\def\easycyrsymbol#1{\mathord{\mathchoice
  {\mbox{\fontsize\tf@size\z@\usefont{T2A}{\rmdefault}{m}{n}#1}}
  {\mbox{\fontsize\tf@size\z@\usefont{T2A}{\rmdefault}{m}{n}#1}}
  {\mbox{\fontsize\sf@size\z@\usefont{T2A}{\rmdefault}{m}{n}#1}}
  {\mbox{\fontsize\ssf@size\z@\usefont{T2A}{\rmdefault}{m}{n}#1}}
}}
\newcommand{\Sha}{\easycyrsymbol{\CYRSH}}
\date{}
\begin{document}
\newtheoremstyle{all}%  |?name>
  {11pt}%      |?Space above>
  {11pt}%      |?Space below>
  {\slshape}%         |?Body font>
  {}%         |?Indent amount>|ntnote1
  {\bfseries}% |?Theorem head font>
  {}%        |?Punctuation after theorem head>
  {.5em}%     |?Space after theorem head>|ntnote2
  {}%         |?Theorem head spec (can be left empty, meaning `normal')>

\theoremstyle{all}
\newtheorem{ithm}{Theorem}
\newtheorem{subthm}{Theorem}[ithm]
\newtheorem{thm}{Theorem}[section]
\newtheorem{prop}[thm]{Proposition}
\newtheorem{cor}[thm]{Corollary}
\newtheorem{lemma}[thm]{Lemma}
\newtheorem{defn}[thm]{Definition}
\newtheorem{ques}[thm]{Question}
\newtheorem{conj}[thm]{Conjecture}
\newtheorem{example}[thm]{Example}
\newtheorem{hypothesis}[thm]{Hypothesis}
\newtheorem{rem}[thm]{Remark}

\newcommand{\nc}{\newcommand}
\newcommand{\renc}{\renewcommand}
  \nc{\kac}{\kappa^C}
\nc{\Lco}{L_{\la}}
\nc{\Lotimes}{\overset{L}{\otimes}}
\nc{\ssy}{\mathsf{y}}
\nc{\ssx}{\mathsf{x}}
\nc{\ssw}{\mathsf{w}}
\nc{\tcO}{\tilde{\cO}}
\nc{\RHom}{\mathrm{RHom}}
\nc{\qD}{q^{\nicefrac 1D}}
\nc{\ocL}{M_{\la}}
\nc{\cP}{\mathcal{P}}
\nc{\cT}{\mathcal{T}}
\nc{\excise}[1]{}
\nc{\mpmod}{\operatorname{-pmod}}
\newcommand{\arxiv}[1]{\href{http://arxiv.org/abs/#1}{\tt arXiv:\nolinkurl{#1}}}
\renc{\!}{\mskip-\thinmuskip}
\nc{\Dbe}{D^{\uparrow}}
\nc{\tr}{\operatorname{tr}}
\nc{\tla}{\mathsf{t}_\la}
\renc{\wr}{\operatorname{wr}}
\nc{\llrr}{\langle\la,\rho\rangle}
\nc{\lllr}{\langle\la,\la\rangle}
\nc{\K}{\mathbbm{k}}
\nc{\Stosic}{Sto{\v{s}}i{\'c}\xspace}
\nc{\cd}{\mathcal{D}}
\nc{\vd}{\mathbb{D}}
\nc{\R}{\mathbb{R}}
  \nc{\Lam}[3]{\La^{#1}_{#2,#3}}
\newcounter{subeqn}
\renewcommand{\thesubeqn}{\theequation\alph{subeqn}}
\newcommand{\subeqn}{%
  \refstepcounter{subeqn}% Step subequation number
  \tag{\thesubeqn}% Label equation
}
\makeatletter
\@addtoreset{subeqn}{equation}
\newcommand{\newseq}{%
  \refstepcounter{equation}% Step subequation number
}
  \nc{\Lab}[2]{\La^{#1}_{#2}}
  \nc{\Lamvwy}{\Lam\Bv\Bw\By}
  \nc{\Labwv}{\Lab\Bw\Bv}
  \nc{\nak}[3]{\mathcal{N}(#1,#2,#3)}
  \nc{\hw}{highest weight\xspace}
  \nc{\al}{\alpha}
  \nc{\be}{\beta}
  \nc{\bM}{\mathbf{m}}
\newcommand{\Mirkovic}{Mirkovi\'c\xspace}
  \nc{\bx}{\mathbf{x}}
\nc{\bp}{\mathbf{p}}
\nc{\by}{\mathbf{y}}
 \nc{\bkh}{\backslash}
  \nc{\Bi}{\mathbf{i}}
\nc{\Bs}{\mathbf{s}}
\nc{\Bk}{\mathbf{k}}

 \nc{\Bm}{\mathbf{m}}
\nc{\bd}{\mathbf{d}}
  \nc{\bpi}{\boldsymbol{\pi}}

  \nc{\Bj}{\mathbf{j}}
\nc{\RAA}{R^\A_A}
  \nc{\Bv}{\mathbf{v}}
  \nc{\Bw}{\mathbf{w}}
\nc{\Id}{\operatorname{Id}}
  \nc{\By}{\mathbf{y}}
\nc{\eE}{\EuScript{E}}
\nc{\eI}{\EuScript{I}}
  \nc{\Bz}{\mathbf{z}}
  \nc{\coker}{\mathrm{coker}\,}
  \renc{\C}{\mathbb{C}}
  \nc{\ch}{\mathrm{ch}}
  \nc{\de}{\delta}
  \nc{\ep}{\epsilon}
  \nc{\Rep}[2]{\mathsf{Rep}_{#1}^{#2}}
  \nc{\Ev}[2]{E_{#1}^{#2}}
  \nc{\fr}[1]{\mathfrak{#1}}
  \nc{\fp}{\fr p}
  \nc{\fq}{\fr q}
  \nc{\fl}{\fr l}
  \nc{\fgl}{\fr{gl}}
\nc{\rad}{\operatorname{rad}}
\nc{\ind}{\operatorname{ind}}
  \nc{\GL}{\mathrm{GL}}
  \nc{\Hom}{\mathrm{Hom}}
  \nc{\im}{\mathrm{im}\,}
 \nc{\SHom}{\EuScript{H}\!\mathit{om}}
  
 \nc{\La}{\Lambda}
  \nc{\la}{\lambda}
  \nc{\mult}{b^{\mu}_{\la_0}\!}
  \nc{\mc}[1]{\mathcal{#1}}
  \nc{\om}{\omega}
\nc{\gl}{\mathfrak{gl}}
  \nc{\cF}{\mathcal{F}}
 \nc{\cC}{\mathcal{C}}
  \nc{\Vect}{\mathsf{Vect}}
 \nc{\modu}{\operatorname{-mod}}
  \nc{\qvw}[1]{\La(#1 \Bv,\Bw)}
\newcommand{\doubletilde}[1]{
  \tilde{{\tilde{#1}}}}
  \nc{\van}[1]{\nu_{#1}}
  \nc{\Rperp}{R^\vee(X_0)^{\perp}}
  \nc{\si}{\sigma}
  \nc{\croot}[1]{\al^\vee_{#1}}
\nc{\di}{\mathbf{d}}
  \nc{\SL}[1]{\mathrm{SL}_{#1}}
  \nc{\Th}{\theta}
  \nc{\vp}{\varphi}
\nc{\talg}{\tilde{\alg}}
\nc{\prj}{\mathsf{P}}
\nc{\Ba}{\mathbf{a}}

\nc{\Sym}{\operatorname{Sym}}
  \nc{\wt}{\mathrm{wt}}
  \nc{\Z}{\mathbb{Z}}
  \nc{\Q}{\mathbb{Q}}
  \nc{\Znn}{\Z_{\geq 0}}
  \nc{\ver}{\EuScript{V}}
  \nc{\Res}[2]{\operatorname{Res}^{#1}_{#2}}
  \nc{\edge}{\EuScript{E}}
  \nc{\Spec}{\mathrm{Spec}}
  \nc{\tie}{\EuScript{T}}
  \nc{\ml}[1]{\mathbb{D}^{#1}}
  \nc{\fQ}{\mathfrak{Q}}
        \nc{\fg}{\mathfrak{g}}
  \nc{\Uq}{U_q(\fg)}
        \nc{\bom}{\boldsymbol{\omega}}
\nc{\bla}{{\underline{\boldsymbol{\lambda}}}}
\nc{\bnu}{{\underline{\boldsymbol{\nu}}}}
\nc{\bmu}{{\boldsymbol{\mu}}}
\nc{\bal}{{\boldsymbol{\al}}}
\nc{\bet}{{\boldsymbol{\eta}}}
\nc{\rola}{X}
\nc{\wela}{Y}
\nc{\fM}{\mathfrak{M}}
\nc{\fX}{\mathfrak{X}}
\nc{\fH}{\mathfrak{H}}
\nc{\fE}{\mathfrak{E}}
\nc{\fF}{\mathfrak{F}}
\nc{\fI}{\mathfrak{I}}
\nc{\qui}[2]{\fM_{#1}^{#2}}
\renc{\cL}{\mathcal{L}}
%\nc{\cO}{\mathcal{O}}
\nc{\ca}[2]{\fQ_{#1}^{#2}}
\nc{\cat}{\mathcal{V}}
\nc{\cata}{\mathfrak{V}}

\nc{\tcata}{\tilde{\mathfrak{V}}}
\nc{\pil}{{\boldsymbol{\pi}}^L}
\nc{\pir}{{\boldsymbol{\pi}}^R}
\nc{\cO}{\mathcal{O}}
\nc{\tO}{\tilde{\cO}}
\nc{\Ko}{\text{\Denarius}}
\nc{\Ei}{\fE_i}
\nc{\Fi}{\fF_i}
\nc{\fil}{\mathcal{H}}
\nc{\brr}[2]{\beta^R_{#1,#2}}
\nc{\brl}[2]{\beta^L_{#1,#2}}
\nc{\so}[2]{\EuScript{Q}^{#1}_{#2}}
\nc{\EW}{\mathbf{W}}
\nc{\rma}[2]{\mathbf{R}_{#1,#2}}
\nc{\Dif}{\EuScript{D}}%{\operatorname{Dif}}
\nc{\MDif}{\EuScript{E}}
\renc{\mod}{\mathsf{mod}}
\nc{\modg}{\mathsf{mod}^g}
\nc{\fmod}{\mathsf{mod}^{fd}}
\nc{\id}{\operatorname{id}}
\nc{\DR}{\mathbf{DR}}
\nc{\End}{\operatorname{End}}
\nc{\Fun}{\operatorname{Fun}}
\nc{\Ext}{\operatorname{Ext}}
\nc{\tw}{\tau}
\nc{\lcm}{\operatorname{lcm}}
\nc{\A}{\EuScript{A}}
\nc{\Loc}{\mathsf{Loc}}
\nc{\eF}{\EuScript{F}}
\nc{\LAA}{\Loc^{\A}_{A}}
\nc{\perv}{\mathsf{Perv}}
\nc{\teF}{\tilde{\EuScript{F}}}
\nc{\teE}{\tilde{\EuScript{E}}}
\nc{\gfq}[2]{B_{#1}^{#2}}
\nc{\qgf}[1]{A_{#1}}
\nc{\qgr}{\qgf\rho}
\nc{\tqgf}{\tilde A}
\nc{\IC}{\mathbf{IC}}
\nc{\Tr}{\operatorname{Tr}}
\nc{\Tor}{\operatorname{Tor}}
\nc{\cQ}{\mathcal{Q}}
\nc{\st}[1]{\Delta(#1)}
\nc{\cst}[1]{\nabla(#1)}
\nc{\ei}{\mathbf{e}_i}
\nc{\Be}{\mathbf{e}}
\nc{\Hck}{\mathfrak{H}}
%\nc{\fH}{\Hck}
%\nc{\fM}{\mathfrak{M}}
\renc{\P}{\mathbb{P}}
\nc{\cI}{\mathcal{I}}
\nc{\tU}{\mathcal{U}}
\nc{\hU}{\widehat{\mathcal U}}
\nc{\coe}{\mathfrak{K}}
\nc{\pr}{\operatorname{pr}}
\nc{\ttU}{\tilde{\mathcal{U}}}
\nc{\bra}{\mathfrak{B}}
\nc{\rcl}{\rho^\vee(\la)}
\nc{\bz}{\mathbf{z}}
\nc{\bLa}{\boldsymbol{\Lambda}}
\nc{\hwo}{\mathbb{V}}
\nc{\cosoc}{\operatorname{hd}}
\nc{\socle}{\operatorname{soc}}
\newcommand{\xsum}[2]{
  \sum_{#1}^{#2}
}
\nc{\alg}{T}
\nc{\ttalg}{{\it \doubletilde{T}}\xspace}
\nc{\PR}{D^i\!R}

\excise{
\newenvironment{block}
\newenvironment{frame}
\newenvironment{tikzpicture}
\newenvironment{equation*}
}

% Margin stuff from jason
%\oddsidemargin=0pt
%\evensidemargin=0pt
%\topmargin=0in
%\headheight=0pt
%\headsep=0pt
%\setlength{\textheight}{9in}
%\setlength{\textwidth}{6.5in}
\setcounter{tocdepth}{2}

\baselineskip=1.1\baselineskip
\renc{\theequation}{\arabic{section}.\arabic{equation}}

 \usetikzlibrary{decorations.pathreplacing,backgrounds,decorations.markings}
\tikzset{wei/.style={draw=red,double=red!40!white,double distance=1.5pt,thin}}
\tikzset{bdot/.style={fill,circle,color=blue,inner sep=3pt,outer sep=0}}
\tikzset{dir/.style={postaction={decorate,decoration={markings,
    mark=at position .8 with {\arrow[scale=1.3]{<}}}}}}
\tikzset{rdir/.style={postaction={decorate,decoration={markings,
    mark=at position .8 with {\arrow[scale=1.3]{>}}}}}}
\tikzset{edir/.style={postaction={decorate,decoration={markings,
    mark=at position .2 with {\arrow[scale=1.3]{<}}}}}}
\begin{center}
\noindent {\large  \bf Knot invariants and higher representation theory}
\medskip

\noindent {\sc Ben Webster}\footnote{Supported by the NSF under Grant
  DMS-1151473 and an NSF Postdoctoral Research Fellowship and  by the NSA under Grant H98230-10-1-0199.}\\
Department of Mathematics\\ University of Virginia\\
Charlottesville, VA\\
Email: {\tt bwebster@virginia.edu}
\end{center}
\bigskip
{\small
\begin{quote}
We construct knot invariants categorifying the quantum knot variants
for all representations of quantum groups.  We show that these
invariants coincide with previous invariants defined by Khovanov for
$\mathfrak{sl}_2$ and $\mathfrak{sl}_3$ and by Mazorchuk-Stroppel and
Sussan for $\mathfrak{sl}_n$.   

Our technique is to study 2-representations of 2-quantum groups (in the sense of Rouquier and Khovanov-Lauda) categorifying tensor products of irreducible representations.
These are the representation categories of certain finite dimensional algebras
with an 
explicit diagrammatic presentation, generalizing the cyclotomic
quotient of the KLR algebra.  When the Lie algebra under
consideration is $\mathfrak{sl}_n$, we show that these categories
agree with certain subcategories of parabolic category $\cO$ for $\mathfrak{gl}_k$.

We also investigate the finer structure of these categories: they are
standardly stratified and satisfy a double centralizer property with
respect to their self-dual modules.  The standard modules of the
stratification play an important role as test objects for functors, as Vermas do in more classical representation theory. 

The existence of these representations has consequences for the
structure of previously studied categorifications. It allows us to
prove the non-degeneracy of Khovanov and Lauda's 2-category (that its
Hom spaces have the expected dimension) in all symmetrizable types,
and that the cyclotomic quiver Hecke algebras are symmetric Frobenius.

In work of Reshetikhin and Turaev, the braiding and (co)evaluation
maps between representations of quantum groups are used to define
polynomial knot invariants.  We show that the 
categorifications of tensor products are related by functors
categorifying these maps, which allow the construction of bigraded knot
homologies whose graded Euler characteristics are the original
polynomial knot invariants.

\end{quote}

\tableofcontents

\renc{\theithm}{\Alph{ithm}}

\section{Introduction}

\addtocounter{ithm}{1}

\subsection{Quantum topology}
Much of the theory of quantum topology rests on the structure of
monoidal categories and their use in a variety of topological
constructions. In this paper, we define a categorification of one
of these constructions: the R-matrix construction of quantum knot invariants,
inspired by the work of Reshetikhin and Turaev \cite{Tur,RT}. 

Their work is in the context of the tensor category of representations of a quantized universal
enveloping algebra $U_q(\fg)$.
They assign natural maps
between tensor products to each ribbon tangle labeled with
representations. One special case of this is a polynomial invariant of
framed knots for each finite-dimensional representation of $U_q(\fg)$. These maps are natural with respect to tangle
composition; thus they can be reconstructed from a small number of
constituents: the maps associated to a ribbon
twist, crossing, cup and cap.
The map associated to a link whose components are
labeled with a representation of $\fg$ is thus simply a Laurent polynomial.

Particular cases of these invariants include:
\begin{itemize}
\item the {\bf Jones polynomial} when $\fg=\mathfrak{sl}_2$ and all strands
  are labeled with the defining representation.
\item the {\bf colored Jones polynomials} for other representations of
  $\fg=\mathfrak{sl}_2$.
\item specializations of the {\bf HOMFLYPT polynomial} for the defining
  representation of $\fg=\mathfrak{sl}_n$.
\item the {\bf Kauffman polynomial} (not to be confused with the Kauffman
  bracket, a variant of the Jones polynomial) for the defining
  representation of $\mathfrak{so}_{n}$.
\end{itemize}

These special cases have been categorified to knot homologies from a
number of perspectives, beginning with work of Khovanov on the Jones polynomial.
However, the vast majority of representations previously
had no homology theory attached to them.  In this paper, we will
construct such a theory for any labels; that is:
%\renc{\thethm}{\Alph{thm}.1}
\begin{subthm}\label{knot-hom}
  For each simple complex Lie algebra $\fg$, there is a
  homology theory $\EuScript{K}(L,\{\la_i\})$ for links $L$ whose
  components are labeled by finite dimensional representations of
  $\fg$ (here indicated by their highest weights $\la_i$), which
  associates to such a link a bigraded vector space whose graded Euler
  characteristic is the quantum invariant of this labeled link.
\end{subthm}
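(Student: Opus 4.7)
The plan is to use the functoriality of the Reshetikhin--Turaev construction. I would first build, for each ordered sequence $\bla=(\la_1,\ldots,\la_n)$ of dominant integral weights, a graded algebra $T^{\bla}$ with an explicit diagrammatic presentation extending that of the cyclotomic KLR algebra; its (bounded) derived category of finitely generated graded modules is the announced categorification $\cat_{\bla}$ of $V_{\la_1}\otimes\cdots\otimes V_{\la_n}$. The Grothendieck group $K_0(\cat_{\bla})\otimes_{\Z[q,q^{-1}]}\Q(q)$ must be identified with the tensor product as a $\Uq$-module, with indecomposable projectives sent to a canonical-type basis; this is where the 2-representation theory of the 2-quantum group $\tU$ of Khovanov--Lauda and Rouquier enters, since induction/restriction functors along the diagrammatic generators categorify the $\Uq$-action and thus determine $K_0$.

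Next, I would realize each elementary tangle as a concrete functor between these categories. Positive and negative crossings between the $i$th and $(i+1)$st strands are categorified as derived tensor products with explicit complexes of $(T^{\bla}, T^{s_i\bla})$-bimodules built from the generators of the 2-quantum group, following Rouquier's prescription; cups and caps correspond to induction and restriction along the inclusions of diagrammatic subalgebras that merge or create an adjacent dual pair $(\la,\la^*)$; the ribbon twist is categorified by a grading shift determined by the quadratic Casimir weight. Each construction is engineered so that its class in the Grothendieck group is the corresponding elementary $\Uq$-intertwiner appearing in the Reshetikhin--Turaev functor.

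With these pieces in hand, the main task is to verify that the assignment extends to a functor from the ribbon category of framed, labeled tangles, i.e.\ that the functors satisfy, up to coherent natural isomorphism, the relations corresponding to the Reidemeister and ribbon moves. The zig-zag relations for cups and caps should reduce to the biadjunction between induction and restriction, compatible with the standardly stratified structure and double centralizer property promised in the abstract; Reidemeister II is homotopy invertibility of the crossing complex, which follows once the bimodule complex is calibrated correctly. Given a labeled link $L$, I present it as the plat closure of a $(1,1)$-tangle and define $\EuScript{K}(L,\{\la_i\})$ as the cohomology of the composite functor applied to the unit object of $\cat_{\emptyset}$, bigraded by homological degree and the internal $q$-grading on $T^{\bla}$; the Euler characteristic identity then falls out of the decategorification statements at each elementary step.

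The hard step is Reidemeister III, i.e.\ the Yang--Baxter equation for the Rouquier crossing complexes acting on $\bigoplus_{\bla}\cat_{\bla}$. This is genuinely a statement about the 2-quantum group itself, and cannot be reduced to any decategorified calculation; one must show that the two triple-braid complexes on a strand triple are homotopy equivalent as complexes of bimodules. I expect to reduce this to a check on the (co)standard modules of the stratification, which, as emphasized in the abstract, play the role of Vermas as test objects; the non-degeneracy of the Khovanov--Lauda 2-category (another advertised output of the paper) should make the diagrammatic computation of these homotopies tractable.
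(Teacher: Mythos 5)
Your overall architecture matches the paper's at the level of the abstract, but several of your key steps are either not what the paper does or would actually fail. First, the bounded derived category of finitely generated graded modules is the wrong home for the invariant: for non-minuscule labels the algebras $T^{(\la,\la^*)}$ have infinite global dimension, the quantum trace $\RHom(\dot L_\la,-)$ does not preserve boundedness, and the resulting link homology is genuinely infinite-dimensional (the paper computes this explicitly for the unknot colored by $V_2$ for $\mathfrak{sl}_2$). One must work in the category $\cat^\bla$ of complexes satisfying the $C^{\uparrow}$ finiteness condition of Definition \ref{derived-cat}, and the Euler characteristic then lives in a $q$-adic completion. Second, your cup/cap functors are not ``induction and restriction along subalgebra inclusions'': the essential input is the identification of a particular \emph{simple} module $L_\la$ in $\cata^{(\la,\la^*)}$ categorifying the invariant vector (Lemma \ref{lem:Lco-simple}), and the (co)evaluation and quantum (co)trace are $\RHom$ and $\Lotimes$ with this simple, with specific homological and internal shifts. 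Relatedly, the zig-zag (S-move) cannot be deduced from a biadjunction between the cup and cap functors, because $\mathbb{K}$ and $\mathbb{T}$ are \emph{not} biadjoint in general (they are only biadjoint up to shift in the minuscule case); the paper instead proves the S-move by a spectral-sequence/Tor computation against the explicit standard resolution of $L_\la$ (Propositions \ref{sta-res} and \ref{S-move}), which in turn requires the Tor-vanishing coming from the standard stratification.

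Third, you have misidentified where the difficulty lies in the braiding. The paper does not use Rouquier complexes for the crossing: the braiding is the derived tensor product with a single bimodule $\bra_{s_i}$ of ``$s_i$-Stendhal diagrams'' in which the two red strands cross, and more generally $\bra_w$ for any permutation. Rouquier complexes categorify the quantum Weyl group action on weight spaces of a single representation, not the R-matrix interchanging tensor factors, so ``following Rouquier's prescription'' here is not a construction you can simply invoke. With the red-crossing bimodules, Reidemeister III is essentially automatic from the isomorphism $\bra_{ww'}\cong \bra_w\otimes_T\bra_{w'}$ when lengths add (Lemma \ref{lem:independent}); the real work is elsewhere, namely (a) proving $\mathbb{B}_w$ is an equivalence, which goes through showing the half-twist sends projectives to self-dual tilting modules, (b) the standard-filtration and Tor-vanishing statements ($p_n$, $f_n$, $s_n$ in Lemma \ref{pro-sta}) needed to control all these derived tensor products, and (c) the pitchfork and S-moves relating crossings to cups and caps. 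Finally, the ribbon normalization is subtler than ``a grading shift by the Casimir'': the categorification forces the Snyder--Tingley ribbon element rather than the standard one (Proposition \ref{Lco-bra} produces a sign $(-1)^{2\rho^\vee(\la)}$), so the Euler characteristic is the Reshetikhin--Turaev invariant for that ribbon structure, differing from the usual one by explicit writhe-dependent signs.
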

Given the extensive past work on knot homology, it is natural to ask
which of the homology theories mentioned above coincide with those of
Theorem \ref{knot-hom} in special cases. 
\begin{subthm}\label{coincindence}
  When $\fg=\mathfrak{sl}_2,\mathfrak{sl}_3$ and the link is labeled
  with the defining representation of these algebras, the theory $\EuScript{K}(L,\{\la_i\})$ coincides up to grading shift with Khovanov's homologies
  for $\fg=\mathfrak{sl}_2,\mathfrak{sl}_3$.  In the case
  $\fg=\mathfrak{sl}_n$ and we use the defining representation,
  $\EuScript{K}(L,\{\la_i\})$  agrees with the
  Mazorchuk-Stroppel-Sussan homology.
\end{subthm}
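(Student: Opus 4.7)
The plan is to reduce everything to the $\mathfrak{sl}_n$ case, where the heavy lifting is the identification with parabolic category $\cO$ for $\fg\fl_k$, and then deduce the $\mathfrak{sl}_2$ and $\mathfrak{sl}_3$ statements from known comparisons between MSS and Khovanov's theories.

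For $\fg=\mathfrak{sl}_n$ with each $\la_i$ the defining representation, the categorification of the tensor product $V^{\otimes k}$ produced in this paper has already been identified (via the earlier theorem of the introduction) with the singular block of parabolic category $\cO$ for $\mathfrak{gl}_k$ whose singular and parabolic data are determined by $\bla$. This is precisely the abelian category underlying the Mazorchuk-Stroppel-Sussan construction, so the content of the statement is a match of \emph{functors}: the cup, cap, and crossing functors entering the Reshetikhin-Turaev calculus. The cup and cap appear in both constructions as biadjoint pairs built from translation/Zuckerman functors through walls; once one matches the $\fE_i$, $\fF_i$ with the corresponding projective functors (which is part of the already-established equivalence), the cup/cap identification is immediate from the characterization of these biadjoint pairs by their action on standard modules. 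The braiding is the Rickard complex built from the categorical $\mathfrak{sl}_2$-action along each simple root; on the MSS side it is a derived shuffling functor. These coincide by the uniqueness of Rickard complexes attached to a strong 2-representation, applied to the identification of 2-representations we already have.

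For $\fg=\mathfrak{sl}_2$ and $\fg=\mathfrak{sl}_3$, the theorem then follows by composition with known identifications: Stroppel's work shows that MSS in rank $2$ recovers Khovanov's arc algebra homology, and the analogous statement for $n=3$ is provided by the identification of the relevant parabolic $\cO$ with the foam $2$-category underlying Khovanov's $\mathfrak{sl}_3$-homology. Compatibility of the tangle functors under these identifications is either explicit in the relevant references or, again, follows from the uniqueness of Rickard complexes.

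The main obstacle is the functor-level identification of the braiding with the shuffling complex of Sussan. The underlying abelian category equivalence is not enough by itself: one must argue that the complex of functors produced by the categorical $\fE_i,\fF_i$-action in this paper agrees, up to quasi-isomorphism and controlled grading shifts, with the one built by Sussan from twisting functors. The cleanest route is a uniqueness theorem characterizing braiding functors on a 2-representation by their commutation with $\fE_i,\fF_i$ and their behavior on a small generating set; producing and applying such a uniqueness result is the technical heart of the argument. A secondary source of trouble is bookkeeping of grading and framing normalizations, since the two theories fix their framing conventions differently, and the resulting shift must be computed explicitly in order to state the coincidence with no ambiguity.
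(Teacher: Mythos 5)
Your overall strategy---identify $\cata^\bla$ with a block of parabolic category $\cO$ for $\mathfrak{gl}_N$, match the cup, cap and crossing functors with those of Mazorchuk--Stroppel--Sussan, and then quote the known agreement of MSS homology with Khovanov's theories for $\mathfrak{sl}_2$ and $\mathfrak{sl}_3$---is the same as the paper's (Sections \ref{sec:type-A} and \ref{sec:comparison-functors}, culminating in Proposition \ref{MSS-compare}). The reductions you propose for the cups and caps (check on adjoint pairs, reduce via equivariance and standardization to a single cup at the far right) are also essentially what the paper does, although you should note that the MSS cup/cap is not merely a translation-through-a-wall: it involves the Enright--Shelton equivalence, which changes the rank $N$, and the paper must explicitly identify that equivalence with a parabolic induction $\ind_\fl^{\fgl_N}(-\otimes\C^n)$, which is then matched with $\mathbb{S}^{\bla;(\om_1,\om_{n-1})}(-\boxtimes L_{\om_1})$ via Lemma \ref{trace-standard}.

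The genuine gap is in your treatment of the braiding. The functor $\mathbb{B}_{\si_k}$ of this paper is \emph{not} the Rickard (Chuang--Rouquier) complex attached to the categorical $\mathfrak{sl}_2$-action along a simple root of $\fg$. A Rickard complex is an endofunctor of a single $2$-representation interchanging the $\mu$ and $s_i\mu$ weight categories; it categorifies the quantum Weyl group action. The braiding here is a functor $\cat^\bla_\C\to\cat^{s_k\bla}_\C$ that permutes the \emph{tensor factors} (the red strands), and it is defined as derived tensor product with the explicit bimodule $\bra_{s_k}$ of Definition \ref{bra-def}. So ``uniqueness of Rickard complexes'' cannot be invoked to identify it with Sussan's twisting functor, and this is exactly the step you flag as the technical heart. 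Your fallback suggestion---characterize the braiding by its commutation with $\fE_i,\fF_i$ together with its effect on a small generating set---is the correct idea, and it is what Proposition \ref{trans-braid} actually executes: both $T_v$ and $\Xi\circ\mathbb{B}_v\circ\Xi^{-1}$ commute with translation functors, every projective is a summand of translation functors applied to the dominant Verma, so it suffices to compare the two functors on Verma modules; there the comparison is made via the mutation of the semi-orthogonal decomposition by standard modules (Proposition \ref{pro:mutate}) against the known effect of twisting functors on Vermas. As written, your proposal leaves this uniqueness statement unproved and substitutes for it an identification (with Rickard complexes) that is false, so the braiding step does not go through without the paper's argument or an equivalent one.
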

Previous approaches to categorifying the special Reshetikhin-Turaev
invariants mentioned above have been given by Khovanov and Khovanov-Rozansky
\cite{Kho00,Kho02,Kh-sl3,Kho05,KR04,KRSO,KR05}, Stroppel and
Mazorchuk-Stroppel \cite{StDuke,MS09}, Sussan \cite{Sussan2007},
Seidel-Smith \cite{SS}, Manolescu \cite{Manolescu}, Cautis-Kamnitzer
\cite{CK,CKII}, Mackaay, \Stosic and Vaz \cite{MSVsln,MSV} and the
author and Williamson \cite{WWcol}.  All of these approaches depend
heavily on special features of minuscule representations.

There has been some progress on other representations of
$\mathfrak{sl}_2$.  In a paper still in preparation,  Stroppel and Sussan also consider
the case of the colored Jones polynomial \cite{StSu} (building on
previous work with Frenkel \cite{FSS}); it seems likely their
construction is equivalent to ours (see Section
\ref{sec:comparison-functors}). Similarly, Cooper and
Krushkal have given a categorification of the colored Jones
polynomial using Bar-Natan's cobordism formalism for Khovanov homology
\cite{CoKr}.  We show in \cite{WebTGK} that Cooper and Krushkal's
theory agrees with ours for colored Jones polynomials.

On the other hand, the work of physicists suggests that
categorifications for all representations exist; one schema for
defining them is given by Witten \cite{Witknots}.  The relationship
between Witten's proposals and the invariants
presented in this paper is completely unknown (at least to the author)
and presents a very interesting question for consideration in the
future. 

Another question of  particular interest is whether $\EuScript{K}(L,\{\la_i\})$  for the defining representation
  of $\mathfrak{sl}_n$
coincides with Khovanov-Rozansky homology \cite{KR04}; we will
establish this agreement in future work with Mackaay \cite{Webweb}.\medskip

At the moment, we have not proven that $\EuScript{K}(L,\{\la_i\})$ is functorial, but
we do have a proposal for the map associated to a cobordism when the weights $\la_i$ are all minuscule.  As usual
in knot homology, this proposed functoriality map is constructed by picking a
Morse function on the cobordism, and associating simple maps to the
addition of handles.  We have no proof that this definition is
independent of Morse function and we anticipate that proving this will be quite difficult.

\medskip

\subsection{Categorification of tensor products}
The program of ``higher representation theory,'' begun (at least as an
explicit program) by Chuang and Rouquier in \cite{CR04} and continued
by Rouquier \cite{Rou2KM} and Khovanov-Lauda \cite{KLIII}, is aimed at
studying ``2-analogues'' of the universal enveloping algebras of
simple Lie algebras $U(\fg)$ and their quantizations $U_q(\fg)$. The
2-analogue for us of the quantum group $U_q(\fg)$ is the strict 2-category $\tU$ defined in \cite[\S 2]{CaLa}.
In this paper, we'll define an algebra $T^\bla$ for each list
$\bla=(\la_1,\dots, \la_\ell)$ of dominant weights for any
symmetrizable Kac-Moody algebra $\fg$. Our 2-analogue of a tensor
product of simple $\fg$-representations is the category $\cata^\bla$ of finite-dimensional
representations of $T^\bla$. 

Our first objective is to show that we have defined a categorification
of such tensor products.
\addtocounter{subthm}{-2}
\begin{ithm}\label{main}
  The category $\cata^\bla$ carries a
  categorical action of $\fg$, that is, it carries
  an action of the 2-category $\tU$. The Grothendieck group of $\cata^\bla$ is canonically isomorphic to
  the tensor product $$V_\bla\cong V_{\la_1}\otimes \cdots\otimes
  V_{\la_\ell}$$ of irreducible representations $V_{\la_i}$.
\end{ithm}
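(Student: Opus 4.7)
The plan is to construct the 2-representation of $\tU$ by defining the categorified Chevalley generators as bimodule tensor product functors, to verify the relations of $\tU$ directly from the diagrammatic presentation of $T^\bla$, and to compute the Grothendieck group via a standard stratification whose classes match the tensor-product basis of $V_\bla$.

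First I would define $\fF_i$ on $T^\bla\modu$ as the functor of tensoring with the $(T^\bla,T^\bla)$-bimodule obtained by adding an $i$-colored strand on the right in the diagrammatic presentation, and $\fE_i$ as its right adjoint. The dot and crossing generators of $T^\bla$ then furnish the natural transformations $x\colon \fF_i\to \fF_i$ and $\tau\colon \fF_i\fF_j\to \fF_j\fF_i$ required by the definition of $\tU$, and the KLR-style relations imposed in the definition of $T^\bla$ translate directly into the required relations between these 2-morphisms. The substantive checks are the non-local ones: biadjunction of $\fE_i$ and $\fF_i$, and the categorified $\mathfrak{sl}_2$-relation between $\fE_i\fF_i$ and $\fF_i\fE_i$. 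For biadjunction one produces explicit unit/counit 2-morphisms using the cyclotomic-flavor relations; the $\mathfrak{sl}_2$-relation then follows from a short exact sequence of bimodules exhibited by an explicit idempotent decomposition in the diagrammatic presentation, with the cyclotomic-style relations providing the termination condition on the filtration.

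For the Grothendieck group computation I would build a standard stratification on $\cata^\bla$ whose standard modules $\st{\bmu}$ are naturally indexed by tuples of canonical basis elements $b_1\otimes \cdots\otimes b_\ell \in V_{\la_1}\otimes\cdots\otimes V_{\la_\ell}$, and whose strata correspond to the positions of ``splitting points'' between tensor factors. The classes $[\st{\bmu}]$ then form a $\Z[q,q^{-1}]$-basis of $K(\cata^\bla)$ matching the tensor-product basis of $V_\bla$. To upgrade this to a $\Uq$-module isomorphism, I would check that the action of $[\fE_i]$ and $[\fF_i]$ on $[\st{\bmu}]$ matches the coproduct formula: this decomposes into separate computations on each tensor factor, each of which reduces to the known Grothendieck group computation for the cyclotomic KLR algebra categorifying a single irreducible $V_{\la_k}$.

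The main obstacle, as in prior work on cyclotomic categorifications, is verifying the biadjunction and the $\mathfrak{sl}_2$-relation for the pair $(\fE_i,\fF_i)$: these encode global information and cannot be read off from the local KLR relations alone. The cyclotomic-style relations in $T^\bla$ must be used essentially here, and a careful analysis of the bimodule obtained by adding and then removing an $i$-colored strand is required to exhibit the short exact sequences that witness the relation. Once this is in hand, identifying $K(\cata^\bla)$ with $V_\bla$ as a $\Uq$-module — rather than merely matching graded characters — requires bookkeeping to ensure that the stratification is compatible with the categorical Chevalley functors in the manner dictated by the coproduct on each tensor factor.
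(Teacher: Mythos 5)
Your overall architecture is sound, and both halves take routes that genuinely differ from the paper's. For the categorical action, the paper does not verify the biadjunction and the categorified $\mathfrak{sl}_2$-commutator directly on the bimodules; instead it builds 2-categories $\tU^-_i$ for the minimal parabolics, represents them on ``quiver flag categories'' obtained by base change from Lauda's equivariant Grassmannian construction, and realizes $T^\la$ (and later $T^\bla$, via the double tensor algebra $DT^\bla$) as a Morita-equivalent quotient of these 2-categories, so that biadjointness of $\fE_i$ and $\fF_i$ is inherited rather than checked. Your direct verification is the Kang--Kashiwara-style alternative; it can be made to work, but the ``explicit short exact sequence of bimodules'' you gesture at is the entire content of that approach and is far from routine, whereas the paper's detour buys non-degeneracy of $\tU$ and the Frobenius trace on $R^\la$ as byproducts.

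The genuine gap is in your Grothendieck group argument, which is circular as stated. In the paper the key fact that the filtration of $P^\kappa_\Bi$ by the submodules $P_{\geq\phi}$ has subquotients \emph{exactly} isomorphic to the standard modules $S^{\kappa_\phi}_{\Bi_\phi}$ (rather than proper quotients of them) is proved by a dimension count: one already knows $\dim_q\Hom(P^\kappa_\Bi,P^{\kappa'}_{\Bi'})=\langle p^\kappa_\Bi,p^{\kappa'}_{\Bi'}\rangle$ and hence $\dim P^\kappa_\Bi=\sum_\phi\dim S^{\kappa_\phi}_{\Bi_\phi}$, which forces the natural surjections to be isomorphisms. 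That Euler-form identity is equivalent to the statement $K_0(\cata^\bla)\cong V_\bla^\Z$ you are trying to prove, so you cannot invoke the standard stratification as an input without an independent argument. The obstruction is concrete: the spanning set $B$ is an honest basis only of $\tilde{T}^\bla$, and the violating ideal is not visibly spanned by a subset of $B$, so one cannot simply read off $\dim T^\bla$ and the filtration multiplicities from diagrams. You would need either a basis theorem for $T^\bla$ itself (as in the later cellular-basis work) or the paper's alternative: prove the Euler form equals the quasi-R-matrix-twisted Shapovalov form by induction on adjunctions and the categorified commutator, get spanning from the $\tU$-action, and get injectivity from a crystal-theoretic count of indecomposable projectives bounding their number by $\prod_j\dim V_{\la_j}$. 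Relatedly, the compatibility of $\fE_i,\fF_i$ with the standardization filtration (your ``bookkeeping'') is itself a nontrivial double induction in the paper, not a formal consequence of the coproduct.
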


In the case where $\fg$ is finite-type and simply-laced, the classes
of indecomposable projectives in $\cata^\bla$ are Lusztig's canonical
basis of a tensor product by \cite[6.11]{WebCB}.

When $\bla=(\la)$, the
algebra $\alg^\la:=\alg^{(\la)}$ is a cyclotomic KLR algebra in the sense of \cite[\S
3.4]{KLIII}.  Even in this case, the categorical action of Theorem
\ref{main} is new, and it implies that
the induction and restriction functors on these categories are
biadjoint.  This was proved independently by Kang and Kashiwara
\cite{KK} using completely different methods.  This action can be used
to prove that the 2-category $\tU$ is nondegenerate in the finite type
case. Earlier versions of this paper included a discussion of non-degeneracy outside of finite type, which is now proven in \cite{WebCBerr} instead. 

%\renc{\thethm}{\Alph{thm}}
These algebras $\alg^\bla$ are also quite interesting from the perspective of pure
representation theory.  We'll prove that they have a number of
properties which have already appeared in the literature.
\begin{ithm}\label{properties}
  The projective-injective objects of $\cata^\bla$
  form a categorification of the subrepresentation $V_{\la_1+\dots
    +\la_n}\subset V_\bla$.  In particular, if $\bla=(\la)$, then 
  all projectives are injective and the algebra $\alg^{(\la)}$
  is Frobenius.

  The sum of all indecomposable projective-injectives has the double
  centralizer property. This realizes $\alg^\bla$ as the endomorphisms
  of a natural collection of modules over the algebra $\alg^{(\la_1+\dots +\la_n)}$.

  The algebra $\alg^\bla$ is standardly stratified. The
  semi-orthogonal decomposition for this stratification categorifies
  the decomposition of $V_\bla$ as the sum of tensor products of
  weight spaces.
\end{ithm}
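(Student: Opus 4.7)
The plan is to derive all three statements from a close study of the idempotent and induction structure of $\alg^\bla$, using the categorical $\fg$-action supplied by Theorem \ref{main} as the main engine. I would tackle the three parts in the order stratification, then projective-injective and Frobenius, then double centralizer, since the last of these is essentially a formal consequence of the first two.

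For the standardly stratified structure, the first step is to produce candidate standard modules $\st\bmu$ indexed by tuples $\bmu=(\mu_1,\dots,\mu_\ell)$ with each $\mu_i$ a weight of $V_{\la_i}$. The natural construction is to induce up from a tensor product of cyclotomic KLR algebras $\alg^{(\la_1)}\otimes\cdots\otimes \alg^{(\la_\ell)}$, placing an indecomposable projective of the $\mu_i$-weight block in the $i$-th slot; at the level of Grothendieck groups, $[\st\bmu]$ is then the image of the pure tensor $v_{\mu_1}\otimes\cdots\otimes v_{\mu_\ell}$ under the isomorphism of Theorem \ref{main}, so the family $\{[\st\bmu]\}$ categorifies exactly the weight-space decomposition of $V_\bla$ predicted in the statement. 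I would partial-order the $\bmu$'s by lexicographic comparison of the partial sums $\mu_1+\cdots+\mu_k$, which matches the natural filtration of $\alg^\bla$ by two-sided ideals corresponding to ``moving weight to the right''. With these definitions, the axioms of standard stratification --- $\Hom$-triangularity and $\Ext^{>0}$-vanishing between standards attached to the same stratum --- should follow from a PBW/triangular decomposition of $\alg^\bla$ refining the Khovanov-Lauda decomposition for ordinary KLR algebras.

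For the projective-injective statement, begin with the single-factor case $\bla=(\la)$. The top stratum corresponds to the one-dimensional $\la$-weight space, so the associated standard is a one-dimensional module that is simultaneously projective and injective. The biadjointness (up to grading shift) of $\Ei$ and $\Fi$ furnished by the $\tU$-action of Theorem \ref{main} implies that both operators preserve projectivity and injectivity; since every simple in $V_\la$ is produced from the highest-weight vector by iterated applications of the $\Fi$'s, every indecomposable projective in $\cata^{(\la)}$ occurs as a summand of some word in the $\Fi$'s applied to the top-weight module, and is therefore injective. Upgrading self-injectivity to genuine Frobenius requires a non-degenerate trace form, which I would construct diagrammatically from the cyclotomic KLR presentation (equivalently, by showing that the Nakayama automorphism is inner, using the symmetry of the biadjunctions). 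In the general case the same propagation starting from $\st{(\la_1,\dots,\la_\ell)}$ produces a sub-2-representation of $\cata^\bla$ consisting entirely of projective-injectives and categorifying the Cartan component $V_{\la_1+\dots+\la_\ell}\subset V_\bla$; conversely, a simple lying in a non-top stratum admits a proper standard quotient of its projective cover, which forces that projective cover to be non-self-dual and hence non-injective.

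The double centralizer property is then a Soergel-style formal consequence. Faithfulness of the sum of projective-injectives on each standard $\st\bmu$ follows from applying a suitable sequence of $\Ei$'s to push $\st\bmu$ into the top stratum, where projectives and injectives coincide; this yields a nonzero map from $\st\bmu$ into a projective-injective. A filtration argument then promotes faithfulness to the whole category, and the standard centralizer lemma for finite-dimensional algebras delivers the realization of $\alg^\bla$ as the endomorphism algebra of a natural module over $\alg^{(\la_1+\dots+\la_\ell)}$. I expect the main obstacle to lie in the first step, namely pinning down the exact triangular decomposition of $\alg^\bla$ and verifying that induction from the factor algebras is compatible with the chosen partial order; once this is in place, the projective-injective and double centralizer assertions follow by categorical propagation and by formal homological arguments.
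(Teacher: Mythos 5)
Your overall architecture --- standardization from $\alg^{(\la_1)}\otimes\cdots\otimes\alg^{(\la_\ell)}$ ordered by partial sums of weights, biadjointness of $\fE_i$ and $\fF_i$ propagating injectivity out of the highest weight space, a diagrammatic trace for the Frobenius structure, and a Soergel-style double centralizer --- is the same as the paper's, and the forward directions go through essentially as you describe. The genuine gap is in your converse for the projective-injective classification. The claim that a simple outside the distinguished stratum ``admits a proper standard quotient of its projective cover, which forces that projective cover to be non-self-dual and hence non-injective'' fails at both steps: the big projective in a regular block of category $\cO$ for $\mathfrak{sl}_2$ (which is exactly $P^0$ in the case $\bla=(\omega,\omega)$, weight $0$) has the simple antidominant Verma as a proper standard quotient and is nevertheless self-dual and injective; and in general an indecomposable projective-injective need not be self-dual (the Nakayama permutation can move its head), so non-self-duality would not rule out injectivity in any case. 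What the paper actually proves is: (i) $P^0=\bigoplus_{\Bi}\fF_{\Bi}P_\emptyset$ is self-dual because $\End(P^0)\cong\alg^{(\la)}$ carries a Frobenius trace; (ii) an indecomposable injective is the injective hull of a standard submodule; and (iii) the socle of every standard $S^\kappa_{\Bi}$ already occurs in the head of $P^0$, which rests on the fact that right multiplication by $\dot\theta_\kappa$ gives an \emph{embedding} $P^\kappa_{\Bi}\hookrightarrow P^0_{\Bi}$ compatible with standard filtrations (Lemma \ref{self-dual-embedding}). You have no substitute for (iii), and it is precisely the point where a diagram-level input, not formal $2$-representation theory, is required.

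The same embedding is also what your double centralizer argument is missing: a nonzero map from each standard into a projective-injective is not enough --- one needs the injective hull of each indecomposable standard to itself be projective (dominant dimension at least two), which is what $P^\kappa_{\Bi}\hookrightarrow P^0_{\Bi}$ supplies before the Mazorchuk--Stroppel criterion can be invoked. Finally, identifying the endomorphism algebra of the projective-injective generator with $\alg^{(\la_1+\cdots+\la_\ell)}$ is not formal either: it comes from the diagrammatic isomorphism obtained by merging adjacent red strands (Proposition \ref{split-strands}), giving $\End(P^0)\cong\alg^{(\la)}\cong R^\la$. With these three concrete inputs supplied, your outline coincides with the paper's proof.
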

These algebras also have connections in the type A case to classical
representation theory, as has been explored by Brundan and Kleshchev
\cite{BKKL}. Using Theorem \ref{properties}, we will build on their work in Section \ref{sec:type-A}
by showing that the algebras $\alg^\bla$ are endomorphism algebras of certain
projectives in parabolic category $\cO$, while in type $\widehat A$,
they are related to the representations of the cyclotomic $q$-Schur
algebra.  This last relationship will be explored more fully in work of the
author and Stroppel \cite{SWschur,WebRou}.

The method of proof leaves little hope for finding connections with
category $\cO$ in other types.
We see no reason to think that $\cata^\bla$ has a similar description
in terms of classical representation theory when
$\fg\not\cong\mathfrak{sl}_n, \widehat{\mathfrak{sl}}_n$, though we would be quite pleased to be
proven wrong in this speculation.\medskip

\subsection{Topology}  We now turn to the construction of knot invariants.
As mentioned above, the original construction of Reshetikhin-Turaev
invariants is encoded in a 
ribbon structure on the category of $U_q(\fg)$-representations. We can
depict the structure maps of this category in terms of
diagrams.
\begin{itemize}
\item Crossing two ribbons: the corresponding operator in
  representations of the quantum group is called the {\bf braiding} or
  {\bf R-matrix}\footnote{As usual, the R-matrix is a map between
    tensor products of representations $V\otimes W\to V\otimes W$
    intertwining the usual and opposite coproducts; we use the term
    {\bf braiding} to refer to the composition of this with the usual
    flip map, which is thus a homomorphism of representations
    $V\otimes W\to W\otimes V$.}.  More generally for any braid $\si$
  on $\ell$ strands,
  this defines a homomorphism $\Phi(\si)\colon V^\bla\to V^{\si(\bla)}$.
\item Creating a cup, or closing a cap: the corresponding operators in
  representations of the quantum group are called the {\bf
    (co)evaluation} and {\bf quantum (co)trace}.
\item Adding a full twist to one of the ribbons: the corresponding
  operator in the quantum group is called the {\bf ribbon element}.
\end{itemize}
In this paper, we will categorify every of these maps to a functor,
and use these the define the invariants $\EuScript{K}(L,\{\la_i\})$.
  This approach was pioneered by Stroppel for
the defining rep of $\mathfrak{sl}_2$ \cite{Str06b,Str06a} and was
extended to $\mathfrak{sl}_n$ by Sussan \cite{Sussan2007} and
Mazorchuk-Stroppel \cite{MS09}.  To work in complete generality, we must use
the derived categories\footnote{We'll want to use slightly unusual
  finiteness conditions on $D(\cata^\bla)$, so we'll leave
  the precise definition of these categories to the body of the
  paper.  See Definition \ref{derived-cat}.}
$\cat^\bla=D(\cata^\bla)$ of finite dimensional $\alg^\bla$-representations, rather  than the variations of category $\cO$
used by those authors. 

\begin{ithm}\label{categor}
 The derived category $\cat^\bla$  carries functors categorifying all the structure maps of the ribbon category of $U_q(\fg)$-modules:
\begin{enumerate} 
\renc{\labelenumi}{(\roman{enumi})}
\item If $\si$ is a braid, then we have an exact functor
  $\mathbb{B}_{\si}\colon\cat^\bla\to\cat^{\si(\bla)}$
  such that the induced map on Grothendieck groups $K_0(\alg^\bla)\to
  K_0(\alg^{\si(\bla)})$ coincides with $\Phi(\si)$.  Furthermore, these functors
  induce a strong action of the braid groupoid on the categories
  associated to permutations of the set $\bla$.
\item If two consecutive elements of $\bla$ label dual representations
  and $\bla^-$ denotes the sequence with these removed, then there are functors $\mathbb{T},\mathbb{E}\colon \cat^{\bla}\to \cat^{\bla^-}$
  which induce the quantum trace and evaluation on the Grothendieck group, and
  similarly functors $\mathbb{K},\mathbb{C}\colon \cat^{\bla^-}\to
  \cat^{\bla}$ for the coevaluation maps and quantum cotrace maps.
%\item There is a ribbon operator categorified by a shift in the
%  homological and internal grading.   Ribbon operators are not unique,
%  and it is more natural for us to choose one (which we call the {\bf
%    Snyder-Tingley ribbon structure}) which is unusual.  Changing the ribbon structure only changes the sign of some
%  invariants, as described in  
%  Proposition \ref{schur-indicate}.
\item When $\fg=\mathfrak{sl}_n$,  the structure functors above can be described in
  terms of twisting and Enright-Shelton functors on $\cO$.
\end{enumerate}
\end{ithm}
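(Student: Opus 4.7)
The plan is to build each structure functor from the 2-representation of $\tU$ on $\cat^\bla$ supplied by Theorem \ref{main}, and then pin down its Grothendieck class and coherence properties.

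For (i), I would construct the braiding $\mathbb{B}_{s_k}$ associated to the swap of adjacent labels $\la_k,\la_{k+1}$ as a Rickard/Chuang--Rouquier complex built from divided powers of the generators $\Ei, \Fi$ acting on $\cat^\bla$. Concretely, the R-matrix acting on a tensor product of weight spaces in $V_{\la_k}\otimes V_{\la_{k+1}}$ is a signed sum of compositions of Chevalley generators, and the corresponding complex of functors is the standard one categorifying this formula; independence of the choice of reduced expression for the long element will be built in by construction. For a general braid $\si$ one composes these simple-crossing functors, and the braid relations must then be verified. The natural isomorphisms coming from the 2-category structure of $\tU$ should give the ``strong'' enhancement of Rouquier, so that the resulting braid groupoid action is coherent and not merely well-defined on objects up to isomorphism.

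For (ii), when two consecutive labels are dual I would produce $\mathbb{K}\colon \cat^{\bla^-}\to\cat^\bla$ from a bimodule, obtained by literally ``inserting a cap'' in the diagrammatic presentation of $\alg^\bla$: the identity element in a suitable idempotent induces an algebra homomorphism (up to derived corrections) allowing the use of induction/restriction functors between $\alg^{\bla^-}$ and $\alg^\bla$. Its biadjoint, which exists because induction and restriction between cyclotomic-type quotients are biadjoint by Theorem \ref{main}, is the quantum trace $\mathbb{T}$. The evaluation $\mathbb{E}$ and cotrace $\mathbb{C}$ are defined symmetrically. The Grothendieck group identities are then checked on a spanning set: the categorical action identifies $K_0(\cat^\bla)$ with $V_\bla$, and one verifies that these functors intertwine the action of the generators of $\Uq$ with the usual maps by computing on standard modules (which, as highlighted in the introduction, play the role of Vermas) and using the compatibility between induction/restriction and the coproduct.

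The main obstacle is coherence: beyond Grothendieck-level identities one must produce explicit natural isomorphisms realizing the braid relations, the pitchfork/Reidemeister-II,III-type relations between $\mathbb{B}_\si$ and $\mathbb{K},\mathbb{T},\mathbb{C},\mathbb{E}$, and the ribbon relations needed to obtain a well-defined link invariant. These I would obtain by reducing to identities among the Rickard complexes built out of $\Ei,\Fi$, exploiting the diagrammatic presentation of $\alg^\bla$ to move caps and cups past strands. For (iii), the identification of $\alg^\bla$ with endomorphisms of projectives in parabolic category $\cO$ for $\fg=\mathfrak{sl}_n$, established in Section \ref{sec:type-A} via Theorem \ref{properties}, transports the braiding functors $\mathbb{B}_\si$ onto derived Arkhipov-type twisting functors and the (co)evaluation functors onto Enright--Shelton functors; after this translation, the description is essentially that of Sussan and Mazorchuk--Stroppel \cite{Sussan2007,MS09}, so one only needs to match the two sets of functors on enough objects to conclude they agree as functors between the equivalent derived categories.
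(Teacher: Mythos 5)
Your plan for part (i) has a genuine gap. The braiding $V_{\la_k}\otimes V_{\la_{k+1}}\to V_{\la_{k+1}}\otimes V_{\la_k}$ is the flip composed with the quasi-R-matrix, whose two tensor legs act \emph{separately} on the two factors; it is not a signed sum of compositions of the coproduct Chevalley generators. Since the functors $\fE_i,\fF_i$ on $\cat^\bla$ only categorify $\Delta(E_i),\Delta(F_i)$, there is no ``standard Rickard complex'' in these functors whose class is $\Phi(s_k)$ --- the Chuang--Rouquier complexes categorify the quantum Weyl group action $\mu\mapsto s_i\mu$ \emph{within} a fixed $\cat^\bla$, not the tensor-factor swap $\cat^\bla\to\cat^{s_k\bla}$. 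The paper avoids this entirely: $\mathbb{B}_w$ is the derived tensor product with an explicit bimodule $\bra_w$ of Stendhal diagrams in which the red strands trace out $w$. The braid relations then come for free from $\bra_{ww'}\cong\bra_w\otimes_T\bra_{w'}$ when lengths add (which also gives the strong action via the Zamolodchikov criterion), the identification with $\Phi(\si)$ on $K_0$ is computed by letting $\bra_{s_k}$ act on standardizations with a highest-weight vector in one slot, and the fact that $\mathbb{B}_\si$ preserves $\cat^\bla$ and is an equivalence rests on showing the half twist sends projectives to self-dual tiltings plus a dimension count --- none of which is reachable from your setup.

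Part (ii) also has two concrete problems. First, $\mathbb{K}$ and $\mathbb{T}$ are \emph{not} biadjoint in general: they differ by the Serre functor $\mathbb{B}_\tau^2$ up to shift, and this is only a shift when the weights are minuscule; so you cannot define $\mathbb{T}$ as ``the biadjoint'' of $\mathbb{K}$. The paper instead identifies the simple module $\Lco$ categorifying the invariant vector in $V_\la\otimes V_{\la^*}$ (the cosocle of a specific standard module, cut out by explicit cap relations), sets $\mathbb{K}=\RHom(\dot\Lco,-)(2\llrr)[-2\rcl]$ and $\mathbb{E}=-\Lotimes\dot L_{\la^*}$, and obtains the quantum (co)trace by composing with braiding and a ribbon functor. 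Second, the normalization is genuinely delicate: the computation $\mathbb{B}\Lco\cong L_{\la^*}[-2\rcl](-\lllr-2\llrr)$ (which itself needs the explicit standard resolution of $\Lco$ and the Tor-vanishing $\operatorname{Tor}^i(\ocL,\dot\Lco)=0$ for $i\neq 2\rcl$) forces the Snyder--Tingley ribbon element rather than the standard one, so the quantum trace your proposal would produce does not match the standard ribbon category maps on $K_0$ without this sign/shift analysis. Your sketch of (iii) is essentially the paper's argument.
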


By definition (see \cite[\S 4]{CP}), the quantum knot invariants are given by a composition of the
decategorifications of the functors constructed in Theorem
\ref{categor}. Combining the functors
themselves in the same pattern gives the knot homology of Theorem
\ref{knot-hom}.
\medskip

\subsection{Summary} 
Let us now summarize the structure of the paper.
\begin{itemize}
\item In Sections \ref{sec:categorification} and \ref{sec:nondegeneracy}, we discuss the basics of
  the 2-category $\tU$, and prove it acts on $\cata^\la$.
  This is accomplished by the construction of categorifications
  $\tU^-_i$ for the minimal non-solvable parabolics $U(\mathfrak{p}_i)$.
  These categories carry a mixture of the characteristics of
  $U(\mathfrak{b})$ and $U(\mathfrak{sl}_2)$; an
  appropriate non-degeneracy result is already known for both of
  these algebras separately. By modifying the
  proofs of these previous results, we can show that $\tU_i^-$ acts on $\cata^\la$.  It
  is an easy consequence of this that the full $\tU$ acts.  These results are of independent
  interest (and, in fact, some of them have been proven independently
  by Kang and Kashiwara \cite{KK}).  
\item In Section \ref{sec:KL}, we define the algebras $\alg^\bla$, using the familiar tool of graphical
  calculus.  This graphical calculus gives an easy description of the
  action of the category $\tU$.  We also study the relationship of this category to $\alg^{(\la_1+\cdots +\la_\ell)}$.
\item In Section \ref{sec:standard},  we develop a
  special class of modules which we term {\bf standard modules}, which
  define a standardly stratified structure.  In the case where all
  $\la_i$ are minuscule, this structure is even quasi-hereditary.
  These modules also serve as categorifications of pure tensors.
\item In Section \ref{sec:braid-rigid}, we prove Theorem
  \ref{categor}(i).  That is, we construct the functor lifting the
  braiding of the monoidal category of
  $U_q(\fg)$-representations.  This functor is the derived tensor product
  with a natural bimodule.  Particularly interesting and important
  special cases correspond to the half-twist braid,
  which sends projective modules to tiltings, and the full twist braid,
  which gives the right Serre functor of $\cat^\bla$. 
\item In Section \ref{sec:rigid}, we prove Theorem
  \ref{categor}(ii). The most important element of this is to identify
  a special simple module in the category for a pair of dual
  fundamental weights, which categorifies an invariant vector.
\item In Section \ref{sec:invariants}, we prove Theorem \ref{knot-hom}
  using the functors constructed in Theorem \ref{categor} and a small
  number of explicit computations.  We also suggest a map for the
  functoriality along a cobordism between links in the minuscule
  case.  As mentioned before, it is unknown whether this is independent of
  choices.
\item In Section \ref{sec:type-A}, we consider the case
  $\fg=\mathfrak{sl}_n$ or $\widehat{\mathfrak{sl}}_n$.  In this case,
  we employ results of Brundan and Kleshchev to show that $\alg^\bla$
  is in fact the endomorphism algebra of a projective in a parabolic
  category $\cO$ in finite type.   In affine type, there is a similar
  description using  the cyclotomic $q$-Schur algebra. In Section
  \ref{sec:comparison-functors}, we relate the functors appearing in
  Theorem \ref{categor} to previously defined functors on category
  $\cO$.  This allows us to show the portions of Theorem
  \ref{knot-hom} regarding comparisons to Khovanov homology and
  Mazorchuk-Stroppel-Sussan homology.
\end{itemize}

We should note that an earlier version of this paper contained a
section on the connection between the algebraic material in this paper
to the geometry of quiver varieties and canonical bases.  In the interest of length and
heaviness of machinery, that material has been moved to other papers
\cite{Webcatq,WebwKLR,WebCB}.

\subsection*{Notation}

We let $\fg$ be a symmetrizable Kac-Moody algebra, which we will assume is
fixed for the remainder of the paper.  Let $\Gamma$ denote the Dynkin
diagram of this algebra, considered as an unoriented graph.
We'll also consider the weight
lattice $\wela(\fg)$, root lattice $\rola(\fg)$, 
simple roots $\al_i$ and coroots $\al_i^\vee$.  Let
$c_{ij}=\al_i^{\vee}(\al_j)$ be the entries of the Cartan matrix.

We let $\langle
-,-\rangle$ denote the symmetrized inner product on $\wela(\fg)$,
fixed by the fact that the shortest root has length $\sqrt{2}$
and $$2\frac{\langle \al_i,\la\rangle}{\langle
  \al_i,\al_i\rangle}=\al_i^\vee(\la).$$ As usual, we let $2d_i
=\langle\al_i,\al_i\rangle$, and for $\la\in\wela(\fg)$, we
let $$\la^i=\al_i^\vee(\la)=\langle\al_i,\la\rangle/d_i.$$
We note that we have $d_ic_{ij}=d_jc_{ji}=\langle\al_i,\al_j\rangle$ for all $i,j$.

We let $\rho$ be the unique weight such that $\al^\vee_i(\rho)=1$ for
all $i$ and
$\rho^\vee$ the unique coweight such that $\rho^\vee(\al_i)=1$ for all
$i$.  We
note that since $\rho\in \nicefrac 12\rola$ and $\rho^\vee\in
\nicefrac 12\wela^*$, for any weight $\la$, the numbers $\llrr$ and
$\rcl$ are not necessarily integers, but $2\llrr$ and $2\rcl$
are (not necessarily even) integers.

Throughout the paper, we will use $\bla=(\la_1,\dots, \la_\ell)$ to
denote an ordered $\ell$-tuple of dominant weights, and always use the
notation $\la=\sum_{i}\la_i$.  

We let $U_q(\fg)$ denote the deformed universal enveloping algebra of
$\fg$; that is, the 
associative $\C(q)$-algebra given by generators $E_i$, $F_i$, $K_{\mu}$ for $i\in \Gamma$ and $\mu \in \wela(\fg)$, subject to the relations:
\begin{center}
\begin{enumerate}[i)]
 \item $K_0=1$, $K_{\mu}K_{\mu'}=K_{\mu+\mu'}$ for all $\mu,\mu' \in \wela(\fg)$,
 \item $K_{\mu}E_i = q^{\al_i^{\vee}(\mu)}E_iK_{\mu}$ for all $\mu \in
 \wela(\fg)$,
 \item $K_{\mu}F_i = q^{- \al_i^{\vee}(\mu)}F_iK_{\mu}$ for all $\mu \in
 \wela(\fg)$,
 \item $E_iF_j - F_jE_i = \delta_{ij}
 \frac{\tilde{K}_i-\tilde{K}_{-i}}{q^{d_i}-q^{-d_i}}$, where
 $\tilde{K}_{\pm i}=K_{\pm d_i \al_i}$,
 \item For all $i\neq j$ $$\sum_{a+b=-c_{ij}+1}(-1)^{a} E_i^{(a)}E_jE_i^{(b)} = 0
 \qquad {\rm and} \qquad
 \sum_{a+b=-c_{ij} +1}(-1)^{a} F_i^{(a)}F_jF_i^{(b)} = 0 .$$
\end{enumerate} \end{center}

This is a Hopf algebra with coproduct on Chevalley generators given
by $$\Delta(E_i)=E_i\otimes 1
+\tilde K_i\otimes E_i\hspace{1cm}\Delta(F_i)=F_i\otimes \tilde K_{-i}
+ 1 \otimes F_i$$

We let $U_q^\Z(\fg)$ denote the Lusztig (divided powers) integral form
generated over $\Z[q,q^{-1}]$ by
$\frac{E_i^n}{[n]_q!},\frac{F_i^n}{[n]_q!}$ for $n\geq 0$.  We let
$\dot{U}^\Z$ be the algebra obtained by adjoining  idempotents $1_\mu$ projecting to
integral weight spaces to $U_q^\Z(\fg)$.  The integral form of the representation of
highest weight $\la$ over this quantum group will be denoted by
$V_\la^{\Z}$. We will always think of this integral form as generated
by a fixed highest weight vector $v_{\la}$.  For a sequence $\bla$, we will be interested in the
tensor product \[V_\bla^\Z=V_{\la_1}^\Z\otimes_{\Z[q,q^{-1}]}\cdots
\otimes_{\Z[q,q^{-1}]}V_{\la_\ell}^\Z.\] 

The category of representations over $U_q(\fg)$ is a braided monoidal
category; of particular importance for us is the $R$-matrix.  We use
the opposite $R_{21}$ of the $R$-matrix defined by Tingley in
\cite{TingRqR};  since Tingley uses the opposite coproduct, this will
be an $R$-matrix for us.  Thus, for two representations $M,M'$, we
have an isomorphism $\sigma_{M,M'}\colon M\otimes M'\to M'\otimes M$ sending $m\otimes
m'\mapsto s(R_{12}m\otimes m')$.  In the notation of \cite{TingRqR}, this map is defined by
\begin{equation}\label{Rmatrix}
\mathcal{R}_{M,M'}(m\otimes m')\in q^{\langle
  \wt(m),\wt(m')\rangle} m'\otimes m+\sum q^{\langle
  \wt(m)-\beta,\wt(m')+\beta\rangle} X_\beta m'\otimes Y_\beta m
\end{equation}
where $X_\beta$ has weight $\beta>0$ and $Y_\beta$ weight
$-\beta>0$.  In particular, if $m$ is lowest weight or $m'$ highest
weight, then this equation simplifies to $\mathcal{R}_{M,M'}(m\otimes m')\in q^{\langle
  \wt(m),\wt(m')\rangle} m'\otimes m$.

For not especially important
technical reasons, it will also be helpful to consider
$V_\bla^{\nicefrac{1}{D}}=V_\bla^\Z[\qD]$ and $V_\bla^\C\cong
V_\bla^\Z[\{q^z\}_{z\in \C}]$, this module with either the $D$th roots
of $q$ (for a fixed $D$)
or all complex powers of
$q$ adjoined.
We will also consider the
completion of these modules in the $q$-adic topology
$V_\bla=V_\bla^\Z\otimes_{\Z[q,q^{-1}]}\Z((q))$.

For a graded ring $R$,  we let $R\modu$ denote the category of
finitely generated right graded $R$-modules.
For a graded $R$-module $M$, we let $M_n$ denote the
vectors of degree $n$.  Let $K^0(R)$ denote the
Grothendieck group of finitely generated graded projective
right $R$-modules. This group carries an action of $\Z[q,q^{-1}]$ by grading
shift $[A(i)]=q^i[A]$, where $A(i)_n=A_{i+n}$.  The careful reader should note that this
is opposite to the grading convention of Khovanov and Lauda.

\subsection*{Acknowledgments}
I would like to thank Ben Elias for a truly exceptional level of help and
input on this project, which has had a very important influence on the
paper;  Catharina Stroppel both for suggestions on this
project, and for teaching me a great deal of quasi-hereditary and
standardly stratified representation theory; Jon Brundan, 
Alex Ellis, Jun Hu, Joel Kamnitzer, Masaki Kashiwara, Mikhail
Khovanov, Marco Mackaay, Josh Sussan and Oded Yacobi for valuable
comments on earlier versions of this paper; Hao Zheng, Rapha\"el
Rouquier, Nick Proudfoot, Tony Licata, Aaron Lauda and Tom Braden for
very useful discussions.

I'd also like to thank the organizers of the conference ``Categorification and Geometrization from Representation Theory'' in Glasgow which made a huge difference in the development of these ideas.

\section{Categorification of quantum groups}\label{sec:categorification}

\subsection{Khovanov-Lauda diagrams}
\label{sec:khov-lauda-diagr}

\renc{\thethm}{\arabic{section}.\arabic{thm}}

In this paper, our notation builds on that of Khovanov and Lauda, who
give a graphical version of the 2-quantum group, which we denote $\tU$
(leaving $\fg$ understood).  These constructions could also be
rephrased in terms of Rouquier's description and we have striven to
make the paper readable following either \cite{KLIII} or
\cite{Rou2KM}; however, it is most sensible for us to use the
2-category defined by Cautis and Lauda \cite{CaLa}, which is a
variation on both of these.  See the introduction of \cite{CaLa} for
more detail on the connections between these different approaches.   

The object of interest for this subsection is a strict 2-category; as
described, for example, in \cite{Laucq}, one natural yoga for
discussing strict 2-categories is planar diagrammatics.  The
2-category $\tU$ is thus most clearly described in this language.  

\begin{defn}
  A {\bf blank KL diagram} is a collection of finitely many oriented
  curves in $\R\times
  [0,1]$ which has no triple points or tangencies,
  decorated with finitely many dots.  Every strand is labeled with an
  element of $\Gamma$, and any open end must meet one of the lines
  $y=0$ or $y=1$ at a distinct point from all other ends.

  A {\bf KL diagram} is a blank KL diagram together with a labeling of
  regions between strands (the components of its complement) with weights following the rule \[  \tikz[baseline,very thick]{
\draw[postaction={decorate,decoration={markings,
    mark=at position .5 with {\arrow[scale=1.3]{<}}}}] (0,-.5) -- node[below,at start]{$i$}  (0,.5);
\node at (-1,0) {$\mu$};
\node at (1,.05) {$\mu-\al_i$};.
}\]
\end{defn}
We identify two KL diagrams if they are isotopic via an isotopy which
does not cancel any critical points of the height function or move
critical points through crossings or dots.  Ultimately, we will need
to introduce scalar corrections for isotopies that do have these
features, as shown in relations (\ref{pitch1}--\ref{pitch2});
particular, the biadjunctions given by cups and caps will typically
not be cyclic.
In the interest of simplifying diagrams, we'll often write a dot with
a number beside it to indicate a group of that number of dots.

We call the lines $y=0,1$ the {\bf  bottom} and {\bf top} of the
diagram.  Reading across the bottom and top from left to right, we
obtain a sequence of elements of $\Gamma$, which we wish to record in
order from left to right.  Since orientations are quite important, we
let $\pm \Gamma$ denote $\Gamma\times \{\pm 1\}$, and associate $i$ to
a strand labeled with $i$ which is oriented upward and $-i$ to one oriented downward.  For example, we have a blank KL diagram  \[
  m=\begin{tikzpicture}[baseline,very thick]
\draw[thin,dashed] (1.33,1) -- (-1.33,1); 
\draw[thin,dashed] (1.33,-1) -- (-1.33,-1); 
\draw [postaction={decorate,decoration={markings,
    mark=at position .7 with {\arrow[scale=1.3]{>}}}}] (-.5,-1) to[out=90,in=-90] node[below,at start]{$i$} (-1,0)
  to[out=90,in=90] node[below,at end]{$i$} (1,-1);
  \draw [postaction={decorate,decoration={markings,
    mark=at position .4 with {\arrow[scale=1.3]{<}}}}] (.5,-1) to[out=90,in=-120] node[below,at start]{$j$} (1,.6)
  to[out=60,in=90] (1.3,.6) to [out=-90,in=-60] (1,.4)to[out=120,in=-90]  node[above,at end]{$j$} (1,1);
  \draw[postaction={decorate,decoration={markings,
    mark=at position .5 with {\arrow[scale=1.3]{<}}}}]  (.5,1) to[out=-90,in=-90] node[above,at start]{$i$} node[above,at end]{$i$}(0,1);
  \draw[postaction={decorate,decoration={markings,
    mark=at position .4 with {\arrow[scale=1.3]{<}}}}] (-1, -1) to[out=90,in=-90]node[below,at start]{$k$} (-.5,0) to[out=90,in=-90] (-1,1);
  \draw[postaction={decorate,decoration={markings,
    mark=at position .5 with {\arrow[scale=1.3]{>}}}}] (0,-1)
to[out=90,in=-90] node[below,at start]{$k$} node[pos=.3,circle,fill,inner sep=2pt]{}
  (-.5,1);
\draw[postaction={decorate,decoration={markings,
    mark=at position .5 with {\arrow[scale=1.3]{<}}}}] (1.5,0) circle (6pt); \node at (1.9,0){$i$};
\end{tikzpicture}
\]
with top given by $(-k,k,-i,i,-j)$ and bottom given by
$(-k,i,k,-j,-i)$.  

We also wish to record the labeling on regions; since fixing the label
on one
region determines all the others, we'll typically only record $\EuScript{L}$, the
weight of the region at far left and $\EuScript{R}$, the weight at far
right.  
In addition, we will
typically not draw the weights on all regions in the interest of
simplifying pictures. We call the pair of a sequence $\Bi\in (\pm \Gamma)^n$ and the
weight $\EuScript{L}$ a {\bf KL pair}; let
$\EuScript{R}:=\EuScript{L}+\sum_{j=1}^n\al_{i_j}$ where we let
$\al_{-i}=-\al_i$.  
\begin{defn}
  Given KL diagrams $a$ and $b$, their {\bf (vertical) composition} $ab$ is
  given by stacking $a$ on top of $b$ and attempting to join the
  bottom of $a$ and top of $b$. If the sequences
  from the bottom of $a$ and top of $b$ don't match or
  $\EuScript{L}_a\neq \EuScript{L}_b$, then the
  composition is not defined and by convention is 0, which is not a
  KL diagram, just a formal symbol.

  The {\bf horizontal composition}  $a\circ b$ of KL diagrams is the
  diagram which 
pastes together the strips where $a$ and $b$ live with $a$ to the {\it
  right} of $b$.  The only compatibility we
require is that $  \EuScript{L}_a=\EuScript{R}
_b$, so that the regions of the new diagram can be labeled
consistently. If $  \EuScript{L}_a\neq\EuScript{R}
_b$, the horizontal composition is 0 as well.
\end{defn}

Implicit in this definition is a rule for {\bf horizontal composition}
of KL pairs in $\pm \Gamma$, which is the reverse of concatenation
$(i_1,\dots, i_m)\circ (j_1,\dots, j_n)=(j_1,\dots,
j_n,i_1,\dots,i_m)$, and gives 0 unless $  \EuScript{L}_{\Bi}=\EuScript{R}
_{\Bj}$.

We should warn the reader, this convention
requires us to read our diagrams differently from the conventions of
\cite{Laucq,KLIII,CaLa}; in our diagrammatic calculus, 1-morphisms point
from the left to the right, not from the right to the left as
indicated in \cite[\S 4]{Laucq}.   The
practical implication will be that our relations are the reflection
through a vertical line of Cautis and Lauda's.  
\begin{defn}
  Let $\doubletilde{\tU}$ be the strict 2-category where
 \begin{itemize}
  \item objects are weights in $\wela(\fg)$,
\item 1-morphisms $\mu\to \nu$ are KL pairs with
  $\EuScript{L}=\mu,\EuScript{R}=\nu$, and composition is given by horizontal
  composition as above.
\item 2-morphisms $h\to h'$ between KL pairs are $\K$-linear combinations
  of KL diagrams  with $h$ as bottom
  and $h'$ as top, and vertical and horizontal composition of
  2-morphisms is defined above.  
\end{itemize}
\end{defn}
We'll typically use $\eE_i$ to denote the 1-morphism $(i)$ (leaving
the weight $\EuScript{L}$ implicit) and $\eF_i$ to denote $(-i)$.
More generally, we'll let $\eE_\Bi=\eF_{-\Bi}$ denote the 1-morphism
for a sequence $\Bi$.

Morse theory shows that the 2-morphism spaces of $\doubletilde{\tU}$ are generated under horizontal and vertical
composition by identity morphisms and the following diagrams:
\renewcommand{\labelitemii}{$*$} 
\begin{itemize}
\item a cup $\iota':\emptyset \to  \eE_i\eF_i$ or  $\iota:
  \emptyset \to\eF_i\eE_i$
\[
\iota'=\tikz[baseline,very thick,scale=3]{\draw[<-]   (.25,.3) to
  [out=-100, in=60] node[at start,above,scale=.8]{$i$} (.2,.1)
  to[out=-120,in=-60] (-.2,.1) to [out=120,in=-80] 
  node[at end,above,scale=.8]{$i$} (-.25,.3);\node[scale=.8] at
  (0,.18){$\la$}; \node[scale=.8] at (0,-.1){$\la+\al_i$};
  \draw[thin,dashed] (.33,.3) -- (-.33,.3);\draw[thin,dashed] (.33,-.2) -- (-.33,-.2);}\qquad \qquad
\iota=\tikz[baseline,very thick,scale=3]{\draw[->] (.25,.3) to
  [out=-100, in=60] node[at start,above,scale=.8]{$i$} (.2,.1)
  to[out=-120,in=-60] (-.2,.1) to [out=120,in=-80] 
  node[at end,above,scale=.8]{$i$} (-.25,.3);\node[scale=.8] at
  (0,.18){$\la$}; \node[scale=.8] at (0,-.1){$\la-\al_i$};
  \draw[thin,dashed] (.33,.3) -- (-.33,.3); \draw[thin,dashed] (.33,-.2) -- (-.33,-.2);}
\]
\item a cap $\ep:\eE_i\eF_i\to \emptyset$ or  $\ep':
  \eF_i\eE_i \to \emptyset$
\[
\ep=\tikz[baseline,very thick,scale=3]{\draw[->] (.25,-.1) to
  [out=100, in=-60]   node[at start,below,scale=.8]{$i$} (.2,.1)
  to[out=120,in=60]
  (-.2,.1)  to [out=-120,in=80] node[at end,below,scale=.8]{$i$} (-.25,-.1);\node[scale=.8] at
  (0,.3){$\la$}; \node[scale=.8] at (0,.02){$\la-\al_i$}; \draw[thin,dashed] (.33,-.1) -- (-.33,-.1); \draw[thin,dashed] (.33,.4) -- (-.33,.4); }\qquad \qquad
\ep'=\tikz[baseline,very thick,scale=3]{\draw[<-] (.25,-.1) to
  [out=100, in=-60]   node[at start,below,scale=.8]{$i$} (.2,.1)
  to[out=120,in=60]
  (-.2,.1)  to [out=-120,in=80] node[at end,below,scale=.8]{$i$} (-.25,-.1);\node[scale=.8] at
  (0,.3){$\la$};\node[scale=.8] at (0,.02){$\la+\al_i$}; \draw[thin,dashed] (.33,-.1) -- (-.33,-.1); \draw[thin,dashed] (.33,.4) -- (-.33,.4);}
\]
\item a crossing $\psi:\eF_j\eF_i\to \eF_i\eF_j$
\[\psi=
\tikz[baseline=-2pt,very thick,scale=4]{\draw[->] (.2,.2)  to [out=-90,in=90]
  node[at end,below,scale=.8]{$i$} node[at start,above,scale=.8]{$i$} (-.2,-.2) ; \draw[<-]
  (.2,-.2) to[out=90,in=-90] node[at start,below,scale=.8]{$j$} node[at
  end,above,scale=.8]{$j$} (-.2,.2); \node[scale=.8] at (-.24,0)
  {$\la$}; \node[scale=.8] at (0,-.13) {$\la-\al_i$}; \node[scale=.8] at (0,.13) {$\la-\al_j$}; \node[scale=.8] at (.37,0) {$\la-\al_i-\al_j$};\draw[thin,dashed] (.3,-.2) -- (-.3,-.2); \draw[thin,dashed] (.3,.2) -- (-.3,.2);}
 \]
\item a dot $y:\eF_i\to \eF_i$
\[
 y=\tikz[baseline=-2pt,very thick,scale=2.5]{\draw[->]
  (0,.2) -- (0,-.2) node[at end,below,scale=.8]{$i$} node[at start,above,scale=.8]{$i$}
  node[midway,circle,fill=black,inner
  sep=2pt]{}; \node[scale=.8] at (-.2,0) {$\la$}; \node[scale=.8] at
  (.3,0) {$\la-\al_i$}; \draw[thin,dashed] (.2,-.2) -- (-.2,-.2); \draw[thin,dashed] (.2,.2) -- (-.2,.2);}\]
\end{itemize}
In the diagrams above, we have included
dashed lines to indicate the source and target of the 2-morphisms; we
will not use this convention in the future in the interest of
simplifying diagrams.

We can define a {\bf degree} function on KL diagrams.  The degrees are
given on elementary diagrams by \[
  \deg\tikz[baseline,very thick,scale=1.5]{\draw[->] (.2,.3) --
    (-.2,-.1) node[at end,below, scale=.8]{$i$}; \draw[<-] (.2,-.1) --
    (-.2,.3) node[at start,below,scale=.8]{$j$};}
  =-\langle\al_i,\al_j\rangle \qquad \deg\tikz[baseline,very
  thick,->,scale=1.5]{\draw (0,.3) -- (0,-.1) node[at
    end,below,scale=.8]{$i$} node[midway,circle,fill=black,inner
    sep=2pt]{};}=\langle\al_i,\al_i\rangle \qquad
  \deg\tikz[baseline,very thick,scale=1.5]{\draw[<-] (.2,.3) --
    (-.2,-.1) node[at end,below,scale=.8]{$i$}; \draw[->] (.2,-.1) --
    (-.2,.3) node[at start,below,scale=.8]{$j$};}
  =-\langle\al_i,\al_j\rangle \qquad \deg\tikz[baseline,very
  thick,<-,scale=1.5]{\draw (0,.3) -- (0,-.1) node[at
    end,below,scale=.8]{$i$} node[midway,circle,fill=black,inner
    sep=2pt]{};}=\langle\al_i,\al_i\rangle\]
  \[
  \deg\tikz[baseline,very thick,scale=1.5]{\draw[->] (.2,.1)
    to[out=-120,in=-60] node[at end,above left,scale=.8]{$i$} (-.2,.1)
    ;\node[scale=.8] at (0,.3){$\la$};} =\langle\la,\al_i\rangle-d_i
  \qquad \deg\tikz[baseline,very thick,scale=1.5]{\draw[<-] (.2,.1)
    to[out=-120,in=-60] node[at end,above left,scale=.8]{$i$}
    (-.2,.1);\node[scale=.8] at (0,.3){$\la$};} =-\langle
  \la,\al_i\rangle-d_i
  \]
  \[
  \deg\tikz[baseline,very thick,scale=1.5]{\draw[<-] (.2,.1)
    to[out=120,in=60] node[at end,below left,scale=.8]{$i$} (-.2,.1)
    ;\node[scale=.8] at (0,-.1){$\la$};} =\langle
  \la,\al_i\rangle-d_i \qquad \deg\tikz[baseline,very
  thick,scale=1.5]{\draw[->] (.2,.1) to[out=120,in=60] node[at
    end,below left,scale=.8]{$i$} (-.2,.1);\node[scale=.8] at
    (0,-.1){$\la$};} =-\langle\la,\al_i\rangle-d_i.
  \]
For a general diagram, we sum together the degrees of the elementary
diagrams it is constructed from.  This defines a grading on the
2-morphism spaces of $\doubletilde{\tU}$.

\subsection{The 2-category \texorpdfstring{$\mathcal{U}$}{U}}

Once and for all, fix a matrix of
polynomials $Q_{ij}(u,v)=\sum_{k,m}Q_{ij}^{(k,m)}u^kv^m$  valued in
$\K$ and indexed by $i\neq j\in
\Gamma$; by convention
$Q_{ii}=0$. We assume $Q_{ij}(u,v)$ is homogeneous
of degree $-\langle\al_i,\al_j\rangle= -2d_ic_{ij}=-2d_jc_{ji}$ when
$u$ is given degree $2d_i$ and $v$ degree $2d_j$.   We will always
assume that the leading order of $Q_{ij}$ in $u$ is $-c_{ij}$, and
that $Q_{ij}(u,v)=Q_{ji}(v,u)$.  We let $t_{ij}=Q_{ij}^{(-c_{ij},0)}=Q_{ij}(1,0)$; by
convention $t_{ii}=1$. In \cite{CaLa}, the coefficients of this
polynomial are denoted
\[Q_{ij}(u,v)=t_{ij} u^{-c_{ij}}+t_{ji} v^{-c_{ji}}+\sum_{pd_i+ qd_j=d_ic_{ij}} s^{pq}_{ij}u^pv^q.\]
Khovanov and Lauda's original category uses the
choice $Q_{ij}=u^{-c_{ij}}+v^{-c_{ji}}$.  To simplify, we'll always
set the parameter $r_i$ from \cite{CaLa} to be $r_i=1$.

\begin{defn}
Let $\tU$ be the quotient of $\doubletilde{\tU}$ by the following
relations on 2-morphisms:
\begin{itemize}
\item the cups and caps are the units and counits of a biadjunction.
  The morphism $y$ is cyclic.  The cyclicity for crossings can be
  derived from the pitchfork relation:
\newseq
  \begin{equation*}\subeqn\label{pitch1}
 t_{ji}  \tikz[baseline,very thick]{\draw[dir] (-.5,.5) to [out=-90,in=-90] node[above, at start]{$i$} node[above, at end]{$i$} (.5,.5); \draw[edir]
     (.5,-.5) to[out=90,in=-90] node[below, at start]{$j$} node[above, at end]{$j$} (0,.5);}= \tikz[baseline,very thick]{\draw[dir] (-.5,.5) to [out=-90,in=-90] node[above, at start]{$i$} node[above, at end]{$i$} (.5,.5); \draw[edir]
  (-.5,-.5) to[out=90,in=-90] node[below, at start]{$j$} node[above, at
   end]{$j$}(0,.5) ;}    \qquad  t_{ji}  \tikz[baseline,very thick]{\draw[dir] (-.5,-.5) to [out=90,in=90] node[below, at start]{$i$} node[below, at end]{$i$} (.5,-.5); \draw[dir]
     (0,-.5) to[out=90,in=-90] node[below, at start]{$j$} node[above, at end]{$j$} (-.5,.5);}= \tikz[baseline,very thick]{\draw[dir] (-.5,-.5) to [out=90,in=90] node[below, at start]{$i$} node[below, at end]{$i$} (.5,-.5); \draw[dir]
    (0,-.5)  to[out=90,in=-90] node[below, at start]{$j$} node[above, at end]{$j$} (.5,.5);}
  \end{equation*}
\begin{equation*}\subeqn\label{pitch2}
 \tikz[baseline,very thick]{\draw[dir] (-.5,.5) to [out=-90,in=-90] node[above, at start]{$i$} node[above, at end]{$i$} (.5,.5); \draw[dir]
      (0,.5) to[out=-90,in=90] node[above, at start]{$j$} node[below, at end]{$j$}(.5,-.5);}= \tikz[baseline,very thick]{\draw[dir] (-.5,.5) to [out=-90,in=-90] node[above, at start]{$i$} node[above, at end]{$i$} (.5,.5); \draw[dir]
      (0,.5) to[out=-90,in=90] node[above, at start]{$j$} node[below, at end]{$j$}(-.5,-.5);}\qquad  \tikz[baseline,very thick]{\draw[dir] (-.5,-.5) to [out=90,in=90] node[below, at start]{$i$} node[below, at end]{$i$} (.5,-.5); \draw[edir]
      (-.5,.5) to[out=-90,in=90] node[above, at start]{$j$} node[below, at end]{$j$}(0,-.5);}=\tikz[baseline,very thick]{\draw[dir] (-.5,-.5) to [out=90,in=90] node[below, at start]{$i$} node[below, at end]{$i$} (.5,-.5); \draw[edir]
      (.5,.5) to[out=-90,in=90] node[above, at start]{$j$} node[below, at end]{$j$}(0,-.5);}.
  \end{equation*}
The mirror images of these relations through a vertical axis also hold.
\item Recall that a {\bf bubble} is a morphism given by a closed
  circle, endowed with some number of dots.  Any bubble of negative degree is zero,
  any bubble of degree 0 is equal to 1.  Labeling all strands with
  $i$, we have that:\newseq
  \begin{equation*}\subeqn\label{zero-bubble}
    \begin{tikzpicture}[baseline]
\draw[postaction={decorate,decoration={markings,
    mark=at position .5 with {\arrow[scale=1.3]{<}}}},very thick] (-1.5,0) circle (15pt);
\node [fill,circle,inner sep=2.5pt,label=right:{$-\la^i-1-j$},right=11pt] at (-1.5,0) {};
\node[scale=1.5] at (-.625,-.6){$\la$};
\node[scale=1.5] at (3.125,-.6){$\la$};
\node at (1.3,0){$=$};
\draw[postaction={decorate,decoration={markings,
    mark=at position .5 with {\arrow[scale=1.3]{>}}}},very thick] (2.25,0) circle (15pt);
\node [fill,circle,inner sep=2.5pt,label=right:{$\la^i-1-j$},right=11pt] at (2.25,0) {};
\node at (5.9,0) {$
  =\begin{cases}
    1 & j=0\\
    0 & j>0
  \end{cases}
$};
\end{tikzpicture}
  \end{equation*}
We must add formal symbols
  called ``fake bubbles'' which are bubbles labelled with a negative
  number of dots (these are explained in \cite[\S 3.1.1]{KLIII});
  given these, we have the inversion formula for bubbles:
%\begin{figure}[h!]
  \begin{equation*}\label{inv}\subeqn
\begin{tikzpicture}[baseline]
\node at (-1,0) {$\displaystyle \sum_{k=-\la^i-1}^{j+\la^i+1}$};
\draw[postaction={decorate,decoration={markings,
    mark=at position .5 with {\arrow[scale=1.3]{<}}}},very thick] (.5,0) circle (15pt);
\node [fill,circle,inner sep=2.5pt,label=right:{$k$},right=11pt] at (.5,0) {};
\node[scale=1.5] at (1.375,-.6){$\la$};
\draw[postaction={decorate,decoration={markings,
    mark=at position .5 with {\arrow[scale=1.3]{>}}}},very thick] (2.25,0) circle (15pt);
\node [fill,circle,inner sep=2.5pt,label=right:{$j-k$},right=11pt] at (2.25,0) {};
\node at (5.2,0) {$
  =\begin{cases}
    1 & j=-2\\
    0 & j>-2
  \end{cases}
$};
\end{tikzpicture}
\end{equation*}

%\caption{Bubble inversion relations; all strands are colored with $\al_i$.}
%\label{inv-rels}
%\end{figure}
\item 2 relations connecting the crossing with cups and caps, shown in
  (\ref{lollipop1}-\ref{switch-2}).  Since these only involve one
  label $i$, we will leave it out of the diagrams below.
\newseq
%\begin{figure}[t!]
 \begin{equation*}\subeqn\label{lollipop1}
\begin{tikzpicture} [scale=1.3, baseline=35pt] 
\node[scale=1.5] at (-.7,1){$\la$};
\draw[postaction={decorate,decoration={markings,
    mark=at position .5 with {\arrow[scale=1.3]{>}}}},very thick] (0,0) to[out=90,in=-90]  (1,1) to[out=90,in=0]  (.5,1.5) to[out=180,in=90]  (0,1) to[out=-90,in=90]  (1,0);  
\node at (1.5,1.15) {$= \,-$}; 
\node at (2.2,1) {$\displaystyle\sum_{a+b=-1}$};
\draw[postaction={decorate,decoration={markings,
    mark=at position .5 with {\arrow[scale=1.3]{>}}}},very thick]
(3,0) to[out=90,in=180]  (3.5,.5) to[out=0,in=90]  node
[pos=.7,fill=black,circle,label={[label distance=3.5pt]right:{$a$}},inner sep=2.5pt]{} (4,0); 

\draw[postaction={decorate,decoration={markings,
    mark=at position .5 with {\arrow[scale=1.3]{>}}}},very thick] (3.5,1.3) circle (10pt);
\node [fill,circle,inner sep=2.5pt,label=right:{$b$},right=10.5pt] at (3.5,1.3) {};
\node[scale=1.5] at (4.7,1){$\la$};
\end{tikzpicture}
\end{equation*}
\begin{equation*}\subeqn \label{eq:1}
\begin{tikzpicture} [scale=1.3, baseline=35pt] 
\node[scale=1.5] at (-.7,1){$\la$};
\draw[postaction={decorate,decoration={markings,
    mark=at position .5 with {\arrow[scale=1.3]{<}}}},very thick] (0,0) to[out=90,in=-90]  (1,1) to[out=90,in=0]  (.5,1.5) to[out=180,in=90]  (0,1) to[out=-90,in=90]  (1,0);  
\node at (1.5,1.15) {$=$}; 
\node at (2.2,1) {$\displaystyle\sum_{a+b=-1}$};
\draw[postaction={decorate,decoration={markings,
    mark=at position .5 with {\arrow[scale=1.3]{<}}}},very thick]
(3,0) to[out=90,in=180]  (3.5,.5) to[out=0,in=90]  node
[pos=.7,fill=black,circle,label={[label distance=3.5pt]right:{$a$}},inner sep=2.5pt]{} (4,0); 

\draw[postaction={decorate,decoration={markings,
    mark=at position .5 with {\arrow[scale=1.3]{<}}}},very thick] (3.5,1.3) circle (10pt);
\node [fill,circle,inner sep=2.5pt,label=right:{$b$},right=10.5pt] at (3.5,1.3) {};
\node[scale=1.5] at (4.7,1){$\la$};
\end{tikzpicture}
\end{equation*}
\begin{equation*}\subeqn\label{switch-1}
\begin{tikzpicture}[baseline,scale=1.3]
\node at (0,0){
\begin{tikzpicture} [scale=1.3]
\node[scale=1.5] at (-.7,1){$\la$};
\draw[postaction={decorate,decoration={markings,
    mark=at position .5 with {\arrow[scale=1.3]{<}}}},very thick] (0,0) to[out=90,in=-90] (1,1) to[out=90,in=-90] (0,2);  
\draw[postaction={decorate,decoration={markings,
    mark=at position .5 with {\arrow[scale=1.3]{>}}}},very thick] (1,0) to[out=90,in=-90] (0,1) to[out=90,in=-90] (1,2);
\end{tikzpicture}
};
\node at (1.5,0) {$=$};
\node at (5.4,0){
\begin{tikzpicture} [scale=1.3, baseline=35pt]

\node[scale=1.5] at (.3,1){$\la$};

\node at (.7,1) {$-$};
\draw[postaction={decorate,decoration={markings,
    mark=at position .5 with {\arrow[scale=1.3]{<}}}},very thick] (1,0) to[out=90,in=-90]  (1,2);  
\draw[postaction={decorate,decoration={markings,
    mark=at position .5 with {\arrow[scale=1.3]{>}}}},very thick] (1.7,0) to[out=90,in=-90]  (1.7,2);

\node at (2.5,1.15) {$+$}; 
\node at (3,1) {$\displaystyle\sum_{a+b+c=-2}$};
\draw[postaction={decorate,decoration={markings,
    mark=at position .5 with {\arrow[scale=1.3]{<}}}},very thick] (4,0) to[out=90,in=-180] (4.5,.5) to[out=0,in=90] node [pos=.6, fill,circle,inner sep=2.5pt,label=above:{$a$}] {} (5,0);  
\draw[postaction={decorate,decoration={markings,
    mark=at position .5 with {\arrow[scale=1.3]{>}}}},very thick] (4,2) to[out=-90,in=-180] (4.5,1.5) to[out=0,in=-90] node [pos=.6, fill,circle,inner sep=2.5pt,label=below:{$c$}] {} (5,2);
\draw[postaction={decorate,decoration={markings,
    mark=at position .5 with {\arrow[scale=1.3]{<}}}},very thick] (5.5,1) circle (10pt);
\node [fill,circle,inner sep=2.5pt,label=right:{$b$},right=10.5pt] at (5.5,1) {};
\node[scale=1.5] at (6.5,1){$\la$};
\end{tikzpicture}
};
\end{tikzpicture}
\end{equation*}

\begin{equation*}\subeqn\label{switch-2}
\begin{tikzpicture}[scale=.9,baseline]
\node at (-3,0){
\scalebox{.95}{\begin{tikzpicture} [scale=1.3]
\node at (0,0){\begin{tikzpicture} [scale=1.3]
\node[scale=1.5] at (-.7,1){$\la$};
\draw[postaction={decorate,decoration={markings,
    mark=at position .5 with {\arrow[scale=1.3]{>}}}},very thick] (0,0) to[out=90,in=-90] (1,1) to[out=90,in=-90] (0,2);  
\draw[postaction={decorate,decoration={markings,
    mark=at position .5 with {\arrow[scale=1.3]{<}}}},very thick] (1,0) to[out=90,in=-90] (0,1) to[out=90,in=-90] (1,2);
\end{tikzpicture}};

\node at (1.5,0) {$=$};
\node at (5.4,0){
  {
\begin{tikzpicture} [scale=1.3, baseline=35pt]

\node[scale=1.5] at (.3,1){$\la$};

\node at (.7,1) {$-$};
\draw[postaction={decorate,decoration={markings,
    mark=at position .5 with {\arrow[scale=1.3]{>}}}},very thick] (1,0) to[out=90,in=-90]  (1,2);  
\draw[postaction={decorate,decoration={markings,
    mark=at position .5 with {\arrow[scale=1.3]{<}}}},very thick] (1.7,0) to[out=90,in=-90]  (1.7,2);

\node at (2.5,1.15) {$+$}; 
\node at (3,1) {$\displaystyle\sum_{a+b+c=-2}$};
\draw[postaction={decorate,decoration={markings,
    mark=at position .5 with {\arrow[scale=1.3]{>}}}},very thick] (4,0) to[out=90,in=-180] (4.5,.5) to[out=0,in=90] node [pos=.6, fill,circle,inner sep=2.5pt,label=above:{$a$}] {} (5,0);  
\draw[postaction={decorate,decoration={markings,
    mark=at position .5 with {\arrow[scale=1.3]{<}}}},very thick] (4,2) to[out=-90,in=-180] (4.5,1.5) to[out=0,in=-90] node [pos=.6, fill,circle,inner sep=2.5pt,label=below:{$c$}] {} (5,2);
\draw[postaction={decorate,decoration={markings,
    mark=at position .5 with {\arrow[scale=1.3]{>}}}},very thick] (5.5,1) circle (10pt);
\node [fill,circle,inner sep=2.5pt,label=right:{$b$},right=10.5pt] at (5.5,1) {};
\node[scale=1.5] at (6.5,1){$\la$};
\end{tikzpicture}
}
};
\end{tikzpicture}}};
\end{tikzpicture}
\end{equation*}
%\caption{``Cross and cap'' relations; all strands are colored with $\al_i$.  By convention, a negative number of dots on a strand which is not closed into a bubble is 0.}
%\label{pop-rels}
%\end{figure}

\item Oppositely oriented crossings of differently colored strands
  simply cancel with a scalar.\newseq
%\begin{figure}[h!] 
\begin{equation*}\subeqn\label{opp-cancel1}
    \begin{tikzpicture}[baseline]
      \node at (0,0){ \begin{tikzpicture} [scale=1.3] \node[scale=1.5]
        at (-.7,1){$\la$};
        \draw[postaction={decorate,decoration={markings, mark=at
            position .5 with {\arrow[scale=1.3]{<}}}},very thick]
        (0,0) to[out=90,in=-90] node[at start,below]{$i$} (1,1)
        to[out=90,in=-90] (0,2) ;
        \draw[postaction={decorate,decoration={markings, mark=at
            position .5 with {\arrow[scale=1.3]{>}}}},very thick]
        (1,0) to[out=90,in=-90] node[at start,below]{$j$} (0,1)
        to[out=90,in=-90] (1,2);
      \end{tikzpicture}};

    \node at (1.7,0) {$=$}; \node[scale=1.1] at (2.3,0) {$t_{ij}$};
    \node at (3.9,0){
      \begin{tikzpicture} [scale=1.3,baseline=35pt]

        \node[scale=1.5] at (2.4,1){$\la$};

        \draw[postaction={decorate,decoration={markings, mark=at
            position .5 with {\arrow[scale=1.3]{<}}}},very thick]
        (1,0) to[out=90,in=-90] node[at start,below]{$i$} (1,2);
        \draw[postaction={decorate,decoration={markings, mark=at
            position .5 with {\arrow[scale=1.3]{>}}}},very thick]
        (1.7,0) to[out=90,in=-90] node[at start,below]{$j$} (1.7,2);
      \end{tikzpicture}
    };
  \end{tikzpicture}
\end{equation*} 
\begin{equation*}\subeqn\label{opp-cancel2}
\begin{tikzpicture}[baseline]
\node at (0,0){\begin{tikzpicture} [scale=1.3]
\node[scale=1.5] at (-.7,1){$\la$};
\draw[postaction={decorate,decoration={markings,
    mark=at position .5 with {\arrow[scale=1.3]{>}}}},very thick] (0,0) to[out=90,in=-90] node[at start,below]{$i$} (1,1) to[out=90,in=-90] (0,2) ;  
\draw[postaction={decorate,decoration={markings,
    mark=at position .5 with {\arrow[scale=1.3]{<}}}},very thick] (1,0) to[out=90,in=-90] node[at start,below]{$j$} (0,1) to[out=90,in=-90] (1,2);
\end{tikzpicture}};

\node at (1.7,0) {$=$};
\node[scale=1.1] at (2.3,0) {$t_{ji}$};
\node at (3.9,0){
\begin{tikzpicture} [scale=1.3,baseline=35pt]

\node[scale=1.5] at (2.4,1){$\la$};

\draw[postaction={decorate,decoration={markings,
    mark=at position .5 with {\arrow[scale=1.3]{>}}}},very thick] (1,0) to[out=90,in=-90]  node[at start,below]{$i$ }(1,2) ;  
\draw[postaction={decorate,decoration={markings,
    mark=at position .5 with {\arrow[scale=1.3]{<}}}},very thick] (1.7,0) to[out=90,in=-90]  node[at start,below]{$j$} (1.7,2);
\end{tikzpicture}
};

\end{tikzpicture}
\end{equation*}
%\caption{The cancellation of oppositely oriented crossings with different labels.}
%\label{opp-cancel}
%\end{figure}

\item the endomorphisms of words only using $\eF_i$ (or by duality only $\eE_i$'s) satisfy the relations of the {\bf quiver Hecke algebra} $R$.\newseq

%\begin{figure}[h!]
\begin{equation*}\subeqn\label{first-QH}
    \begin{tikzpicture}[scale=.9,baseline]
      \draw[very thick,postaction={decorate,decoration={markings,
    mark=at position .2 with {\arrow[scale=1.3]{<}}}}](-4,0) +(-1,-1) -- +(1,1) node[below,at start]
      {$i$}; \draw[very thick,postaction={decorate,decoration={markings,
    mark=at position .2 with {\arrow[scale=1.3]{<}}}}](-4,0) +(1,-1) -- +(-1,1) node[below,at
      start] {$j$}; \fill (-4.5,.5) circle (3pt);
      % \draw[very thick] (0,0) +(0,-1) -- +(0,1) node[below, at
      % start]{$i$}; \fill (0,0) circle (5pt);
      \node at (-2,0){=}; \draw[very thick,postaction={decorate,decoration={markings,
    mark=at position .8 with {\arrow[scale=1.3]{<}}}}](0,0) +(-1,-1) -- +(1,1)
      node[below,at start] {$i$}; \draw[very thick,postaction={decorate,decoration={markings,
    mark=at position .8 with {\arrow[scale=1.3]{<}}}}](0,0) +(1,-1) --
      +(-1,1) node[below,at start] {$j$}; \fill (.5,-.5) circle (3pt);
      \node at (4,0){unless $i=j$};
    \end{tikzpicture}
  \end{equation*}
\begin{equation*}\subeqn\label{second-QH}
    \begin{tikzpicture}[scale=.9,baseline]
      \draw[very thick,postaction={decorate,decoration={markings,
    mark=at position .2 with {\arrow[scale=1.3]{<}}}}](-4,0) +(-1,-1) -- +(1,1) node[below,at start]
      {$i$}; \draw[very thick,postaction={decorate,decoration={markings,
    mark=at position .2 with {\arrow[scale=1.3]{<}}}}](-4,0) +(1,-1) -- +(-1,1) node[below,at
      start] {$j$}; \fill (-3.5,.5) circle (3pt);
      % \draw[very thick] (0,0) +(0,-1) -- +(0,1) node[below, at
      % start]{$i$}; \fill (0,0) circle (5pt);
      \node at (-2,0){=}; \draw[very thick,postaction={decorate,decoration={markings,
    mark=at position .8 with {\arrow[scale=1.3]{<}}}}](0,0) +(-1,-1) -- +(1,1)
      node[below,at start] {$i$}; \draw[very thick,postaction={decorate,decoration={markings,
    mark=at position .8 with {\arrow[scale=1.3]{<}}}}](0,0) +(1,-1) --
      +(-1,1) node[below,at start] {$j$}; \fill (-.5,-.5) circle (3pt);
      \node at (4,0){unless $i=j$};
    \end{tikzpicture}
  \end{equation*}
\begin{equation*}\subeqn\label{nilHecke-1}
    \begin{tikzpicture}[scale=.9,baseline]
      \draw[very thick,postaction={decorate,decoration={markings,
    mark=at position .2 with {\arrow[scale=1.3]{<}}}}](-4,0) +(-1,-1) -- +(1,1) node[below,at start]
      {$i$}; \draw[very thick,postaction={decorate,decoration={markings,
    mark=at position .2 with {\arrow[scale=1.3]{<}}}}](-4,0) +(1,-1) -- +(-1,1) node[below,at
      start] {$i$}; \fill (-4.5,.5) circle (3pt);
      % \draw[very thick] (0,0) +(0,-1) -- +(0,1) node[below, at
      % start]{$i$}; \fill (0,0) circle (5pt);
      \node at (-2,0){=}; \draw[very thick,postaction={decorate,decoration={markings,
    mark=at position .8 with {\arrow[scale=1.3]{<}}}}](0,0) +(-1,-1) -- +(1,1)
      node[below,at start] {$i$}; \draw[very thick,postaction={decorate,decoration={markings,
    mark=at position .8 with {\arrow[scale=1.3]{<}}}}](0,0) +(1,-1) --
      +(-1,1) node[below,at start] {$i$}; \fill (.5,-.5) circle (3pt);
      \node at (2,0){$+$}; \draw[very thick,postaction={decorate,decoration={markings,
    mark=at position .5 with {\arrow[scale=1.3]{<}}}}](4,0) +(-1,-1) -- +(-1,1)
      node[below,at start] {$i$}; \draw[very thick,postaction={decorate,decoration={markings,
    mark=at position .5 with {\arrow[scale=1.3]{<}}}}](4,0) +(0,-1) --
      +(0,1) node[below,at start] {$i$};
    \end{tikzpicture}
  \end{equation*}
 \begin{equation*}\subeqn\label{nilHecke-2}
    \begin{tikzpicture}[scale=.9,baseline]
      \draw[very thick,postaction={decorate,decoration={markings,
    mark=at position .8 with {\arrow[scale=1.3]{<}}}}](-4,0) +(-1,-1) -- +(1,1) node[below,at start]
      {$i$}; \draw[very thick,postaction={decorate,decoration={markings,
    mark=at position .8 with {\arrow[scale=1.3]{<}}}}](-4,0) +(1,-1) -- +(-1,1) node[below,at
      start] {$i$}; \fill (-4.5,-.5) circle (3pt);
      % \draw[very thick] (0,0) +(0,-1) -- +(0,1) node[below, at
      % start]{$i$}; \fill (0,0) circle (5pt);
      \node at (-2,0){=}; \draw[very thick,postaction={decorate,decoration={markings,
    mark=at position .2 with {\arrow[scale=1.3]{<}}}}](0,0) +(-1,-1) -- +(1,1)
      node[below,at start] {$i$}; \draw[very thick,postaction={decorate,decoration={markings,
    mark=at position .2 with {\arrow[scale=1.3]{<}}}}](0,0) +(1,-1) --
      +(-1,1) node[below,at start] {$i$}; \fill (.5,.5) circle (3pt);
      \node at (2,0){$+$}; \draw[very thick,postaction={decorate,decoration={markings,
    mark=at position .5 with {\arrow[scale=1.3]{<}}}}](4,0) +(-1,-1) -- +(-1,1)
      node[below,at start] {$i$}; \draw[very thick,postaction={decorate,decoration={markings,
    mark=at position .5 with {\arrow[scale=1.3]{<}}}}](4,0) +(0,-1) --
      +(0,1) node[below,at start] {$i$};
    \end{tikzpicture}
  \end{equation*}
  \begin{equation*}\subeqn\label{black-bigon}
    \begin{tikzpicture}[very thick,scale=.9,baseline]
      \draw[postaction={decorate,decoration={markings,
    mark=at position .5 with {\arrow[scale=1.3]{<}}}}] (-2.8,0) +(0,-1) .. controls (-1.2,0) ..  +(0,1)
      node[below,at start]{$i$}; \draw[postaction={decorate,decoration={markings,
    mark=at position .5 with {\arrow[scale=1.3]{<}}}}] (-1.2,0) +(0,-1) .. controls
      (-2.8,0) ..  +(0,1) node[below,at start]{$i$}; \node at (-.5,0)
      {=}; \node at (0.4,0) {$0$};
\node at (1.5,.05) {and};
    \end{tikzpicture}
\hspace{.4cm}
    \begin{tikzpicture}[very thick,scale=.9,baseline]

      \draw[postaction={decorate,decoration={markings,
    mark=at position .5 with {\arrow[scale=1.3]{<}}}}] (-2.8,0) +(0,-1) .. controls (-1.2,0) ..  +(0,1)
      node[below,at start]{$i$}; \draw[postaction={decorate,decoration={markings,
    mark=at position .5 with {\arrow[scale=1.3]{<}}}}] (-1.2,0) +(0,-1) .. controls
      (-2.8,0) ..  +(0,1) node[below,at start]{$j$}; \node at (-.5,0)
      {=}; 
\draw (1.8,0) +(0,-1) -- +(0,1) node[below,at start]{$j$};
      \draw (1,0) +(0,-1) -- +(0,1) node[below,at start]{$i$}; 
\node[inner xsep=10pt,fill=white,draw,inner ysep=8pt] at (1.4,0) {$Q_{ij}(y_1,y_2)$};
    \end{tikzpicture}
  \end{equation*}
 \begin{equation*}\subeqn\label{triple-dumb}
    \begin{tikzpicture}[very thick,scale=.9,baseline]
      \draw[postaction={decorate,decoration={markings,
    mark=at position .2 with {\arrow[scale=1.3]{<}}}}] (-3,0) +(1,-1) -- +(-1,1) node[below,at start]{$k$}; \draw[postaction={decorate,decoration={markings,
    mark=at position .8 with {\arrow[scale=1.3]{<}}}}]
      (-3,0) +(-1,-1) -- +(1,1) node[below,at start]{$i$}; \draw[postaction={decorate,decoration={markings,
    mark=at position .5 with {\arrow[scale=1.3]{<}}}}]
      (-3,0) +(0,-1) .. controls (-4,0) ..  +(0,1) node[below,at
      start]{$j$}; \node at (-1,0) {=}; \draw[postaction={decorate,decoration={markings,
    mark=at position .8 with {\arrow[scale=1.3]{<}}}}] (1,0) +(1,-1) -- +(-1,1)
      node[below,at start]{$k$}; \draw[postaction={decorate,decoration={markings,
    mark=at position .2 with {\arrow[scale=1.3]{<}}}}] (1,0) +(-1,-1) -- +(1,1)
      node[below,at start]{$i$}; \draw[postaction={decorate,decoration={markings,
    mark=at position .5 with {\arrow[scale=1.3]{<}}}}] (1,0) +(0,-1) .. controls
      (2,0) ..  +(0,1) node[below,at start]{$j$}; \node at (5,0)
      {unless $i=k\neq j$};
    \end{tikzpicture}
  \end{equation*}
\begin{equation*}\subeqn\label{triple-smart}
    \begin{tikzpicture}[very thick,scale=.9,baseline]
      \draw[postaction={decorate,decoration={markings,
    mark=at position .2 with {\arrow[scale=1.3]{<}}}}] (-3,0) +(1,-1) -- +(-1,1) node[below,at start]{$i$}; \draw[postaction={decorate,decoration={markings,
    mark=at position .8 with {\arrow[scale=1.3]{<}}}}]
      (-3,0) +(-1,-1) -- +(1,1) node[below,at start]{$i$}; \draw[postaction={decorate,decoration={markings,
    mark=at position .5 with {\arrow[scale=1.3]{<}}}}]
      (-3,0) +(0,-1) .. controls (-4,0) ..  +(0,1) node[below,at
      start]{$j$}; \node at (-1,0) {=}; \draw[postaction={decorate,decoration={markings,
    mark=at position .8 with {\arrow[scale=1.3]{<}}}}] (1,0) +(1,-1) -- +(-1,1)
      node[below,at start]{$i$}; \draw[postaction={decorate,decoration={markings,
    mark=at position .2 with {\arrow[scale=1.3]{<}}}}] (1,0) +(-1,-1) -- +(1,1)
      node[below,at start]{$i$}; \draw[postaction={decorate,decoration={markings,
    mark=at position .5 with {\arrow[scale=1.3]{<}}}}] (1,0) +(0,-1) .. controls
      (2,0) ..  +(0,1) node[below,at start]{$j$}; \node at (2.8,0)
      {$+$};        \draw (6.2,0)
      +(1,-1) -- +(1,1) node[below,at start]{$i$}; \draw (6.2,0)
      +(-1,-1) -- +(-1,1) node[below,at start]{$i$}; \draw (6.2,0)
      +(0,-1) -- +(0,1) node[below,at start]{$j$}; 
\node[inner ysep=8pt,inner xsep=5pt,fill=white,draw,scale=.8] at (6.2,0){$\displaystyle \frac{Q_{ij}(y_3,y_2)-Q_{ij}(y_1,y_2)}{y_3-y_1}$};
    \end{tikzpicture}
  \end{equation*}
%\caption{The relations of the KLR algebra.  These relations are insensitive to labeling of the plane.}
%\label{quiver-hecke}
%\end{figure}
\end{itemize}
 \end{defn}
This completes the definition of the category $\tU$.  

There are 2-categorical analogues of the positive and negative
Borels as well.
\begin{defn}
Let $\doubletilde{\tU}^-$ be the $2$-subcategory of
$\doubletilde{\tU}$ formed by sequences and diagrams where only
downward pointed strands are allowed.  
We let   $\tU^-$ be the quotient of $\doubletilde{\tU}^-$ by the
relations (\ref{first-QH}--\ref{triple-smart}) on 2-morphisms.  
We let $\tU^+$ denote the analogous 2-category where only upward
pointing strands are allowed.
\end{defn}
Note that the relations (\ref{first-QH}--\ref{triple-smart}) in this 2-category are insensitive to
the labeling of regions (that is, to $\EuScript{L}$ and
$\EuScript{R}$).  Thus, we can capture all the structure of $\tU^-$ in an algebra.

\begin{defn}\label{KLR-alg}
  The algebra $R=\End_{\tU^-}(\oplus_{\Bi}\eF_{\Bi})$ where we let
  $\Bi$ range over all sequences in $+\Gamma$ with $\EuScript{L}$
  fixed is called the {\bf KLR
    algebra} or {\bf quiver Hecke algebra} (QHA), which is discussed in
  \cite[\S 4]{Rou2KM} and an earlier paper of Khovanov and Lauda
  \cite{KLI}.  We can realize the same algebra (in a slightly
  different presentation) as $R=\End_{\tU^+}(\oplus_{\Bi}\eE_{\Bi})$.
\end{defn}

\subsection{A spanning set}
\label{sec:spanning-set}

For the
2-category $\tU$, there is an expected ``size'' of the category
predicted by Khovanov and Lauda, both in terms of its Grothendieck
group, and the graded dimension of Hom spaces between objects. However, in
\cite{KLIII} this is only proven for $\fg=\mathfrak{sl}(n)$.  In
particular, they give a spanning set $B$  in \cite[3.2.3]{KLIII} for the set of 2-morphisms between
fixed 1-morphisms, which we will show is a basis.

For KL pairs $G$ and $H$, any KL diagram with bottom $G$ and top $H$ 
induces a matching between the union of the sequences for $G$ and $H$,
by connecting the opposite end of each strand.
The set $B$ is indexed by the set of matchings that occur this way:
they must connect entries with opposite signs
within $G$ or $H$, and like signs when connecting an entry in $G$ to one in $H$.

For each such matching $\varphi$, we choose
(arbitrarily) a diagram $m_\varphi$ which realizes this matching, and
which has no dots and a minimal number of crossings.  We also choose
(arbitrarily) a preferred location on each strand of $m_\varphi$.

\begin{defn}\label{BGH}
  We let $B_{G,H}$ denote the set of diagrams obtained from
  $m_\varphi$, ranging over matchings $\varphi$, by
  \begin{itemize}
  \item adding any number of dots at the location we have fixed on
    each arc
  \item multiplying on the left by a monomial in positive degree,
    clockwise oriented bubbles.
  \end{itemize}
\end{defn}

\subsection{Bubble slides}
\label{sec:bubble-slides}

Since these calculations are not done in Cautis and Lauda \cite{CaLa},
let us record the form of the bubble slide relations when the bubble
and the strand crossing it have different labels.  

Fixing $i\neq j$, we have that the polynomial 
\[t_{ji}^{-1}u^{-c_{ji}}Q_{ji}(u^{-1},v)=\sum
t_{ji}^{-1}Q_{ji}^{(-c_{ji}-k,m)}u^kv^m\in 1+u\K[u,v]+v\K[u,v]\] has an
inverse in $\K[[u,v]]$ which is given by $(t_{ji}^{-1}u^{-c_{ji}}Q_{ji}(u^{-1},v))^{-1}=\sum S^{(c_{ji}-p,q)}_{ji}u^pv^q.$
More explicitly, there's a unique collection $S^{(p,q)}_{ji}\in \K$ such that \[\sum_{(k,m)}t_{ji}^{-1}Q_{ji}^{(k-p,m-q)}S_{ji}^{(p,q)}=
\begin{cases}
  1 & (k,m)=(0,0)\\
  0 & (k,m)\neq (0,0)
\end{cases}
\] subject to the condition that $S^{(c_{ji}-p,q)}_{ji}=0$ whenever
$p<0$ or $q<0$.   

\begin{prop}In the category $\tU$, the following relations and their
  mirror images hold:
 \begin{equation}\label{bubble-slide1}
\begin{tikzpicture} [scale=1.3, baseline] 
\draw[postaction={decorate,decoration={markings,
    mark=at position .8 with {\arrow[scale=1.3]{>}}}},very thick]
(2,-.8) to node[below,at start]{$i$} (2,.8); 

\draw[postaction={decorate,decoration={markings,
    mark=at position .5 with {\arrow[scale=1.3]{>}}}},very thick]
(3,0) circle (10pt);
\node [above=13pt] at (3,0) {$j$};
\node [fill,circle,inner sep=2.5pt,label=-20:{$b$},below=10pt] at (3,0) {};
%\node[scale=1.5] at (4.7,1){$\la$};
\end{tikzpicture}\hspace{1mm}
=\sum_{k,m}t_{ji}^{-1}Q_{ji}^{(k,m)}\hspace{2mm}
\begin{tikzpicture} [scale=1.3, baseline] 
\draw[postaction={decorate,decoration={markings,
    mark=at position .8 with {\arrow[scale=1.3]{>}}}},very thick]
(2,-.8) to
  node
[pos=.5,fill=black,circle,label={[label distance=3.5pt] 0:{$m$}},inner sep=2.5pt]{} node[below,at start]{$i$}  (2,.8); 

\draw[postaction={decorate,decoration={markings,
    mark=at position .5 with {\arrow[scale=1.3]{>}}}},very thick] (1,0) circle (10pt);
\node [fill,circle,inner sep=2.5pt,label=-20:{$b+k$},below=10pt] at
(1,0) {};
\node [above=13pt] at (1,0) {$j$};
%\node[scale=1.5] at (4.7,1){$\la$};
\end{tikzpicture}
\end{equation}
  \begin{equation}\label{bubble-slide2}
\begin{tikzpicture} [scale=1.3, baseline] 
\draw[postaction={decorate,decoration={markings,
    mark=at position .8 with {\arrow[scale=1.3]{>}}}},very thick]
(2,-.8) to
  (2,.8); 

\draw[postaction={decorate,decoration={markings,
    mark=at position .5 with {\arrow[scale=1.3]{>}}}},very thick]
(1,0) circle (10pt);
\node [above=13pt] at (1,0) {$j$};
\node [fill,circle,inner sep=2.5pt,label=-20:{$b$},below=10pt] at (1,0) {};
%\node[scale=1.5] at (4.7,1){$\la$};
\end{tikzpicture}\hspace{1mm}
=\sum_{p,q}S^{(p,q)}_{ji}\hspace{2mm}
\begin{tikzpicture} [scale=1.3, baseline] 
\draw[postaction={decorate,decoration={markings,
    mark=at position .8 with {\arrow[scale=1.3]{>}}}},very thick]
(2,-.8) to node
[pos=.5,fill=black,circle,label={[label distance=3.5pt] 0:{$q$}},inner sep=2.5pt]{}  (2,.8); 

\draw[postaction={decorate,decoration={markings,
    mark=at position .5 with {\arrow[scale=1.3]{>}}}},very thick]
(3,0) circle (10pt);
\node [above=13pt] at (3,0) {$j$};
\node [fill,circle,inner sep=2.5pt,label=-20:{$b+p$},below=10pt] at (3,0) {};
%\node[scale=1.5] at (4.7,1){$\la$};
\end{tikzpicture}
\end{equation}
\end{prop}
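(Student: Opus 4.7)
The plan is to establish (\ref{bubble-slide1}) by a direct diagrammatic calculation and then deduce (\ref{bubble-slide2}) as a formal inverse. The underlying geometric idea is to pull the $i$-strand through the interior of the $j$-bubble, which necessarily creates a bigon of oppositely oriented $i$- and $j$-strands; this bigon can be resolved using the established relations of $\tU$.

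For (\ref{bubble-slide1}): I would first write the dotted bubble as $\ep \circ (y^b) \circ \iota'$, i.e., a $j$-cap above $b$ dots above a $j$-cup, sitting to the right of the vertical $i$-strand. Next I would use the pitchfork relations (\ref{pitch1})--(\ref{pitch2}) to drag the $i$-strand behind the cup and cap, resulting in a picture where the $i$-strand is enclosed by the $j$-loop and crosses the $j$-strand twice, once at the top and once at the bottom. Each pitchfork move contributes a factor of $t_{ji}$, and after the two moves required to enclose the strand these combine (together with the normalization (\ref{opp-cancel1})) to give the overall $t_{ji}^{-1}$. At this stage, one side of the enclosed configuration contains a KLR-style bigon between the $i$-strand and the $j$-arc; applying the bigon relation (\ref{black-bigon}) converts this bigon into the polynomial $Q_{ji}(y_1,y_2)$, where $y_1$ acts on the $i$-strand and $y_2$ on the enclosed $j$-arc.

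Finally, I would expand $Q_{ji}(y_1,y_2) = \sum_{k,m} Q_{ji}^{(k,m)} y_1^k y_2^m$, absorb the $y_2^k$ dots into the dots already present on the $j$-cup (producing a $(b+k)$-dotted bubble after re-closing), and leave the $y_1^m$ dots on the now-straight $i$-strand. This yields the right-hand side of (\ref{bubble-slide1}). For (\ref{bubble-slide2}), the claim is that the operation of sliding the bubble from the left of the $i$-strand to its right is formally inverse to sliding in the opposite direction; since (\ref{bubble-slide1}) shows the first operation is multiplication (on generating functions in the number-of-dots variable) by $t_{ji}^{-1} u^{-c_{ji}} Q_{ji}(u^{-1}, v)$, the inverse operation must be multiplication by its formal inverse, whose Taylor coefficients are by definition $S^{(p,q)}_{ji}$.

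The main obstacle will be the bookkeeping around the normalizing scalar and the boundary behavior of the dot-count. The coefficient $t_{ji}^{-1}$ is sensitive to conventions for which pitchfork applies in each direction, and near $b = 0$ the dotted-bubble picture must be interpreted via the fake bubbles introduced in (\ref{zero-bubble})--(\ref{inv}); I would verify the leading term explicitly to fix signs and scalars, and check a low-rank test case (for instance $\fg = \mathfrak{sl}_3$ with $Q_{ij} = u + v$) to make sure the combinatorics of $Q_{ji}^{(k,m)}$ and $S^{(p,q)}_{ji}$ appear with the correct shifts.
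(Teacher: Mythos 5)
Your proposal is correct and follows essentially the same route as the paper: reduce (\ref{bubble-slide2}) to (\ref{bubble-slide1}) via the defining property of $S^{(p,q)}_{ji}$, then prove (\ref{bubble-slide1}) by pulling the strand through the bubble at the cost of $t_{ji}^{-1}$ and resolving the resulting bigon with $Q_{ji}$, expanding its coefficients into dots on the strand and on the bubble. The paper's proof is exactly this two-step diagrammatic chain, stated more tersely.
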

\begin{proof}
  The equations are equivalent by the definition of $S^{(p,q)}_{ij}$.
  Thus, we only need to prove \eqref{bubble-slide1}.  Thus follows
  from  \begin{equation*}
\begin{tikzpicture} [scale=1.3, baseline] 
\draw[postaction={decorate,decoration={markings,
    mark=at position .8 with {\arrow[scale=1.3]{>}}}},very thick]
(2,-.8) to (2,.8); 

\draw[postaction={decorate,decoration={markings,
    mark=at position .5 with {\arrow[scale=1.3]{>}}}},very thick] (3,0) circle (10pt);
\node [fill,circle,inner sep=2.5pt,label=-20:{$b$},below=10pt] at (3,0) {};
%\node[scale=1.5] at (4.7,1){$\la$};
\end{tikzpicture}\hspace{1mm}
=t_{ji}^{-1}\quad \begin{tikzpicture} [scale=1.3, baseline] 
\draw[postaction={decorate,decoration={markings,
    mark=at position .9 with {\arrow[scale=1.3]{>}}}},very thick]
(2,-.8) to (2,.8); 
\draw[postaction={decorate,decoration={markings,
    mark=at position .5 with {\arrow[scale=1.3]{>}}}},very thick] (2,0) circle (10pt);
\node [fill,circle,inner sep=2.5pt,label=0:{$b$},right=10pt] at (2,0) {};
%\node[scale=1.5] at (4.7,1){$\la$};
\end{tikzpicture}\hspace{1mm}=\sum_{k,m}t_{ji}^{-1}Q_{ji}^{(k,m)}\hspace{2mm}
\begin{tikzpicture} [scale=1.3, baseline] 
\draw[postaction={decorate,decoration={markings,
    mark=at position .8 with {\arrow[scale=1.3]{>}}}},very thick]
(2,-.8) to
  node
[pos=.5,fill=black,circle,label={[label distance=3.5pt] 0:{$m$}},inner sep=2.5pt]{} (2,.8); 
\draw[postaction={decorate,decoration={markings,
    mark=at position .5 with {\arrow[scale=1.3]{>}}}},very thick] (1,0) circle (10pt);
\node [fill,circle,inner sep=2.5pt,label=-20:{$b+k$},below=10pt] at (1,0) {};
%\node[scale=1.5] at (4.7,1){$\la$};
\end{tikzpicture}.
\end{equation*}
\end{proof}

\section{Cyclotomic quotients}
\label{sec:nondegeneracy}

\subsection{A first approach to the categorification of simples}
\label{sec:nond-prel}

When one presents a category by generators and relations, it can be
difficult to confirm that these relations have not killed more
elements then expected, or that the category is not 0.  The usual
technique for doing this is find a representation of the 2-category
where it is easy to confirm that things are non-zero.

For $\tU$, this representation will be a natural categorification of
the simple representation $V_\la^\Z$.  We define this by imitating the
construction of $V_\la^\Z$ as a quotient of a Verma module.

\begin{defn}
Let $\widetilde{DR}^\la_\mu$ be the algebra $\End_{\tU}(\bigoplus \eF_{\Bi})$
where $(\Bi,\la)$ ranges over all KL pairs with $\EuScript{L}=\la$ and
$\EuScript{R}=\mu$.  Unlike in Definition \ref{KLR-alg}, we allow
entries both from $\Gamma$ and $-\Gamma$.

Let $DR^\la_\mu$ be the quotient of $\widetilde{DR}^\la_\mu$ by the
two-sided ideal $I_\mu$ generated by
\begin{itemize}
\item the identity on $\eE_{\Bi}$ if $i_1\in +\Gamma$ (that is, the
  left-most strand is upward oriented) and 
\item the horizontal composition $a\circ b$ of any map $a$ with a
  positive degree  bubble $b$.  
\end{itemize}  We let $DR^\la\cong \oplus_\mu DR^\la_\mu$.

We let $D\cata^\la_\mu$ denote the category of finitely generated
right modules over $ DR^\la_\mu$.
\end{defn}

We can write these relations graphically as:
\newseq
 \begin{equation*}\subeqn
\tikz[baseline]{\draw[postaction={decorate,decoration={markings,
    mark=at position .8 with {\arrow[scale=1.3]{>}}}},very thick]
(0,-.5) -- node [below,at start]{$j$} 
(0,.5); \node at (.5,0){$\cdots$};}=0
\label{lm-relations}
\end{equation*} 
\begin{equation*}\label{bubble-relations}\subeqn
  \tikz[baseline]{\draw[postaction={decorate,decoration={markings,
    mark=at position .8 with {\arrow[scale=1.3]{<}}}},very thick]
(0,0) to[out=90,in=0] 
(-.5,.5) to[out=180,in=90] node [above,at start]{$j$} (-1,0) to[out=-90,in=180] node
[pos=.3,circle,fill=black,inner sep=2pt,label=below left:$a$]{} (-.5,-.5)
to[out=0,in=-90] (0,0);    \node at (.5,0){$\cdots$};}=0 \quad a\geq
-\la^j 
\end{equation*}
 \begin{equation*}\subeqn\label{cyclo-equation}
\tikz[baseline]{\draw[postaction={decorate,decoration={markings,
    mark=at position .8 with {\arrow[scale=1.3]{<}}}},very thick]
(0,-.5) -- node [below,at start]{$j$} node
[pos=.3,circle,fill=black,inner sep=2pt,label=below left:$\la^j$]{}
(0,.5); \node at (.5,0){$\cdots$};}=0
\end{equation*} 
where  (\ref{cyclo-equation}) actually follows from (\ref{lm-relations}--\ref{bubble-relations}) by
(\ref{eq:1}):
\begin{equation}\label{loop-cyclo}
\begin{tikzpicture}[baseline]
  \node at (1.1,0){$-$}; \node at (2.1,0){
    \begin{tikzpicture}[baseline=-2.75pt, yscale=1.2, xscale=-1.2]
      \node [fill=white,circle,inner
      sep=2pt,label={[scale=.7,white]left:{$-\la^j$}},above=4.5pt] at
      (1.3,.4) {}; \draw[very
      thick,postaction={decorate,decoration={markings, mark=at
          position .5 with {\arrow{<}}}}] (1,-.5) to[out=90,in=180]
      node[at start,below]{$j$} (1.5,.2) to[out=0,in=90] (1.75,0)
      to[out=-90,in=0] (1.5, -.2) to[out=left,in=-90] node[at
      end,above]{$j$} (1,.5);
      % \draw[very thick,red] (2,-.5) -- (2,.5) node[at
      % start,below]{$\la$};
    \end{tikzpicture}}; \node at (.3,0){$\dots$}; \node at
  (.7,0){$=$}; \node at (-.9,0){
    \begin{tikzpicture}[baseline=-2.75pt, yscale=1.2, xscale=-1.2]
      \node [fill=white,circle,inner
      sep=2pt,label={[scale=.7,white]left:{$-\la^j$}},above=4.5pt] at
      (1.3,.4) {};

      \draw[very thick,postaction={decorate,decoration={markings,
          mark=at position .65 with {\arrow{<}}}}] (1,-.5) -- (1,.5)
      node[pos=.3,fill,circle,inner
      sep=2pt,label={[scale=.7]left:{$\la^j$}}]{} node[at
      end,above]{$j$} node[at start,below]{$j$} ;
      % \draw[very thick,red] (2,-.5) -- (2,.5) node[at
      % start,below]{$\la$};
    \end{tikzpicture}}; \node at (3.1,0){$\dots$}; \node at
  (3.75,0){$-$}; \node at (5,0){
    \begin{tikzpicture}[baseline=-2.75pt, yscale=1.2, xscale=-1.2]
      \draw[very thick,postaction={decorate,decoration={markings,
          mark=at position .65 with {\arrow{<}}}}] (1,-.5) -- (1,.5)
      node[pos=.3,fill,circle,inner
      sep=2pt,label={[scale=.7]left:{$\la^j-1$}}]{} node[at
      end,above]{$j$} node[at start,below]{$j$} ;
      \draw[postaction={decorate,decoration={markings, mark=at
          position .5 with {\arrow[scale=1.3]{>}}}},very thick]
      (1.7,.4) circle (6pt); \node [fill,circle,inner
      sep=2pt,label={[scale=.7]45:{$-\la^j$}},above=4.5pt] at (1.7,.4)
      {};
      % \draw[very thick,red] (2.4,-.5) -- (2.4,.5) node[at
      % start,below]{$\la$};
    \end{tikzpicture}}; \node at (6,0){$\dots$}; \node at
  (6.6,0){$-$}; \node at (7.4,0){$\dots$}; \node at (8.2,0){$+$};
\node at (9.4,0){
    \begin{tikzpicture}[baseline=-2.75pt, yscale=1.2, xscale=-1.2]
      \draw[very thick,postaction={decorate,decoration={markings,
          mark=at position .65 with {\arrow{<}}}}] (1,-.5) -- (1,.5)
      node[at end,above]{$j$} node[at start,below]{$j$} ;
      \draw[postaction={decorate,decoration={markings, mark=at
          position .5 with {\arrow[scale=1.3]{>}}}},very thick]
      (1.8,.4) circle (6pt); \node [fill,circle,inner
      sep=2pt,label={[scale=.7]45:{$-1$}},above=4.5pt] at (1.8,.4) {};
      % \draw[very thick,red] (2.4,-.5) -- (2.4,.5) node[at
      % start,below]{$\la$};
    \end{tikzpicture}}; \node at (10.7,0){$\dots$};
\end{tikzpicture}.
\end{equation}
Note that if $\la$ is not dominant, the algebra $DR^\la$ is 0, since
if $\la^i< 0$, the identity functor can be written as an honest
clockwise bubble with label $i$ and $-\la^i-1$ dots, which is 0 by (\ref{lm-relations}).

We have an obvious functor $\varpi(u)=\Hom(\bigoplus \eF_{\Bi},u)/I_\mu$ from the category of 1-morphisms $\la\to
\mu$ in $\tU$ to $D\cata^\la_\mu$. 

Now, we wish to define an action of $\tU$ on the categories
$D\cata^\la_\mu$ with $\la$ fixed.  That is, a 2-functor sending the
weight $\mu$ to the category $D\cata^\la_\mu$, and each 1-morphism
$\mu\to \nu$ to a $DR^\la_\mu\operatorname{-}DR^\la_\nu$-bimodule.

There is only one way to do this
which is compatible with $\varpi$.  In the interest of explicitness, we will define this action as tensor product
with certain bimodules.  We let $\tilde{\delta}_u= \Hom(\bigoplus
\eF_{\Bi},u\circ \bigoplus \eF_{\Bj})$ for $u$ a 1-morphism $\mu\to
\nu$ in $\tU$.  We let \[\delta_u\cong \tilde{\delta}_u/(I_\nu\cdot
\tilde{\delta}_u+\tilde{\delta}_u\cdot I_\mu).\] Pictorially, we can
visualize elements of this bimodule as below.  
\begin{equation}\label{DRu-schematic}
  \tikz[very thick,baseline]{\draw[postaction={decorate,decoration={markings,
    mark=at position .5 with {\arrow[scale=1.3]{<}}}}] (0,-1) to[out=90,in=-90] (-.5,1);
\draw[postaction={decorate,decoration={markings,
    mark=at position .5 with {\arrow[scale=1.3]{<}}}}] (.25,-1) to[out=90,in=-90] (0,1);
\draw[postaction={decorate,decoration={markings,
    mark=at position .5 with {\arrow[scale=1.3]{>}}}}]  (1,1) to[out=-90,in=-90] (2.5,1);
\draw[postaction={decorate,decoration={markings,
    mark=at position .5 with {\arrow[scale=1.3]{<}}}}]  (2.25,-1) to[out=90,in=-90] (3.5,1);
\draw[postaction={decorate,decoration={markings,
    mark=at position .5 with {\arrow[scale=1.3]{<}}}}]  (2.5,-1) to[out=90,in=-90] (-.25,1);
\draw[postaction={decorate,decoration={markings,
    mark=at position .5 with {\arrow[scale=1.3]{<}}}}]  (2.75,-1) to[out=90,in=-90] (1.25,1);
\node at (1.5, -.7){$\cdots$};
\node at (.5, .7){$\cdots$};
\node at (3, .7){$\cdots$};
\draw[decorate,decoration=brace,-] (3,-1.25) --
    node[below,midway]{$DR^\la_\nu$-action} (-.25,-1.25);
\draw[decorate,decoration=brace,-] (-.75,1.25) --
    node[above,midway]{$DR^\la_\mu$-action} (1.5,1.25);
\draw[decorate,decoration=brace,-] (2.25,1.25) --
    node[above,midway]{$u\colon \mu\to \nu$} (3.75,1.25);
  }
\end{equation} 
The 2-morphisms of $\tU$ act on the bimodule
  $\delta_u$ by attaching at the top right of the diagram
  \eqref{DRu-schematic}.  This is well-defined since the ideals $I_*$
  are closed under horizontal composition $b\mapsto a\circ b$.
\begin{prop}
 There is a representation of the 2-category
  $\tU$ which sends $\mu \mapsto D\cata^\la_\mu$ such that
  $u\mapsto -\otimes_{DR^\la}\delta_u$, with the induced action of
  2-morphisms. 
\end{prop}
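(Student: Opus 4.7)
The plan is to verify that this data assembles into a strict 2-functor from $\tU$ to the 2-category of bimodules over the algebras $DR^\la$; the construction is essentially the corepresentable 2-functor $\Hom_\tU(\bigoplus_\Bi\eF_\Bi,-)$ followed by passage to a quotient, so most of the work amounts to checking this quotient is compatible with all the structure.

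First, I would check that $\delta_u$ is a well-defined bimodule. The space $\tilde\delta_u$ carries a natural $(\widetilde{DR}^\la_\mu,\widetilde{DR}^\la_\nu)$-bimodule structure: one action attaches 2-morphisms to the $\eF_\Bj$-strands at the top-left of \eqref{DRu-schematic}, and the other pre-composes at the $\eF_\Bi$-strands at the bottom; these commute by the interchange law of the strict 2-category $\tU$. Since $I_\mu$ and $I_\nu$ are two-sided ideals in their respective endomorphism algebras, the subspaces $I_\nu\cdot\tilde\delta_u$ and $\tilde\delta_u\cdot I_\mu$ are sub-bimodules, so $\delta_u$ is a well-defined $(DR^\la_\mu,DR^\la_\nu)$-bimodule, and $-\otimes_{DR^\la_\mu}\delta_u$ gives a functor $D\cata^\la_\mu\to D\cata^\la_\nu$.

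Next, I would check the induced action of 2-morphisms. A 2-morphism $f\colon u\to u'$ acts on $\tilde\delta_u$ by vertical post-composition with $f\circ\id_{\eF_\Bj}$, i.e.\ by attaching $f$ at the top-right of \eqref{DRu-schematic}. Two applications of the interchange law show this commutes with both bimodule actions, so it carries the sub-bimodule being quotiented into the analogous one for $\tilde\delta_{u'}$ and descends to a bimodule map $\delta_u\to\delta_{u'}$; the closure observation made immediately before the proposition---that the ideals $I_*$ are stable under horizontal composition $b\mapsto a\circ b$---is exactly what is needed here. Compatibility with vertical and horizontal composition of 2-morphisms is then automatic, so all relations of $\tU$ are preserved by the induced action.

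The remaining step, which I expect to be the main obstacle, is the compositional isomorphism $\delta_{v\circ u}\cong\delta_u\otimes_{DR^\la_\nu}\delta_v$ for composable 1-morphisms $u\colon\mu\to\nu$, $v\colon\nu\to\nu'$, together with the unit $\delta_{\id_\mu}\cong DR^\la_\mu$. The natural map stacks diagrams along intermediate weight $\nu$: the $\eF_\Bi$-strands forming the top-left of a diagram for $u$ merge with those at the bottom of a diagram for $v$ to form the intermediate strands of $v\circ u$. Surjectivity is clear by cutting any diagram for $v\circ u\circ\eF_\Bj$ horizontally across the middle region. Well-definedness on the tensor product, and injectivity, amount to verifying that the identifications $y\cdot a\otimes z=y\otimes a\cdot z$ for $a\in DR^\la_\nu$ coincide exactly with those arising from sliding endomorphisms across the cut; this ultimately rests on $\{\eF_\Bi\}$, with $\Bi$ ranging over all sequences with $\EuScript{R}=\nu$, forming a corepresenting family in $\tU$ for 1-morphisms landing at $\nu$.
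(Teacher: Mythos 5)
Your proposal is correct and matches the paper's (much terser) argument: the paper simply notes that the functors and 2-morphism actions have already been defined and that the relations of $\tU$ hold because they are local, so it makes no difference whether one applies them before or after attaching a diagram at the top. Your additional verifications --- that the two-sided ideals $I_*$ make $\delta_u$ a well-defined bimodule, and that $\delta_{v\circ u}\cong\delta_u\otimes_{DR^\la_\nu}\delta_v$ (which follows cleanly because $u\circ\eF_{\Bj}$ is itself one of the summands $\eF_{\Bi'}$, so $\tilde\delta_u$ is a projective right module and the tensor product is computed by multiplying by an idempotent) --- are points the paper leaves implicit rather than a different route.
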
  
\begin{proof}
We have already defined the functors associated to 1-morphisms and the
action of 2-morphisms, so we need only check that the relations
(\ref{pitch1}-\ref{triple-smart}) hold. This follows immediately from
the locality of the relations, since it makes no difference if we
apply them before or after attaching at the top of the diagram.  
\end{proof}

Unfortunately, it's not immediately clear that $DR^\la_\mu\neq 0$.  From a certain perspective,
the main result of this section is
the fact that this ring contains a non-zero element if the $\mu$
weight space of $\la$ is non-trivial.
We'll resolve this issue by showing that this ring is Morita
equivalent to a more familiar ring, the {\bf cyclotomic quotient} of
$R$.
\begin{defn}
  The  {\bf cyclotomic quiver Hecke algebra} (QHA) or {\bf cyclotomic KLR
    algebra} $R^\la$ for a weight
  $\lambda$ is the quotient of $R$ by the {\bf cyclotomic ideal}, the
  2-sided ideal generated by the elements $y_1^{\la^{i_1}}e(\Bi)$ for
  all sequences $\Bi$.  This is precisely the two-sided ideal
  generated by the relations (\ref{cyclo-equation}).

We let $\cata^\la$ denote the category of finite dimensional graded $R^\la$-modules. 
\end{defn}
This algebra has attracted great interest recently in the work of
Brundan-Kleshchev \cite{BKKL}, Kleshchev-Ram \cite{KlRa},
Hoffnung-Lauda and Lauda-Vazirani \cite{LV,HL}, 
Hill-Melvin-Mondragon \cite{HMM} and Tingley and the author \cite{TW}.  It has a very rich structure and
representation theory, and some surprising connections to classical
representation theory.

Note that we have a natural map $p'\colon R\to DR^\la$ thinking of a 
diagram in $R$ as one in $\widetilde{DR}^\la$, and then applying the
quotient map.
   The relation (\ref{cyclo-equation}) shows that the map $p'$ factors through a map $p\colon R^\la\to DR^\la$.
We will eventually show that this map induces a Morita equivalence. 
% This
%is precisely what we will need to prove Theorem \ref{nond}.

Unfortunately, it is not easy to attack the question of whether this
map is a Morita equivalence directly.  
Luckily, we can deduce this result for $\mathfrak{sl}_2$ by work of Lauda \cite{Laucq}.  Since a
Kac-Moody algebra is essentially a bunch of $\mathfrak{sl}_2$'s with
their interactions described by a Borel, we can hope that this case
can lead us to the more general case.

Let us first give a rough sketch of the argument:
\begin{itemize}
\item First, we construct an intermediary Morita equivalence between
  $R^\la$
 and a ring similar to $DR^\la$ where one only allows upward
  strands labeled with one of the elements of $\Gamma$.  
\item We can use this Morita equivalence to show that the restriction
  and induction functors (defined below) $\fE_i$ and $\fF_i$ define an action of
  $\tU_{\mathfrak{sl}_2}$; in particular, these functors are
  biadjoint.
\item We then need only check one extra relation to confirm that we
  have a categorical action of $\tU$ on the modules over the
  cyclotomic quotient; this action can be used to confirm the Morita
  equivalence of $R^\la$ and $DR^\la$, and to show
  non-degeneracy for $\tU$. 
\end{itemize}

\subsection{Categorifications for minimal parabolics}

\label{sec:categ-minim-parab}

\subsubsection{The parabolic categorification} Fix $i\in \Gamma$ for the remainder of this section.
\begin{defn}
We let $\doubletilde{\tU}_i^-$
be the $2$-subcategory of $\doubletilde{\tU}$ where we allow downward pointing strands with
all labels and upward pointing strands {\it only} with the label $i$.  
We let $\tU_i^-$ denote the quotient of $\doubletilde{\tU}_i^-$ where we
impose those relations (\ref{pitch1}-\ref{triple-smart}) which still make sense in this $2$-category.
\end{defn} 
  In Rouquier's language, we would construct this category by
  adjoining $\eE_i$ to  $\tU^-$ as a formal
  left adjoint to $\eF_i$, and impose the relations that
  \begin{itemize}
  \item the map $\rho_{s,\la}$ is an isomorphism whose inverse is
    described by the equations (\ref{switch-1}--\ref{switch-2}) (in the
    ``style'' of Rouquier, one would not impose this equation, but
    simply adjoin an inverse to $\rho_{s,\la}$).
  \item the right adjunction between $\eF_i$ and $\eE_i$ is determined by
    the equations (\ref{lollipop1}--\ref{eq:1}).
  \end{itemize}
It seems very likely that using an argument in the style of
\cite{Brundandef} would show that this defines an equivalent category,
but we have not confirmed the details of this.

There are functors $\tU^-\to \tU_i^-\to \tU$, neither of which is manifestly faithful,
since new relations could appear when the other objects are added.  We
note that the 2-morphisms in this category have a spanning set
defined as in Definition \ref{BGH}:
\begin{defn}\label{BiGH}
  Let $B_{i,G,H}$ be the subset of $B_{G,H}$ given by KL diagrams which make sense in
  $\tU^-_i$, that is those where any cups, caps or bubbles are labeled
  with $i$.
\end{defn}

This category corresponds to the parabolic $\fp_i\cong \mathfrak{b}^-\oplus
\fg_{\al_i}$.

\subsubsection{The quiver flag category} Lauda defines an action of his
categorification of $\mathfrak{sl}_2$ on a ``flag category,''
which gives an algebraic encapsulation of the geometry of
Grassmannians.  There is an appropriate generalization of Grassmannians for
$\mathfrak{g}$ of higher rank with symmetric Cartan matrix.  These are
the quiver varieties of the graph $\Gamma$.   Unfortunately, these are
analogues of cotangent bundles of Grassmannians, not the Grassmannians
themselves, and are typically not cotangent bundles.  This makes the
geometry required for defining a geometric action of $\tU$
considerably more complex. The author has implemented one
version of such an action using deformation quantization in
\cite{Webcatq} and an action has been defined on coherent sheaves on
quiver varieties in \cite{CKLquiver}.  However, neither of these
constructions are suitable for algebras without symmetric Cartan matrices.

A standard trick to get around this issue is to work one
vertex at a time.  One fixes a vertex $i$, assumes that it is a
source, and replaces the Grassmannian with the space of quiver
representations where the sum of all maps out of $i$ is injective.
For example, this approach is used by Zheng
\cite{Zheng2008} in his construction of a weak categorical action on
certain categories of sheaves attached to quiver varieties.

Inspired by this approach, we will develop an algebraic replacement
for these quiver varieties which works even in non-symmetric types.

Let us give a brief reminder on Lauda's action on the equivariant
cohomology of Grassmannians from \cite{LauSL2}.  Over any field $\K$, we have an isomorphism \[H^q_m:=H^*_{GL_q}(\operatorname{Gr}(m,q))\cong \K[\ssx_1,\dots,
\ssx_m,\ssy_1,\dots, \ssy_{q-m}],\] where $\ssx_g=c_g(T)$ and $\ssy_g=c_g(\C^q/T)$,
the Chern classes of the two  tautological bundles.  By convention, $x_0=y_0=1$.  From these, we can construct another very important
sequence of classes.  Let
\begin{equation}
\ssw_g:=c_g(\C^q)=\sum_{k=0}^{g} \ssx_k\ssy_{g-k},\label{whitney}
\end{equation}
where the latter equality follows from the Whitney sum formula; these
are the image of the Chern classes in $H_{GL_q}(*)$ under the pullback
map.  Note, in particular, that $\ssw_{N}=0$ for $N>q$.  

We can
also write~\eqref{whitney} as an equality of generating series $\ssw(u)=\ssx(u)\ssy(u)$
where $\ssw(u)=\sum_{i=0}^\infty \ssw_iu^i$ and similarly for $\ssx(u)$ and
$\ssy(u)$.  Note that in Lauda's construction, the clockwise bubbles
correspond to the coefficients of the quotient $\ssx(-u)/\ssy(-u)$ and the
counter-clockwise to $\ssy(-u)/\ssx(-u)$ by \cite[(3.18)]{LauSL2}
(keep in mind that our conventions differ from Lauda's by a reflection
 through the $y$-axis).  

The ring $\K[\ssw_1,\dots, \ssw_q]$ functions as a base ring for Lauda's
construction, since all bimodules and and bimodule maps he considers
arise from pullback and pushforward in equivariant cohomology.  This is
also easily seen from the algebraic formulas he gives.   In particular,
for any commutative ring $B$ and ring homomorphism $\K[\ssw_1,\dots,
\ssw_q]\to B$, we can base change Lauda's action by tensoring all
bimodules in the construction with $B$.  For example, the action on
the usual cohomology of Grassmannians given in \cite{Laucq} is the
base change via the quotient map $\K[\ssw_1,\dots, \ssw_q]\to \K$.  

The construction with Grassmannians we've discussed gives a
categorification of the representation $V_q=\operatorname{Sym}^q(\C^2)$ of
$\mathfrak{sl}_2$.  We can endow this vector space with a module
structure over the whole Levi $\mathfrak{sl}_2+\mathfrak{h}$ for
$\fp_i$ by extending the highest weight to an integral weight
$\la$ such that $q=\la^i$.   
  
For a fixed weight $\la$ with $\la^i\geq 0$, there is essentially a universal
categorical representation of $\tU^-_i$ which
satisfies a few basic properties.  As usual, we let $\Lambda(\bp)$ be
the algebra of symmetric polynomials on an alphabet $\bp$, and let
$e_i(\bp),h_i(\bp)$ denote the elementary and complete symmetric
polynomials of degree $i$. We want to find a module over $\tU_i^-$ such that:
\begin{enumerate}
\item The category attached to a weight $\mu=\la-\sum_{j\in \Gamma} m_j\al_j$ is the 
  category of representations of an algebra $\bLa_\mu$, with
  $\bLa_\la\cong \K$.   If $\mu\nleq \la$ then $\bLa_\mu=0$.
\item If one only uses $\eF_j$ for $j\neq i$, then the representation
  coincides with the polynomial representation of $\tU^-$. That is, in
  the special case
  $\nu=\la-\sum_{j\neq
    i}m_j\al_j$, we have that $\bLa_\nu\cong\bigotimes_{j\neq i}
  \Lambda(\bp_j)$ where $\bp_j$ is the alphabet $\{p_{j,1},\dots,p_{j,m_j}\}$, and $\eF_\Bi(\bLa_\la)$ is a
  copy of the polynomial ring $\K[\{p_{j,k}\}]$ with action given by
  the usual polynomial action of \cite[Prop. 3.12]{Rou2KM} or
  \cite{KLII} (also shown in equations \eqref{psi-action}).  
\item On the string $\{\nu, \nu-\al_i,\nu-2\al_i,\dots\}$ then
  the algebras $\bLa_{\nu-m_i\al}$ arise as the base change of
  $H^{\nu^i}_{m_i}$ by a map $\gamma_\nu\colon H^{\nu^i}_{0}\cong
  \K[\ssw_1,\dots, \ssw_{\nu^i}]\to\bLa_{\nu}$.
\end{enumerate}
We can determine the map $\gamma_\nu$ using bubble slides. Again,
consider a weight of the form $\nu=\la-\sum_{j\neq i}m_j\al_j$.  In
this case, we have that $\ssx(u)=1$, so $(-1)^k\ssw_k=(-1)^k\ssy_k$ corresponds to the clockwise bubble labeled $i$ of degree $k$ as calculated in
\cite[\S 4.2]{LauSL2}.  
Thus, we can calculate how $\ssw_k$ should act in $\bLa_\nu$ using the
bubble slide \eqref{bubble-slide1}, reflected through a horizontal
axis.

Fix a sequence $(-j_1,\dots, -j_m)$ where $j\in -\Gamma$ appears $m_j$
many times. We will use
  $\circlearrowright_i(k)$ as shorthand for the clockwise bubble of
  degree $k$ labeled with $i$.  We're interested in how this bubble
  will act in the rightmost region with label $\nu$.  We 
  introduce a power series \[\Theta_p(u)=(-1)^k\sum \id_{(-j_{p+1},\dots, -j_m)} \circ
\circlearrowright_i(k)\circ \id_{(-j_1,\dots, -j_p)}u^k\] given by the
action of these bubbles when placed between the $p$th and $p+1$st
strand.  The bubble slides allow us to compute these power series
inductively.  By assumption $\Theta_0(u)=1$, and the bubble slide
  \eqref{bubble-slide1} can be restated as the formula 
\[\Theta_p(u)=\Theta_{p-1}(u)  t_{ij_p}^{-1}\cdot
(-u)^{-c_{ij_p}}Q_{j_pi}(y_{k},-u^{-1}).\]
Thus, we have that 
\begin{equation}
\gamma_\nu(\ssw(u))= \Theta_m(u)=\prod_{j\neq i}\prod_{k=1}^{m_j}
  t_{ij}^{-1}\cdot (-u)^{-c_{ij}}Q_{ji}(p_{j,k},-u^{-1}).\label{theta}
\end{equation}
Since $\ssw_i$ has the same action on the left and right of all
bimodules in Lauda's construction, this formula holds for all weights of
the form $\mu=\nu-m_i\al_i$ as well.  Thus, we can rephrase points
(1-3) above as:

\begin{defn}\label{bla-mu}
  The ring $\bLa_\mu$ is the base extension of  $\K[\ssx_1,\dots,
\ssx_{m_i},\ssy_1,\dots, \ssy_{\mu^i+m_i}]$ via the map
$\gamma_\nu$.  That is, if we let $\gamma_\mu\colon \K[\ssx_1,\dots,
\ssx_{m_i},\ssy_1,\dots, \ssy_{\mu^i+m_i}]\to \bLa_\mu$ denote the
induced maps, then this ring is 
 generated over $\bigotimes_{j\neq i}
  \Lambda(p_{j,1},\dots,p_{j,m_j})$ by the elements $\gamma_{\mu}(\ssx_k)$
  for $k=1,\dots,m_i$ and $\gamma_{\mu}(\ssy_k)$ for $k=1,\dots, \mu^i+m_i$,
  subject only to the relation
  \begin{equation}
  \gamma_{\mu}(\ssx(u)) \gamma_{\mu}( \ssy(u))=\prod_{j\neq i}\prod_{k=1}^{m_j}
  t_{ij}^{-1}\cdot (-u)^{-c_{ij}}Q_{ji}(p_{j,k},-u^{-1}).\label{xyprod}
\end{equation}

\end{defn}
We call the category $\bLa_\mu\modu$ the {\bf quiver flag category}.

We identify the images $\gamma_{\mu}(\ssx_i)$ with the elementary symmetric polynomials in an alphabet
$p_{i,1},\dots, p_{i,m_i}$.  That is:
\[\gamma_{\mu} (\ssx(u))=\sum_{m=1}^{m_i} e_m(\bp_i)=\prod_{k=1}^{m_i} (1+p_{i,k}u),\]
where $\bp_i$
  denotes the alphabet of variables $p_{i,*}$.   We can thus write
  $\bLa_\mu$ as a quotient of the ring $$\tilde{\bLa}_\mu
\cong\bigotimes_{j\in \Gamma}\Lambda(p_{j,1},\dots,p_{j,m_j})$$
of polynomials, symmetric in each of a union of alphabets, one for
each node of $\Gamma$, with size given by $m_j$.
  In view of \eqref{xyprod}, we can identify  $\gamma_{\mu}( \ssy(u))$
  with the power series in $\tilde{\bLa}_\mu$ given by
  \[%\gamma_{\mu}(\ssy(u))=
\Xi_\mu(\bp,u):= \left(\sum_{k=0}^{\infty}
    h_{k}(\bp_i)(-u)^{k}\right)\prod_{j\neq i}\prod_{k=1}^{m_j}
  t_{ij}^{-1}\cdot (-u)^{-c_{ij}}Q_{ji}(p_{j,k},-u^{-1}).\]

Note that
if each element of $\bp_j$ is given degree $2d_j$, and $u$ given
degree $-2d_i$, then $\Xi_\mu$ is homogeneous of degree 0; this is
clear for the first term in the product, and follows from the fact
that $Q_{ji}(p_{j,k},-u^{-1})$ is homogeneous of degree $-2d_ic_{ij}$
by assumption.

  We let $f(u)\{u^g\}$ denote the $u^g$ coefficient of a polynomial or
  power series.
\begin{lemma}
  The map sending 
\[e_k(\bp_i)\to \gamma_{\mu}(\ssx_k)\qquad  \qquad\Xi_\mu\{u^g\}\mapsto \gamma_{\mu}(\ssy_g)\]
 induces an isomorphism from  the quotient of $\tilde{\bLa}_\mu $ by the
  relations:
  \begin{equation}\label{grass-rels}
    \Xi_\mu\{u^g\}=0\qquad  \text{ for all } g> \mu^i+m_{\mu}^i.  
  \end{equation}  to $\bLa_\mu$.
\end{lemma}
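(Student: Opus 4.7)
The plan is to verify that the proposed map is well-defined and then to show it is an isomorphism by presenting both sides as identical quotients of $\bLa_\nu[\ssx_1,\dots,\ssx_{m_i}]$, where $\nu=\la-\sum_{j\neq i}m_j\al_j$.

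For well-definedness, I identify $\gamma_\mu(\ssx_k)$ with $e_k(\bp_i)$, so that $\gamma_\mu(\ssx(u)) = E(u) := \prod_k (1+p_{i,k}u)$ and $H(-u) := \sum_k h_k(\bp_i)(-u)^k = 1/E(u)$. Combining this with the defining relation \eqref{xyprod} yields the power series identity $\Xi_\mu(\bp,u) = H(-u)\cdot\gamma_\mu(\ssw(u)) = \gamma_\mu(\ssy(u))$ in $\bLa_\mu$. Since $\gamma_\mu(\ssy(u))$ is a polynomial in $u$ of degree $\mu^i+m_i$ by Definition \ref{bla-mu}, all higher coefficients vanish, so the relations $\Xi_\mu\{u^g\}=0$ for $g>\mu^i+m_i$ hold in $\bLa_\mu$. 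Surjectivity is then immediate, since the listed images exhaust a generating set of $\bLa_\mu$.

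For injectivity, I eliminate the $\ssy_g$'s from the presentation of $\bLa_\mu$. The $\nu^i$ Whitney relations $(\gamma_\mu(\ssx(u))\gamma_\mu(\ssy(u)))\{u^g\}=\gamma_\nu(\ssw_g)$ split into two groups. The first $\mu^i+m_i = \nu^i-m_i$ equations take the triangular form
\[ \ssy_g + \sum_{l=1}^{\min(g,m_i)} \ssx_l\,\ssy_{g-l} = \gamma_\nu(\ssw_g), \]
and can be solved recursively to express each $\ssy_g$ as the polynomial $\Xi_\mu\{u^g\}$ in $\bLa_\nu[\ssx_*]$. Substituting these expressions into the remaining $m_i$ Whitney relations for $g=\mu^i+m_i+1,\dots,\nu^i$, and using $\ssy_g=0$ for $g>\mu^i+m_i$, produces precisely $\Xi_\mu\{u^g\}=0$ in that range. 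Hence $\bLa_\mu$ is presented as $\bLa_\nu[\ssx_1,\dots,\ssx_{m_i}]/(\Xi_\mu\{u^g\} : \mu^i+m_i<g\leq\nu^i)$.

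It remains to verify that this finite set of relations generates the same ideal as $(\Xi_\mu\{u^g\} : g>\mu^i+m_i)$. The polynomial $\gamma_\nu(\ssw(u))$ has degree $\nu^i-\la^i$ in $u$, so the identity $\Xi_\mu(\bp,u)\,E(u)=\gamma_\nu(\ssw(u))$ yields the recursion $\Xi_\mu\{u^g\} = -\sum_{k=1}^{m_i} e_k(\bp_i)\,\Xi_\mu\{u^{g-k}\}$ for every $g>\nu^i-\la^i$. Iterating shows that each $\Xi_\mu\{u^g\}$ with $g>\nu^i$ lies in the ideal generated by the Whitney-eliminated relations $\Xi_\mu\{u^{g'}\}$ for $\mu^i+m_i<g'\leq\nu^i$, completing the identification. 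The only nontrivial content in this argument is this reduction of infinitely many implicit vanishing conditions to the finite set matching Whitney elimination; the remainder is a careful bookkeeping comparison of the two presentations.
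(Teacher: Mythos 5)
Your proof is correct, and while the well-definedness step (identifying $\gamma_\mu(\ssx(u))$ with $E(u)=\prod_k(1+p_{i,k}u)$ so that $\Xi_\mu(\bp,u)=\gamma_\mu(\ssy(u))$, whence the relations \eqref{grass-rels} hold in $\bLa_\mu$) is the same as the paper's, your injectivity argument takes a genuinely different route. The paper counts ranks: $\bLa_\mu$ is free of rank $\binom{\mu^i+2m_i}{m_i}$ over $\bigotimes_{j\neq i}\Lambda(\bp_j)$ (inherited from the freeness of $H^{\nu^i}_{m_i}$ over $\K[\ssw_1,\dots,\ssw_{\nu^i}]$), while the leading-term structure $\Xi_\mu\{u^g\}=h_g(\bp_i)+\cdots$ shows the quotient $\bLa'$ is spanned by a set of the same cardinality, namely a basis of $\Lambda(\bp_i)/(h_g:g>\mu^i+m_i)\cong H^*(\operatorname{Gr}(m_i,\mu^i+2m_i))$; a surjection between modules of equal finite rank is then forced to be an isomorphism. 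You instead work directly with the presentation from Definition \ref{bla-mu}: you eliminate the $\ssy_g$ via the triangular Whitney relations, observe that the residual $m_i$ relations generate (after a unitriangular change of generators, which you gloss over slightly but harmlessly) the same ideal as $\{\Xi_\mu\{u^g\}:\mu^i+m_i<g\leq\nu^i\}$, and then use the recursion $\Xi_\mu\{u^g\}=-\sum_{k=1}^{m_i}e_k(\bp_i)\Xi_\mu\{u^{g-k}\}$, valid for $g>\nu^i-\la^i$, to absorb the infinitely many relations with $g>\nu^i$ into that finite set (the index check $g-k>\nu^i-m_i=\mu^i+m_i$ is exactly what makes the induction close up). Your approach buys a self-contained, purely ideal-theoretic proof that never invokes the freeness of the equivariant cohomology ring over the Chern-class subring; the paper's approach is shorter but leans on that geometric/symmetric-function input and leaves the ideal comparison implicit.
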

\begin{proof}
Let $\bLa'$ be the quotient of $\tilde{\bLa}_\mu $ by the relations \eqref{grass-rels}.
First, we note that we have a map $\bLa' \to \bLa_\mu$.  The equality $\gamma_{\mu}(\ssy(u))=\gamma_{\mu}(\ssw(u))/\gamma_{\mu}(\ssx(u))=\Xi_\mu(\bp,u)$
implies that the relations \eqref{grass-rels} hold as the corresponding
coefficients of $\ssy(u)$ vanish as well.  

The ring
$\bLa_\mu$ has rank $\binom{\mu^i+2m_i}{m_i}$ as a module over $\bigotimes_{j\neq i}
  \Lambda(p_{j,1},\dots,p_{j,m_j})$.  On the other hand, the
  coefficients $\Xi_\mu\{u^g\}$ have the form $h_g(\bp_i)+\cdots $
  where the remaining terms have lower order in $\bp_i$.  Thus, $\bLa'$ is spanned over $\bigotimes_{j\neq i}
  \Lambda(p_{j,1},\dots,p_{j,m_j})$ by any
  spanning set in $\Lambda(\bp_i)/(h_g(\bp_i)\mid g>
  \mu^i+m_{\mu}^i)$ which is isomorphic to $H^*(\operatorname{Gr}(m^i,\mu^i+2m^i))$. Therefore,
  the rank of $\bLa'$ is $\leq \binom{\mu^i+2m_i}{m_i}$, which
  is only possible if the map is an isomorphism.
\end{proof}

\subsubsection{The action} We wish to define a 2-functor
$\mathcal{G}_\la$ from $\tU_i^-$ to the 2-category of $\K$-linear categories which on the level of 0-morphisms sends $\mu \mapsto \bLa_\mu\modu$. 
On 1-morphisms, we need only say where we send the 1-morphisms $\eE_i$
and $\eF_j$ for all $j\in \Gamma$.  \begin{itemize}

\item The functors $\eF_j$ for $j\neq i$ act by tensoring with the $\bLa_\mu\operatorname{-}\bLa_{\mu-\al_j}$ bimodule $\bLa_{\mu}[p_{j,m_j+1}]$.
The left-module structure over $\bLa_\mu$ is the obvious one, and right-module over $\bLa_{\mu-\al_j}$ is a slight tweak of this: $e_k(\bp_j')$ acts by $e_k(\bp_j,p_{j,m_j+1})$, $e_k(\bp_m')$ by $e_k(\bp_m)$ for $m\neq j$.
\item The functor $\eF_i$ acts by an analogue of the action in Lauda's paper \cite{LauSL2}; tensor product with a natural $\bLa_\mu\operatorname{-}\bLa_{\mu-\al_i}$-bimodule $\bLa_{\mu;i}$ which is a quotient of $\bLa_\mu[p_{i,m_i+1}]$ by the relation
 \begin{equation}\label{bim-rel}
 \left(\sum_{c=0}^\infty (-p_{i,m_i+1}u)^c\right) \Xi_\mu\{u^g\}=0\qquad  \text{ for all } g> \mu^i+m_i -1 \end{equation}
with the same left and right actions as above.
\item Similarly, the functor $\eE_i$ acts by tensor product with $\dot{\bLa}_{\mu+\al_i;i}$, the bimodule defined above with the actions above reversed.  This can also be presented as a quotient of $\bLa_{\mu}[p_{i,m_i}]$ by the relation 
\begin{equation*}%\label{bim-rel}
 \left(1+ p_{i,m_i+1}u\right) \Xi_\mu\{u^g\}=0\qquad  \text{ for all } g> \mu^i+m_i.\end{equation*}
\end{itemize}
Now, we must specify the action of 2-morphisms.  %The 2-morphism
%$y\colon \eF_j\to \eF_j$ is always
%multiplication by $p_{i,m_i+1}$; similarly, for $\eE_i$, it is
%given by  $p_{i,m_i}$.  

All the morphisms only involving only the label $i$ are inherited from the
corresponding construction for $\mathfrak{sl}_2$, given by Lauda in
\cite{LauSL2}.  Let $s$ be a 1-morphism in
$\tU_{\mathfrak{sl}_2}$ and let $\eta'(s)$ be the bimodule over equivariant
cohomology rings of Grassmannians associated to $s$ under the
representation $\Gamma^G_p$ defined in \cite[\S 4.1]{LauSL2}; recall
that we have an auto-functor $\tilde{\omega}\colon \tU\to \tU$ defined
in \cite[\S 3.3.2]{KLIII} which swaps $\eE_i$ and $\eF_i$.  Note, we
must use this functor because Lauda's construction uses a lowest
weight representation, rather than a highest weight; that is, he
associates the Grassmannian of 0 planes in $\C^N$ to the $-N$ weight
space of a representation, by \cite[(4.1)]{LauSL2}.\footnote{Of
  course, there are other differences between our conventions and
  Lauda's, but these cancel.  In this paper, we read diagrams from
  left to right, rather than right to left, but because we use left
  modules, we have the same conventions for ordering bimodules as Lauda.}

\begin{lemma}\label{Lauda-change}
  The base change by the map $\gamma_{\mu}$ sends the bimodule $\eta'(\tilde{\omega}s)$ to the
  bimodule $\mathcal{G}_\la(s)$.  
\end{lemma}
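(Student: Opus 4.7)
My plan is to verify the isomorphism on the generating 1-morphisms and the elementary 2-morphisms, relying on 2-functoriality of both sides. Both $\mathcal{G}_\la(-)$ and the base change $\bLa_\nu\otimes_{H^{\nu^i}_0}\eta'(\tilde{\omega}-)$ are 2-functors from the subcategory of $\tU_i^-$ generated by the label $i$ to $\bLa_\nu$-bimodules, and they agree on 0-morphisms by the very definition of $\bLa_\mu$ in Definition \ref{bla-mu}. So the task reduces to matching the bimodules assigned to $\fE_i$ and $\fF_i$, and then matching the bimodule maps associated to the elementary 2-morphisms.

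First I would set up the base change carefully. Every bimodule appearing in $\eta'$ is the equivariant cohomology of a natural incidence variety inside a product of Grassmannians for $GL_{\nu^i}$, and every structure map is built from equivariant pushforward, pullback, or multiplication by Chern classes. Each such bimodule therefore carries commuting left and right actions of $H^{\nu^i}_0=\K[\ssw_1,\ldots,\ssw_{\nu^i}]$ that in fact coincide, and every structure map is $H^{\nu^i}_0$-linear; consequently base change along any ring homomorphism $H^{\nu^i}_0\to B$ yields a canonical 2-representation. Applied with $B=\bLa_\nu$ and the map $\gamma_\nu$, Definition \ref{bla-mu} is precisely the statement that $H^{\nu^i}_{m_i}$ base-changes to $\bLa_{\nu-m_i\al_i}$.

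Next I would match 1-morphisms. Lauda's bimodule for $\fE$ (which equals $\tilde{\omega}\fF_i$) is the equivariant cohomology of $\operatorname{Fl}(m_i,m_i+1,\nu^i)$, presented as $H^{\nu^i}_{m_i}[p_{i,m_i+1}]$ modulo the relations encoding the tautological splitting $\ssy(u)=(1+p_{i,m_i+1}u)\ssy^{\mathrm{new}}(u)$ of the universal quotient bundle along the extra hyperplane. Since $\gamma_\mu(\ssy(u))=\Xi_\mu(\bp,u)$ by construction, base change along $\gamma_\mu$ rewrites this relation as exactly \eqref{bim-rel}, so the base change reproduces $\bLa_{\mu;i}=\mathcal{G}_\la(\fF_i)$. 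The analogous argument with the roles of sub- and quotient bundles reversed matches $\dot{\bLa}_{\mu+\al_i;i}$ with the base change of $\eta'(\tilde{\omega}\fE_i)$. For the 2-morphisms, $\mathcal{G}_\la$ is \emph{defined} on all cups, caps, dots, and crossings with label $i$ to be the base change of Lauda's maps, so compatibility is tautological on generators; the relations \eqref{pitch1}--\eqref{triple-smart} that involve only the label $i$ hold in $\eta'$ by \cite{LauSL2} and so persist under base change, and $\mathcal{G}_\la$ is well-defined on the subcategory generated by $\fE_i,\fF_i$.

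The main obstacle I anticipate is the bookkeeping in the 1-morphism step: one must verify that Lauda's presentation of the flag-variety bimodule, base-changed through $\gamma_\mu$, produces \eqref{bim-rel} literally. This hinges on the precise form of $\gamma_\mu$ extracted from the bubble slide \eqref{bubble-slide1} and recorded in \eqref{theta}, and on tracking the correct degree bound after base change, since the relations \eqref{grass-rels} already imposed on $\bLa_\mu$ absorb the vanishing $\ssy_g=0$ for $g>\nu^i-m_i$ and it is exactly the residual part that becomes the bound $g>\mu^i+m_i-1$ appearing in \eqref{bim-rel}.
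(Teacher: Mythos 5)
Your proposal is correct and follows essentially the same route as the paper's proof: reduce to the single generator (the paper checks $\eta'(\eE_i)\mapsto\mathcal{G}_\la(\eF_i)$), identify Lauda's flag-variety bimodule with $\bLa_{\mu;i}$ by tracking the splitting of $\ssy(u)$ under $\gamma_\mu$ — the paper does this via the explicit assignment $w_k\mapsto e_k(\bp_i)$, $\xi\mapsto p_{i,m_i+1}$, $z_\ell\mapsto\Xi_{\mu-\al_i}(u)\{u^\ell\}$ and the identity $(1+up_{i,m_i+1})\Xi_{\mu-\al_i}(u)=\Xi_\mu(u)$ — and then extend to arbitrary words by compatibility of tensor products with base change, which the paper justifies by noting the bimodules are sweet. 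The differences are purely presentational (quotient presentation versus free polynomial presentation of the flag cohomology), so no gap.
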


\begin{proof}
   This follows from the definition for $\eE_i$ and $\eF_i$; since
   these bimodules are flips of each other, we need only check it that
   $\eta'(\eE_i)$ is sent to $\eta(\eF_i)$.  
Using the notation of \cite{LauSL2}, the bimodule $\eta'(\eE_i)$ is just the polynomial ring $\K[w_1,\dots,
w_{m_i},\xi, z_1,\dots, z_{p-m_i-1}]$ and we can define a map
of this to $\bLa_{\mu;i}$ by sending \[w_k\mapsto e_k(\bp_i)\qquad \xi\mapsto
p_{i,m_i+1}\qquad z_\ell\mapsto \Xi_{\mu-\al_i}(u)\{u^\ell\}.\]
Indeed, the left and right actions match those given
   by Lauda in \cite[\S 3.1]{LauSL2}; for the left action, this
   follows from \[\ssx_k\mapsto w_k\mapsto e_{k}(\bp_{i})\qquad
   \ssy_\ell\mapsto z_\ell+\xi\cdot z_{\ell-1}=\Xi_{\mu}(u)\{u^\ell\}\]
 where the last equality holds since
 $(1+up_{i,m_i+1})\Xi_{\mu-\al_i}(u)=\Xi_{\mu}(u)$. For the right
 action,  we have that \[\ssx_k\mapsto w_k+\xi w_{k-1}\mapsto
 e_{k}(\bp_{i}, p_{i,m_i+1})\qquad
  \ssy_\ell\mapsto z_\ell\mapsto \Xi_{\mu-\al_i}(u)\{u^\ell\},\] as
 desired.

The same result holds for tensor products of these bimodules, since
they are free both as left and as right modules, that is, they are {\bf
  sweet}.  Thus, tensor products and base change commute, and we are done.  
\end{proof}

Thus, we can define all 2-morphisms between $\eF_i$'s and $\eE_i$'s by
simply base changing the same 2-morphisms from \cite[\S
4.1]{LauSL2}.  That is, in our notation, we have that
\begin{itemize}
\item the transformations $y$ acts by 
  \begin{equation}\label{y-def}
    y\colon \eF_i\to \eF_i\mapsto (f\mapsto f p_{i,m_i+1})\qquad
    y\colon \eE_i\to \eE_i \mapsto (f\mapsto f p_{i,m_i})
  \end{equation}
\item the transformation $\psi\colon \eF_i^2\to \eF_i^2$ acts by
  multiplication by the
  usual Demazure operator in the last two variables:
  \begin{equation}
f\mapsto \frac{f^s-f}{p_{i,m_i+2}-p_{i,m_i+1}}
\end{equation}

where $s$ is the transformation switching $p_{i,m_i+1}$ and
$p_{i,m_i+2}$.
\item the adjunctions are given by:
\newseq
\begin{align*}
 \iota=\tikz[baseline,very thick,scale=2.5]{\draw[->] (.2,.1)
  to[out=-120,in=-60] node[at end,above left,scale=.8]{$i$}
  node[at start,above right,scale=.8]{$i$} (-.2,.1) ;\node[scale=.8] at
  (0,.3){$\mu+\al_i$}; \node[scale=.8] at (0,-.2){$\mu$};}
     &\mapsto
     \Big(1 \mapsto
    \xsum{j=0}{\mu^i+m_i}(-1)^{j}\Xi_{\mu}(u)\{u^j\} \otimes
    p_{i,m_i}^{\mu^i+m_i-j} \Big)
%=   \xsum{\ell=0}{\mu^i+m_i} (-1)^{\ell}
%     p_{i,m_i}^{\mu^i+m_i-\ell} \otimes \Xi_\mu(t)\{t^\ell\}
\subeqn
\\
\iota'=\tikz[baseline,very thick,scale=2.5]{\draw[<-] (.2,.1)
  to[out=-120,in=-60]  node[at end,above left,scale=.8]{$i$}
  node[at start,above right,scale=.8]{$i$} (-.2,.1);\node[scale=.8] at
  (0,.3){$\mu-\al_i$}; \node[scale=.8] at (0,-.2){$\mu$};}
    & \mapsto 
     \Big(1 \mapsto
     \sum_{j=0}^{m_i}(-1)^{j} p_{i,m_i+1}^{m_i-j}\otimes e_j(\bp_i) 
     \Big)
%=  \sum_{\ell=0}^{m_i}(-1)^{\ell}  p_{i,m_i+1}^{m_i-\ell}\otimes e_\ell(\bp_i)
\subeqn
\end{align*}
\begin{align*}
\ep'&=\tikz[baseline,very thick,scale=2.5]{\draw[<-] (.2,.1)
  to[out=120,in=60] node[at end,below left,scale=.8]{$i$}
  node[at start,below right,scale=.8]{$i$} (-.2,.1) ;\node[scale=.8] at
  (0,.4){$\mu$}; \node[scale=.8] at (0,-.1){$\mu+\al_i$};}\mapsto
     \Big(p_{i,m_i}^{a} \otimes  p_{i,m_i}^{b}
    \mapsto 
   h_{a+b-m_i+1}(\bp_i) \Big)\subeqn\\
\ep&=\tikz[baseline,very thick,scale=2.5]{\draw[->] (.2,.1)
  to[out=120,in=60] node[at end,below left,scale=.8]{$i$}
  node[at start,below right,scale=.8]{$i$}  (-.2,.1) ;\node[scale=.8] at
  (0,.4){$\mu$}; \node[scale=.8] at (0,-.1){$\mu-\al_i$};}\mapsto
    \Big(p_{i,m_i+1}^{a} \otimes  p_{i,m_i+1}^{b}
    \mapsto 
    (-1)^{a+b+1-m_i-\mu^i}\Xi^{-1}_\mu(u)\{u^{a+b+1-m_i-\mu^i}\}\Big)\subeqn
 \end{align*}
\end{itemize}
Above, we use that by our assumptions on $Q_{ij}$, the power series
$\Xi(u)$ has a non-zero constant term, and thus has a formal inverse
in $\Lambda(\bp)[[t]]$, which we denote $\Xi^{-1}(u)$.  By the usual
Cauchy formula, we have \[\Xi^{-1}(u)=\left(\sum_{k=0}^{\infty}
  e_{k}(\bp_i)u^{k}\right)\prod_{j\neq i}\prod_{k=0}^{m_j}
\frac{t_{ij}\cdot u^{c_{ij}}}{Q_{ji}(p_{j,k},-u^{-1})}. \]

Now we turn to the question of describing the action of 2-morphisms
involving labels other than $i$. Choose an orientation on $\Gamma$ so
that $i$ is a source; this is necessary in order to pin down
conventions for the action of the KLR algebra in its polynomial
representation, as we see in \eqref{psi-action} below.  We
can identify $\eF_j\eF_k$ with tensor product with $\bLa_{\mu;k}\otimes_{\bLa_{\mu-\al_k}}
\bLa_{\mu-\al_k;j}$, and 
define the transformation $\psi\colon \eF_j\eF_k\to \eF_k\eF_j$ as in
\cite[Proposition 3.12]{Rou2KM} or \cite{KLII}
via the formulae
\begin{equation}\label{psi-action}
  \psi (f) =
  \begin{cases}
    \frac{f^s-f}{p_{j,m_j+2}-p_{i,m_j+1}} & j=k\\
    Q_{jk}(p_{j,m_j+1},p_{k,m_k+1}) f & j \to k\\
    f & j \not\to k
  \end{cases}
\end{equation}
Finally, we use \eqref{pitch1} as the definition of a crossing of a
downward $j$ colored strand and an upward $i$ colored one.
Note that it is not obvious that these formulae are well defined in
all cases; we will check this in the course of the proof.  
\begin{thm}\label{non-degenerate}
The formulas of (\ref{y-def}-\ref{psi-action}) define a strict 2-functor $\mathcal{G}_\la$
from $\tU^-_i$ to the 2-category of $\K$-linear categories which sends $\mu \mapsto \bLa_\mu\modu$. 
\end{thm}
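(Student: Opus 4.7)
The plan is to reduce the verification to two largely independent pieces: the purely $i$-labeled relations, which come for free from Lauda's construction via base change, and the relations involving labels $j\neq i$, which come from the standard polynomial representation of the quiver Hecke algebra. The bridge between them is provided by the bubble slide, which was used in the very definition of $\bLa_\mu$.

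First, I would check that all the bimodules are well-defined. The bimodule $\bLa_{\mu;i}$ is presented as a quotient of $\bLa_\mu[p_{i,m_i+1}]$ by the relations \eqref{bim-rel}; these are exactly the consequences of the defining relations \eqref{grass-rels} of $\bLa_{\mu-\al_i}$ obtained by multiplying $\Xi_\mu(u)$ by $(1+up_{i,m_i+1})^{-1}=\sum_c(-p_{i,m_i+1}u)^c$ and using $(1+up_{i,m_i+1})\Xi_{\mu-\al_i}(u)=\Xi_{\mu}(u)$, so the two actions are compatible. Similarly for $\eE_i$ and for $\eF_j$ with $j\neq i$ (where no quotienting is needed).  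I would then note that all these bimodules are sweet, so tensor products commute with base change.

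Next, I would dispatch the purely $i$-colored relations. Lemma \ref{Lauda-change} identifies each $i$-colored generating bimodule as the base change along $\gamma_\mu$ of the bimodule Lauda uses in his $\mathfrak{sl}_2$ categorification, and the formulas \eqref{y-def}--\eqref{psi-action} (restricted to $i$-strands) and the cup/cap formulas are obtained by base change of Lauda's formulas. Since all relations in $\tU_{\mathfrak{sl}_2}^-$ among these generators hold in Lauda's action on equivariant cohomology of Grassmannians by \cite{LauSL2}, they hold after base change: this settles relations \eqref{pitch1}--\eqref{switch-2} and \eqref{nilHecke-1}--\eqref{black-bigon} when all labels are $i$, together with the bubble relations \eqref{zero-bubble}--\eqref{inv}. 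The purely $j\neq i$ downward relations \eqref{first-QH}--\eqref{triple-smart} are exactly the relations satisfied by the polynomial representation of the KLR algebra, as recorded in \cite[Prop.~3.12]{Rou2KM}; the formulas in \eqref{psi-action} reproduce that representation on $\eF_j\eF_k$, $j,k\neq i$.

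The real work is in the mixed relations, where one strand carries label $i$ and another a label $j\neq i$. The pitchfork relations \eqref{pitch1}--\eqref{pitch2} mixing a downward $j$-strand and an upward $i$-strand are imposed as the definition of such mixed crossings, so they are tautological; their mirror images and the triple-point relations \eqref{triple-dumb}--\eqref{triple-smart} involving one $i$-strand reduce, by this definition together with \eqref{psi-action}, to polynomial identities in $\tilde\bLa$ which I would verify directly. The cancellation relations \eqref{opp-cancel1}--\eqref{opp-cancel2} involving an upward $i$ and downward $j$ similarly follow from the pitchfork definition combined with the $jk$-crossing formula. The crucial mixed relation is the bubble slide \eqref{bubble-slide1}: I need to check that a clockwise $i$-bubble of degree $b$ acts on $\bLa_\mu$ as multiplication by $(-1)^b\gamma_\mu(\ssw_b)$ and that pulling it across a downward $j$-strand multiplies by $t_{ji}^{-1}(-u)^{-c_{ji}}Q_{ji}(p_{j,m_j+1},-u^{-1})$. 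But this is exactly the relation \eqref{theta} that was used to \emph{define} $\gamma_\mu$, so it holds by construction.

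The main obstacle I expect is bookkeeping rather than conceptual: verifying that the cup/cap formulas in \eqref{y-def}--\eqref{psi-action} are compatible with the mixed relations (in particular, that the scalar $\Xi_\mu^{-1}(u)$ appearing in $\ep$ produces the correct action after the pitchfork is imposed) and showing that the mixed $\psi$ acts consistently after the quotient defining $\bLa_{\mu;i}$. In each case, the check reduces to a generating-function identity whose content is that $\gamma_\mu(\ssx(u))\gamma_\mu(\ssy(u))=\Theta_{m}(u)$, which is \eqref{xyprod}. Once these identities are assembled, strictness of the 2-functor follows from the locality of all relations together with the sweetness of the bimodules.
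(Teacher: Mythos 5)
Your proposal is correct and follows essentially the same route as the paper: base change from Lauda's Grassmannian construction for the purely $i$-labeled relations, the KLR polynomial representation for labels $j\neq i$, the pitchfork relation as the \emph{definition} of the mixed crossing, and a reduction of the remaining mixed checks (well-definedness of $\psi$ on $\bLa_{\mu;j}\otimes\bLa_{\mu-\al_j;i}$ and the relations (\ref{opp-cancel1})--(\ref{opp-cancel2})) to generating-function identities rooted in \eqref{xyprod}. The only thing you defer is the explicit symmetric-function computation of the compositions $\ep'\psi\iota'$ and $\ep\psi\iota$ showing they equal $t_{ij}$ times the identity, but you correctly locate where that computation lives and which identity it rests on.
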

\begin{proof}
First, we must check the maps we have given above make sense, and then
we must confirm that they satisfy the correct relations.  For diagrams
only involving the label $i$, this follows immediately from base change
by \cite[Theorem 4.13]{LauSL2}.  

On the other hand, for diagrams not involving $i$'s, the variables
$p_{j,m_j+1}$ and $p_{k,m_k+1}$ act freely.  Thus, the
formulae of \eqref{psi-action} give well-defined maps. The relations
 (\ref{first-QH}--\ref{triple-smart})  follow
since these operators match a known representation
of the KLR algebra (given, for example, in \cite[Proposition
3.12]{Rou2KM}). 

Thus, the only issue is the interaction between these 2 classes of functors.
In particular, it remains to show the maps corresponding to elements of  $R(\nu)$ are well defined (the relations between them then automatically hold, since quotienting out by relations will not cause two things to become unequal).  

Now, consider the bimodules  $
\bLa_{\mu;j}\otimes_{\bLa_{\mu-\al_j}}\bLa_{\mu-\al_j;i}$ and $\bLa_{\mu;i}\otimes_{\bLa_{\mu-\al_i}}
\bLa_{\mu-\al_i;j}$.  The
functors of tensor product with these are canonically isomorphic to
$\eF_i\eF_j$ and $\eF_j\eF_i$, respectively (though they are not the
same ``on the nose''), so it suffices to define the map $\psi$ as a
map between these bimodules.  The latter is just $\bLa_{\mu;i}[ p_{j,m_j+1}]$, so the relations are just (\ref{bim-rel}).  

The former is a quotient of
$\bLa_{\mu}[p_{j,m_j+1},p_{i,m_i+1}]$ by 
\begin{equation}
u^{-c_{ij}}Q_{ji}(p_{j,m_j+1},-u^{-1})\left(\sum_{c=0}^\infty
  (-p_{i,m_i+1}u)^c\right) \Xi_\mu\{u^g\}=0\qquad  \text{ for all
} g> \mu^i+m_i -1-c_{ji}.\label{Xibim1}
\end{equation}

Note
that
\begin{multline}
\frac{(-u)^{-n}}{1+p_{i,m_i+1}u}=\frac{(-u)^{-n}-p_{i,m_i+1}^n}{1+p_{i,m_i+1}u}+\frac{p_{i,m_i+1}^n}{1+p_{i,m_i+1}u}\\=(-u)^{-n}+(-u)^{-n+1}p_{i,m_i+1}+\cdots -u^{-1}p_{i,m_i+1}^{n-1}+\frac{p_{i,m_i+1}^n}{1+p_{i,m_i+1}u}\label{Xibim2}
\end{multline}
Modulo the relations
(\ref{grass-rels}) of $\bLa_\mu$, we have the equality \[
u^{-c_{ij}}Q_{ji}^{(k,n)}p_{j,m_j+1}^k((-u)^{-n}+(-u)^{-n+1}p_{i,m_i+1}+\cdots -u^{-1}p_{i,m_i+1}^{n-1}) \Xi_\mu\{u^g\}=0\qquad  \text{ for all
} g> \mu^i+m_i -1-c_{ji},\] so replacing every $(-u)^n$ in
\eqref{Xibim1} with the equality from \eqref{Xibim2}, we have that \[u^{-c_{ij}}Q_{ji}(p_{j,m_j+1},p_{i,m_i+1})\left(\sum_{c=0}^\infty (-p_{i,m_i+1}u)^c\right) \Xi_\mu\{u^g\}=0\qquad  \text{ for all
} g> \mu^i+m_i -1-c_{ji}.\]
 Thus, the new relations introduced are exactly $Q_{ji}(p_{j,m_j+1},p_{i,m_i+1})$ times those of $\bLa_{\mu;i}[ p_{j,m_j+1}]$.
Thus, the definition of $\psi$ given above indeed induces a map as
desired.

Let us illustrate this point in the simplest case, when $\mu=\la$.
In this case, we have that \begin{align*}\bLa_{\la}&=\K, &\bLa_{\la-\al_i}&=\K[p_i]/(p_i^{\la^i})\\
\bLa_{\la-\al_j}&=\K[p_j] &\bLa_{\la-\al_i-\al_j}&=\K[p_i,p_j]/(p_i^{\la^i}Q_{ji}(p_j,p_i))\end{align*}

The only one of these requiring any appreciable computation is the last.  In this case, we have the relation $p_i^{\la^i}Q_{ji}(p_j,p_i)=0$ by taking the $u^{\la^i-1-c_{ij}}$ term of $(1-p_it+\cdots)u^{-c_{ij}}Q_{ji}(p_j,-u^{-1})$. 

Finally, we must prove the relations 
(\ref{opp-cancel1}) and (\ref{opp-cancel2}).  This is simply a calculation, given that we have already defined the morphisms for all the diagrams which appear.  The composition
\begin{equation}
\eF_j\eE_i\overset{\iota_1'}\longrightarrow \eE_i\eF_i\eF_j\eE_i\overset{\psi_2}\longrightarrow \eE_i\eF_j\eF_i\eE_i\overset{\ep_3'}\longrightarrow \eE_i\eF_j\label{eq:FE}
\end{equation}
pictorially
\[\tikz[very thick]{\draw[postaction={decorate,decoration={markings,
    mark=at position .8 with {\arrow[scale=1.3]{<}}}}] (0,0) to[out=90,in=-90] node[below,at start]{$j$} (1,1);\draw[postaction={decorate,decoration={markings,
    mark=at position .8 with {\arrow[scale=1.3]{>}}}},postaction={decorate,decoration={markings,
    mark=at position .2 with {\arrow[scale=1.3]{>}}}}] (-.5,0)
  to[out=90,  in=180] node[below,at start]{$i$} (0,.7) to[out=0, in=180] (1,.3) to[out=0,in=-90] (1.5,1); }
\]
is given by 
\begin{align*}
\ep_3'\psi_2\iota_1'(p_{i,m_i}^a\otimes p_{j,m_j+1}^b)&=
\ep_3'\psi_2\left(\sum_{k=0}^{m+m_i-1}(-1)^kp_{i,m_i}^a\otimes
  p_{j,m_j+1}^b  \otimes p_{i,m_i}^{m_i-k-1}\otimes e_k(\bp_{i}^-) \right)\\
&=\ep_3'\left(\sum_{k=0}^{m_i-1}(-1)^kp_{i,m_i}^a\otimes p_{i,m_i}^{m_i-k-1} \otimes p_{j,m_j+1}^b \otimes e_k(\bp_{i}^-)\right)\\
&=\sum_{k=0}^{a} (-1)^k p_{j,m_j+1}^b \otimes
h_{a-k}(\bp_i)e_k(\bp_{i}^-)\\
&=p_{j,m_j+1}^b\otimes p_{i,m_i}^a 
\end{align*}
Above, we use $\bp_i^-$ to denote the alphabet
$\{p_{i,1},\dots,p_{i,m_i-1}\}$, and we use the
identity \[\sum_{k=0}^{a} (-1)^k h_{a-k}(\bp_i)e_k(\bp_{i}^-) =\frac{\prod_{k=1}^{m_i-1}(1-up_{i,k})}{\prod_{k=1}^{m_i}(1-up_{i,k})}\{u^a\}=p_{i,m_i}^a.\]
The composition
\begin{equation}
\eE_i\eF_j\overset{\iota_3}\longrightarrow \eE_i\eF_j\eF_i\eE_i\overset{\psi_2}\longrightarrow \eE_i\eF_i\eF_j\eE_i\overset{\ep_1}\longrightarrow \eF_j\eE_i\label{eq:EF}
\end{equation}
pictorially
\[\tikz[very thick]{\draw[postaction={decorate,decoration={markings,
    mark=at position .8 with {\arrow[scale=1.3]{<}}}}] (0,0) to[out=90,in=-90] node[below,at start]{$j$} (-1,1);\draw[postaction={decorate,decoration={markings,
    mark=at position .8 with {\arrow[scale=1.3]{>}}}},postaction={decorate,decoration={markings,
    mark=at position .2 with {\arrow[scale=1.3]{>}}}}] (.5,0)
  to[out=90,  in=0]  node[below,at start]{$i$} (0,.7) to[out=180, in=0] (-1,.3) to[out=180,in=-90] (-1.5,1); }
\]
is given by 
\begin{align*}
\ep_1\psi_2\iota_3(p_{j,m_j+1}^b\otimes p_{i,m_i}^a )&=
\ep_1\psi_2\left(\sum_{k=0}^{m_i-1} (-1)^k \Xi_\mu(u)\{u^k\} \otimes
  p_{i,m_i}^{\mu^i+m_i-k}  \otimes p_{j,m_j+1}^b \otimes p_{i,m_i}^a  \right)\\
&=\ep_1\left(\sum_{k=0}^{m_i-1} (-1)^k \Xi_\mu(u)\{u^k\} \otimes p_{j,m_j+1}^b \otimes p_{i,m_i}^{m_i-k+1} Q_{ij}( p_{i,m_i},p_{j,m_j+1}) \otimes p_{i,m_i}^a 
\right)\\
&=\sum_{k=0}^{a} (-1)^k \Xi_\mu(u)\{u^k\}\cdot
\Xi_{\mu+\al_i-\al_j}(u)^{-1}Q_{ji}(p_{j,m_j+1},-u)\{u^{a-k-c_{ji}}\}
\otimes    p_{j,m_j+1}^b \\
&=t_{ij}p_{i,m_i}^a\otimes    p_{j,m_j+1}^b
\end{align*}
Thus, composing the maps \eqref{eq:FE} and \eqref{eq:EF} in either
order gives $t_{ij}$ times the identity, confirming the relations (\ref{opp-cancel1}-\ref{opp-cancel2}).
\end{proof}

Recall the spanning set $B_{i,G,H}$ defined in Definition
\ref{BiGH}. In fact, this set is a basis:
\begin{cor}\label{i-nondegenerate}
  Every non-trivial linear combination of elements of $B_{i,G,H}$ in $\tU_i^-$ acts non-trivially in one of these categories.  That is, $\tU_i^-$ is non-degenerate in the sense of Khovanov-Lauda.
\end{cor}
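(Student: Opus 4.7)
The plan is to detect nonvanishing of $2$-morphisms via the functors $\mathcal{G}_\la$ from Theorem \ref{non-degenerate}, with $\la$ dominant and chosen sufficiently large. Suppose $x = \sum_{b \in B_{i,G,H}} c_b\, b$ is a nonzero combination in a fixed degree; I will outline how to find $\la$ and an input $f \in \bLa_\mu$ with $\mathcal{G}_\la(x)(f) \neq 0$.

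Elements of $B_{i,G,H}$ are parametrized by triples consisting of a matching $\varphi$, a dot-multiplicity vector $\bd$, and a bubble-power vector $\mathbf{e}$, so the combination decomposes as $x = \sum_\varphi x_\varphi$. The first step is to separate matchings: since $m_\varphi$ is chosen with a minimal number of crossings, the images $\mathcal{G}_\la(m_\varphi)$ in the space of bimodule maps $\mathcal{G}_\la(\eF_G) \to \mathcal{G}_\la(\eF_H)$ can be distinguished by their polynomial degree and support in the alphabets $\bp_j$ (using the structure of tensor products of the sweet bimodules $\bLa_{\mu;j}$), which reduces the problem to showing vanishing separately for each $x_\varphi$.

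Having fixed $\varphi$, the dot and bubble indices can be handled separately. For the dots, faithfulness follows from the polynomial representation of the KLR algebra \cite[Prop.~3.12]{Rou2KM} combined with Lauda's \cite[Theorem 4.13]{LauSL2} applied to the upward $i$-strands via the base change of Lemma \ref{Lauda-change}. For the bubbles, the bubble-slide relations \eqref{bubble-slide1}--\eqref{bubble-slide2} are invertible, so one may first move all bubbles into a single fixed region, where they act on $\bLa_\mu$ by central multiplication by explicit scalars; Lauda's calculation in \cite[\S 4.2]{LauSL2} shows that for $\la$ with $\la^i$ large enough, bubbles with distinct dot counts act by linearly independent scalars.

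The main obstacle I anticipate is coordinating these two faithfulness inputs. Once bubbles have been slid to the rightmost region they act by multiplication by elements of $\bigotimes_{j\in\Gamma}\Lambda(\bp_j)$; because $\bLa_\mu$ arises from this ring via the explicit relations of Definition \ref{bla-mu} and is free over $\bigotimes_{j\ne i}\Lambda(\bp_j)$, these scalars are independent of the polynomial action of dots on the strands of $m_\varphi$. Combining all three independence statements yields $\mathcal{G}_\la(x) \neq 0$ for some sufficiently dominant $\la$, proving that $B_{i,G,H}$ is a basis and hence that $\tU_i^-$ is non-degenerate.
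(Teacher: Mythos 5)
Your overall strategy---detecting nonvanishing by acting on the quiver flag categories $\bLa_\mu\modu$ via $\mathcal{G}_\la$ with $\la$ chosen so large that the relations \eqref{grass-rels} sit in degree $>N$---is the same as the paper's, and that last step is sound. The gap is in how you arrive at a situation where faithfulness of the polynomial action can actually be invoked. The paper first uses the biadjunction of $\eF_i$ and $\eE_i$ together with the commutation relations to reduce to the case where $G$ and $H$ consist only of downward strands; after that reduction there are no cups, caps, or leftover orientation data, the matchings are simply bijections between the bottom and top sequences, and $B_{i,G,H}$ becomes (bubble monomials times) the standard KLR spanning set acting on a free polynomial ring, where linear independence is known. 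You skip this reduction and instead assert that for general $G,H$ the images $\mathcal{G}_\la(m_\varphi)$ of distinct matchings ``can be distinguished by their polynomial degree and support in the alphabets $\bp_j$.'' This does not work: every term of a fixed-degree linear combination has the same degree, and two matchings of the same $G$ and $H$ involve exactly the same labels and hence the same alphabets, so neither invariant separates, for example, the cap-then-cup matching from the two-through-strands matching in $\End(\eE_i\eF_i)$. Those are precisely the configurations where cancellation could occur, and ruling it out is the whole content of the corollary.

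A secondary weakness is the claim that the dot and bubble contributions ``can be handled separately'' by citing the KLR polynomial representation and Lauda's $\mathfrak{sl}_2$ theorem independently; for a single diagram mixing downward strands of arbitrary labels with upward $i$-strands, coordinating these two faithfulness statements is nontrivial, and the adjunction reduction is exactly the device that makes it unnecessary. If you insert that reduction---close off every cup and cap using the units and counits of the biadjunction, changing $G$ and $H$ into downward sequences---the remainder of your argument collapses onto the paper's proof.
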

\begin{proof}
  If there is any pair of 1-morphisms $G,H$ where the set $B_{i,G,H}$
  is not linearly independent, then using the biadjunction of
$\eF_i$ and $\eE_i$ and the commutation relations, we can assume that
$G$ and $H$ only involve elements of $-\Gamma$, that is downward strands.  In this case, the
functor $\eF_{\Bi}$ corresponds to adjoining new variables, followed
by certain relations, where morphisms
in $\tU^-$ act on the polynomial ring by the polynomial
representation we've defined.

No linear combination in $B_i$ acts trivially before modding out by
the relations \eqref{grass-rels}.  Furthermore, for each degree $N$, we can choose $\la$ sufficiently large that all relations in $\bLa_\mu$ are of
degree $>N$. Thus, there can be
no non-trivial linear combinations in degree $N$.  
\end{proof}

\subsection{Cyclotomic quotients}
\label{sec:cyc}
Now that we understand how to add the adjoint of one of the $\eF_i$'s
to $\tU^-$, we move towards considering all of them.  Just as with
$\tU^-$ and $\tU^-_i$, we prove non-degeneracy by constructing a
family of actions which are jointly faithful.  As in the previous
section, $i$ will denote a fixed element of $\Gamma$, and we will use
$j$ for an arbitrary index.

Our first step is to realize the
cyclotomic quotient in terms of the category $\tU^-_i$.  Fix a
dominant weight $\la$.
Now, let $\Sigma_i$ be the set of sequences in $-\Gamma\cup \{+i\}$,
all considered as KL pairs with $\EuScript{L}=\la$.  In the definition
below, we'll only be interested in 1-morphisms that originate at
$\la$, and will thus use $\eF_\Bi$ to denote the 1-morphism with this
name originating at $\la$.

\begin{defn}\label{PR-def}
  Let $\PR^\la$ be the quotient of the algebra
  $\End_{\tU^-_i}(\oplus_{\Bi\in \Sigma_i} \eF_{\Bi})$ by the
  relations
  \begin{align}
    y_1^{\la^{-i_1}} 1_{\Bi}&=0& i_1\in -\Gamma \label{F-cyc}\\
1_{\Bi}&=0&
    i_1=+i \label{E-cyc}
  \end{align}
\end{defn}
In terms of diagrams, this means that we kill all diagrams of the form 
\[\tikz[baseline]{\draw[postaction={decorate,decoration={markings,
    mark=at position .8 with {\arrow[scale=1.3]{<}}}},very thick]
(0,-.5) -- node [below,at start]{$j$} node
[pos=.3,circle,fill=black,inner sep=2pt,label=below left:$\la^j$]{}
(0,.5); \node at (.5,0){$\cdots$};}\qquad \qquad \tikz[baseline]{\draw[postaction={decorate,decoration={markings,
    mark=at position .8 with {\arrow[scale=1.3]{>}}}},very thick]
(0,-.5) -- node [below,at start]{$i$}
(0,.5); \node at (.5,0){$\cdots$};}\]
irrespective of what occurs to the right of the first strand.
Note that these relations are equivalent to
(\ref{lm-relations}--\ref{cyclo-equation}), since any positive degree
counter-clockwise bubble is killed by  \eqref{F-cyc}.  In particular,
as before, if $\la$ is not dominant, this algebra is 0.
For
purposes of reference in the proof below, we'll call the strand which
is at the far left in either diagram above {\bf violating}.  We'll call a
KL pair {\bf downward} if it only uses entries from $-\Gamma$.

In $\PR^\la$, we have a natural idempotent $e_-$ which kills $\eF_{\Bi}$ if
$\Bi$ is not downward, and acts as the identity on it if it is downward.  We have a natural ring map $I\colon R^\la\to \PR^\la$ which lands
in the subalgebra $e_-\PR^\la e_-$.
\begin{lemma}\label{PR-iso}
  The map $I$ induces an isomorphism $R^\la\cong e_-\PR^\la e_-$.
\end{lemma}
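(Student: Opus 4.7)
The map $I$ is well defined because all the defining relations of $R^\la$ are satisfied in $\PR^\la$: the KLR relations \eqref{first-QH}--\eqref{triple-smart} are part of the relations of $\tU^-_i$, and the cyclotomic relation $y_1^{\la^{i_1}}e(\Bi)=0$ is precisely \eqref{F-cyc}. It therefore remains to prove surjectivity and injectivity.

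For surjectivity, my plan is to show every element of $e_-\PR^\la e_-$ can be reduced, via the relations of $\PR^\la$, to a linear combination of downward-only diagrams. I work with the spanning set $B_{i,G,H}$ of Definition~\ref{BiGH}: for $G$ and $H$ both downward, every matching must connect $G$-endpoints to $H$-endpoints, so the representative $m_\varphi$ can be chosen as a purely downward KLR diagram with dots, and the only ``non-KLR'' ingredient in a general element is its clockwise $i$-bubble factor. The key tool for eliminating these factors is that \eqref{E-cyc} forces $\id_Y=0$ in $\PR^\la$ for every 1-morphism $Y$ whose leading entry is $+i$ at source weight $\la$, and hence every 2-morphism factoring through such a $Y$ vanishes via $f=\id_Y\circ f=0$. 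Placed as a standalone with $\la$ labelling the region to its left, the curl in \eqref{lollipop1} has a horizontal slice through its stem whose only strand is an upward $i$-strand in region $\la$, so the curl factors through $\id_{\eE_i}=0$ and is zero in $\PR^\la$. Combining this vanishing with \eqref{lollipop1} and its dotted analogues (obtained by adding $k$ extra dots above the curl for each $k\ge 0$), together with the infinite Grassmannian identity \eqref{inv} defining the fake bubbles, one solves recursively for the values of the clockwise $i$-bubbles in the $\la$-region and finds that every positive-degree one vanishes. The bubble slides \eqref{bubble-slide1}--\eqref{bubble-slide2} propagate this vanishing to bubbles at interior positions modulo lower-order corrections, and induction on degree eliminates every bubble factor.

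For injectivity, I propose to package the surjectivity reduction as a well-defined left inverse $\phi\colon e_-\PR^\la e_-\to R^\la$. Concretely, $\phi$ takes a diagram, slides its bubble factors into the $\la$-region and evaluates them by the curl-derived relations, and removes any remaining cup--cap pairs of upward $i$-strands using the adjunction; confluence of the algorithm (independence of the order of moves) follows because every move is a consequence of the relations of $\PR^\la$, so two reduction sequences differ by a null combination and agree in $R^\la$. Purely downward diagrams require no reductions, so $\phi\circ I=\id_{R^\la}$, which gives injectivity. An independent check is provided by the categorifications $\mathcal{G}_\la$ of Theorem~\ref{non-degenerate} run for each vertex $j\in\Gamma$ in turn: each such action descends to $\PR^\la$ because $\bLa_{\la+\al_j}=0$ forces \eqref{E-cyc} and $\bLa_{\la-\al_j}\cong\K[p_{j,1}]/(p_{j,1}^{\la^j})$ forces \eqref{F-cyc} for the vertex $j$; combining these actions over all $j$ yields $R^\la$-representations that jointly detect the cyclotomic ideal, and Corollary~\ref{i-nondegenerate} implies the combined action is faithful on $R^\la$.

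The principal obstacle is verifying confluence of the straightening procedure and managing the combinatorics of fake bubbles under \eqref{inv}; once the recursive evaluation of positive-degree clockwise $i$-bubbles in the leftmost region is in hand, the remainder of the argument is a systematic application of the defining relations of $\PR^\la$.
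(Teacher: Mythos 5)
Your surjectivity argument is essentially the paper's: reduce via the spanning set $B_{i,G,H}$ to downward diagrams decorated with $i$-labelled bubbles at the far left, then kill the bubbles. The paper does the bubble-elimination more directly — a positive-degree bubble at the far left either has an upward $i$-strand as the leftmost strand of some slice (killed outright by \eqref{E-cyc}) or is a downward strand carrying at least $\la^i$ dots (killed by \eqref{F-cyc}), while negative-degree bubbles already vanish in $\tU$ and the degree-zero one is $1$ — so your recursion through the curl relation and the infinite Grassmannian identity is workable but unnecessary. That half is fine.

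The injectivity argument has a genuine gap, and injectivity is where all the work in this lemma lives. To get a left inverse $\phi\colon e_-\PR^\la e_-\to R^\la$ from your straightening algorithm, you must show two things: that the output is independent of the reduction sequence, \emph{and} that $\phi$ kills every relation of $\PR^\la$ — including relations whose intermediate diagrams involve upward strands and which are therefore not visibly relations of $R^\la$. Your justification ("every move is a consequence of the relations of $\PR^\la$, so two reduction sequences differ by a null combination and agree in $R^\la$") only shows the two outputs agree in $\PR^\la$; whether they agree in $R^\la$ is exactly the question being asked, so the argument is circular. The fallback via the quiver flag representations does not close the gap either: Corollary \ref{i-nondegenerate} gives linear independence in $\tU^-_i$ (obtained by letting $\la$ grow), not faithfulness of the fixed-$\la$ representations $\bLa_\mu$ on $R^\la$, and the claim that the representations for the various vertices $j$ "jointly detect the cyclotomic ideal" is an unproven statement essentially equivalent in strength to the lemma itself. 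What the paper actually does is reformulate both quotients as quotients of $\check R = R\otimes \End_{\tU^-_i}(\id_\la)\cong \End_{\tU^-_i}(\oplus_\Bi\eF_\Bi)$ and then show by a direct case analysis that any diagram with a violating slice at $y=\nicefrac12$ — after rewriting the top and bottom halves in the basis $B_{i,G,H}$ and isotoping stray cups and caps through the middle band — either reduces the number of upward strands at the slice, or produces a closed $i$-bubble or a dotted curl at the far left that manifestly lies in the cyclotomic ideal. Some such explicit control of the violated diagrams is unavoidable here, and your proposal does not supply it.
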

\begin{proof}
The argument will be easier if we give a slightly different presentation
of $R^\la$.  Let $\check{R}$ denote the tensor product of the ring $R$
with the ring $\End_{\tU^-_i}(\id_\la)$, which is a polynomial ring
generated by counter-clockwise bubbles with label $i$; when we draw a picture, we place
this element of $\End_{\tU^-_i}(\id_\la)$ at the far left of the
diagram.    By Corollary \ref{i-nondegenerate}, we can identify
\[\check{R}\cong \End_{\tU^-_i}(\oplus_{\Bi}\eF_{\Bi}).\]  
The bubble slides \eqref{bubble-slide1} allow us to interpret a bubble
placed anywhere in the diagram as an element of $\check{R}$.
We have a ring map $\check{R}\to R$ which just
kills positive degree bubbles, and thus induces maps $\check{R}\to
R^\la$ and $\check{R}\to
\PR^\la$.

  First, we must show surjectivity, that any diagram  $d\in e_-\PR^\la
  e_-$ is in the image of $I$.  We know 
  that the set $B_{G,H,i}$ spans the corresponding 2-morphisms $G\to
  H$ in $\tU_i^-$.  Thus we
  can rewrite $d$ as an element of the image of $I$, times counter-clockwise bubbles at
  the left, labeled $i$. If such a bubble has $<\la^i-1$ dots, the diagram is 0 and
  if the bubble has
 at least $\la^i$ dots, then it is 0 by the relation \eqref{F-cyc}.
We are left with the case where it has exactly $\la^j-1$ dots, and can
 thus be deleted by \eqref{zero-bubble}.  This shows surjectivity.  

 We need now to show injectivity. That is, we need to show that the
 maps from $\check{R}\to R^\la$ and $\check{R}\to \PR^\la$ have the
 same kernel.  Let $J_1,J_2$ be the kernels of these maps. Using the
 identification $\check{R}\cong \End_{\tU^-_i}(\oplus_{\Bi}\eF_{\Bi}),$
 we have that the ideal $J_2$ is spanned by KL diagrams such that the
 slice at $y=\nicefrac 12$ looks like the relations
 (\ref{F-cyc}--\ref{E-cyc}).  We'll chop our diagram into 3 pieces:
 the narrow band with $y\in [\nicefrac 12-\epsilon,\nicefrac
 12+\epsilon]$ and the remainder above and below this.  We'll call the
 leftmost strand at its point of intersection with the line
 $y=\nicefrac 12$ the {\bf violating point}.

We can rewrite the pieces above $y=\nicefrac
 12+\epsilon$ and below $y=\nicefrac
 12-\epsilon$ in terms of $B_{G,H,i}$. If the KL pair obtained from the slices at $\nicefrac
 12\pm\epsilon$ is downward, then we obtain an element of the
 cyclotomic ideal and thus we are done.  Thus, we can assume there is
 at least one upward strand at $y=\nicefrac 12$.

There must be at least one one cap in the top half, and one cup in the
bottom half. By the form of $B_{G,H,i}$, we may assume that at least
one of these caps/cups has no crossings or smaller caps/cups inside it.  
 If there is
 any such cap above $y=\nicefrac 12$ that does not connect to the violating
 point, we can use
 an isotopy to sink it through the band $y\in [\nicefrac 12-\epsilon,\nicefrac
 12+\epsilon]$ (which contains no crossings).  We have thus reduced
 the number of upward strands at $y=\nicefrac 12$, and we can continue
 this process until the KL pair at $y=\nicefrac 12$ is downward (in
 which case are done), or there is a single cap in the top part and
 single cap in the bottom part connecting to the
 violating strand.  This cup and cap are necessarily labeled $i$.

 Similarly, if the remaining  cap has any crossings with other strands above
 $y=\nicefrac 12+\epsilon$, we can use an isotopy to sink these through the band $y\in [\nicefrac 12-\epsilon,\nicefrac
 12+\epsilon]$, so that no crossings with this cap remain above this point.

{\it Case 1: downward orientation at the violating point.}
We have now reduced to the case where the
diagram is as below:
\begin{equation*}
\begin{tikzpicture}[very thick,scale=2]
  \draw[thin,dashed] (1,-1) -- (-1,-1); \draw[thin,dashed] (1,1) --
  (-1,1);
 \draw[thin,dashed] (1,-.1) -- (-1,-.1);
 \draw[thin,dashed] (1,.1) -- (-1,.1);
\draw[postaction={decorate,decoration={markings,
    mark=at position .5 with {\arrow[scale=1.3]{<}}}},very thick] (.85,-1) -- (.85,1);
\draw[postaction={decorate,decoration={markings,
    mark=at position .5 with {\arrow[scale=1.3]{<}}}},very thick] (-.4,-1) -- (-.4,1);
\draw[postaction={decorate,decoration={markings,
    mark=at position .5 with {\arrow[scale=1.3]{>}}}},very thick]
 (-.6,-.4) to [out=90,in=-90] (-.6,.2) to [out=90,in=90] (-.85,.2) to
[out=-90,in=-90] node
[pos=.225,circle,fill=black,inner sep=2pt]{}
(-.85,-.4);
\node at (-1.1,0){$\la^i$};
\node[inner xsep=52pt,inner ysep=15pt,draw,fill=white] at (0,-.55) {$b$};
\node[inner xsep=40pt,inner ysep=15pt,draw,fill=white] at (.2,.55)
{$a$};
\node at (.225,.9) {$\cdots$};
\node at (.225,-.9) {$\cdots$};
\node at (.225,0) {$\cdots$};
\end{tikzpicture}
\end{equation*}
We now
rewrite the top and bottom piece of the diagram in terms of
$B_{G,H,i}$.  Since this leaves the middle unchanged, it will still be
in $J_2$.  By our assumptions, the top half consists of a diagram where
every strand points downward throughout, with one counter-clockwise
cap added at the far left.

Now, consider the structure of the bottom
half.  It must have exactly one cup.  If this cup
doesn't connect to the violating point, then it must connect at $y=
\nicefrac 12-\epsilon$ to some strand further right.  
As argued earlier, we can isotope this cup upward through $y\in [\nicefrac 12-\epsilon,\nicefrac
 12+\epsilon]$, and arrive at the situation where the KL pair at $y=
\nicefrac 12$ is downward.

The other possibility is that the cup does connect to the violating strand, in which case
we have a closed, counterclockwise oriented bubble  with at least
$\la^i$ dots at the left of the
diagram.  This is positive degree, so the diagram is in the kernel of the map $\check{R}\to
R^\la$.

{\it Case 2: upward orientation at the violating point.}  
Now, we need only consider the case where the diagram has the form:
\begin{equation*}
\begin{tikzpicture}[very thick,scale=2]
  \draw[thin,dashed] (1,-1) -- (-1,-1); \draw[thin,dashed] (1,1) --
  (-1,1);
 \draw[thin,dashed] (1,-.1) -- (-1,-.1);
 \draw[thin,dashed] (1,.1) -- (-1,.1);
\draw[postaction={decorate,decoration={markings,
    mark=at position .5 with {\arrow[scale=1.3]{<}}}},very thick] (.85,-1) -- (.85,1);
\draw[postaction={decorate,decoration={markings,
    mark=at position .5 with {\arrow[scale=1.3]{<}}}},very thick] (-.4,-1) -- (-.4,1);
\draw[postaction={decorate,decoration={markings,
    mark=at position .5 with {\arrow[scale=1.3]{<}}}},very thick]
 (-.6,-.4) to [out=90,in=-90] (-.6,.2) to [out=90,in=90] (-.85,.2) to
[out=-90,in=-90] 
(-.85,-.4);
\node[inner xsep=52pt,inner ysep=15pt,draw,fill=white] at (0,-.55) {$b$};
\node[inner xsep=40pt,inner ysep=15pt,draw,fill=white] at (.2,.55) {$a$};
\end{tikzpicture}
\end{equation*}

As before, the bottom must have a single cup, but now this cup is
required to meet the violating point. It may be that this cup closes up the
violating strand, creating a clockwise oriented bubble at the
far left.  Since the region has $\la^i\geq 0$, this bubble has
positive degree (note that it cannot be a fake bubble) so we get an
element of $J_1$.  

If this cup does not close the violating strand, it must connect to
a strand at $y=
\nicefrac 12-\epsilon$ which is right of the two leftmost.
Together with the top half, this must create a
self-intersection in the violating strand.  By our freedom of choice
of basis, we can assume that it makes this crossing before either
strand crosses any others.  After isotopy, we see that we have
obtained a diagram in which all strands point downward, except that at
the violating point, we created a curl in the strand.  Thus, we can
apply the relation:
$$\begin{tikzpicture}
\node at (-.9,0){$-$};
\node at (0,0){
\begin{tikzpicture}[baseline=-2.75pt, yscale=1.2, xscale=-1.2]
\node [fill=white,circle,inner sep=2pt,label={[scale=.7,white]left:{$-\la^i$}},above=4.5pt] at (1.3,.4) {};
\draw[very thick,postaction={decorate,decoration={markings,
    mark=at position .5 with {\arrow{<}}}}]    
(1,-.5) to[out=90,in=180] node[at start,below]{$i$} 
(1.5,.2) to[out=0,in=90] (1.75,0) to[out=-90,in=0] (1.5, -.2)
to[out=left,in=-90] node[at end,above]{$i$}  (1,.5);
%\draw[very thick,red] (2,-.5) -- (2,.5)  node[at start,below]{$\la$};
\end{tikzpicture}};
\node at (1,0){$\dots$};
\node at (1.7,0){$=$};
\node at (2,0){
\begin{tikzpicture}[baseline=-2.75pt, yscale=1.2, xscale=-1.2]
\node [fill=white,circle,inner sep=2pt,label={[scale=.7,white]left:{$-\la^i$}},above=4.5pt] at (1.3,.4) {};

\draw[very thick,postaction={decorate,decoration={markings,
    mark=at position .65 with {\arrow{<}}}}]    (1,-.5) -- (1,.5) node[pos=.3,fill,circle,inner sep=2pt,label={[scale=.7]left:{$\la^i$}}]{} node[at end,above]{$i$} node[at start,below]{$i$} ;
%\draw[very thick,red] (2,-.5) -- (2,.5)  node[at start,below]{$\la$};
\end{tikzpicture}};
\node at (3.1,0){$\dots$};
\node at (3.75,0){$+$};
\node at (5,0){
\begin{tikzpicture}[baseline=-2.75pt, yscale=1.2, xscale=-1.2]
\draw[very thick,postaction={decorate,decoration={markings,
    mark=at position .65 with {\arrow{<}}}}]    (1,-.5) -- (1,.5) node[pos=.3,fill,circle,inner sep=2pt,label={[scale=.7]left:{$\la^i-1$}}]{} node[at end,above]{$i$} node[at start,below]{$i$} ;
\draw[postaction={decorate,decoration={markings,
    mark=at position .5 with {\arrow[scale=1.3]{>}}}},very thick] (1.7,.4) circle (6pt);
\node [fill,circle,inner sep=2pt,label={[scale=.7]45:{$-\la^i$}},above=4.5pt] at (1.7,.4) {};
%\draw[very thick,red] (2.4,-.5) -- (2.4,.5)  node[at start,below]{$\la$};
\end{tikzpicture}};
\node at (6,0){$\dots$};
\node at (6.6,0){$+$};
\node at (7.4,0){$\dots$};
\node at (8.2,0){$+$};
\node at (9.4,0){
\begin{tikzpicture}[baseline=-2.75pt, yscale=1.2, xscale=-1.2]
\draw[very thick,postaction={decorate,decoration={markings,
    mark=at position .65 with {\arrow{<}}}}]    (1,-.5) -- (1,.5)  node[at end,above]{$i$} node[at start,below]{$i$} ;
\draw[postaction={decorate,decoration={markings,
    mark=at position .5 with {\arrow[scale=1.3]{>}}}},very thick] (1.8,.4) circle (6pt);
\node [fill,circle,inner sep=2pt,label={[scale=.7]45:{$-1$}},above=4.5pt] at (1.8,.4) {};
%\draw[very thick,red] (2.4,-.5) -- (2.4,.5)  node[at start,below]{$\la$};
\end{tikzpicture}};
\node at (10.7,0){$\dots$};
\end{tikzpicture}$$
The first term is in $J_1$ because it has the
requisite number of dots and the others are because they have positive
degree bubbles, so the RHS is in $J_1$. 
\end{proof}

\begin{prop}\label{R-Morita}
  The algebras $R^\la$ and $\PR^\la$ are Morita equivalent.
\end{prop}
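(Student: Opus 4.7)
The plan is to upgrade the algebra isomorphism $R^\la \cong e_-\PR^\la e_-$ of Lemma \ref{PR-iso} to a Morita equivalence by showing that $e_-$ is a full idempotent in $\PR^\la$, i.e., $\PR^\la e_- \PR^\la = \PR^\la$. Equivalently, I will show that $1_\Bi \in \PR^\la e_- \PR^\la$ for every $\Bi \in \Sigma_i$, by double induction on the pair consisting of the number of upward ($+i$) entries in $\Bi$ and the position of the leftmost upward entry, ordered lexicographically.

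The base case is immediate: if $\Bi$ has no $+i$ entry, then $\Bi$ is downward, and $1_\Bi = 1_\Bi e_- \in \PR^\la e_- \PR^\la$. For the inductive step, let $k$ be the leftmost position of a $+i$ entry. If $k=1$, then $1_\Bi = 0$ by relation \eqref{E-cyc}, and we are done. Otherwise the entry at position $k-1$ is $-j$ for some $j \in \Gamma$, and there are two cases.

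If $j\neq i$, relation \eqref{opp-cancel1} shows that the two oppositely oriented crossings between a $-j$ strand and a $+i$ strand compose to $t_{ji}$ times the identity. Hence the single crossing is an invertible element of $\PR^\la$ (up to the nonzero scalar $t_{ji}$), and conjugating $1_\Bi$ by it produces $1_{\Bi'}$, where $\Bi'$ is obtained from $\Bi$ by swapping the $-j$ and $+i$ entries. The leftmost $+i$ of $\Bi'$ lies at position $k-1$, so the inner inductive hypothesis gives $1_{\Bi'}\in \PR^\la e_- \PR^\la$ and therefore $1_\Bi \in \PR^\la e_-\PR^\la$. If $j=i$, apply the appropriate switch relation from (\ref{switch-1}--\ref{switch-2}) to the strands at positions $k-1$ and $k$. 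This rewrites $1_\Bi$ as the sum of (i) a term factoring through $1_{\Bi_1}$, where $\Bi_1$ has the $(-i,+i)$ pair reversed to $(+i,-i)$, so that the leftmost $+i$ of $\Bi_1$ sits at position $k-1$, and (ii) a linear combination of cup-cap-bubble terms, each factoring through $1_{\Bi_2}$, where $\Bi_2$ is obtained from $\Bi$ by deleting the $(-i,+i)$ pair and so contains one fewer $+i$. The first factors through an idempotent handled by the inner induction on position, while the second factors through one handled by the outer induction on the count of $+i$'s; in both cases we obtain containment in $\PR^\la e_- \PR^\la$, completing the inductive step.

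The main obstacle is the bookkeeping in the case $j=i$: one must check that applying the switch relation \emph{inside} a larger diagram indeed decomposes $1_\Bi$ as described, and that the cup-cap-bubble terms genuinely factor through the sequence obtained by excising the $(-i,+i)$ pair (with no new upward entries appearing). This amounts to a local rewriting once the conventions about orientations and which variant of (\ref{switch-1}--\ref{switch-2}) is in force have been pinned down. With the induction complete, we have $\PR^\la = \PR^\la e_- \PR^\la$, which together with Lemma \ref{PR-iso} yields the Morita equivalence between $\PR^\la$ and $R^\la$, implemented by the bimodule $\PR^\la e_-$.
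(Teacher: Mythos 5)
Your proposal is correct and follows essentially the same route as the paper: identify $e_-\PR^\la e_-$ with $R^\la$ via Lemma \ref{PR-iso} and then show $e_-$ is full by pushing upward strands leftward with (\ref{switch-1}) and (\ref{opp-cancel1}), using \eqref{E-cyc} when an upward strand reaches the far left. The only difference is the induction measure — the paper counts pairs of a downward entry left of an upward one, while you use the lexicographic pair (number of $+i$'s, position of the leftmost $+i$) — and both terminate for the same reason.
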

\begin{proof}
  It's a standard result that for an algebra $A$ and idempotent $e$, the
bimodules $Ae$ and $eA$ induce Morita equivalences if and only if
$AeA=A$.  Thus, we need only prove that $\PR^\la\cdot e_-\cdot
\PR^\la=\PR^\la$.  It suffices to prove that each idempotent $e(\Bi)$
for $\Bi\in \Sigma_i$ lies in  $\PR^\la\cdot e_-\cdot
\PR^\la=\PR^\la$.  We prove this by induction on the number of pairs
$j,k\in [1,n]$ such that $i_j\in -\Gamma,i_k=i$ and $j<k$.   
If there is no such pair, then either $i$ does not appear, in which
case $e(\Bi)=e(\Bi)e_-$,  or the idempotent is 0 by \eqref{E-cyc}.  If
there is any such pair, there must be one where $j$ and $k$ are
consecutive.  Thus, we can apply (\ref{switch-1}) if $i_j=-i$ or
(\ref{opp-cancel1})  if $i_j\neq -i$, and rewrite our idempotent has
factoring through diagrams with strictly fewer such pairs.  This
completes the proof.
\end{proof}

We can easily define an action of $\tU^-_i$ on the representation
category of $\PR^\la$, and thus of $R^\la$ by this Morita equivalence,
using an analogous definition to that of $\tU$ on $\DR^\la$.  

If $u$ is a 1-morphism in $\tU_i^-$, we let $e_u$ be the idempotent in $\PR^\la$ which acts by the
identity on all KL pairs which end in $u$ (that is, they are a
horizontal composition $u\circ t$ for a 1-morphism $t$ in $\tU$) and
by 0 all others.  
\begin{defn}
  Let $\beta_u'$ be the $\PR^\la$-$\PR^\la$ bimodule $e_u \cdot
  \PR^\la $. The left and right actions of $\PR^\bla$ on this
  space are by the formula $a\cdot h\cdot b=(1_u\circ a)hb$.  

  Let $\beta_u=e_{-}\cdot\beta_u'\cdot e_-$ be the image of this bimodule
  under the Morita equivalence of Theorem \ref{R-Morita}.
\end{defn}
Schematically, an element of the bimodule $\beta_u$ looks like \begin{equation}\label{RFu-schematic}
  \tikz[very thick,baseline]{\draw[postaction={decorate,decoration={markings,
    mark=at position .5 with {\arrow[scale=1.3]{<}}}}] (0,-1) to[out=90,in=-90] (-.5,1);
\draw[postaction={decorate,decoration={markings,
    mark=at position .5 with {\arrow[scale=1.3]{<}}}}] (.25,-1) to[out=90,in=-90] (0,1);
\draw[postaction={decorate,decoration={markings,
    mark=at position .5 with {\arrow[scale=1.3]{>}}}}]  (1,1) to[out=-90,in=-90] (2.5,1);
\draw[postaction={decorate,decoration={markings,
    mark=at position .5 with {\arrow[scale=1.3]{<}}}}]  (2.25,-1) to[out=90,in=-90] (3.5,1);
\draw[postaction={decorate,decoration={markings,
    mark=at position .5 with {\arrow[scale=1.3]{<}}}}]  (2.5,-1) to[out=90,in=-90] (-.25,1);
\draw[postaction={decorate,decoration={markings,
    mark=at position .5 with {\arrow[scale=1.3]{<}}}}]  (2.75,-1) to[out=90,in=-90] (1.25,1);
\node at (1.5, -.7){$\cdots$};
\node at (.5, .7){$\cdots$};
\node at (3, .7){$\cdots$};
\draw[decorate,decoration=brace,-] (3,-1.25) --
    node[below,midway]{right action} (-.25,-1.25);
\draw[decorate,decoration=brace,-] (-.75,1.25) --
    node[above,midway]{left action} (1.5,1.25);
\draw[decorate,decoration=brace,-] (2.25,1.25) --
    node[above,midway]{$u$} (3.75,1.25);
  }
\end{equation}  

If we have a 2-morphism $\phi\colon u\to v$ in $\tU^-_i$, then we have an induced
bimodule map $\beta_u\to \beta_v$ where we act by by $\phi \circ 1$.  
In terms of the picture \eqref{RFu-schematic}, the action of 2-morphisms $u\to
v$ is by attaching the diagrams at the upper right.  Since the relations of $\tU_i^-$
are local, they are satisfied by the bimodule maps.  

Consider the map $\nu_j\colon R^\la_{\mu}\to R^\la_{\mu-\al_i}$ that adds a strand
labeled with $j$ at the right.
\begin{defn}
  We let $\fF_j=-\otimes_{R^\la_{\mu}}R^\la_{\mu-\al_i}$ denote the functor of extension
  of scalars by this map; we will refer to this as an {\bf induction}
  functor.

We let $\fE_j=\Hom_{R^\la_{\mu-\al_i}}(R^\la_{\mu},-)(\langle \mu,\al_j\rangle-d_i)$ denote restriction of scalars by
this map (with a grading shift), 
the functors left adjoint to the $\fF_i$'s; we call these {\bf
  restriction functors}.
\end{defn}

\begin{prop}\label{quotient-is-cyc}
There is a strict 2-functor $\tU^-_i\to\mathsf{Cat}$ given by
\begin{align*}
  \mu& \mapsto R^\la_\mu\mathsf{-pmod}\\
  u&\mapsto -\otimes_{R^\la}\beta_u\\
\eF_j & \mapsto \fF_j\\
\eE_i &\mapsto \fE_i
\end{align*}
In particular, the functors $\fE_i$ and $\fF_i$ are biadjoint (up to
grading shift) since $\eF_i$ and $\eE_i$ are biadjoint in $\tU_i^-$.
\end{prop}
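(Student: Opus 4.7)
The plan is to first construct the 2-functor on $\PR^\la\modu$, where the definition is tautological, and then transport along the Morita equivalence of Proposition \ref{R-Morita}. By construction $\PR^\la = \End_{\tU_i^-}(\oplus_{\Bi\in\Sigma_i}\eF_{\Bi})/(\text{cyclotomic relations})$, so the assignment $u\mapsto -\otimes_{\PR^\la}\beta_u'$ is obtained simply by horizontal composition on the left with $1_u$. Since every defining relation of $\tU_i^-$ (including those \eqref{pitch1}--\eqref{triple-smart} inherited from $\tU$) is local in the strip where it appears and is preserved under horizontal composition, the cyclotomic relations \eqref{F-cyc}--\eqref{E-cyc} do not interfere, and we get a strict 2-functor $\tU_i^-\to \PR^\la\modu$.

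Next I would transport this 2-functor along Proposition \ref{R-Morita}. Under the Morita equivalence $M\mapsto Me_-$, a $\PR^\la$-bimodule $B$ becomes the $R^\la$-bimodule $e_-Be_-$, and by construction $\beta_u = e_-\beta_u'e_-$. So the transported 2-functor sends $\mu\mapsto R^\la_\mu\modu$ and $u\mapsto -\otimes_{R^\la}\beta_u$, establishing all of the statement except for the identification of specific 1-morphisms with $\fF_j$ and $\fE_i$.

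The identification of $-\otimes\beta_{\eF_j}$ with $\fF_j$ for $j\in\Gamma$ is essentially formal: the bimodule $\beta_{\eF_j}'$ is $\PR^\la$ itself with the left action modified by adjoining a downward $j$-colored strand on the right. After sandwiching with $e_-$ and applying Lemma \ref{PR-iso}, this is exactly the bimodule $R^\la_{\mu-\al_j}$ regarded as an $R^\la_\mu$-$R^\la_{\mu-\al_j}$-bimodule via the ring map $\nu_j$, which is induction $\fF_j$ by definition. For the identification $-\otimes\beta_{\eE_i}\cong \fE_i$, I would argue by adjunction: $\eE_i$ is right biadjoint to $\eF_i$ in $\tU_i^-$, so its image must be biadjoint to $\fF_i$ up to grading shift. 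The right adjoint of induction is restriction of scalars, and the shift is pinned down by computing the degrees of the cup and cap morphisms in $\tU_i^-$ from the degree formulas of Section \ref{sec:khov-lauda-diagr}; comparing with the grading shift $\langle\mu,\al_i\rangle - d_i$ in the definition of $\fE_i$ confirms the agreement.

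The biadjointness claim is then immediate: since $\eF_i$ and $\eE_i$ are biadjoint in $\tU_i^-$ via the cup and cap 2-morphisms, their images $\fF_i$ and $\fE_i$ inherit biadjunction units and counits, namely the images of the cups and caps under the 2-functor. The most delicate point in carrying out the plan is tracking the grading shift in Step 4 and checking that the identifications of $\beta_{\eF_j}$ and $\beta_{\eE_i}$ with the stated standard bimodules are compatible on the nose, rather than merely up to isomorphism, so that the strictness of the 2-functor survives the transport; but this is a bookkeeping matter once Lemma \ref{PR-iso} is in hand.
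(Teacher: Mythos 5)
Your proposal is correct and follows essentially the same route as the paper: the 2-representation is obtained tautologically on $\PR^\la$ (locality of the relations makes the action of 2-morphisms by attachment well-defined) and then transported through the Morita equivalence of Proposition \ref{R-Morita}, after which one identifies $\beta_{\eF_j}$ with the Khovanov--Lauda induction bimodule and deduces the $\eE_i$ case by adjunction. The paper's proof is just a terser version of the same argument, with your extra degree bookkeeping for $\fE_i$ being a harmless elaboration.
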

\begin{proof}
As noted above, the fact that this is a 2-representation is immediate; one
way to interpret Proposition \ref{R-Morita} is that it realizes the
category of modules over $R^\la$ as an appropriate quotient of $\tU^-_i$
which obviously carries such a representation.  

Thus, we need only check the assignments $\eF_j  \mapsto \fF_j$ and
$\eE_i \mapsto \fE_i$.  By adjunction, it is only necessary to check
one of these.  The bimodule $\beta_{\eF_i}$ is by definition of the
subspace of $R^\la$ where the last entry of the sequence at the top of
the diagram is $i$; this exactly the definition of the induction
bimodule given in \cite[\S 3.2]{KLI}.
\end{proof}

\subsection{The categorical action on cyclotomic quotients}
\label{sec:categ-acti-cycl}

We can upgrade the action of $\tU^-_i$ to an action of the full category
$\tU$.  This action will, of course, assign  $\eE_j\mapsto \fE_j,
\eF_j\mapsto \fF_j$.  In particular, for any basic 2-morphism in $\tU$
except the upward oriented
 crossing, we have a well-defined
 natural transformation of bimodules induced from the action of $\tU^-_i$.

Thus, we must define a map for the upward crossing.  If we apply $\fE_i\fE_j$ to a module $M$ over $R^\la$, then
 the resulting diagrams look like the picture \eqref{DRu-schematic}
 with an element of $M$ attached at the top left.  Both the strands
 coming from the top right must form cups whose minimum we can assume
 comes to the right of all other strands. We define the morphism
 $\psi\colon \fE_j\fE_i\to  \fE_i\fE_j$ to be given by $t_{ij}^{-1}$
 times the diagram where the crossing is done left of the cups.  Pictorially:
\[
\begin{tikzpicture}[very thick,baseline,xscale=1.5]
  \draw (0,-1) -- (0,1);
  \draw (.6,-1) -- (.6,0);
  \draw (1,0) -- (1,1);
  \draw[postaction={decorate,decoration={markings,
    mark=at position .6 with {\arrow[scale=1.3]{>}}}}]   (.8,0) to[out=-90,in=180] (1.3,-.6) to[out=0,in=-90] (1.8,0) to[out=90,in=-75] node[at end,above]{$j$} (1.4,1);
  \draw[postaction={decorate,decoration={markings,
    mark=at position .6 with {\arrow[scale=1.3]{>}}}}]  (1,0)  to[out=-90,in=180] (1.2,-.4) to[out=0,in=-90] (1.4,0) to[out=90,in=-105] node[at end,above]{$i$} (1.8,1);
\node[inner xsep=27pt,draw,fill=white] at (.5,0) {$a$};
\end{tikzpicture}=t_{ij}^{-1}\,\begin{tikzpicture}[very thick,baseline,xscale=1.5]
  \draw (0,-1) -- (0,1);
  \draw (.6,-1) -- (.6,0);
  \draw (1,0) -- (1,1);
  \draw[postaction={decorate,decoration={markings,
    mark=at position .6 with {\arrow[scale=1.3]{>}}}}]  (.8,-.1) to[out=-90,in=180] (1.1,-.6) to[out=0,in=-90] (1.4,0) to[out=90,in=-90] node[at end,above]{$j$} (1.4,1);
  \draw[postaction={decorate,decoration={markings,
    mark=at position .7 with {\arrow[scale=1.3]{>}}}}]  (1,-.1)
to[out=-90,in=90] (.8,-.6) to[out=-90,in=180] (1.3,-.8)
to[out=0,in=-90] (1.8,0) to[out=90,in=-90] node[at end,above]{$i$} (1.8,1);
\node[inner xsep=27pt,draw,fill=white] at (.5,0) {$a$};
\end{tikzpicture}
\]
The locality of the relations assures that this map is well-defined.
\begin{thm}\label{cyc-action}
There is a strict 2-functor $\tU\to\mathsf{Cat}$ given by
\begin{align*}
  \mu& \mapsto R^\la_\mu\mathsf{-pmod}\\
  u&\mapsto -\otimes_{R^\la}\beta_u\\
\eF_j & \mapsto \fF_j\\
\eE_i &\mapsto \fE_i
\end{align*}
\end{thm}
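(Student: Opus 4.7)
The plan is to extend the 2-functors $\tU^-_i \to \mathsf{Cat}$ produced by Proposition \ref{quotient-is-cyc}, one for each $i \in \Gamma$, to a single 2-functor on all of $\tU$. The assignments on objects, 1-morphisms, and on 2-morphisms involving at most one upward color are already in place; only the upward crossing $\psi\colon \fE_j\fE_i \to \fE_i\fE_j$ for $i \neq j$ is genuinely new, and it was defined in the paragraph just before the theorem.

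First I would verify the well-definedness of $\psi$: the definition requires a choice of heights for the two cups through which the upward strands are bent, and independence of this choice is the pitchfork identity (\ref{pitch1})--(\ref{pitch2}) between a downward and a differently colored upward strand, which lives inside some $\tU^-_k$ and hence holds by Proposition \ref{quotient-is-cyc}.

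Next I would observe that every relation of $\tU$ involving at most one upward color---biadjunction, bubble relations (\ref{zero-bubble})--(\ref{inv}), the single-color relations (\ref{lollipop1})--(\ref{switch-2}), and the mixed cancellations (\ref{opp-cancel1})--(\ref{opp-cancel2})---is an instance of a relation in some $\tU^-_i$ and so is automatic. What remains is to verify the quiver Hecke relations (\ref{first-QH})--(\ref{triple-smart}) applied to upward strands of two different colors.

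For these, the strategy is to use the biadjunctions to bend each upward strand down to a downward configuration. By construction, $\psi$ is the corresponding downward crossing conjugated by cups and caps and scaled by $t_{ij}^{-1}$, the scalar being chosen precisely so that (\ref{opp-cancel1})--(\ref{opp-cancel2}) hold on the nose. Each upward QHA relation thus translates to the corresponding downward relation, with matching numbers of scalar factors $t_{ij}^{-1}$ on both sides; the resulting downward identity holds because the downward strands already act through the KLR algebra $R^\la$. The main obstacle is the careful bookkeeping of these scalar factors: the consistency conditions $Q_{ij}(u,v) = Q_{ji}(v,u)$ and $t_{ii}=1$ from the preamble to the definition of $\tU$ are exactly what ensures that the powers of $t_{ij}^{-1}$ on the two sides of each of (\ref{first-QH})--(\ref{triple-smart}) agree, so that after cancellation one is left with precisely the downward relation that we already know holds.
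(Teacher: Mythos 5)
Your overall strategy matches the paper's in outline: extend the $\tU_i^-$-actions of Proposition \ref{quotient-is-cyc}, define the upward crossing $\psi\colon \fE_j\fE_i\to\fE_i\fE_j$ as $t_{ij}^{-1}$ times the bent-down crossing, and observe that every defining relation of $\tU$ involving upward strands of at most one color already holds in some $\tU_k^-$. But there is a genuine gap at the step where you dispose of the pitchfork relations. You assert that (\ref{pitch1})--(\ref{pitch2}) ``live inside some $\tU_k^-$'' and hence hold by Proposition \ref{quotient-is-cyc}. That is true of (\ref{pitch1}), where the transversal $j$-labeled strand is oriented downward, but it is false for (\ref{pitch2}): there the $j$-strand is oriented \emph{upward}, so for $i\neq j$ the diagram contains upward strands of two different colors and lies in no $\tU_k^-$. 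The relations (\ref{pitch2}) are in fact the only defining relations of $\tU$ not covered by the one-color observation, and they are exactly what the paper verifies by hand: starting from the definition of the sideways crossing, one slides the crossing through the cup using (\ref{pitch1}) and then cancels the resulting pair of oppositely oriented crossings with (\ref{opp-cancel1})--(\ref{opp-cancel2}), the scalars $t_{ji}^{-1}$ and $t_{ij}^{-1}$ appearing and cancelling along the way. Without this computation your construction is not known to satisfy all the relations of $\tU$.

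Conversely, the relations you devote most of your effort to --- the quiver Hecke relations (\ref{first-QH})--(\ref{triple-smart}) on upward strands of two different colors --- are not independent defining relations here: in the presentation used in this paper the upward crossing is a composite of cups, caps and the downward crossing, and the upward KLR relations follow formally from the downward ones together with the adjunction, pitchfork and cancellation relations. So once (\ref{pitch2}) is established, your ``bend everything down'' bookkeeping is unnecessary; and before (\ref{pitch2}) is established, the bending procedure is itself ambiguous, since bending on the left versus the right differs precisely by the pitchfork moves you have assumed. The fix is to replace the appeal to $\tU_k^-$ for (\ref{pitch2}) by the explicit diagrammatic verification.
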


We should note that this theorem has been independently proven by
Cautis and Lauda \cite[7.1]{CaLa} based on work of Kang and Kashiwara
\cite{KK}.  

\begin{proof}[Proof of Theorem \ref{cyc-action}]
We also know that every relation of $\tU$ that only involves upward
strands of one color is satisfied, since these hold in one of the
$\tU^i_-$.  The only remaining relations are those of (\ref{pitch2}).

These simply involve manipulating the definition above and the
relations (\ref{pitch1}) and (\ref{opp-cancel1}-\ref{opp-cancel2}).
The argument for the first one is that:
\[ \tikz[baseline,very thick]{\draw[dir] (-.5,.5) to [out=-90,in=-90] node[above, at start]{$i$} node[above, at end]{$i$} (.5,.5); \draw[dir]
      (0,.5) to[out=-90,in=90] node[above, at start]{$j$} node[below,
      at end]{$j$}(-.5,-.5);}= t^{-1}_{ji}
 \tikz[baseline,very thick]{\draw[dir] (-.5,.5) to [out=-90,in=90] node[above, at start]{$i$}
   (-.75,0) to [out=-90,in=-90] node[above, at end]{$i$} (.5,.5); \draw[dir]
      (0,.5) to[out=-90,in=0] node[above, at start]{$j$} (-.5,0) to[out=180,in=0] (-1,.3) to[out=180,in=90] node[below,
      at end]{$j$}(-1.5,-.5);}= t^{-1}_{ji}t^{-1}_{ij}
 \tikz[baseline,very thick]{\draw[dir] (-.5,.5) to [out=-90,in=90] node[above, at start]{$i$}
   (-.75,0) to [out=-90,in=-90] node[above, at end]{$i$} (.5,.5); \draw[dir]
      (0,.5) to[out=-90,in=90] node[above, at start]{$j$} (.5,0) to[out=-90,in=0] (-1,.3) to[out=180,in=90] node[below,
      at end]{$j$}(-1.5,-.5);}= 
t^{-1}_{ij}\tikz[baseline,very thick]{\draw[dir] (-.5,.5) to [out=-90,in=-90] node[above, at start]{$i$} node[above, at end]{$i$} (.5,.5); \draw[dir]
      (0,.5) to[out=-90,in=90] node[above, at start]{$j$} node[below,
      at end]{$j$}(.5,-.5);}. \]
The second equation follows an analogous calculation.
\end{proof}
This completes the main goal of this section.  However, there are
several consequences of this theorem which we must draw out, including
the Morita equivalence of $R^\la$ and $\DR^\la$.

First, this equips $R^\la$ with a map $\tr_\la:R^\la\to \K$ given by closing
a diagram at the right (if top and bottom strands match) and
considering the scalar with which this acts on $R^\la_\la\cong \K$, as shown below.  
\begin{equation*}
\begin{tikzpicture}[very thick]
\node (a) at (0,0)[draw,inner sep=20pt, label=above:{$\cdots$},label=below:{$\cdots$}] {d}; \node[scale=1.5] at (-1.4,0){$\lambda$};
\draw (a.55) to[out=90,in=180] (1.2,1.5) to[out=0,in=90] (1.8,.8) to[out=-90,in=90] (1.8,-.8) to[out=-90,in=0] (1.2,-1.5) to[out=180,in=-90] (a.-55);
\draw (a.125) to[out=90,in=180] (1.2,2) to[out=0,in=90] (3,.8) to[out=-90,in=90] (3,-.8) to[out=-90,in=0] (1.2,-2) to[out=180,in=-90] (a.-125);
\node[scale=1.4] at (2.4,0){$\cdots$};
  \end{tikzpicture}
\end{equation*}
Recall that a {\bf Frobenius} structure on a $\K$-algebra $A$  is a
linear map
$\operatorname {tr}\colon A\to \K$ which kills no left ideal.

\begin{thm}\label{Frobenius}
  The map $\tr_\la\colon R^\la\to \K$ is a Frobenius trace.
\end{thm}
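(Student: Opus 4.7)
The Frobenius property is the assertion that the bilinear form $(a,b)\mapsto \tr_\la(ab)$ on $R^\la$ is non-degenerate. Decomposing according to the primitive idempotents $e(\Bi)$, it is enough to verify that for every pair of sequences $\Bi,\Bj$ of the same weight decomposition the pairing
\[
e(\Bj)R^\la e(\Bi)\times e(\Bi)R^\la e(\Bj)\to \K,\qquad (a,b)\mapsto \tr_\la(ab),
\]
is non-degenerate. The key input will be the biadjointness of $\fE_k$ and $\fF_k$ on $R^\la\mpmod$ furnished by the categorical $\tU$-action of Theorem \ref{cyc-action}; in fact only this consequence of the full $\tU$-action is required here.

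I would first identify the projective $P_\Bi:=e(\Bi)R^\la$ with $\fF_\Bi(P_\la)$, where $P_\la\cong \K$ is the one-dimensional projective at the highest weight $\la$; this is immediate from the description of $\fF_j$ as right multiplication by the strand-adding bimodule, together with $R^\la_\la\cong \K$. Iterated application of the two biadjunctions $\fF_k\dashv \fE_k$ and $\fE_k\dashv \fF_k$, together with $\Hom_{R^\la_\la}(\K,-)=\id$, then yields natural graded isomorphisms
\[
\Hom_{R^\la}\!\bigl(\fF_\Bi P_\la,\,\fF_\Bj P_\la\bigr)\cong V, \qquad
\Hom_{R^\la}\!\bigl(\fF_\Bj P_\la,\,\fF_\Bi P_\la\bigr)\cong V^{*},
\]
up to a common grading shift, where $V$ is the graded $\K$-vector space obtained by applying an appropriate iterated $\fE\fF$-word to $P_\la$. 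In particular the two Hom spaces are canonically dual to one another as graded $\K$-vector spaces.

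The core step is then to verify that under these identifications the composition
\[
\Hom(P_\Bi,P_\Bj)\otimes \Hom(P_\Bj,P_\Bi)\to \End(P_\Bj)\xrightarrow{\tr_\la}\K
\]
coincides, up to a non-zero overall scalar, with the canonical evaluation pairing $V\otimes V^{*}\to \K$. Diagrammatically this amounts to recognizing the closure-at-the-right operation defining $\tr_\la$ as nothing other than an iterated composition of units and counits of the biadjunctions $\fF_k\dashv \fE_k\dashv \fF_k$, realized by the cups and caps of $\tU$ and simplified using the zig-zag identities built into the relations (\ref{pitch1}--\ref{eq:1}). Granting this identification, the non-degeneracy of the canonical pairing $V\otimes V^{*}\to \K$ immediately yields the Frobenius property.

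The main obstacle will be the bookkeeping of the various scalar factors and grading shifts accumulating during this reduction: one must confirm that the overall scalar produced when reorganizing the closure as an iterated adjunction pairing is non-zero, and that it truly is the canonical duality rather than some degenerate variant. This ultimately reduces to the invertibility of the $t_{ij}$ appearing in the pitchfork and cancellation relations (\ref{pitch1}--\ref{pitch2}) and (\ref{opp-cancel1}--\ref{opp-cancel2}), together with the non-degeneracy of the bubble inversion formula (\ref{inv}); both are built into our standing hypotheses on the parameters $Q_{ij}$.
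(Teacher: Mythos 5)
Your proposal is correct and follows essentially the same route as the paper: the paper's proof opens by observing that the statement is ``essentially automatic from the fact that $\fE_i$ and $\fF_i$ are biadjoint, and the map of capping off is the counit of this adjunction'' --- which is precisely your argument --- before giving a more concrete version that expands the iterated units and counits (the ``arches'') via the relations (\ref{switch-1}) and (\ref{opp-cancel1}) into an explicit canonical element $\sum_j \kappa^{(1)}_j\otimes\kappa^{(2)}_j$ exhibiting dual bases. No gap.
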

\begin{proof}
This is essentially automatic from the fact that $\eF_i$ and $\eE_i$
are biadjoint, and the map of ``capping off'' is the counit of this
adjunction; however, let us give a more concrete proof. 

A trace is Frobenius if
and only if the bilinear form $R^\la\times R^\la\to \K$ given by
$(a,b)\mapsto \tr_\la(ab)$ is non-degenerate.  This is the case if
and only if there exist dual bases $\{b_1,\dots, b_m\}$ and
$\{c_1,\dots,c_m\}$ such that $a=\sum_{i=1}^m\tr(ab_i)c_i$ for every
$a\in R^\la$.  Alternatively, it is equivalent to the existence of a
canonical element $\sum b_i\otimes c_i$.  

We can write this canonical element implicitly using the action of $\tU$.  Consider the morphism from
  $(i_n,\dots, i_1,-i_1,\dots,-i_n)$ given by the ``arches'':
  \[a=\qquad\tikz[very
  thick,baseline]{\draw[postaction={decorate,decoration={markings,
        mark=at position .5 with {\arrow[scale=1.3]{>}}}}] (-3,-3.5)
    to[out=90,in=180] node[below,at start]{$i_n$} (0,-.5)
    to[out=0,in=90] node[below,at end]{$i_n$} (3,-3.5);
    \draw[postaction={decorate,decoration={markings, mark=at position
        .5 with {\arrow[scale=1.3]{>}}}}] (-1,-3.5) to[out=90,in=180]
    node[below,at start]{$i_1$} (0,-2.5) to[out=0,in=90] node[below,at
    end]{$i_1$} (1,-3.5);
    \draw[postaction={decorate,decoration={markings, mark=at position
        .5 with {\arrow[scale=1.3]{<}}}}] (-3,3.5) to[out=-90,in=180]
    node[above,at start]{$i_n$} (0,.5) to[out=0,in=-90] node[above,at
    end]{$i_n$} (3,3.5);
    \draw[postaction={decorate,decoration={markings, mark=at position
        .5 with {\arrow[scale=1.3]{<}}}}] (-1,3.5) to[out=-90,in=180]
    node[above,at start]{$i_1$} (0,2.5) to[out=0,in=-90] node[above,at
    end]{$i_1$} (1,3.5); \node at (0, -3){$\la$};\node at (0,
    3){$\la$}; \node at (-3,0){$\mu$};\node at (3,0){$\mu$}; \node at
    (-2,3.3){$\cdots$};\node at (2,3.3){$\cdots$}; \node at
    (-2,-3.3){$\cdots$};\node at (2,-3.3){$\cdots$}; }\] We can use
  the relations (\ref{switch-1}) and (\ref{opp-cancel1}) to rewrite
  this element.   This will be an inductive process, where at each
  step, we apply a
  relation to decrease the number of pairs of a cup and a cap which
  don't cross, by pushing the
  bottom arches upward and the top arches downward.  The first step is
  to apply (\ref{switch-1}) to the outermost cup and cap.  This
  results in one term where this cup and cap cross, and possibly others
  with fewer cups and caps.  

At each step, in the resulting diagram, if we see any
  pair of a cup and cap that don't intersect, then there is such a
  pair where nothing separates them, and we can apply (\ref{switch-1}) or (\ref{opp-cancel1}) 
 depending on the label. The resulting terms have fewer such pairs.

Thus, at the end, we only have terms where every cap intersects every
cup.  If there are any caps, then the region just above the
minimum of the bottom of the cup is labeled with $\la+\al_i$.  This
diagram will act trivially on $\cata^\la_\mu$, since the object corresponding to the horizontal
slice through this region is trivial.  Thus, the action of $a$ on $\cata^\la_\mu$
coincides with that of the morphism $a'$ consisting only of the
diagrams resulting from our inductive process with no arches.

That is, we can write the natural transformation $a'$ induced by $a$ as a sum:
\begin{equation}
a'\,=\,\sum_{j=1}^p\,\, 
\tikz[scale=.5,very thick,baseline]{  \node at (0,
  -3){$\la$};\node at (0, 3){$\la$}; \node at (-3,0){$\mu$};\node at
  (3,0){$\mu$};  \draw (-5,3.5) to[out=-90,in=90] node[above,at
  start]{$i_n$} (-6,-.5) to[out=-90,in=180] (-4,-2.5) to[out=0,in=-90] (-2,-1); \draw (4,.5)
  to node[above,at end]{$i_n$} (4,3.5);\draw (-3,3.5)
  to[out=-120,in=70] node[above,at start]{$i_1$} (-5,1) to[out=-110,in=180] (-4.8,-1.5) to[out=0,in=-90] (-4,-1); \draw (2,1)
  to node[above,at end]{$i_1$} (2,3.5); 
 \draw (-3,-3.5) to[out=60,in=-110] node[below,at start]{$i_n$}
 (-1,-1) to[out=70,in=0] (-1.2,1.5) to [out=180,in=90] (-2,1); \draw (4,-1)
  to node[below,at end]{$i_n$} (4,-3.5);
\draw (-1,-3.5)  to[out=90,in=-90] node[below,at start]{$i_1$} (0,.5)
to[out=90,in=0] (-2,2.5) to [out=180,in=90] (-4,1);
\draw  (2,-1)
  to node[below,at end]{$i_1$} (2,-3.5); 
\node[fill=white,draw=black,inner xsep=12pt,inner ysep=10pt]  at (3,0){$\kappa^{(2)}_j$};  
\node[fill=white,draw=black,inner xsep=12pt,inner ysep=10pt]  at (-3,0){$\kappa^{(1)}_j$};   }\label{canonical-element}
\end{equation}
Note that $\kappa^{(q)}_j$ gives a well-defined element of $R^\la$, since
 in this representation of $\tU$, the cyclotomic relation has become a
 local relation that
 holds whenever a strand separates $\la$ and $\la-\al_i$. In
 particular, we can use this relation and the bubble slides to remove
 any positive degree bubbles.
By the manifest bilinearity, we can assume without loss of
generality that $\kappa^{(2)}_j$ ranges over any chosen basis of $R^\la$.

On the other hand, if we choose $r\in R^\la$, and connect the left
edges of the arches using $r$, we simply obtain $r$ itself by
biadjunction as shown in \eqref{arch-adjoint}.
\begin{equation}\label{arch-adjoint} r=\tikz[very thick,baseline,scale=.5]{\draw[postaction={decorate,decoration={markings,
    mark=at position .5 with {\arrow[scale=1.3]{>}}}}] (-5,0)
to[out=-90,in=180] (-4,-3.5) to
[out=0,in=-90] (-3,-2.5) to[out=90,in=180] (0,-.5) 
  to[out=0,in=90]  node[below,at end]{$i_n$} (3,-3.5); \draw[postaction={decorate,decoration={markings,
    mark=at position .5 with {\arrow[scale=1.3]{>}}}}]   (-7,0)
to[out=-90,in=180] (-4,-5) to
[out=0,in=-90] (-1,-3) to[out=90,in=180] (0,-2.5)
  to[out=0,in=90] node[below,at end]{$i_1$} (1,-3.5);  \draw[postaction={decorate,decoration={markings,
    mark=at position .5 with {\arrow[scale=1.3]{<}}}}] (-5,0)
to[out=90,in=180] (-4,3.5) to
[out=0,in=90] (-3,2.5) to[out=-90,in=180] (0,.5)
  to[out=0,in=-90] node[above,at end]{$i_n$} (3,3.5); \draw[postaction={decorate,decoration={markings,
    mark=at position .5 with {\arrow[scale=1.3]{<}}}}] (-7,0)
to[out=90,in=180] (-4,5) to
[out=0,in=90] (-1,3)  to[out=-90,in=180] (0,2.5)
  to[out=0,in=-90] node[above,at end]{$i_1$} (1,3.5);  \node at (0,
  -3){$\la$};\node at (0, 3){$\la$}; \node at (-3,0){$\mu$};\node at
  (3,0){$\mu$};  
\node[fill=white,draw=black,inner sep=15pt] at (-6,0) {$r$};
}=\tikz[very thick,baseline,scale=.5]{\draw[postaction={decorate,decoration={markings,
    mark=at position .5 with {\arrow[scale=1.3]{>}}}}] (-5,0)
to[out=-90,in=180] (-4,-4) to
[out=0,in=-110] (-2.3,-3) to[out=70,in=-120] (0,-.5) 
  to[out=-60,in=120]  node[below,at end]{$i_n$} (3,-3.5); \draw[postaction={decorate,decoration={markings,
    mark=at position .5 with {\arrow[scale=1.3]{>}}}}]   (-7,0)
to[out=-90,in=180] (-4,-5) to
[out=0,in=-90] (-1,-3) to[out=90,in=135] (0,-2)
  to[out=45,in=90] node[below,at end]{$i_1$} (1,-3.5);  \draw[postaction={decorate,decoration={markings,
    mark=at position .5 with {\arrow[scale=1.3]{<}}}}] (-5,0)
to[out=90,in=180] (-4,4) to
[out=0,in=110] (-2.3,3) to[out=-70,in=120] (0,.5)
  to[out=60,in=-120] node[above,at end]{$i_n$} (3,3.5); \draw[postaction={decorate,decoration={markings,
    mark=at position .5 with {\arrow[scale=1.3]{<}}}}] (-7,0)
to[out=90,in=180] (-4,5) to
[out=0,in=90] (-1,3)  to[out=-90,in=-135] (0,2)
  to[out=-45,in=-90] node[above,at end]{$i_1$} (1,3.5);  \node at (0,
  -3){$\la$};\node at (0, 3){$\la$}; \node at (-3,0){$\mu$};\node at
  (3,0){$\mu$};  
\node[fill=white,draw=black,inner sep=15pt] at (-6,0) {$r$};  
\node[fill=white,draw=black,inner sep=30pt]  at (0,0){$a'$};
}\end{equation}
In this case, we can interpret the equation \eqref{arch-adjoint} as
saying that \[r=\sum_j
\tr_\la(r{\kappa}^{(1)}_j)\kappa^{(2)}_j.\] That is,
$\kappa^{(1)}_j$ and $\kappa^{(2)}_j$ are dual bases, and
$\kappa$ is essentially the canonical element of the Frobenius
pairing. This shows that the pairing is non-degenerate.  
\end{proof}
  This Frobenius trace is not symmetric, since the 2-category $\tU$ is
  not cyclic. For example, assume $\la=\omega_1+\omega_2$ is the highest weight
of the adjoint rep for $\mathfrak{sl}_3$, and $Q_{12}(u,v)=u-v$.
The algebra $R^\la_0$ has two idempotents $e_{(1,2)},e_{(2,1)}$
corresponding to the crossingless diagrams with a strand each of
label $1$ and $2$ in the corresponding order.  We have that \[\tr(\psi^2 e_{(1,2)})=\tr(y_1 e_{(1,2)})-\tr(y_2 e_{(1,2)})=0-1=-1.\]
On the other hand, we have that \[\tr(\psi e_{(1,2)}\psi)=\tr(\psi^2
e_{(2,1)})=-\tr(y_1 e_{(2,1)})+\tr(y_2 e_{(2,1)})=1-0=1.\] 

 In general, moving
a crossing to the other side of the diagram requires multiplying by
$t_{ji}/t_{ij}$, as is shown by 
the pitchfork moves (\ref{pitch1}--\ref{pitch2}).

\begin{rem}
However, this trace can easily be adjusted to become symmetric. One
  fixes one reference sequence $\Bi_\mu$ for each weight $\mu$; for
  each other sequence $\Bi$, we pick a diagram connecting it to
  $\Bi_\mu$ and for each crossing with and consider the scalar $t(\Bi)$
  which is the product over all crossings in the diagram of
  $t_{ji}/t_{ij}$ where the NE/SW strand of the crossing is labeled
  with $i$ and the NW/SE strand is labeled $j$.  If we multiply the
  trace on $e(\Bi)R^\la e(\Bi)$ by $t(\Bi)$, the result will still be
  Frobenius and symmetric.

The reader may sensibly ask why we use the trace above instead; it is
in large part so we may match the conventions of \cite{CaLa} and use
their results.  That said, their choice arises very naturally from a
coherent principle: that degree 0 bubbles should be 1.  Trying to
recover cyclicity in $\tU$ will definitely break this condition.
\end{rem}

\begin{cor}
  The map $p\colon  R^\la\to DR^\la$ is a Morita equivalence.
\end{cor}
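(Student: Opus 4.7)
The plan is to mirror the proof of Proposition \ref{R-Morita}, using the idempotent $e_-\in DR^\la$ that acts as the identity on $\eF_\Bi$ when $\Bi$ contains no upward strands and by zero otherwise. The homomorphism $p$ manifestly factors through the corner $e_-\cdot DR^\la\cdot e_-$, so by the standard criterion for Morita equivalence via an idempotent it suffices to prove (a) $DR^\la\cdot e_-\cdot DR^\la=DR^\la$ and (b) $p$ induces an isomorphism $R^\la\cong e_-\cdot DR^\la\cdot e_-$.

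For (a), I would induct on the number of pairs $(j,k)$ with $j<k$, $i_j\in -\Gamma$ and $i_k\in +\Gamma$ appearing in a KL pair $\Bi$. When no such pair exists, either $\Bi$ is purely downward (so $e(\Bi)\in e_-\cdot DR^\la$) or the leftmost strand of $\Bi$ is upward, in which case $e(\Bi)=0$ by the defining relations of $I_\mu$. Otherwise pick a consecutive pair: if $i_j=-i_{j+1}$ apply (\ref{switch-1}), and if $i_j\neq -i_{j+1}$ apply (\ref{opp-cancel1}); either way $e(\Bi)$ factors through diagrams with strictly fewer such pairs, exactly as in the proof of Proposition \ref{R-Morita}.

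For (b), injectivity is the easier half: by Theorem \ref{cyc-action} the $2$-category $\tU$ acts on the category of $R^\la$-modules, and since this action satisfies the defining relations of $DR^\la$ it descends to a ring homomorphism $DR^\la\to\End\bigl(\bigoplus_\Bi \fF_\Bi(R^\la)\bigr)$. The composition $R^\la\to DR^\la\to \End_{R^\la}(R^\la)$ is just left multiplication, which is faithful, so $p$ is injective. For surjectivity I follow the template of Lemma \ref{PR-iso}: any element of $e_-\cdot DR^\la\cdot e_-$ is represented by a diagram with downward strands on the top and bottom boundaries. Using the spanning set of Definition \ref{BGH} and the isotopies available in $\tU$, rewrite such a diagram as a downward diagram together with clockwise bubbles placed at the far left; then the bubble identities (\ref{zero-bubble}), the cyclotomic relation (\ref{F-cyc}), and the slide (\ref{bubble-slide1}) combine to reduce these bubbles to scalars, placing the class in the image of $p$.

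The main obstacle will be the bookkeeping in the surjectivity step. The proof of Lemma \ref{PR-iso} carried out an intricate case analysis based on the orientation of the violating strand at a critical horizontal slice and on whether the accompanying cup and cap connect to it; here the analysis must be extended to allow cups, caps, and bubbles of any label, not only $i$. The bubble-slide relations (\ref{bubble-slide1})--(\ref{bubble-slide2}) let one trade a bubble of one color for one of another color up to polynomial corrections in the dots, and these corrections themselves lie in the image of $p$ by the cyclotomic relation. Once these manipulations are carried through, the structural shape of the argument is identical to the $\PR^\la$ case and yields the desired isomorphism.
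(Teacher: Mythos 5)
Your proposal is correct, and two of its three steps coincide with the paper's: the surjectivity of $R^\la\to e_-DR^\la e_-$ is handled exactly as in Lemma \ref{PR-iso}, and the identity $DR^\la\cdot e_-\cdot DR^\la=DR^\la$ by the same induction as Proposition \ref{R-Morita}. Where you genuinely diverge is the injectivity step. The paper deduces injectivity from the non-degeneracy of the Frobenius trace: if $p(a)=0$ then $\tr_\la(ab)=0$ for all $b\in R^\la$, which forces $a=0$ by Theorem \ref{Frobenius}. You instead observe that the categorical action of Theorem \ref{cyc-action} satisfies the defining relations of $DR^\la$ (the functor attached to $\eE_\Bi$ with leftmost strand upward vanishes because $\fE_i$ kills the highest weight category, and positive-degree bubbles are positive-degree endomorphisms of the identity functor on $R^\la_\la\mpmod\cong\K\mpmod$, hence zero), so $DR^\la$ acts by natural transformations; evaluating on the object $R^\la$ recovers left multiplication, which is faithful. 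Both arguments are valid and neither is circular, since Theorem \ref{cyc-action} precedes the corollary; your route has the advantage of not invoking the Frobenius structure at all, and it is in fact the strategy the paper itself adopts later for the tensor-product analogue $T^\bla\cong e_-D\alg^\bla e_-$, while the paper's trace argument is shorter once Theorem \ref{Frobenius} is available. One small simplification to your surjectivity step: for downward top and bottom, the spanning set of Definition \ref{BGH} already consists of downward diagrams multiplied by positive-degree clockwise bubbles at the far left, and such bubbles are literally zero in $DR^\la$ by the definition of the ideal $I_\mu$ (with degree-zero bubbles equal to $1$), so no bubble-slide bookkeeping is actually needed there.
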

\begin{proof}
There is an idempotent $e_-$ that picks out downward KL pairs in
$DR^\la$, just as in $D^iR^\la$.  The proof that the map $R^\la$
surjects onto $e_-DR^\la e_-$ is the same the proof in Lemma
\ref{PR-iso}.  This map is injective, since any element $a$ killed by this
map must be killed by $\tr_\la(ab)=0$ for all $b\in R^\la$, and there are no such elements by Theorem
\ref{Frobenius}.  The surjectivity of the map $DR^\la e_-DR^\la\to
DR^\la$ is the same proof as Proposition \ref{R-Morita}.
\end{proof}

\begin{prop}\label{simple-GG}
  We have an isomorphism of representations $K_0(R^\la)\cong V_\la^\Z$.
\end{prop}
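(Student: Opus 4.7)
The plan is to use the categorical action of $\tU$ on $R^\la\mpmod$ furnished by Theorem \ref{cyc-action} to transport the quantum group operators to the Grothendieck group. Since $\tU$ is the 2-quantum group, the induced operators $[\fE_i]$ and $[\fF_i]$ on $K_0(R^\la) = \bigoplus_\mu K_0(R^\la_\mu)$ satisfy the relations of $\dot U_q^\Z(\fg)$, and the weight decomposition matches the categorical one. With this structure in hand, I want to build a surjection $V_\la^\Z \twoheadrightarrow K_0(R^\la)$ from a highest-weight vector, then close the argument by matching graded ranks.

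First I would identify the highest weight generator. The algebra $R^\la_\la$ is generated by the idempotent $e(\emptyset)$, whose endomorphism space only contains clockwise bubble polynomials; degree-zero bubbles equal $1$ by (\ref{zero-bubble}), and all positive-degree bubbles at the region labeled $\la$ are forced to zero by a cyclotomic and degree count, so $R^\la_\la \cong \K$. Let $v := [R^\la_\la]$. Then $[\fE_i]v = 0$, since $\fE_i$ outputs a module over $R^\la_{\la+\al_i}$, and no composition of downward strands can reach $\la+\al_i$ from $\la$, making that algebra zero.

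Next I would produce $\phi \colon V_\la^\Z \to K_0(R^\la)$ by the universal property of $V_\la^\Z$. The one remaining defining relation to check is that $[\fF_i]^{\la^i+1}v = 0$. Unwinding, this is the assertion that the projective $\fF_i^{\la^i+1}(R^\la_\la) \cong R^\la \cdot e(i^{\la^i+1})$ vanishes, which I would deduce from the cyclotomic relation $y_1^{\la^i}e(i\cdots) = 0$ together with the nilHecke identity expressing the primitive idempotent of the nilHecke algebra of rank $\la^i+1$ as a polynomial in crossings and a product $y_1^{\la^i}\cdots y_{\la^i+1}^{0}$, which is killed by the cyclotomic ideal. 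Surjectivity of $\phi$ then follows because every indecomposable projective $R^\la_\mu$-module is a summand of $\fF_\Bi(R^\la_\la) = R^\la e(\Bi)$ for some downward sequence $\Bi$, hence is obtained from $v$ by applying the $[\fF_i]$'s.

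Finally, for injectivity I would compare graded $\Z[q,q^{-1}]$-ranks in each weight space $\mu$. On the left, $\mathrm{rk}\, V_\la^\Z(\mu) = \dim V_\la(\mu)$; on the right, the rank of $K_0(R^\la_\mu)$ equals the number of simple graded $R^\la_\mu$-modules (up to grading shift), which by the crystal parametrization of simples for cyclotomic KLR algebras due to Lauda--Vazirani \cite{LV} (building on Kleshchev--Ram \cite{KlRa}) equals $|B(\la)_\mu| = \dim V_\la(\mu)$. A surjection of free $\Z[q,q^{-1}]$-modules of equal rank is an isomorphism, so $\phi$ is an isomorphism of $U_q^\Z(\fg)$-modules. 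I expect the main technical obstacle to be verifying the Serre-type relation $[\fF_i]^{\la^i+1}v = 0$, namely the explicit nilHecke computation showing $\fF_i^{\la^i+1}(R^\la_\la)=0$; once this is in hand the rest of the argument is structural.
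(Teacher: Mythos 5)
Your argument is correct, but it closes the proof by a genuinely different mechanism from the paper's. The paper runs the map in the opposite direction: it observes that $K_0(R)\to K_0(R^\la)$ is surjective (every projective $R^\la$-module lifts to a projective $R$-module), so $K_0(R^\la)$ is a cyclic highest-weight module, i.e.\ a quotient of the integral Verma module; it then notes that the categorification is integrable (both $\fE_i$ and $\fF_i$ eventually annihilate every module) and concludes by the fact that $V_\la^\Z$ is the unique integrable quotient of the Verma module that is free over $\Z[q,q^{-1}]$. That route needs neither your explicit verification that $\fF_i^{\la^i+1}(R^\la_\la)=0$ --- the Serre-type relation is absorbed into the soft integrability statement --- nor any count of simple modules. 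Your route instead builds $\phi\colon V_\la^\Z\to K_0(R^\la)$ directly, which forces the cyclotomic nilHecke computation (your identification of $R^\la_{\la-(\la^i+1)\al_i}$ with the vanishing quotient $NH_{\la^i+1}/(y_1^{\la^i})$ is exactly the right step), and then gets injectivity by matching ranks against $|B(\la)_\mu|$ via the Lauda--Vazirani crystal classification \cite{LV}. This imports an external theorem the paper's proof of this proposition avoids (though the paper does invoke \cite[7.8]{LV} later, so it is a legitimate tool), but it buys an explicit rank formula for each weight space and makes the highest-weight generator and the Serre relation concrete rather than implicit. Two minor points: $R^\la_\la\cong\K$ for the trivial reason that $R_0=\K$ --- bubbles live in $DR^\la$, not in the cyclotomic KLR algebra $R^\la$ --- so no degree count is needed there; and in the surjectivity step, "every indecomposable projective is a summand of some $R^\la e(\Bi)$" does not by itself put its class in the span of the $[R^\la e(\Bi)]$ --- you should route this through the surjection $K_0(R)\twoheadrightarrow K_0(R^\la)$ together with the spanning of $K_0(R)$ by divided-power monomial projectives from \cite{KLI}, which is the same implicit input the paper uses.
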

\begin{proof}
First, we note that the map $K^0(R)\to K^0(R^\la)$ is surjective,
since every projective $R^\la$ module is the quotient of a projective
$R$ module by the cyclotomic ideal.  In particular, $K_0(R^\la)$ is
generated over $U_q^-$ by a single highest weight vector of weight $\la$. 

 We need only
note that 
\begin{itemize}
\item $K^0(R^\la)$ is thus a quotient of the Verma module of highest weight $\la$.
\item On the other hand, $\cata^\la$ is an integrable categorification in the
  sense of Rouquier: acting by $\fF_i$ or $\fE_i$ a sufficiently large
  number of times kills any $R^\la$-module, so $K_0(R^\la)$ is integrable.
\item $V_\la^\Z$ is the only integrable quotient of the the Verma module which is free as a $\Z[q,q^{-1}]$ module.\qedhere
\end{itemize}
\end{proof}

Recall that the {\bf $q$-Shapovalov form}  $\langle-,-\rangle$ is the
unique bilinear form on $V_\la^\Z$ such that \begin{itemize}
\item $\langle v_h,v_h\rangle =1$ for a fixed highest weight vector $v_h$.
\item $\langle u\cdot v,v'\rangle=\langle v,\tau(u)\cdot v'\rangle$ for any $v,v'\in V_\la$ and $u\in U_q(\fg)$, where $\tau$ is the $q$-antilinear antiautomorphism defined by
  \begin{equation}
\tau(E_i)=q_i^{-1}\tilde{K}_{-i}F_i \qquad \tau(F_i)=q_i^{-1}\tilde{K}_{i}E_i \qquad \tau(K_\mu)=K_{-\mu}\label{eq:4}
\end{equation}

\item $f\langle v,v'\rangle=\langle \bar f v,v'\rangle=\langle v,f
  v'\rangle$ for any  $v,v'\in V_\la^\Z$ and $f\in\Z[q,q^{-1}]$. 
\end{itemize}
On
  the other hand, the Grothendieck group $K_0(R^\la)$ carries an {\bf Euler form},
defined by:$$\langle[P_1],[P_2]\rangle=\dim_q\Hom(P_1,P_2).$$
\begin{cor}\label{Euler-form}
The isomorphism $K_0(R^\la)\cong V^\Z_\la$ intertwines the Euler form
with the $q$-Shapovalov form described above. 
In particular, \[\displaystyle\dim_q e(\Bi) R^\la e(\Bj)=\langle F_\Bi v_\la,F_\Bj v_\la\rangle\]
\end{cor}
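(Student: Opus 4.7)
The plan is to verify that the Euler form on $K_0(R^\la)$ satisfies the three axioms that characterize the $q$-Shapovalov form. Since those axioms determine the form uniquely on $V_\la^\Z$, the identification of Proposition \ref{simple-GG} will automatically intertwine the two pairings, and the explicit formula for $\dim_q e(\Bi)R^\la e(\Bj)$ will be an immediate consequence.

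For the normalization axiom, I would first observe that the weight space $R^\la_\la$ is spanned by the empty diagram alone: any non-empty diagram with a downward strand would connect regions of different weights, and upward strands at the far left are killed by $\eqref{E-cyc}$. Hence $R^\la_\la \cong \K$ in degree $0$, its class in $K_0$ is the highest weight vector $v_\la$, and $\dim_q \End(R^\la_\la) = 1 = \langle v_\la,v_\la\rangle$. For the antilinearity axiom, I would use that by our grading conventions $[P(n)] = q^n[P]$ while $\dim_q \Hom(P(n),P') = q^{-n}\dim_q\Hom(P,P')$, which is exactly the required sesquilinearity.

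The substantive step is the adjunction axiom $\langle F_i v,v'\rangle = \langle v,\tau(F_i)v'\rangle$. Here I would invoke the biadjunction of $\fE_i$ and $\fF_i$ established in Theorem \ref{cyc-action}: Proposition \ref{quotient-is-cyc} gives an isomorphism $\Hom(\fF_i P,P') \cong \Hom(P,\fE_i P')$ of graded vector spaces, where the grading shift $\langle\mu,\al_i\rangle - d_i$ built into the definition of $\fE_i$ (acting on target weight $\mu$) must be compared to the scalar by which $\tau(F_i) = q_i^{-1}\tilde K_i E_i$ acts. Since $\tilde K_i$ scales a weight-$\mu$ vector by $q^{\langle\al_i,\mu\rangle}$ and $q_i^{-1} = q^{-d_i}$, this scalar is precisely $q^{\langle\al_i,\mu\rangle - d_i}$, matching the Hom-shift exactly. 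Taking graded dimensions on both sides then translates the biadjunction into the desired identity on $K_0$.

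For the explicit formula, $e(\Bi)R^\la$ (as a right $R^\la$-module) is obtained from $R^\la_\la$ by iteratively applying the induction functors $\fF_{i_k}$ described in Proposition \ref{quotient-is-cyc}, so its class in $K_0$ equals $F_\Bi v_\la$; combining with $\Hom_{R^\la}(e(\Bi)R^\la,e(\Bj)R^\la) \cong e(\Bi)R^\la e(\Bj)$ as graded spaces yields the final equality. The main obstacle is purely bookkeeping: carefully tracking the grading shifts in the biadjunction and confirming they line up with the scalar in $\tau$. Once this is verified, uniqueness of the Shapovalov form closes the argument.
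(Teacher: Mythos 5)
Your proposal is correct and follows exactly the route the paper intends: the corollary is stated without proof immediately after the three-property characterization of the $q$-Shapovalov form, so the intended argument is precisely to check those properties for the Euler form (normalization via $R^\la_\la\cong\K$, sesquilinearity from the shift conventions, and $\tau$-adjunction from the biadjunction of $\fE_i$ and $\fF_i$ with its built-in shift $\langle\mu,\al_i\rangle-d_i$ matching the scalar of $\tau(F_i)=q_i^{-1}\tilde K_iE_i$) and invoke uniqueness together with Proposition \ref{simple-GG}. This is also the same strategy the paper carries out in detail for the tensor-product generalization in Proposition \ref{unique-form} and Lemma \ref{Euler-inner}.
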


We let $\langle-,-\rangle_1$ denote the specialization of this form at
$q=1$, which is thus the ungraded Euler form.

\subsection{Universal categorifications}
\label{sec:univ-quant}
In \cite[\S 5.1.2]{Rou2KM}, Rouquier discusses universal categorifications
of simple integrable modules.  Of course, to speak of universality, we
must have a notion of morphisms between categorical modules.  Let
$\aleph_1,\aleph_2\colon \tU\to \mathsf{Cat}$ be two strict
2-functors.
\begin{defn}
  A {\bf strongly equivariant} functor $\be$ is a collection of
  functors $\be(\la)\colon \aleph_1(\la) \to \aleph_2(\la)$ together
  with natural isomorphisms of functors $c_u\colon \be\circ\aleph_1(u)\cong
  \aleph_2(u)\circ \be$ for every 1-morphism $u\in \tU$ such that 
 \[c_v\circ (\id_{\be}\otimes\,  \aleph_1(\al))= (\aleph_2(\al) \otimes
 \id_{\be}) \circ c_u\] for every 2-morphism $\al\colon u\to v$ in
 $\tU$. (Here we use $\otimes$ for horizontal composition, and $\circ$
 for vertical composition of 2-morphisms). 
\end{defn}

In \cite[\S 5.1.2]{Rou2KM}, it is proven that there is a unique
$\tU$-module category $\check\cata^\la$ (he uses the notation
$\mathcal{L}(\la)$) with generating highest weight object $P$ with the
universal property that
\begin{itemize}
\item [($*$)] for any additive, idempotent-complete $\tU$-module category $\mathcal{C}$ and any
  object $C\in \operatorname{Ob}\mathcal{C}_\la$ with $\fE_i(C)=0$ for
  all $i$, there is a unique (up to unique isomorphism) strongly
  equivariant
  functor $\phi_C\colon \check\cata^\la \to \mathcal{C}$ sending $P_\emptyset$
  to $C$.
\end{itemize}

This is a higher categorical
analogue of the universal property of a Verma module, but somewhat
surprisingly, $\check\cata^\la$ does {\it not} categorify a Verma
module, but rather an integrable module.  

Consider the algebra $\check{R} :=R\otimes\bLa$ where
$\bLa\cong \left(\otimes_{j\in\Gamma}\La(\bp_j)\right)$ and $\bp_j$ is an
infinite alphabet attached to each node.  This algebra can represented
diagrammatically with $R$ given by diagrams as usual, the clockwise bubble at
the left of the diagram of
degree $2n$ corresponding to $(-1)^ne_{n}(\bp_j)$, and the counterclockwise
one of degree $2n$ corresponding to $h_{n}(\bp_i)$. Note, this is
compatible with our conventions from Section
\ref{sec:categ-minim-parab}, but will not require the same sort of
involved calculations.
This gives us a natural homomorphism 
$ \check{R}\to \End_{\tU}(\oplus_{\Bi}\eF_{\Bi}\la)$ where
$\bLa\cong \left(\otimes_{j\in\Gamma}\La(\bp_j)\right)$ and $\bp_j$ is an
infinite alphabet attached to each node.  In
\cite[$\epsilon$.8]{WebCBerr}, we show that this map is an
isomorphism, but we do not use this fact in what follows (in fact, the
results below are useful in proving the results of \cite{WebCBerr}).  

\begin{defn}
  Let $\check{R}^\la$ be the quotient of $ \check{R}$
  by the relations
$$\begin{tikzpicture}

\node at (12.1,0){$0$};

\node at (11.4,0){$=$};

\node at (-.9,0){$-$};
\node at (0,0){
\begin{tikzpicture}[baseline=-2.75pt, yscale=1.2, xscale=-1.2]
\node [fill=white,circle,inner sep=2pt,label={[scale=.7,white]left:{$-\la^j$}},above=4.5pt] at (1.3,.4) {};
\draw[very thick,postaction={decorate,decoration={markings,
    mark=at position .5 with {\arrow{<}}}}]    
(1,-.5) to[out=90,in=180] node[at start,below]{$j$} 
(1.5,.2) to[out=0,in=90] (1.75,0) to[out=-90,in=0] (1.5, -.2)
to[out=left,in=-90] node[at end,above]{$j$}  (1,.5);
%\draw[very thick,red] (2,-.5) -- (2,.5)  node[at start,below]{$\la$};
\end{tikzpicture}};
\node at (1,0){$\dots$};
\node at (1.7,0){$=$};
\node at (2,0){
\begin{tikzpicture}[baseline=-2.75pt, yscale=1.2, xscale=-1.2]
\node [fill=white,circle,inner sep=2pt,label={[scale=.7,white]left:{$-\la^j$}},above=4.5pt] at (1.3,.4) {};

\draw[very thick,postaction={decorate,decoration={markings,
    mark=at position .65 with {\arrow{<}}}}]    (1,-.5) -- (1,.5) node[pos=.3,fill,circle,inner sep=2pt,label={[scale=.7]left:{$\la^j$}}]{} node[at end,above]{$j$} node[at start,below]{$j$} ;
%\draw[very thick,red] (2,-.5) -- (2,.5)  node[at start,below]{$\la$};
\end{tikzpicture}};
\node at (3.1,0){$\dots$};
\node at (3.75,0){$+$};
\node at (5,0){
\begin{tikzpicture}[baseline=-2.75pt, yscale=1.2, xscale=-1.2]
\draw[very thick,postaction={decorate,decoration={markings,
    mark=at position .65 with {\arrow{<}}}}]    (1,-.5) -- (1,.5) node[pos=.3,fill,circle,inner sep=2pt,label={[scale=.7]left:{$\la^j-1$}}]{} node[at end,above]{$j$} node[at start,below]{$j$} ;
\draw[postaction={decorate,decoration={markings,
    mark=at position .5 with {\arrow[scale=1.3]{>}}}},very thick] (1.7,.4) circle (6pt);
\node [fill,circle,inner sep=2pt,label={[scale=.7]45:{$-\la^j$}},above=4.5pt] at (1.7,.4) {};
%\draw[very thick,red] (2.4,-.5) -- (2.4,.5)  node[at start,below]{$\la$};
\end{tikzpicture}};
\node at (6,0){$\dots$};
\node at (6.6,0){$+$};
\node at (7.4,0){$\dots$};
\node at (8.2,0){$+$};
\node at (9.4,0){
\begin{tikzpicture}[baseline=-2.75pt, yscale=1.2, xscale=-1.2]
\draw[very thick,postaction={decorate,decoration={markings,
    mark=at position .65 with {\arrow{<}}}}]    (1,-.5) -- (1,.5)  node[at end,above]{$j$} node[at start,below]{$j$} ;
\draw[postaction={decorate,decoration={markings,
    mark=at position .5 with {\arrow[scale=1.3]{>}}}},very thick] (1.8,.4) circle (6pt);
\node [fill,circle,inner sep=2pt,label={[scale=.7]45:{$-1$}},above=4.5pt] at (1.8,.4) {};
%\draw[very thick,red] (2.4,-.5) -- (2.4,.5)  node[at start,below]{$\la$};
\end{tikzpicture}};
\node at (10.7,0){$\dots$};
\end{tikzpicture}$$
$$\tikz [baseline=-2pt, yscale=2, xscale=-2]{\draw[postaction={decorate,decoration={markings,
    mark=at position .5 with {\arrow[scale=1.3]{>}}}},very thick] (0,0) circle (6pt);
\node [fill,circle,inner
sep=2pt,label={[scale=.7]45:{$n$}},above=8.2pt] at (0,0)
{};\node at (-.6,0){$\dots$};}=0\qquad (n\geq 0)$$
where in both pictures, the ellipses indicate that the portion of the
diagram shown is at the far left.  More algebraically, these relations
can be written in the form
  \begin{align*}
    e(\Bi)(y^{\la^{i_i}}_1-e_1(\bp_{i_1})y^{\la_{i_1}-1}_1+\cdots + (-1)^{\la^{i_1}}e_{\la^{i_1}}(\bp_{i_1}))&=0\\
    e_{n}(\bp_j) &=0 \qquad (n> \la^j)
  \end{align*}
\end{defn}

Note that if we specialize $e_n(\bp_j)=0$ for every $n>0$, then we
recover the usual cyclotomic quotient $R^\la$.

If we extend scalars to polynomials in the $p_{*,*}$ and form the
algebra $\check{R}\otimes_{\bLa}\K[p_{1,1},\dots,]$ then we can
rewrite these equations as
  \begin{align*}
    e(\Bi)(y_1-p_{i_1,1})(y_1-p_{i_1,2})\cdots (y_1-p_{i_1,\la^{i_1}})&=0\\
    p_{j,n} &=0 \qquad (n> \la^j)
  \end{align*}
In terms of the geometry of quiver varieties, $\check{R}^\la$ arises from considering equivariant sheaves for
the action of the group $\prod \operatorname{GL}(W_i)$, and its
extension to polynomials from equivariant sheaves for a maximal torus
of this group.

%As in Definition \ref{PR-def}, we will use $\eF_\Bi$ to denote the
%1-morphism with this name originating at $\la$.
\begin{prop}\label{quotient-is-cyc-2}
 The algebra $\check{R}^\la$ is Morita equivalent to the quotient of
 $\End_{\tU}(\eF_{\Bi}\la)$ by the ideal generated by all morphisms
 factoring through $\eF_{\Bj}\eE_{j}$ for all $\Bj,j$.  The category
 of $\check{R}^\la$ modules
 thus carries a natural action of $\tU$.
\end{prop}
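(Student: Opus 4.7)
The argument should mirror the chain Lemma \ref{PR-iso} $\to$ Proposition \ref{R-Morita} $\to$ the Morita equivalence $R^\la \sim DR^\la$ already established for the non-bubbled cyclotomic quotient. Let $Q = \End_\tU(\bigoplus_\Bi \eF_\Bi\la)/I$, and let $e_-$ be the idempotent projecting onto purely downward sequences $\Bi \in (-\Gamma)^*$. The $\tU$-action on $Q$-modules is immediate: since $I$ is a 2-sided ideal in the 2-category $\tU$, the bimodule construction $u \mapsto \beta_u' = e_u \cdot Q$ of Section \ref{sec:cyc} descends to give a strict 2-functor $\tU \to \mathsf{Cat}$.

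The main content is the identification $\check{R}^\la \cong e_- Q e_-$ and the Morita equivalence $Q \sim e_-Qe_-$. For the former, I would define the obvious map $\check{R} \to e_-Qe_-$ sending KLR diagrams to themselves and the polynomial generators $e_n(\bp_j)$, $h_n(\bp_j)$ to bubbles of degree $2n$ at the far left (clockwise and counterclockwise respectively). Verifying this descends to $\check{R}^\la$ amounts to two relation checks: the cyclotomic-type relation is exactly \eqref{loop-cyclo} applied with the LHS lying in $I$ (the curl factors through a 1-morphism of the form $\eF_\Bj\eE_j$); the vanishing of counterclockwise bubbles at the far left of non-negative degree is for the same reason. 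Surjectivity of the induced map $\check{R}^\la \to e_-Qe_-$ then follows from the spanning-set argument of Lemma \ref{PR-iso} combined with the bubble slides \eqref{bubble-slide1}; injectivity requires a faithful representation of $\check{R}^\la$. Finally, $Qe_-Q = Q$ is proved verbatim as in Proposition \ref{R-Morita}, inducting on the number of upward entries in $\Bi$ and using (\ref{switch-1}) and (\ref{opp-cancel1}) to push $\eE$-strands leftward until they can be absorbed by $e_-$.

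The main obstacle is constructing the faithful representation of $\check{R}^\la$ needed for injectivity. The natural candidate is an infinite-alphabet enlargement of the quiver flag category $\bLa_\mu\modu$ from Section \ref{sec:categ-minim-parab}, in which the variables $p_{j,k}$ remain algebraically independent rather than being truncated by a Grassmannian-type relation $\gamma_\mu(\ssy_g)=0$. The $\tU$-action is assembled one color at a time by applying Theorem \ref{non-degenerate} and base-changing through the universal bubble algebra $\bLa$; however, coordinating the infinite alphabets across distinct colors requires care, since they interact through the polynomials $Q_{ij}$. This bookkeeping, rather than any single conceptual step, is the most delicate technical aspect of the proof.
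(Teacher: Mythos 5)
Your plan is essentially the paper's: the paper gives no argument for Proposition \ref{quotient-is-cyc-2} beyond declaring it "so similar to that of Lemma \ref{PR-iso} and Propositions \ref{R-Morita}--\ref{quotient-is-cyc} that we leave it to the reader," and your outline — identify $\check{R}^\la$ with $e_-Qe_-$ via the bubble-to-symmetric-function dictionary, check the deformed cyclotomic relations against \eqref{loop-cyclo}, run the chopping/spanning argument for surjectivity, and prove $Qe_-Q=Q$ by pushing upward strands left — is exactly that adaptation, with the $\tU$-action descending because the violating ideal is closed under horizontal composition. Your flagged obstacle (injectivity, since neither the Frobenius-trace argument used for $DR^\la$ nor the finite quiver flag categories applies verbatim in the deformed setting) is a genuine gap the paper glosses over, and the fix you propose — the base-changed polynomial/quiver-flag representations with the Grassmannian truncation \eqref{grass-rels} omitted so the $p_{j,k}$ stay independent — is the right one and is consistent with how the paper handles the analogous step elsewhere.
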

The proof of this proposition is so similar to that of Lemma
\ref{PR-iso} and Propositions \ref{R-Morita}-\ref{quotient-is-cyc}
that we leave it to the reader.

Note that this implies that:
\begin{cor}\label{check-free}
  The ring $\check{R}^\la$ is a free module over $\check{R}^\la_\la$.
  That is, $\check{R}^\la$ is a flat deformation of $R^\la$.  
\end{cor}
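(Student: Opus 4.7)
The plan is to combine an explicit $\check{R}^\la_\la$-spanning set with a Nakayama-type rank computation coming from the categorical action of Proposition \ref{quotient-is-cyc-2}.

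First, I would identify the base ring. Since $e_n(\bp_j) = 0$ in $\check{R}^\la$ for all $n > \la^j$, the ring $\check{R}^\la_\la$ is the finitely-generated polynomial ring $\bigotimes_{j \in \Gamma} \K[e_1(\bp_j), \ldots, e_{\la^j}(\bp_j)]$; let $\mathfrak{m}$ denote its augmentation ideal. A direct comparison of defining relations shows that $\check{R}^\la / \mathfrak{m} \check{R}^\la \cong R^\la$, since setting every $e_n(\bp_j)=0$ collapses the first relation of $\check{R}^\la$ to the usual cyclotomic relation $y_1^{\la^{i_1}} e(\Bi) = 0$ and kills all positive-degree bubbles at the far left.

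Second, I would produce a spanning set. Fix the KLR-type basis $B_{\Bj, \Bi}$ of $e(\Bj) R^\la e(\Bi)$ from Definition \ref{BGH} applied to the cyclotomic quotient, and lift each element to $\check{R}^\la$ by using the identical KL diagram. Using the bubble slides \eqref{bubble-slide1}, any bubble appearing inside an arbitrary diagram can be pushed to the far left, where it becomes by construction an element of $\check{R}^\la_\la$; together with the KLR-type reductions and upward-strand elimination used in Lemma \ref{PR-iso}, this shows that the lifts of $B_{\Bj, \Bi}$ span $e(\Bj) \check{R}^\la e(\Bi)$ as a right $\check{R}^\la_\la$-module. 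In particular, we obtain a graded surjection $F_{\Bj, \Bi} \twoheadrightarrow e(\Bj) \check{R}^\la e(\Bi)$ from a free graded $\check{R}^\la_\la$-module $F_{\Bj, \Bi}$ indexed by $B_{\Bj, \Bi}$ (with the appropriate degree shifts), and by the first step this surjection specializes to the identity on $e(\Bj) R^\la e(\Bi)$ after tensoring with $\check{R}^\la_\la / \mathfrak{m}$.

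The main obstacle — and the heart of the proof — is upgrading this surjection to an isomorphism, i.e.\ showing that no non-trivial $\check{R}^\la_\la$-linear relation holds among the lifted basis elements. For this I would exploit the $\tU$-action on $\check{R}^\la\mpmod$ from Proposition \ref{quotient-is-cyc-2}. The adjunction-based argument of Theorem \ref{Frobenius} and Corollary \ref{Euler-form} goes through essentially unchanged in this setting: the biadjunction of $\fE_i$ and $\fF_i$ produces a canonical element realizing a Shapovalov-type pairing on Hom spaces between $\fF_\Bi$ and $\fF_\Bj$ applied to the generating object, forcing the graded $\check{R}^\la_\la$-rank of $e(\Bj) \check{R}^\la e(\Bi)$ to be at least $\langle F_\Bi v_\la, F_\Bj v_\la \rangle$, which equals the rank of $F_{\Bj, \Bi}$ by Corollary \ref{Euler-form}. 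Combined with the spanning surjection of the previous paragraph, this coincidence of ranks forces the surjection $F_{\Bj, \Bi} \twoheadrightarrow e(\Bj) \check{R}^\la e(\Bi)$ to be an isomorphism of graded $\check{R}^\la_\la$-modules, establishing that each idempotent-truncated piece of $\check{R}^\la$ is a free $\check{R}^\la_\la$-module of the expected rank and thereby proving the corollary.
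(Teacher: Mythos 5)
Your argument is correct, and it reaches the same Nakayama-type endgame as the paper but by a different route at the key step. Both proofs begin (implicitly or explicitly) from the observation that a basis of $R^\la=\check{R}^\la/\mathfrak{m}\check{R}^\la$ lifts to a generating set of $\check{R}^\la$ over $\check{R}^\la_\la$, so that freeness reduces to showing the resulting surjection from a graded free module has no kernel. Where you diverge is in certifying the rank: the paper computes the generic rank of $\check{R}^\la$ and the dimension of $R^\la$ simultaneously, by using adjunction to rewrite $e(\Bj)(-)e(\Bi)$ in terms of $\fE_\Bj\fF_\Bi$ applied to the highest-weight object and observing that $\fE_\Bj\fF_\Bi$ restricted to the $\la$-weight space is a direct sum of copies of the identity functor, with a multiplicity dictated by the categorical commutation relations and hence identical in the deformed and undeformed settings. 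You instead rerun the canonical-element argument of Theorem \ref{Frobenius} over the base $\check{R}^\la_\la$, which (if one prefers) even gives projectivity directly rather than just a rank lower bound. Your route is valid but leans on slightly more machinery, and the phrase ``goes through essentially unchanged'' hides the one point that actually needs checking: in $\check{R}^\la$ the positive-degree bubbles produced when straightening the arches are no longer zero, so they must be slid to the far left where they become elements of $\check{R}^\la_\la$ (this is precisely what the diagrammatic presentation of $\check{R}$ is designed to accommodate), and the terms retaining a cap still vanish because $\check{R}^\la_{\la+\al_i}=0$. With that sentence added, your proof is complete; the paper's version buys brevity by avoiding the Frobenius pairing altogether.
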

\begin{proof}
  In order to check this, we need only confirm that $\dim R^\la$
  coincides with the generic rank of the $\check{R}^\la_\la$-module
  $\check{R}^\la$.  Using the adjunction between $\fE_i$ and $\fF_i$
  one can write $\dim R^\la$ as the sum of the dimensions of the
  modules $\fE_{\Bj}\fF_{\Bi}P_\emptyset$ over all sequences $\Bj$ and
  $\Bi$ of the same weight.  Similarly, the generic rank of
  $\check{R}^\la$ is the sum of the generic rank of
  $\fE_{\Bj}\fF_{\Bi}\check{P}_\emptyset$ over all such sequences.
  Just using the fact that $
\fE_i$ kills the highest weight space, we find that
$\fE_{\Bj}\fF_{\Bi}$ acting on the $\la$ weight space is a sum of some
number of copies of the identity functor; this number is both the
contribution of $\fE_{\Bj}\fF_{\Bi}P_\emptyset$ to the dimension of
$R^\la$ and of $\fE_{\Bj}\fF_{\Bi}\check{P}_\emptyset$ to the generic
rank and thus these numbers coincide.
\end{proof}
The following corollary is essentially equivalent to
\cite[4.25]{RouQH}; we include it mainly to spare the reader the
difficulty of translating between formalisms.

\begin{cor}\label{universal}
   For any additive,  idempotent-complete $\tU$-module category $\mathcal{C}$ and any
  object $C\in \operatorname{Ob}\mathcal{C}_\la$ with $\fE_i(C)=0$ for
  all $i$, there is a unique strongly equivariant functor (up to unique isomorphism)
  $\phi_C\colon \check R^\la\mpmod \to \mathcal{C}$ sending $P_\emptyset$
  to $C$.  The induced base change functor $\phi'_C\colon (\check
  R^\la\otimes_{\check R^\la_\la}\End(C))\mpmod\to \mathcal{C}$ is
  fully faithful.
\end{cor}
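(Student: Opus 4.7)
The plan is to construct $\phi_C$ directly by exploiting the Morita-theoretic description of $\check R^\la$ given in Proposition \ref{quotient-is-cyc-2}, then to pin down its image on morphisms using the hypothesis $\fE_i(C) = 0$, and finally to check full faithfulness of $\phi'_C$ by a Hom-space comparison via biadjunction. Concretely, by Proposition \ref{quotient-is-cyc-2} the algebra $\check R^\la$ is (Morita equivalent to) the quotient of $\End_{\tU}\bigl(\bigoplus_\Bi \fF_\Bi 1_\la\bigr)$ by the ideal of $2$-morphisms factoring through some $\fE_j$. The $2$-action of $\tU$ on $\mathcal{C}$ induces a ring map
\[
\End_{\tU}\bigl(\textstyle\bigoplus_\Bi \fF_\Bi 1_\la\bigr) \longrightarrow \End_\mathcal{C}\bigl(\textstyle\bigoplus_\Bi \fF_\Bi C\bigr)^{\mathrm{op}},
\]
and the hypothesis $\fE_j(C)=0$ kills every morphism factoring through any $\fE_j$. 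This descends to a homomorphism $\check R^\la \to \End_\mathcal{C}(Q_C)^{\mathrm{op}}$ where $Q_C := \bigoplus_\Bi \fF_\Bi C$, making $Q_C$ a $(\check R^\la,\End_\mathcal{C}(Q_C))$-bimodule. I define $\phi_C(P_\Bi) := \fF_\Bi C$ on the full subcategory of $\check R^\la\mpmod$ spanned by the $P_\Bi = e(\Bi)\check R^\la$, with action on morphisms via this homomorphism, and extend to all of $\check R^\la\mpmod$ by direct sums and summands using idempotent-completeness of $\mathcal{C}$. Strong equivariance is tautological: the $\tU$-action on $Q_C$ through the left factor commutes with the right $\check R^\la$-action.

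For uniqueness, any strongly equivariant $\phi$ with $\phi(P_\emptyset) = C$ comes with structure isomorphisms $\phi(\fF_\Bi P_\emptyset) \cong \fF_\Bi C$, and the action of $\phi$ on morphisms between the $P_\Bi$ is forced by strong equivariance together with the identification of $\Hom_{\check R^\la}(P_\Bj,P_\Bi) = e(\Bi)\check R^\la e(\Bj)$ with $2$-morphisms in $\tU$ modulo the cyclotomic ideal. Since the $P_\Bi$ generate $\check R^\la\mpmod$ under sums and summands, this determines $\phi$ up to unique isomorphism.

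For the full faithfulness of $\phi'_C$, using Corollary \ref{check-free} to write $e(\Bi)\bigl(\check R^\la \otimes_{\check R^\la_\la}\End(C)\bigr)e(\Bj) \cong e(\Bi)\check R^\la e(\Bj)\otimes_{\check R^\la_\la}\End(C)$, I must show
\[
e(\Bi)\check R^\la e(\Bj) \otimes_{\check R^\la_\la} \End(C) \xrightarrow{\ \sim\ } \Hom_\mathcal{C}(\fF_\Bj C, \fF_\Bi C).
\]
By the biadjunction of $\fF_\Bj$ and $\fE_\Bj$ (Proposition \ref{quotient-is-cyc}), the right-hand side equals $\Hom_\mathcal{C}(\fE_\Bj \fF_\Bi C, C)$ up to a grading shift. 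Decomposing $\fE_\Bj \fF_\Bi$ inside $\tU$ using the commutation relations \eqref{switch-1}--\eqref{opp-cancel2}, each summand either ends with some $\fE_k$ (and therefore vanishes on $C$) or is a pure $\la$-to-$\la$ $2$-endomorphism lying in $\check R^\la_\la$; only the latter contribute, and they are canonically indexed by a $\check R^\la_\la$-basis of $e(\Bi)\check R^\la e(\Bj)$. Matching this indexing against the universal case $\mathcal{C}=\check R^\la\mpmod$, $C = P_\emptyset$ (where the statement is tautological) and base changing along $\check R^\la_\la \to \End(C)$ gives the desired isomorphism.

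The main obstacle is the last step: cleanly matching the decomposition of $\fE_\Bj \fF_\Bi C$ in $\mathcal{C}$ with the internal structure of $e(\Bi)\check R^\la e(\Bj)$ as a $\check R^\la_\la$-module. The conceptual point is that $\check R^\la$ already encodes precisely the universal relations forced by $\fE_i$ vanishing at highest weight, so $\Hom$-spaces in $\mathcal{C}$ should differ from those in $\check R^\la\mpmod$ only by base change along $\check R^\la_\la \to \End(C)$; the flatness provided by Corollary \ref{check-free} is exactly what licenses this base change weight-by-weight.
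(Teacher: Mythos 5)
Your existence and uniqueness argument is essentially the paper's: both factor the unique strongly equivariant functor $\tU(\la)\to\mathcal{C}$ sending $\id_\la\mapsto C$ through the quotient of Proposition \ref{quotient-is-cyc-2}, observing that $\fE_i(C)=0$ kills every $2$-morphism factoring through $u\,\eE_i\,\id_\la$. Where you genuinely diverge is in the full faithfulness of $\phi'_C$. The paper proves the required isomorphism $\Hom_{\mathcal{C}}(\phi'_C(M),\phi'_C(N))\cong\Hom(M,N)$ by a downward induction on the weight of $M$: the base case at weight $\la$ (where $M$ may be taken to be $\End(C)$) is a tautology, and for $M=\eF_iM'$ one applies the biadjunction of $\eF_i$ and $\eE_i$ simultaneously in $\mathcal{C}$ and in $(\check R^\la\otimes_{\check R^\la_\la}\End(C))\mpmod$ to reduce to the pair $(M',\eE_iN)$, with $M'$ of strictly higher weight and $\eE_iN$ still projective. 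This peels off one $\eF_i$ at a time and never needs a global description of $\eE_\Bj\eF_\Bi$. Your route instead decomposes $\eE_\Bj\eF_\Bi 1_\la$ all at once into $\eF_\Bk\eE_{\Bk'}$-summands with multiplicity spaces over the bubble algebra and matches the surviving piece against the universal case $C=P_\emptyset$. That can be made to work --- it is the same counting the paper uses to prove Corollary \ref{check-free} --- but the step you flag as the ``main obstacle'' is precisely where all the work lives: you must show that the multiplicity space of $1_\la$ in $\eE_\Bj\eF_\Bi 1_\la$, computed in the quotient where positive bubbles and morphisms ending in $\eE_k$ are killed, is a free $\check R^\la_\la$-module canonically identified with $e(\Bi)\check R^\la e(\Bj)$; this is essentially re-deriving Corollary \ref{check-free}. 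The trade-off: the paper's induction needs only the biadjunction and is short, while your approach, if completed, would produce an explicit description of $\Hom_{\mathcal{C}}(\fF_\Bj C,\fF_\Bi C)$ at the cost of first establishing the full triangular decomposition of $\eE_\Bj\eF_\Bi$.
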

\begin{proof}
For any object $C$, there is a unique strongly equivariant functor $\tU(\la)\to \mathcal{C}$ sending $\id_\la\mapsto C$.  We wish to
show that this factors through the functor from $\tU(\la)\to
\check{R}^\la\mpmod$. By Proposition \ref{quotient-is-cyc-2}, it
suffices to check that this map kills any 2-morphism factoring through
$u\eE_i \id_\la$.  Indeed, this is sent to $u\fE_i(C)=0$, so we kill
the required 2-morphisms.

Thus, we have a base change functor $\phi'_C$.  We wish to show that 
\begin{equation}
\Hom_{\mathcal{C}}(\phi'_C(M),\phi'_C(N))\cong
\Hom_{\check{R}^\la\otimes_{\check R^\la_\la}\End(C)}(M,N)\label{eq:2}
\end{equation}
for all
projectives $M$ and $N$.  This is clear if the weight of $M$ is $\la$;
in this case, we can assume that $M=\End(C)$ as a module over
itself, and $N$ either has the wrong weight (so both sides of the
desired equation are 0), or $N$ may also be assumed to be $\End(C)$,
in which case \eqref{eq:2} is a tautology.  

Now, let us induct on the weight of $M$.  Every indecomposable projective of weight
$<\la$ is a summand of one of the form $\eF_iM'$.  Thus, we may assume
that $M=\eF_i M'$, so 
\begin{multline*}
\Hom_{\mathcal{C}}(\phi'_C(M),\phi'_C(N))\cong
\Hom_{\mathcal{C}}(\phi'_C(M'),\phi'_C(\eE_iN))\\ \cong
\Hom_{\check{R}^\la\otimes_{\check R^\la_\la}\End(C)}(M',\eE_i N)\cong
\Hom_{\check{R}^\la\otimes_{\check R^\la_\la}\End(C)}(M,N), 
\end{multline*}
which establishes \eqref{eq:2}.
\end{proof}

These algebras are quite interesting; though they are infinite
dimensional (unlike $R^\la$), they seem to have
finite global dimension (unlike $R^\la$).  We will explore these
algebras and their tensor product analogues in future work.

\section{The tensor product algebras}
\label{sec:KL}
\setcounter{equation}{0}

\subsection{Stendhal diagrams}
\label{sec:stendhal-diagrams}

\begin{defn}
  A {\bf Stendhal diagram} is a collection of finitely many oriented curves in
  $\R\times [0,1]$. Each curve is either
  \begin{itemize}
  \item colored red and labeled with a dominant weight of $\fg$, or
  \item colored black and labeled with $i\in \Gamma$ and decorated with finitely many dots.
  \end{itemize}
 The diagram must be locally of the form \begin{equation*}
\begin{tikzpicture}
%\tikzstyle wei=[red,dashed];
  \draw[very thick,postaction={decorate,decoration={markings,
    mark=at position .75 with {\arrow[scale=1.3]{<}}}}] (-4,0) +(-1,-1) -- +(1,1);
   % node[below,at start] {$i$};
  \draw[very thick,postaction={decorate,decoration={markings,
    mark=at position .75 with {\arrow[scale=1.3]{<}}}}](-4,0) +(1,-1) -- +(-1,1);
  %  node[below,at start] {$j$};

%\draw[very thick] (0,0) +(0,-1) -- +(0,1)
%node[below, at start]{$i$};
%\fill (0,0) circle (5pt);

  \draw[very thick,postaction={decorate,decoration={markings,
    mark=at position .75 with {\arrow[scale=1.3]{<}}}}](0,0) +(-1,-1) -- +(1,1);
   % node[below,at start] {$i$};
  \draw[wei, very thick,postaction={decorate,decoration={markings,
    mark=at position .75 with {\arrow[scale=1.3]{<}}}}](0,0) +(1,-1) -- +(-1,1);
   % node[below,at start] {$\lambda$};

  \draw[wei,very thick,postaction={decorate,decoration={markings,
    mark=at position .75 with {\arrow[scale=1.3]{<}}}}](4,0) +(-1,-1) -- +(1,1);
   % node[below,at start] {$i$};
  \draw [very thick,postaction={decorate,decoration={markings,
    mark=at position .75 with {\arrow[scale=1.3]{<}}}}](4,0) +(1,-1) -- +(-1,1);
   % node[below,at start] {$\lambda$};

  \draw[very thick,postaction={decorate,decoration={markings,
    mark=at position .75 with {\arrow[scale=1.3]{<}}}}](8,0) +(0,-1) --  node
  [midway,circle,fill=black,inner sep=2pt]{}
  +(0,1);
\end{tikzpicture}
\end{equation*}
with each curve oriented in the negative direction.  In
particular, no red strands can ever cross.  Each curve must
meet both $y=0$ and $y=1$ at points we call {\bf
  termini}.  No two strands should meet the same terminus.
\end{defn}
We'll typically only consider Stendhal diagrams up to isotopy. Since the orientation on a diagram is clear, we typically won't draw
it.  

We call the lines $y=0,1$ the {\bf  bottom} and {\bf top} of the
diagram.  Reading across the bottom and top from left to right, we
obtain a sequence of dominant weights and elements of $\Gamma$.  We
record this data as
\begin{itemize}
\item the list $\Bi=(i_1, \dots, i_n)$ of elements of $\Gamma$, read
  from the left;
\item the  list $\bla=(\la_1,\dots,\la_\ell)$ of dominant weights, read
  from the left;
\item the weakly increasing function $\kappa\colon [1,\ell]\to [0,n]$
  such that there are $\kappa(m)$ black termini left of the $m$th red
  terminus. In particular, $\kappa(i)=0$ if the $i$th red terminus is
  left of all black termini.
\end{itemize}
We call such a triple of data a {\bf Stendhal triple}.  We will often
want to partition the sequence $\Bi$ in the groups of black strands
between two consecutive reds, that is, the groups \[\Bi_0=(i_1,\dots,
i_{\kappa(1)}), \Bi_1=(i_{\kappa(1)+1},\dots, i_{\kappa(2)}),\dots,
\Bi_\ell=(i_{\kappa(\ell)+1},\dots,i_n).\]  We call these {\bf black blocks}.

%\begin{example}
  Here are two examples of Stendhal diagrams:
\[a=
\begin{tikzpicture}[baseline,very thick]
  \draw (-.5,-1) to[out=90,in=-90] node[below,at start]{$i$} (-1,0) to[out=90,in=-90](.5,1);
  \draw (.5,-1) to[out=90,in=-90] node[below,at start]{$j$} (1,1);
  \draw  (1,-1) to[out=90,in=-90] node[below,at start]{$i$}
  node[midway,circle,fill=black,inner sep=2pt]{} (0,1);
  \draw[wei] (-1, -1) to[out=90,in=-90]node[below,at start]{$\la_1$} (-.5,0) to[out=90,in=-90] (-1,1);
  \draw[wei] (0,-1) to[out=90,in=-90]node[below,at start]{$\la_2$} (-.5,1);
\end{tikzpicture}\qquad \qquad 
b=
\begin{tikzpicture}[baseline,very thick]
  \draw (-.5,-1) to[out=90,in=-90] node[below,at start]{$i$} (-1,0) to[out=90,in=-90](1,1);
  \draw (.5,-1) to[out=90,in=-90] node[below,at start]{$j$} (.5,1);
  \draw  (1,-1) to[out=90,in=-90] node[below,at start]{$i$} (-.5 ,1);
  \draw[wei] (0,-1) to[out=90,in=-90] node[below,at start]{$\la_2$} (0,1);
  \draw[wei] (-1, -1) to[out=90,in=-90] node[below,at start]{$\la_1$} (-.5,0) to[out=90,in=-90] (-1,1);
\end{tikzpicture}
\]
\begin{itemize}
\item At the top of $a$, we have $\Bi=(i,i,j)$, $\bla=(\la_1,\la_2)$
  and $\kappa=(1\mapsto 0,2\mapsto 0)$.
\item At the top of $b$ and bottom of $a$ and $b$, $\Bi=(i,j,i)$,
  $\bla=(\la_1,\la_2)$ and $\kappa=(1\mapsto 0,2\mapsto 1)$.
\end{itemize}

%\end{example}

\begin{defn}
  Given Stendhal diagrams $a$ and $b$, their {\bf composition} $ab$ is
  given by stacking $a$ on top of $b$ and attempting to join the
  bottom of $a$ and top of $b$. If the Stendhal triples
  from the bottom of $a$ and top of $b$ don't match, then the
  composition is not defined and by convention is 0, which is not a
  Stendhal diagram, just a formal symbol.
\[
ab=
\begin{tikzpicture}[baseline,very thick,yscale=.5]
  \draw (-.5,-2) to[out=90,in=-90] node[below,at start]{$i$} (-1,-.8)
  to[out=90,in=-90](1,.2) to[out=90,in=-90] node[midway,circle,fill=black,inner sep=2pt]{}  (0,2);
  \draw (.5,-2) to[out=90,in=-90] node[below,at start]{$j$} (.5,0) to[out=90,in=-90] (1,2);
  \draw  (1,-2) to[out=90,in=-90] node[below,at start]{$i$} (-1,.8) to[out=90,in=-90] (.5,2);
  \draw[wei] (0,-2) to[out=90,in=-90] node[below,at start]{$\la_2$}
  (0,0) to[out=90,in=-90]
  (-.5,2);
  \draw[wei] (-1, -2) to[out=90,in=-90] node[below,at start]{$\la_1$}
  (-.5,-1) to[out=90,in=-90] (-1,0) to[out=90,in=-90] (-.5,1)
  to[out=90,in=-90]   (-1,2);
\end{tikzpicture}\qquad \qquad ba=0
\]
Fix a field $\K$ and let $\ttalg$ be the formal span over
$\K$ of Stendhal diagrams (up to isotopy).  The composition law
induces an algebra structure on $\ttalg$.
\end{defn}

 Let $e(\Bi,\bla,\kappa)$ be the unique
crossingless, dotless  diagram where the triple read off from both top and
bottom is $(\Bi,\bla,\kappa)$.
Composition on the left/right with  $e(\Bi,\bla,\kappa)$ is an idempotent
operation; it sends a diagram $a$ to itself if the top/bottom of $a$
matches $(\Bi,\bla,\kappa)$ and to 0 otherwise.  We'll often fix $\bla$, and thus leave it out from the notation, just
writing $e(\Bi,\kappa)$ for this diagram.

Considered as
elements of $\ttalg$, the diagrams $e(\Bi,\bla,\kappa)$ are
orthogonal idempotents. 
The algebra $\ttalg$ is not unital, but it is {\bf locally
  unital}. That is, for any finite linear combination $a$ of Stendhal diagrams, there is an idempotent such
that $ea=ae=a$.  This can be taken to be the sum of $e(\Bi,\bla,\kappa)$ for all
triples that occur at the top or bottom of one of the diagrams in $a$.

Alternatively, we can organize these diagrams into a category whose objects are Stendhal triples
$(\Bi,\bla,\kappa)$ and whose morphisms are Stendhal diagrams read
from bottom to top.  In this perspective, the idempotents
$e(\Bi,\bla,\kappa)$ are the identity morphisms of different objects.

\begin{defn}\label{viol-def}
  We call a black strand in a Stendhal diagram {\bf violating} if at some
  horizontal slice $y=c$ for $c\in [0,1]$, it is the leftmost strand.  A Stendhal
  diagram which possesses a violating strand is called {\bf violated}.
\end{defn}
Both the diagrams $a$ and $b$ above are violated.  The diagrams
\[c=
\begin{tikzpicture}[baseline,very thick]
  \draw (-.5,-1) to[out=90,in=-90] node[below,at start]{$i$} (.5,1);
  \draw (.5,-1) to[out=90,in=-90] node[below,at start]{$j$} (1,1);
  \draw  (1,-1) to[out=90,in=-90] node[below,at start]{$i$} (0,1);
  \draw[wei] (-1, -1) to[out=90,in=-90]node[below,at start]{$\la_1$} (-1,1);
  \draw[wei] (0,-1) to[out=90,in=-90]node[below,at start]{$\la_2$} (-.5,1);
\end{tikzpicture}\qquad \qquad 
d=
\begin{tikzpicture}[baseline,very thick]
  \draw (-.5,-1) to[out=90,in=-90] node[below,at start]{$i$} (1,1);
  \draw (.5,-1) to[out=90,in=-90] node[below,at start]{$j$} (.5,1);
  \draw  (1,-1) to[out=90,in=-90] node[below,at start]{$i$} (-.5 ,1);
  \draw[wei] (0,-1) to[out=90,in=-90] node[below,at start]{$\la_2$} (0,1);
  \draw[wei] (-1, -1) to[out=90,in=-90] node[below,at start]{$\la_1$} (-1,1);
\end{tikzpicture}
\]
are not violated. The diagram $e({\Bi},\bla,\kappa)$ is violated if
and only if $\kappa(1) >0$.  

\begin{defn}  The {\bf degree} of a Stendhal diagram is the sum over
  crossings and dots in the diagram of 
  \begin{itemize}
  \item$-\langle\al_i,\al_j\rangle$ for each crossing of a black strand
    labeled $i$ with one labeled $j$;
  \item $\langle\al_i,\al_i\rangle=2d_i$ for each dot on a black
    strand labeled $i$;
  \item $\langle\al_i,\la\rangle=d_i\la^i$ for each crossing of a
    black strand labeled $i$ with a red strand labeled $\la$.
  \end{itemize}
The degree of diagrams is additive under composition.  Thus, the
algebra $\ttalg$ inherits a grading from this degree function.
\end{defn}

Consider the reflection through the horizontal axis of a Stendhal
diagram $a$ with its orientations reversed. 
This is again a Stendhal diagram, which we denote $\dot{a}$.  Note that
$\dot{(ab)}=\dot b\dot a$, so reflection induces an anti-automorphism of $\ttalg$.

\subsection{Definition and basic properties}
\label{sec:defn}
\begin{defn}\label{tilde-def}
  Let $\tilde{T}$ be the quotient  of $\ttalg$ by
  the following local
  relations between Stendhal diagrams:
  \begin{itemize}
  \item the KLR relations  (\ref{first-QH}--\ref{triple-smart}) 
\item  All black crossings and dots can pass through red lines.  For the latter two
  relations (\ref{dumb}--\ref{red-dot}), we also include their mirror images:
\newseq
  \begin{equation*}\subeqn
    \begin{tikzpicture}[very thick,baseline]\label{red-triple-correction}
      \draw (-3,0)  +(1,-1) -- +(-1,1) node[at start,below]{$i$};
      \draw (-3,0) +(-1,-1) -- +(1,1)node [at start,below]{$j$};
      \draw[wei] (-3,0)  +(0,-1) .. controls (-4,0) .. node[below, at start]{$\la$}  +(0,1);
      \node at (-1,0) {=};
      \draw (1,0)  +(1,-1) -- +(-1,1) node[at start,below]{$i$};
      \draw (1,0) +(-1,-1) -- +(1,1) node [at start,below]{$j$};
      \draw[wei] (1,0) +(0,-1) .. controls (2,0) ..  node[below, at start]{$\la$} +(0,1);   
\node at (2.6,0) {$+ $};
      \draw (6.5,0)  +(1,-1) -- +(1,1) node[midway,circle,fill,inner sep=2.5pt,label=right:{$a$}]{} node[at start,below]{$i$};
      \draw (6.5,0) +(-1,-1) -- +(-1,1) node[midway,circle,fill,inner sep=2.5pt,label=left:{$b$}]{} node [at start,below]{$j$};
      \draw[wei] (6.5,0) +(0,-1) -- node[below, at start]{$\la$} +(0,1);
\node at (3.8,-.2){$\displaystyle \delta_{i,j}\sum_{a+b+1=\la^i} $}  ;
 \end{tikzpicture}
  \end{equation*}
\begin{equation*}\subeqn\label{dumb}
    \begin{tikzpicture}[very thick,baseline=2.85cm]
      \draw[wei] (-3,3)  +(1,-1) -- +(-1,1);
      \draw (-3,3)  +(0,-1) .. controls (-4,3) ..  +(0,1);
      \draw (-3,3) +(-1,-1) -- +(1,1);
      \node at (-1,3) {=};
      \draw[wei] (1,3)  +(1,-1) -- +(-1,1);
  \draw (1,3)  +(0,-1) .. controls (2,3) ..  +(0,1);
      \draw (1,3) +(-1,-1) -- +(1,1);    \end{tikzpicture}
  \end{equation*}
\begin{equation*}\subeqn\label{red-dot}
    \begin{tikzpicture}[very thick,baseline]
  \draw(-3,0) +(-1,-1) -- +(1,1);
  \draw[wei](-3,0) +(1,-1) -- +(-1,1);
\fill (-3.5,-.5) circle (3pt);
\node at (-1,0) {=};
 \draw(1,0) +(-1,-1) -- +(1,1);
  \draw[wei](1,0) +(1,-1) -- +(-1,1);
\fill (1.5,.5) circle (3pt);
    \end{tikzpicture}
  \end{equation*}
\item  The ``cost'' of a separating a red and a black line is adding $\la^i=\al_i^\vee(\la)$ dots to the black strand.
  \begin{equation}\label{cost}
  \begin{tikzpicture}[very thick,baseline=1.6cm]
    \draw (-2.8,0)  +(0,-1) .. controls (-1.2,0) ..  +(0,1) node[below,at start]{$i$};
       \draw[wei] (-1.2,0)  +(0,-1) .. controls (-2.8,0) ..  +(0,1) node[below,at start]{$\la$};
           \node at (-.3,0) {=};
    \draw[wei] (2.8,0)  +(0,-1) -- +(0,1) node[below,at start]{$\la$};
       \draw (1.2,0)  +(0,-1) -- +(0,1) node[below,at start]{$i$};
       \fill (1.2,0) circle (3pt) node[left=3pt]{$\la^i$};
          \draw[wei] (-2.8,3)  +(0,-1) .. controls (-1.2,3) ..  +(0,1) node[below,at start]{$\la$};
  \draw (-1.2,3)  +(0,-1) .. controls (-2.8,3) ..  +(0,1) node[below,at start]{$i$};
           \node at (-.3,3) {=};
    \draw (2.8,3)  +(0,-1) -- +(0,1) node[below,at start]{$i$};
       \draw[wei] (1.2,3)  +(0,-1) -- +(0,1) node[below,at start]{$\la$};
       \fill (2.8,3) circle (3pt) node[right=3pt]{$\la^i$};
  \end{tikzpicture}
\end{equation}
  \end{itemize}
\end{defn}

The algebra $\tilde{T}$ will play a mostly auxilliary role in this
paper, but it is a very natural object.  For example, it has a 
geometric description, as we discuss in \cite[\S
4]{WebwKLR}.

\begin{defn}
  Let $T$ be the quotient of $\tilde{T}$ by the 2-sided ideal $K$
  generated by all violated diagrams.
\end{defn}
 
Now, as before, fix a sequence of dominant weights
$\bla=(\la_1,\dots,\la_\ell)$ and let $\la=\sum_{i=1}^\ell \la_i$.
\begin{defn}\label{T-def}
 We let
  $\alg^\bla$  (resp. $\tilde{\alg}^\bla$) be the subalgebra of $\alg$
  (resp. $\tilde{\alg}$) where
  the red lines are labeled, from left to right, with the elements of
  $\bla$.  Let  $\alg^\bla_\al$ for $\al\in \wela(\fg)$ be the subalgebra of
  $\alg^\bla$ where the sum of the roots associated to the black strands
  is $\la-\al$, and let $\alg^\bla_n$ be the subalgebra of diagrams
  with $n$ black strands (and similarly for $\tilde{\alg}^\bla_\al,\tilde{\alg}^\bla_n$).
\end{defn}
We use the notation $\alg^\bla_\al$ because we'll show later that the
Grothendieck group of this algebra is canonically isomorphic to the
$\al$-weight space of $V^\Z_{\bla}$ (see Proposition \ref{Uq-action}).
Note that every indecomposable module is killed by $\alg^\bla_\al$ for
all but one value of $\al$, since the identities of these algebras
give collection of orthogonal central idempotents summing to $1$.

To give a simple illustration of the behavior of our algebra, let us
consider $\fg=\mathfrak{sl}_2$; to avoid confusion between integers
and elements of the weight lattice, we'll use $\al$ to denote the
unique simple root of $\mathfrak{sl}_2$, and $\omega=\al/2$
the unique fundamental weight (and $0\cdot\omega$ the trivial weight).  Now consider the case $\bla=(\omega,\omega)$.  Thus, our diagrams have 2 red lines, both labeled with $\omega$.

In this case, the algebras $\alg^\bla_\al$ are  easily described as follows:
\begin{itemize}
\item $\alg^{(\omega,\omega)}_{2\omega}=\alg^{(\omega,\omega)}_{0}\cong \K$: it is spanned by the diagram
  $\tikz[baseline=-1pt,xscale=.8, yscale=.6]{\draw[wei]
    (0,-.5)--(0,.5); \draw[wei] (.5,-.5)--(.5,.5); }$ .
\item $\alg^{(\omega,\omega)}_{0\cdot \omega}=\alg^{(\omega,\omega)}_1$ is spanned by \begin{center}
\tikz[xscale=.8, yscale=.6]{\draw[wei] (0,0)--(0,1); \draw[thick] (.5,0) --(.5,1);\draw[wei] (1,0)--(1,1); },\quad \tikz[xscale=.8,yscale=.6]{\draw[wei] (0,0)--(0,1); \draw[wei] (.5,0) --(.5,1);\draw[thick] (1,0)--(1,1);},\quad \tikz[xscale=.8,yscale=.6]{\draw[wei] (0,0)--(0,1); \draw[wei] (.5,0) --(.5,1);\draw[thick] (1,0)--(1,1) node[midway,fill, circle,inner sep=1.5pt]{};},\quad \tikz[xscale=.8,yscale=.6]{\draw[wei] (0,0)--(0,1); \draw[wei] (1,0) --(.5,1);\draw[thick] (.5,0)--(1,1);},\quad \tikz[xscale=.8,yscale=.6]{\draw[wei] (0,0)--(0,1); \draw[wei] (.5,0) --(1,1);\draw[thick] (1,0)--(.5,1);}.
\end{center}
One can easily check that this is the standard presentation of a regular block of category $\cO$ for $\mathfrak{sl}_2$ as a quotient of the path algebra of a quiver (see, for example, \cite{Str03}).
\item $\alg^{(\omega,\omega)}_{-2\omega}=\alg^{(\omega,\omega)}_{2}\cong \operatorname{End}(\K^3)$: The algebra
  is spanned by the diagrams, which one can easily check multiply (up
  to sign) as the elementary generators of $\operatorname{End}(\K^3)$.
\begin{center}
\begin{tikzpicture}[yscale=1.2,xscale=2]
\node at (0,0){ \tikz[xscale=.8, yscale=.6]{\draw[wei] (0,0)--(0,1); \draw[thick] (.5,0) --(.5,1);\draw[wei] (1,0)--(1,1);\draw[thick] (1.5,0) --(1.5,1); }};

\node at (0,-1) {\tikz[xscale=.8,yscale=.6]{\draw[wei] (0,0)--(0,1); \draw[wei] (1,0) --(.5,1);\draw[thick] (1.5,0)--(1,1) node[pos=.85,fill, circle,inner sep=1.5pt]{}; \draw[thick] (.5,0) --(1.5,1) ;}};

\node at (1,-1) {\tikz[xscale=.8,yscale=.6]{\draw[wei] (0,0)--(0,1); \draw[wei] (.5,0) --(.5,1);\draw[thick] (1.5,0)--(1,1) node[pos=.8,fill, circle,inner sep=1.5pt]{}; \draw[thick] (1,0) --(1.5,1) ;}};
\node at (1,0) {\tikz[xscale=.8,yscale=.6]{\draw[wei] (0,0)--(0,1); \draw[wei] (.5,0) --(1,1);\draw[thick] (1.5,0)--(.5,1); \draw[thick] (1,0) --(1.5,1) ;}};

\node at (0,-2) {\tikz[xscale=.8,yscale=.6]{\draw[wei] (0,0)--(0,1); \draw[wei] (1,0) --(.5,1);\draw[thick] (1.5,0)--(1,1); \draw[thick] (.5,0) --(1.5,1) ;}};

\node at (2,-2) {\tikz[xscale=.8,yscale=.6]{\draw[wei] (0,0)--(0,1); \draw[wei] (.5,0) --(.5,1);\draw[thick] (1.5,0)--(1,1) node[pos=.2,fill, circle,inner sep=1.5pt]{}; \draw[thick] (1,0) --(1.5,1);}};

\node at (2,0) {\tikz[xscale=.8,yscale=.6]{\draw[wei] (0,0)--(0,1); \draw[wei] (.5,0) --(1,1);\draw[thick] (1.5,0)--(.5,1) node[pos=.1,fill, circle,inner sep=1.5pt]{}; \draw[thick] (1,0) --(1.5,1);}};

\node at (2,-1) {\tikz[xscale=.8,yscale=.6]{\draw[wei] (0,0)--(0,1); \draw[wei] (.5,0) --(.5,1);\draw[thick] (1.5,0)--(1,1) node[pos=.8,fill, circle,inner sep=1.5pt]{} node[pos=.2,fill, circle,inner sep=1.5pt]{}; \draw[thick] (1,0) --(1.5,1) ;}};

\node at (1,-2) {\tikz[xscale=.8,yscale=.6]{\draw[wei] (0,0)--(0,1); \draw[wei] (.5,0) --(.5,1); \draw[thick] (1.5,0)--(1,1) ; \draw[thick] (1,0) --(1.5,1);}};

\end{tikzpicture}
\end{center}
\end{itemize}

Perhaps a more interesting example is the case of
$\fg=\mathfrak{sl}_3$ and we let $\bla=(\om_1,\om_2)$ and $\mu=0$.  Based on the
construction of a cellular basis in \cite{SWschur}, we can calculate
that this algebra is 19 dimensional, with a basis given by 
 \begin{center}
\tikz[xscale=.8, yscale=.6,baseline]{\draw[wei] (0,0)-- node[below, at
  start]{$1$}(0,1); \draw[thick] (.5,0) -- node[below, at start] {$1$}(.5,1);\draw[thick] (1,0) -- node[below, at start] {$2$} (1,1);\draw[wei] (1.5,0)--node[below, at start]{$2$} (1.5,1); },\quad \tikz[xscale=.8, yscale=.6,baseline]{\draw[wei] (0,0)-- node[below, at
  start]{$1$}(0,1); \draw[thick] (.5,0) -- node[below, at start]
  {$1$}(.5,1);\draw[thick] (1.5,0) -- node[below, at start] {$2$}
  (1.5,1);\draw[wei] (1,0)--node[below, at start]{$2$} (1,1); },\quad \tikz[xscale=.8, yscale=.6,baseline]{\draw[wei] (0,0)-- node[below, at
  start]{$1$}(0,1); \draw[thick] (.5,0) -- node[below, at start]
  {$1$}(.5,1);\draw[thick] (1.5,0) -- node[below, at start] {$2$} node[midway,circle,fill=black,inner sep=2pt] {}
  (1.5,1);\draw[wei] (1,0)--node[below, at start]{$2$}
  (1,1); },\quad \tikz[xscale=.8, yscale=.6,baseline]{\draw[wei] (0,0)-- node[below, at
  start]{$1$}(0,1); \draw[thick] (1,0) -- node[below, at start]
  {$1$}(1,1);\draw[thick] (1.5,0) -- node[below, at start] {$2$}
  (1.5,1);\draw[wei] (.5,0)--node[below, at start]{$2$} (.5,1); },\quad \tikz[xscale=.8, yscale=.6,baseline]{\draw[wei] (0,0)-- node[below, at
  start]{$1$}(0,1); \draw[thick] (1,0) -- node[below, at start]
  {$1$}(1,1);\draw[thick] (1.5,0) -- node[below, at start] {$2$} node[midway,circle,fill=black,inner sep=2pt] {}
  (1.5,1);\draw[wei] (.5,0)--node[below, at start]{$2$}
  (.5,1); }, \quad \tikz[xscale=.8, yscale=.6,baseline]{\draw[wei] (0,0)-- node[below, at
  start]{$1$}(0,1); \draw[thick] (1,0) -- node[below, at start]
  {$2$}(1,1);\draw[thick] (1.5,0) -- node[below, at start] {$1$}
  (1.5,1);\draw[wei] (.5,0)--node[below, at start]{$2$} (.5,1); },\quad \tikz[xscale=.8, yscale=.6,baseline]{\draw[wei] (0,0)-- node[below, at
  start]{$1$}(0,1); \draw[thick] (1,0) -- node[below, at start]
  {$2$}(1,1);\draw[thick] (1.5,0) -- node[below, at start] {$1$} node[midway,circle,fill=black,inner sep=2pt] {}
  (1.5,1);\draw[wei] (.5,0)--node[below, at start]{$2$}
  (.5,1); },

\tikz[xscale=.8, yscale=.6,baseline]{\draw[wei] (0,0)-- node[below, at
  start]{$1$}(0,1); \draw[thick] (.5,0) -- node[below, at start] {$1$}(.5,1);\draw[thick] (1,0) -- node[below, at start] {$2$} (1.5,1);\draw[wei] (1.5,0)--node[below, at start]{$2$} (1,1); },\quad \tikz[xscale=.8, yscale=.6,baseline]{\draw[wei] (0,0)-- node[below, at
  start]{$1$}(0,1); \draw[thick] (.5,0) -- node[below, at start] {$1$}(1,1);\draw[thick] (1,0) -- node[below, at start] {$2$} (1.5,1);\draw[wei] (1.5,0)--node[below, at start]{$2$} (.5,1); },\quad \tikz[xscale=.8, yscale=.6,baseline]{\draw[wei] (0,0)-- node[below, at
  start]{$1$}(0,1); \draw[thick] (.5,0) -- node[below, at start] {$1$}(.5,1);\draw[thick] (1.5,0) -- node[below, at start] {$2$} (1,1);\draw[wei] (1,0)--node[below, at start]{$2$} (1.5,1); },\quad \tikz[xscale=.8, yscale=.6,baseline]{\draw[wei] (0,0)-- node[below, at
  start]{$1$}(0,1); \draw[thick] (1,0) -- node[below, at start] {$1$}(.5,1);\draw[thick] (1.5,0) -- node[below, at start] {$2$} (1,1);\draw[wei] (.5,0)--node[below, at start]{$2$} (1.5,1); },\quad \tikz[xscale=.8, yscale=.6,baseline]{\draw[wei] (0,0)-- node[below, at
  start]{$1$}(0,1); \draw[thick] (1,0) -- node[below, at start]
  {$1$}(.5,1);\draw[thick] (1.5,0) -- node[below, at start] {$2$}
  (1.5,1);\draw[wei] (.5,0)--node[below, at start]{$2$} (1,1); },\quad \tikz[xscale=.8, yscale=.6,baseline]{\draw[wei] (0,0)-- node[below, at
  start]{$1$}(0,1); \draw[thick] (1,0) -- node[below, at start]
  {$1$}(.5,1);\draw[thick] (1.5,0) -- node[below, at start] {$2$} node[midway,circle,fill=black,inner sep=2pt] {}
  (1.5,1);\draw[wei] (.5,0)--node[below, at start]{$2$}
  (1,1); }, \quad \tikz[xscale=.8, yscale=.6,baseline]{\draw[wei] (0,0)-- node[below, at
  start]{$1$}(0,1); \draw[thick] (.5,0) -- node[below, at start]
  {$1$}(1,1);\draw[thick] (1.5,0) -- node[below, at start] {$2$}
  (1.5,1);\draw[wei] (1,0)--node[below, at start]{$2$} (.5,1); },\quad \tikz[xscale=.8, yscale=.6,baseline]{\draw[wei] (0,0)-- node[below, at
  start]{$1$}(0,1); \draw[thick] (.5,0) -- node[below, at start]
  {$1$}(1,1);\draw[thick] (1.5,0) -- node[below, at start] {$2$} node[midway,circle,fill=black,inner sep=2pt] {}
  (1.5,1);\draw[wei] (1,0)--node[below, at start]{$2$}
  (.5,1); }, \quad \tikz[xscale=.8, yscale=.6,baseline]{\draw[wei] (0,0)-- node[below, at
  start]{$1$}(0,1); \draw[thick] (1.5,0) -- node[below, at start]
  {$2$}(1,1);\draw[thick] (1,0) -- node[below, at start] {$1$}
  (1.5,1);\draw[wei] (.5,0)--node[below, at start]{$2$} (.5,1); },\quad  \tikz[xscale=.8, yscale=.6,baseline]{\draw[wei] (0,0)-- node[below, at
  start]{$1$}(0,1); \draw[thick] (1,0) -- node[below, at start]
  {$2$}(1.5,1);\draw[thick] (1.5,0) -- node[below, at start] {$1$}
  (1,1);\draw[wei] (.5,0)--node[below, at start]{$2$} (.5,1); },\quad \tikz[xscale=.8, yscale=.6,baseline]{\draw[wei] (0,0)-- node[below, at
  start]{$1$}(0,1); \draw[thick] (1.5,0) -- node[below, at start]
  {$2$}(1,1);\draw[thick] (.5,0) -- node[below, at start] {$1$}
  (1.5,1);\draw[wei] (1,0)--node[below, at start]{$2$} (.5,1); },\quad  \tikz[xscale=.8, yscale=.6,baseline]{\draw[wei] (0,0)-- node[below, at
  start]{$1$}(0,1); \draw[thick] (1,0) -- node[below, at start]
  {$2$}(1.5,1);\draw[thick] (1.5,0) -- node[below, at start] {$1$}
  (.5,1);\draw[wei] (.5,0)--node[below, at start]{$2$} (1,1); }.
\end{center}
We leave the calculation of the multiplication in this basis to the
reader; it is a useful exercise to those wishing to become more 
comfortable with these sorts of calculations.  For example, when we multiply the last two vectors in
the basis above, we get that (for $Q_{21}(u,v)=u-v$)
\begin{equation*}
\tikz[xscale=.8, yscale=.6,baseline=4pt]{\draw[wei] (0,0)-- node[below, at
  start]{$1$}(0,1); \draw[thick] (1,0) -- node[below, at start]
  {$2$}(1.5,1);\draw[thick] (1.5,0) -- node[below, at start] {$1$}
  (.5,1);\draw[wei] (.5,0)--node[below, at start]{$2$} (1,1); }\cdot   \tikz[xscale=.8, yscale=.6,baseline=4pt]{\draw[wei] (0,0)-- node[below, at
  start]{$1$}(0,1); \draw[thick] (1.5,0) -- node[below, at start]
  {$2$}(1,1);\draw[thick] (.5,0) -- node[below, at start] {$1$}
  (1.5,1);\draw[wei] (1,0)--node[below, at start]{$2$} (.5,1); } =
\tikz[xscale=.8, yscale=.6,baseline=4pt]{\draw[wei] (0,0) to[out=90,in=-90] node[below, at
  start]{$1$}(0,1); \draw[thick] (1.5,0) to[out=90,in=-90]  node[below, at start]
  {$2$} (1,.5) to[out=90,in=-90] (1.5,1);\draw[thick] (.5,0) to[out=90,in=-90]  node[below, at start] {$1$}
  (1.5,.5) to[out=90,in=-90] (.5,1);\draw[wei] (1,0) to[out=90,in=-90]
  node[below, at start]{$2$} (.5,.5) to[out=90,in=-90] (1,1); }  =\tikz[xscale=.8, yscale=.6,baseline=4pt]{\draw[wei] (0,0) to[out=90,in=-90] node[below, at
  start]{$1$}(0,1); \draw[thick] (1.5,0) to[out=90,in=-90]  node[below, at start]
  {$2$}  node[midway,circle,fill=black,inner sep=2pt] {}(1.5,1);\draw[thick] (.5,0) to[out=90,in=-90]  node[below, at start] {$1$}
  (1,.5) to[out=90,in=-90] (.5,1);\draw[wei] (1,0) to[out=90,in=-90]
  node[below, at start]{$2$} (.5,.5) to[out=90,in=-90] (1,1); } - \tikz[xscale=.8, yscale=.6,baseline=4pt]{\draw[wei] (0,0) to[out=90,in=-90] node[below, at
  start]{$1$}(0,1); \draw[thick] (1.5,0) to[out=90,in=-90]  node[below, at start]
  {$2$}  (1.5,1);\draw[thick] (.5,0) to[out=90,in=-90]  node[below,
at start] {$1$} node[at end,circle,fill=black,inner sep=2pt] {}
  (1,.5) to[out=90,in=-90] (.5,1);\draw[wei] (1,0) to[out=90,in=-90]
  node[below, at start]{$2$} (.5,.5) to[out=90,in=-90] (1,1); }=\tikz[xscale=.8, yscale=.6,baseline=4pt]{\draw[wei] (0,0) to[out=90,in=-90] node[below, at
  start]{$1$}(0,1); \draw[thick] (1.5,0) to[out=90,in=-90]  node[below, at start]
  {$2$}  node[midway,circle,fill=black,inner sep=2pt] {}(1.5,1);\draw[thick] (.5,0) to[out=90,in=-90]  node[below, at start] {$1$}
  (.5,1);\draw[wei] (1,0) to[out=90,in=-90]
  node[below, at start]{$2$} (1,1); } - \tikz[xscale=.8, yscale=.6,baseline=4pt]{\draw[wei] (0,0) to[out=90,in=-90] node[below, at
  start]{$1$} (.5,.5) to[out=90,in=-90] (0,1); \draw[thick] (1.5,0) to[out=90,in=-90]  node[below, at start]
  {$2$}  (1.5,1);\draw[thick] (.5,0) to[out=90,in=-90]  node[below,
at start] {$1$} 
  (0,.5) to[out=90,in=-90] (.5,1);\draw[wei] (1,0) to[out=90,in=-90]
  node[below, at start]{$2$} (1,1); }=\tikz[xscale=.8, yscale=.6,baseline=4pt]{\draw[wei] (0,0) to[out=90,in=-90] node[below, at
  start]{$1$}(0,1); \draw[thick] (1.5,0) to[out=90,in=-90]  node[below, at start]
  {$2$}  node[midway,circle,fill=black,inner sep=2pt] {}(1.5,1);\draw[thick] (.5,0) to[out=90,in=-90]  node[below, at start] {$1$}
  (.5,1);\draw[wei] (1,0) to[out=90,in=-90]
  node[below, at start]{$2$} (1,1); }. 
\end{equation*}

\begin{defn}\label{derived-cat}
  Let $\cata^\bla_\al$ be the category of finite dimensional modules
  over $T^\bla_\al$.  Let $\cat^\bla_\al$ be the derived category of
  complexes in $\cata^\bla_\al$ that lie in $C^{\uparrow}(\alg^\bla)$,
  the category of complexes of finite dimensional graded modules such
  that the degree $j$ part of the $i$th homological term $C^i_j=0$ for
  $i\geq N$ or $i+j\leq M$ for some constants $M,N$ (depending on the
  complex).
\end{defn}
There are two explanations for this (somewhat unfamiliar) category.
The first is that since in each graded degree, this complex is finite,
any element of this category will have a well-defined class in the
completion $V_\bla$.  The second is that it arises naturally from
simple operations over these algebras.
Note that $T^{2\omega}_1\cong \K[y]/(y^2)$.  The trivial module $\K$ has a
minimal projective resolution given by $\cdots \to T^{2\omega}_1(-2n)\to
\cdots \to T^{2\omega}_1(-2) \to T^{2\omega}_1\to \K$.  In particular,
$\K\Lotimes_{T^{2\omega}_1} \K$ is an unbounded complex (with trivial differential), but does lie in
$C^{\uparrow}(\alg^\bla)$.

\subsection{A basis and spanning set}
\label{sec:basis}
Given a Stendhal diagram $d$, we obtain a permutation by considering
how its black strands are reordered, reading from the bottom to the
top.  Actually, we obtain more information than this, since the
Stendhal diagram gives a factorization of this permutation into simple
transpositions.  As usual, we let $S_n$ be the symmetric group on $n$
letters, and $s_m$ denote the simple transposition $(m,m+1)$.
\begin{defn}
Assume $d$ is a generic Stendhal diagram (no two crossings occur at
the same value of $y$).  Let $\Bs_d=(s_{j_1},\dots,s_{j_m})$ be the
list of simple transpositions in the symmetric group $S_n$ obtained by
reading off the crossings of black strands from bottom to top.
\end{defn}
Note that $\Bs_d$ is not isotopy independent, since commuting
transpositions can move past each other.  The list $\Bs_d$ may or may not
be a reduced expression; it will be reduced if no two black strands
cross twice.

For our running examples
\[a=
\begin{tikzpicture}[baseline,very thick]
  \draw (-.5,-1) to[out=90,in=-90] node[below,at start]{$i$} (-1,0) to[out=90,in=-90](.5,1);
  \draw (.5,-1) to[out=90,in=-90] node[below,at start]{$j$} (1,1);
  \draw  (1,-1) to[out=90,in=-90] node[below,at start]{$i$} (0,1);
  \draw[wei] (-1, -1) to[out=90,in=-90]node[below,at start]{$\la_1$} (-.5,0) to[out=90,in=-90] (-1,1);
  \draw[wei] (0,-1) to[out=90,in=-90]node[below,at start]{$\la_2$} (-.5,1);
\end{tikzpicture}\qquad \qquad 
b=
\begin{tikzpicture}[baseline,very thick]
  \draw (-.5,-1) to[out=90,in=-90] node[below,at start]{$i$} (-1,0) to[out=90,in=-90](1,1);
  \draw (.5,-1) to[out=90,in=-90] node[below,at start]{$j$} (.5,1);
  \draw  (1,-1) to[out=90,in=-90] node[below,at start]{$i$} (-.5 ,1);
  \draw[wei] (0,-1) to[out=90,in=-90] node[below,at start]{$\la_2$} (0,1);
  \draw[wei] (-1, -1) to[out=90,in=-90] node[below,at start]{$\la_1$} (-.5,0) to[out=90,in=-90] (-1,1);
\end{tikzpicture}
\]
we have that $\Bs_a=(s_2,s_1)$ and $\Bs_b=(s_2,s_1,s_2)$, which are
both reduced.

For each permutation $w\in S_n$, and each Stendhal triple $(\Bi,\bla,\kappa)$
and weakly increasing function $\kappa'\colon [1,\ell]\to [0,n]$,
we choose a Stendhal diagram $\psi_{w,\kappa'}e(\Bi,\bla,\kappa)$ such that 
\begin{itemize}
\item the bottom of $\psi_{w,\kappa'}e(\Bi,\bla,\kappa)$ corresponds to
  $(\Bi,\bla,\kappa)$ and the top to $(w\Bi,\bla,\kappa')$.
\item the sequence of transpositions $\Bs_{\psi_{w,\kappa'} e(\Bi,\bla,\kappa)}$ is
  a reduced expression for $w$; that is, the permutation on black
  strands reading bottom to top is $w$ and no two black strands cross twice.
\item no pair of red and black strands cross twice.
\end{itemize}
We should emphasize that this choice is very far from unique; there
are various ways one can make it more systematically, but we see no
reason to prefer one of these over any other.

Let $\by^{\mathbf{a}}$ for $\mathbf{a}\in\Z_{\geq 0}^n$ denote the monomial
$y_1^{a_1} \cdots y_n^{a_n}$.   Let $B$ be the set $\{\psi_{w,\kappa'}
e(\Bi,\bla,\kappa) \mathbf{y}^{\mathbf{a}}\}$ as $(\Bi,\bla,\kappa)$ ranges over all Stendhal triples,
$\kappa'$ over weakly increasing functions,
$w$ over $S_n$ (here $n=|\Bi|$), and $\mathbf{a}$ over $\Z_{\geq
  0}^n$.

A basic observation, but one we will use many times through the paper is:
\begin{samepage}

\begin{lemma}\label{modulo-smaller}\mbox{}
  \begin{enumerate}
  \item Consider two Stendhal diagrams $a$ and $b$ with $n$ crossings
    which differ by a finite number of isotopies, switches through
    triple points (involving all black or black and red strands) as in
    \begin{equation*}
      \begin{tikzpicture}[very thick,scale=.9,baseline]
        \draw[postaction={decorate,decoration={markings, mark=at
            position .2 with {\arrow[scale=1.3]{<}}}}] (-3,0) +(1,-1)
        -- +(-1,1) node[below,at start]{$k$};
        \draw[postaction={decorate,decoration={markings, mark=at
            position .8 with {\arrow[scale=1.3]{<}}}}] (-3,0) +(-1,-1)
        -- +(1,1) node[below,at start]{$i$};
        \draw[postaction={decorate,decoration={markings, mark=at
            position .5 with {\arrow[scale=1.3]{<}}}}] (-3,0) +(0,-1)
        .. controls (-4,0) ..  +(0,1) node[below,at start]{$j$};
        \node at (-1,0) {$\leftrightarrow$};
        \draw[postaction={decorate,decoration={markings, mark=at
            position .8 with {\arrow[scale=1.3]{<}}}}] (1,0) +(1,-1)
        -- +(-1,1) node[below,at start]{$k$};
        \draw[postaction={decorate,decoration={markings, mark=at
            position .2 with {\arrow[scale=1.3]{<}}}}] (1,0) +(-1,-1)
        -- +(1,1) node[below,at start]{$i$};
        \draw[postaction={decorate,decoration={markings, mark=at
            position .5 with {\arrow[scale=1.3]{<}}}}] (1,0) +(0,-1)
        .. controls (2,0) ..  +(0,1) node[below,at start]{$j$};
      \end{tikzpicture}
    \end{equation*} and switches of dots through crossings. 
    The diagrams $a$ and $b$ agree as elements of
    $\tilde{T}^\bla$ modulo the subspace spanned by diagrams with $<n$ total
    crossings.  
\item If the isotopies and switches in (1) are contained in a
    subset $U$ of the plane, then $a-b$ is a sum of diagrams with
    fewer crosses agreeing
    with $a$ outside $U$. 
  \item Any diagram $c$ with $n$ crossings containing a bigon (either
    all black or black/red) defines an element of $\tilde{T}^\bla$
    which lies in the span of diagrams with $<n$ crossings, which
    agree with $c$ outside a neighborhood of the bigon.
  \end{enumerate}
\end{lemma}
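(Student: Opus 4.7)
The unifying observation is that each defining relation of $\tilde{T}^\bla$ that rearranges crossings takes the form ``$n$-crossing diagram $=$ $n$-crossing diagram $+$ (strictly fewer than $n$)-crossing corrections''. This is visibly true for the triple-point moves (\ref{triple-dumb}--\ref{triple-smart}) and (\ref{red-triple-correction}), for the dot-past-crossing relations (\ref{first-QH}--\ref{second-QH}), (\ref{nilHecke-1}--\ref{nilHecke-2}), (\ref{red-dot}), and trivially for (\ref{dumb}) which is an equality on the nose. Moreover, the two bigon-removal relations---(\ref{black-bigon}) for all-black bigons and (\ref{cost}) read as a red/black bigon---have \emph{no} $n$-crossing terms on the right at all, only uncrossed strands with dots.

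For (1), the plan is to induct on the number of local moves needed to pass from $a$ to $b$. A pure isotopy that does not alter the combinatorics of crossings and critical points is already identified in $\ttalg$, hence yields an on-the-nose equality. A triple-point switch is a direct application of one of (\ref{triple-dumb}), (\ref{triple-smart}), or (\ref{red-triple-correction}), and a move of a dot past a crossing is an application of the appropriate one of (\ref{first-QH}--\ref{second-QH}), (\ref{nilHecke-1}--\ref{nilHecke-2}), or (\ref{red-dot}). In every case the two sides differ by a linear combination of diagrams with strictly fewer than $n$ crossings, and summing these corrections along the chain of moves proves the claim.

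Part (2) is then an immediate refinement: each relation invoked in (1) is strictly local, supported in an arbitrarily small neighborhood of the feature being rearranged, so if all moves occur inside $U$ then every correction diagram produced along the way agrees with $a$ (and with $b$) outside $U$.

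For (3), the plan is to first arrange that the chosen bigon is empty---its enclosed region meeting no other strands---and then apply the appropriate bigon relation. Among all bigons of $c$ choose one minimizing the number of strand arcs in its interior. If this number is positive, pick a strand $\gamma$ meeting the bigon; $\gamma$ must enter and exit, so near one vertex of the bigon the three strands form a triple-point configuration, and a sequence of triple-point switches (from (\ref{triple-dumb}), (\ref{triple-smart}), or (\ref{red-triple-correction})) pulls $\gamma$ across that vertex to the exterior. By part (2) this alters $c$ only by $(<n)$-crossing diagrams agreeing with $c$ outside a neighborhood of the bigon, and strictly reduces the strand count in the interior. Iterating reduces us to an empty bigon, at which point (\ref{black-bigon}) or (\ref{cost}) replaces the two offending crossings by a polynomial expression in dots on parallel strands, producing diagrams with $n-2$ crossings as required. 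The main obstacle is the bookkeeping in this last step: one must verify that the triple-point switch used to evict $\gamma$ genuinely decreases the interior strand count (rather than merely shuffling crossings around the bigon), which one checks by examining the two possible local configurations of $\gamma$ with respect to a chosen bigon vertex and observing that in both, the resulting arc of $\gamma$ inside the bigon is replaced by an arc entirely outside it.
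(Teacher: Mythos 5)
Your proof is correct and follows essentially the same route as the paper's: parts (1) and (2) reduce to a single triple-point switch or dot move and the locality of the relations (\ref{first-QH}--\ref{triple-smart}), (\ref{red-triple-correction}--\ref{red-dot}), and part (3) first empties the bigon by triple-point switches and then applies (\ref{black-bigon}) or (\ref{cost}). The one nit is that in (3) your minimization should range only over bigons contained in (a neighborhood of) the \emph{given} bigon, so that the corrections remain local to it as the statement demands; with that reading your argument is the paper's, with the eviction step spelled out in somewhat more detail.
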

\end{samepage}

\begin{proof}
  For part (1), we need only check this when $a$
  and $b$ differ by a single triple point switch or a single dot moving through a
  crossing.  This is clear from the relations
  (\ref{triple-dumb}--\ref{triple-smart},\ref{red-triple-correction})
  in the first case and
  (\ref{first-QH}--\ref{nilHecke-2},\ref{red-dot}) in the second.
  Part (2) follows from the locality of these relations.

  Now consider part (3).  We can assume that this
  bigon contains no smaller bigons inside it, but there may still be
  some number of strands which pass through, crossing each side of the
  bigon once.  However, by doing triple point switches, we can move
  these strands out, and assume that our bigon is empty.  Then we
  simply apply the relations (\ref{black-bigon},\ref{cost}) to rewrite
  this diagram in terms of those with fewer crossings.
\end{proof}

\begin{lemma}\label{span}
  The set $B$ spans $\tilde{\alg}$.
\end{lemma}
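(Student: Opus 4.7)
The strategy is induction on the total number of crossings $n$ in a Stendhal diagram. The base case $n=0$ is trivial: a crossingless diagram is uniquely specified by its boundary triple and a multiset of dots on each strand, so it has the form $e(\Bi,\bla,\kappa)\by^{\mathbf{a}}\in B$.

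For the inductive step, I take a diagram $d$ with $n\geq 1$ crossings and first isotope all dots to the bottom. Pushing a dot through a crossing uses the relations (\ref{first-QH}--\ref{second-QH}) for black/black crossings and (\ref{red-dot}) for red/black crossings; in every case, by Lemma \ref{modulo-smaller}(1) the result changes by a sum of diagrams with strictly fewer than $n$ crossings, to which induction applies. After this step the diagram has the form $c\cdot\by^{\mathbf{a}}$ with $c$ a dotless braid-like picture whose bottom is $(\Bi,\bla,\kappa)$ and whose top is $(\Bj,\bla,\kappa')$ for some permutation of the black strands.

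Next I reduce $c$ to be bigon-free. If $c$ contains either an all-black bigon or a red/black bigon, Lemma \ref{modulo-smaller}(3) rewrites it as a linear combination of diagrams strictly fewer crossings, so induction finishes these terms. When no bigons remain, no two black strands cross twice (so the sequence $\Bs_c$ read bottom-to-top is a reduced word for some $w\in S_n$) and no red-black pair crosses twice. In particular $c$ and the preferred representative $\psi_{w,\kappa'}e(\Bi,\bla,\kappa)$ have matching boundary data, the same permutation $w$, and both realize $w$ by a reduced expression.

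To finish, I must show any two such bigon-free realizations differ only by isotopies and triple-point switches; combined with Lemma \ref{modulo-smaller}(1), this identifies $c$ with $\psi_{w,\kappa'}e(\Bi,\bla,\kappa)$ modulo lower-order terms, which suffices. This uniqueness step is the main obstacle, but it reduces to classical facts: since the red strands do not cross each other and their endpoints at top and bottom are determined, they can be isotoped into any prescribed monotone position; and the black pattern becomes, after this straightening, a positive braid realizing two reduced expressions for $w$, which are connected by Matsumoto braid and commutation moves, i.e.\ precisely the triple-point switches and isotopies permitted by Lemma \ref{modulo-smaller}(1). Pushing the dots $\by^{\mathbf{a}}$ back through the resulting identification (again at the cost of fewer-crossing terms) exhibits $d$ as a combination of elements of $B$ modulo diagrams with $<n$ crossings, completing the induction.
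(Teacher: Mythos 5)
Your proposal is correct and follows essentially the same route as the paper's proof: induct on the number of crossings, use Lemma \ref{modulo-smaller} to push dots to the bottom and eliminate bigons (with correction terms having fewer crossings), and then observe that the remaining bigon-free diagram and $\psi_{w,\kappa'}e(\Bi,\bla,\kappa)$ are two reduced-word realizations of the same permutation of all (red and black) strands, hence connected by commutation isotopies and triple-point switches, each of which preserves the class modulo diagrams with fewer crossings. The only cosmetic difference is that the paper treats the reduced decomposition of the combined permutation on red and black strands in one step, whereas you straighten the reds first and then apply Matsumoto to the black pattern; this amounts to the same Matsumoto-type argument.
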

\begin{proof}
  Given a Stendhal diagram $d$, we must show that modulo the relations
  of $\tilde{\alg}$, we can rewrite $d$ as a sum of elements of $B$.
  We'll induct on the number of crossings.  If there are 0 crossings,
  then $d$ must be $e(\Bi,\bla,\kappa)$ multiplied by a monomial in
  the dots, which is an element of $B$ by definition.

By Lemma \ref{modulo-smaller}, we can
assume that $d$ has no bigons and that all dots are at the bottom of
the diagram. Let $w$ be the induced
permutation on black strands.  
It only remains to show that we can rewrite a dotless diagram $d$ with
no bigons as the fixed diagram $\psi_{w,\kappa'}$ with the same top,
bottom and induced permutation on black strands, plus diagrams with
fewer crossings.

The isotopy class of $d$ is encoded not just in the expression
$\Bs_d$, but also contains encodes a reduced decomposition of the
permutation induced on both red and black strands.  Thus, the moves
necessary to get from $d$ to $\psi_{w,\kappa'}$ are encoded in the
series of braid relations that takes one reduced expression to the
other.  The swapping of commuting transpositions is just an isotopy,
and the braid relation corresponds to a triple point switch.  Each
time we apply one of these, Lemma \ref{modulo-smaller} shows that the
class modulo diagrams with fewer crossings is unchanged.  After
finitely many moves, we get to $\psi_{w,\kappa'}$, and the result is
proven.
\end{proof}

Fix $\bla$ and $n\geq 0$.  Let $\cP_n$ be a free module over the polynomial ring $\K[Y_1,\cdots,
Y_n]$ generated by elements $\varepsilon(\Bi,\kappa)$ for each
Stendhal triple $(\Bi,\bla,\kappa)$.  Choose
polynomials $P_{ij}(u,v)$ such that $Q_{ij}(u,v)=P_{ij}(u,v)P_{ji}(v,u)$.
\begin{lemma}\label{action}
  The algebra $\tilde{\alg}^\bla_n$ acts on $\cP_n$ by the rule that:
  \begin{itemize}
  \item The dots $y_i$ act as the variables $Y_i$.
  \item $e(\Bi,\kappa)\cdot \varepsilon (\Bi',\kappa')=\delta_{\Bi,\Bi'}\delta_{\kappa,\kappa'}\varepsilon(\Bi,\kappa).$
 \item Assume $\kappa(j)=k$. The diagram crossing the $k$th black
    strand right over the $j$th red strand sends $\varepsilon (\Bi,\kappa)\mapsto
    Y_{k}^{\la_j^{i_k}}\varepsilon (\Bi,\kappa')$ where $\kappa'(m)=\kappa(m)-\delta_{j,m}$.
\item Assume $\kappa(j)=k$. The diagram crossing the $k+1$st black strand left of the $j$th
  red sends $\varepsilon (\Bi,\kappa)\mapsto \varepsilon (\Bi,\kappa'')$ where $\kappa''(m)=\kappa(m)+\delta_{j,m}$.
\item Crossing the $m$th and $m+1$st black strands (assuming there is
  no red between them) sends $\varepsilon (\Bi,\kappa)\mapsto 0$ if $i_m=i_{m+1}$
  and $\varepsilon (\Bi,\kappa)\mapsto P_{ji}(Y_m,Y_{m+1}) \varepsilon
  (s_m\cdot \Bi,\kappa)$ if $i_m\neq
  i_{m+1}$.  
\item Since the elements $\varepsilon (\Bi,\kappa)$ generate $\cP_n$
  over the polynomial ring $\C[Y_i]$, the action on any other element
  can be computed using the relations commuting elements of $T^\bla$
  past $y_i$'s.  
  \end{itemize}
More schematically, if we leave all but the two strands after the $k-1$st
black out of the diagram, we can represent this action by:
  \begin{equation*}
\begin{tikzpicture}[scale=.4,baseline]
\draw[wei] (-1,-1) -- (1,1) node[at start,below]{
$\la$} node[at end,above]{
$\la$};
\draw[very thick] (1,-1) -- (-1,1) node[at start,below]{
$i$} node[at end,above]{
$i$};
%\draw[very thick,|->] (1.5,0) -- (2.5,0);
\node at (2.7,0) {$\bullet\: f= f$ } ;
\end{tikzpicture}\qquad \qquad
\begin{tikzpicture}[scale=.4,baseline]
\draw[wei] (1,-1) -- (-1,1) node[at start,below]{
$\la$} node[at end,above]{
$\la$};
\draw[very thick] (-1,-1) -- (1,1) node[at start,below]{
$i$} node[at end,above]{
$i$};
\node at (3.7,0) {$\bullet\: f=Y_k^{\la^i}\cdot f$ } ;
\end{tikzpicture}\qquad \qquad
\begin{tikzpicture}[scale=.4,baseline]
\draw[very thick] (1,-1) -- (1,1) node[at start,below]{
$i$} node[at end,above]{
$i$} node[circle,midway,fill,inner sep=2pt]{};
\node at (3.8,0) {$\bullet\: f= Y_k\cdot f$ } ;
\end{tikzpicture}
  \end{equation*}
\begin{equation*}
  \begin{tikzpicture}[scale=.4]
\draw[very thick] (1,-1) -- (-1,1) node[at start,below]{
$j$} node[at end,above]{
$j$};
\draw[very thick] (-1,-1) -- (1,1) node[at start,below]{
$i$} node[at end,above]{
$i$};
\node at (7.5,0) {$\bullet\: f=\begin{cases}
 P_{ji}(Y_k,Y_{k+1}) f^{s_k} & i\neq j\\
 \displaystyle \frac{f^{s^k}- f}{Y_{k+1}-Y_{k}} & i=j  
\end{cases}$ } ;
\end{tikzpicture}
\end{equation*}
\end{lemma}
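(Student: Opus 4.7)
The plan is to verify that the formulas above respect every defining relation of $\tilde{T}^\bla_n$, so that the assignments extend from generators to a well-defined action on $\cP_n$. Since $\cP_n$ is free over $\K[Y_1,\dots,Y_n]$ on the symbols $\varepsilon(\Bi,\kappa)$, and every generator preserves this decomposition (changing $\Bi$ only through a transposition, changing $\kappa$ only by $\pm 1$ at a single index, and always acting by polynomial multiplication on the $Y$-part), it suffices to check each relation by applying both sides to an arbitrary $f\cdot\varepsilon(\Bi,\kappa)$ and comparing the resulting elements.

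First I would dispatch the purely black relations \eqref{first-QH}--\eqref{triple-smart}. The formulas for dots and black--black crossings are exactly the polynomial representation of the KLR algebra on $\bigoplus_\Bi\K[Y_1,\dots,Y_n]\cdot \varepsilon(\Bi,\kappa)$ used in \cite[Prop.\ 3.12]{Rou2KM} and in the proof of Theorem~\ref{non-degenerate}; since these relations involve only black strands, $\kappa$ plays no role and the verification is literally the same as in the KLR case (with the factorization $Q_{ij}=P_{ij}P_{ji}$ accounting for our slight modification). The black bigon relation \eqref{black-bigon} for $i\neq j$ produces exactly $P_{ji}(Y_m,Y_{m+1})P_{ij}(Y_{m+1},Y_m)=Q_{ij}(Y_{m+1},Y_m)$, as required.

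Next I would verify the red--black relations \eqref{red-triple-correction}--\eqref{red-dot}. The pass-through of a dot through a red crossing (relation \eqref{red-dot}) is immediate from the formulas since crossing a red strand only multiplies by a power of a single $Y_k$, which commutes with dot actions. The triple-point moves \eqref{dumb} and \eqref{red-triple-correction} are verified by tracking $\kappa$ and the power of $Y$ that accumulates: in \eqref{dumb} no $\la^i$ factor appears on either side, and both sides produce the same transposition on $\Bi$; in \eqref{red-triple-correction} with $i=j$, the difference between crossing red before or after a black--black crossing is exactly the ``remainder'' from the nilHecke relation, which matches the sum $\sum_{a+b+1=\la^i}y_1^a y_2^b$ coming from $(Y_{k}^{\la^i}-Y_{k+1}^{\la^i})/(Y_k-Y_{k+1})$.

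Finally comes the cost relation \eqref{cost}, which I expect to be the main obstacle and which genuinely forces the factor $Y_k^{\la^i}$ in the formula for crossing a black strand rightward past a red. Reading \eqref{cost} in the action: the LHS moves a black strand across a red strand and back, which by our rules picks up $Y_k^{\la^i}$ (from the right-over-red crossing) and then leaves $\kappa$ unchanged after the subsequent left-under-red crossing; the RHS is the identity on $\Bi,\kappa$ multiplied by $Y_k^{\la^i}$, matching perfectly. The mirror version \eqref{cost} going the other way is similar. The remaining isotopy invariance (commuting distant crossings, sliding dots on a single strand) is automatic because the formulas are local in the strands involved. Assembling these verifications, every relation defining $\tilde{T}^\bla_n$ is respected, so the assignment extends to the claimed action.
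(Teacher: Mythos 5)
Your proposal is correct and follows essentially the same route as the paper: reduce the KLR relations to the known polynomial representation of \cite[Prop.~3.12]{Rou2KM}, observe that the red–black relations \eqref{dumb}–\eqref{red-dot} and \eqref{cost} are immediate from the formulas, and verify \eqref{red-triple-correction} for $i=j$ by the explicit computation identifying the correction term with $(Y_{k+1}^{\la^i}-Y_k^{\la^i})/(Y_{k+1}-Y_k)$. The only cosmetic difference is emphasis — the paper treats \eqref{cost} as manifest while you flag it as the main obstacle — but the substance of the verification is identical.
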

\begin{proof}
  The KLR relations (\ref{first-QH}--\ref{triple-smart})  follow from  
  \cite[Proposition 3.12]{Rou2KM}. Thus the only relations we need check
  are our additional relations (\ref{red-triple-correction}-c) and (\ref{cost}).  All of
  these are manifest except for 
  (\ref{red-triple-correction}) in the case where $i=j$.  The LHS is \[f\mapsto \frac{Y_{k+1}^{\la^i} f^{s_k} -Y_k^{\la^i}
    f}{Y_{k+1}-Y_{k}}\] and the RHS
  is $$f\mapsto Y_{k+1}^{\la^i} \frac{f^{s_k}-
    f}{Y_{k+1}-Y_{k}}+\frac{Y_{k+1}^{\la^i} -Y_k^{\la^i}}{Y_{k+1}-Y_{k}}f$$
  so the relation is verified.
\end{proof}

Fix any sequence of elements of the root lattice $\bnu=(\nu_0,\dots,
\nu_\ell)$.  Then we have a map from the tensor product of KLR
algebras $\wp_{\bnu}\colon R_{\nu_0}\otimes \cdots \otimes R_{\nu_\ell}\to \tilde{T}^\bla$
sending 
\begin{equation}\label{wp-map}
  \begin{tikzpicture}[very thick,xscale=1.9]
    \node[draw=black, inner sep=10pt] at (-3,0) {$r_0$};
    \node at (-2,0) {$\cdots$};
\node[draw=black, inner sep=10pt] at (-1,0) {$r_\ell$};
\node[scale=2] at (0,0) {$\mapsto$};
    \node[draw=black, inner sep=10pt] at (1,0) {$r_0$};
    \node at (2,0) {$\cdots$};
\node[draw=black, inner sep=10pt] at (3,0) {$r_\ell$};
\node at (-2.5,0) {$\otimes$}; 
\node at (-1.5,0) {$\otimes$}; 
\draw[wei] (2.5,-1) --  node[below,at start]{$\la_{\ell}$}(2.5,1); 
\draw[wei] (1.5,-1) -- node[below,at start]{$\la_{1}$} (1.5,1); 
  \end{tikzpicture}
\end{equation}
In  the KLR algebra, there are idempotents attached not just to
sequences of elements of $\Gamma$, but to divided powers of these
elements, as defined in \cite[2.5]{KLI}.  That is, consider
$\Bi=(i_1^{(\vartheta_1)},\dots,i_n^{(\vartheta_n)})$ for $i_j\in \Gamma$
and $\vartheta_j\in \Z_{>0}$ with $\sum_j \vartheta_j\al_{i_j}=\nu$
(in the notation of \cite{KLI}, this is an element of
$\operatorname{Seqd}(\nu)$).  We denote the idempotent attached to
this sequence by $e(\Bi)\in
R_\nu$ (the same idempotent is denoted $1_{\Bi}$ in  \cite{KLI}).  

Now, consider such a sequence $\Bi$ together with $\bla$
and $\kappa$ as in a Stendhal triple, and let $\Bi_0,\dots, \Bi_\ell$
be the black blocks of the sequence $\Bi$ (that is, $\Bi_0$ is the
first $\kappa(1)$ entries, $\Bi_1$ the next $\kappa(2)-\kappa(1)$,
etc.) and $\nu_j=\wt(\Bi_j)$
\begin{defn}\label{divided-power-idempotent}
  Let $\displaystyle e(\Bi,\kappa):=\wp_{\nu_0,\dots, \nu_\ell}(e(\Bi_0) \boxtimes
  \cdots \boxtimes e(\Bi_\ell))$.  Note that if $\vartheta_j=1$ for
  all $j$, this is agrees with the previous definition of $e(\Bi,\kappa)$
\end{defn}
Usually, we will not require these multiplicities, and will thus exclude them from the notation.  Unless they are indicated explicitly, the reader should assume that they are 1.

Recall that the KLR algebra $R_\nu$ has a faithful polynomial
representation\footnote{This representation is denoted by $R_n$ in
  \cite{Rou2KM}; for obvious reasons, we won't use this notation.}
$\Pi_\nu$  defined in \cite[3.2.2]{Rou2KM}; special cases of this are
also defined in  \cite{KLI,KLII}.
\begin{lemma}
The action of $R_{\nu_0}\otimes \cdots \otimes R_{\nu_\ell}$  on $\wp_{\bnu}(1) \cP_n$ via $\wp_{\bnu}$ is
isomorphic to $\Pi_{\nu_0}\boxtimes \cdots \boxtimes \Pi_{\nu_\ell}$.
\end{lemma}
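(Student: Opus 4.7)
The plan is to construct an explicit isomorphism by matching the two natural idempotent decompositions, then verifying equivariance factor-by-factor using the action formulas already given in Lemma \ref{action}.

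First I would pin down both sides as free $\K[y_1,\dots,y_n]$-modules with compatible basis labellings. On the right, $\wp_{\bnu}(1)$ is (by Definition \ref{divided-power-idempotent} and \eqref{wp-map}) the sum of $e(\Bi,\kappa_{\bnu})$ as $\Bi=\Bi_0\cdots\Bi_\ell$ ranges over concatenated sequences with $\wt(\Bi_j)=\nu_j$, where $\kappa_{\bnu}(j)=\sum_{k<j}|\Bi_k|$ is the unique function forced by the fact that the $j$-th red strand sits immediately after $\Bi_{j-1}$. Hence $\wp_{\bnu}(1)\cP_n$ is a free $\K[Y_1,\dots,Y_n]$-module on basis $\{\varepsilon(\Bi,\kappa_{\bnu})\}$ indexed by such tuples. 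On the left, $\Pi_{\nu_0}\boxtimes\cdots\boxtimes\Pi_{\nu_\ell}$ has by definition a $\K[y_1,\dots,y_n]$-basis indexed by the same tuples $(\Bi_0,\dots,\Bi_\ell)$. Define a $\K[y_*]$-linear map $\Phi$ between these modules by sending $e(\Bi_0)\otimes\cdots\otimes e(\Bi_\ell)$ to $\varepsilon(\Bi,\kappa_{\bnu})$; this is obviously a bijection of bases.

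Next I would check equivariance for the three types of generators of $R_{\nu_0}\otimes\cdots\otimes R_{\nu_\ell}$. A dot on the $p$-th strand of the $j$-th tensor factor is sent by $\wp_{\bnu}$ to a dot on the corresponding strand in the $j$-th black block of a Stendhal diagram; both act as multiplication by the same polynomial variable, so Lemma \ref{action} gives a match. For a black-black crossing inside the $j$-th factor, the image under $\wp_{\bnu}$ is a crossing of two black strands in the $j$-th block, which in particular has no red strand between them. The formulas in Lemma \ref{action} then give the same divided-difference action (if $i_k=i_{k+1}$) or $P_{ji}$-multiplication-plus-permutation action (if $i_k\ne i_{k+1}$) that defines Rouquier's polynomial representation with the chosen factorization $Q_{ij}=P_{ij}P_{ji}$; a one-line bigon check confirms this action satisfies $\tau_k^2=Q_{ij}$ as required. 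Finally, the actions of different tensor factors commute because the diagrams $\wp_{\bnu}(r_j)$ and $\wp_{\bnu}(r_{j'})$ occupy disjoint horizontal strips separated by red strands, and neither crossings nor dots in one strip touch variables or strands in another; since both are local in the polynomial representation of Lemma \ref{action}, the combined action is the external tensor product.

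Putting these pieces together, $\Phi$ intertwines the two $R_{\nu_0}\otimes\cdots\otimes R_{\nu_\ell}$-actions on the generators, and hence is an isomorphism of modules. The main obstacle is not conceptual but notational: one must be careful that $\kappa_{\bnu}$ is genuinely forced by the image of $\wp_{\bnu}(1)$ (so no extra summands sneak in from different placements of the red strands) and that the choice of $P_{ij}$ fixed at the start is used consistently on both sides. Once this bookkeeping is in place, the lemma follows by inspection of the formulas in Lemma \ref{action}.
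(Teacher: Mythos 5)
Your proposal is correct and follows essentially the same route as the paper: the paper's proof simply observes that elements of the image of $\wp_{\bnu}$ act trivially unless the black blocks have weights $\nu_0,\dots,\nu_\ell$, and that the generators of each $R_{\nu_j}$ then act by the formulas of Rouquier's polynomial representation, which match those of Lemma \ref{action}. Your version just makes the basis identification and the generator-by-generator check explicit.
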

\begin{proof}
  Obviously, any element of the image of $\wp_{\bnu}$ will act
  trivially if the weight of the black block does not match
  $\nu_0,\dots,\nu_\ell$.  If it does, then the generated of
  $R_{\nu_i}$ act by the formulas given in \cite[3.2.2]{Rou2KM} which
  exactly match those of Lemma \ref{action}.
\end{proof}
\begin{cor}\label{wp-injective}
  The map $\wp_{\bnu}$ is injective.
\end{cor}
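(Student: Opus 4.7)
The plan is to deduce injectivity of $\wp_{\bnu}$ directly from the preceding lemma, together with the fact that the polynomial representation of the KLR algebra is faithful. The idea is that $\tilde{T}^\bla$ acts on $\cP_n$ by Lemma \ref{action}, and the restriction of this action to the image of $\wp_{\bnu}$ can already be identified with a known faithful representation of the tensor product KLR algebra.

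Concretely, let $x \in R_{\nu_0}\otimes \cdots \otimes R_{\nu_\ell}$ and suppose $\wp_{\bnu}(x) = 0$ in $\tilde{T}^\bla$. Then $\wp_{\bnu}(x)$ acts as zero on the module $\cP_n$ constructed in Lemma \ref{action}; in particular it acts as zero on the submodule $\wp_{\bnu}(1)\cP_n$. By the preceding lemma, the action of $R_{\nu_0}\otimes \cdots \otimes R_{\nu_\ell}$ on $\wp_{\bnu}(1)\cP_n$ pulled back via $\wp_{\bnu}$ is isomorphic to the exterior tensor product $\Pi_{\nu_0}\boxtimes \cdots \boxtimes \Pi_{\nu_\ell}$ of the polynomial representations. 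Hence $x$ acts as zero on $\Pi_{\nu_0}\boxtimes \cdots \boxtimes \Pi_{\nu_\ell}$.

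To conclude, I would invoke the faithfulness of the polynomial representation of the KLR algebra, proved in \cite[3.2.2]{Rou2KM} (see also \cite{KLI,KLII}). Since each $\Pi_{\nu_i}$ is a faithful $R_{\nu_i}$-module, the exterior tensor product $\Pi_{\nu_0}\boxtimes \cdots \boxtimes \Pi_{\nu_\ell}$ is a faithful module over $R_{\nu_0}\otimes \cdots \otimes R_{\nu_\ell}$ (this is a standard fact: an element of the tensor product algebra that annihilates all simple tensors of module elements must vanish, by expanding in a basis and using faithfulness on each factor). Therefore $x = 0$, and $\wp_{\bnu}$ is injective.

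There is no real obstacle here — the work has already been done in the preceding lemma, which translates the potentially complicated relations in $\tilde{T}^\bla$ back into the well-understood polynomial action of the KLR algebra. The only minor point worth verifying carefully is the passage from faithfulness of each $\Pi_{\nu_i}$ on $R_{\nu_i}$ to faithfulness of the exterior tensor product on the tensor product algebra, but this is a standard linear-algebra argument over a field.
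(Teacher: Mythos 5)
Your proof is correct and is essentially the paper's argument: the paper's own proof is the one-line observation that any element of the kernel acts trivially on $\Pi_{\nu_0}\boxtimes\cdots\boxtimes\Pi_{\nu_\ell}$, which is impossible by the faithfulness of the polynomial representation from \cite[3.2.2]{Rou2KM}. You have simply spelled out the intermediate steps (restriction to $\wp_{\bnu}(1)\cP_n$ and faithfulness of the exterior tensor product over a field) that the paper leaves implicit.
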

\begin{proof}
  Any element of the kernel acts trivially on $\Pi_{\nu_0}\boxtimes
  \cdots \boxtimes \Pi_{\nu_\ell}$ and this is impossible by \cite[3.2.2]{Rou2KM}.
\end{proof}

\begin{prop}\label{basis}
The set $B$ is a basis of $\tilde{\alg}$.
\end{prop}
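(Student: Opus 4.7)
The plan is to establish linear independence of $B$ using the polynomial representation $\cP_n$ from Lemma \ref{action}, reducing the statement to the basis theorem for KLR algebras \cite[Theorem 2.5]{KLI}. Since Lemma \ref{span} already handles spanning, only linear independence remains. Because the idempotents $e(\Bi,\bla,\kappa)$ are orthogonal and act nontrivially on distinct summands of $\cP_n$, it suffices to prove linear independence within each subspace $e(\Bi',\bla,\kappa')\,\tilde{T}^\bla\,e(\Bi,\bla,\kappa)$, which is spanned by the elements $\psi_{w,\kappa'}\,e(\Bi,\bla,\kappa)\,\by^{\mathbf{a}}$ with $w\Bi = \Bi'$.

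Given a putative linear relation $\sum_{w,\mathbf{a}} c_{w,\mathbf{a}}\,\psi_{w,\kappa'}\,e(\Bi,\bla,\kappa)\,\by^{\mathbf{a}} = 0$, I would apply both sides to the generator $\varepsilon(\Bi,\kappa) \in \cP_n$ and track the computation via Lemma \ref{action}. Each element $\psi_{w,\kappa'}\,e(\Bi,\bla,\kappa)\,\by^{\mathbf{a}}$ produces $Y^{\mathbf{a}}\,G_w(Y)\,\varepsilon(\Bi',\kappa')$, where $G_w(Y)$ is the product of the crossing contributions in our fixed reduced diagram $\psi_{w,\kappa'}$: differently-colored black-black crossings give factors $P_{ji}(Y_*,Y_*)$, same-colored black-black crossings give divided-difference operators, and each black-over-red crossing gives a monomial factor $Y_k^{\la_j^{i_k}}$. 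The red-crossing contributions collect into a monomial $M_w(Y)$ which is a nonzerodivisor in $\K[Y_1,\dots,Y_n]$, while the black-black contributions collect into precisely the operator by which the corresponding element of $R_{\nu_0}\otimes\cdots\otimes R_{\nu_\ell}$ acts on the polynomial representation $\Pi_{\nu_0}\boxtimes\cdots\boxtimes\Pi_{\nu_\ell}$.

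Combining the injectivity of $\wp_{\bnu}$ (Corollary \ref{wp-injective}) with the KLR basis theorem, and using that multiplication by $M_w(Y)$ is injective on $\K[Y]$, the assumed relation forces the analogous relation on the side of $R_{\nu_0}\otimes\cdots\otimes R_{\nu_\ell}$, so every $c_{w,\mathbf{a}}$ vanishes. The main subtlety is the bookkeeping in the case of same-colored black swaps, since then $G_w$ genuinely contains divided-difference operators rather than pure polynomial multiplications; one must verify that the interaction between $M_w$ and these operators respects the natural filtration by operator length, so that the top-order term of the action is precisely $M_w(Y)\,Y^{\mathbf{a}}$ times the KLR polynomial-representation operator. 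Since no two black strands cross twice in our chosen reduced diagrams, the divided-difference operators appear in exactly the pattern of a reduced word for $w$, and this filtration argument can be organized by induction on the number of black strands lying to the left of the first red strand (equivalently, on $\kappa(1)$), reducing the case $\kappa(1) > 0$ to cases with strictly smaller value of $\kappa(1)$ plus terms with fewer crossings.
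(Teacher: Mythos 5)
Your overall strategy (push everything into the polynomial representation and quote the KLR basis theorem) is viable, but as written there is a concrete step that fails: testing the relation by applying it to the single vector $\varepsilon(\Bi,\kappa)$. Once a same-labelled black/black crossing enters, $G_w$ contains a divided-difference operator, and distinct basis elements can then agree on the generator. For instance, with two black strands of the same label $i$, both $\psi\, y_1\, e(\Bi,\kappa)$ and $\psi\, y_2\, e(\Bi,\kappa)$ lie in $B$, and their sum is the nonzero element $(y_1+y_2)\psi\, e(\Bi,\kappa)$ (by (\ref{nilHecke-1})--(\ref{nilHecke-2})), yet it annihilates $\varepsilon(\Bi,\kappa)$ since $\partial(Y_1)+\partial(Y_2)=1-1=0$. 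So you must compare the elements of $B$ as \emph{operators} on all of $\K[Y_1,\dots,Y_n]\varepsilon(\Bi,\kappa)$, not as vectors $G_w(Y^{\mathbf a})\varepsilon$. That forces you to actually carry out the part you defer: commuting the red-crossing monomials and the $P_{ij}$ factors through the divided differences produces correction terms supported on shorter permutations, and the correct organizing filtration is by the Bruhat order on $w$ (equivalently the number of crossings), not by $\kappa(1)$ as you propose --- your suggested induction does not address why the length-$\ell(w)$ symbols for distinct $w$ cannot cancel against each other.

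For comparison, the paper sidesteps the operator analysis entirely. It uses the polynomial representation only once, to prove Corollary \ref{wp-injective}, which identifies the $\kappa=\kappa'=0$ block with the KLR algebra $R_\nu$ and imports its basis. The general case is then reduced to that block algebraically: multiplying a relation by $\dot\theta_{\kappa'}$ on the left and $\theta_\kappa$ on the right sends $\psi_{w,\kappa'}\,y^{\mathbf a}\,e(\Bi,\kappa)$ to $\psi_{w,0}\,y^{\mathbf a+\mathbf b}\,e(\Bi,0)$ modulo diagrams with fewer crossings (Lemma \ref{theta-pull}), so a Bruhat-maximal coefficient in the putative relation survives into the $\kappa=\kappa'=0$ block with an invertible ``diagonal'' (a shift of the dot exponent by $\mathbf b$), giving the contradiction. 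If you want to keep your route, you should either prove the operator-level triangularity statement carefully, or adopt the paper's $\theta_\kappa$ trick, which encodes exactly that triangularity inside the algebra.
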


We will always refer to the process of rewriting an element in terms
of this basis as ``straightening'' since, visually, it is akin to
pulling all the strands taut until they are straight.  In the course
of the proof, we'll need the 
element $\theta_\kappa$, which is the sum over all $\Bi$ of the unique
Stendhal diagram which 
\begin{itemize}
\item has bottom triple given by $(\Bi,\bla,0)$,
\item has top triple given by $(\Bi,\bla,\kappa)$,
\item has no dots and a minimal number of crossings.
\end{itemize}
For example, for $\kappa=(1\mapsto 0,2\mapsto 1,3\mapsto 1, 4\mapsto
3)$, we sum over all ways of adding black labels with the diagram:
\begin{equation*}
    \begin{tikzpicture}[very thick, scale=1.3]
    
\draw[wei] (1.5,-1) to node[below, at start]{$\la_1$} (1.5,0);
\draw[wei] (2,-1) to node[below, at start]{$\la_2$} (2.5,0) ;
\draw[wei] (2.5,-1) to node[below, at start]{$\la_3$} (3,0)  ;
\draw[wei] (3,-1) to node[below, at start]{$\la_4$}  (4.5,0) ;
\draw (3.5,-1) to (2,0) ;
\draw (4,-1)  to (3.5,0);
\draw (4.5,-1)  to (4,0);
\draw (5,-1) to  (5,0) ;
  \end{tikzpicture}
\end{equation*}

The product 
$\dot\theta_{\kappa'}\psi_{w,\kappa'}\mathbf{y}^{\mathbf{a}}
e(\Bi,\kappa) \theta_\kappa$ is quite close to being an element of
$B$, except that we may have created some bigons between red and black
strands.  Such a bigon will have been created with the strand which
connects to the 
$k$th black terminus at the bottom and $p$th red strand if either $k<\kappa(p)$ or
$w(k)<\kappa'(p)$.  We define a vector $\mathbf{b}\in \Z_{\geq 0}^n$ whose $k$th entry
is the sum over such $p$ of $\la_p^{i_k}$.

\begin{lemma} \label{theta-pull}The diagram 
  $\dot\theta_{\kappa'}\psi_{w,\kappa'}\mathbf{y}^{\mathbf{a}}
  e(\Bi,\kappa) \theta_\kappa$ is equal to
  $\psi_{w,0}\mathbf{y}^{\mathbf{a}+\mathbf{b}} e(\Bi,0)$ modulo the
  span of diagrams with fewer crossings than $\psi_{w,0}$.
\end{lemma}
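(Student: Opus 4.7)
The plan is to straighten $\dot\theta_{\kappa'}\psi_{w,\kappa'}\mathbf{y}^{\mathbf{a}} e(\Bi,\kappa) \theta_\kappa$ into $\psi_{w,0}\mathbf{y}^{\mathbf{a}+\mathbf{b}} e(\Bi,0)$ by freely applying isotopies and triple-point switches (absorbing lower-crossing terms via Lemma~\ref{modulo-smaller}) and then resolving each red--black bigon with relation (\ref{cost}), which replaces such a bigon by $\la^i$ dots on the black strand plus diagrams with strictly fewer crossings. Both the source and the target have bottom triple $(\Bi,\bla,0)$ and top triple $(w\Bi,\bla,0)$, and the induced permutation on the black strands in each is $w$; the two diagrams differ only in the paths of the red strands and in dot placement.

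The first step I will carry out is to use Lemma~\ref{modulo-smaller}(1,2) to consolidate the diagram: all the dots in $\mathbf{y}^{\mathbf{a}}$ are slid onto the bottoms of their black strands (dots pass through crossings modulo lower crossings), and the black--black crossings of $\psi_{w,\kappa'}$ are pulled into a single region realizing the reduced expression underlying $\psi_{w,0}$, leaving the red strands to execute a well-defined bigon pattern against the blacks. The $p$-th red strand crosses the $k$-th black exactly once in $\theta_\kappa$ precisely when $k\le\kappa(p)$, at most once in $\psi_{w,\kappa'}$ (by the third bullet in the definition of $\psi_{w,\kappa'}$, no red--black pair crosses twice in the middle layer), and once in $\dot\theta_{\kappa'}$ precisely when $w(k)\le\kappa'(p)$. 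Since the red lies to the left of the black at both the extreme top and the extreme bottom of the composite diagram, the total number of crossings between each such pair is even, and therefore organizes into a well-defined number of bigons.

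I will then apply relation (\ref{cost}) to each bigon. A bigon between the $p$-th red and the strand terminating at the $k$-th black terminus contributes $\la_p^{i_k}$ dots to that black strand, modulo diagrams with strictly fewer crossings; summing these contributions over $p$ gives exactly the $k$-th entry of $\mathbf{b}$. The remaining black-on-black pattern is $\psi_{w,0}$ up to Lemma~\ref{modulo-smaller}(3), since any two reduced expressions for $w$ agree modulo fewer crossings. The main point where care is needed, and the expected main obstacle, is the bigon bookkeeping: I must check that the paired crossings really do bound genuine bigons (no other strand is trapped inside, after the allowed triple-point switches) and that no crossing is double-counted, so that the accumulated dot count matches $\mathbf{b}$ exactly and no extraneous dots are created on wrong strands.
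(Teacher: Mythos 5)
Your argument is correct and rests on the same two ingredients as the paper's own proof: relation \eqref{cost} converts each red/black bigon into $\la_p^{i_k}$ dots, and Lemma \ref{modulo-smaller} absorbs all isotopies, triple-point switches and dot-slides into terms with fewer crossings. The only difference is organizational — the paper builds up $\theta_\kappa$ one red/black crossing at a time and resolves each bigon as it appears (adjusting $\mathbf{b}$ step by step), whereas you resolve all bigons at once after the parity argument pairing the crossings; the residual checks you flag (emptying bigons of trapped strands, no double-counting) are exactly what Lemma \ref{modulo-smaller}(3) and the observation that each red/black pair crosses $0$ or $2$ times already supply.
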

\begin{proof}
  It is easiest to see this inductively. If $\kappa\neq 0$, then we
  can multiply $\psi_{w,\kappa'}e(\Bi,\kappa) $ on the bottom by
  crossing the black strand attached to the $\kappa(j)$th terminus (at
  the bottom) over the $j$th red strand, reducing $\kappa$.  If this
  black strand had not already crossed the $j$th in
  $\psi_{w,\kappa'}e(\Bi,\kappa) $, then this is still a basis vector
  (modulo diagrams with fewer crossings), and $\mathbf{b}$ is
  unchanged.  On the other hand, if it had, then we can apply Lemma
  \ref{modulo-smaller} and the relation \eqref{cost} to move this
  strand to the right side of the $j$th strand and arrive at a basis
  vector, at the cost of multiplying it by $\la_j^{i_{\kappa(j)}}$
  dots.  However, we also must decrease $\mathbf{b}$ in order to
  compensate for this change, meaning that
  $\psi_{w,0}\mathbf{y}^{\mathbf{a}+\mathbf{b}} e(\Bi,0)$ is left
  unchanged (modulo diagrams with fewer crossings).  Applying this
  until $\kappa=\kappa'=0$ shows the claim.
\end{proof}

\begin{proof}[Proof of Proposition \ref{basis}]
First, consider the map $\wp_{0,\dots, 0,\nu}\colon R_\nu \to
\tilde{\alg}^\bla_n$.   By Corollary \ref{wp-injective}, this map is
injective.  Furthermore, the algebra $R_\nu$ has a basis denoted $S$ in
\cite[3.1.2]{Rou2KM} which depends on a choice of reduced word for
each permutation.  As long as we choose these compatibly with the
reduced word given by our basis vectors $\psi_{w,0}e(\Bi,0)$, the
basis $S$ of $R_\nu$ is sent to the elements
$\psi_{w,0}\mathbf{y}^{\mathbf{a}} e(\Bi,0)\in B$ for $\Bi$ such that
$\sum_j\al_{i_j}=\nu$.  By the injectivity of $\wp_{0,\dots, 0,\nu}$,
these vectors are linearly independent.

Fix $\kappa$ and $\kappa'$, and suppose there is a non-trivial linear relation between elements of
$B$ with $\kappa$ in the Stendhal triple at bottom and $\kappa'$ at
top. Now, multiply the relations on the left by $\dot\theta_{\kappa'}$ and on the
right by $\theta_\kappa$.
As shown in Lemma \ref{span}, we can rewrite each term of the
resulting relation in terms of the vectors $\psi_{w',0}\mathbf{y}^{\mathbf{a}'}
e(\Bi,0)$.

Now, choose a permutation $w\in S_n$ such that for some $\mathbf{a}$,
the vector $\psi_{w,\kappa'}\By^{\mathbf{a}}e(\Bi,\kappa)$ has nontrivial
coefficient $m$, and such that $w$ is  maximal in Bruhat order amongst
such permutations.  Now,  multiply by $\theta_\kappa$ and
$\dot\theta_{\kappa'}$ and rewrite in terms of $B$. We find that $\psi_{w,0}\mathbf{y}^{\mathbf{a}+\mathbf{b}}
e(\Bi,0)$
also has coefficient $m$ since no element of $B$ other than
$\psi_{w,\kappa'}\By^{\mathbf{a}}e(\Bi,\kappa)$ could contribute to
its coefficient by Lemma \ref{theta-pull}.  Since the elements $\{\psi_{w',0}\mathbf{y}^{\mathbf{a'}}
e(\Bi,0)\}$ are linearly independent, we must have $m=0$, giving a contradiction.  Thus, this
relation is trivial and we have a basis of $\tilde{\alg}^\la$.
\end{proof}

 If $\bla=(\la)$, then we will simplify notation by writing $\alg^\la$ for $\alg^\bla$.
\begin{thm}
\label{cyclotomic}
$R^\la\cong \alg^\la$.
\end{thm}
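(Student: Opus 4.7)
The plan is to construct mutually inverse algebra homomorphisms $\psi\colon R^\la \to T^\la$ and $\phi\colon T^\la \to R^\la$ by exchanging KLR diagrams with Stendhal diagrams having a single red strand pinned at the far left. Define $\psi$ on generators by sending the idempotent $e(\Bi)$ to the Stendhal diagram $e(\Bi,(\la),0)$ (one red strand of label $\la$ at the far left followed by the black sequence $\Bi$), and sending dots and crossings on black strands to the corresponding Stendhal generators. To verify that $\psi$ factors through the cyclotomic ideal, we need $\psi(y_1^{\la^{i_1}} e(\Bi)) = 0$ in $T^\la$: the image is a Stendhal diagram whose leftmost black strand carries $\la^{i_1}$ dots immediately to the right of the red strand, and by the second equation in (\ref{cost}) this equals a diagram in which the red strand swings right and the black strand loops left. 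In the latter configuration the black strand is leftmost at intermediate heights, so the diagram is violated and hence zero in $T^\la$.

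For the inverse map, I define $\phi$ first on $\tilde{T}^\la$: violated diagrams map to $0$, and any non-violated diagram is sent to the KLR element obtained by erasing the red strand. The key geometric observation that makes this unambiguous is that in a non-violated Stendhal diagram no black strand can cross the red strand — a single crossing would leave the black strand leftmost above it, and a double crossing would leave it leftmost between the two crossings — so the red strand can be isotoped to the far left without interacting with any black strand, and erasing it yields a well-defined KLR diagram up to the usual isotopies.

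I then must check that $\phi$ respects every defining relation of $\tilde{T}^\la$. The KLR relations (\ref{first-QH}--\ref{triple-smart}) descend to the same relations in $R^\la$ once the red strand is removed. The red-involving relations (\ref{red-triple-correction}--\ref{red-dot}) each involve a configuration in which a black strand crosses the red strand, so by the observation above, both sides of each of these relations represent violated diagrams and are sent to $0$ by $\phi$. The decisive case is relation (\ref{cost}): its left-hand side is violated and so maps to $0$, while its right-hand side maps to $y_1^{\la^{i_1}} e(\Bi)$ in $R^\la$, which is zero by the cyclotomic relation defining $R^\la$. Thus $\phi$ descends to a homomorphism $T^\la \to R^\la$.

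To finish, $\phi\circ\psi = \id_{R^\la}$ is immediate because adding a red strand and then erasing it returns the original KLR diagram, and $\psi\circ\phi = \id_{T^\la}$ holds because every non-violated Stendhal diagram is isotopic, in $T^\la$, to one in which the red strand is straight on the far left — erasing and reinserting it therefore reproduces the original class. The main obstacle, and really the only content in the argument, is the verification of well-definedness of $\phi$ on relation (\ref{cost}); this is where the ``cost'' of $\la^{i_1}$ dots in $T^\la$ is matched on the nose by the cyclotomic relation in $R^\la$, and where the two perspectives on the simplest quotient of $U(\mathfrak{p}^-)$ are forced to agree.
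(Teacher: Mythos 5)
Your proof is correct, but it takes a genuinely different route from the paper's. The paper gets surjectivity of $\wp'\colon R\to \alg^\la$ from the spanning set of Proposition \ref{basis}, and for injectivity it shows by induction on a crossing-count statistic that every violated diagram in $\tilde{T}^\la$ lies in the image of the cyclotomic ideal, then invokes the injectivity of $\wp\colon R\hookrightarrow\tilde{T}^\la$ (Corollary \ref{wp-injective}, which rests on the faithful polynomial representation). You instead build an explicit two-sided inverse by erasing the red strand, so that the whole content of the theorem is concentrated in the well-definedness of $\phi$: the only relation equating a violated diagram with a non-violated one is \eqref{cost}, and there the non-violated side lands in the cyclotomic ideal. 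Your route is more self-contained — it needs neither the basis theorem nor the polynomial representation, and it yields injectivity of $\psi$ for free from $\phi\circ\psi=\id$ — at the cost of a careful case analysis of where each local relation's disk sits relative to the red strand; the paper's route reuses machinery it needs anyway and its rewriting induction is the template for later arguments such as Propositions \ref{add-red} and \ref{split-strands}.

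Two small imprecisions worth tightening, neither of which is a gap. First, your blanket justification for relations (\ref{red-triple-correction}--\ref{red-dot}) — ``each involves a black strand crossing the red strand'' — does not literally apply to the dotted correction term of \eqref{red-triple-correction}, which has no crossing inside the disk; what is true is that every term of these relations has a black strand strictly to the left of the red strand at some height of the disk, and since there is only one red strand, the leftmost strand at that height is black, so every global completion is violated. Second, the right-hand side of \eqref{cost} maps not to $y_1^{\la^{i_1}}e(\Bi)$ on the nose but to a diagram with $\la^{j}$ dots on the leftmost black strand at some interior height; cutting there exhibits it as an element of the two-sided cyclotomic ideal, which is what you need. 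Similarly, the KLR-relation check should note that the disk lies entirely on one side of the red strand, with the left-hand case forcing all terms to be violated.
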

\begin{proof}
 We have an injective map $\wp\colon R\hookrightarrow \tilde{\alg}^\la$ given by adding a
  red line at the left.  Composing with  the projection
  $\tilde{\alg}^\la\to \alg^\la$, we obtain a map $ \wp'\colon R\to \alg^\la$.  This map is a surjection
  since each   element of the basis of Proposition \ref{basis} is in
  the image.

Thus, it only remains to show that the kernel of the map $\wp'$ is
precisely the cyclotomic ideal.  To show that the latter is contained
in the former, we need only show that the image of
$y_1^{\la^{i_1}}e(\Bi)$  is 0 in $T^\la$; this follows immediately
from \eqref{cost}.  

Consider a diagram $d$ with a violating strand in $\tilde{T}^\la$; we will
prove by induction that $d$ lies in the image of the cyclotomic ideal
of $R$.  The statistic $c$ on which we induct on is half the number of
red/black crossings in $d$ plus the number of black/black crossings
left of the red line.  If $c=1$, we must have a single
black strand labeled with $i$ which crosses over and immediately crosses back, and
\eqref{cost} shows that this diagram is equal to one with no strands
left of the red, but with $\la^i$ dots on the left-most strand at some
value of $y$, which is thus in the cyclotomic ideal.

If $c>1$, then there is either a bigon or a triangle formed with a red
strand on the right side.  Applying either the relation \eqref{cost}
if there is a bigon or (\ref{red-triple-correction}) if there is a
triangle, every term on the RHS has $c$ lower, but $\geq 1$.  Thus,
applying the inductive hypothesis, we can rewrite $d$ in $\tilde{T}$
as sum of diagrams in the cyclotomic ideal.

Thus, if $r\in R$ lies in the kernel of the map $\wp'$, its image is a
sum of diagrams in the cyclotomic ideal.  Thus, it can be rewritten as a sum of
elements of the cyclotomic ideal.  By the injectivity of $\wp$, the
element $r$ thus lies in the cyclotomic ideal.
This completes the proof.
\end{proof}

\subsection{Splitting red strands}
\label{sec:splitting}

This leads us to an observation which will be quite useful in the
future.   Let $e_{\ell}$ be the idempotent given by the sum of
$e(\Bi,\kappa)$ where $\kappa(\ell)=n$, i.e., those where the last
strand is colored red, not black.  Let $\bla^-=(\la_1,\dots, \la_{\ell-1})$.
\begin{prop}\label{add-red}
  There is an isomorphism $T^{\bla^-}\to e_\ell T^{\bla} e_\ell$.
\end{prop}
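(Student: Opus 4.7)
The plan is to define $\phi\colon T^{\bla^-}\to e_\ell T^\bla e_\ell$ sending a Stendhal diagram $d$ to the diagram $\phi(d)$ obtained by adjoining a single vertical red strand labeled $\la_\ell$ at the far right. I first check that $\phi$ is a well-defined algebra map; this is essentially a locality statement. The defining relations of $T^{\bla^-}$ listed in Definition \ref{tilde-def} all take place in a small disc, which we can arrange to lie strictly to the left of the new rightmost red strand, so they still hold after applying $\phi$. Passing to the quotient by the violating ideal $K$ is equally easy: a violating black strand in $d$ (one that is leftmost at some horizontal slice) remains leftmost after appending the new red strand on the right, so $\phi$ sends violated diagrams to violated diagrams. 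Since the new strand is the $\ell$th red strand and is the rightmost object at every height, the image lies in $e_\ell T^\bla e_\ell$, and composition of diagrams is obviously respected.

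For injectivity, I compare bases using Proposition \ref{basis}. A basis of $T^{\bla^-}$ is given by elements $\psi_{w,\kappa'}e(\Bi,\bla^-,\kappa)\by^{\mathbf{a}}$. I will choose the fixed diagrams $\psi_{w,\kappa'}$ for $T^\bla$ so that whenever $\kappa(\ell)=\kappa'(\ell)=n$, the representative $\psi_{w,\kappa'}$ keeps all black strands strictly to the left of the $\ell$th red strand; this is possible because red strands cannot cross one another and all black termini lie left of the last red terminus at both the top and bottom. With such a compatible choice, $\phi$ carries the basis of $T^{\bla^-}$ bijectively to the subset of the chosen basis of $T^\bla$ consisting of those elements with $\kappa(\ell)=\kappa'(\ell)=n$, which manifestly lies in $e_\ell T^\bla e_\ell$. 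This makes $\phi$ injective.

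For surjectivity, I need to show every element of $e_\ell T^\bla e_\ell$ lies in the image of $\phi$. The content here is that a general element $x\in e_\ell T^\bla e_\ell$ need not already have the $\ell$th red strand isolated on the right: black strands may loop around and cross this red strand in the middle of the diagram, even though they must end up to its left at top and bottom. I will reduce $x$ to a sum of basis elements in the chosen basis, and then argue that every basis element $\psi_{w,\kappa'}e(\Bi,\bla,\kappa)\by^{\mathbf{a}}$ with $\kappa(\ell)=\kappa'(\ell)=n$ in fact lies in the image of $\phi$. Since no black strand terminates to the right of the $\ell$th red strand, any crossing of a black strand with that red strand must be paired with another crossing (as one travels along the black strand), producing a black/red bigon. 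Using Lemma \ref{modulo-smaller}(3) together with relation \eqref{cost}, each such bigon can be removed at the cost of adding $\la_\ell^i$ dots to the black strand and producing error terms with strictly fewer crossings; induction on the total number of crossings with the $\ell$th red strand then finishes the job.

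The main obstacle is the surjectivity step, specifically the bookkeeping needed to iteratively remove all black–red bigons with the rightmost red strand while keeping the remaining error terms in $e_\ell T^\bla e_\ell$. Once this is set up properly, injectivity, the algebra-map property, and well-definedness are all essentially formal consequences of the locality of the relations and the explicit basis from Proposition \ref{basis}.
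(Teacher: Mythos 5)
Your definition of the map, the locality check, and the surjectivity step all match the paper's argument (the paper in fact gets surjectivity for free from Proposition \ref{basis}, so your bigon-removal work there is redundant once you have made the compatible choice of $\psi_{w,\kappa'}$). The genuine gap is in the injectivity step. Proposition \ref{basis} gives a basis of the \emph{unquotiented} algebra $\tilde{T}$, not of $T$: the images in $T^{\bla}$ of the basis elements with $\kappa(1)=\kappa'(1)=0$ span but are \emph{not} linearly independent, because the violating ideal contains diagrams that are not themselves violated. (For instance, relation \eqref{cost} shows that a black strand forming a bigon with the leftmost red strand --- a violated diagram --- equals the non-violated diagram with $\la_1^{i}$ dots, so nontrivial linear combinations of non-violated basis elements die in the quotient.) Consequently, "$\phi$ carries a basis bijectively to a subset of a basis'' does not establish injectivity of the induced map on the quotients; your argument only proves that $\tilde{T}^{\bla^-}\to e_\ell\tilde{T}^{\bla}e_\ell$ is an isomorphism.

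What actually has to be shown is that the kernel of $e_\ell\tilde{T}^{\bla}e_\ell\to e_\ell T^{\bla}e_\ell$, i.e.\ the span of violated diagrams intersected with $e_\ell\tilde{T}^{\bla}e_\ell$, is contained in the image of the violating ideal of $\tilde{T}^{\bla^-}$. The paper does this by writing such an element as a sum of violated diagrams with a minimal total number of crossings and observing that any bigon a black strand forms with the $\ell$th red strand can be removed by Lemma \ref{modulo-smaller} using only moves that preserve the property of being violated; minimality then forces every violated diagram in the sum to keep all black strands left of the $\ell$th red strand, hence to come from a violated diagram in $\tilde{T}^{\bla^-}$. Ironically, you already have exactly this bigon-removal mechanism written out in your surjectivity paragraph --- it needs to be moved to the injectivity step, with the additional observation that the relevant isotopies and relations do not destroy the violating strand.
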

\begin{proof}
  The map is induced by the map $\tilde T^{\bla^-}\mapsto e_\ell
  \tilde T^{\bla} e_\ell$ which adds a red strand at the far right of
  the Stendhal diagram.  Proposition \ref{basis} shows that this is
  surjective, and obviously it sends violated diagrams to violated
  diagrams.  Thus, we have a surjective map $T^{\bla^-}\to e_\ell
  T^{\bla} e_\ell$.  

Now assume, we have an element of $e_\ell
  \tilde T^{\bla} e_\ell$, which is a sum of violated diagrams.  We
  need only to consider diagrams where top and bottom satisfy
$\kappa(1)=0$,  since otherwise the diagrams are automatically 0 in $e_\ell
  \tilde T^{\bla} e_\ell $.  In particular, if $\ell=1$, we need only
  consider diagrams with no black strands, and thus obtain an
  isomorphism $T^{\bla^-}\cong \K\cong e_\ell T^{\bla} e_\ell$. Assume
  from now on that $\ell>1$. 

Thus, let $a$ be a violated diagram whose top and bottom satisfy
$\kappa(\ell)=n$.  If at any point, there is a black strand right of the
rightmost red strand, these strands must form a bigon.
By
  Lemma 
  \ref{modulo-smaller}, we can rewrite $a$ as a sum of diagrams with
  fewer crossings without this bigon.  Furthermore, in the proof, we use isotopies are
  relations that never change the fact that $a$ is violating.  Thus,
  if we write an element $a$ of the kernel of the map $e_\ell
  \tilde T^{\bla} e_\ell\to e_\ell
   T^{\bla} e_\ell$ as a sum of violated diagrams with a minimal
   number of crossings, there will be no bigons involving the $\ell$th
   red strand.
Thus, $a$ is in the image of the violating ideal in
  $\tilde T^{\bla^-}$ and we have the desired isomorphism.
\end{proof}
This isomorphism induces a $T^{\bla^-}\operatorname{-}T^\bla$-bimodule
structure on $e_\ell T^{\bla}$.
\begin{defn}\label{I-def}
Let $\fI_{\la_\ell}(M):=  M\otimes_{T^{\bla^-}} e_\ell T^{\bla}$.  Let
$\fI^R_{\la_\ell}(N):= N e_\ell$ be its right adjoint.
\end{defn}
We'll often use the functor $\fI_\mu$ without carefully defining in
relevant lists of weights first.  For any sequence $\bla$, by
definition $\fI_\mu$ is a functor  $\cata^\bla\to \cata^{(\la_1,\dots, \la_\ell,\mu)}$.

Fix $1\leq k<\ell$, and let $\bla'=(\la_1,\dots,
\la_{k}+\la_{k+1},\dots, \la_\ell)$.  Given a Stendhal diagram with
red lines labeled by  $\bla'$, we can obtain a new Stendhal diagram by
``splitting'' the $k$th red strand into two, labeled with $\la_k$ and
$\la_{k+1}$.  This is compatible with composition and thus induces an
algebra map {\it $\sigma\colon \ttalg^{\bla'}_n\to
\ttalg^{\bla}_n$}.   The algebra {\it $\ttalg^{\bla'}_n$}
is unital; its unit is the sum over all Stendhal diagrams for $\bla'$
with $n$ black strands and no crossings or dots.  However, this
homomorphism is not unital.  It sends {\it  $1\in \ttalg^{\bla'}_n$}
to an idempotent  $e_{\bla'}\in \ttalg^{\bla}_n$ consisting of the sum
of $e(\Bi,\kappa)$ for all $\kappa$ with $\kappa(k)=\kappa(k+1)$. 

\begin{prop}\label{split-strands}
  The map $\sigma$ induces isomorphisms
  $\tilde{\alg}^{\bla'}\to e_{\bla'}\tilde{\alg}^{\bla}e_{\bla' }$ and
  ${\alg}^{\bla'}\to e_{\bla'}{\alg}^{\bla}e_{\bla'}$.
\end{prop}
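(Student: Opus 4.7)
The plan is to show $\sigma$ is a well-defined algebra homomorphism, verify its bijectivity using the basis from Proposition~\ref{basis}, and then descend to an isomorphism of cyclotomic quotients via the violating ideal.

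For well-definedness, I would check that every defining relation of $\tilde\alg^{\bla'}$ maps to an identity in $e_{\bla'}\tilde\alg^\bla e_{\bla'}$. The KLR relations involve only black strands and are immediate. The pitchfork and dot-slide relations (\ref{dumb},\ref{red-dot}) are local at a single red strand and are deducible by applying the original relation in turn at each of the two split red strands. The cost relation (\ref{cost}) at the merged strand splits into two applications: a bigon around the merged strand, under $\sigma$, encloses both split reds, and is resolved by straightening first past $\la_{k+1}$, producing $\la_{k+1}^i$ dots, then past $\la_k$, producing $\la_k^i$ more, for a total of $(\la_k+\la_{k+1})^i$ dots as required. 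The most delicate check is the triple-point correction (\ref{red-triple-correction}) at the merged strand when the two black labels coincide. Moving the black-black crossing from the left of both split reds to the right requires two applications of (\ref{red-triple-correction}), one at $\la_k$ and one at $\la_{k+1}$; each yields a correction in which the two black strands acquire a residual bigon with the red strand not directly involved in that step. Resolving each bigon via (\ref{cost}) shifts the dot count on one strand by $\la_{k+1}^i$ in the first correction and $\la_k^i$ in the second. Writing $n_1=\la_k^i$ and $n_2=\la_{k+1}^i$, after re-indexing, the first correction becomes a sum over dot-pairs $(a,b)$ with $a+b+1=n_1+n_2$ and $b\ge n_2$, while the second is a sum over such pairs with $a\ge n_1$. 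These two ranges are disjoint and together cover every $(a,b)\in\Znn^2$ with $a+b+1=(\la_k+\la_{k+1})^i$, exactly matching the correction required in the $\bla'$ relation.

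For bijectivity, I would choose the basis diagrams $\psi_{w,\kappa'}$ for $\tilde\alg^{\bla'}$ freely, then take their $\sigma$-images as the basis diagrams for the corresponding elements of $e_{\bla'}\tilde\alg^\bla e_{\bla'}$. Splitting a red strand introduces no new pair of strands crossing twice, so this choice is compatible with the minimality condition in the basis. By Proposition~\ref{basis}, the basis elements of $e_{\bla'}\tilde\alg^\bla e_{\bla'}$ are exactly those with $\kappa(k)=\kappa(k+1)$ and $\kappa'(k)=\kappa'(k+1)$, which are in natural bijection with the basis elements of $\tilde\alg^{\bla'}$. Thus $\sigma$ is an isomorphism. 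Finally, splitting does not change the leftmost red strand nor the position of any black strand relative to it, so a diagram is violating if and only if its $\sigma$-image is; combined with the basis bijection, this shows $\sigma$ restricts to an isomorphism of violating ideals $K^{\bla'}\cong e_{\bla'}K^\bla e_{\bla'}$, inducing the desired isomorphism $\alg^{\bla'}\cong e_{\bla'}\alg^\bla e_{\bla'}$. The main obstacle is the bookkeeping in the triple-point relation, where one must match the two disjoint dot-pair ranges arising from successive corrections against the single range in the $\bla'$ relation.
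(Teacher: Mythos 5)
Your verification of the relations and your use of the basis of Proposition~\ref{basis} to see that $\sigma\colon\tilde{\alg}^{\bla'}\to e_{\bla'}\tilde{\alg}^{\bla}e_{\bla'}$ is an isomorphism follow the paper's argument essentially step for step; in particular your analysis of the two complementary dot-ranges in the triple-point check is the same computation the paper carries out, and it is correct.

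There is, however, a genuine gap in your final step. You claim that ``a diagram is violating if and only if its $\sigma$-image is,'' combined with the basis bijection, yields $\sigma(K^{\bla'})=e_{\bla'}K^{\bla}e_{\bla'}$. The inclusion $\sigma(K^{\bla'})\subseteq e_{\bla'}K^{\bla}e_{\bla'}$ is indeed immediate, but the reverse inclusion is not: an element of $e_{\bla'}K^{\bla}e_{\bla'}$ is a linear combination of violated diagrams of $\tilde{\alg}^{\bla}$ whose top and bottom satisfy $\kappa(k)=\kappa(k+1)$, yet whose \emph{interior} may have black strands wandering between the $k$th and $(k+1)$st red strands. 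Such a diagram is not in the image of $\sigma$, so your equivalence ``violating iff the $\sigma$-image is violating'' says nothing about it, and the basis bijection for the ambient algebra does not restrict to a statement about the ideal. What is needed — and what the paper supplies — is the observation that any feature between the two red strands forces a bigon or a triangle with one of them, which can be removed using Lemma~\ref{modulo-smaller}(1) or (3) modulo diagrams with fewer crossings, \emph{and} that by locality these moves preserve the property of being violated. Writing a violated element of $e_{\bla'}\tilde{\alg}^{\bla}e_{\bla'}$ as a sum of violated diagrams with a minimal number of crossings and nothing between the two reds then exhibits it as a $\sigma$-image of an element of $K^{\bla'}$. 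Without this rewriting argument, the surjectivity of the induced map onto $e_{\bla'}\alg^{\bla}e_{\bla'}$ is fine but its injectivity is not established.
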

\begin{proof}
  First, we must show that $\sigma$ induces a homomorphism
  $\tilde{\alg}^{\bla'}\to\tilde{\alg}^{\bla}$.  Obviously, the KLR
  relations present no issue, nor do (\ref{dumb}) and (\ref{red-dot}).
  Thus, we need only confirm (\ref{red-triple-correction}) and
  \eqref{cost}.
The first follows from 
 \begin{multline*}
    \begin{tikzpicture}[very thick]
      \draw (-3,0)  +(1,-1) -- +(-1,1) node[at start,below]{$i$};
      \draw (-3,0) +(-1,-1) -- +(1,1) node [at start,below]{$j$};
      \draw[wei] (-3.3,0)  +(0,-1) .. controls (-4.3,0) .. node[below, at start]{$\la_k$}  +(0,1);
 \draw[wei] (-2.7,0)  +(0,-1) .. controls (-3.7,0) ..  node[below, at start]{$\la_{k+1}$}  +(0,1);
      \node at (-1,0) {=};
      \draw (1,0)  +(1,-1) -- +(-1,1) node[at start,below]{$i$};
      \draw (1,0) +(-1,-1) -- +(1,1) node [at start,below]{$j$};
      \draw[wei] (.7,0)  +(0,-1) .. controls (-.3,0) .. node[below, at start]{$\la_k$}  +(0,1);
      \draw[wei] (1.3,0) +(0,-1) .. controls (2.3,0) ..  node[below, at start]{$\la_{k+1}$}+(0,1);   
\node at (2.8,0) {$+ $};
      \draw (6.5,0)  +(1,-1) -- +(1,1) node[midway,circle,fill,inner sep=2.5pt,label=right:{$a$}]{} node[at start,below]{$i$};
      \draw (6.5,0) +(-1,-1) .. controls (6.5,0) .. +(-1,1) node[midway,circle,fill,inner sep=2.5pt,label=left:{$b$}]{} node [at start,below]{$j$};
      \draw[wei] (6.2,0) +(0,-1) .. controls (5.2,0) .. node[below, at
      start]{$\la_k$}+(0,1);
 \draw[wei] (6.8,0) +(0,-1) -- node[below, at start]{$\la_{k+1}$} +(0,1);
\node at (4.2,-.2){$\displaystyle \delta_{i,j}\sum_{a+b+1=\la^i_{k+1}} $}  ;
 \end{tikzpicture}\\
\begin{tikzpicture}[very thick]
\node at (-1,0) {=};
      \draw (1,0)  +(1,-1) -- +(-1,1) node[at start,below]{$i$};
      \draw (1,0) +(-1,-1) -- +(1,1) node [at start,below]{$j$};
      \draw[wei] (.7,0)  +(0,-1) .. controls (1.7,0) .. node[below, at start]{$\la_k$}  +(0,1);
      \draw[wei] (1.3,0) +(0,-1) .. controls (2.3,0) ..  node[below, at start]{$\la_{k+1}$}+(0,1);   
\node at (2.5,0) {$+ $};
      \draw (6.2,0)  +(1,-1) -- +(1,1) node[midway,circle,fill,inner sep=2.5pt,label=right:{$a$}]{} node[at start,below]{$i$};
      \draw (6.2,0) +(-1,-1) .. controls (6.2,0) .. +(-1,1) node[midway,circle,fill,inner sep=2.5pt,label=left:{$b$}]{} node [at start,below]{$j$};
      \draw[wei] (5.9,0) +(0,-1) .. controls (4.9,0) .. node[below, at
      start]{$\la_k$}+(0,1);
 \draw[wei] (6.5,0) +(0,-1) -- node[below, at start]{$\la_{k+1}$} +(0,1);
\node at (3.9,-.2){$\displaystyle \delta_{i,j}\sum_{a+b+1=\la^i_{k+1}} $}  ;
\node at (8,0) {$+ $};
      \draw (11.5,0)  +(1,-1) .. controls (11.5,0) ..   +(1,1) node[midway,circle,fill,inner sep=2.5pt,label=right:{$a$}]{} node[at start,below]{$i$};
      \draw (11.5,0) +(-1,-1) -- +(-1,1) node[midway,circle,fill,inner sep=2.5pt,label=left:{$b$}]{} node [at start,below]{$j$};
      \draw[wei] (11.2,0) +(0,-1) -- node[below, at
      start]{$\la_k$}+(0,1);
 \draw[wei] (11.8,0) +(0,-1)  .. controls (12.8,0) ..  node[below, at start]{$\la_{k+1}$} +(0,1);
\node at (9.1,-.2){$\displaystyle \delta_{i,j}\sum_{a+b+1=\la^i_k} $}  ;
 \end{tikzpicture}\\
\begin{tikzpicture}[very thick]
\node at (-1.5,0) {=};
      \draw (.5,0)  +(1,-1) -- +(-1,1) node[at start,below]{$i$};
      \draw (.5,0) +(-1,-1) -- +(1,1) node [at start,below]{$j$};
      \draw[wei] (.2,0)  +(0,-1) .. controls (1.2,0) .. node[below, at start]{$\la_k$}  +(0,1);
      \draw[wei] (.8,0) +(0,-1) .. controls (1.8,0) ..  node[below, at start]{$\la_{k+1}$}+(0,1);   
\node at (2,0) {$+ $};
      \draw (6.2,0)  +(1,-1) -- +(1,1) node[midway,circle,fill,inner sep=2.5pt,label=right:{$a$}]{} node[at start,below]{$i$};
      \draw (6.2,0) +(-1,-1) -- +(-1,1) node[midway,circle,fill,inner sep=2.5pt,label=left:{$b$}]{} node [at start,below]{$j$};
      \draw[wei] (5.9,0) +(0,-1) -- node[below, at
      start]{$\la_k$}+(0,1);
 \draw[wei] (6.5,0) +(0,-1) -- node[below, at start]{$\la_{k+1}$} +(0,1);
\node at (3.6,-.2){$\displaystyle \delta_{i,j}\sum_{a+b+1=\la^i_{k}+\la^i_{k+1}} $}  ;
 \end{tikzpicture}
  \end{multline*}
and the second from 
\begin{equation*}
  \begin{tikzpicture}[very thick,baseline]
    \draw (-2.8,0)  +(0,-1) .. controls (-1.2,0) ..  +(0,1) node[below,at start]{$i$};
       \draw[wei] (-1.3,0)  +(0,-1) .. controls (-2.9,0) ..  +(0,1)
       node[below,at start]{$\la_k$};
       \draw[wei] (-.7,0)  +(0,-1) .. controls (-2.3,0) ..  +(0,1) node[below,at start]{$\la_{k+1}$};
           \node at (-.3,0) {=};
\draw[wei] (2.7,0)  +(0,-1) .. controls (1.1,0) ..  +(0,1) node[below,at
    start]{$\la_{k}$};
    \draw[wei] (3.3,0)  +(0,-1) -- +(0,1) node[below,at
    start]{$\la_{k+1}$};
       \draw[wei] (-.7,0)  +(0,-1) .. controls (-2.3,0) ..  +(0,1) node[below,at start]{$\la_{k+1}$};
       \draw (1.2,0)  +(0,-1) .. controls (3.2,0) .. +(0,1) node[below,at start]{$i$};
       \fill (2.68,0) circle (3pt) node[left=3pt]{$\la^i_{k+1}$};
           \node at (4.7,0) {=};
\draw[wei] (8.5,0)  +(0,-1)-- +(0,1) node[below,at
    start]{$\la_{k}$};
    \draw[wei] (9.3,0)  +(0,-1) -- +(0,1) node[below,at
    start]{$\la_{k+1}$};
       \draw (7.2,0)  +(0,-1) -- +(0,1) node[below,at start]{$i$};
       \fill (7.2,0) circle (3pt) node[left=3pt]{$\la^i_{k}+\la^i_{k+1}$};
  \end{tikzpicture}.
\end{equation*}
This further induces a map $\alg^{\bla'}\to \alg^{\bla}$ since it
sends violated diagrams to violated diagrams.

That the image lies in $e_{\bla'}\tilde{\alg}^{\bla}e_{\bla' }$ is
clear from the definition.  
Furthermore, the map $\tilde{\alg}^{\bla'}\to
e_{\bla'}\tilde{\alg}^{\bla}e_{\bla' }$ sends the basis  $B$
in $\tilde{\alg}^{\bla'}$ to the intersection of the same basis with
$e_{\bla'}\tilde{\alg}^{\bla}e_{\bla' }$.  Thus, it is an
isomorphism.  

Finally, we must show that this remains an isomorphism when we pass to
the map $\alg^{\bla'}\to
e_{\bla'}\alg^{\bla}e_{\bla' }$.  That is, we must show that any
violated diagram $d$  is the image of a sum of violated diagrams.  This
is achieved by an argument very similar to Lemma \ref{add-red}: if $d$ is
not the image of a diagram under the splitting, then there must be a
bigon or triangle inside the region between the $k$ and $k+1$st red
strands with one side formed by one of the strands.  We can use Lemma
\ref{modulo-smaller}(1) for a triangle, or Lemma
\ref{modulo-smaller}(3) for a bigon in order to remove these features
from between the two strands, modulo diagrams with fewer crossings
Furthermore, by locality, these 
operations don't change whether the diagram is violated. Thus, writing
$d$ as a sum of 
violated diagrams with a minimal number of crossings and none between
the $k$th and $k+1$st strands, we see that $d$ is in the image of the
violating ideal under the map $\tilde{\alg}^{\bla'}\to
e_{\bla'}\tilde{\alg}^{\bla}e_{\bla' }$, so we have the desired isomorphism.
\end{proof}

\subsection{The double tensor product algebras}
\label{sec:double-tens-prod}

We'll give a presentation of a Morita equivalent algebra to
$\alg^\bla$.  This involves a ``doubled'' generalization of Stendhal diagrams
which roughly includes both the original Stendhal diagrams and
morphisms from $\tU$.  More formally.  
\begin{defn}
  A {\bf blank double Stendhal diagram} is a collection of
  finitely many oriented curves in
  $\R\times [0,1]$. Each curve is either
  \begin{itemize}
  \item colored red and labeled with a dominant weight of $\fg$, or
  \item colored black and labeled with $i\in \Gamma$ and decorated with finitely many dots.
  \end{itemize}
and has the same local restrictions as a Stendhal diagram.  However
only the red strands are constrained to be oriented downwards, and the
black strands are allowed to close into circles, self-intersect, etc.

Blank double Stendhal diagrams divide their complement in $\R^2\times [0,1]$ into finitely
many connected components, and we define a {\bf double Stendhal
  diagram} (DSD) to be a blank DSD together with a labeling of these regions
by weights consistent with the rules 
\begin{equation*}
  \tikz[baseline,very thick]{
\draw[wei,postaction={decorate,decoration={markings,
    mark=at position .5 with {\arrow[scale=1.3]{<}}}}] (0,-.5) -- node[below,at start]{$\la$}  (0,.5);
\node at (-1,0) {$\mu$};
\node at (1,.05) {$\mu+\la$};
}\qquad \qquad 
  \tikz[baseline,very thick]{
\draw[postaction={decorate,decoration={markings,
    mark=at position .5 with {\arrow[scale=1.3]{<}}}}] (0,-.5) -- node[below,at start]{$i$}  (0,.5);
\node at (-1,0) {$\mu$};
\node at (1,.05) {$\mu-\al_i$};
}
\end{equation*}
Since this labeling is fixed as soon as one region is labeled, we will
typically not draw in the weights in all regions in the interest of
simplifying pictures.
\end{defn}
Any Stendhal diagram is also a blank double Stendhal diagram, but not {\it
  vice versa}. For example,
\[  a'=
\begin{tikzpicture}[baseline,very thick]
\draw [postaction={decorate,decoration={markings,
    mark=at position .7 with {\arrow[scale=1.3]{>}}}}] (-.5,-1) to[out=90,in=-90] node[below,at start]{$i$} (-1,0)
  to[out=90,in=90] node[below,at end]{$i$} (1,-1);
  \draw [postaction={decorate,decoration={markings,
    mark=at position .4 with {\arrow[scale=1.3]{<}}}}] (.5,-1) to[out=90,in=-120] node[below,at start]{$j$} (1,.6)
  to[out=60,in=90] (1.3,.6) to [out=-90,in=-60] (1,.4)to[out=120,in=-90]  node[above,at end]{$j$} (1,1);
  \draw[postaction={decorate,decoration={markings,
    mark=at position .5 with {\arrow[scale=1.3]{<}}}}]  (.5,1) to[out=-90,in=-90] node[above,at start]{$i$} node[above,at end]{$i$}(0,1);
  \draw[wei,postaction={decorate,decoration={markings,
    mark=at position .4 with {\arrow[scale=1]{<}}}}] (-1, -1) to[out=90,in=-90]node[below,at start]{$\la_1$} (-.5,0) to[out=90,in=-90] (-1,1);
  \draw[wei,postaction={decorate,decoration={markings,
    mark=at position .5 with {\arrow[scale=1]{<}}}}] (0,-1) to[out=90,in=-90]node[below,at start]{$\la_2$}
  (-.5,1);
\draw[postaction={decorate,decoration={markings,
    mark=at position .5 with {\arrow[scale=1.3]{<}}}}] (1.5,0) circle (6pt); \node at (1.9,0){$i$};
\end{tikzpicture}
\]
is blank double Stendhal, but not Stendhal.  Similarly, every KL diagram is
a DSD.  There is a unique extension of the degree function of Stendhal
and KL diagrams to DSD's which is compatible with composition.

For the top and bottom of a
double Stendhal diagram, we must record orientation information in
addition to the labels.  Thus, in the list of labels on black strands,
we write  $-i$ for a strand with label $i$ oriented downward and $+i$
when it is oriented upward.    Note that this means that when we
consider consider a usual Stendhal diagram as a DSD, we will only have
elements of $-\Gamma$ at the top and bottom; this convention saves us
from negating everything, and matches better the literature on KLR algebras.
\begin{defn}
  A double Stendhal triple\footnote{Somewhat inaccurately named.}
  (DST) is a
  pair of 
  lists $\Bi\in (\pm \Gamma)^n$, $\bla \in X^+(\fg)^\ell$, a weakly
  increasing function $\kappa\colon [1,\ell]\to [0,n]$, and weights
  $\EuScript{L}$ and $\EuScript{R}$ such
  that \[\EuScript{L}+\sum_{k=1}^\ell\la_k+\sum_{m=1}^n\al_{i_m}=\EuScript{R}.\]
  As usual, we employ the convention that $\al_{-i}=-\al_i$.
\end{defn}
Thus, for the diagram $a'$ above, the
blank double Stendhal triple at the top is  $\Bi=(-i,i,-j),
\bla=(\la_1,\la_2)$ and $\kappa=(1\mapsto 0,2\mapsto 0)$, and for the
bottom it is  $\Bi=(i,-j,-i),
\bla=(\la_1,\la_2)$ and $\kappa=(1\mapsto 0,2\mapsto 1)$.  We haven't
chosen labelings of the regions, but if the leftmost region is labeled
with $\EuScript{L}$, the rightmost must carry $\EuScript{R}=\EuScript{L}+\la_1+\la_2-\al_j$.

We can define {\bf (vertical) composition} for double Stendhal diagrams as
with usual Stendhal diagrams, though we must also require that orientations on strands 
and labels of regions match at bottom of $a$ and top of $b$ to get a
non-zero result for $ab$.

We can also define {\bf horizontal composition} $a\circ b$ of DSD's which
pastes together the strips where $a$ and $b$ live with $a$ to the {\it right} of $b$.  The only compatibility we
require is that $  \EuScript{L}_a=\EuScript{R}
_b$, so that the regions of the new diagram can be labeled
consistently. Of course, this gives a notion of composition of DST's
$h_2h_1$ where $h_m=(\Bi_m,\bla_m,\kappa_m,\EuScript{L}_m,\EuScript{R}_m)$.  In terms of sequences, we take the
concatenations $\Bi=\Bi_1\Bi_2$ and $\bla=\bla_1\bla_2$,
\begin{equation*}
  \kappa(j)=
  \begin{cases}
    \kappa_1(j) & j\leq \ell_1\\
    \kappa_2(j)+n_1 & j>\ell_1,
  \end{cases}
\end{equation*}
and $\EuScript{L}=\EuScript{L}_1,\EuScript{R}=\EuScript{R}_2$, with
the composition being 0 if $  \EuScript{L}_2\neq\EuScript{R}
_1$.

\begin{defn}
  Let $\mathcal{T}$ be the strict 2-category whose 
  \begin{itemize}
  \item objects are weights in $X(\fg)$,
\item 1-morphisms $\mu\to \nu$ are DST's with
  $\EuScript{L}=\mu,\EuScript{R}=\nu$ and composition is given by horizontal
  composition as above.
\item 2-morphisms $h\to h'$ between DST's are $\K$-linear combinations
  of DSD's with $h$ as bottom
  and $h'$ as top, modulo the relations
  \begin{itemize}
  \item all the relations of Figures (\ref{pitch1}--\ref{triple-smart})
  hold for KL diagrams
    thought of as DSD's.
  \item all the relations of (3.1-2) hold for Stendhal diagrams
    thought of DSD's (ignoring labeling of regions).
  \item the further relations and their mirror images through a
    vertical line, which are again independent of labels,
    hold
\newseq
  \begin{equation*}\subeqn\label{redpitch}
    \begin{tikzpicture}[very thick,baseline]
      \draw[postaction={decorate,decoration={markings,
    mark=at position .9 with {\arrow[scale=1.3]{>}}}}] (-3,0)  +(1,-1)
to[out=90,in=0] +(0,0) to[out=180,in=90] +(-1,-1);
      \draw[wei,postaction={decorate,decoration={markings,
    mark=at position .9 with {\arrow[scale=1.3]{<}}}}] (-3,0)  +(0,-1) .. controls (-4,0) ..  +(0,1);
      \node at (-1,0) {=};
      \draw[postaction={decorate,decoration={markings,
    mark=at position .9 with {\arrow[scale=1.3]{>}}}}] (1,0)  +(1,-1)
to[out=90,in=0] +(0,0) to[out=180,in=90] +(-1,-1);
      \draw[wei,postaction={decorate,decoration={markings,
    mark=at position .9 with {\arrow[scale=1.3]{<}}}}] (1,0) +(0,-1) .. controls (2,0) ..  +(0,1);   
 \end{tikzpicture}
  \end{equation*}
  \begin{equation*}\subeqn
    \begin{tikzpicture}[very thick,baseline]
      \draw[postaction={decorate,decoration={markings,
    mark=at position .9 with {\arrow[scale=1.3]{>}}}}] (-3,0)  +(1,-1) -- +(-1,1);
      \draw[postaction={decorate,decoration={markings,
    mark=at position .9 with {\arrow[scale=1.3]{>}}}}] (-3,0) +(-1,-1) -- +(1,1);
      \draw[wei,postaction={decorate,decoration={markings,
    mark=at position .9 with {\arrow[scale=1.3]{<}}}}] (-3,0)  +(0,-1) .. controls (-4,0) ..  +(0,1);
      \node at (-1,0) {=};
      \draw[postaction={decorate,decoration={markings,
    mark=at position .9 with {\arrow[scale=1.3]{>}}}}] (1,0)  +(1,-1) -- +(-1,1) ;
      \draw[postaction={decorate,decoration={markings,
    mark=at position .9 with {\arrow[scale=1.3]{>}}}}] (1,0) +(-1,-1) -- +(1,1);
      \draw[wei,postaction={decorate,decoration={markings,
    mark=at position .9 with {\arrow[scale=1.3]{<}}}}] (1,0) +(0,-1) .. controls (2,0) ..  +(0,1);   
 \end{tikzpicture}
  \end{equation*}
\begin{equation*}\subeqn
    \begin{tikzpicture}[very thick,baseline=2.85cm]
      \draw[wei,postaction={decorate,decoration={markings,
    mark=at position .9 with {\arrow[scale=1.3]{<}}}}] (-3,3)  +(1,-1) -- +(-1,1);
      \draw[postaction={decorate,decoration={markings,
    mark=at position .9 with {\arrow[scale=1.3]{>}}}}] (-3,3)  +(0,-1) .. controls (-4,3) ..  +(0,1);
      \draw[postaction={decorate,decoration={markings,
    mark=at position .9 with {\arrow[scale=1.3]{>}}}}] (-3,3) +(-1,-1) -- +(1,1);
      \node at (-1,3) {=};
      \draw[wei,postaction={decorate,decoration={markings,
    mark=at position .9 with {\arrow[scale=1.3]{<}}}}] (1,3)  +(1,-1) -- +(-1,1);
  \draw[postaction={decorate,decoration={markings,
    mark=at position .9 with {\arrow[scale=1.3]{>}}}}] (1,3)  +(0,-1) .. controls (2,3) ..  +(0,1);
      \draw[postaction={decorate,decoration={markings,
    mark=at position .9 with {\arrow[scale=1.3]{>}}}}] (1,3) +(-1,-1) -- +(1,1);    \end{tikzpicture}
  \end{equation*}
\begin{equation*}\subeqn\label{red-dot-1}
    \begin{tikzpicture}[very thick,baseline]
  \draw[postaction={decorate,decoration={markings,
    mark=at position .9 with {\arrow[scale=1.3]{>}}}}] (-3,0) +(-1,-1) -- +(1,1);
  \draw[wei,postaction={decorate,decoration={markings,
    mark=at position .9 with {\arrow[scale=1.3]{<}}}}](-3,0) +(1,-1) -- +(-1,1);
\fill (-3.5,-.5) circle (3pt);
\node at (-1,0) {=};
 \draw[postaction={decorate,decoration={markings,
    mark=at position .9 with {\arrow[scale=1.3]{>}}}}] (1,0) +(-1,-1) -- +(1,1);
  \draw[wei,postaction={decorate,decoration={markings,
    mark=at position .9 with {\arrow[scale=1.3]{<}}}}](1,0) +(1,-1) -- +(-1,1);
\fill (1.5,0.5) circle (3pt);
    \end{tikzpicture}
  \end{equation*}
  \begin{equation*}\subeqn\label{red-switch}
  \begin{tikzpicture}[very thick,baseline=1.6cm]
    \draw[postaction={decorate,decoration={markings,
    mark=at position .9 with {\arrow[scale=1.3]{>}}}}] (-2.8,0)  +(0,-1) .. controls (-1.2,0) ..  +(0,1) node[below,at start]{$i$};
       \draw[wei,postaction={decorate,decoration={markings,
    mark=at position .9 with {\arrow[scale=1.3]{<}}}}] (-1.2,0)  +(0,-1) .. controls (-2.8,0) ..  +(0,1) node[below,at start]{$\la$};
           \node at (-.3,0) {=};
    \draw[wei,postaction={decorate,decoration={markings,
    mark=at position .9 with {\arrow[scale=1.3]{<}}}}] (2.8,0)  +(0,-1) -- +(0,1) node[below,at start]{$\la$};
       \draw[postaction={decorate,decoration={markings,
    mark=at position .9 with {\arrow[scale=1.3]{>}}}}] (1.2,0)  +(0,-1) -- +(0,1) node[below,at start]{$i$};
          \draw[wei,postaction={decorate,decoration={markings,
    mark=at position .9 with {\arrow[scale=1.3]{<}}}}] (-2.8,3)  +(0,-1) .. controls (-1.2,3) ..  +(0,1) node[below,at start]{$\la$};
  \draw[postaction={decorate,decoration={markings,
    mark=at position .9 with {\arrow[scale=1.3]{>}}}}] (-1.2,3)  +(0,-1) .. controls (-2.8,3) ..  +(0,1) node[below,at start]{$i$};
           \node at (-.3,3) {=};
    \draw [postaction={decorate,decoration={markings,
    mark=at position .9 with {\arrow[scale=1.3]{>}}}}](2.8,3)  +(0,-1) -- +(0,1) node[below,at start]{$i$};
       \draw[wei,postaction={decorate,decoration={markings,
    mark=at position .9 with {\arrow[scale=1.3]{<}}}}] (1.2,3)  +(0,-1) -- +(0,1) node[below,at start]{$\la$};
  \end{tikzpicture}
\end{equation*}
\end{itemize}  
  \end{itemize}
\end{defn}

Note that if $\la,\nu$ is are dominant weights, we have natural map
$R^{\nu+\la}\to R^\nu$ induced by the inclusion of cyclotomic ideals.
We let $\eI_\la\colon R^\nu\modu\to R^{\nu+\la}\modu$ denote the
functor of pullback by these maps.
\begin{thm}\label{thm:T-action}
  There is a representation of $\cT$ in the strict 2-category of
  categories, sending $\mu\mapsto \oplus_\nu R^\nu_\mu\modu$, sending
  the image of $\tU$ to the previously defined action of Theorem
  \ref{cyc-action} and a single red line with label $\la$ to $\eI_\la$.
\end{thm}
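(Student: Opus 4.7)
The plan is to construct the 2-representation by extending the $\tU$-action of Theorem \ref{cyc-action} to an action of the larger 2-category $\cT$. Since $\cT$ is generated by the 1-morphisms already present in $\tU$ together with single red strands, and by the 2-morphisms of $\tU$ together with red--black crossings (red--red crossings do not occur, since red strands cannot cross), the proof splits into: (a) assigning functors to the generating 1-morphisms, (b) assigning natural transformations to the new generating 2-morphisms, and (c) verifying the relations of $\cT$ not already part of $\tU$.

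On 1-morphisms, black strands act via $\fE_i,\fF_i$ on the appropriate $R^\nu\modu$ as in Theorem \ref{cyc-action}, and a red strand labeled $\la$ acts via $\eI_\la\colon R^\nu\modu\to R^{\nu+\la}\modu$. Horizontal composition is consistent since $\eI_\la\eI_{\la'}\cong \eI_{\la+\la'}$ and the $\tU$-action on each $R^\nu\modu$ is itself a 2-representation.

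On 2-morphisms, the key new datum is the red--black crossing. The bimodule that defines $\fF_i$ on $R^\nu\modu$ is the quotient, via the surjection $R^{\nu+\la}\to R^\nu$, of the analogous bimodule defining $\fF_i$ on $R^{\nu+\la}\modu$; this supplies a canonical comparison between $\eI_\la\fF_i$ and $\fF_i\eI_\la$, and I take this to be the crossing in the direction moving the black strand through the red from a non-violating to a non-violating side. Crossings in the opposite direction are forced by relation (\ref{cost}) to equal composition with $y^{\la^i}$, which generates the kernel of $R^{\nu+\la}\to R^\nu$ at the relevant idempotent. Crossings involving an upward-oriented black strand are then obtained by transporting these through the $(\fE_i,\fF_i)$ biadjunction of Proposition \ref{quotient-is-cyc}.

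For relations, the $\tU$-internal relations transfer by locality from Theorem \ref{cyc-action}. The Stendhal relations (\ref{red-triple-correction})--(\ref{cost}) with downward-oriented black strands are the defining relations of $T^\bla$ and are most easily checked in the faithful polynomial representation of Lemma \ref{action}; after identifying our bimodule definitions with this model, each relation reduces to an elementary polynomial identity. The mixed-orientation relations (\ref{redpitch})--(\ref{red-switch}) follow from the downward versions by adjunction transport. The main technical obstacle is the careful bookkeeping of dot and scalar corrections produced by the biadjunction units and counits when passing a red strand; the structural content, however, is that every such correction is already forced by relation (\ref{cost}) and the $\tU$-relations, so no further obstruction arises.
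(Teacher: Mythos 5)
Your overall architecture mirrors the paper's, but run in the opposite direction, and the one step you dispatch with a wave is exactly where the content sits. The paper assigns the \emph{identity} natural transformation to the two upward-oriented red/black crossings (this is legitimate because $\eI_\la$ and $\fE_i$ are pullbacks along the two sides of a commuting square, so the functors literally agree on underlying vector spaces); with that choice the mixed-orientation relations (\ref{redpitch})--(\ref{red-switch}) hold for free, and the entire burden of proof is shifted onto identifying what the \emph{downward} crossings, which are now determined by rotating the upward ones through the biadjunction, actually are. That identification is the computation \eqref{EI-IE} (showing the rotated crossing is the canonical projection $\fF_i\eI_\la\to\eI_\la\fF_i$) together with a duality argument using (\ref{lollipop1}) and the bubble slides (showing the other rotation is induced by $y^{\la^i}$), from which (\ref{cost}) follows.

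You instead define the downward crossings directly as the canonical projection and the $y^{\la^i}$-map, and then assert that the mixed-orientation relations ``follow from the downward versions by adjunction transport.'' This is the gap: (\ref{redpitch})--(\ref{red-switch}) are not consequences of the downward relations. They are independent compatibility conditions between the crossings and the units/counits of the $(\fE_i,\fF_i)$ biadjunction --- concretely, they assert that the two distinct rotations of your downward crossing agree, and that the resulting upward crossings are mutually inverse. Nothing in your setup guarantees this; it must be computed, and that computation is precisely the one the paper performs (in the reverse direction). A secondary, related omission: you never check that multiplication by $y^{\la^i}$ on $\tilde{\fF}_iM$ descends to a well-defined map $\eI_\la\fF_i\to\fF_i\eI_\la$ between the two quotients, i.e.\ that it carries one kernel into the other; the paper obtains this for free from the duality argument rather than verifying it directly. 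If you carry out the rotation computation honestly, your proof closes up and becomes the paper's proof read backwards; as written, the assertion that ``no further obstruction arises'' is exactly the obstruction.
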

\begin{proof}
  The action of $\tU$ on the same category defines how all diagrams
  only involving black strands act, and checks all of their
  relations.  Thus, we need only define how the diagrams involving red
  strands act.  Luckily, this is quite easy: the functors $   \eI_\la$
  of pullback and 
  and $\fE_i$ of restriction obviously commute, since they are
  pullbacks along the two sides of a commuting square.  Thus, the
  morphisms $ a= \tikz[very thick,baseline=-4pt,scale=.3]{
      \draw[postaction={decorate,decoration={markings,
    mark=at position .9 with {\arrow[scale=1.3]{>}}}}] (-3,0)  +(1,-1) -- +(-1,1) node[at start,below]{$i$};\draw[wei,postaction={decorate,decoration={markings,
    mark=at position .9 with {\arrow{<}}}}] (-3,0)
+(-1,-1) -- node[at start,below]{$\la$} +(1,1);
     }$ and $ b= \tikz[very thick,baseline=-4pt,scale=.3]{
      \draw[postaction={decorate,decoration={markings,
    mark=at position .9 with {\arrow[scale=1.3]{>}}}}] (-3,0)  +(-1,-1) -- +(1,1) node[at start,below]{$i$};\draw[wei,postaction={decorate,decoration={markings,
    mark=at position .9 with {\arrow{<}}}}] (-3,0)
+(1,-1) -- node[at start,below]{$\la$}+(-1,1);
     }$  are assigned to the identity map on the underlying vector
     spaces.

The relations (\ref{redpitch}--\ref{red-switch}) follow immediately from this assignment.  Thus,
we need only calculate where this sends the diagrams $ b'= \tikz[very thick,baseline=-4pt,scale=.3]{
      \draw[postaction={decorate,decoration={markings,
    mark=at position .9 with {\arrow[scale=1.3]{<}}}}] (-3,0)  +(1,-1) -- +(-1,1) node[at start,below]{$i$};\draw[wei,postaction={decorate,decoration={markings,
    mark=at position .9 with {\arrow{<}}}}] (-3,0)
+(-1,-1) -- node[at start,below]{$\la$} +(1,1);
     }$ and $a'=  \tikz[very thick,baseline=-4pt,scale=.3]{
      \draw[postaction={decorate,decoration={markings,
    mark=at position .9 with {\arrow[scale=1.3]{<}}}}] (-3,0)  +(-1,-1) -- +(1,1) node[at start,below]{$i$};\draw[wei,postaction={decorate,decoration={markings,
    mark=at position .9 with {\arrow{<}}}}] (-3,0)
+(1,-1) -- node[at start,below]{$\la$}+(-1,1);
     }$  and check the relations (3.1-2).  

We can write $a'$ and $b'$ in terms of the morphisms $a$ and $b$ above and the
adjunctions from $\tU$.  We can factor 
$a'$ as the sequence $\fF_i  \eI_\la\to \fF_i \eI_\la\fE_i\fF_i\to  \fF_i \fE_i
\eI_\la\fF_i\to \eI_\la\fF_i$.  Pictorially,
\[a\,=\,\tikz[very thick,baseline]{\draw[postaction={decorate,decoration={markings,
    mark=at position .8 with {\arrow[scale=1.3]{<}}}},wei] (0,-.5) to[out=90,in=-90] node[below,at start]{$j$} (1,.5);\draw[postaction={decorate,decoration={markings,
    mark=at position .8 with {\arrow[scale=1.3]{<}}}},postaction={decorate,decoration={markings,
    mark=at position .2 with {\arrow[scale=1.3]{<}}}}] (-.5,-.5)
  to[out=90,  in=180] node[below,at start]{$i$} (0,.2) to[out=0, in=180] (1,-.2) to[out=0,in=-90] (1.5,.5); }
\]  Consider a $T^\bla$-module $M$ (which we will sometimes consider
as a module over $\tilde{T}^\bla$). Both the modules $\fF_i  \eI_\la M$
and $\eI_\la\fF_i M$ are quotients of $\tilde{\fF}_iM$, the induction
of $M$ considered as a module over $\tilde{\alg}^\bla$.  The identity map
$\tilde{\fF}_iM\to \tilde{\fF}_iM$ induces a natural projection
$c\colon \fF_i\eI_\la\to
\eI_\la\fF_i$.  We claim that this is the map induced by $a'$.  In
order to represent the functors that appear diagrammatically, we use
blue dots to represent the strands created by an $\fF_i$ or eaten by
an $\fE_i$, and use a dashed line to denote the
moment where we do the pullback.
Since we consider right (i.e. bottom) modules, the order these
functors appear is reading down the page.
\begin{equation}\label{EI-IE}
  \begin{tikzpicture}
    \node[scale=.9] (a) at (-4,0) {
\tikz[very thick] {
\draw (-1,0) -- (-1,-2);
\draw (.5,0) -- (.5,-2);
\draw (1,0) -- (1,-2);
\draw (1.5,-1) -- node[circle,fill=blue,inner sep=2pt, at start]{} (1.5,-2);
\node[fill=white,draw=black,inner xsep=28pt] at (0,0){$m$};
\node[fill=white,draw=black,inner xsep=36pt] at (.25,-1.5){$a$};
\node at (-.25,-.6){$\cdots$};
\draw[red,dashed] (-1.2,-.8) -- node [right, at end]{$\la$} (1.7,-.8);
}
};
    \node[scale=.9] (b) at (0,0) {
\tikz[very thick] {
\draw (-1,0) -- (-1,-2);
\draw (.5,0) -- (.5,-2);
\draw (1,0) -- (1,-2);
\draw (1.5,-1) -- node[circle,fill=blue,inner sep=2pt, at start]{} (1.5,-2);
\node[fill=white,draw=black,inner xsep=28pt] at (0,0){$m$};
\node[fill=white,draw=black,inner xsep=36pt] at (.25,-1.5){$a$};
\node at (-.25,-.6){$\cdots$};
\draw[red,dashed] (-1.2,-.8) -- node [right, at end]{$\la$} (1.7,-.8);
\draw (1.5,0) -- node[circle,fill=blue,inner sep=2pt, at start]{} node[circle,fill=blue,inner sep=2pt, at end]{} (1.5,-.5);
}
};
    \node[scale=.9] (c) at (4,0) {
\tikz[very thick] {
\draw (-1,0) -- (-1,-2);
\draw (.5,0) -- (.5,-2);
\draw (1,0) -- (1,-2);
\draw (1.5,-1) -- node[circle,fill=blue,inner sep=2pt, at start]{}  (1.5,-2);
\node[fill=white,draw=black,inner xsep=28pt] at (0,0){$m$};
\node[fill=white,draw=black,inner xsep=36pt] at (.25,-1.5){$a$};
\node at (-.25,-.9){$\cdots$};
\draw[red,dashed] (-1.2,-.5) -- node [right, at end]{$\la$} (1.7,-.5);
\draw (1.5,0) --  node[circle,fill=blue,inner sep=2pt, at start]{}
node[circle,fill=blue,inner sep=2pt, at end]{} (1.5,-.7);
}
};
   \node[scale=.9]  (d) at (8,0) {
\tikz[very thick] {
\draw (-1,0) -- (-1,-2);
\draw (.5,0) -- (.5,-2);
\draw (1,0) -- (1,-2);
\draw (1.5,0) -- node[circle,fill=blue,inner sep=2pt, at start]{}  (1.5,-2);
\node[fill=white,draw=black,inner xsep=28pt] at (0,0){$m$};
\node[fill=white,draw=black,inner xsep=36pt] at (.25,-1.5){$a$};
\node at (-.25,-.9){$\cdots$};
\draw[red,dashed] (-1.2,-.6) -- node [right, at end]{$\la$} (1.7,-.6);
}
};
\draw[->,very thick] (a) -- (b);
\draw[->,very thick] (b) -- (c);
\draw[->,very thick] (c) -- (d);
  \end{tikzpicture}
\end{equation}

On the other hand, the map $y^{\la^i}\colon \tilde{\fF}_iM\to
\tilde{\fF}_iM$ induces a map $d\colon \eI_\la\fF_i\to \fF_i  \eI_\la$; we
claim that this coincides with $b'$.
In order to show this, we note that the map $b'$ is the dual of $a'$ under the natural pairing between
$\eF_i \eI_\la M$ and $\eF_i \eI_\la M^\star$ and similarly with the
functors in the other order.  
From the relation (\ref{lollipop1}) and the bubble slides of
\cite[\S 3.1.2]{KLIII}, we see that decreasing all labeling of
regions by $\la$ and adding $\la^i$ dots to each bubble and any loop
formed by the rightmost strand just applies the projection
$R^{\nu+\la}\to R^\nu$.  That is, given two elements  
$m\otimes p\in \eF_i \eI_\la M, m'\otimes p'\eF_i \eI_\la M^\star$, we
have that
\[    
\tikz[very thick,baseline=-30pt] {
\draw (-1,0) -- (-1,-2);
\draw (.5,0) -- (.5,-2);
\draw (1,0) -- (1,-2);
\draw[postaction={decorate,decoration={markings,
    mark=at position .3 with {\arrow[scale=1.3]{<}}}}] (1.5,-1) to[out=90,in=180] (1.9,-.6) to[out=0,in=90] (2.3,-1)
to[out=-90,in=0] node[pos=.1,circle,fill=black,inner
sep=2pt,label=right:{$\la^i$}]{}  (1.9,-1.4) to [out=180,in=-90] (1.5,-1);
\node[fill=white,draw=black,inner xsep=28pt] at (0,0){$m$};
\node[fill=white,draw=black,inner xsep=36pt] at (.25,-1){$p\dot{p}'$};
\node at (-.25,-.6){$\cdots$};
\draw[red,dashed] (-1.2,-.4) -- node [left, at start]{$\la$} (1.9,-.4);
\draw[red,dashed] (-1.2,-1.6) -- node [left, at start]{$\la$}
(1.9,-1.6);
\node[fill=white,draw=black,inner xsep=28pt] at (0,-2){$m'$};
}=\tikz[very thick,baseline=-30pt] {
\draw (-1,0) -- (-1,-2);
\draw (.5,0) -- (.5,-2);
\draw (1,0) -- (1,-2);
\draw[postaction={decorate,decoration={markings,
    mark=at position .3 with {\arrow[scale=1.3]{<}}}}] (1.5,-1) to[out=90,in=180] (1.9,-.6) to[out=0,in=90] (2.3,-1)
to[out=-90,in=0] (1.9,-1.4) to [out=180,in=-90] (1.5,-1);
\node[fill=white,draw=black,inner xsep=28pt] at (0,0){$m$};
\node[fill=white,draw=black,inner xsep=36pt] at (.25,-1){$p\dot{p}'$};
\node at (-.25,-.6){$\cdots$};
\node[fill=white,draw=black,inner xsep=28pt] at (0,-2){$m'$};
}\]
This shows that $\langle m\otimes p,d(m'\otimes p')\rangle =\langle
a'(m\otimes p),m'\otimes p'\rangle$, so we must have $d=b'$.  

Thus, we have that the compositions $a'b'$ and $b'a'$ are both
$y^{\la^i}$.  This confirms \eqref{cost}.  The relations (\ref{red-triple-correction}--\ref{red-dot}) are
confirmed by same
the  calculations as the proof of Lemma
\ref{action}.
\end{proof}

 As with a usual
Stendhal diagram, we call a DSD {\bf violated} if it factors through a
DST with $\kappa(0)>1$, that is, which has a black strand (of either
orientation) at the far left.  

\begin{defn}\label{defn-double-tensor}
  Let the {\bf double tensor product algebra} $D\alg^\bla$ be the
  $\K$-algebra spanned by DSD's with $\EuScript{L}=0$ and red lines labeled by
  $\bla$, modulo the relations of $\cT$ and all violated diagrams.  
\end{defn}
We let $D\cata^\bla=D\alg^\bla\modu$ be the category of finite
  dimensional representations of equivalently
  $D\alg^\bla$ graded by $\Z$.
We wish to show that this category carries a categorical
$\fg$-action.  Consider a $1$-morphism $u$ in $\tU$.  This is a word
in $\eE_i$ and $\eF_i$, which we can consider as a DST with no red
lines.  Let $e_u$ be the idempotent in $D\alg^\bla$ which acts by the
identity on all DST's which end in $u\colon \mu\to \nu$ (that is, they are a
horizontal composition $u\circ t$ for a 1-morphism $t$ in $\cT$) and
by 0 all others.  
\begin{defn}
  Let $\beta'_u$ be the $D\alg^\bla_\mu$-$D\alg^\bla_\nu$ bimodule $e_u \cdot
  D\alg^\bla $. The left and right actions of $D\alg^\bla$ on this
  space are by the formula $a\cdot h\cdot b=(1_u\circ a)hb$.  
\end{defn}
This definition is perhaps a bit clearer from the schematic diagram
\begin{equation}\label{Fu-schematic}
  \tikz[very thick,baseline]{\draw[wei] (0,-1) to[out=90,in=-90] (-.5,1);
\draw [postaction={decorate,decoration={markings,
    mark=at position .5 with {\arrow{<}}}}] (.25,-1) to[out=90,in=-90] (0,1);
\draw [postaction={decorate,decoration={markings,
    mark=at position .8 with {\arrow{<}}}}] (.5,-1) to[out=90,in=-90]
(2.75,1);
\draw [postaction={decorate,decoration={markings,
    mark=at position .8 with {\arrow{>}}}}] (.75,1) to[out=-90,in=-90] (2.5,1);
\draw [postaction={decorate,decoration={markings,
    mark=at position .5 with {\arrow{<}}}}] (2.25,-1) to[out=90,in=-90] (3.75,1);
\draw [postaction={decorate,decoration={markings,
    mark=at position .65 with {\arrow{<}}}}] (2.5,-1) to[out=90,in=-90] (-.25,1);
\draw [postaction={decorate,decoration={markings,
    mark=at position .45 with {\arrow{<}}}}] (2.75,-1) to[out=90,in=-90] (1.25,1);
\draw[wei] (.75,-1) to[out=90,in=-90] (1,1);
\node at (1.5, -.7){$\cdots$};
\node at (.5, .7){$\cdots$};
\node at (3.2, .7){$\cdots$};
\draw[decorate,decoration=brace,-] (3,-1.25) --
    node[below,midway]{$D\alg^\bla_\nu$-action} (-.25,-1.25);
\draw[decorate,decoration=brace,-] (-.75,1.25) --
    node[above,midway]{$D\alg^\bla_\mu$-action} (1.5,1.25);
\draw[decorate,decoration=brace,-] (2.25,1.25) --
    node[above,midway]{$u$} (4,1.25);
  }
\end{equation}

Any 2-morphism $\phi\colon u\to v$ in $\tU$ can be considered as a
DSD, and it defines a map
of bimodules $\beta_\phi'=(\phi\circ 1)(-)\colon \fF_u\to \fF_v$; in the diagram
\eqref{Fu-schematic}, this action attaches the 2-morphism to the group of
strands in the upper right.
Since DSD's satisfy all the relations of $\tU$, it immediately follows
that:
\begin{thm}
\label{full-action-prime}
There is a representation of $\tU$ which sends \[\mu\mapsto
D\cata^\bla_\mu\qquad u \mapsto \beta_u' \qquad \phi \mapsto \beta_\phi'\] 
\end{thm}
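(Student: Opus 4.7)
The plan is to verify that all the structural compatibilities follow from the fact that $D\alg^\bla$ was constructed precisely so that its defining relations contain those of $\cT$, and hence of $\tU$, and that the action of 2-morphisms is local. Thus essentially everything to check is formal, once the basic well-definedness issues are resolved.

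First I would verify that the bimodule $\beta_u'$ is well-defined: since $\cT$'s relations do not create or destroy the rightmost block of strands forming the 1-morphism $u$ (they are local relations on 2-morphisms), the subspace $e_u \cdot D\alg^\bla$ is closed under the left action $a \cdot h = (1_u \circ a)h$ as claimed, and multiplication on the right is the obvious right action on $D\alg^\bla$. The tensoring behaviour $\beta_{u \circ v}' \cong \beta_u' \otimes_{D\alg^\bla_\xi} \beta_v'$ for $v:\mu \to \xi$, $u:\xi \to \nu$ follows because a DSD whose top reads $u \circ v$ decomposes uniquely (up to relations) by slicing at a horizontal level immediately to the left of the ``$u$-block'' at the top right; the resulting two pieces pair under the middle weight $\xi$. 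The only subtlety here is to rule out that such a slicing fails for diagrams containing features which cross the slicing line --- but this never occurs since $u, v$ are read off at the very top and the rest of the diagram is to the lower-left.

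Next I would confirm that 2-morphisms assemble into bimodule maps. For $\phi: u \to v$ in $\tU$, interpreting $\phi$ as a DSD (glued at the top right of the diagram \eqref{Fu-schematic}), the map $\beta_\phi'=(\phi \circ 1)(-)$ commutes with the right action by construction, and with the left action because the left action occurs on the lower portion of the diagram, which is disjoint from where $\phi$ is attached; locality of the $\cT$-relations guarantees that the two operations of attaching on disjoint pieces commute strictly. Functoriality $\beta_{\phi' \circ \phi}' = \beta_{\phi'}' \circ \beta_\phi'$ is similarly immediate by stacking.

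Finally, I would observe that every defining relation of $\tU$ (the KLR relations \eqref{first-QH}--\eqref{triple-smart}, the biadjunction and cyclicity data \eqref{pitch1}--\eqref{switch-2}, the bubble relations \eqref{zero-bubble}--\eqref{inv}, and the scalar cancellations \eqref{opp-cancel1}--\eqref{opp-cancel2}) is by definition among the relations defining 2-morphisms of $\cT$, and hence is a relation in $D\alg^\bla$ after quotienting by the violating ideal. Therefore $\beta_\phi' = \beta_{\phi'}'$ whenever $\phi = \phi'$ in $\tU$, so the assignments genuinely descend to a strict 2-functor. The hardest step, though still routine, is the second: one must be careful that ``attaching at top right'' realizes the derived tensor product on the nose rather than just up to quasi-isomorphism, but since $\beta_u'$ is a summand of a free right module (it is itself a direct summand of $D\alg^\bla$ as a right module, cut out by the idempotent $e_u$), the ordinary tensor product suffices and there is no derived subtlety to address here. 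With these three points established, the theorem follows.
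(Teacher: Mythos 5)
Your proposal is correct and follows essentially the same route as the paper, which deduces the theorem immediately from the fact that DSD's satisfy all the relations of $\tU$ by the definition of $\cT$, with locality of the relations guaranteeing that attaching a 2-morphism at the top right commutes with the bimodule actions. Your additional checks (closure of $e_u\cdot D\alg^\bla$ under the left action, compatibility with horizontal composition of 1-morphisms) are the routine verifications the paper leaves implicit, and your closing remark about derived tensor products is harmless but unnecessary since the statement involves only honest bimodules.
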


%\subsection{The module category structure}\label{sec:module-full}

\subsection{A Morita equivalence}

Note that we have a natural map $f\colon T^\bla\to D\alg^\bla$ given by
considering a Stendhal diagram as a double Stendhal diagram; the image
of the identity in $T^\bla$ is an idempotent $e_-\in  D\alg^\bla$.
\begin{lemma}
  The map $f$ induces an isomorphism $T^\bla\cong e_-D\alg^\bla e_-$.
\end{lemma}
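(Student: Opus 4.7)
The plan is to mirror the structure of Lemma~\ref{PR-iso} and Proposition~\ref{R-Morita}: establish surjectivity of $f$ by a Morse-theoretic straightening argument that rewrites any DSD with only downward termini as a combination of Stendhal diagrams modulo violation, and then establish injectivity by checking that the relations of $\cT$ do not collapse the basis $B$ from Proposition~\ref{basis} any further than the relations already present in $T^\bla$.

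For surjectivity, take $d\in e_-D\alg^\bla e_-$ and induct on the number of cups plus caps in $d$. If this count is zero, the orientation is constant along every strand, so all black strands point downward throughout and $d$ is itself a Stendhal diagram in the image of $f$. Otherwise, choose an innermost cup--cap feature (one whose enclosed region contains no further critical points of the height function, and no strand dots). The possibilities are: an isolated bubble, which we slide to the far-left region using \eqref{bubble-slide1}--\eqref{bubble-slide2} where it becomes a violated diagram and dies in $D\alg^\bla$; a cup--cap pair on strands of the same color, which we eliminate using \eqref{switch-1}--\eqref{switch-2} at the cost of bubbles (handled as above) and diagrams with fewer crossings; or a cup--cap pair on strands of different colors, where the pitchfork moves \eqref{pitch1}--\eqref{pitch2} combined with the opposite-orientation cancellations \eqref{opp-cancel1}--\eqref{opp-cancel2} and the red/black cost relation \eqref{cost} produce terms with strictly fewer cups and caps. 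Throughout, the top and bottom of the diagram remain in $e_-$, so the induction goes through and we end with a sum of Stendhal diagrams plus violated contributions.

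For injectivity, suppose $a \in T^\bla$ has $f(a)=0$, i.e.\ there is an equality, in the free span of DSDs modulo the relations of $\cT$, rewriting $a$ as a sum of violated DSDs. Expanding $a$ in the basis $B$ of Proposition~\ref{basis}, I want to conclude that each coefficient is zero (equivalently, that $a$ already lies in the violating ideal of $\tilde T^\bla$). The key observation is that every relation of $\cT$ that involves only downward strands at its top and bottom is either one of the relations defining $\tilde T^\bla$ or a consequence of them combined with violation: indeed, relations \eqref{pitch1}--\eqref{switch-2}, \eqref{opp-cancel1}--\eqref{opp-cancel2}, and \eqref{redpitch}--\eqref{red-switch} either involve upward strands that cannot appear at the top/bottom in $e_-D\alg^\bla e_-$, or reduce to Stendhal relations. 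Thus any reduction of $f(a)$ to zero can, after applying the straightening of the first paragraph to every intermediate term, be converted into a reduction of $a$ to zero inside $T^\bla$.

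The main obstacle is the surjectivity induction: one must choose the complexity statistic (e.g.\ the lexicographic pair of \emph{number of critical points of the height function} and \emph{number of crossings}) delicately so that each application of the cup/cap, pitchfork, or cost relations strictly decreases it. In particular, one must verify that a cup--cap elimination does not inadvertently introduce new cups or caps, and that every intermediate bubble actually reaches the leftmost region when slid—this last point uses that the sliding identities \eqref{bubble-slide1}--\eqref{bubble-slide2} preserve total degree, so the number of bubbles created along the way is finite and each can be transported leftward into the violating region in turn.
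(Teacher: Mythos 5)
Your surjectivity argument is in the right spirit and essentially matches the paper's: one straightens an arbitrary DSD with downward top and bottom by removing self-intersections and double crossings, sliding bubbles to the far left where they are killed by the violation relation, until only a Stendhal diagram remains. The bookkeeping you propose (lexicographic induction on critical points and crossings) is more elaborate than necessary but workable.

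The injectivity argument, however, has a genuine gap, and it is the essential difficulty of the lemma. You claim that ``every relation of $\cT$ that involves only downward strands at its top and bottom is either one of the relations defining $\tilde T^\bla$ or a consequence of them combined with violation,'' and that a rewriting of $f(a)$ to a sum of violated diagrams can therefore be converted into a rewriting of $a$ inside $T^\bla$. This does not follow: the relations of $\cT$ are local, and a chain of relations carrying $f(a)$ to zero will in general pass through intermediate DSDs containing upward strands, cups, caps and bubbles in the \emph{interior} of the diagram, even though the top and bottom stay downward. Straightening each intermediate term is itself done with $\cT$-relations, and two consecutive straightened terms need not be related by any relation of $T^\bla$ --- so you have not produced a rewriting inside $T^\bla$. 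More fundamentally, no purely syntactic inspection of the relations can rule out that the quotient by the larger set of relations collapses the subalgebra further; the paper explicitly flags this issue (``new relations could appear when the other objects are added''), which is exactly why every non-degeneracy statement in the paper is proved by exhibiting a faithful representation rather than by manipulating diagrams.

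That is what the paper does here: injectivity is obtained either by citing the 2-representation results of \cite{WebCBerr}, or by running the machinery of Section \ref{sec:nondegeneracy} again --- one introduces the intermediate category $\cT^i_-$ allowing upward strands of a single color, extends the quiver flag category action $\bLa_\mu$ to it, deduces a basis and a Morita equivalence with $T^\bla$, and thereby obtains an action of $\cT$ on $\oplus_\bla\cata^\bla$ in which left multiplication by any nonzero $a\in T^\bla$ acts nontrivially, forcing $f(a)\neq 0$. Your proof needs this (or an equivalent faithfulness input) to close the injectivity step.
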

\begin{proof}
  We first show that any diagram $d$ of  $e_-D\alg^\bla e_-$ is in the
  image of $f$.  As in the proof of \cite[3.9]{KLIII}, we can apply
  the relations of $\cT$ to write $d$ as a sum of diagrams with fewer strands that
  intersect twice or self-intersect until  neither of these occurs,
  and we have slid all bubbles to the far left.  Any diagram with a
  bubble at far left is 0, so we are left with only diagrams with no
  bubbles, and all strands connect top to bottom.  That is, we are
  left only with Stendhal diagrams.  

  Now, we need to show that the map is injective.   
If we use \cite[4.5]{WebCBerr}, then we can apply the argument of
Lemma \ref{PR-iso} to show that any element of the kernel can be
rewritten in terms of the cyclotomic ideal.  

We can also sketch out a proof of this result that follows the path of
Section \ref{sec:nondegeneracy} and thus keeps this paper
self-contained.  Since the results are, on the whole, very similar, we
will spare the reader most of the details.

As before, we can define an intermediate category $\cT_-^i$ and
quotient algebra $D^i\alg^\bla $
spanned by DSD's where only strands with label $i$ can be upward.  The
action of Section \ref{sec:categ-minim-parab} can be extended to
$\cT_-^i$ analogously to the action of Theorem \ref{thm:T-action}. If
we let $\bLa^{\la}_{\nu} $ denote the ring $\bLa_\nu$ attached to the
weight $\nu$ when fix $\la$ at the start of the construction, then the
relations \eqref{grass-rels} show that we have a projection map
$\bLa^{\la+\mu}_{\nu+\mu}\to \bLa^{\la}_{\nu}$.  The action of $\cT_-^i$ sends
$\eI_\mu$ to the pullback map under this homomorphism.

This action allows us to show the analogue of Corollary
\ref{i-nondegenerate} in this case, that the diagrams given by
$B_{i,G,H}$ with the red strands adding introducing a minimal number
of crossings is a basis for the Hom space. Repeating the proof of
Lemma \ref{PR-iso} and Proposition \ref{R-Morita} shows that the
quotient $D^i\alg^\bla$ is Morita equivalent to $T^\bla$ via the
analogue of $f$. Thus, by the argument of Proposition
\ref{quotient-is-cyc}, we have an action of $\cT_-^i$ on $\oplus_\bla
\cata^\bla$, which we can extend as in Theorem \ref{cyc-action} to
an action of $\cT$.  Thus, any diagram in $\cT$ can be interpreted as
a natural transformation between functors from $T^\bla\modu$ to
$T^{\bla'}\modu$ in a functorial way.  In particular, the operator of
left multiplication by a diagram appears this way, by thinking of
that diagram in $\cT$ and letting it act on the identity of the weight $0$.

Thus if $a$ in $T^\bla$ is in the kernel of this map, this means that
if we interpret this diagram as an 2-morphism of $\cT$, this
2-morphism acts trivially on the identity of the weight $0$.  But this
means that left multiplication by $a$ is 0, that is $a=0$.  This
proves injectivity.  
\end{proof}

\begin{thm}\label{Morita}
  The algebras $\alg^\bla$ and $D\alg^\bla$ are Morita equivalent.
\end{thm}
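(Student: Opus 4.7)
The plan is to reduce the Morita equivalence to the full-idempotent property $D\alg^\bla\cdot e_-\cdot D\alg^\bla=D\alg^\bla$, since the preceding lemma already identifies $e_-D\alg^\bla e_-$ with $\alg^\bla$. Equivalently, I must show that every idempotent $e(\Bi,\bla,\kappa)$ attached to a non-violated DST lies in the ideal $D\alg^\bla\cdot e_-\cdot D\alg^\bla$.

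My argument parallels the proof of Proposition \ref{R-Morita}. I would induct on the statistic $N(\Bi,\kappa)$ defined as the number of pairs $j<k$ such that position $j$ in $(\Bi,\bla,\kappa)$ is either a red strand or a downward black entry $i_j\in-\Gamma$, and position $k$ is an upward black entry $i_k\in+\Gamma$. When $N=0$, every upward black strand lies to the left of every red and every downward black strand, so either $\Bi$ is all downward (and $e(\Bi,\bla,\kappa)$ lies tautologically in $e_-$), or the leftmost strand of the diagram is upward black, making the diagram violated and hence zero in $D\alg^\bla$.

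For the inductive step $N\geq 1$, I locate consecutive positions $j,j+1$ with the left red or downward and the right upward; such a pair exists because the position immediately preceding the leftmost upward strand must itself be red or downward. Three local rewrites apply: (\ref{switch-1}) when positions $j,j+1$ are $-i,+i$, (\ref{opp-cancel1})-(\ref{opp-cancel2}) when a downward $-j_0$ meets an upward $+i$ with $j_0\neq i$, and (\ref{red-switch}) when the red strand crosses the upward black. In each case I rewrite the identity on $(\Bi,\bla,\kappa)$ as a linear combination $\sum_\alpha a_\alpha\cdot e(\Bi^\alpha,\bla,\kappa^\alpha)\cdot b_\alpha$, where each intermediate idempotent either has the upward strand pushed one place to the left (so $N$ strictly decreases) or, for the cup-cap terms arising from (\ref{switch-1}), has strictly fewer black strands and hence smaller $N$ automatically. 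The inductive hypothesis puts each middle idempotent in $D\alg^\bla\cdot e_-\cdot D\alg^\bla$, and the conclusion for $e(\Bi,\bla,\kappa)$ follows.

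The main bookkeeping task will be verifying that $N$ strictly decreases under each rewrite. The (\ref{red-switch}) case is cleanest since it is an honest equality with no correction terms, whereas (\ref{switch-1}) must be supplemented by an auxiliary decrease argument for its cup-cap contributions. Once the full-idempotent property is established, the standard criterion for locally unital algebras yields the asserted Morita equivalence via the bimodules $D\alg^\bla\cdot e_-$ and $e_-\cdot D\alg^\bla$.
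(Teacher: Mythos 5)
Your proposal is correct and follows essentially the same route as the paper: the paper likewise reduces to showing $D\alg^\bla\cdot e_-\cdot D\alg^\bla=D\alg^\bla$ and uses the relations (\ref{switch-1}), (\ref{opp-cancel1})--(\ref{opp-cancel2}) and (\ref{red-switch}) to push upward strands leftward until each DST either has no upward strands (hence lies in the image of $e_-$) or is violated and thus zero. Your explicit induction on the pair-counting statistic $N$ is just a more formal version of the termination argument the paper carries out (mirroring its proof of Proposition \ref{R-Morita}), so there is nothing substantively different here.
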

\begin{proof}
Recall again that for an algebra $A$ and idempotent $e$, the
bimodules $Ae$ and $eA$ induce Morita equivalences if and only if
$AeA=A$.  Thus, we need only prove that the idempotent attached to any
DST in $D\alg^\bla$ actually lies in $D\alg^\bla\cdot e_- \cdot
D\alg^\bla$.  In order to prove this, we fix a region near
$y=\nicefrac{1}2$.  If we see a pair of consecutive black lines where
the rightward is upward oriented, and the leftward downward oriented,
we use the relations (\ref{switch-1}) and (\ref{opp-cancel1}) to swap them
past each other.  If we see an upward oriented strand immediately to
the right of a red strand, we use relation (\ref{red-switch}) to
swap them.  Thus, ultimately, we can rewrite the idempotent as a sum
of DSD's factoring through DST's which  have all their upward oriented
strands left of all other strands, red or black.  Of course, such a
DST will only be non-zero in $D\alg^\bla$ if it has no upward oriented
strands.  Thus, this central portion is in the image of $f$, and the
whole diagram lies in $D\alg^\bla\cdot e_- \cdot
D\alg^\bla$.  The result follows.
\end{proof}
We can consider the image $\beta_u=e_{-}\cdot \beta'_u\cdot e_-$ of
the action bimodules under this Morita equivalence.  It immediately
follows that:
\begin{thm}\label{full-action}
There is a representation of $\tU$ which sends \[\mu\mapsto
\cata^\bla_\mu \qquad u \mapsto \beta_u \qquad \phi \mapsto \beta_\phi\] 
\end{thm}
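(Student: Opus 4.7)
The plan is to deduce Theorem \ref{full-action} directly from the 2-representation of Theorem \ref{full-action-prime} by transporting the categorical action along the Morita equivalence $e_-$ of Theorem \ref{Morita}. The work is essentially formal: once one knows that a Morita equivalence canonically transports bimodule actions and their natural transformations, there is nothing left to check beyond bookkeeping.

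First I would record the general principle. Given a ring $A$ with idempotent $e$ such that $AeA = A$, the functors $(-)\otimes_{A} Ae \colon A\modu \to eAe\modu$ and $(-)\otimes_{eAe} eA \colon eAe\modu \to A\modu$ are mutually inverse equivalences. For any $A$-$A$-bimodule $M$, the diagram
\begin{equation*}
\begin{array}{ccc}
A\modu & \xrightarrow{-\otimes_A M} & A\modu \\
\downarrow & & \downarrow \\
eAe\modu & \xrightarrow{-\otimes_{eAe} eMe} & eAe\modu
\end{array}
\end{equation*}
commutes up to canonical natural isomorphism, and bimodule maps $M\to M'$ correspond under $e(-)e$ to natural transformations between the induced functors on the quotient category. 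Setting $A = D\alg^\bla$ and $e = e_-$ (with $AeA=A$ established in the proof of Theorem \ref{Morita}), the bimodule $\beta_u = e_-\beta'_u e_-$ induces on $\cata^\bla = e_- D\alg^\bla e_-\modu$ the functor corresponding under the Morita equivalence to tensor product with $\beta'_u$ on $D\cata^\bla$, and similarly $\beta_\phi$ corresponds to $\beta'_\phi$.

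Next, I would invoke Theorem \ref{full-action-prime}, which supplies a strict 2-functor $\tU \to \mathsf{Cat}$ sending $\mu\mapsto D\cata^\bla_\mu$, $u\mapsto -\otimes \beta'_u$, and $\phi\mapsto \beta'_\phi$. Composing with the Morita equivalence $D\cata^\bla_\mu \cong \cata^\bla_\mu$ (which respects the weight decomposition since $e_-$ is a sum of weight-homogeneous idempotents) and transporting each bimodule as in the previous paragraph gives a new 2-functor $\tU \to \mathsf{Cat}$ with the desired assignments $\mu\mapsto \cata^\bla_\mu$, $u\mapsto -\otimes \beta_u$, $\phi \mapsto \beta_\phi$. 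Because the Morita equivalence is monoidal with respect to tensor product of bimodules (that is, $e_-(M\otimes_A N)e_- \cong (e_-Me_-)\otimes_{eAe}(e_-Ne_-)$ naturally when $AeA = A$), horizontal and vertical compositions of 2-morphisms are preserved, so all relations of $\tU$ that hold in the $D\alg^\bla$-action automatically hold after transport.

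The only potential subtlety is verifying that $\beta_u$ as defined in the statement (namely $e_-\cdot\beta'_u\cdot e_-$) really does agree with the bimodule produced by the transport-of-structure argument; this is immediate from the definition $\beta'_u = e_u\cdot D\alg^\bla$ once one notes that $e_u$ and $e_-$ are idempotents on opposite sides. No new calculation with Stendhal or KL diagrams is needed; the hard analytic work was already done in Theorems \ref{full-action-prime} and \ref{Morita}, and the present statement is their straightforward synthesis.
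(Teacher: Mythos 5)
Your proposal is correct and is essentially the paper's own argument: the paper states Theorem \ref{full-action} as an immediate consequence of Theorem \ref{full-action-prime} by passing the bimodules $\beta'_u$ through the Morita equivalence of Theorem \ref{Morita} via $\beta_u = e_-\beta'_u e_-$. You have simply made explicit the standard transport-of-structure facts (in particular the compatibility $e_-(M\otimes_A N)e_-\cong (e_-Me_-)\otimes_{e_-Ae_-}(e_-Ne_-)$ when $Ae_-A=A$) that the paper leaves implicit.
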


The bimodule $\beta_u$ is the subspace inside $\beta_u'$ such that the
only upward 
termini are attached to $u$ in the diagram above.  
In the interior of the diagram, 
we allow bubbles and self-intersections, and the diagram is only
constrained by the rules of a DSD.  However, elements like
self-intersections can always be removed using the relations.

Two special cases of these functors merit special attention.  When
$u=\eF_i,\eE_i$, we denote the corresponding functors
$\fF_i:=-\otimes_{\alg^\bla}\beta_{\eF_i}$ and
$\fE_i:=-\otimes_{\alg^\bla}\beta_{\eE_i}$.  In Figure \ref{funcs}, we
show the diagrams as in \eqref{Fu-schematic} for these functors. The
bimodule $\beta_{\eF_i}$ is spanned by diagrams where all strands are
downward, and $\beta_{\eE_i}$
by diagrams where all but a single cup turned up at the right.
As in the case of the cyclotomic quotient, we can interpret $\fF_i$ as
an extension of scalars via the map  $\nu_i\colon {\alg}^\bla\to
{\alg}^\bla$ given by adding a $i$-labeled strand at the far right.
We will often call this strand {\bf new} to distinguish it from the
others.  In $\beta_{\eF_i}$, this is the strand connected to the
rightmost terminal at top.  

Similarly, we can interpret $\fE_i$ as
restriction under the same map $\nu_i$ (with a grading shift, due to
the cup).  

\begin{figure}
\centering
  \begin{tikzpicture}[very thick,label distance=7pt]
 \node at (4,0){ \begin{tikzpicture}[very thick]
\draw[rdir] (.25,2.5) to[out=-90,in=180]  (1.2, 1.9) to[out=0, in=-90] (2.5,2.7) node[above,at end]{$i$};
\draw[wei] (-1.5,2.5)  to[out=-90, in=90] (-1.5,1.25);%-- (b.-105);
\draw[rdir] (-.25,2.5) to[out=-90, in=90] (-.25,1.25);% -- (b.-160);
\draw[wei] (.75,2.5) to[out=-90, in=90] (.25,1.25);% -- (b.-160);
\draw[rdir] (1.5,2.5) to[out=-90, in=90] (1.3,1.25);%-- (b.-33);
\node at (-.85,1.6) {$\cdots$};
\node at (.85,1.6) {$\cdots$};
\node (a) at (0,2.8)[draw,fill=white,inner xsep=42pt,inner ysep=10pt,label=above:{$\fE_i$}]{$m$};

  \end{tikzpicture}};
\node at (-4,0){  \begin{tikzpicture}[very thick]
\draw[postaction={decorate,decoration={markings,
    mark=at position .9 with {\arrow[scale=1.3]{>}}}}] (2.5,2.7) to[out=-90, in=90] (.25,1.25)  node[above,at start]{$i$};
\draw[wei] (-1.5,2.5)  to[out=-90, in=90] (-1.5,1.25);%-- (b.-105);
\draw[rdir] (-.25,2.5)  to[out=-90, in=90] (-.25,1.25);%-- (b.-105);
\draw[wei] (.25,2.5) to[out=-90, in=90] (.75,1.25);% -- (b.-160);
\draw[rdir] (1.5,2.5) to[out=-90, in=90] (2,1.25);%-- (b.-33);
\node at (-.85,1.6) {$\cdots$};
\node at (1.325,1.6) {$\cdots$};
\node (a) at (0,2.8)[draw,inner xsep=42pt,inner ysep=10pt,fill=white, label=above:{$\fF_i$}]{$m$};
  \end{tikzpicture}};
  \end{tikzpicture}
\caption{The functors \texorpdfstring{$\mathfrak{E}_i$}{E_i} and  \texorpdfstring{$\mathfrak{F}_i$}{F_i}}
\label{funcs}
\end{figure}

\begin{prop}\label{FI-IF} We have
  \[\fI^R_{\mu}\fI_\mu=\id\qquad\qquad\fI^R_{\mu}\fF_i\fI_\mu=\fF_i(-\mu^i)\qquad\qquad\fI^R_{\mu}\fE_i\fI_\mu=\fE_i\]
\end{prop}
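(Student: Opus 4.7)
The first identity $\fI^R_\mu\fI_\mu=\id$ is immediate from Proposition~\ref{add-red}: unwinding the definitions,
\[\fI^R_\mu\fI_\mu(M)=(M\otimes_{T^{\bla^-}} e_\ell T^\bla)\cdot e_\ell=M\otimes_{T^{\bla^-}} e_\ell T^\bla e_\ell,\]
and Proposition~\ref{add-red} identifies the inner bimodule with $T^{\bla^-}$, recovering $M$.

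For the second identity, the plan is to compare the two sides as $T^{\bla^-}$-$T^{\bla^-}$-bimodules. Writing $\fF_i$ as tensor product with $(T^\bla)_{\nu_i}$, where $\nu_i\colon T^\bla\to T^\bla$ is the homomorphism adding an $i$ strand on the right, the composite $\fI^R_\mu\fF_i\fI_\mu$ is represented by the subspace of $T^\bla$ spanned by Stendhal diagrams whose top has $\mu$ rightmost and whose bottom has $\mu$ second-to-rightmost with an $i$ strand rightmost, with both $T^{\bla^-}$-actions pulled back through the isomorphism of Proposition~\ref{add-red}. The bimodule representing $\fF_i$ on $\cata^{\bla^-}$ is the analogous subspace of $T^{\bla^-}$: diagrams with an $i$ strand rightmost at the bottom. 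The bijection between these is built by inserting a red $\mu$ strand running from the second-to-rightmost position at the bottom up to the rightmost position at the top; its inverse isotopes the $\mu$ strand of a given diagram back to this canonical arc, which is possible since no other red strand obstructs it. The insertion introduces a single red/black crossing near the bottom, and this accounts for the grading shift $(-\mu^i)$ via the degree conventions of Section~\ref{sec:stendhal-diagrams} together with the identity \eqref{cost}; compatibility with the $T^{\bla^-}$-actions follows because these actions attach diagrams at the top and bottom, away from the inserted strand.

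The third identity is proved by the same strategy, except that the bimodule for $\fE_i$ involves a cup which must slide past the inserted $\mu$ strand; this is accomplished by the pitchfork relation \eqref{redpitch}, and the grading shift $(\langle\mu,\al_i\rangle-d_i)$ built into the definition of $\fE_i$ in Section~\ref{sec:cyc} precisely cancels the red/black crossing cost, leaving no net shift. The main technical obstacle throughout is to verify rigorously that the bijections on Stendhal diagrams extend to genuine bimodule isomorphisms and to track the degree contributions carefully through \eqref{cost} (which governs a bigon rather than a single crossing). Both tasks are handled by combining Proposition~\ref{basis} to produce compatible Stendhal bases on the two sides with Lemma~\ref{modulo-smaller} to simplify any auxiliary crossings that arise when pushing actions through the inserted $\mu$ strand.
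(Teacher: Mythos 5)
Your proposal is correct and follows essentially the same route as the paper: reduce to an identification of the action bimodules, realize the isomorphism by inserting (resp.\ deleting) the red $\mu$ strand so that it crosses only the new black strand coming from $\fF_i$ or $\fE_i$, and read off the grading shift from the degree of that single red/black crossing (with the shift cancelling in the $\fE_i$ case). The paper's proof is just a terser version of this, with the $\fE_i$ case left as "a similar argument"; your extra appeals to \eqref{cost} for the degree count and to the pitchfork relation are harmless elaborations rather than a different method.
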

\begin{proof}
  We only need to check these equalities on the algebra $T^{\bla}$
  itself.  The image of this algebra under the functor $\fI_\mu$ is
  $e_{\ell+1} T^{(\la_1,\dots, \la_\ell,\mu)}$.  The image of this
  under $\fI^R_\mu$ is indeed $T^\bla\cong e_{\ell+1}T^{(\la_1,\dots,
    \la_\ell,\mu)}e_{\ell+1}$.  

Now we turn to the interaction of $\fI_\mu$ and $\fF_i$.  We note that
$\fI^R_{\mu}\fF_i
  \fI_\mu(T^\bla)=e_\ell\beta_{\eF_i}^+e_\ell$ where $\beta_{\eF_i}^+$
  denotes the action bimodule for $T^{(\la_1,\dots,
    \la_\ell,\mu)}$.  This is the subspace spanned by diagrams in
  $\beta_{\eF_i}$ where no strand is right of the rightmost (labeled
  $\mu$) except the new strand attached to the rightmost terminal at top,
  corresponding to $\eF_i$.  There's a map of $\beta_{\eF_i}$, the corresponding
  bimodule for $T^\bla$, to $e_\ell\beta_{\eF_i}^+e_\ell$ given by
  adding in the $\mu$ labeled strand adding just a crossing with the
  new strand.  Since the bimodule action only works on the strands
  left of the red labeled $\mu$, this will be a bimodule map, and
  simply deleting the red strand is its inverse.  To see that these
  maps are well-defined, just note that they respect local relations
  and neither can get rid of a
  violating strand.  Note that this map is of degree $\mu^i$, because
  of the degree of the red/black crossing.

A similar argument shows the same for $\fE_i$.  This completes the proof.
\end{proof}

\subsection{Decategorification}
\label{sec:decat}

In order to understand the Grothendieck group $K_0(\alg^\bla)$, we
need to better understand its Euler form.  In particular, we need a
candidate bilinear form on $V_\bla^\Z$, which we hope will match with
the Euler form under a hypothetical isomorphism $K_0(\alg^\bla)\cong V_\bla^\Z$.  There is a system of
non-degenerate $U_q(\fg)$-invariant sesquilinear forms
$\langle -,- \rangle$ on all tensor products $V_\bla^\Z$ defined by 
$\langle v,w\rangle=\langle \Theta^{(\ell)}v,w\rangle_{\Sha},$ where
$\Theta^{(\ell)}$ is the $\ell$-fold {\bf quasi-R-matrix} and $\langle
-,-\rangle_{\Sha}$ is the factor-wise $q$-Shapovalov form.  The usual
quasi-R-matrix $\Theta^{(2)}$ on two tensor factors is defined in
\cite[\S 4]{Lusbook}; the $\ell$-fold one is defined inductively by
$\Theta^{(\ell)}=(\Theta^{(2)}\otimes 1^{\otimes
  \ell-2})(\Delta\otimes 1^{\otimes \ell-2}(\Theta^{(\ell-1)}))$. Let
$\langle-,-\rangle_1$ denote the specialization of this form at $q=1$,
which is the same as the factor-wise Shapovalov form.
\begin{prop}\label{unique-form}
The form $\langle -,-\rangle$ is the unique system of sesquilinear forms on $V^\Z_\bla$ which
are
\begin{enumerate}
\item non-degenerate, and 
\item if $\tau$ is the
  antiautomorphism defined in \eqref{eq:4}, then
  $\langle u\cdot v,v'\rangle=\langle v,\tau(u)\cdot v'\rangle$ for
  any $v,v'\in V_\bla$ and $u\in U_q(\fg)$; that is, the form is
  $\tau$-Hermitian, and
\item the natural map tensoring with a highest weight vector
  $V_{\la_1}^\Z\otimes\cdots\otimes
  V_{\la_{\ell-1}}^\Z\otimes \{v_{\la_\ell}\}
  \hookrightarrow V_\bla^\Z$ is an isometric embedding.
\end{enumerate}
\end{prop}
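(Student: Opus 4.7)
The plan is to prove the statement by induction on $\ell$, with the form $\langle v,w\rangle = \langle \Theta^{(\ell)}v,w\rangle_{\Sha}$ as the candidate. The base case $\ell=1$ reduces to the classical uniqueness of the $q$-Shapovalov form on $V_{\la_1}^\Z$: any non-degenerate $\tau$-Hermitian form with $\langle v_{\la_1},v_{\la_1}\rangle = 1$ (which condition (3) enforces via the empty-tensor-product convention) coincides with it. For general $\ell$, properties (1) and (2) of the candidate are standard consequences of the defining intertwining identity for the quasi-R-matrix. For property (3), the key point is that $v_{\la_\ell}$ is annihilated by all positive root vectors and $\Theta^{(\ell)}$ reduces to $\Theta^{(\ell-1)}\otimes 1$ modulo terms that act on the last tensor factor by positive root vectors; this yields $\Theta^{(\ell)}(v\otimes v_{\la_\ell}) = (\Theta^{(\ell-1)}v)\otimes v_{\la_\ell}$, from which property (3) for the candidate follows from property (3) for the $(\ell-1)$-fold candidate.

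For uniqueness, suppose $\langle-,-\rangle'$ is another system satisfying (1)--(3). Extending scalars to $\Q(q)$, the non-degeneracy of $\langle-,-\rangle$ gives a well-defined $\Q(q)$-linear operator $T\colon V_\bla^{\Q(q)}\to V_\bla^{\Q(q)}$ with $\langle Tv,w\rangle = \langle v,w\rangle'$. Applying the $\tau$-Hermiticity of both forms yields $\langle T(uv),w\rangle = \langle uv,w\rangle' = \langle v,\tau(u)w\rangle' = \langle Tv,\tau(u)w\rangle = \langle uTv,w\rangle$ for every $u\in U_q(\fg)$, and non-degeneracy forces $T(uv)=u\cdot T(v)$. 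Thus $T$ is a $U_q(\fg)$-module endomorphism of $V_\bla^{\Q(q)}$.

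By condition (3) applied to both forms and the inductive hypothesis on $V_{(\la_1,\ldots,\la_{\ell-1})}^\Z$, the two forms agree on $V_{\la_1}^\Z\otimes\cdots\otimes V_{\la_{\ell-1}}^\Z\otimes\{v_{\la_\ell}\}$, so $T$ is the identity on this subspace. I claim that this subspace generates $V_\bla$ as a $U_q(\fg)$-module: the coproduct identity $\Delta(F_i)=F_i\otimes \tilde K_{-i}+1\otimes F_i$ expresses $F_i(v\otimes v_{\la_\ell})$ as the sum of a scalar multiple of $(F_iv)\otimes v_{\la_\ell}$ and $v\otimes F_iv_{\la_\ell}$, so $v\otimes F_iv_{\la_\ell}$ lies in the $U_q$-submodule generated by our subspace. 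Iterating on the length of a word $\Bi$ places every $v\otimes F_\Bi v_{\la_\ell}$ in this submodule, and such vectors span $V_\bla$. Since $T$ is a module endomorphism fixing a generating set, $T = \id$ and the two forms coincide.

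The main obstacle I anticipate is the careful verification of the identity $\Theta^{(\ell)}(v\otimes v_{\la_\ell})=(\Theta^{(\ell-1)}v)\otimes v_{\la_\ell}$ needed for property (3) of the candidate: the recursion given in the excerpt peels off the leftmost two factors rather than the rightmost, so one must pass through the ``associativity'' form of the recursion, invoking the defining intertwining identity of $\Theta^{(\ell)}$ with the $\bar{\cdot}$-twisted coproduct. The remainder of the argument is essentially formal.
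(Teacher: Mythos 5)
Your overall strategy---induction on $\ell$, the same candidate form $\langle v,w\rangle=\langle\Theta^{(\ell)}v,w\rangle_{\Sha}$, and uniqueness reduced to agreement of the forms on the subspace $M'=V_{\la_1}^\Z\otimes\cdots\otimes V_{\la_{\ell-1}}^\Z\otimes \{v_{\la_\ell}\}$---is the same as the paper's, and your treatment of property (3) via the identity $\Theta^{(\ell)}(v\otimes v_{\la_\ell})=(\Theta^{(\ell-1)}v)\otimes v_{\la_\ell}$ is, if anything, more careful than the paper's one-line version. There is, however, a genuine gap in the uniqueness step. From the agreement of the two forms on $M'\times M'$ you conclude that the intertwiner $T$ ``is the identity on this subspace.'' What actually follows is only that $\langle (T-\id)v,\,w\rangle=0$ for all $v,w\in M'$, i.e.\ $(T-\id)(M')$ is orthogonal to $M'$. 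Since $M'$ is not a submodule and need not contain the orthogonal projections of its own elements, this does not give $(T-\id)|_{M'}=0$, so the final step ``a module endomorphism fixing a generating set is the identity'' is not yet applicable.

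The missing ingredient is precisely the hypothesis the paper's lemma insists on: $M'$ is invariant under $U_q^{\geq 0}(\fg)$, and $\tau$ carries $U_q^-(\fg)$ into $U_q^{\geq 0}(\fg)$ (indeed $\tau(F_i)=q_i^{-1}\tilde K_i E_i$). Your coproduct computation in fact shows $V_\bla=U_q^-(\fg)\cdot M'$. Writing $m=\sum f_i v_i$ with $f_i\in U_q^-(\fg)$, $v_i\in M'$, and taking $w\in M'$, contravariance and the fact that $T-\id$ is a module map give $\langle (T-\id)m,\,w\rangle=\sum\langle (T-\id)v_i,\,\tau(f_i)w\rangle=0$, because $\tau(f_i)w\in M'$. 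A second application of contravariance in the other variable then yields $\langle (T-\id)m,\,m'\rangle=0$ for all $m,m'\in V_\bla$, whence $T=\id$ by non-degeneracy. With this repair your argument is complete and coincides with the paper's proof.
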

\begin{proof}
The uniqueness follows by induction on the number of tensor factors.   Two
$\tau$-hermitian forms on a $U^\Z_q(\fg)$-module $M$ agree if they
agree on a generating subspace $M'$ which is invariant under
$U^{\geq 0}_q(\fg)$.  Since   $V_{\la_1}\otimes\cdots\otimes
  V_{\la_{\ell-1}}\otimes \{v_{\la_\ell}\}\}
  \subset V_\bla$ is such a subspace, the uniqueness follows
  immediately from the inductive hypothesis. 

Non-degeneracy follows from the fact that $\Theta^{(\ell)}$ is
invertible and the non-degeneracy of the $q$-Shapovalov form for $q$ generic.

That $\langle -,-\rangle$ is  $\tau$-Hermitian follows from the
following calculation, where we use the notation $\Delta^{(\ell)}(u)v$
freely in place of $u\cdot v$ to emphasize when we are using the usual
coproduct and when we are using its bar-conjugate $\bar
  \Delta^{(\ell)}(u)v$.
\begin{multline*}\langle u\cdot v,v'\rangle=\langle
  \Theta^{(\ell)}\Delta^{(\ell)}(u)v,v'\rangle_{\Sha}=\langle \bar
  \Delta^{(\ell)}(u)\Theta^{(\ell)} v,v'\rangle_{\Sha}=\langle
  \Theta^{(\ell)}v,(\tau\otimes\cdots \otimes \tau)\bar
  \Delta^{(\ell)}(u)v'\rangle_{\Sha}\\ =\langle \Theta^{(\ell)}
  v,\Delta^{(\ell)}(\tau(u))v'\rangle_{\Sha}=\langle v,\tau(u)\cdot
  v'\rangle.\end{multline*}  Above, we use the fact that
$\Theta^{(\ell)}$ conjugates the coproduct to the bar-coproduct, that
the $q$-Shapovalov form on a simple is $\tau$-Hermitian, and that $\tau$ also
conjugates the bar-coproduct to the coproduct.

Statement (3) follows from the fact that $\Theta^{(n)}\in 1\otimes
\cdots \otimes 1+\sum _i U_q(\fg)^{\otimes \ell-1}\otimes U_q(\fg)E_i$, so $\Theta^{(n)}$ fixes $V_{\la_1}\otimes\cdots\otimes
  V_{\la_{\ell-1}}\otimes v_{h}$. 
\end{proof}

\begin{defn}
Consider a double Stendhal triple $(\Bi,\bla, \kappa)$, possibly with divided
powers in $\Bi$.  We let $P_{\Bi}^\kappa=e(\Bi,\kappa)D\alg^\bla e_-=e(\Bi,\kappa)\alg^\bla$
and  $\tilde P^\kappa_\Bi=e(\Bi,\kappa)\tilde \alg^\bla$. 
\end{defn}

Fix a Stendhal triple $(\Bi,\kappa)$, and $i\in \Gamma$. We'll want to consider a DST $(\Bi^{(j)},\kappa^{(j)})$
where we add an upward oriented $i$-labeled strand right of the $j$th black
strand and a Stendhal triple $(\Bi_{(j)},\kappa_{(j)})$ where we remove the $j+1$st
strand. More precisely, we consider the (D)STs corresponding to $\Bi^{(j)}=(-i_1,\dots,
-i_j,i,-i_{j+1},\dots,-i_n)$ and
$\kappa^{(j)}(m)=\kappa(m)+\delta_{\kappa(m)\leq j}$, and $\Bi_{(j)}=(-i_1,\dots,
-i_j,-i_{j+2},\dots,-i_n)$ and
$\kappa_{(j)}(m)=\kappa(m)-\delta_{\kappa(m)\leq j+1}$.  We let
$\mu_{(j)}=\sum_{\kappa(m)<j}\la_m-\sum_{k=1}^j\al_{i_k}$ be the
weight of the region right of the $j$th black strand in the original
idempotent.  
Visually, these correspond to the diagrams
\[e(\Bi^{(j)},\kappa^{(j)})=\tikz[baseline=-2pt,very thick,xscale=1.3]{\node at
    (0,0){$\cdots$};\node at (2.5,0){$\cdots$}; \draw[->] (.5,.5) --
    node[below,at end]{$i_j$} (.5,-.5); \draw[<-] (1,.5) --
    node[below,at end]{$i$} (1,-.5); \draw[->] (1.5,.5) --
    node[below,at end]{$i_{j+1}$} (1.5,-.5); \draw[->] (2,.5) --
    node[below,at end]{$i_{j+2}$} (2,-.5);}\qquad e(\Bi_{(j)},\kappa_{(j)})=\tikz[baseline=-2pt,very thick,xscale=1.3]{\node at
    (0,0){$\cdots$};\node at (1.5,0){$\cdots$}; \draw[->] (.5,.5) --
    node[below,at end]{$i_j$} (.5,-.5);\draw[->] (1,.5) --
    node[below,at end]{$i_{j+2}$} (1,-.5); }.\]
We've left out red strands from this diagram, but there could be some 
present.  When a red strand separates $i_j$ and$ i_{j+1}$, there is
ambiguity in the definition of $e(\Bi^{(j)}, k^{(j)})$, based on whether
the new upward strand is to the left or right of the red
strand. However, the relation (\ref{red-switch}) shows
the corresponding projectives are isomorphic.  \begin{lemma} \label{category-commute} As right $DT^\bla$-modules, we have isomorphisms:
  \begin{align*}
    P_{\Bi^{(j)}}^{\kappa^{(j)}} &\cong
    P_{\Bi^{(j+1)}}^{\kappa^{(j+1)}} & i&\neq i_{j+1}\\
      P_{\Bi^{(j)}}^{\kappa^{(j)}}\oplus
      (P_{\Bi_{(j)}}^{\kappa_{(j)}})^{\oplus [\mu_{(j+1)}^i]_q}&\cong
    P_{\Bi^{(j+1)}}^{\kappa^{(j+1)}}& i&= i_{j+1}, \mu_{(j+1)}^i
    \geq 0\\
       P_{\Bi^{(j+1)}}^{\kappa^{(j+1)}}\oplus
      (P_{\Bi_{(j)}}^{\kappa_{(j)}})^{\oplus [\mu_{(j+1)}^i]_q} &\cong
  P_{\Bi^{(j)}}^{\kappa^{(j)}} & i&= i_{j+1}, \mu_{(j+1)}^i
    \leq 0
  \end{align*}
\end{lemma}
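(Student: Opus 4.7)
The lemma categorifies the $U_q(\fg)$-commutator relation $[E_i, F_{i_{j+1}}] = \delta_{i, i_{j+1}}\,[\mu^i_{(j+1)}]_q$, applied at the weight region sandwiched between the swapped strands at positions $j{+}1$ and $j{+}2$. The strategy is to reduce each assertion to a corresponding 2-isomorphism (resp.\ direct-sum decomposition) of $1$-morphisms in $\tU$ at the appropriate weight, and then transfer it to the projectives via the $2$-categorical action of $\cT$ on $D\cata^\bla$ (Theorem~\ref{full-action-prime}) combined with the Morita equivalence of Theorem~\ref{Morita}.

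\textbf{Setup.} The three idempotents $e(\Bi^{(j)}, \kappa^{(j)})$, $e(\Bi^{(j+1)}, \kappa^{(j+1)})$, and $e(\Bi_{(j)}, \kappa_{(j)})$ coincide outside a small horizontal neighborhood $U$ containing the two strands that are being swapped or removed. Since every defining relation of $\cT$ is local, any $2$-morphism in $\cT$ between the $1$-morphisms supported in $U$ extends (by horizontal composition with the identity elsewhere) to a homomorphism between the corresponding projectives in $D\cata^\bla$; this is the content of Lemma~\ref{modulo-smaller}. It therefore suffices to establish the required $2$-isomorphisms or decompositions locally, for the $1$-morphisms $\eE_i \eF_{i_{j+1}} 1_{\mu}$ and $\eF_{i_{j+1}} \eE_i 1_{\mu}$ with $\mu = \mu_{(j+1)} + \al_i$.

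\textbf{The two cases.} When $i \neq i_{j+1}$, relations~(\ref{opp-cancel1}) and (\ref{opp-cancel2}) assert precisely that the oppositely-oriented crossings of an $i$-strand and an $i_{j+1}$-strand equal the identity of the uncrossed pair, up to the nonzero scalars $t_{i, i_{j+1}}$ and $t_{i_{j+1}, i}$. These furnish mutually inverse $2$-morphisms $\eE_i\eF_{i_{j+1}} \rightleftarrows \eF_{i_{j+1}}\eE_i$ and hence the claimed isomorphism $P^{\kappa^{(j)}}_{\Bi^{(j)}} \cong P^{\kappa^{(j+1)}}_{\Bi^{(j+1)}}$.
When $i = i_{j+1}$, one invokes (\ref{switch-1}) if $\mu^i_{(j+1)} \geq 0$ and (\ref{switch-2}) if $\mu^i_{(j+1)} \leq 0$. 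Combined with the bubble relations~(\ref{zero-bubble})--(\ref{inv}) and the lollipop relations~(\ref{lollipop1})--(\ref{eq:1}), each of these gives an explicit direct-sum decomposition in $\tU$ at the level of $1$-morphisms: one summand is the $1$-morphism with the strands swapped, and the remaining $|\mu^i_{(j+1)}|$ summands are copies of the identity $1_{\mu_{(j+1)} + \al_i}$, carrying grading shifts $|\mu^i_{(j+1)}|-1,\,|\mu^i_{(j+1)}|-3,\,\dots,\,-|\mu^i_{(j+1)}|+1$. These shifts assemble to the quantum integer $[\mu^i_{(j+1)}]_q$. Inserting each identity summand back into the surrounding idempotent amounts to deleting both strands of the cancelled pair from the Stendhal triple, so each such summand contributes a shifted copy of $P^{\kappa_{(j)}}_{\Bi_{(j)}}$, yielding the stated decomposition.

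\textbf{Main obstacle.} The most delicate step is the verification of the decomposition of $1$-morphisms in $\tU$ itself: namely, that the explicit $2$-morphisms extracted from the right-hand sides of (\ref{switch-1}) and (\ref{switch-2}) are mutually orthogonal idempotents that sum to the identity. Orthogonality among the identity summands is built into the bubble inversion formula (\ref{inv}); orthogonality between the ``crossing'' summand and the identity summands follows from the pitchfork/lollipop relations combined with the vanishing of negative-degree bubbles (\ref{zero-bubble}). The final bookkeeping task is to confirm, using the degree formulas for dots, cups, and caps, that the collection of grading shifts on the identity summands reproduces exactly $[\mu^i_{(j+1)}]_q$; this is straightforward but must be carried out carefully to match the normalization conventions.
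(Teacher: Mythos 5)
Your proposal is correct and follows essentially the same route as the paper: the paper's proof likewise reduces the statement to the categorified commutation relations, invoking (\ref{red-switch}) for commuting the upward strand past any intervening red strands and (\ref{switch-1})--(\ref{opp-cancel2}) (as worked out in Khovanov--Lauda) for commuting past the $j{+}1$st black strand. Your write-up simply spells out in more detail the orthogonal-idempotent decomposition that the paper delegates to the cited reference.
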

\begin{proof}
  This is an immediate consequence of the categorified commutation
  relations of $\eE_i$ and $\eF_i$.  The DST's
  $(\Bi^{(j+1)},\kappa^{(j+1)})$ and $(\Bi^{(j)},\kappa^{(j)})$ differ
  by commuting the upward oriented strand labeled $i$ past the $j+1$st
  black strand, and any red strands with $\kappa(m)=j$.  Commuting
  past red strands is immediate from (\ref{red-switch}), so we need
  only deal with commuting past the $j+1$st black strand, in which
  case the desired isomorphism follows from
  (\ref{switch-1}--\ref{opp-cancel2}) as argued in \cite[3.25]{KLIII}.
\end{proof}

For any DST $(\Bi,\kappa)$, let $p_{\Bi}^\kappa\in V_\bla$ be defined inductively by:
\begin{itemize}
\item if $\kappa(\ell)=n$, then
  $p_{\Bi}^\kappa:=p_{\Bi}^{\kappa^-}\otimes v_{\la_\ell}$ where, as
  defined earlier, $v_{\la_\ell}$ is
  the highest weight vector of $V_{\la_\ell}$, and $\kappa^-$ is the
  restriction to $[1,\ell-1]$.
\item If $\kappa(\ell)\neq n$, so $p_{\Bi}^\kappa:=E_{i_n}p^{\kappa}_{\Bi^-}=F_{-i_n}p^{\kappa}_{\Bi^-}$, where
  $\Bi^-=(i_1,\dots,i_{n-1})$. 
\end{itemize}

\begin{lemma}\label{Euler-inner}
  $\displaystyle \dim_q \Hom(P^{\kappa}_{\Bi},P^{\kappa'}_{\Bi'})=\langle
p_{\Bi}^{\kappa},p_{\Bi'}^{\kappa'}\rangle.$  
\end{lemma}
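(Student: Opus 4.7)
The plan is to exhibit both $\dim_q \Hom(P^\kappa_\Bi, P^{\kappa'}_{\Bi'})$ and $\langle p^\kappa_\Bi, p^{\kappa'}_{\Bi'}\rangle$ as satisfying a common recursion determined by three ingredients: (i)~a base case where $\Bi = \Bi' = \emptyset$; (ii)~a reduction removing a rightmost red strand, using the adjunction $\fI^R_{\la_\ell}\fI_{\la_\ell} = \id$ on the categorical side and clause~(3) of Proposition~\ref{unique-form} on the form side; and (iii)~a reduction moving a rightmost black strand from one side to the other, using the $\fF_j \dashv \fE_j$ biadjunction on the categorical side and the $\tau$-intertwining property of the form on the representation side. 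Since these reductions terminate at the base case, they force the equality.

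For~(i), Proposition~\ref{basis} identifies $e(\emptyset,\bla,0)\,T^\bla\,e(\emptyset,\bla,0)$ with $\K$ concentrated in degree $0$, so $\dim_q \Hom(P^0_\emptyset, P^0_\emptyset) = 1$, while $p^0_\emptyset = v_{\la_1}\otimes\cdots\otimes v_{\la_\ell}$ has self-inner-product $1$ by iterated application of clause~(3) of Proposition~\ref{unique-form}. For~(ii), in the situation $\kappa(\ell) = n$ and $\kappa'(\ell) = n'$ (both rightmost strands are the red strand labeled $\la_\ell$), Proposition~\ref{add-red} gives $P^\kappa_\Bi \cong \fI_{\la_\ell}P^{\kappa^-}_\Bi$ and $P^{\kappa'}_{\Bi'} \cong \fI_{\la_\ell}P^{(\kappa')^-}_{\Bi'}$, and the adjunction $\fI^R_{\la_\ell}\fI_{\la_\ell} = \id$ of Proposition~\ref{FI-IF} rewrites $\Hom_{T^\bla}$ as $\Hom_{T^{\bla^-}}$. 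Simultaneously $p^\kappa_\Bi = p^{\kappa^-}_\Bi \otimes v_{\la_\ell}$ and clause~(3) of Proposition~\ref{unique-form} collapses $\langle-,-\rangle_{V_\bla}$ to $\langle-,-\rangle_{V_{\bla^-}}$, and an induction on $\ell$ closes this reduction.

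For~(iii), when some rightmost strand, say that of $(\Bi,\kappa)$, is a black label $i := i_n \in \pm\Gamma$: since $\fF_j$ (resp.\ $\fE_j$) is by construction extension of scalars along the right-insertion map $\nu_j$, we have $P^\kappa_\Bi \cong \fF_j P^\kappa_{\Bi^-}$ when $i = -j$ and $P^\kappa_\Bi \cong \fE_j P^\kappa_{\Bi^-}$ when $i = +j$. Applying the $(\fF_j, \fE_j)$ biadjunction in $\tU$, whose unit and counit are the cup and cap 2-morphisms supplied by Theorem~\ref{full-action}, rewrites the left-hand Hom as $q^{N}\dim_q \Hom(P^\kappa_{\Bi^-}, \fE_j P^{\kappa'}_{\Bi'})$ (or its $\fE_j \leftrightarrow \fF_j$ analogue) for a specific $q$-shift $N$. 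On the form side, sesquilinearity combined with $\tau(F_j) = q_j^{-1}\tilde K_j E_j$ and $\tau(E_j) = q_j^{-1}\tilde K_{-j}F_j$ performs the parallel rewrite; crucially $\fE_j P^{\kappa'}_{\Bi'}$ is the standard projective $P^{\kappa''}_{\Bi'\cdot j}$ attached to $\Bi'$ with $j$ appended upward, matching the defining recursion $E_j p^{\kappa'}_{\Bi'} = p^{\kappa''}_{\Bi'\cdot j}$, and likewise for $\fF_j$.

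The main obstacle will be twofold. First, one must choose a well-founded measure under which the reductions terminate: a naive count of strands is not monotone under~(iii), but one can for instance induct lexicographically on $(\ell, n)$, using~(iii) only to move strands from $(\Bi,\kappa)$ rightward until $n = 0$ and then applying~(ii) repeatedly. Second, and more substantively, one must verify that the grading shift $q^{N}$ in the $\fF_j \dashv \fE_j$ biadjunction matches exactly the factor $q_j^{-1}\tilde K_{\pm j}$ appearing in $\tau$, including its dependence on the weight of the target projective; this reduces to a bookkeeping computation using the degrees of the cup and cap 2-morphisms from Section~\ref{sec:khov-lauda-diagr}, together with the identities $q_j = q^{d_j}$ and $\tilde K_{\pm j}\cdot v_\mu = q^{\pm(\al_j,\mu)}v_\mu$.
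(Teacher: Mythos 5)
There is a genuine gap in the termination of your recursion, and it is precisely where the paper has to work hardest. Your moves (ii) and (iii) are the same two reductions the paper uses — strip a common rightmost red strand via Proposition \ref{add-red} and clause (3) of Proposition \ref{unique-form}, and transfer a rightmost black strand across the Hom via the $(\fF_j,\fE_j)$ biadjunction matched against $\tau$-Hermitianness — but they do not suffice on their own. Each application of (iii) replaces $P^{\kappa'}_{\Bi'}$ by $\fE_j P^{\kappa'}_{\Bi'}$, i.e.\ appends an \emph{upward} strand at the far right of the other argument. Once you have exhausted the black strands to the right of the last red strand of $(\Bi,\kappa)$, the left argument ends in a red strand while the right argument ends in a pile of upward strands, so (ii) is not applicable; and the only remaining move, adjunction, simply undoes the previous step. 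Your proposed lexicographic induction on $(\ell,n)$ therefore never reaches a computable base case. The quantity you are left with, $\dim_q\Hom(P^{0}_{\emptyset}, \fE_{j_1}\cdots\fE_{j_m}P^{\kappa'}_{\Bi'})$, is the graded dimension of a weight space of a composite functor applied to a projective, and evaluating it requires exactly the ingredient your proposal omits.

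That missing ingredient is the categorified commutation relation: $\fE_i$ commutes with red strands (relation (\ref{red-switch})) and satisfies $\fE_i\fF_{j}\cong\fF_{j}\fE_i\oplus(\text{$[\mu^i]_q$ copies of the identity when }i=j)$, which the paper records as Lemma \ref{category-commute} and which decategorifies to $[E_i,F_j]=\delta_{ij}\frac{\tilde K_i-\tilde K_{-i}}{q^{d_i}-q^{-d_i}}$. The paper's proof is organized around this: it generalizes the statement to double Stendhal triples carrying at most one upward strand located left of the $j$th downward strand, and inducts on $j$, walking the upward strand created by adjunction leftward through the diagram; each swap either passes cleanly or splits off correction terms with strictly fewer strands, and when the upward strand reaches the far left both the projective and the vector $p^{\kappa}_{\Bi}$ vanish. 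This is the engine that makes the recursion terminate and simultaneously matches the coefficient $\mu^i_{(j)}$ on the representation-theoretic side. Without this step (or an equivalent device for eliminating the upward strands you accumulate), the argument does not close. Your secondary worry about matching the grading shift in the biadjunction with the $q_j^{-1}\tilde K_{\pm j}$ in $\tau$ is legitimate but routine; the structural issue above is the real one.
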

\begin{proof}
Note that unless $(\Bi,\kappa)$ and $(\Bi',\kappa')$ have the same
weight  $\EuScript{R}$, both sides of the equation are 0; thus we need
only consider the case where they have the same weight.
  As is often true, it's easier to prove a slightly more general
  result.  Thus, we will show that the formula above holds when
  $(\Bi,\kappa)$ is allowed to be a DST with at most one upward
  strand.  
The proof will be by induction on the statement:
\begin{itemize}
\item [$(w_{\mu,j,\ell})$]  Lemma \ref{Euler-inner} holds when there
  are $\ell$ red strands, when 
  $\EuScript{R}\geq \mu$ and $(\Bi,\kappa)$ and $(\Bi',\kappa')$ are
  DSTs with at most one upward strand, which is left of the $j$th
  downward strand.   Lemma \ref{Euler-inner} also holds in all cases with
  $<\ell$ red strands.  
\end{itemize}

If $j=0$, then if there is an upward strand, it comes left all downward
strands by definition.   Thus, this DST corresponds to a trivial idempotent and
$p_{\Bi,\kappa}=0$.  Thus when
$j=0$, we need only consider the case of downward DSTs.
  In particular, $(w_{\la,0,1})$ is simply the fact that $\langle
  p_{\emptyset,0},p_{\emptyset,0}\rangle=1$, and $T^\la_\la\cong\K$.   
%Similarly, if $\ell=1$, then this is exactly Corollary \ref{Euler-form}. 

  First, we wish to show that $(w_{\mu,j,\ell})\Rightarrow
  (w_{\mu,j+1,\ell})$.  If neither $(\Bi,\kappa)$ nor $(\Bi',\kappa')$
  have a upward strand in the $j+1$st position, then the formula
  follows from $(w_{\mu,j,\ell})$.  To simplify the proof, let's
  assume that $(\Bi,\kappa)$ has such a strand and $(\Bi',\kappa')$
  does not; the other cases follow from the same argument.  Thus,
  using the notation of Lemma \ref{category-commute}, we have that
  $(\Bi,\kappa)\cong (\Bk^{(j)},\vartheta^{(j)})$ for some DST
  $(\Bk,\vartheta)$, with $i$ being the label of the upward strand.
 The reduction to $w_{\mu,j,\ell}$ follows from the match between Lemma
  \ref{category-commute} and the commutator relation   \begin{multline*}
    E_i(F_{k_{j}}p_{(k_1,\dots, k_{j})}^\kappa)=F_{k_{j}}(E_i
    p_{(k_1,\dots, k_{j})}^\kappa)+[E_i,F_{k_{j}}]p_{(k_1,\dots,
      k_{j})}^\kappa\\ =F_{k_{j}}(E_i p_{(k_1,\dots,
      k_{j})}^\kappa)+\delta_{i,k_{j}}\mu_{(j)}^i p_{(k_1,\dots,
      k_{j})}^\kappa.
  \end{multline*}
For example, if $i\neq k_j$, then
$P_{\Bk^{(j)}}^{\vartheta^{(j)}}\cong
P_{\Bk^{(j-1)}}^{\vartheta^{(j-1)}}$ and $p_{\Bk^{(j)}}^{\vartheta^{(j)}}=
p_{\Bk^{(j-1)}}^{\vartheta^{(j-1)}}$, so
\[ \dim_q\Hom(P_{\Bk^{(j)}}^{\vartheta^{(j)}},P^{\kappa'}_{\Bi'}) = \dim_q\Hom(P_{\Bk^{(j-1)}}^{\vartheta^{(j-1)}},P^{\kappa'}_{\Bi'}) =\langle
p_{\Bk^{(j-1)}}^{\vartheta^{(j-1)}},p_{\Bi'}^{\kappa'}\rangle=\langle
p_{\Bk^{(j)}}^{\vartheta^{(j)}},p_{\Bi'}^{\kappa'}\rangle\]
and the Lemma holds in this case.
Similarly, if $i=k_j$ then
\begin{align*}
  \dim_q\Hom(P_{\Bk^{(j)}}^{\vartheta^{(j)}},P^{\kappa'}_{\Bi'}) &=
  \dim_q\Hom(P_{\Bk^{(j-1)}}^{\vartheta^{(j-1)}},P^{\kappa'}_{\Bi'})+[\mu_{(j)}^i]_q
  \dim_q\Hom(P_{\Bk_{(j-1)}}^{\vartheta_{(j-1)}},P^{\kappa'}_{\Bi'})\\
& =\langle
  p_{\Bk^{(j-1)}}^{\vartheta^{(j-1)}},p_{\Bi'}^{\kappa'}\rangle+[\mu_{(j)}^i]_q
  \langle
  p_{\Bk_{(j-1)}}^{\vartheta_{(j-1)}},p_{\Bi'}^{\kappa'}\rangle\\
&=\langle
  p_{\Bk^{(j)}}^{\vartheta^{(j)}},p_{\Bi'}^{\kappa'}\rangle.
\end{align*}

Now, we wish to establish that $(w_{\mu,0,\ell})$ is implied by
$(w_{\mu+\al_i,j,\ell})+(w_{\mu-\la_\ell,j,\ell-1})$ for all $i,j$.
Assume that in
either $\Bi$ or $\Bi'$, we have that $\kappa(\ell)<n$, that is, the
rightmost strand is black, not red; for simplicity, assume this is the
case for $\Bi$.   Then we can use adjunction to write 
\[\dim_q \Hom(P^{\kappa}_{\Bi},P^{\kappa'}_{\Bi'})=\dim_q
\Hom(P^{\kappa}_{(i_1,\dots,i_{n-1})},\fE_{i_n}P^{\kappa'}_{\Bi'})= \langle
p_{(i_1,\dots,i_{n-1})}^{\kappa},E_ip_{\Bi'}^{\kappa'}\rangle=\langle
p_{\Bi}^{\kappa},p_{\Bi'}^{\kappa'}\rangle.\]  In the middle step, we
use $(w_{\mu-\al_{i_n},n+1,\ell})$.

Finally, we must consider the case where
$\kappa(\ell)=\kappa'(\ell)=n$.  In this case, we can use Proposition
\ref{add-red} to show that \[\dim_q \Hom(P^{\kappa}_{\Bi},P^{\kappa'}_{\Bi'})=\dim_q
\Hom(P^{\kappa^-}_{\Bi},P^{(\kappa')^-}_{\Bi'})= \langle
p_{\Bi}^{\kappa^-},p_{\Bi'}^{(\kappa')^-}\rangle=\langle
p_{\Bi}^{\kappa},p_{\Bi'}^{\kappa'}\rangle.\]
In the middle step, this time, we use $(w_{\mu-\la_\ell,n,\ell-1})$. 
\end{proof}

\begin{lemma}\label{red-span}
  The classes $[P^\kappa_{\Bi}]$ span $K_0(\alg^\bla)$ as a
  $\Z[q,q^{-1}]$ module.  
\end{lemma}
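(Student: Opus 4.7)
The essential observation is that the $e(\Bi,\kappa)$ for ordinary (non-divided-power) Stendhal triples form a complete collection of orthogonal idempotents in $\alg^\bla$, summing in the locally unital sense to the identity. This yields the decomposition $\alg^\bla = \bigoplus_{(\Bi,\kappa)} P^\kappa_\Bi$ as a right $\alg^\bla$-module, where the sum is over Stendhal triples (downward strands, no divided powers). Hence the corresponding projectives form a projective generator for the category of finitely generated graded projective right $\alg^\bla$-modules, and each f.g.\ graded projective $P$ is a direct summand of a finite direct sum of shifted Stendhal-triple projectives $\bigoplus P^{\kappa_i}_{\Bi_i}(n_i)$.

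Since $\alg^\bla$ is locally finite-dimensional as a graded algebra (each piece $(\alg^\bla_\al)_d$ is finite-dimensional by the basis of Proposition \ref{basis}, dot monomials being the only infinite family but contributing positive degree $2d_i$ per dot), Krull--Schmidt applies to f.g.\ graded projectives. I plan to argue by induction on a size function (e.g.\ graded Jordan--H\"older length in a fixed weight space) that each class $[P] \in K_0(\alg^\bla)$ is a $\Z[q,q^{-1}]$-linear combination of Stendhal-triple classes. Explicitly, if $Q$ is an indecomposable summand of some $P^\kappa_\Bi$ with complement $R$, then $[Q] = [P^\kappa_\Bi] - [R]$, and $R$ has strictly smaller length, so by induction $[R]$ is in the span; this propagates to show each indecomposable class, and hence each projective class, lies in the span.

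For DSTs containing upward-oriented strands, I would apply Lemma \ref{category-commute} iteratively to move each upward strand rightward: each step expresses $[P^{\kappa^{(j)}}_{\Bi^{(j)}}]$ as a $\Z[q,q^{-1}]$-linear combination of $[P^{\kappa^{(j+1)}}_{\Bi^{(j+1)}}]$ (upward strand further right) and $[P^{\kappa_{(j)}}_{\Bi_{(j)}}]$ (one upward-downward pair cancelled), reducing after finitely many steps to Stendhal triples handled above. For DSTs with divided powers, the divided-power projective appears as a direct summand of the ordinary projective obtained by expanding each $i^{(n)}$ into $n$ separate $i$-strands, with the relationship governed by standard KLR divided-power identities involving the quantum factorial $[n]_q!$; this expresses divided-power classes as $\Z[q,q^{-1}]$-combinations of ordinary Stendhal-triple classes.

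The main obstacle is the Krull--Schmidt induction in the second paragraph: one must verify that the category of f.g.\ graded projective $\alg^\bla$-modules genuinely satisfies Krull--Schmidt with a well-defined length function such that complements $R$ of indecomposable summands are strictly smaller. Everything else is essentially formal once this is in place.
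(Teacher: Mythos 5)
The decomposition $\alg^\bla=\bigoplus_{(\Bi,\kappa)}P^\kappa_\Bi$ and the Krull--Schmidt property both hold, but together they only give that every indecomposable projective occurs as a summand of some $P^\kappa_\Bi$ -- and that statement is strictly weaker than the spanning you need. Your induction does not bridge the gap: writing $[Q]=[P^\kappa_\Bi]-[R]$ for an indecomposable summand $Q$ with complement $R$, the module $R$ is shorter than $P^\kappa_\Bi$ but need not be shorter than $Q$, and its indecomposable summands may include grading shifts of $Q$ itself or other indecomposables of arbitrarily large length, so the induction on length never closes. A toy example makes the logical point: for $A=\K\times\K$ with the single ``distinguished'' projective $A$ itself, every indecomposable projective is a summand of a distinguished one, yet $[A]=[P_1]+[P_2]$ spans only a rank-one subgroup of the rank-two $K_0(A)$. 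What is missing is a unitriangularity statement: for each indecomposable $P$ you must exhibit a distinguished projective in which $P$ occurs with multiplicity one and \emph{all other summands are strictly smaller} in some well-founded order.

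That triangularity is the actual content of the paper's proof. It inducts on the number $\ell$ of red strands (the case $\ell=1$ being \cite[7.8]{LV}) and, within fixed $\ell$, on the statistic $q(P)$, the minimal value of $n-\kappa(\ell)$ over presentations of $P$ as a summand of some $P^\kappa_\Bi$. The case $q(P)=0$ reduces to $T^{\bla^-}$ via Proposition \ref{add-red}. For $q(P)>0$ one realizes $P$ as a summand of $u\circ P'$ with $u$ an indecomposable $1$-morphism of $\tU^-$ and $q(P')=0$, and shows -- using the basis of Proposition \ref{basis} to control diagrams whose strands cross the rightmost red line -- that $\End(u\circ P')$ modulo the ideal of maps factoring through projectives of smaller $q$ is a graded local ring; hence every summand of $u\circ P'$ other than $P$ has strictly smaller $q$. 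Since the span $K$ is preserved by the categorical action (Theorem \ref{full-action}), $[u\circ P']\in K$, and $[P]=[u\circ P']-[Q']$ with $[Q']\in K$ by induction. Your third paragraph (reducing upward strands via Lemma \ref{category-commute} and divided powers via summand relations) is fine but secondary; without the triangularity argument the proof does not go through.
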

\begin{proof}
  Let $K\subseteq K_0(\alg^\bla)$ denote the span of these classes
  over $\Z[q,q^{-1}]$. 
We wish to show that the class of any indecomposable projective $P$ is in the span of
  these classes.  As usual, we induct on the number of red lines; the
  case of one red line follows from
  \cite[7.8]{LV}. 

Let $q(P)$ be the minimal integer such that $P$ is a summand of
$P^\kappa_{\Bi}$ with $\kappa(\ell)=n-q(P)$; within a fixed number of
tensor factors, we will further induct
based on this statistic.

  If $q(P)=0$, then $P$ is a summand of $P^\kappa_{\Bi}$ with
  $\kappa(\ell)=n$.  In this case
  $P$ is the image of a module over $T^{\bla^-}$ under the functor
  $-\otimes_{T^{\bla^-} }e_\ell T^\bla$ induced by the isomorphism of
  Proposition \ref{add-red}.  Thus, applying the inductive hypothesis to
  $P^\kappa_{\Bi}e_\ell$ as a module over $T^{\bla^-}$, we obtain that
  $[P]\in K$.
This 
  covers the case where $q(P)=0$.

  Now, we can assume that $P$ is a summand of $u\circ P'$ for $u\in
  \tU$ and $P'$ with $q(P')=0$ which are both indecomposable.  Thus,
  it must be that
  $P'$ is the image of a primitive idempotent endomorphism $e'$ acting on
  $P^{\kappa}_{\Bi'}$ with $\Bi=(i_1,\dots, i_{\kappa(\ell)})$ and $u$ the image of a primitive idempotent
  endomorphism $e''$ acting on
  $\eF_{i_n}\cdots \eF_{i_{\kappa(\ell)+1}}\in \tU^-$.   Inside $\End(u\circ P')$,
  there is a 2-sided ideal $I$ of morphisms factoring through projective
  modules $Q$ with $q(Q)<q(P)$.  By Proposition \ref{basis}, any Stendhal diagram
  with top and bottom given by $(\Bi,\kappa)$ with a black strand that
  crosses the rightmost red strand can be written as an element of
  $I$, plus a correction term
  with fewer crossings.  Thus, the subalgebra $A$ in $\End(u\circ
  P')$ generated by Stendhal diagrams
  where no black strand crosses the rightmost red surjects onto $\End(u\circ
  P')/I$. We have an isomorphism
 $A\cong e'\End(P^{\kappa}_{\Bi'})e'\otimes e''\End(\eF_{i_n}\cdots
  \eF_{i_{\kappa(\ell)+1}})e''$;  since $e'$ and $e''$ are primitive, the
  latter is a graded local ring.  Thus, $\End(u\circ
  P')/I$ is again graded local. 
This implies that $u\circ
  P'$ has at most one summand $H$ with $q(H)\geq q(P)$.  That is,
  every summand $Q$ of $u\circ P'$ other than $P$ has $q(Q)<q(P)$.
Let $Q'$ be the kernel of the projection $u\circ P'\to P$.

  Since $K$ is invariant under the action of $U_q^\Z(\fg)$ by
  Theorem \ref{full-action}, we have
  that $[u\circ P']\in K$, and by induction $[Q']\in K$.  Thus,
  $[P]=[u\circ P']-[Q']\in K$, and we are done.
\end{proof}

\begin{thm}\label{Uq-action}
There is a canonical isomorphism $\eta:K_0(\alg^\bla)\to V_\bla^\Z$ given by
$[P^\kappa_\Bi]\mapsto  p_{\Bi}^\kappa$ intertwining the inner product defined above with the Euler form.
\end{thm}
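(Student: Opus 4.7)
The plan is to verify four properties of $\eta$ in turn: well-definedness, the isometry property, bijectivity, and compatibility with the $U_q^\Z(\fg)$-action. The engine driving everything is Lemma \ref{Euler-inner}.

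For well-definedness, recall from Lemma \ref{red-span} that the classes $\{[P^\kappa_\Bi]\}$ span $K_0(T^\bla)$ as a $\Z[q,q^{-1}]$-module (allowing divided-power sequences via the idempotents of Definition \ref{divided-power-idempotent}). Given any $\Z[q,q^{-1}]$-linear relation $\sum_a c_a [P^{\kappa_a}_{\Bi_a}] = 0$, I would apply $\dim_q \Hom(-, P^{\kappa'}_{\Bi'})$ and use Lemma \ref{Euler-inner} to obtain
\[
\Big\langle \sum_a c_a\, p^{\kappa_a}_{\Bi_a},\; p^{\kappa'}_{\Bi'} \Big\rangle \;=\; 0
\]
for every $(\Bi', \kappa')$. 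The vectors $p^{\kappa'}_{\Bi'}$ (over all divided-power sequences) include every tensor $F^{(\vartheta_1)}_{I_1} v_{\la_1} \otimes \cdots \otimes F^{(\vartheta_\ell)}_{I_\ell} v_{\la_\ell}$, so by construction of the integral form they span $V_\bla^\Z$. Non-degeneracy from Proposition \ref{unique-form} then forces $\sum_a c_a\, p^{\kappa_a}_{\Bi_a} = 0$, so $\eta$ is well-defined, and the same calculation shows $\eta$ is an isometry.

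Surjectivity is immediate, since the $p^\kappa_\Bi$ span $V_\bla^\Z$ and all lie in the image. For injectivity, I would pass to the $\Q(q)$-extension: $K_0(T^\bla)$ is free over $\Z[q,q^{-1}]$ on classes of indecomposable graded projectives (up to grading shift), hence embeds in $K_0(T^\bla) \otimes_{\Z[q,q^{-1}]} \Q(q)$, and any $x \in \ker \eta$ would satisfy $\langle x, y\rangle_{\mathrm{Euler}} = \langle \eta(x), \eta(y)\rangle = 0$ for all $y$, contradicting non-degeneracy of the $\Q(q)$-extended Shapovalov form unless $x=0$. Equivalently, Lemma \ref{Euler-inner} converts injectivity into non-degeneracy of the Gram matrix of the Euler pairing on the spanning classes, which is exactly non-degeneracy of the Shapovalov form on $V_\bla^{\Q(q)}$.

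Compatibility with the $U_q^\Z(\fg)$-action is essentially built in: Theorem \ref{full-action} produces functors $\fF_i$, $\fE_i$ which append an $i$-labelled strand at the right, and the inductive definition of $p^\kappa_\Bi$ precisely matches, sending $[\fF_i P^\kappa_\Bi] \mapsto F_i\, p^\kappa_\Bi$. Alternatively, one recognises $\eta$ as the unique $U_q^\Z(\fg)$-module map sending the trivial idempotent class $[P^{\kappa_0}_\emptyset]$ to $v_{\la_1} \otimes \cdots \otimes v_{\la_\ell}$, using Proposition \ref{add-red} to reduce inductively on $\ell$ and matching the three axioms of Proposition \ref{unique-form}. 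The main obstacle is the integral spanning claim for $V_\bla^\Z$: over $\Q(q)$ it is an immediate PBW argument on each tensor factor, but over $\Z[q,q^{-1}]$ one genuinely needs divided powers, which is precisely what Definition \ref{divided-power-idempotent} provides. Once that point is in place the remainder assembles mechanically from Lemmas \ref{Euler-inner}, \ref{red-span} and Proposition \ref{unique-form}.
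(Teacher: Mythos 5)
Your construction of $\eta$, the isometry property, and surjectivity all go through exactly as in the paper (the paper phrases well-definedness by realizing $V_\bla^\Z$ as the quotient of the formal span of the $p_\Bi^\kappa$ by the radical of the form, which is the same argument as your pairing computation). The problem is the injectivity step, which as written is circular. From $x\in\ker\eta$ you correctly deduce $\langle x,y\rangle_{\mathrm{Euler}}=\langle\eta(x),\eta(y)\rangle=0$ for all $y$, i.e.\ $x$ lies in the radical of the Euler form on $K_0(\alg^\bla)\otimes\Q(q)$. But this does not contradict non-degeneracy of the Shapovalov form: that form lives on $V_\bla$, and the Euler form is by construction its \emph{pullback} along $\eta$, so its radical automatically contains $\ker\eta$ and is non-degenerate only if $\eta$ is already known to be injective. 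Likewise, the Gram matrix of the Euler pairing on the spanning classes $[P^\kappa_\Bi]$ equals the Gram matrix of the Shapovalov form on the (linearly dependent) vectors $p_\Bi^\kappa$; its kernel computes the relations holding in $V_\bla$, not the relations holding in $K_0$. Nothing in your argument rules out a ``phantom'' summand of $K_0$ lying in the radical of the Euler form, which is exactly what injectivity must exclude. Non-degeneracy of the Euler (Cartan) form on $K_0(\alg^\bla)$ is not available at this point in the paper — $\alg^\bla$ does not have finite global dimension in general, and the standardly stratified structure is only established later.

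What the paper does instead is a counting argument: it shows the rank of $K_0(\alg^\bla)$, i.e.\ the number of indecomposable projectives up to shift, is at most $\prod_{j}\dim V_{\la_j}$, so the surjection $\eta$ onto the free module $V_\bla^\Z$ of that rank must be an isomorphism. This bound is proved by induction on $\ell$: the base case is Proposition \ref{simple-GG} (itself a structural argument about integrable quotients of Verma modules, not a form argument), and the inductive step shows every indecomposable projective is detected by a unique pair $(u,Q)$ with $Q$ supported on the first $\ell-1$ red strands and $u$ an indecomposable $1$-morphism of $\tU^-$ not factoring through $\eF_i^{\la_\ell^i+1}$; such $u$ are counted by the crystal $\mathcal{B}(\la_\ell)$ via Lauda--Vazirani \cite[7.8]{LV}. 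Some input of this kind — a classification of simples, a triangularity/stratification statement, or an independent proof that the Cartan matrix is non-singular — is genuinely needed to close the gap; it cannot be extracted from Lemma \ref{Euler-inner} and non-degeneracy of the Shapovalov form alone.
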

\begin{proof}
First, we note that by the non-degeneracy of $\langle-,-\rangle$, we
can interpret $V^\Z_\bla$ as the quotient of the formal span of
$p_{\Bi}^\kappa$ over
$\Z[q,q^{-1}]$ modulo the kernel of the induced form.  

Thus, if we find any other $\Z[q,q^{-1}]$-module $W$ equipped with a bilinear form $\{-,-\}$,
generated by elements $q_{\Bi}^\kappa$ such that \[\{q_{\Bi}^\kappa,q_{\Bi'}^{\kappa'}\}=\langle
p_{\Bi}^{\kappa},p_{\Bi'}^{\kappa'}\rangle,\] we immediately have a
map $\eta\colon W\to V$ which sends $q_{\Bi}^\kappa\mapsto
p_{\Bi}^{\kappa}$ such that $\{-,-\}=\eta^*\langle
-,-\rangle$.  
 
 By Lemma \ref{Euler-inner}, the Grothendieck group $K_0(\alg^\bla) $
 and the classes $[P^\kappa_{\Bi}]$ are exactly such a module and set
 of vectors.  Thus, we have a  map $\eta$ as desired, which is
 surjective.

In order to prove injectivity, we need to show that the rank of
$K_0(\alg^\bla)$ is no greater than $V_\bla^\Z$.  Again, we induct on
the number of tensor factors; we have already established the case
where $\ell=1$ in Proposition \ref{simple-GG}.

Thus, by our inductive hypothesis, we can assume that there are precisely $\prod_{j=1}^{\ell-1}
\dim V_{\la_j}$ indecomposable projectives with $q(P)=0$.  Every 
indecomposable projective $P$ appears in $u\circ Q$ for $q(Q)=0$.  As shown in Lemma
\ref{red-span}, there are unique indecomposable $u$ and $Q$ such that
$P$ is the unique summand of $u\circ Q$ with $q(P)=n-\kappa(\ell)$
(that is, the number of black termini in $u$). 

Consider a single index $i$. To simplify notation, let $m=\la_{\ell}^i$.  Note that the algebra
$T^{\la_\ell}_{(m+1)\al_i}=0$, that is, the identity of
$\tilde{T}^{\la_\ell}_{(m+1)\al_i}$ can be written as a sum of
violating diagrams.  Applying the map $\wp$ to this sum, we can write
the idempotent $e(\Bi,\kappa)$ for
a Stendhal triple with $\kappa(\ell)=n-m-1$ and $i_{n-m-1}=\cdots
=i_n=i$ (that is, it's last black block is $m+1$ instances of $i$) in
terms of diagrams
factoring through Stendhal triples with $\kappa(\ell)\geq n-m$.  That
is, the corresponding projective $P^\kappa_\Bi$ is a sum of
projective modules $P$
with $q(P)\leq m$.

Now, assume $u$ is a summand of $u'\circ
\eF_i^{m+1}$, with $p$ as before.  As argued above, every
summand $P$ of $u\circ Q$  has $q(P)<p$.  That is, we may assume that $u\colon \mu'\to \mu$ is a summand of
$\eF_{i_n}\cdots \eF_{i_{\kappa(\ell)+1}}$ but not a summand of  $u'\circ
\eF_i^{\la_{\ell}^i+1}$ for any index $i$. Such a 1-morphism is the image of a primitive
idempotent $e$ in  the KLR algebra $R_{\mu'-\mu}$ whose corresponding
simple quotient $L=R_{\mu'-\mu}e/\operatorname{rad}(R_{\mu'-\mu}e)$
satisfies $\Hom(Re(\Bj),L)=0$ if $j_1=\cdots =j_{\la_\ell^i+1}=i$ for
all $i$.  In
the notation of \cite{LV}, this is the assertion that
$\epsilon_i^*(L)\leq \la_\ell^i$.   By \cite[7.8]{LV}, such simples are in
bijection with the crystal of the representation $V_{\la_\ell}$, so
the number of them is $\dim V_{\la_{\ell}}$.   

For every indecomposable projective $P$, there is a unique $u$ as
above and $Q$ with $q(Q)=0$, such that $P$ is a
summand of $u\circ Q$ and every other summand has $q<q(P)$.  In
particular, no pair $u$ and $Q$ can correspond to two indecomposable projectives, so
the number of indecomposable projectives is bounded above by the number of such
pairs.   By induction, there are
 $\prod_{j=1}^{\ell-1}
\dim V_{\la_j}$ indecomposables with $q(Q)=0$ and $\dim
V_{\la_{\ell}}$ such $u$.  
Thus, we have that there are no more than $\prod_{j=1}^{\ell}
\dim V_{\la_j}$ indecomposable projectives, as desired.
\end{proof}

We can easily extend this statement to the category $\tcata^\bla=\tilde{T}^\bla\modu$.
The $\Z[q,q^{-1}]$-module $U_q^{-,\Z}\otimes
  V_{\bla}^\Z$ has left and right
  actions of $U_q^{-,\Z}$  given by
  \begin{align*}
    F_i\cdot (u\otimes w_1\otimes \cdots \otimes w_\ell)&=F_iu\otimes
    w_1\otimes \cdots \otimes w_\ell \\
    (u\otimes w_1\otimes \cdots \otimes w_\ell)\cdot F_i&=uF_i\otimes
    K^{-1}_i(w_1\otimes \cdots \otimes w_\ell)+u\otimes F_i(w_1\otimes
    \cdots \otimes w_\ell)
  \end{align*}
We can define vectors $\tilde{p}_{\Bi}^\kappa$ defined by the same inductive
rules as $p_\Bi^\kappa$, except that $p_{\emptyset}^{\emptyset}$ is by
definition the generator of the trivial representation, and
$\tilde{p}_{\emptyset}^{\emptyset}$ is the element $1$ in
$U_q^{-,\Z}$.  Thus, if $\bla=\emptyset$, then
$p_{\emptyset}^{\Bi}=F_{i_n}\cdots F_{i_1}\in U_q^{-,\Z}$.  Let
$\tilde{\fF}_i^*,\tilde{\fI}_\la^*$ be the conjugates of
$\tilde{\fF}_i,\tilde{\fI}_\la$ by the algebra reflecting diagrams
through a horizontal line (and multiplying each crossing of strands
with the same label by $-1$).

\begin{prop}\label{tilde-iso}
  We have an isomorphism \[K^0_q(\tilde{T}^\bla)\cong U_q^{-,\Z}\otimes
  V_{\bla}^\Z.\]
This isomorphism sends 
\newseq
\[\subeqn \label{lr-action-1}[\tilde{\fF}_i^*](u\otimes w)\mapsto F_i\cdot(u\otimes w) \quad
[\tilde{\fF}_i](u\otimes w)\mapsto (u\otimes w)\cdot
F_i  \]
\[\subeqn \label{lr-action-2}[\tilde{\fI}_\la^*](u\otimes w)\mapsto u_{(1)}\otimes (u_{(2)}v_{\la}\otimes
w) \quad [\tilde{\fI}_\la](u\otimes w)\mapsto  u\otimes w\otimes v_{\la} \]
\end{prop}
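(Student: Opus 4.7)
The strategy is to mimic the proof of Theorem \ref{Uq-action}, adapted to the larger algebra $\tilde{T}^\bla$. The key idea is that each DST $(\Bi,\bla,\kappa)$ splits its black strands into a ``violating block'' $\Bi_0=(i_1,\dots,i_{\kappa(1)})$ lying to the left of all red strands and the remaining strands $(i_{\kappa(1)+1},\dots,i_n)$ forming a DST for the quotient $T^\bla$. This splitting is exactly what matches the tensor product structure of $U_q^{-,\Z}\otimes V_{\bla}^\Z$: the violating block will correspond to the $U_q^{-,\Z}$ factor, and the rest to the $V_\bla^\Z$ factor.

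I would define $\eta$ by $[\tilde{P}_\Bi^\kappa]\mapsto F_{i_{\kappa(1)}}\cdots F_{i_1}\otimes \tilde{p}_{(i_{\kappa(1)+1},\dots,i_n)}^{\kappa^-}$, where $\kappa^-(m)=\kappa(m)-\kappa(1)$, and then prove that these classes span $K^0(\tilde T^\bla)$ by an induction analogous to Lemma \ref{red-span}, treating the violating block as an ``external'' KLR contribution via the map $\wp_{\mu,0,\dots,0}$ of \eqref{wp-map} and the non-violating part via Theorem \ref{Uq-action}. For injectivity I would prove the analogue of Lemma \ref{Euler-inner}: endow $U_q^{-,\Z}\otimes V_\bla^\Z$ with the tensor product of Lusztig's form on $U_q^-$ and the form of Proposition \ref{unique-form}, show that it is non-degenerate and characterized by an analogue of Proposition \ref{unique-form}, then verify that graded dimensions of Hom spaces between the $\tilde{P}_\Bi^\kappa$ compute this form using the basis of Proposition \ref{basis}.

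For the action formulas, I would verify them case by case by pushing test projectives through the corresponding bimodules and straightening diagrams via Lemma \ref{modulo-smaller}. The formula for $\tilde{\fF}_i^*$ is essentially immediate: a strand added at the far left is automatically violating, and so its class is left multiplication by $F_i$ on the $U_q^{-,\Z}$ factor. The $\tilde{\fI}_\la$ formula is equally direct since it appends a red strand at the far right without interacting with any violating strand. For $\tilde{\fF}_i$, one expands the resulting class using the basis: after isotopy, the new rightmost strand either stays to the right of all reds (giving the summand $u\otimes F_iw$) or is slid leftward past all reds to become violating (giving $uF_i\otimes \tilde{K}_{-i}w$, with the grading shift of $\tilde{K}_{-i}$ produced by the accumulated cost of \eqref{cost} at each red crossing). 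The two possibilities reproduce the right coproduct action.

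The main obstacle is the formula for $\tilde{\fI}_\la^*$, where the coproduct $u\mapsto u_{(1)}\otimes u_{(2)}v_\la$ appears. Here adding a red strand at the far left interacts with every existing violating strand, each of which may either remain violating with respect to the new red or cross it at the cost of $\la^i$ dots. The resulting binary choice on each generator of the violating block must be shown to reproduce the iterated coproduct on $F_{\Bi_0}\in U_q^-$, using $\Delta(F_i)=F_i\otimes \tilde{K}_{-i}+1\otimes F_i$. Making this precise requires an inductive argument on the length of $\Bi_0$, carefully tracking how partial straightenings in Proposition \ref{basis} produce the signs and $q$-shifts matching the coproduct, and this is where the delicate bookkeeping will lie.
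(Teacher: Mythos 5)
There is a genuine gap, and it sits exactly where your argument has to do its real work: the choice of bilinear form and of target elements. The Euler form on $K^0_q(\tilde{T}^\bla)$ is \emph{not} the tensor product of Lusztig's form on $U_q^{-,\Z}$ with the form of Proposition \ref{unique-form} on $V_\bla^\Z$ under the naive splitting into violating block and remainder. Concretely, for $\fg=\mathfrak{sl}_2$ and $\bla=(\omega)$, the projectives $\tilde P^{\kappa}_{(1)}$ with $\kappa(1)=1$ (black strand left of the red) and $\kappa'(1)=0$ (black strand right of the red) have nonzero graded Hom space --- the single red/black crossing is a basis element by Proposition \ref{basis} --- yet their naive images $F_1\otimes v_\omega$ and $1\otimes F_1v_\omega$ lie in different bi-weights and pair to zero under any tensor-product form. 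For the same reason your assignment $[\tilde P^\kappa_\Bi]\mapsto F_{i_{\kappa(1)}}\cdots F_{i_1}\otimes p^{\kappa^-}_{(i_{\kappa(1)+1},\dots,i_n)}$ cannot be the isomorphism of the proposition: if $[\tilde\fF_i]$ acts by $(u\otimes w)\cdot F_i=uF_i\otimes\tilde K_{-i}w+u\otimes F_iw$ and $[P_\emptyset]\mapsto 1\otimes v_{\la_1}\otimes\cdots\otimes v_{\la_\ell}$, then $[\tilde P^{\kappa'}_{(1)}]$ is forced to go to $q^{-1}F_1\otimes v_\omega+1\otimes F_1v_\omega$, not to $1\otimes F_1v_\omega$. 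Your own (correct) two-term expansion of $\tilde\fF_i$ already contradicts the naive splitting: the classes $\tilde p^\kappa_\Bi$ genuinely carry cross terms between the two tensor factors, and whatever form matches the Euler form must mix the factors as well.

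The paper circumvents writing this form down intrinsically by a stabilization argument: for $\la_0$ with all $\la_0^i\geq 1$, the functors $\mathfrak{r}_N$ (add a red strand labeled $N\la_0$ at the far left and impose the violating relation) are full and, in each fixed degree and weight, faithful for $N\gg0$; the companion maps are $\mathfrak{q}_N(u\otimes w)=uv_{N\la_0}\otimes w$. The form on $U_q^{-,\Z}\otimes V_\bla^\Z$ is then \emph{defined} as $\lim_{N\to\infty}\langle\mathfrak{q}_N(-),\mathfrak{q}_N(-)\rangle_{V^\Z_{N\la_0,\bla}}$, the Euler form is shown to be the corresponding limit of Euler forms, and Theorem \ref{Uq-action} is imported at each finite level; this limit form is exactly a quasi-R-matrix--twisted pairing, not a tensor product. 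If you want to keep your direct route you must make these two replacements (limit form; inductively defined $\tilde p^\kappa_\Bi$ as targets). One simplification worth adopting in either case: since $\tilde\fF_i^*$ and $\tilde\fI_\la^*$ commute with $\tilde\fF_i$ and $\tilde\fI_\la$, and likewise for the proposed maps on the target, the starred formulas need only be checked on $P_\emptyset$, which eliminates the delicate coproduct bookkeeping you flag for $\tilde\fI_\la^*$.
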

\begin{proof}
We hope to find an isomorphism
  $K^0_q(\tcata^\bla)\to U_q^{-,\Z}\otimes
  V_{\la_1}^\Z\otimes \cdots \otimes V_{\la_\ell}^\Z$ which sends
  $[P_{\Bi}^\kappa]\mapsto p_{\Bi}^\kappa$.
In order to check that such a map exists, we use the fact that both groups have
non-degenerate forms which match.  For any fixed dominant weight $\la_0$ with
$\la_0^i\geq 1$ for all $i$, we have a
functor $\mathfrak{r}_N\colon \tcata^\bla\to \cata^{N\la_0,\bla}$ given by applying
$\tilde{\fI}_\la^*$ and then adding the violating relation.  This
functor is full, and for each degree $d$ and fixed weight space $\mu$, there is a bound $N(d,\mu)$ such
that if $N\geq N(d,\mu)$, then this functor is also faithful is degree
$d$.  In particular, no projective in $\tcata^\bla$ is killed for all
$N$.  This shows that the classes $[P^\Bi_\kappa]$ span the
Grothendieck group $K^0_q(\tilde{T}^\bla)$, since the same is true of $K^0_q(T^{N\la_0,\bla})$.

Furthermore, on the level of Euler forms, we have 
\[\langle [M],[M']\rangle_{\tcata^\bla}=\lim_{N\to \infty}\langle
[\mathfrak{r}_N M],[\mathfrak{r}_N M']\rangle_{\cata^{N\la_0,\bla}}\]
where the convergence is in power series with the $q$-adic topology.
 For any weight vector  $m\in K_q^0(\tilde{T}^\bla)$, we can consider the
 minimal degree of a
 non-vanishing term of $\langle
\mathfrak{r}_N m,[\mathfrak{r}_N M']\rangle_{\cata^{N\la_0,\bla}}$
for any fixed $M'$.  This valuation is is bounded above as $N$ varies, since each weight space is finite
rank over $\Z[q,q^{-1}]$.  Since the classes $[P_{\Bi}^\kappa]$ span
$K_q^0(\tilde{T}^\bla)$, we must have that $\langle
\mathfrak{r}_N m,[\mathfrak{r}_N
P_{\Bi}^\kappa]\rangle_{\cata^{N\la_0,\bla}}\neq 0$ for some
$\Bi,\kappa$ for each $N$.  While $\Bi,\kappa$ might depend on $N$,
since there are finitely many options, there is at least one that
gives a non-zero answer for infinitely many $N$. The upper bound on
valuation shows that the limit $\lim_{N\to \infty}\langle
\mathfrak{r}_N m,[\mathfrak{r}_N
P_{\Bi}^\kappa]\rangle_{\cata^{N\la_0,\bla}}\neq 0$ as well.  
This form is thus non-degenerate.

Similarly, $U_q^{-,\Z}\otimes
  V_{\bla}^\Z$ is endowed with a form
  defined a similar limit.   Let $\mathfrak{q}_N\colon U_q^{-,\Z}\otimes
  V_{\bla}^\Z \to V_{N\la_0,\bla}^\Z$ such that $\mathfrak{q}_N (u\otimes
  w)= uv_{N\la_0}\otimes w$.  We define a form by 
\[\langle u\otimes w,u'\otimes w'\rangle_{U_q^{-,\Z}\otimes
  V_{\bla}^\Z} = \lim_{N\to \infty}\langle
\mathfrak{q}_N(u\otimes w),\mathfrak{q}_N(u'\otimes w')\rangle_{
  V_{N\la_0,\bla}^\Z} \]  where the form on $V_{N\la_0,\bla}^\Z$ is that
given in Theorem \ref{unique-form}.  A similar argument
gives the non-degeneracy of this form.  

By Theorem \ref{Uq-action}, we have an isomorphism $V_{N\la_0,\bla}^\Z\cong K_q^0(T^{N\la_0,\bla})$ of free
$\Z[q,q^{-1}]$ modules endowed with sesquilinear forms such that
$[\mathfrak{r}_N P^\kappa_\Bi]\mapsto \mathfrak{q}_N p^\kappa_\Bi$.  
Thus, we have that:
\begin{multline}\label{form-match}
\langle [P^\kappa_\Bi],[P^{\kappa'}_{\Bi'}]\rangle_{\tcata^\bla}=\lim_{N\to \infty}\langle
[\mathfrak{r}_N P^\kappa_\Bi],[\mathfrak{r}_N P^{\kappa'}_{\Bi'}]\rangle_{\cata^{N\la_0,\bla}}\\=\lim_{N\to \infty}\langle
\mathfrak{q}_N p^\kappa_\Bi,\mathfrak{q}_N p^{\kappa'}_{\Bi'}\rangle_{ V_{N\la_0,\bla}}=\langle
p^\kappa_\Bi, p^{\kappa'}_{\Bi'}\rangle_{U_q^-\otimes
  V_{\bla}}
\end{multline}

As in the proof of Theorem \ref{Uq-action}, we can view
$K^0_q(\tilde{T}^\bla)$ and $U_q^{-,\Z}\otimes
  V_{\bla}^\Z$ as quotients of the
  free span of $[P_\Bi^\kappa]$ and $p_\Bi^\kappa$ by the kernel of
  these forms, so  \eqref{form-match} shows that we have the desired
  isomorphism.  Compatibility with $\tilde{\eF}_i$ and
  $\tilde{\fI}_\la$ is obvious.  The functors $\tilde{\eF}_i^*$ and
  $\tilde{\fI}_\la^*$ commute with $\tilde{\eF}_i$ and
  $\tilde{\fI}_\la$, and similarly for the maps we intend to match
  them with in equations (\ref{lr-action-1}--\ref{lr-action-2}).
  Thus, need only check that they give the right
  answer when acting on $P_\emptyset$, which is clear. 
\end{proof}

\section{Standard modules}
\label{sec:standard}

\subsection{Standard modules defined}
\label{sec:standard-def}

When analyzing the structure of re\-pres\-en\-ta\-tion-theoretic categories,
such as the categories $\cO$ appearing in Stroppel's construction of
Khovanov homology \cite{Str06b}, a crucial role is played by the Verma modules and their analogues. The property of ``having objects like Verma
modules'' was formalized by Cline-Parshall-Scott as the property of
being {\bf quasi-hereditary} \cite{CPS88}.  Unfortunately, this is too
strong of an assumption for us; as we noted earlier, the cyclotomic
QHA is Frobenius, and thus very far from being
quasi-hereditary (any ring which is both Frobenius and quasi-hereditary is semi-simple).

Luckily, our categories satisfy a weaker condition: they are {\bf
  standardly stratified}, as defined by the same authors \cite{CPS96}.
To show this, we must construct a collection of modules which are
called {\bf standard}, and show that projectives have a filtration by
these modules compatible with a preorder.

From another perspective, given the isomorphism between
$K^0(\alg^\bla)\cong V_\bla^\Z$, it is natural to expect that pure
tensors in $V_\bla^\Z$ correspond to modules, and that things like the
definition of the coproducts
\begin{multline}\label{Ecp}
 \Delta^{(\ell)}(E_i)=E_i\otimes
    1\otimes \cdots \otimes 1+\tilde K_i\otimes E_i\otimes 1\otimes
    \cdots \otimes 1+ \cdots + \\ \tilde K_i\otimes\cdots \otimes
    \tilde K_i\otimes E_i \otimes 1+\tilde K_i\otimes \cdots\otimes
    \tilde K_i\otimes E_i.\end{multline}
  \begin{multline}\label{Fcp}
    \Delta^{(\ell)}(F_i)=F_i\otimes \tilde K_{-i} \otimes \cdots
    \otimes \tilde K_{-i}+1\otimes F_i\otimes \tilde K_{-i} \otimes
    \cdots \otimes \tilde K_{-i}+\cdots +\\ 1\otimes \cdots \otimes
    1\otimes F_i\otimes \tilde K_{-i}+ 1\otimes \cdots \otimes
    1\otimes F_i.
  \end{multline}
will have a categorical interpretation.  Standard modules are the key
to both these questions.

We define a preorder on Stendhal triples $(\Bi,\kappa)$'s by $(\Bi,\kappa)\leq (\Bi',\kappa')$ if
\begin{equation*}
\sum_{k\leq\kappa(j)}\al_{i_k}\leq\sum_{k\leq\kappa'(j)}\al_{i_k'}\quad\text{ for all $j\in[1,\ell]$}.
\end{equation*}
Since there is a danger of sign confusion, let us emphasize that we
are summing {\it positive} roots here, since we are using the sign
conventions of a Stendhal triple.  Put more informally, one gets
higher in this order as black strands move left and red strands move
right.  

This preorder can be packaged as the dominance order for a function $\bal_{\Bi,\kappa}\colon[0,\ell]\to \rola(\fg)$ which we call a {\bf root function} given by
\begin{equation*}
  \bal_{\Bi,\kappa}(k)=\sum_{\kappa(k-1)<j\leq \kappa(k)}\al_{i_j}.
\end{equation*} Note that this preorder is
entirely insensitive to permutations of the black strands which do not
cross any red strands.

\begin{defn}
  Let $U^\kappa_\Bi\subset \tilde P_{\Bi}^{\kappa}$ be the submodule
  generated by the image of all maps $\tilde P_{\Bi'}^{\kappa'}\to
  \tilde P_{\Bi}^{\kappa}$ with $(\Bi',\kappa')\geq (\Bi,\kappa)$.  We define
  $ S^\kappa_{\Bi}=\tilde P_{\Bi}^{\kappa}/U^\kappa_\Bi$ to be the {\bf
    standard module} for $\kappa$ and $\Bi$.

  If $\kappa(1)=0$, then the action of $\tilde\alg^\bla$ on  $
  S^\kappa_{\Bi}$ factors
  through the natural map $\tilde \alg^\bla\to \alg^\bla$, and we will
  typically consider $ S^\kappa_{\Bi}$ as a module over this smaller algebra.
\end{defn}
Recall that according to our conventions, elements of the algebra
$\tilde{\alg}^\bla$ act at the bottom of the diagram.  Thus, the
submodule $U^\kappa_\Bi$ is the span of all diagrams where the slice
at the top is given by $(\Bi,\kappa)$ and somewhere in the middle of
the diagram is given by $(\Bi',\kappa')\geq(\Bi,\kappa)$.

  By convention, we call a red/black crossing where black strands go
  from NW to SE {\bf left} and the mirror image of such a crossing
  {\bf right}.
Note that this terminology does not apply to black/black crossings; if
we call a crossing left or right we are implicitly assuming it is black/red.
  
\begin{equation}
    \begin{tikzpicture}
      \node at (-3,0) [label=below:{a ``left'' crossing}]
      {\begin{tikzpicture} [very thick,scale=2.3] \draw[wei] (0,0) --
          (.3,.3); \draw (.3,0) -- (0,.3);
        \end{tikzpicture}}; \node at (3,0)[label=below:{a ``right''
        crossing}] {\begin{tikzpicture} [very thick,scale=2.3]
          \draw[wei] (.3,0) -- (0,.3); \draw
          (0,0) -- (.3,.3); 
        \end{tikzpicture}};
    \end{tikzpicture}\label{eq:left-crossing}    
  \end{equation}

We can alternatively define $U^{\kappa}_{\Bi}$ as the submodule
generated by all diagrams with at least one right crossing and no left crossings.
\begin{defn}
  We will call a black strand that makes a right crossing above all
  left crossings {\bf standardly violating}, and a diagram containing
  such a strand {\bf standardly violated}.
\end{defn}

 Let $e_\bal$ be the idempotent which is 1 on
  projectives $P^\kappa_\Bi$ with root function $\bal_{\Bi,\kappa}=\bal$.  We let
  $S_\bal$ be the standard quotient of the projective $e_\bal
  \alg^\bla$, that is, its quotient by the submodule
  generated by the image of all maps $P_{\Bi'}^{\kappa'}\to
  e_\bal \alg^\bla$ with $\bal_{\Bi',\kappa'}> \bal$.  Recall that we have a map $\wp_{\bal}\colon
  R_{\bal(1)}\otimes \cdots \otimes R_{\bal(\ell)}\to e_{\bal}\tilde{T}^\bla e_{\bal}$ defined in
  Section \ref{sec:basis}.  Let $\mu_i=\la_i-\bal(i)$.

  \begin{prop}\label{standard-end}
    The map $\wp_{\bal}$ induces an algebra map \[R_{\bal(0)}\otimes
    \alg_{\mu_1}^{\la_1}\otimes\cdots
\otimes \alg_{\mu_\ell}^{\la_\ell}\to \End_{\tilde{\alg}^\bla}(S_\bal).\]
  \end{prop}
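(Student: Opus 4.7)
The plan is to realize the desired map via left multiplication on $S_{\bal}$ by elements in the image of $\wp_{\bal}$. Recall that $S_{\bal}=e_{\bal}\alg^{\bla}/U_{\bal}$ as a right $\alg^{\bla}$-module, where $U_{\bal}$ is generated by the image of all maps $P^{\kappa'}_{\Bi'}\to e_{\bal}\alg^{\bla}$ with $\bal_{\Bi',\kappa'}>\bal$; any $t\in e_{\bal}\alg^{\bla}e_{\bal}$ with $tU_{\bal}\subseteq U_{\bal}$ therefore induces a right $\alg^{\bla}$-linear endomorphism of $S_{\bal}$ by $\bar x\mapsto\overline{tx}$, and this assignment is an algebra homomorphism. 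Since $\wp_{\bal}$ is already an algebra map $R_{\bal(0)}\otimes R_{\bal(1)}\otimes\cdots\otimes R_{\bal(\ell)}\to e_{\bal}\tilde{\alg}^{\bla}e_{\bal}$, it suffices to check two things: (i) the image of $\wp_{\bal}$ (pushed into $\alg^{\bla}$) preserves $U_{\bal}$, and (ii) the cyclotomic ideal of each $R_{\bal(k)}$ for $k\geq 1$ lands in $U_{\bal}$, so that the action factors through $\alg^{\la_{k}}_{\mu_{k}}$.

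For (i), the key structural feature of $\wp_{\bal}$ is that it places KLR diagrams inside the black blocks between red strands, so its image consists of Stendhal diagrams with no red-black crossings. For a generator $f\in e_{\bal}\alg^{\bla}e(\Bi',\kappa')$ of $U_{\bal}$ (with $\bal_{\Bi',\kappa'}>\bal$) and $t$ in the image of $\wp_{\bal}$, the product $tf\in e_{\bal}\alg^{\bla}e(\Bi',\kappa')$ again defines a morphism $P^{\kappa'}_{\Bi'}\to e_{\bal}\alg^{\bla}$ with source root function $\bal_{\Bi',\kappa'}>\bal$, and hence lies in $U_{\bal}$; right-closure of $U_{\bal}$ then gives $tU_{\bal}\subseteq U_{\bal}$. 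This step is essentially formal.

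For (ii), the cyclotomic ideal of $R_{\bal(k)}$ is generated by elements $y_{1}^{\la_{k}^{i}}e(\Bi)$, where $i$ is the leftmost label of $\Bi$, and $\wp_{\bal}$ sends such a generator to the Stendhal diagram with $\la_{k}^{i}$ dots on the leftmost strand of the $k$-th black block. Because this strand sits immediately to the right of the $k$-th red strand, I would apply the relevant form of relation \eqref{cost} to replace the dots by a bigon in which the strand loops leftward around the $k$-th red strand and returns. Tracing the bigon from bottom to top along this black strand, one first crosses the red strand moving right-to-left (a left crossing) and then moving left-to-right (a right crossing), so the right crossing lies above the unique left crossing on the strand. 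The strand is therefore standardly violating, the diagram is standardly violated, hence lies in $U_{\bal}$, and it vanishes in $S_{\bal}$.

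The main point requiring care is the orientation check in (ii): one must identify which of the two forms of \eqref{cost} applies to a dotted strand lying to the right of the neighboring red strand, and verify that it produces a bigon with the right crossing above (and not below) the left crossing, as demanded by the definition of standardly violating. The absence of any cyclotomic condition on the leftmost factor $R_{\bal(0)}$ is then consistent with the observation that $S_{\bal}=0$ whenever $\bal(0)\neq 0$, since in that case every idempotent summand of $e_{\bal}$ is a violating Stendhal diagram and hence already zero in $\alg^{\bla}$.
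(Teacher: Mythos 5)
Your proof is correct and follows essentially the same route as the paper: the paper likewise observes that left (top) multiplication by $\wp_{\bal}(r)$ preserves $U_{\bal}$ because its composite with a map from a higher projective is still such a map, and it disposes of the cyclotomic ideals of the factors $R_{\bal(k)}$, $k\geq 1$, by noting (via Theorem \ref{cyclotomic}, whose proof is exactly your relation~\eqref{cost} computation) that their images factor through strictly higher idempotents. One small correction to your closing aside: the proposition is about $\End_{\tilde{\alg}^\bla}(S_\bal)$ with $S_\bal$ the quotient of $e_\bal\tilde{\alg}^\bla$ (not of $e_\bal\alg^\bla$), so $S_\bal\neq 0$ when $\bal(0)\neq 0$ and the factor $R_{\bal(0)}$ acts genuinely nontrivially --- this matters for Proposition \ref{semi-orthogonal}, where the induced map is shown to be an isomorphism.
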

  \begin{proof}
     First, we note that left (top) multiplication by $\wp_{\bal}$ induces an action of
     $R_{\bal(0)}\otimes R_{\bal(1)}\otimes \cdots \otimes
     R_{\bal(\ell)}$ on $e_\bal T^\bla$.  This further induces an
     action on $S_\bal$,
     since the elements of $\wp_{\bal}$ only rearrange strands within black blocks.
     Let $U_\bal$ be the sum of the submodules
     $U^{\kappa}_{\Bi}$ with $\bal(\Bi,\kappa)=\bal$. The map $\wp_{\bal}(r)$ must send $U_{\bal}$ to
     itself, since a map from $P^{\kappa'}_{\Bi'}$ composed with
     $\wp_{\bal}(r)$ is still a map from a higher projective and thus
     in $U_{\bal}$.  

     It follows that we have a map $ R_{\bal(0)}\otimes R_{\bal(1)}\otimes \cdots
     \otimes R_{\bal(\ell)}\to \End_{\alg^\bla}(S_\bal)$.  Furthermore,
     consider $r$ in 
     the span of $R_{\bal(0)}\otimes R_{\bal(1)}\otimes \cdots\otimes
     I_{\la_i}\otimes \cdots
     \otimes R_{\bal(\ell)}$ for $i=1,\dots,\ell$, where $I_{\la_i}\subset R_{\bal(i)}$ is
     the cyclotomic ideal of corresponding to $\la_i$. In this case, $r$ can be
     written in $T^\bla$ as elements factoring through a higher projective by
     Theorem \ref{cyclotomic}. In this case $r$ will send the
     entirety of $P_{\bal}$ to $U_{\bal}$, and thus acts trivially on
     $S_\bal$.  It follows that we have the desired induced action.
  \end{proof}
Thus, we can think of $S_{\bal}$ as a $R_{\bal(0)}\otimes\alg_{\mu_1}^{\la_1}\otimes\cdots
\otimes \alg_{\mu_\ell}^{\la_\ell}-\tilde{\alg}^{\bla}_{\al}$-bimodule, and $S=\oplus_{\bal} S_\bal$ as a $R\otimes\alg^{\la_1}\otimes\cdots
\otimes \alg^{\la_\ell}-\tilde{\alg}^{\bla}$-bimodule.  Let \[\cata^{\infty;\la_1;\dots;\la_\ell}:=R\otimes\alg^{\la_1}\otimes\cdots
\otimes \alg^{\la_\ell}\modu\qquad\qquad \cata^{\la_1;\dots;\la_\ell}:=\alg^{\la_1}\otimes\cdots
\otimes \alg^{\la_\ell}\modu\]
\begin{defn}
  The {\bf standardization functor} is the tensor
  product with this bimodule:
  \begin{equation*}
      \mathbb{S}^{\bla}\colon \cata^{\infty;\la_1;\dots;\la_\ell}\to \tilde{T}^{\bla}\modu\qquad  \qquad\mathbb{S}^{\bla}(-)=-\otimes_{R\otimes \alg^{\la_1}\otimes\cdots
\otimes \alg^{\la_\ell}}S
  \end{equation*}
Note that if we restrict to sequences where $\bal(0)=0$, then we can view this as a functor $ \mathbb{S}^{\bla}\colon\cata^{\la_1;\dots;\la_\ell}\to \cata^{\bla}$.
\end{defn}

More generally, we can construct partial standardization modules, where we
only kill the right crossings for some of the red strands.  This will
give us a standardization functor   \begin{equation*}
    \mathbb{S}^{\bla_1;\dots;\bla_m}:\cata^{\bla_1;\dots;\bla_\ell}\to \cata^{\bla}
  \end{equation*}
for any list of sequences
$\bla_1,\dots,\bla_m$ such that the concatenation $\bla_1\cdots\bla_m$
is equal to $\bla$.

We've already seen one example of these functors.  For any dominant
weight $\mu$, we can rewrite   the functor $\fI_{\mu}$ defined in
Definition \ref{I-def} as the standardization functor $\fI_{\mu}(M)=\mathbb{S}^{\bla;(\mu)}(M\boxtimes
P_\emptyset)$. This categorifies the inclusion of
$V_{\bla}\otimes\{v_{high}\}\hookrightarrow V_{\bla}\otimes V_{\mu}$.
This map is not a map of $\fg$-representations, though we will discuss
the interaction of standardization functors with the categorical
$\fg$-action below.

The category $\cata^{\bla_1;\dots;\bla_m}$ has a categorical action
of $\fg^{\oplus m}$ by functors we denote ${}_k\fE_i$ and
${}_k\fF_i$ which act only on the $k$th factor. That is:
\[{}_k\fE_i (\cdots \boxtimes M_{k-1} \boxtimes M_{k} \boxtimes
M_{k+1} \boxtimes \cdots)\cong \cdots \boxtimes M_{k-1} \boxtimes \fE_iM_{k} \boxtimes
M_{k+1} \boxtimes \cdots.\]
These actions are compatible with the action via $\fE_i,\fF_i$ on
$\cata^{\bla}$ as follows:
\begin{prop}\label{prop:act-filter}
  For any $\alg_{\mu_1}^{\bla_1}\otimes\cdots
\otimes \alg_{\mu_\ell}^{\bla_m}$-module $M$, the module $\fE_i\mathbb{S}^{\bla_1;\dots;\bla_m}(M)$ has a natural filtration $Q_1\supset
  Q_2\supset \cdots$ such that \[Q_k/Q_{k+1}\cong
  \mathbb{S}^{\bla_1;\dots;\bla_m}({}_k\fE_iM) \left(\sum_{j=1}^{k-1}\langle\al_i,\la_j-\bal(j)\rangle\right).\]

 The module $\fF_i\mathbb{S}^{\bla_1;\dots;\bla_m}(M)$ has a natural filtration $O_m\supset
  O_{m-1}\supset \cdots$ such that \[O_k/O_{k-1}\cong
  \mathbb{S}^{\bla_1;\dots;\bla_m}({}_k\fF_iM) \left(-\sum_{j=k+1}^{k}\langle\al_i,\la_j-\bal(j)\rangle\right).\]
\end{prop}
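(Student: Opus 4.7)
\smallskip
This proposition is the categorical counterpart of the coproduct formulas \eqref{Ecp} and \eqref{Fcp}: since $\mathbb{S}^{\bla_1;\dots;\bla_m}$ categorifies the tensor product of pure tensors in the weight space decomposition, the sought filtrations realize the individual summands of $\Delta^{(\ell)}(E_i)$ and $\Delta^{(\ell)}(F_i)$. My plan is to work throughout at the level of bimodules, since both $\mathbb{S}^{\bla_1;\dots;\bla_m}$ and $\fE_i,\fF_i$ are defined as tensor products with explicit bimodules.

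The first move is to reduce to $m=2$ by induction. The standardization functor is associative, in the sense that
\[
\mathbb{S}^{\bla_1;\dots;\bla_m}(M_1\boxtimes\cdots\boxtimes M_m) \cong \mathbb{S}^{\bla_1;\,\bla_2\cdots\bla_m}\bigl(M_1\boxtimes\mathbb{S}^{\bla_2;\dots;\bla_m}(M_2\boxtimes\cdots\boxtimes M_m)\bigr),
\]
because both sides are presented by killing the same collection of red/black right crossings: those between a red in $\bla_p$ and a black strand originating in $\bla_q$ for $p\neq q$. Given the $m=2$ statement, iterating produces a filtration for arbitrary $m$; the grading shifts assemble because the weight of $\mathbb{S}^{\bla_2;\dots;\bla_m}(M_2\boxtimes\cdots\boxtimes M_m)$ equals $\sum_{j=2}^m(\la_j-\bal(j))$.

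For the base case $m=2$, consider $\fF_i\mathbb{S}^{\bla_1;\bla_2}(N\boxtimes M)$. I would define $O_1\subset O_2=\fF_i\mathbb{S}^{\bla_1;\bla_2}(N\boxtimes M)$ as the image of the natural map from an appropriately shifted $\mathbb{S}^{\bla_1;\bla_2}(\fF_i N\boxtimes M)$ that inserts the new black $i$-strand into block $1$ via a sequence of left crossings with the reds of $\bla_2$ and commutations past the block-$2$ black strands. Since a left crossing with a red labeled $\la_j$ carries degree $\langle\al_i,\la_j\rangle$ and commuting past a black labeled $k$ carries degree $-\langle\al_i,\al_k\rangle$, the total acquired degree is $\langle\al_i,\la_2-\bal(2)\rangle$, producing the shift in $O_1\cong\mathbb{S}(\fF_i N\boxtimes M)(-\langle\al_i,\la_2-\bal(2)\rangle)$. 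The quotient $O_2/O_1$ is then the unshifted $\mathbb{S}(N\boxtimes\fF_i M)$, spanned by classes in which the new strand remains to the right of all $\bla_2$-reds. For $\fE_i$, a dual argument via the biadjointness of $\fE_i$ and $\fF_i$ (Theorem \ref{full-action}) produces the filtration $Q_1\supset Q_2$ in the expected order; alternatively, one traces directly to which block the upward $i$-strand of $\fE_i$ terminates in at the top of the standardization bimodule. The main obstacle will be checking that these candidate filtrations are genuine filtrations by subbimodules, closed under the right $\tilde T^\bla$-action, and that each subquotient identification is an isomorphism rather than only a surjection; this will require the basis of Proposition \ref{basis} to compare graded ranks, together with Lemma \ref{modulo-smaller} to absorb the lower-order correction terms that appear whenever the pitchfork and red/black crossing relations (\ref{pitch1}--\ref{pitch2},\ref{red-triple-correction}--\ref{red-switch}) are applied near the critical $i$-strand, with careful bookkeeping of the accumulated grading shifts as the secondary technical demand.
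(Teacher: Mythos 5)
Your overall architecture matches the paper's: reduce to two tensor factors via associativity of standardization, exhibit the candidate sub-bimodule (the span of diagrams where the new/cup strand stays on the appropriate side of the dividing red strand), and construct surjections from the shifted standardizations onto the sub and the quotient. The grading bookkeeping you describe for the shift $\langle\al_i,\la_2-\bal(2)\rangle$ is also the right computation.

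The gap is in your final step. You propose to upgrade the surjections to isomorphisms by "comparing graded ranks using the basis of Proposition \ref{basis}." That basis is a basis of the algebra $\tilde\alg^\bla$, not of the standard modules $S^\kappa_\Bi$ or of $\fE_i\mathbb{S}(M)$, $\fF_i\mathbb{S}(M)$. To count ranks you would need to know exactly which basis elements span the submodule $U^\kappa_\Bi$ being quotiented out, i.e.\ a dimension formula for standards — and in the paper's logical order that formula (Lemma \ref{Phivs} together with Proposition \ref{standard-filtration}, via Proposition \ref{standard-class}) is only established \emph{after} and \emph{using} Proposition \ref{prop:act-filter}. So the rank count is either unavailable or circular at this point. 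The paper instead proves the required dimension identity $\dim\Hom(P,\fE_i\mathbb{S}(M))=\sum_k\dim\Hom(P,\mathbb{S}({}_k\fE_iM))$ (and its $\fF$ analogue) by a double induction on the number of red strands and on the weight of $P$: surjectivity gives one inequality in each of the two identities, and composing $\fE_j\fF_i$ versus $\fF_i\fE_j$ together with the categorified commutator in $\tU$ and Proposition \ref{FI-IF} forces both inequalities to be equalities simultaneously. Some argument of this kind is genuinely needed and is the real content of the proof.

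A secondary soft spot: for $\fE_i$ you appeal to "a dual argument via biadjointness," but the map $\gamma_1\colon\mathbb{S}({}_1\fE_iM)\to\fE_i\mathbb{S}(M)/Q$ is only well defined into the quotient by $Q$ (moving a dot or crossing past the cup produces correction terms lying in the image of $\gamma_2$), so it must be constructed explicitly rather than deduced formally from the $\fF_i$ case.
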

These filtrations are precisely the categorification of the coproducts
\eqref{Ecp} and \eqref{Fcp}.
\begin{proof}
We can easily reduce from the general case to the case where there are
two tensor factors.  For any sequence $(\bla_1;\dots;\bla_m)$, we first
apply the two term result for $(\bla_1;\bla_2\cdots \bla_{m})$ and
then on $(\bla_2;\bla_3\cdots \bla_{m})$, and so on.  Thus,
throughout, we'll assume that $m=2$ and 
  $\bla_1=(\la_1,\dots ,\la_{j-1}),\bla_2=(\la_j,\dots, \la_\ell)$.

  First, consider $\fE_i\mathbb{S}^{\bla_1;\bla_2}(M)$.  Let
  $Q(M)$ be the submodule of diagrams where the strand forming the unique cup stays to the
  right of the $j$th red strand.  One can easily check that this is a
  subfunctor of $\fE_i\mathbb{S}^{\bla_1;\bla_2}$.  In the diagrams below, the
  left-hand diagram is in $Q(M)$ and the right-hand is not (or at least
  this is not clear from how it is written).
    \begin{equation*}    \label{el-p}
      \begin{tikzpicture}[very thick,yscale=1]
        \draw[wei] (1,-1) -- +(0,2) node[at
        start,below]{$\la_1$} node[at end,above]{$\la_1$}; \draw
        (1.5,-1) -- +(0,2); \node at (2.3,0){$\cdots$}; \draw[wei]
        (3,-1) -- +(0,2) node[at start,below]{$\la_{j}$} node[at
        end,above]{$\la_{j}$}; \draw (4,-1) -- +(0,2); \draw (6,-1) --
        +(0,2); %node[at start,below]{$\la_\ell$} node[at end,above]{$\la_\ell$};
        \node at (5,0){$\cdots$}; \draw[postaction={decorate,decoration={markings,
    mark=at position .5 with {\arrow[scale=1.3]{<}}}}] (6.7,1) to[in=-40,out=-140] node[at start,above=2pt]{$i$}
        node[at end,above=2pt]{$j$}  (3.5,1);
      \end{tikzpicture}\qquad  \qquad   \begin{tikzpicture}[very thick,yscale=1]
        \draw[wei] (1,-1) -- +(0,2) node[at
        start,below]{$\la_1$} node[at end,above]{$\la_1$}; \draw
        (1.5,-1) -- +(0,2); \node at (2.3,0){$\cdots$}; \draw[wei]
        (3.5,-1) -- +(0,2) node[at start,below]{$\la_{j}$} node[at
        end,above]{$\la_{j}$}; \draw (4,-1) -- +(0,2); \draw (6,-1) --
        +(0,2); %node[at start,below]{$\la_\ell$} node[at end,above]{$\la_\ell$};
        \node at (5,0){$\cdots$}; \draw[postaction={decorate,decoration={markings,
    mark=at position .5 with {\arrow[scale=1.3]{<}}}}] (6.7,1) to[in=-40,out=-140] node[at start,above=2pt]{$i$}
        node[at end,above=2pt]{$j$}  (3,1);
      \end{tikzpicture}
    \end{equation*}

For any $M\in\cata^{\bla_1;\bla_2}$, we have a natural transformation
$\gamma_2\colon \mathbb{S}^{\bla_1;\bla_2}({}_2\fE_i M)\to Q(M)$ where we
take a diagram in the former module and think of it in the latter.

One can think of this as a map from of bimodules $\gamma_2\colon
(T^{\bla_1}\boxtimes \beta_{\eE_i})\otimes_{T^{\bla_1}\otimes
  T^{\bla_2}} S\to S\otimes_{T^{\bla}} \beta_{\eE_i} $, where again,
the inclusion is just isotopy of diagrams. Let $c_i$ be the diagram
just making a cup between the only upward terminal, and the $i$th
downward terminal from the right. Every element of
$(T^{\bla_1}\boxtimes \beta_{\eE_i})\otimes S $ can be written as a
sum of diagrams of the form $c_i\otimes a$ where $a$ is an element of
$S$; in this case, $\gamma_1((1\boxtimes c_i)\otimes a)=1\otimes
c_ia$.  This is well-defined by the usual locality of relations, but
not obviously injective.

Note, however, that this map is not grading preserving.  The degree of
the cup will increase by $\langle \al_i,\la_1-\bal(1)\rangle$, since
we must change the labeling of regions in the diagram.

Dually, we have a natural transformation
$\gamma_1\colon \mathbb{S}^{\bla_1;\bla_2}({}_1\fE_i M)\to
\fE_i\mathbb{S}^{\bla_1;\bla_2}(M)/Q$.  One can think of this as a map
of bimodules $(\beta_{\eE_i}\boxtimes
T^{\bla_2})\otimes_{T^{\bla_1}\otimes T^{\bla_2}} S\to
(S\otimes_{T^{\bla}} \beta_{\eE_i}) /\operatorname{im}(\gamma_1)$.
This
maps \[\gamma_1((c_i\boxtimes 1)\otimes a)=1\otimes c_{i+\rho^\vee(\bal(2))}a.\]
This map is only well-defined modulo the image
$\operatorname{im}(\gamma_\rho^\vee(\bal(2)))$ since when need to move a dot or
crossing past the cup $c_{i+n}$, the equations
(\ref{nilHecke-1},\ref{triple-smart})  show that two representations
of the same element will differ by diagrams of the form
$\gamma_1((1\boxtimes c_b)\otimes a)$ for $b<n$.

Note that this map is surjective, since the module
$\fE_i\mathbb{S}^{\bla_1;\bla_2}(M)/Q$ is spanned by elements of the
form $(1\boxtimes e(\Bj,\kappa))\otimes c_{i+n}a$, which is in the
image of $\gamma_1$.

The map $\gamma_1$ is shown in \eqref{horiz-map}.  In each, case, the diagram we have
  shown would be acting on an element of $M$ as in Figure \ref{funcs}.
  For the legibility of the pictures, we have not shown these elements.
    \begin{equation}\label{horiz-map}
\begin{tikzpicture}[very thick,baseline]
\node at (-.4,1.5){
      \begin{tikzpicture}[very thick]
        \draw[wei] (1,-.5) -- +(0,1.5) node[at
        start,below]{$\la_1$} node[at end,above]{$\la_1$}; 
  %\draw    (1.5,-1) -- +(0,1.5); \node at (2.3,0){$\cdots$}; 
  \node (a) [inner xsep=10pt, inner ysep=6.6pt,draw] at (2,.5){$d_1$};
\draw (a.-130) -- +(0,-.63);
\draw (a.-50) -- +(0,-.63);
        \draw[wei] (3,-.5) -- +(0,1.5) node[at
        start,below]{$\la_2$} node[at end,above]{$\la_2$}; 

\draw[wei] (4.5,-.5) -- +(0,1.5) node[at start,below]{$\la_{j-1}$} 
  node[at   end,above]{$\la_{j-1}$}; 
\node (a) [inner xsep=10pt, inner ysep=6.6pt,draw] at (5.5,.5){$d_{j-1}$};
\draw (a.-130) -- +(0,-.63);
\draw (6.5,.2) to[in=-30,out=-150]  node[circle,fill=blue,inner sep=2pt, at start]{}  (a.-50);
  \draw[wei] (7,-.5) -- +(0,1.5) node[at start,below]{$\la_{j}$} 
  node[at   end,above]{$\la_{j}$}; 
  \node (a) [inner xsep=10pt, inner ysep=6.6pt,draw] at (8,.5){$d_{j}$};
\draw (a.-130) -- +(0,-.63);
\draw (a.-50) -- +(0,-.63);
\draw[wei] (9,-.5) -- +(0,1.5) node[at start,below]{$\la_{j+1}$} 
  node[at   end,above]{$\la_{j+1}$}; 
   \node at (9.75,0){$\cdots$};
   \draw[wei] (10.5,-.5) -- +(0,1.5) node[at start,below]{$\la_{\ell}$} 
  node[at   end,above]{$\la_{\ell}$}; 
  \node (a) [inner xsep=10pt, inner ysep=6.6pt,draw] at (11.5,.5){$d_\ell$};
\draw (a.-130) -- +(0,-.63);
\draw (a.-50) -- +(0,-.63);
%   \draw (6,-1) --  +(0,1.5); %node[at start,below]{$\la_\ell$} node[at end,above]{$\la_\ell$};
        \node at (3.75,0){$\cdots$}; 

      \end{tikzpicture}};
\draw[->] (0,.3) -- (0,-.3);
\node at (0,-1.5){
      \begin{tikzpicture}[very thick]
        \draw[wei] (1,-.5) -- +(0,1.5) node[at
        start,below]{$\la_1$} node[at end,above]{$\la_1$}; 
  %\draw    (1.5,-1) -- +(0,1.5); \node at (2.3,0){$\cdots$}; 
  \node (a) [inner xsep=10pt, inner ysep=6.6pt,draw] at (2,.5){$d_1$};
\draw (a.-130) -- +(0,-.63);
\draw (a.-50) -- +(0,-.63);
        \draw[wei] (3,-.5) -- +(0,1.5) node[at
        start,below]{$\la_2$} node[at end,above]{$\la_2$}; 

\draw[wei] (4.5,-.5) -- +(0,1.5) node[at start,below]{$\la_{j-1}$} 
  node[at   end,above]{$\la_{j-1}$}; 
\node (a) [inner xsep=10pt, inner ysep=6.6pt,draw] at (5.5,.5){$d_{j-1}$};
\draw (a.-130) -- +(0,-.63);
\draw (12.5,.2) to[in=-10,out=-160]  node[circle,fill=blue,inner sep=2pt, at start]{}  (a.-50);
  \draw[wei] (6.5,-.5) -- +(0,1.5) node[at start,below]{$\la_{j}$} 
  node[at   end,above]{$\la_{j}$}; 
  \node (a) [inner xsep=10pt, inner ysep=6.6pt,draw] at (7.5,.5){$d_{j}$};
\draw (a.-130) -- +(0,-.63);
\draw (a.-50) -- +(0,-.63);
\draw[wei] (8.5,-.5) -- +(0,1.5) node[at start,below]{$\la_{j+1}$} 
  node[at   end,above]{$\la_{j+1}$}; 
   \node at (9.5,0){$\cdots$};
   \draw[wei] (10.5,-.5) -- +(0,1.5) node[at start,below]{$\la_{\ell}$} 
  node[at   end,above]{$\la_{\ell}$}; 
  \node (a) [inner xsep=10pt, inner ysep=6.6pt,draw] at (11.5,.5){$d_\ell$};
\draw (a.-130) -- +(0,-.63);
\draw (a.-50) -- +(0,-.63);
%   \draw (6,-1) --  +(0,1.5); %node[at start,below]{$\la_\ell$} node[at end,above]{$\la_\ell$};
        \node at (3.75,0){$\cdots$}; 

      \end{tikzpicture}};
\end{tikzpicture}
    \end{equation}
We turn to the module
  \begin{math}
\fF_i\mathbb{S}^{\bla_1;\bla_2}(M).
  \end{math}
This has a
  submodule $O$ generated by the diagram $g$ where the ``new'' strand at
  the far right is pulled to the spot left of the $j$th red strand
  with no other crossings or dots.  Much
  like the case of $\fE_i$, we have a map $\delta_1\colon 
  \mathbb{S}^{\bla_1;\bla_2}({}_1\fF_iM)  \to O$ of
  degree $-\langle\al_i,\la_2-\bal(2)\rangle$.  As in the case of
  $\eE$, this can be described as a bimodule map $(\beta_{\eF_i}\boxtimes T^{\bla_2})\otimes S\to
S\otimes \beta_{\eF_i}$ which sends a diagram
  $1\otimes a\to 1\otimes ga$. This map is shown in
  \eqref{sta-filt}.  Note that in the course
  of this proof, we will draw several diagrams representing elements
  of functors applied to $M$. 
     \begin{equation}    \label{sta-filt}
\begin{tikzpicture}[very thick,yscale=1,baseline]
\node at (-.4,1.5){
      \begin{tikzpicture}[very thick,yscale=1]
        \draw[wei] (1,-.5) -- +(0,1.5) node[at
        start,below]{$\la_1$} node[at end,above]{$\la_1$}; 
  %\draw    (1.5,-1) -- +(0,1.5); \node at (2.3,0){$\cdots$}; 
  \node (a) [inner xsep=10pt, inner ysep=6.6pt,draw] at (2,.5){$d_1$};
\draw (a.-130) -- +(0,-.63);
\draw (a.-50) -- +(0,-.63);
        \draw[wei] (3,-.5) -- +(0,1.5) node[at
        start,below]{$\la_2$} node[at end,above]{$\la_2$}; 

\draw[wei] (4.5,-.5) -- +(0,1.5) node[at start,below]{$\la_{j-1}$} 
  node[at   end,above]{$\la_{j-1}$}; 
\node (a) [inner xsep=10pt, inner ysep=6.6pt,draw] at (5.5,.5){$d_{j-1}$};
\draw (a.-130) -- +(0,-.63);
\draw (a.-50) -- +(0,-.63);
\draw (6.5,.2) to[in=30,out=-150]  node[circle,fill=blue,inner
sep=2pt, at start]{}  (5.5,-.5);
  \draw[wei] (7,-.5) -- +(0,1.5) node[at start,below]{$\la_{j}$} 
  node[at   end,above]{$\la_{j}$}; 
  \node (a) [inner xsep=10pt, inner ysep=6.6pt,draw] at (8,.5){$d_{j}$};
\draw (a.-130) -- +(0,-.63);
\draw (a.-50) -- +(0,-.63);
\draw[wei] (9,-.5) -- +(0,1.5) node[at start,below]{$\la_{j+1}$} 
  node[at   end,above]{$\la_{j+1}$}; 
   \node at (9.75,.25){$\cdots$};
   \draw[wei] (10.5,-.5) -- +(0,1.5) node[at start,below]{$\la_{\ell}$} 
  node[at   end,above]{$\la_{\ell}$}; 
  \node (a) [inner xsep=10pt, inner ysep=6.6pt,draw] at (11.5,.5){$d_\ell$};
\draw (a.-130) -- +(0,-.63);
\draw (a.-50) -- +(0,-.63);
%   \draw (6,-1) --  +(0,1.5); %node[at start,below]{$\la_\ell$} node[at end,above]{$\la_\ell$};
        \node at (3.75,.25){$\cdots$}; 

      \end{tikzpicture}};
\draw[->] (0,.3) -- (0,-.3);
\node at (0,-1.5){
      \begin{tikzpicture}[very thick,yscale=1]
        \draw[wei] (1,-.5) -- +(0,1.5) node[at
        start,below]{$\la_1$} node[at end,above]{$\la_1$}; 
  %\draw    (1.5,-1) -- +(0,1.5); \node at (2.3,0){$\cdots$}; 
  \node (a) [inner xsep=10pt, inner ysep=6.6pt,draw] at (2,.5){$d_1$};
\draw (a.-130) -- +(0,-.63);
\draw (a.-50) -- +(0,-.63);
        \draw[wei] (3,-.5) -- +(0,1.5) node[at
        start,below]{$\la_2$} node[at end,above]{$\la_2$}; 

\draw[wei] (4.5,-.5) -- +(0,1.5) node[at start,below]{$\la_{j-1}$} 
  node[at   end,above]{$\la_{j-1}$}; 
\node (a) [inner xsep=10pt, inner ysep=6.6pt,draw] at (5.5,.5){$d_{j-1}$};
\draw (a.-130) -- +(0,-.63);
\draw (a.-50) -- +(0,-.63);
\draw (12.5,.1) to[in=15,out=-170]  node[circle,fill=blue,inner sep=2pt, at start]{}  (5.5,-.5);
  \draw[wei] (6.5,-.5) -- +(0,1.5) node[at start,below]{$\la_{j}$} 
  node[at   end,above]{$\la_{j}$}; 
  \node (a) [inner xsep=10pt, inner ysep=6.6pt,draw] at (7.5,.5){$d_{j}$};
\draw (a.-130) -- +(0,-.63);
\draw (a.-50) -- +(0,-.63);
\draw[wei] (8.5,-.5) -- +(0,1.5) node[at start,below]{$\la_{j+1}$} 
  node[at   end,above]{$\la_{j+1}$}; 
   \node at (9.5,.25){$\cdots$};
   \draw[wei] (10.5,-.5) -- +(0,1.5) node[at start,below]{$\la_{\ell}$} 
  node[at   end,above]{$\la_{\ell}$}; 
  \node (a) [inner xsep=10pt, inner ysep=6.6pt,draw] at (11.5,.5){$d_\ell$};
\draw (a.-130) -- +(0,-.63);
\draw (a.-50) -- +(0,-.63);
%   \draw (6,-1) --  +(0,1.5); %node[at start,below]{$\la_\ell$} node[at end,above]{$\la_\ell$};
        \node at (3.75,.25){$\cdots$}; 

      \end{tikzpicture}};
\end{tikzpicture}
    \end{equation}
Dually, we have a map $\delta_2\colon \mathbb{S}^{\bla_1;\bla_2}({}_2\fF_i M)\to
\fF_i\mathbb{S}^{\bla_1;\bla_2}(M)/O$.  This sends a diagram to the
same underlying diagram. As with $\gamma_1$, this isn't well-defined
as a map to $\fF_i\mathbb{S}^{\bla_1;\bla_2}(M)$ since diagrams where
the new strand ${}_2\fF_i$ adds is violating aren't sent to elements
of the violating ideal.  However, such a diagram
does land in $O$, so the map to the quotient is well-defined.

Thus, in order to finish the proof, we must prove that the maps
$\gamma_k,\delta_k$ are isomorphisms. Since the maps $\gamma_k$ and
$\delta_k$ are surjections, suffices to check that the dimensions of
the source and target coincide.  That is, it suffices to prove for
any projective that
\begin{align}\label{SE}
  \dim \Hom(P,\fE_i\mathbb{S}^{\bla_1;\bla_2}(M))&=
  \dim\Hom(P,\mathbb{S}^{\bla_1;\bla_2}({}_1\fE_i M)) +\dim\Hom(P,\mathbb{S}^{\bla_1;\bla_2}({}_2\fE_i M)) \\\label{SF}
\dim
  \Hom(P,\fF_i\mathbb{S}^{\bla_1;\bla_2}(M))&=
  \dim\Hom(P,\mathbb{S}^{\bla_1;\bla_2}({}_1\fF_i M))+
  \dim\Hom(P,\mathbb{S}^{\bla_1;\bla_2}({}_2\fF_i M)).
\end{align}
Surjectivity implies that in both \eqref{SE} and \eqref{SF}, the LHS must be $\leq$
the RHS.

We'll induct on $\ell$, and on the weight of the module $P$.   More
precisely, our inductive hypothesis will be that 
\begin{itemize}
\item [$f_{(\mu,\ell)}$] For all $i$, The equation \eqref{SE} holds for any $P$ 
  projective over $T^{\bla}_{\mu+\al_i}$, and the equation \eqref{SF} holds
 for any  $P$ 
  projective over $T^{\bla}_{\mu}$.
  \end{itemize}
For our induction, we prove $f_{(\mu,\ell)}$ assuming $f_{(\nu,k)}$ holds if $k<\ell$ or
$\ell=k$ and $\nu> \mu$.  When $\ell=1$ the equations \eqref{SE}
and \eqref{SF} are tautological, and for $\mu=\la$, the module $P$ in
\eqref{SE} can only be the trivial module, and similarly for the
module $M$ in \eqref{SF}, so this establishes the base case.

Obviously, if either  \eqref{SE}
or \eqref{SF} fails for a projective $P$, it will still fail if $P$ is replaced by its sum
with any other projective module, and it must fail for some
indecomposable summand of $P$.   Similarly, since Hom with a projective is
exact, the formulas \eqref{SE}
or \eqref{SF} hold for $M$ if and only if they hold for all its
composition factors. Thus, we can assume that either
$P=\fI_{\la_\ell}P'$, or that $P=\fF_jP'$ for some
$j$ and some other projective $P'$.

In the former case, we can assume without loss of
generality that $M={}_m\fI_{\la_\ell} M'$ for some $M'$, since any simple
which is not a composition factor of such a module has
$\Hom(P,\mathbb{S}^{\bla_1;\bla_2}L)=0$. By Proposition \ref{FI-IF},
we have that:
\begin{align*}
  \dim
  \Hom(P,\fF_i\mathbb{S}^{\bla_1;\dots;\bla_m}(M))&=\sum
  \dim\Hom(\fI_{\la_\ell}P',\fF_i\fI_{\la_\ell}\mathbb{S}^{\bla_1;\dots;\bla_{m}^-}(M'))\\
&=\sum
  \dim\Hom(P',\fI_{\la_\ell}^R\fF_i\fI_{\la_\ell}\mathbb{S}^{\bla_1;\dots;\bla_{m}^-}(M'))\\
&=\sum
  \dim\Hom(P',\fF_i\mathbb{S}^{\bla_1;\dots;\bla_{m}^-}(M'))\\
&=\sum
  \dim\Hom(P',\mathbb{S}^{\bla_1;\dots;\bla_m^-}({}_k\fF_i
  M'))\\
&=\sum\dim\Hom(P,\mathbb{S}^{\bla_1;\dots;\bla_m}({}_k\fF_i M))
\end{align*}
applying the inductive hypothesis $f_{(\mu-\la_{\ell},\ell-1)}$.  This
establishes \eqref{SF} and \eqref{SE} follows by a similar argument.

Thus, we may assume that $P=\fF_jP'$ for some
$j$.  In this case, we can apply the adjunction to show that 
\begin{align}
  \dim \Hom(\fF_jP',\fF_i\mathbb{S}^{\bla_1;\dots;\bla_m}(M))&=\dim
  \Hom(P',\fE_j\fF_i\mathbb{S}^{\bla_1;\dots;\bla_m}(M))\label{EFS1}\\
&\leq  \sum
  \dim\Hom(P,\fE_i\mathbb{S}^{\bla_1;\dots;\bla_m}({}_k\fF_i M) \label{EFS2}\\
&\leq \sum
  \dim\Hom(P,\mathbb{S}^{\bla_1;\dots;\bla_m}({}_p\fE_j \circ {}_k\fF_i M) \label{EFS3}
\end{align} where \eqref{EFS2} and \eqref{EFS3} follow from the
inequality LHS $\leq$ RHS in \eqref{SF} and \eqref{SE} respectively. 
Applying the commutation relations in $\tU$, we find that this implies
that 
\begin{equation}
\Hom(P',\fF_i\fE_j\mathbb{S}^{\bla_1;\dots;\bla_m}(M))
\leq \sum
  \dim\Hom(P,\mathbb{S}^{\bla_1;\dots;\bla_m}({}_k\fF_i \circ {}_p\fE_j M)\label{FES}
\end{equation}
with equality if and only if both steps \eqref{EFS2} and \eqref{EFS3}
are equalities.
On the other hand, the inductive hypothesis $f_{(\mu+\al_j,\ell)}$
implies that \eqref{FES} is an equality, by applying \eqref{SE} to
$\mathbb{S}^{\bla_1;\dots;\bla_m}(M)$ and then \eqref{SF} to
$\mathbb{S}^{\bla_1;\dots;\bla_m}({}_p\fE_j M)$.  

Thus, we must have
that \eqref{EFS2}
is an equality, which shows \eqref{SF} for $P=\fF_jP'$ and $M$
arbitrary.  This establishes the second half of  $f_{(\mu,\ell)}$ in
complete generality.  On
the other hand, we also know that \eqref{EFS3} is an equality.  This
establishes \eqref{SE} for $P$ arbitrary, and $M$ any composition
factor of ${}_k\fF_iM'$ with $M'$ arbitrary.  

Thus it only remains to establish \eqref{SE} when $M$ is a simple
module which receives no maps from ${}_k\fF_iM'$ for any $i$.  In this case,
adjunction shows that ${}_k\fE_iM=0$ for every $k$ and $i$.  This
shows that the right hand side of \eqref{SE} is 0, so the equation
must hold.  Thus, the result is proven.
\end{proof}

We let $s^\kappa_\Bi=F_{i_{\kappa(2)}}\cdots F_{i_1}p_1\otimes
  \cdots \otimes F_{i_n}\cdots F_{i_{\kappa(\ell)+1}}p_{\ell}$.

\begin{prop}\label{standard-class}
  $\displaystyle \eta([S^\kappa_\Bi])=s^\kappa_\Bi$. 
\end{prop}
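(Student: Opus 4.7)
The plan is to realize $S^\kappa_\Bi$ as the standardization of a pure tensor of cyclotomic projectives, and then transport Proposition~\ref{prop:act-filter}, which categorifies the Hopf-algebra coproduct, to an identity between the Grothendieck class of this standardization and the corresponding pure tensor. Specifically, under the running hypothesis $\kappa(1)=0$ (needed for $S^\kappa_\Bi$ to descend to an $\alg^\bla$-module), unwinding the definition of the bimodule $S$ gives an identification
\[
S^\kappa_\Bi \;\cong\; \mathbb{S}^{\la_1;\dots;\la_\ell}(M_1\boxtimes\cdots\boxtimes M_\ell),
\]
where $M_k=e(\Bi_k,0)\,\alg^{\la_k}$ is the cyclotomic projective over $\alg^{\la_k}$ indexed by the $k$-th black block $\Bi_k$. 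By Theorem~\ref{Uq-action} applied to the single-tensor-factor case, $[M_k]=F_{i_{\kappa(k)}}\cdots F_{i_{\kappa(k-1)+1}}v_{\la_k}\in V_{\la_k}^\Z$, so the proposition reduces to the statement that $\eta$ identifies $[\mathbb{S}^{\la_1;\dots;\la_\ell}(M_1\boxtimes\cdots\boxtimes M_\ell)]$ with the pure tensor $[M_1]\otimes\cdots\otimes[M_\ell]$.

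I would proceed by a double induction: outer induction on $\ell$, inner induction on $|\Bi_\ell|$. The outer base $\ell=1$ is exactly Theorem~\ref{Uq-action}. At the outer inductive step, the inner base $|\Bi_\ell|=0$ (i.e. $\kappa(\ell)=n$) is handled using Proposition~\ref{add-red} and the functor $\fI_{\la_\ell}$: then $S^\kappa_\Bi\cong\fI_{\la_\ell}(S^{\kappa^-}_\Bi)$ with the latter viewed as a standard module over $\alg^{\bla^-}$, and Proposition~\ref{tilde-iso} identifies the decategorification of $\fI_{\la_\ell}$ with $-\otimes v_{\la_\ell}$, so the outer inductive hypothesis closes this case.

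For the inner step $|\Bi_\ell|\geq 1$, set $i=i_n$, $\Bi^\flat=(i_1,\ldots,i_{n-1})$, and $\kappa^\flat=\kappa$, so that the inner inductive hypothesis gives $\eta([S^{\kappa^\flat}_{\Bi^\flat}])=s^{\kappa^\flat}_{\Bi^\flat}$. I would then compute $[\fF_i S^{\kappa^\flat}_{\Bi^\flat}]$ in two ways. On one hand, Proposition~\ref{prop:act-filter} provides a filtration of $\fF_i\mathbb{S}^{\la_1;\dots;\la_\ell}(N^\flat)$, with $N^\flat=M^\flat_1\boxtimes\cdots\boxtimes M^\flat_\ell$, whose successive quotients are the standardizations $\mathbb{S}^{\la_1;\dots;\la_\ell}({}_k\fF_iN^\flat)$ with grading shifts $-\sum_{j>k}\langle\al_i,\la_j-\bal(j)\rangle$; the $k=\ell$ piece is precisely $S^\kappa_\Bi$ (and carries no shift, since the sum is empty), while for $k<\ell$ the pieces are the standard modules $S^{\kappa^{(k)}}_{\Bi^{(k)}}$ in which the new $i$-strand is inserted into block $k$ rather than block $\ell$. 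Each such $S^{\kappa^{(k)}}_{\Bi^{(k)}}$ satisfies $|\Bi^{(k)}_\ell|<|\Bi_\ell|$, so the inner induction gives $\eta([S^{\kappa^{(k)}}_{\Bi^{(k)}}])=s^{\kappa^{(k)}}_{\Bi^{(k)}}$. On the other hand, Theorem~\ref{full-action} yields $\eta([\fF_i S^{\kappa^\flat}_{\Bi^\flat}])=F_i\cdot s^{\kappa^\flat}_{\Bi^\flat}$, which the coproduct formula~\eqref{Fcp} expands as $\sum_k q^{-c_k}s^{\kappa^{(k)}}_{\Bi^{(k)}}$, with shifts $c_k$ arising from the action of the Cartan factors $\tilde K_{-i}$ on the weight-space vectors in the slots right of $k$. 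Equating the two expansions and cancelling the $k<\ell$ terms (known by the inner induction) isolates the desired equality $\eta([S^\kappa_\Bi])=s^\kappa_\Bi$.

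The main obstacle is to verify that the grading shifts supplied by Proposition~\ref{prop:act-filter} agree on the nose with the powers of $q$ produced by the Cartan factors $\tilde K_{-i}$ in~\eqref{Fcp}; this is a direct weight-space computation but must be done compatibly with the conventions fixed in the Notation section for how $K_\mu$ acts on weight vectors, and it is this matching that makes the ``coproduct'' interpretation of Proposition~\ref{prop:act-filter} rigorous. Once that bookkeeping is in hand, the inductive identification of the $k=\ell$ term is immediate.
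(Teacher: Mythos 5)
Your proposal is correct and follows essentially the same route as the paper: induction on $\ell$ with the $\kappa(\ell)=n$ case handled via $\fI_{\la_\ell}$, and the main step obtained by comparing the filtration of $\fF_{i_n}S^{\kappa}_{\Bi^-}$ from Proposition~\ref{prop:act-filter} against the coproduct formula~\eqref{Fcp} and cancelling the known terms. The only (harmless) cosmetic differences are your explicit identification of $S^\kappa_\Bi$ as a standardization of cyclotomic projectives and your inner induction on $|\Bi_\ell|$ in place of the paper's induction on the height of $(\Bi,\kappa)$ in the preorder.
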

\begin{proof}
We'll induct on $\ell$ and on the height of $(\Bi,\kappa)$ in our
preorder.  For $\ell=1$, this is simply the statement of Proposition
\ref{Uq-action}.  This establishes the base case.

Now, assume that $\kappa(\ell)=n$; in this case, we can assume that
$\eta([S^{\kappa^-}_{\Bi}])=s^{\kappa^-}_{\Bi}$ by the inductive
hypothesis.  The class of
$S^\kappa_\Bi=\eI_{\la_\ell}(S^{\kappa^-}_\Bi)$ is thus
$s^{\kappa^-}_\Bi\otimes p_{\ell}=s^\kappa_\Bi$ by definition.

Thus, we may assume that $\kappa(\ell)<n$. We let $\Bi_k$ and
$\kappa_k$ be the sequence $\Bi$ with $i_n$ moved from the end
of the sequence to the end of the $k$th black block (so
$\Bi_\ell=\Bi$), and the function $\kappa$ changed appropriately, that
is, with 1 added to its values above $k$.    By
Proposition \ref{prop:act-filter} we see that the kernel of the surjection $\fF_{i_n}
S^{\kappa}_{\Bi^-}\to S^{\kappa}_{\Bi}$ has a filtration by the standard
modules $S^{\kappa_k}_{\Bi_k}$ for $k=1,\dots, \ell-1$.  Thus, we have
that 
\begin{align*} [S^{\kappa}_{\Bi}]&=[\fF_{i_n} S^{\kappa}_{\Bi^-}] -
  \sum_{k=1}^{\ell-1} q^{\al^\vee_i(\la_{k+1}+\cdots
    +\la_\ell)}[S^{\kappa_k}_{\Bi_k}]\\
  &=\Delta^{(n)}(F_i)s^{\kappa}_{\Bi^-}-
  \sum_{k=1}^{\ell-1}q^{\al^\vee_i(\la_{k+1}+\cdots
    +\la_\ell)}s^{\kappa_k}_{\Bi_k}\\
 &=\Delta^{(n)}(F_i)s^{\kappa}_{\Bi^-}-
  \sum_{k=1}^{\ell-1}(1\otimes \cdots \otimes 1\otimes F_i\otimes \tilde K_{-i} \otimes
    \cdots \otimes \tilde K_{-i})s^{\kappa}_{\Bi^-}\\
  &=(1\otimes \cdots \otimes 1\otimes F_i ) s^{\kappa}_{\Bi^-} \\
  &=s^{\kappa}_{\Bi}\qedhere
\end{align*}
\end{proof}

This result also shows the exactness of the standardization
functor:
\begin{prop}\label{standard-exact}
  The standardization functor
  $\mathbb{S}^{\bla_1;\dots;\bla_m}:\cata^{\bla_1;\dots;\bla_\ell}\to
  \cata^{\bla}$ is exact.
\end{prop}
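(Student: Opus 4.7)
The functor $\mathbb{S}^{\bla_1;\dots;\bla_m}$ is by definition tensor product with a bimodule, hence right exact automatically; the task is to establish left exactness. Equivalently, I must show the relevant bimodule is flat---and in fact I expect free---as a left module over the source algebra $A = \alg^{\bla_1}\otimes\cdots\otimes\alg^{\bla_m}$. The plan is first to reduce to the case of full standardization $\mathbb{S}^{(\la_1);\dots;(\la_\ell)}$: a partial standardization factors naturally through the full standardization via splitting red strands as in Proposition \ref{split-strands}, so exactness for the full case implies the general case.

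For the fully standardized case, I would construct an explicit basis of $S_\bal$ as a left module over $A_\bal = \alg^{\la_1}_{\mu_1}\otimes\cdots\otimes\alg^{\la_\ell}_{\mu_\ell}$. The basis consists of \emph{standard diagrams}: Stendhal diagrams in which every red-black crossing is a left crossing in the sense of \eqref{eq:left-crossing}, these crossings occur in a fixed canonical configuration near the top of the diagram, and below this band the diagram is trivial on all red strands while acting freely on the individual black blocks via $A_\bal$. The spanning property follows by starting from the basis $B$ of $\tilde{\alg}^\bla$ in Proposition \ref{basis}: using Lemma \ref{modulo-smaller}, the triple-point relation \eqref{red-triple-correction}, and the relation \eqref{cost}, together with the definition of $S_\bal$ as the quotient annihilating diagrams with a standardly violating strand, every basis element of $\tilde{\alg}^\bla$ straightens modulo $U_\bal$ into an element of $A_\bal$ acting on such a standard diagram.

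The main obstacle will be linear independence. Given a putative relation $\sum a_i d_i \in U_\bal$ with $a_i \in A_\bal$ and the $d_i$ distinct standard diagrams, one must conclude each $a_i = 0$. The strategy is to expand both sides in the basis $B$ of $\tilde{\alg}^\bla$ and compare coefficients on distinguished basis vectors, with careful control over how diagrams containing right crossings (which generate $U_\bal$) expand in $B$. This is best organized via the preorder on root functions $\bal$ and the associated filtration of $\tilde{\alg}^\bla$, with the associated graded exhibiting each $S_\bal$ cleanly as a free $A_\bal$-module of the expected rank---essentially a cellularity argument in the spirit of \cite{SWschur}. Should this combinatorial bookkeeping prove unwieldy, an alternative route is available: use Proposition \ref{standard-class} together with the Euler-form computation of Lemma \ref{Euler-inner} to prove that $\dim_q\Hom(P, \mathbb{S}(M))$ is additive on short exact sequences in $M$ for every projective $P \in \cata^\bla$; combined with the (automatic) right exactness of $\mathbb{S}$ and the exactness of $\Hom(P,-)$ on projectives, this forces left exactness by an elementary diagram chase.
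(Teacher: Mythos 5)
Your reduction to the fully-split case and your identification of the real content—left exactness, i.e.\ projectivity of the bimodule $S_\bal$ as a left module over $A_\bal=\alg^{\la_1}_{\mu_1}\otimes\cdots\otimes\alg^{\la_\ell}_{\mu_\ell}$—are both fine. The problem is that each of your two routes stalls exactly where the work has to happen. In the basis route, the spanning argument by straightening is routine, but the linear independence (equivalently, a lower bound on $\dim S_\bal$) \emph{is} the proposition, and you leave it as an acknowledged obstacle with only the word ``cellularity'' standing in for an argument. Controlling which linear combinations of diagrams lie in $U^\kappa_\Bi$ is genuinely delicate: note that the paper itself never exhibits a linearly independent set of diagrams in a standard module, and even the dimension of $S^\kappa_\Bi$ (Proposition \ref{standard-filtration}) is obtained by a Grothendieck-group count against Lemma \ref{Phivs} rather than combinatorially. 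A dimension count comparing your spanning set against that known dimension could plausibly close the gap, but that is not the argument you propose.

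The fallback route is circular. For $P$ projective, $\dim_q\Hom(P,X)$ depends only on the graded composition factors of $X$, i.e.\ on the class of $X$ in the Grothendieck group of finite-dimensional modules; so additivity of $\dim_q\Hom(P,\mathbb{S}(M))$ over all projectives $P$ is \emph{equivalent} to additivity of $[\mathbb{S}(M)]$ on short exact sequences, which—given the automatic right exactness—is equivalent to the injectivity of $\mathbb{S}(M')\to\mathbb{S}(M)$ that you are trying to prove. Proposition \ref{standard-class} and Lemma \ref{Euler-inner} only control $\mathbb{S}$ applied to projectives and give no handle on $\mathbb{S}(M)$ for general $M$ without already knowing exactness. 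The non-circular input you are missing is Proposition \ref{prop:act-filter}: the paper inducts on the number of black strands and tensor factors, writes each $P^\kappa_\Bi$ with $\kappa(\ell)<n$ as a summand of some $\fF_{i}P'$, and uses the adjunction $\Hom(\fF_{i}P',\mathbb{S}(-))\cong\Hom(P',\fE_{i}\mathbb{S}(-))$ together with the functorial filtration of $\fE_{i}\mathbb{S}(M)$ with subquotients $\mathbb{S}({}_j\fE_{i}M)$ to exhibit $\Hom(P^\kappa_\Bi,\mathbb{S}(-))$ as an iterated extension of functors that are exact by induction; the case $\kappa(\ell)=n$ is handled by passing to the highest-weight part of the last factor. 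Some such categorical input relating $\mathbb{S}$ to the $\fE_i$ is unavoidable here.
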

\begin{proof}
Note that we need only consider the case where $m=\ell$ and
$\bla_i=(\la_i)$.  We induct as in the proof of Theorem \ref{Uq-action} on $n$ and
$\ell$.  
  It suffices to prove that
  $\Hom(P_\Bi^\kappa,\mathbb{S}^{\bla}(-))$ is always exact since
  every indecomposable projective is a summand of $P_\Bi^\kappa$.  

Unless $n=\kappa(\ell)$, the projective $P_\Bi^\kappa$ is a sum of summands of modules of the form $\fF_i(P')$.
  Thus, we can use the
  adjunction \[\Hom(\fF_i(P'),\mathbb{S}^{\bla}(-))\cong
  \Hom(P',\fE_i\mathbb{S}^{\bla}(-)).\]  By Proposition \ref{prop:act-filter},
  $\fE_i\mathbb{S}^{\bla}(M)$ is filtered by the
  modules $\mathbb{S}^{\bla}({}_j\fE_iM)$ where
  ${}_j\fE_i$ is the categorification functor applied in the $j$th
  tensor factor.  
By induction, we have that
$\mathbb{S}^{\bla}({}_j\fE_i(-))$ is exact, so this
establishes this induction step.

If $n=\kappa(\ell)$, then $
\Hom(P_\Bi^\kappa,\mathbb{S}^{\bla}(M))$ is the same as
$\Hom(P_\Bi^{\kappa^-},\mathbb{S}^{(\la_1,\dots, \la_{\ell-1})}(M^+))$
where $M^+$ is the $\alg^{\la_1}\otimes \cdots\otimes
\alg^{\la_{\ell-1}}$ submodule in $M$ where the weight for
$\alg^{\la_\ell}$ is $\la_\ell$.  Since $M\mapsto M^+$ is exact (it is
the projection of a sum of idempotents), by induction $M\mapsto
\mathbb{S}^{(\la_1,\dots, \la_{\ell-1})}(M^+)$ is exact as well.  This
completes the induction step, and thus the proof.
\end{proof}

\subsection{Simple modules and crystals}
\label{sec:crystal}

Lauda and Vazirani show that there is a natural crystal structure on simple representations of $R^\la=\alg^\la$, which is isomorphic to the usual highest weight crystal $\mathcal{B}(\la)$.  A similar crystal structure exists for simples of $\alg^\bla$; we denote the set of isomorphism classes of simple modules by $\mathcal{B}^\bla$.

\nc{\te}{\tilde{e}}
\nc{\tf}{\tilde{f}}
\nc{\sse}{\mathsf{e}}

\nc{\ssf}{\mathsf{f}}
\nc{\soc}{\operatorname{soc}}
%\nc{\Lotimes}{\overset{L}\otimes}

Recall that the {\bf cosocle} or {\bf head} $\cosoc(M)$ of a
representation $M$ is its maximal semi-simple quotient.  As many
examples in representation theory show, it is often easiest to
construct simple modules by first considering other modules that they
are cosocle of.   For example, this is done for KLR algebras in \cite{KlRa}.

\begin{thm}\label{h-bijection}
For $L_i$ a simple $T^{\la_i}$ module, the module $\mathbb{S}^{\bla}(L_1\boxtimes\cdots\boxtimes
L_\ell)$ has a unique simple quotient.  This defines a bijection  $$h\colon \mathcal{B}^{\la_1}\times\cdots \times
\mathcal{B}^{\la_\ell}\to \mathcal{B}^\bla,$$ \[h(L_1,\dots,
L_\ell)\mapsto \cosoc \mathbb{S}^{\bla}(L_1\boxtimes\cdots\boxtimes
L_\ell).\]
\end{thm}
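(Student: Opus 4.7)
The plan is to follow the classical pattern for proving that a Verma-like module has a simple cosocle, and then use a counting argument to promote injectivity to bijectivity. Let me write $M := \mathbb{S}^{\bla}(L_1 \boxtimes \cdots \boxtimes L_\ell)$.

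Step 1 (Simple cosocle). I will compute $\End_{T^\bla}(M)$ and show its degree-zero part is $\K$, which forces the cosocle to be simple. The tool is the adjunction: since $\mathbb{S}^\bla$ is tensor product with the bimodule $S$, its right adjoint is $\Hom_{T^\bla}(S, -)$, and hence
\[
\End_{T^\bla}(M) \;\cong\; \Hom_{T^{\la_1}\otimes\cdots\otimes T^{\la_\ell}}\!\bigl(L_1\boxtimes\cdots\boxtimes L_\ell,\;\Hom_{T^\bla}(S, M)\bigr).
\]
Using exactness of standardization (Proposition~\ref{standard-exact}) and the fact that the bimodule $S$ has a natural filtration by strata $\bal$, I expect $\Hom_{T^\bla}(S,M)$ to carry a filtration whose top quotient is $L_1\boxtimes\cdots\boxtimes L_\ell$ itself (coming from the diagonal stratum) and whose lower pieces contribute only strictly positive graded degree. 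Since $\End_{T^{\la_i}}(L_i)_0 = \K$ for each $i$, this yields $\End(M)_0 = \K$, so $\End(M)$ is graded local and $M$ has a unique simple quotient, which we call $h(L_1,\dots,L_\ell)$.

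Step 2 (Injectivity). The same adjunction calculation, applied to two different tuples $(L_\bullet)$ and $(L'_\bullet)$, shows that $\Hom_{T^\bla}\!\bigl(\mathbb{S}^\bla(L_\bullet),\mathbb{S}^\bla(L'_\bullet)\bigr)_0$ is controlled by $\Hom(L_i, L'_i)_0$ in the top stratum and by positively graded terms from lower strata. Hence any degree-zero map between them vanishes unless $L_i \cong L'_i$ for all $i$, and in particular the cosocles are distinct. This establishes that $h$ is injective.

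Step 3 (Surjectivity by counting). Since the algebras $T^\bla_\alpha$ are finite-dimensional in each weight $\alpha$, the number of simple modules of $T^\bla$ (up to grading shift) equals the rank of $K_0(T^\bla)$ as a $\Z[q,q^{-1}]$-module. By Theorem~\ref{Uq-action} this rank is $\dim V_\bla = \prod_i \dim V_{\la_i}$. For $\ell = 1$ the result (essentially Lauda--Vazirani's crystal-theoretic classification used already in the proof of Lemma~\ref{red-span}) gives $|\mathcal{B}^{\la_i}| = \dim V_{\la_i}$. Thus source and target of $h$ have the same (finite) cardinality in each weight, and injectivity forces bijectivity.

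The main obstacle will be Step~1: verifying that $\Hom_{T^\bla}(S,M)$ has the expected filtration with top $L_1\boxtimes\cdots\boxtimes L_\ell$ and strictly positively graded contributions from all other strata. This amounts to carefully exploiting the standardly stratified structure (Theorem~\ref{properties}) and the fact that morphisms between standards $S_\bal \to S_{\bal'}$ with $\bal \neq \bal'$ necessarily involve right crossings and hence live in positive degree. Once this grading estimate is in place, Steps~2 and 3 follow formally.
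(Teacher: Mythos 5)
There is a genuine gap, and it sits at the heart of Step~1. Knowing that $\End_{T^\bla}(M)$ is graded local (indeed, full faithfulness of standardization, Proposition~\ref{semi-orthogonal}, gives $\End(M)\cong\End(L_1\boxtimes\cdots\boxtimes L_\ell)=\K$ directly, with no filtration argument needed) only tells you that $M$ is \emph{indecomposable}. It does not tell you that $\cosoc(M)$ is simple: an indecomposable module can perfectly well have a decomposable head while having scalar endomorphisms (e.g.\ the representation $\K\to\K\leftarrow\K$ of the quiver $1\to 2\leftarrow 3$). The missing ingredient is a \emph{generation} statement. The correct argument fixes the maximal root function $\bal$ of $L_1\boxtimes\cdots\boxtimes L_\ell$, identifies $Me_{\bal}\cong L_1\boxtimes\cdots\boxtimes L_\ell$ as a simple module over $e_{\bal}T^\bla e_{\bal}$ (the adjunction unit is an isomorphism here, by the basis theorem), and observes that $Me_{\bal}$ \emph{generates} $M$ because $M$ is a quotient of a projective generated by that subspace. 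Then any proper submodule must be killed by $e_{\bal}$, so the sum of two proper submodules is still proper, and a unique maximal submodule exists. Your endomorphism computation cannot substitute for this.

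Step~2 has a similar logical slip: the vanishing of $\Hom\bigl(\mathbb{S}^\bla(L_\bullet),\mathbb{S}^\bla(L'_\bullet)\bigr)_0$ for $L_\bullet\not\cong L'_\bullet$ does not imply the cosocles are distinct, since two modules with isomorphic simple quotients need not admit any nonzero map between each other (the surjections onto the common quotient need not factor through one another). Injectivity should instead be proved by \emph{recovering} $(L_1,\dots,L_\ell)$ from $L=h(L_\bullet)$: if $\bal$ is maximal with $Le_{\bal}\neq 0$, then comparing with any other presentation $\mathbb{S}^\bla(L'_\bullet)\twoheadrightarrow L$ forces the root functions to agree and yields $L_1\boxtimes\cdots\boxtimes L_\ell\cong Le_{\bal}\cong L'_1\boxtimes\cdots\boxtimes L'_\ell$. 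With injectivity obtained this way, the same recovery argument already gives surjectivity (every simple $L$ receives a nonzero, hence surjective, map from $\mathbb{S}^\bla(Le_{\bal})$ via the counit), so your counting argument in Step~3, while valid in principle given Theorem~\ref{Uq-action}, is not needed.
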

We'll use the following standard lemma:
\begin{lemma}\label{simple-quotient}
  Let $A$ be an algebra and $M$ a right $A$-module, and $e\in A$ an
  idempotent.  If
  \begin{enumerate}
  \item $Me$ is simple as an $eAe$-module and
  \item $Me$ generates $M$ as an $A$ module,
  \end{enumerate}
then $M$ has a unique simple quotient.  
\end{lemma}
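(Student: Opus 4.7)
The plan is to show that every proper submodule of $M$ is contained in the single proper submodule $N_0 := \{m \in M : me = 0\}$, from which existence and uniqueness of a simple quotient follow immediately.

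First I would observe that $N_0$ is well-defined as an $A$-submodule of $M$, and that it is proper: indeed $Me \neq 0$ since $Me$ is simple (hence nonzero) as an $eAe$-module, so there is some $m \in M$ with $me \neq 0$, and for such an $m$ we have $(me)e = me \neq 0$, i.e.\ $me \notin N_0$. (More directly, $M \cdot e \not\subset N_0$.)

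Next, let $N \subsetneq M$ be any proper $A$-submodule. Then $Ne$ is an $eAe$-submodule of $Me$, so by hypothesis (1) either $Ne = 0$ or $Ne = Me$. In the latter case $Me \subseteq N$, and then by hypothesis (2) we would have $M = Me \cdot A \subseteq N$, contradicting properness of $N$. Hence $Ne = 0$, which means $N \subseteq N_0$.

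It follows that $N_0$ is the unique maximal proper submodule of $M$, so $M/N_0$ is the unique simple quotient of $M$. The main content is simply the trade-off between conditions (1) and (2): (1) forces every proper submodule to be killed by $e$ on the right, while (2) prevents any submodule killed by $e$ from being all of $M$. No step here is a genuine obstacle; the lemma is a standard observation about Schur-functor-type idempotents, and the proof is routine once one identifies the candidate maximal submodule $N_0$.
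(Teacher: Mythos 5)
Your key step --- that every proper submodule $N$ satisfies $Ne=0$, because otherwise $Ne=Me$ by simplicity and then $M=MeA\subseteq N$ by generation --- is exactly the paper's argument. But the way you package the conclusion is flawed: the set $N_0=\{m\in M: me=0\}$ is \emph{not} in general a right $A$-submodule of $M$ (from $me=0$ one cannot conclude $mae=0$), and this failure can occur even under the hypotheses of the lemma. Concretely, take $A=M_2(\K)$, $e=e_{11}$, and $M=e_{22}A$ the second row. Then $Me=\K e_{21}$ is simple over $eAe\cong\K$ and $MeA=e_{21}A=M$, so both hypotheses hold; but $N_0=\K e_{22}$ is not a submodule (since $e_{22}\cdot e_{21}=e_{21}\notin N_0$), and the unique maximal proper submodule of $M$ is $0$, not $N_0$. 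So your assertions that ``$N_0$ is well-defined as an $A$-submodule'' and that ``$N_0$ is the unique maximal proper submodule'' are both false in general.

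The repair is immediate and is what the paper does: since every proper submodule is contained in the set $N_0$, and $N_0\neq M$ (as you correctly observe), the sum of any two --- indeed of all --- proper submodules is still contained in $N_0$ and hence is again proper. That sum is therefore the unique maximal submodule, and $M$ has a unique simple quotient. So your proof is salvageable with a one-line change, but as written it proves something false about $N_0$.
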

\begin{proof}
  Any proper submodule is killed by the idempotent $e$, since any
  non-zero vector in $Me$ generates $M$.  Thus, the sum of two proper
  submodules is killed by $e$, and is again proper.  Therefore, we have a unique
  maximal submodule. 
\end{proof}
\begin{proof}[Proof of Theorem \ref{h-bijection}]
  Since $L_i$ is indecomposable, it makes sense to speak of its
  weight.  Thus we have a root function $\bal$ of
  $L_1\boxtimes\cdots\boxtimes L_\ell$, and the corresponding
  idempotent $e_{\bal}$ as defined earlier.  Note that the functor
  $\mathbb{S}^{\bla}$ restricted to
  $\alg^{\la_1}_{\mu_1}\otimes \cdots \otimes
  \alg^{\la_\ell}_{\mu_\ell}$-modules has a right adjoint
  given by $\Hom(S_\bal,-)$.  For a fixed module $T^\bla$ module $M$,
  if $Me_{\bal}\neq 0$ and $\bal$ is maximal amongst $\bal'$ with this
  property, then we have that $\Hom(S_\bal,M)\cong M e_\bal$.

The unit of the adjunction gives an inclusion of
$\alg^{\la_1}_{\mu_1}\otimes \cdots \otimes
\alg^{\la_\ell}_{\mu_\ell}$-modules 
\[L_1\boxtimes\cdots\boxtimes L_\ell \hookrightarrow
\mathbb{S}^{\bla}(L_1\boxtimes\cdots\boxtimes L_\ell) e_{\bal}.\]  This
map is actually an isomorphism since by Proposition \ref{basis}, we
can rewrite all elements of $e_\bal T^\bla e_{\bal}$ as a sum of
diagrams in the image of $\wp$, which preserve
$L_1\boxtimes\cdots\boxtimes L_\ell$, and of diagrams with standardly
violating strands, which act trivially.

We apply Lemma \ref{simple-quotient} to the idempotent
$e_\bal$ and the $T^\bla$-module $\mathbb{S}^{\bla}(L_1\boxtimes\cdots\boxtimes
L_\ell)$; condition (1) follows from the simplicity of $L_i$, and
condition (2) from the definition of standard modules (these are
quotients of projectives generated by the same subspace).  Thus $h$ is
indeed well-defined.  

Now we wish to show bijectivity by constructing an inverse. Fix a
simple $L$ and let $\bal$ be a maximal root function such that $ L
e_\bal\neq 0$.  Let $L_1\boxtimes\cdots\boxtimes L_\ell$ be a simple
submodule of $Le_\bal $.  Since $\bal$ is maximal, the counit of the
adjunction between $\mathbb{S}^{\bla}$ and $\cdot e_\bal$ induces a
map $\mathbb{S}^{\bla}(L_1\boxtimes\cdots\boxtimes L_\ell)\to L$,
which is non-zero, and thus surjective.  This shows that the map $h$
is surjective. 

Now, assume there is another set of simples $L_1',\dots, L_\ell'$ with
a possibly different root function $\bal'$ such that $L$ is also a
quotient of  $\mathbb{S}^{\bla}(L_1'\boxtimes\cdots\boxtimes
L_\ell')$.  Since $ L e_\bal\neq 0$, we must have $\mathbb{S}^{\bla}(L_1'\boxtimes\cdots\boxtimes
L_\ell') e_\bal\neq 0$.  This only possible if $\bal\leq\bal'$.  By
symmetry, this also implies that $\bal'\leq \bal$, so we must have $\bal'=\bal$. 

Furthermore, we have that 
\begin{equation*}
 L_1\boxtimes\cdots\boxtimes L_\ell\cong \mathbb{S}^{\bla}(L_1\boxtimes\cdots\boxtimes
  L_\ell)e_\bla\cong  L e_\bal \cong \mathbb{S}^{\bla}(L_1'\boxtimes\cdots\boxtimes
  L_\ell')e_\bla\cong L_1'\boxtimes\cdots\boxtimes L_\ell'.
\end{equation*}
This shows that the map $h$ is also injective.  
\end{proof}

If $M$ is
a right module over $T^\bla$, we let $\dot M$ be the left module
given by twisting the action by the anti-automorphism $a\mapsto \dot{a}$ flipping
diagrams through the vertical axis.
\begin{defn}
For a  finite-dimensional right module $M$, we define the {\bf dual module} by $M^\star=\dot M^*$, where $(\cdot)^*$ denotes usual vector space duality interchanging left and right modules.  
\end{defn} 
This is a right module since both vector space dual and the anti-automorphism interchange left and right modules.

\begin{prop}\label{prop-simple-self-dual}
Any simple module $L\in \mathcal{B}^\bla$ is isomorphic to its dual: $L\cong L^\star$.
\end{prop}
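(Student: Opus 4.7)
The plan is to proceed by induction on $\ell$, using the bijection $h$ of Theorem \ref{h-bijection} to reduce to the base case of a single red strand.

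For the base case $\ell=1$, the algebra $T^{(\la)}=R^\la$ is Frobenius by Theorem \ref{Frobenius}, and the anti-automorphism $\dot{}\,$ really is an anti-involution of $R^\la$ (the single red strand being unaffected by its own ``reversal''). A standard argument then gives self-duality of simples: $\dot{}\,$ sends $e(\Bi)$ to $e(\Bi^{\mathrm{rev}})$, and the two idempotents are conjugate in $R^\la$ via any diagram reversing the sequence of black strands, so the functor $\star$ permutes primitive idempotents within each conjugacy class; combined with the Frobenius pairing this forces each simple to be self-dual. This is in essence what appears in Lauda--Vazirani \cite{LV}.

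For the inductive step, suppose the result for every tensor product with fewer than $\ell$ factors and write $L=h(L_1,\dots,L_\ell)$. By induction each $L_i\cong L_i^\star$. Let $\bal$ be the root function of $L_1\boxtimes\cdots\boxtimes L_\ell$. The proof of Theorem \ref{h-bijection} shows that $Le_\bal\cong L_1\boxtimes\cdots\boxtimes L_\ell$ as a module over $\wp_\bal\bigl(R_{\bal(0)}\otimes \bigotimes_i R_{\bal(i)}\bigr)$, and that $\bal$ is maximal among root functions with $Le_\bal\ne 0$. Applying duality, $L^\star e_\bal\cong (Le_\bal)^\star$; I then wish to identify $(L_1\boxtimes\cdots\boxtimes L_\ell)^\star$ with $L_1^\star\boxtimes\cdots\boxtimes L_\ell^\star$, which reduces to checking that $\dot{}\,$ restricts (modulo the standardly violating ideal) to the product of the base-case involutions on each tensor factor, possibly pre-composed with conjugation by a permuting diagram within each black block. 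Granting this, $L^\star e_\bal\cong L_1\boxtimes\cdots\boxtimes L_\ell$ with $\bal$ still maximal, and the uniqueness in Theorem \ref{h-bijection} yields $L^\star\cong h(L_1,\dots,L_\ell)=L$.

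The main obstacle is the compatibility between $\dot{}\,$ and the embedding $\wp_\bal$. Since $\dot{}\,$ swaps left and right crossings in the sense of \eqref{eq:left-crossing}, one has to check that it sends standardly violating diagrams to (conjugates of) standardly violating diagrams so as to descend cleanly to $S_\bal$, and then identify the induced anti-involution on $R_{\bal(0)}\otimes\bigotimes_i R_{\bal(i)}$ with the product of the familiar KLR flip involutions. This is essentially a bookkeeping exercise using Proposition \ref{basis} and Lemma \ref{theta-pull} to straighten flipped diagrams; once it is verified the rest of the proof is a direct application of the inductive hypothesis and the bijectivity in Theorem \ref{h-bijection}.
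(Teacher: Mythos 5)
Your proposal is correct and follows essentially the same route as the paper: reduce via the classification of Theorem \ref{h-bijection} to the $e_\bal$-truncation (using $\dot{e}_\bal=e_\bal$), and invoke self-duality of simples over (tensor products of) cyclotomic KLR algebras, which the paper cites from Khovanov--Lauda. The induction on $\ell$ is superfluous window-dressing — your inductive step only ever uses the single-factor case — and the compatibility of $\dot{}\,$ with $\wp_\bal$ that you defer is glossed over in the paper as well, so the two arguments are at the same level of detail.
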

\begin{proof}
From Theorem \ref{h-bijection}, we have that two simple modules $L,L'$are
isomorphic if there is a root function $\bal$ such that
$Le_{\bal'}=L'e_{\bal'}=0$ for all $\bal'\not\leq \bal$, and $Le_{\bal}$ and
$L'e_{\bal}$ are non-zero and isomorphic as modules over $\alg^{\la_1}_{\mu_1}\otimes \cdots \otimes
\alg^{\la_\ell}_{\mu_\ell}$.  Since $\dot{e}_\bal=e_\bal$, we
have that $Le_{\bal'}=0$ if and only if $L^\star e_{\bal'}=0$.  Thus,
the criterion above shows that $L\cong L^\star$ if and only if
$Le_{\bal}\cong L^\star e_{\bal}$.  Khovanov and Lauda have shown
\cite[\S 3.2]{KLI} that every simple module over
$\alg^{\la_i}_{\mu_i}$, and thus over the tensor products of
these algebras, is self-dual.  Applying this to $Le_{\bal}$ gives the result.
\end{proof}

Now, we wish to understand how the simple modules of $\cata^\bla$ are
related by categorification functors.  In particular, it follows from
\cite[5.20]{CR04} that:
\begin{prop}\label{e-f-simple}
  For a simple module $L$, the modules
  $\tf_i(L):=\cosoc(\fF_iL)$, and
  $\te_i(L):=\cosoc(\fE_iL)$ are simple.
\end{prop}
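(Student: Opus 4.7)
The plan is to invoke Chuang--Rouquier's Theorem~5.20 from \cite{CR04}, which asserts exactly this conclusion for any integrable categorical $\mathfrak{sl}_2$-action. For each fixed $i \in \Gamma$, restricting the $\tU$-action from Theorem~\ref{full-action} along the inclusion $\tU_{\mathfrak{sl}_2} \hookrightarrow \tU$ (adjoining all $\eE_i, \eF_i$ and the relations among them) endows $\bigoplus_\mu \cata^\bla_\mu$ with a categorical $\mathfrak{sl}_2$-action. I therefore need only confirm that this induced action satisfies Chuang--Rouquier's hypotheses.

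The hypotheses are biadjointness of $\fE_i$ and $\fF_i$, the nilHecke-algebra action on $\fF_i^n$ (equivalently, the Rouquier $\mathfrak{sl}_2$-categorification axioms), and integrability. The first two are built into the 2-category $\tU$: biadjointness comes from the cup/cap units and counits of Section~\ref{sec:categorification}, and the nilHecke relations (\ref{nilHecke-1}--\ref{black-bigon}) are precisely the $\mathfrak{sl}_2$-categorification relations. Integrability means that for any module $M$, the functors $\fE_i^n M$ and $\fF_i^n M$ vanish for $n \gg 0$; this follows from the fact that $K_0(\cata^\bla) \cong V_\bla^\Z$ (Theorem~\ref{Uq-action}) is a finite direct sum of integrable weight spaces, so in particular each weight space has a bounded number of nonzero $\mathfrak{sl}_2$-string neighbors, and hence $\fE_i^n M, \fF_i^n M$ must eventually be zero on each indecomposable summand.

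Granted these, the heart of Chuang--Rouquier's argument is to show that $\End(\fF_i L)$ is a local ring, which is equivalent to saying $\fF_i L$ has a unique maximal quotient. By biadjointness,
\[
\End(\fF_i L) \cong \Hom(\fE_i \fF_i L, L).
\]
The $[\fE_i, \fF_i]$ relation in $\tU$ expresses $\fE_i \fF_i L$ as an extension built from $\fF_i \fE_i L$ and shifted copies of $L$, and one analyzes $\Hom(-, L)$ using simplicity of $L$ together with the nilHecke-algebra structure on $\End(\fF_i^n)$; this is exactly the local-ring argument carried out in \cite[\S 5.2]{CR04} and refined in the $q$-graded setting in Rouquier~\cite[\S 5.1]{Rou2KM}. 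The analogous argument for $\fE_i$ is symmetric.

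The step that will require the most care is making sure our grading conventions and the specific form of the biadjunction (which, per the pitchfork relations, is \emph{not} cyclic) do not introduce sign or shift issues that invalidate Chuang--Rouquier's local-ring computation; however, since the argument only uses the \emph{existence} of unit and counit morphisms generating a nilHecke action on $\fF_i^n$ and $\fE_i^n$ (via the crossings $\psi$ and dots $y$), the non-cyclicity is irrelevant. Hence Proposition~\ref{e-f-simple} follows.
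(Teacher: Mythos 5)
Your proposal is correct and matches the paper exactly: the paper deduces this proposition directly from \cite[5.20]{CR04}, applied to the categorical action of Theorem \ref{full-action} restricted to each $\mathfrak{sl}_2$. The extra detail you supply on verifying the Chuang--Rouquier hypotheses (biadjointness, nilHecke relations, integrability) is sound but left implicit in the paper.
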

\begin{rem}
  It would be more in the spirit of earlier work on crystals of
  representations, such as \cite{LV}, to let $\te_i(L)$ be the socle
  of the kernel of the action of $y$ on $\fE_iL$; however, $\fE_iL$ is
  self-dual, so this is the same up to isomorphism.
\end{rem}

\begin{thm}\label{simple-perfect}
  These operators make the classes of the simple modules a perfect basis of $K_0(\alg^\bla)$ in the sense of Berenstein and Kazhdan \cite[Definition 5.30]{BeKa}.  In particular, they define a crystal structure on simple modules.
\end{thm}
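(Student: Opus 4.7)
The plan is to verify the Berenstein-Kazhdan axioms for a perfect basis directly, using the categorical $\fg$-action established in Theorem \ref{full-action} to translate each axiom into a statement about composition factors of $\fE_i L$ and $\fF_i L$. Recall that a basis $B$ of an integrable module indexed by the set of simples is \emph{perfect} if, for every $i \in \Gamma$ and every $b = [L]$ with $E_i [L] \neq 0$, when we write $E_i[L] = \sum_{L''} c_{L,L''}[L'']$ in the basis of simple classes, there is a unique composition factor $L''$ of $\fE_i L$ whose $\varepsilon_i$-value is maximal, that this $L''$ equals $\tilde e_i(L) = \cosoc(\fE_i L)$, and the analogous statement holds for $\tilde f_i$; and finally these operations are mutual inverses (on the subset where they are nonzero).

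The first step is to observe that by Theorem \ref{Uq-action} the classes $\{[L]\}$ indeed form a basis of $V_\bla^\Z$, and by Theorem \ref{full-action} we have $[\fE_i L]=E_i[L]$ and $[\fF_i L]=F_i[L]$. Thus the coefficients $c_{L,L''}$ of the decomposition are precisely the graded composition multiplicities $[\fE_i L:L'']_q$, and the issue is to identify them. Next I would restrict the $\tU$-action on $\cata^\bla$ to a single $\mathfrak{sl}_2$-categorification for each $i$: the functors $\fE_i, \fF_i$ together with the 2-morphisms $y,\psi$ coming from \eqref{y-def}--\eqref{psi-action} satisfy the Chuang-Rouquier $\mathfrak{sl}_2$-categorification axioms, precisely because the defining relations of $\tU$ hold in $\cata^\bla$. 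Combined with integrability (every $L$ is killed by some power of $\fE_i$ or $\fF_i$), this puts us exactly in the framework where Chuang-Rouquier's structural theorems for simples under $\fE_i,\fF_i$ apply.

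The third step is to apply these structural theorems, in the form proved by Lauda-Vazirani \cite{LV} for cyclotomic KLR algebras and easily transplanted here, since the only input is the $\mathfrak{sl}_2$-categorification structure on the abelian category $\cata^\bla$ together with self-duality of simples (Proposition \ref{prop-simple-self-dual}). These give exactly the two needed facts: (i) for a simple $L$ with $\varepsilon_i(L)=n>0$, the module $\fE_i L$ has simple cosocle $\tilde e_i(L)$ with $\varepsilon_i(\tilde e_i(L))=n-1$, appearing with graded multiplicity $[n]_q$, and every other composition factor $L''$ satisfies $\varepsilon_i(L'')<n-1$; (ii) the operators $\tilde e_i$ and $\tilde f_i$ are mutually inverse wherever defined (because $\cosoc(\fE_i \tilde f_i(L))\cong L$ by adjunction together with the simplicity of the cosocle). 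Condition (i) identifies $\tilde e_i(L)$ as the unique composition factor of $\fE_i L$ of maximal $\varepsilon_i$-value, which is the Berenstein-Kazhdan perfectness condition.

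The main obstacle is verifying that the local $\mathfrak{sl}_2$-categorification framework really delivers the sharp multiplicity statement in (i) — in particular, that only $\tilde e_i(L)$ (and not some other simple with the same $\varepsilon_i$-value) appears as the top composition factor. This is the content of the Chuang-Rouquier/Lauda-Vazirani analysis of highest-weight vectors in an $\mathfrak{sl}_2$-categorification, and the key technical ingredient is the biadjointness and the description of $\fE_i\fF_i$ versus $\fF_i\fE_i$ on weight spaces coming from the relations \eqref{switch-1}--\eqref{switch-2}; these relations hold in our setting by Theorem \ref{full-action}, so the argument goes through verbatim. Once the perfect basis axioms are checked, Berenstein-Kazhdan's general result \cite[Main Theorem 5.37]{BeKa} furnishes the crystal structure.
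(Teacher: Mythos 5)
Your proposal is correct and takes essentially the same route as the paper: the paper's proof is a three-line invocation of Chuang--Rouquier's \cite[Prop.\ 5.20]{CR04} (that $\fE_i^a(L)$ is semisimple and isotypic of type $\te_i^a(L)$ for $a=\varepsilon_i(L)$), which is exactly the $\mathfrak{sl}_2$-categorification structural input you identify via Lauda--Vazirani. Your version merely spells out the verification of the Berenstein--Kazhdan axioms in more detail.
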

\begin{proof}
By \cite[Prop. 5.20]{CR04}, if $a$ is the largest integer such that $\te_i^a(L)\neq 0$, then
$\fE_i^a(L)$ is semi-simple; in fact, it is a sum of copies of
$\te_i^a(L)$ (since $\fF_i^{(a)}(\te_i^a(L))$ surjects onto $L$).  In
particular, any other simple constituent of $\fE_i(L)$ is killed by
$\te_i^{a-1}$.  This is the definition of a perfect basis.
\end{proof}

Since $K_0(\alg^\bla)\cong V_\bla$, this implies that an isomorphism
of crystals exists between $\mathcal{B}^\bla$ and the tensor product
$\mathcal{B}^{\la_1}\times \cdots\times \mathcal{B}^{\la_\ell}$
without actually determining what it is.  In \cite[7.2]{LoWe}, the author and
Losev prove that:
\begin{thm}
  The crystal structure induced on $\mathcal{B}^\bla$ by $h$ has
  Kashiwara operators given by $\tf_i$ and $\te_i$, where
  $\mathcal{B}^{\la_1}\times \cdots\times \mathcal{B}^{\la_\ell}$ is
  endowed with the tensor product crystal structure.
\end{thm}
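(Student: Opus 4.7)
The plan is to verify directly that the categorically-defined operators $\te_i, \tf_i$ act on $h(L_1,\dots,L_\ell)$ by the tensor product crystal rule. I would induct on $\ell$, with the base case $\ell=1$ being tautological. For the inductive step, associativity of the tensor product crystal and of the standardization functor (via iterated application of Proposition \ref{prop:act-filter}) reduces the problem to the case $\ell=2$: it suffices to show that
\begin{equation*}
\te_i \, h(L_1,L_2) \;=\; \begin{cases} h(\te_i L_1, L_2) & \text{if } \varphi_i(L_1) \geq \varepsilon_i(L_2),\\ h(L_1, \te_i L_2) & \text{if } \varphi_i(L_1) < \varepsilon_i(L_2), \end{cases}
\end{equation*}
with the analogous formula for $\tf_i$.

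For the two-factor case, Proposition \ref{prop:act-filter} produces a short exact sequence
\begin{equation*}
0 \to \mathbb{S}^{\la_1;\la_2}(L_1 \boxtimes \fE_i L_2)(\langle\al_i,\la_1-\wt(L_1)\rangle) \to \fE_i \mathbb{S}^{\la_1;\la_2}(L_1\boxtimes L_2) \to \mathbb{S}^{\la_1;\la_2}(\fE_i L_1 \boxtimes L_2) \to 0.
\end{equation*}
Taking cosocle on the middle term gives $\te_i\, h(L_1,L_2)$ by definition. By Theorem \ref{h-bijection} combined with Proposition \ref{e-f-simple} and Proposition \ref{standard-exact}, the cosocle of the quotient is $h(\te_i L_1, L_2)$ when $\fE_iL_1\neq 0$, while the cosocle of the submodule, when nonzero, is $h(L_1, \te_i L_2)$. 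The task then is to determine which of these two simples actually survives as a quotient of the middle term. The natural criterion is: the quotient surjection $\fE_i\mathbb{S}(L_1\boxtimes L_2)\to \mathbb{S}(\fE_iL_1\boxtimes L_2)$ splits off the cosocle precisely when no copy of $h(\te_iL_1,L_2)$ appears in the sub. Using the perfect basis property (Theorem \ref{simple-perfect}) and biadjunction of $\fE_i,\fF_i$, one computes multiplicities by passing to $h(\cdot)e_\bal$ where $\bal$ is the root function of $\te_iL_1\boxtimes L_2$; the resulting numerical comparison is precisely Kashiwara's signature inequality $\varphi_i(L_1) \geq \varepsilon_i(L_2)$.

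The principal obstacle is matching the numerology: one must show that when $\varphi_i(L_1) < \varepsilon_i(L_2)$, the module $h(\te_iL_1, L_2)$ in fact occurs as a composition factor inside the submodule $\mathbb{S}(L_1\boxtimes \fE_iL_2)$, cancelling it from the cosocle of the total. This requires a careful analysis of further standard filtrations, using Proposition \ref{prop:act-filter} iterated with $\fF_i$ applied to $\te_iL_1$ and then extracting the $L_1$-isotypic component of the $\alg^{\la_1}$-factor. The Serre-type identity $\varphi_i - \varepsilon_i = \al_i^\vee(\wt)$ is what converts the grading shifts and weight shifts in the filtration into Kashiwara's inequality. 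Alternatively (and possibly more cleanly), one can invoke uniqueness of the tensor product crystal structure among those compatible with a fixed highest-weight crystal embedding, and verify that the categorical crystal on $\mathcal{B}^\bla$ satisfies the axioms of a normal crystal with the correct highest weight vectors, which are exactly the classes of $\mathbb{S}^\bla(v_{\la_1}\boxtimes\cdots\boxtimes v_{\la_\ell})$; this sidesteps the detailed multiplicity computation, but requires appealing to the uniqueness results for crystals tensor-generated by highest weight elements.
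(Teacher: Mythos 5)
First, note that the paper does not prove this theorem at all: it is quoted from Losev--Webster \cite[7.2]{LoWe}, where the proof is a substantial argument relying on the full package of tensor product categorification axioms (the content of Corollary \ref{TPC}) and ultimately on a reduction to $\mathfrak{sl}_2$ via Chuang--Rouquier theory. So you are not reconstructing an argument contained in this paper but attempting to reprove an external theorem, and your sketch, while it starts from the right place (the filtration of Proposition \ref{prop:act-filter} and the reduction to $\ell=2$), substantially underestimates what is involved.

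There are two concrete gaps. The first is a conflation at the very start of the $\ell=2$ analysis: you assert that taking the cosocle of the middle term $\fE_i\mathbb{S}^{\la_1;\la_2}(L_1\boxtimes L_2)$ ``gives $\tilde{e}_i\,h(L_1,L_2)$ by definition.'' It does not. By definition $\tilde{e}_i h(L_1,L_2)=\cosoc\fE_i h(L_1,L_2)$, and $h(L_1,L_2)$ is the simple head of $\mathbb{S}^{\la_1;\la_2}(L_1\boxtimes L_2)$, which is a proper quotient whenever the standardization is not simple. Exactness of $\fE_i$ only gives a surjection $\fE_i\mathbb{S}(L_1\boxtimes L_2)\twoheadrightarrow\fE_i h(L_1,L_2)$, so $\tilde{e}_i h(L_1,L_2)$ is merely one constituent of $\cosoc\fE_i\mathbb{S}(L_1\boxtimes L_2)$; deciding which constituent survives is the entire problem, and the two-step filtration does not decide it. The second gap is the one you flag yourself: the ``numerology'' that is supposed to produce Kashiwara's inequality $\varphi_i(L_1)\geq\varepsilon_i(L_2)$ is never carried out, and it cannot be extracted from a single application of the filtration --- one needs control of the whole $i$-string through $h(L_1,L_2)$, i.e., of $\fE_i^{(k)}$ for $k$ up to $\varepsilon_i$, which is exactly where the $\mathfrak{sl}_2$ reduction enters in the actual proof. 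Your fallback route via ``uniqueness of the tensor product crystal'' also does not close the gap: the tensor product crystal is highly disconnected, so knowing that \emph{some} crystal isomorphism with $\mathcal{B}^{\la_1}\times\cdots\times\mathcal{B}^{\la_\ell}$ exists (which already follows from Theorem \ref{simple-perfect} together with the decategorification, as the paper observes just before the statement) and that $h$ matches a distinguished highest weight element does not force the fixed bijection $h$ to be that isomorphism.
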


\subsection{Stringy triples}
\label{sec:stringy-sequences}

Our system of projectives $P^\kappa_\Bi$ is quite redundant; there are many more of them than there are simple modules, as Proposition \ref{h-bijection} shows.  We can produce a smaller projective generator by using string parametrizations.

Choose any infinite sequence $\{i_1,i_2,\dots\}\in \Gamma$ of simple roots such
that each element of $\Gamma$ appears infinitely often.  For any element $v$ of a
highest weight crystal $\mathcal{B}^{\la}$, there are unique integers $\{a_1,\dots\}$ such
that $\cdots \te_{i_2}^{a_2}\te_{i_1}^{a_1}v=v_{high}$ and
$\te_k^{a_k+1}\cdots \te_{i_1}^{a_1}v=0$. The parametrization of the
elements of the crystal by this tuple is called the {\bf string
parametrization.}  We can associate this to a sequence with
multiplicities $(\dots, i_2^{(a_2)},i_1^{(a_1)})$. While this is {\it a priori}
infinite, $a_j=0$ for all but finitely many $j$, so deleting entries
with multiplicity 0, we obtain a finite sequence, which we'll call the
{\bf  string parametrization} of the corresponding simple.

\begin{defn}
\label{stringy-defn}
We call a Stendhal triple $(\Bi,\bla,\kappa)$ {\bf stringy} if the $j$th black
block, that is, the sequence of $i$'s between the $j$th and $j+1$st red lines, is the string parametrization of a crystal basis vector in $V_{\la_j}$.

We will implicitly use the canonical identification between stringy triples and $\mathcal{B}^{\bla}$ via $h$.  
\end{defn}

As in Khovanov and Lauda \cite[\S 3.2]{KLI}, we order the elements of
the crystal $\mathcal{B}^\bla$ by first decreasing weight (so that the smallest element
is the highest weight vector) and then lexicographically by the string
parametrization.

For the tensor product crystal, we use the dominance order on
$\bal$'s, with the order discussed above in the factors used to break ties.

\begin{prop}\label{prop:stringy}
The projective cover of any simple appears as a summand of
$P^{\kappa}_{\Bi}$ where   $(\Bi,\kappa)$ is the corresponding stringy
triple.  This cover is, in fact, the unique indecomposable summand
which doesn't appear in $P^{\kappa'}_{\Bi'}$ for
$(\Bi',\kappa')>(\Bi,\kappa)$.  If $(\Bi,\kappa)$ is not stringy, then
every indecomposable summand of $P^{\kappa}_{\Bi}$ appears in $P^{\kappa'}_{\Bi'}$ for
$(\Bi',\kappa')>(\Bi,\kappa)$.
\end{prop}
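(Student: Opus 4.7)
My plan is to compute, for each simple $L \in \mathcal{B}^{\bla}$, the multiplicity of its projective cover $P_L$ as a direct summand of $P^{\kappa}_{\Bi}$, which equals $\dim\Hom(P^{\kappa}_{\Bi}, L) = \dim L\,e(\Bi,\kappa)$ since distinct indecomposable projectives map to distinct simples. Writing $L = h(L_1,\dots,L_\ell)$ via the bijection of Theorem \ref{h-bijection}, and using that $L$ is a quotient of $\mathbb{S}^{\bla}(L_1\boxtimes\cdots\boxtimes L_\ell)$, I need two supporting facts. First, $L\,e(\Bi,\kappa)=0$ whenever $\bal(\Bi,\kappa)\not\le\bal(L)$: the standardization uses the bimodule $S_{\bal(L)}$, which is spanned by diagrams with only left crossings of red and black strands, and such diagrams can only weakly decrease the leftward partial sums of roots as one reads from top to bottom, so $S_{\bal(L)}\cdot e(\Bi',\kappa')=0$ unless $\bal(\Bi',\kappa')\le\bal(L)$. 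Second, when $\bal(\Bi,\kappa)=\bal(L)$, the proof of Theorem \ref{h-bijection} identifies $L\,e_{\bal(L)}$ with $L_1\boxtimes\cdots\boxtimes L_\ell$ as a module over the tensor product of cyclotomic KLR algebras, so
\[
L\,e(\Bi,\kappa) \;\cong\; L_1\,e(\Bi_1)\otimes\cdots\otimes L_\ell\,e(\Bi_\ell).
\]

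For the first two claims, suppose $(\Bi,\kappa)$ is stringy, so each $\Bi_j$ is the string parametrization of $L_j$. By the Lauda--Vazirani computation \cite{LV} for cyclotomic KLR algebras, $\dim L_j\,e(\Bi_j)=1$, hence $\dim\Hom(P^{\kappa}_{\Bi},L)=1$ and $P_L$ occurs as a summand of $P^{\kappa}_{\Bi}$ with multiplicity one. Any other indecomposable summand $P_{L'}$ must satisfy $L'\,e(\Bi,\kappa)\ne 0$; by the first fact $\bal(L')\ge\bal(\Bi,\kappa)=\bal(L)$, so either $\bal(L')>\bal(L)$ strictly (in which case the stringy triple of $L'$ is strictly higher in the preorder), or $\bal(L')=\bal(L)$ with some $L'_j\ne L_j$, in which case the nonvanishing of $L'_j\,e(\Bi_j)$ with $\Bi_j=\Bi^{L_j}$ forces $\Bi^{L'_j}>\Bi^{L_j}$ in the lex order on good sequences (again by \cite{LV}). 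Either way, the stringy triple of $L'$ strictly exceeds $(\Bi,\kappa)$ in the refinement of the preorder by block-wise lex on string parametrizations, so $P_{L'}$ appears in a strictly higher $P^{\kappa'}_{\Bi'}$, as required.

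For the final claim, assume $(\Bi,\kappa)$ is not stringy: some $\Bi_j$ is not the string parametrization of any simple in $\mathcal{B}^{\la_j}$. If $P_{L'}$ is any summand, the factor $L'_j\,e(\Bi_j)$ is nonzero; since $\Bi_j\ne\Bi^{L'_j}$ by assumption, the Lauda--Vazirani inequality forces $\Bi^{L'_j}>\Bi_j$ strictly in lex. Together with $\bal(L')\ge\bal(\Bi,\kappa)$, the stringy triple of $L'$ is strictly above $(\Bi,\kappa)$, and $P_{L'}$ appears there.

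The principal obstacle is pinning down the order used implicitly in the statement: the preorder of Section \ref{sec:standard-def} alone does not distinguish stringy from non-stringy triples of the same root function, so one must refine it using block-wise lex on the sequences $\Bi_j$ in the manner indicated before the proposition for elements of $\mathcal{B}^{\bla}$, and verify that this refinement matches the direction of the Lauda--Vazirani inequality under our sign conventions. Once that bookkeeping is settled, all three assertions reduce to the single-tensor-factor KLR computation together with the $\bal$-support bound extracted from the definition of standardization.
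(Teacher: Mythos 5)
Your argument is correct and follows essentially the same route as the paper's: both establish the stringy case via the surjection $P^{\kappa}_{\Bi}\twoheadrightarrow S^{\kappa}_{\Bi}\twoheadrightarrow L$ (equivalently, $\dim L\,e(\Bi,\kappa)=1$), bound the other summands by the dominance inequality $\bal(L')\geq\bal(\Bi,\kappa)$ coming from the support of standardized simples, and reduce the equal-root-function case to the one-factor cyclotomic KLR statement about string parametrizations and lexicographic order (the paper cites \cite[Lemma 3.7]{KLI} where you cite \cite{LV}, but these carry the same content). Your closing caveat about the order is also resolved as you suspect: the paper defines the refinement of the dominance preorder by block-wise lexicographic comparison of string parametrizations in the paragraph immediately preceding the proposition, and that is the order meant by $(\Bi',\kappa')>(\Bi,\kappa)$.
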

As a matter of convention, we call the root function of the stringy triple where an indecomposable projective first appears the root function of that projective.
\begin{proof}
Obviously, $P^{\kappa}_{\Bi}\twoheadrightarrow S^{\kappa}_{\Bi}=\mathbb{S}^{\bla}(\fF_{i_{\kappa(2)}}^{(a_{\kappa(2)})}\cdots \fF_{i_1}^{(a_1)}P_\emptyset\boxtimes\cdots\boxtimes
\fF_{i_{n}}^{(a_n)}\cdots
\fF_{i_{\kappa(\ell)+1}}^{(a_{\kappa(\ell)+1})}P_\emptyset)$ which in
turn surjects to the corresponding simple, by the definition of
Kashiwara operators on simple modules, and of the map $h$.  Thus, the
indecomposable projective cover of the simple with this string
parametrization is a summand of $P^{\kappa}_{\Bi}$.  

 The other indecomposable projective summands of $P^{\kappa}_{\Bi}$
are precisely the projective covers of simples such that $\Hom(
P^{\kappa}_{\Bi},L)\neq 0$, which is the same as requiring that
$Le_{\Bi,\kappa}\neq 0$.  This can only hold if the simple $L$ has an
associated root function (under the bijection $h$) which greater than
or equal to that for $(\Bi,\kappa)$.  If it is strictly greater, then
$L$ must be a quotient of $P^{\kappa'}_{\Bi'}$ with $(\Bi',\kappa')$
having greater root function than $(\Bi,\kappa)$.  Thus, in this case,
we must have $(\Bi',\kappa')>(\Bi,\kappa)$.

On the other hand, if the root functions are equal, any map of
$P^{\kappa}_{\Bi}$ to $L$ must factor through $S^{\kappa}_{\Bi}$.  In
this case, $Le_{\bal}$ will be a quotient of
$S^{\kappa}_{\Bi}e_\bal\cong
\fF_{i_{\kappa(2)}}^{(a_{\kappa(2)})}\cdots \fF_{i_1}^{(a_1)}P_\emptyset\boxtimes\cdots\boxtimes
\fF_{i_{n}}^{(a_n)}\cdots
\fF_{i_{\kappa(\ell)+1}}^{(a_{\kappa(\ell)+1})}P_\emptyset$.

By \cite[Lemma 3.7]{KLI}, this module has a unique simple quotient
that doesn't appear as a quotient associated to a word higher in
lexicographic order if each of the black blocks is a string
parametrization, and none if any one of them is not.  This completes
the proof.
\end{proof}

For an indecomposable projective $P$, its {\bf standard quotient} is its quotient under the sum of all images of maps from projectives with higher root sequences.  This coincides with its image in $S^{\kappa}_\Bi$, the standard quotient for its associated stringy triple.  This standard quotient is indecomposable, since it is a quotient of an indecomposable projective.

\begin{prop}
Consider $({\Bi},{\kappa})$ with the associated root function $\bal$.  Then the sum of indecomposable summands of $P_{\Bi}^{\kappa}$ that have the same root function surject to $S^\kappa_\Bi$, which is a direct sum of the standard quotients of those projectives.
\end{prop}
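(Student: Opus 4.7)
The plan is to decompose $P_\Bi^\kappa=\bigoplus_\alpha P_\alpha$ into indecomposable projective summands, show that this decomposition is compatible with the defining submodule $U_\Bi^\kappa\subset P_\Bi^\kappa$ of $S_\Bi^\kappa$, and then analyze each summand according to its root function. By Proposition \ref{prop:stringy}, each summand $P_\alpha$ satisfies $\bal(P_\alpha)\geq\bal:=\bal_{\Bi,\kappa}$, and strict inequality forces $P_\alpha$ to occur as a summand of some $P_{\Bi'}^{\kappa'}$ with $(\Bi',\kappa')>(\Bi,\kappa)$ (taking a stringy triple realizing $\bal(P_\alpha)$).

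The first technical step is to establish $U_\Bi^\kappa=\bigoplus_\alpha(U_\Bi^\kappa\cap P_\alpha)$, so that $S_\Bi^\kappa=\bigoplus_\alpha P_\alpha/(U_\Bi^\kappa\cap P_\alpha)$. Let $\pi_\alpha\in\End(P_\Bi^\kappa)$ be the idempotent projector onto $P_\alpha$. Since $U_\Bi^\kappa$ is the right $\alg^\bla$-submodule generated by images of morphisms $f\colon P_{\Bi'}^{\kappa'}\to P_\Bi^\kappa$ with $(\Bi',\kappa')>(\Bi,\kappa)$, each composite $\pi_\alpha\circ f$ is again such a morphism; hence $\pi_\alpha(U_\Bi^\kappa)\subseteq U_\Bi^\kappa$, which yields the compatibility via the decomposition $x=\sum_\alpha\pi_\alpha(x)$.

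For a summand with $\bal(P_\alpha)>\bal$, the composition $P_{\Bi'}^{\kappa'}\twoheadrightarrow P_\alpha\hookrightarrow P_\Bi^\kappa$ is a morphism from a strictly higher projective whose image contains $P_\alpha$; hence $P_\alpha\subseteq U_\Bi^\kappa$ and this summand maps to zero in $S_\Bi^\kappa$. This already proves the first assertion of the proposition: the sum of summands with $\bal(P_\alpha)=\bal$ surjects onto $S_\Bi^\kappa$.

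The remaining step, and the main obstacle, is to identify $U_\Bi^\kappa\cap P_\alpha$ (for $\bal(P_\alpha)=\bal$) with the submodule of $P_\alpha$ defining its standard quotient, namely the sum of images of maps $P_{\Bi''}^{\kappa''}\to P_\alpha$ with $\bal_{\Bi'',\kappa''}>\bal(P_\alpha)$. One inclusion is clear by post-composing with $P_\alpha\hookrightarrow P_\Bi^\kappa$. For the other, any $x\in U_\Bi^\kappa\cap P_\alpha$ may be written as $x=\sum_i f_i(y_i)r_i$ with $f_i\colon P_{\Bi_i}^{\kappa_i}\to P_\Bi^\kappa$ and $(\Bi_i,\kappa_i)>(\Bi,\kappa)$; applying $\pi_\alpha$ rewrites this as $x=\sum_i(\pi_\alpha\circ f_i)(y_i)r_i$, a sum of images of maps into $P_\alpha$ from higher projectives, and the hypothesis $\bal(P_\alpha)=\bal$ converts $(\Bi_i,\kappa_i)>(\Bi,\kappa)$ into the required $\bal_{\Bi_i,\kappa_i}>\bal(P_\alpha)$. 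This identifies $P_\alpha/(U_\Bi^\kappa\cap P_\alpha)$ with the standard quotient of $P_\alpha$, exhibiting $S_\Bi^\kappa$ as their direct sum.
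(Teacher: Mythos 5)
Your proposal is correct and follows essentially the same route as the paper's (very terse) proof: summands with strictly higher root function already lie in $U^\kappa_\Bi$, and for the remaining summands the intersection with $U^\kappa_\Bi$ is the trace of the higher projectives. Your idempotent-projection argument simply fills in the step the paper dismisses as "clear."
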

\begin{proof}
If an indecomposable summand of $P_{\Bi}^{\kappa}$ has a different root function, it must be higher, so this summand is in the image of a higher stringy projective and thus in $U^\kappa_\Bi$.  Thus, the other summands must surject.

Similarly, it is clear that the intersection of any indecomposable with the same root function with $U^\kappa_\Bi$ is exactly the trace of the projectives with higher root functions.
\end{proof}

\subsection{Standard stratification}
\label{sec:SS}

Now, we proceed to showing that the algebra $\alg^\bla$ is standardly
stratified.   Fix a Stendhal triple $(\Bi,\kappa)$.  Any Stendhal
diagram with top $(\Bi,\kappa)$ thus has its black strands divided in
black blocks divided by the red strands at the top of the diagram.
Consider the set $\tilde{\Phi}$ of permutations of the
terminals at the top of the diagram which do not move black strands into blocks to their right and
are minimal coset representatives for the permutations within blocks
at the bottom of the diagram. We let $\Phi$ be the subset of
$\tilde{\Phi}$ \label{Phi} where the bottom of the diagram is not violating.

\begin{lemma}\label{Phivs}
  $v^\kappa_\Bi=\sum_{\phi\in\Phi} q^{-\deg x_\phi} s^{\kappa_\phi}_{\Bi_\phi}$
\end{lemma}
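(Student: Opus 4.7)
The plan is to prove the identity by induction on $n = |\Bi|$, the number of black strands. The base case $n=0$ is trivial: then $\Phi = \{\mathrm{id}\}$ and both sides equal $v_{\la_1}\otimes\cdots\otimes v_{\la_\ell}$. For the inductive step I will split into two cases depending on whether the rightmost strand of $(\Bi,\kappa)$ is red or black. If $\kappa(\ell)=n$ (rightmost red), then $v^\kappa_\Bi = v^{\kappa^-}_\Bi \otimes v_{\la_\ell}$, and the inductive hypothesis applied to $(\Bi,\kappa^-)$ (one fewer red strand) gives the result directly: the sets $\Phi$ and $\Phi^-$ are in canonical bijection since a strand cannot be pushed into the empty rightmost block without moving right, and the degrees $\deg x_\phi$ are unchanged by adding the idle red strand at the far right.

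The substantive case is $\kappa(\ell)<n$, so the rightmost strand is black with label $i_n$. Set $\Bi^- = (i_1,\ldots,i_{n-1})$, so $v^\kappa_\Bi = F_{i_n}\, v^\kappa_{\Bi^-}$, and apply the inductive hypothesis to expand $v^\kappa_{\Bi^-} = \sum_{\phi'\in\Phi^-} q^{-\deg x_{\phi'}} s^{\kappa_{\phi'}}_{\Bi^-_{\phi'}}$. Next, use the iterated coproduct \eqref{Fcp} to distribute $F_{i_n}$: acting on a pure tensor $s^{\kappa_{\phi'}}_{\Bi^-_{\phi'}}$ produces the sum over $k\in[1,\ell]$ of the pure tensor obtained by inserting $F_{i_n}$ at the end of the $k$-th block, multiplied by $q^{-\sum_{j=k+1}^\ell \langle \al_{i_n},\,\la_j - \bal_{\phi'}(j)\rangle}$, coming from the $\tilde K_{-i_n}$ factors acting on the tensor slots to the right. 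Equivalently and a bit more conceptually, one can run this on the categorical side: Proposition \ref{prop:act-filter} gives an explicit filtration of $\fF_{i_n}\mathbb{S}^\bla(-)$ by standardizations, and combined with the inductively constructed standard filtration of $P^\kappa_{\Bi^-}$, this yields a standard filtration of $P^\kappa_\Bi = \fF_{i_n} P^\kappa_{\Bi^-}$ whose Grothendieck class is the claimed sum.

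It then remains to verify two combinatorial points: (a) the assignment $(\phi',k)\mapsto\phi$, where $\phi$ extends $\phi'$ by sending the new $n$-th strand into block $k$, is a bijection $\Phi^-\times[1,\ell]\to\Phi$; and (b) $\deg x_\phi - \deg x_{\phi'}$ equals $\sum_{j=k+1}^\ell \langle \al_{i_n},\,\la_j-\bal_{\phi'}(j)\rangle$. Point (a) is essentially automatic: since the $n$-th strand starts in the rightmost block $\ell$, any target block $k\in[1,\ell]$ satisfies the ``no moving right'' constraint, the ``no violating'' constraint rules out exactly $k=0$, and the minimality condition within each bottom block determines a unique diagram extending $x_{\phi'}$. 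The main obstacle is the degree-tracking in (b): the diagram $x_\phi$ extends $x_{\phi'}$ by a path of the $n$-th strand from block $\ell$ to block $k$, crossing the $\ell-k$ red strands in positions $k+1,\ldots,\ell$ (each contributing $d_{i_n}\la_j^{i_n}=\langle\al_{i_n},\la_j\rangle$) and the black strands in those blocks (each contributing $-\langle\al_{i_n},\al_{i_m}\rangle$), and the total telescopes to exactly $\sum_{j=k+1}^\ell\langle\al_{i_n},\la_j-\bal_{\phi'}(j)\rangle$. Once this bookkeeping is nailed down, combining (a) and (b) with the coproduct expansion (or Proposition \ref{prop:act-filter}) gives the formula.
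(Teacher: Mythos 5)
Your proof is correct and follows essentially the same route as the paper's: induction on the number of strands, the trivial case when the rightmost strand is red, and, when it is black, writing $v^\kappa_\Bi=F_{i_n}v^\kappa_{\Bi^-}$, matching the $\ell$-fold coproduct expansion \eqref{Fcp} against the $\ell$ extensions of each $\phi'\in\Phi^-$, with the grading shift checked exactly as in Proposition \ref{prop:act-filter}. Your extra bookkeeping in points (a) and (b) just makes explicit what the paper leaves to that proposition.
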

\begin{proof}
  As usual, we prove this by induction on the number of red and black
  strands.  If $\kappa(\ell)=n$, then $\Phi$ is unchanged by removing
  the red strand, and we have that 
\[v^\kappa_\Bi=v^{\kappa^-}_\Bi\otimes v_{\la_\ell}=\sum_{\phi\in\Phi}
q^{-\deg x_\phi} s^{\kappa_\phi^-}_{\Bi_\phi}\otimes
v_{\la_\ell}=\sum_{\phi\in\Phi} q^{-\deg x_\phi}
s^{\kappa_\phi}_{\Bi_\phi}. \]
Thus, we may assume that $\kappa(\ell)<n$.  We let $\Phi^-$ be
the set of permutations associated to the Stendhal triple
$(\kappa^-,\Bi^-)$ where we
remove the rightmost black strand.  Each
element of $\Phi^-$ contributes $\ell$ elements to $\Phi$ given by
moving the far right element to the far right of the $\ell$ different
black blocks (it can only be at the far right since we must have a
shortest coset representative).  As computed in Proposition
\ref{prop:act-filter}, the grading shifts of these elements match
those in the coproduct formula for $F_{i_n}$ acting on
$s^{\kappa_\phi}_{\Bi_\phi^{-}}$.  Thus, we have \[v^\kappa_\Bi =F_{i_n}v^\kappa_{\Bi^-}=\sum_{\phi\in\Phi^-} q^{-\deg x_\phi} F_{i_n}s^{\kappa_\phi}_{\Bi_\phi^{-}}=\sum_{\phi\in\Phi} q^{-\deg x_\phi}
s^{\kappa_\phi}_{\Bi_\phi}.\] This completes the proof.
\end{proof}

We preorder $\tilde{\Phi}$ according to the preorder on the
idempotent $(\Bi_\phi,\kappa_\phi)$ which appears at the bottom of the
diagram.

Let $x_\phi$ be a Stendhal diagram where we permute the strands exactly
according to a chosen reduced word of $\phi\in \tilde{\Phi}$. 

\begin{example}
  So, for example, in the case of $\mathfrak{sl}_2$, if we have
  $\la=(1,1),\Bi=(1,1)$ and $\kappa(1,2)=0,1$, then the elements in
  $\tilde{\Phi}$ are given (with their ordering) by:
  \[ \centering
  \begin{tikzpicture}[very thick, baseline,scale=.7]
    \draw[wei] (1.5,-1) -- (1.5,1); \draw[wei] (.5,-1) -- (.5,1);
    \draw (0,-1) -- (1,1); \draw (.25,-1) -- (2,1);
  \end{tikzpicture} \,>\, \begin{tikzpicture}[very thick,
    baseline,scale=.7] \draw[wei] (1.5,-1) -- (1.5,1); \draw[wei]
    (.5,-1) -- (.5,1); \draw (1,-1) -- (1,1); \draw (0,-1) -- (2,1);
  \end{tikzpicture} \, ,\, \begin{tikzpicture}[very thick,
    baseline,scale=.7] \draw[wei] (1.5,-1) -- (1.5,1); \draw[wei]
    (.5,-1) -- (.5,1); \draw (1,-1) -- (2,1); \draw (0,-1) -- (1,1);
  \end{tikzpicture} \tikz[baseline]{\node at (0,1.5){\begin{tikzpicture}[very thick,
    baseline,scale=.7] \draw[wei] (1.5,-1) -- (1.5,1); \draw[wei]
    (.5,-1) -- (.5,1); \draw (1.25,-1) -- (2,1); \draw (1,-1) --
    (1,1);
  \end{tikzpicture}};
\node at (0,-1.5){\begin{tikzpicture}[very thick,
    baseline,scale=.7] \draw[wei] (1.5,-1) -- (1.5,1); \draw[wei]
    (.5,-1) -- (.5,1); \draw (2,-1) -- (2,1); \draw (0,-1) -- (1,1);
  \end{tikzpicture}};
\node[rotate=45] at (1,-.75){$>$};
\node[rotate=-45] at (1,.75){$>$};
\node[rotate=-45] at (-1,-.75){$>$};
\node[rotate=45] at (-1,.75){$>$};
}
\begin{tikzpicture}[very thick,
    baseline,scale=.7] \draw[wei] (1.5,-1) -- (1.5,1); \draw[wei]
    (.5,-1) -- (.5,1); \draw (2,-1) -- (2,1); \draw (1,-1) -- (1,1);
  \end{tikzpicture}\]  Only the rightmost and topmost diagrams lie in
  $\Phi$.  The others have a violating strand.
Note that \[  \begin{tikzpicture}[very thick, baseline,scale=.7]
   \draw[wei] (1.5,-1) -- (1.5,1);
   \draw[wei] (.5,-1) -- (.5,1);
      \draw (0,-1) -- (1,1);
      \draw (-.25,-1) -- (2,1);
    \end{tikzpicture}  \] is not one of the diagrams we consider,
    since it is not a shortest coset representative.  
\end{example}
 Consider the submodules $$P_{> \phi}=\sum_{\phi'>\phi}
 x_{\phi'}T^\bla \subset P_{\Bi}^\kappa\qquad P_{\geq
  \phi}=P_{> \phi}+x_\phi T^\bla .$$

\begin{prop}\label{standard-filtration}
For any $\phi\in \Phi$, we have $P_{\geq \phi}/P_{>\phi}\cong S^{\kappa_{\phi}}_{\Bi_\phi}$.
\end{prop}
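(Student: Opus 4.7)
The plan is to construct a right $T^\bla$-module map $\pi\colon P^{\kappa_\phi}_{\Bi_\phi} \to P_{\geq\phi}/P_{>\phi}$ by $a \mapsto x_\phi a + P_{>\phi}$, verify that it is surjective with kernel exactly $U^{\kappa_\phi}_{\Bi_\phi}$, and conclude from Lemma~\ref{Phivs} that the resulting surjection $\bar\pi\colon S^{\kappa_\phi}_{\Bi_\phi}\twoheadrightarrow P_{\geq\phi}/P_{>\phi}$ is in fact an isomorphism. Note that $\pi$ is homogeneous of degree $\deg x_\phi$, so the stated isomorphism should be read with an implicit grading shift. Surjectivity is immediate from $P_{\geq\phi} = P_{>\phi} + x_\phi T^\bla = P_{>\phi} + x_\phi\cdot P^{\kappa_\phi}_{\Bi_\phi}$.

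The key step is the inclusion $U^{\kappa_\phi}_{\Bi_\phi} \subseteq \ker\pi$. It suffices to show that $x_\phi y \in P_{>\phi}$ whenever $y \in e(\Bi_\phi,\kappa_\phi)\,T^\bla\,e(\Bi',\kappa')$ with $(\Bi',\kappa') > (\Bi_\phi,\kappa_\phi)$. I would apply Proposition~\ref{basis} to straighten the composite $x_\phi y$ as a linear combination of basis diagrams $\psi_{w,\kappa'}e(\Bi,\kappa)\by^{\mathbf a}$, each sharing the top $(\Bi,\kappa)$ and bottom $(\Bi',\kappa')$. In $T^\bla$ the basis diagrams with a violating bottom vanish, so only non-violating terms survive. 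The chain $(\Bi',\kappa') \geq (\Bi_\phi,\kappa_\phi) \geq (\Bi,\kappa)$ in the preorder forces every minimal permutation underlying such a basis diagram to move black strands only leftward past red strands, so it defines an element of $\tilde\Phi$, and non-violation of the bottom places it in $\Phi$. After absorbing within-block permutations at the bottom and the monomial $\by^{\mathbf a}$ into a right factor, each surviving basis diagram is of the form $x_{\phi'}\cdot c$ for the unique $\phi'\in\Phi$ with $(\Bi_{\phi'},\kappa_{\phi'}) = (\Bi',\kappa')$. Since $(\Bi',\kappa') > (\Bi_\phi,\kappa_\phi)$ we have $\phi' > \phi$, and hence $x_\phi y \in \sum_{\phi' > \phi} x_{\phi'} T^\bla = P_{>\phi}$.

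Thus $\pi$ descends to the surjection $\bar\pi$. The filtration of $P^\kappa_\Bi$ by the submodules $P_{\geq\phi}$ indexed by the preordered set $\Phi$ then yields the graded-dimension inequality
\[ \dim_q P^\kappa_\Bi \;=\; \sum_{\phi\in\Phi} \dim_q\bigl(P_{\geq\phi}/P_{>\phi}\bigr) \;\leq\; \sum_{\phi\in\Phi} q^{-\deg x_\phi}\dim_q S^{\kappa_\phi}_{\Bi_\phi}. \]
On the other hand, applying $\eta^{-1}$ from Theorem~\ref{Uq-action} (together with the identification $\eta[S^{\kappa_\phi}_{\Bi_\phi}] = s^{\kappa_\phi}_{\Bi_\phi}$ from Proposition~\ref{standard-class}) to Lemma~\ref{Phivs} gives $[P^\kappa_\Bi] = \sum_{\phi\in\Phi} q^{-\deg x_\phi}[S^{\kappa_\phi}_{\Bi_\phi}]$, so the inequality is an equality and each $\bar\pi$ must be an isomorphism.

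The hard part is the straightening step in the second paragraph: one must verify that every surviving term in the basis expansion of $x_\phi y$ really does come from $x_{\phi'} T^\bla$ with $\phi' \in \Phi$ strictly above $\phi$. The crux is that the preorder on $\tilde\Phi$ depends only on the bottom Stendhal triple of $x_{\phi'}$, combined with the observation that whenever the bottom of a diagram is $\geq$ its top in the preorder, only leftward-moving permutations can arise, so each surviving basis diagram is automatically accounted for by an element of $\tilde\Phi$.
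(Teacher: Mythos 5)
Your proof is the paper's proof: the same map $a\mapsto x_\phi a$, the same reduction to showing $x_\phi\cdot U^{\kappa_\phi}_{\Bi_\phi}\subseteq P_{>\phi}$ (which the paper simply asserts), and the same dimension count via Lemma~\ref{Phivs} to upgrade the surjection to an isomorphism. One caution on the step you single out as the crux: it is not true that bottom $\geq$ top in the preorder forces a purely leftward permutation --- two strands with the same label can trade places across a red strand while a third moves left, so a surviving basis diagram may contain right crossings; but after isotoping all left crossings to the top it still factors as $x_{\phi'}c$ with $(\Bi_{\phi'},\kappa_{\phi'})\geq(\Bi',\kappa')>(\Bi_\phi,\kappa_\phi)$, so the containment, and hence your argument, survives.
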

We note that some of these subquotients are trivial, but in this case
the corresponding standard module is trivial as well.
\begin{proof}
The multiplying by the element $x_\phi$ induces a map
$ P^{\kappa_{\phi}}_{\Bi_\phi}\to P_{\geq
  \phi}$.  This map sends $U^{\kappa_{\phi}}_{\Bi_\phi}$ to
$P_{>\phi}$, and thus induces a surjective map
$\gamma_\phi\colon  S^{\kappa_{\phi}}_{\Bi_\phi}\to P_{\geq
  \phi}/P_{>\phi}$.  
Since this map is surjective, we have
\begin{equation}
\dim P_{\geq\phi}/P_{>\phi}\leq \dim S^{\kappa_{\phi}}_{\Bi_\phi}.\label{eq:5}
\end{equation}
On the other hand, we have $v^\kappa_\Bi=\sum_{\phi\in\Phi} q^{-\deg
  x_\phi} s^{\kappa_\phi}_{\Bi_\phi}$ by Lemma \ref{Phivs}, so taking inner product with $[\alg^\bla]$, we obtain $\dim P^\kappa_\Bi=\sum_{\phi\in\Phi}\dim S^{\kappa_{\phi}}_{\Bi_\phi}$.  

Thus we must have equality in \eqref{eq:5}, and the map $\gamma_\phi $
is an isomorphism for dimension reasons.
\end{proof}
\begin{cor}\label{SS}
The algebra $\alg^\bla$ is standardly stratified with standard modules given by the standard quotients of indecomposable projectives, and the preorder on simples/standards/projectives given by the dominance order on root functions $\bal$.
\end{cor}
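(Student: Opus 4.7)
The plan is to unpack the definition of a standardly stratified algebra in the sense of \cite{CPS96} and verify each axiom directly from the results already assembled. Recall that $T^\bla$ is standardly stratified with preorder $\leq$ on indecomposable projectives provided that (i) each projective $P(L)$ has a distinguished quotient $\Delta(L)$ with head $L$, (ii) the kernel $P(L) \to \Delta(L)$ is filtered by quotients of $P(L')$ with $L' > L$, and (iii) every projective admits a filtration by modules $\Delta(L'')$ in a way compatible with the preorder. Here the preorder is to be the one induced by the dominance order on root functions $\bal$: we set $P(L') > P(L)$ iff $\bal(L') > \bal(L)$. By the definition recalled in the excerpt, $\Delta(L) := P(L)/U(L)$, where $U(L)$ is the sum of the images of all maps $P(L') \to P(L)$ with $\bal(L') > \bal(L)$; this is the ``standard quotient'' already introduced.

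First I would use Proposition \ref{prop:stringy} to identify the indecomposable projectives with simples via their stringy parametrization, noting that every indecomposable projective $P(L)$ appears as a direct summand of $P^\kappa_\Bi$ for the stringy triple $(\Bi, \kappa)$ associated to $L$. Next I would invoke Proposition \ref{standard-filtration}: the submodules $\{P_{\geq \phi}\}_{\phi \in \Phi}$ of $P^\kappa_\Bi$ form a filtration with successive quotients $S^{\kappa_\phi}_{\Bi_\phi}$, and by the very definition of the preorder on $\tilde\Phi$ (page \pageref{Phi}) this ordering refines dominance of the root functions $\bal_{\Bi_\phi, \kappa_\phi}$. Combining this with the paragraph following Proposition \ref{prop:stringy}, each $S^{\kappa_\phi}_{\Bi_\phi}$ is a direct sum of shifted copies of $\Delta(L'')$ for simples $L''$ of root function exactly $\bal_{\Bi_\phi, \kappa_\phi}$. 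Hence $P^\kappa_\Bi$ itself carries a filtration by standard modules $\Delta(L'')$ whose indices $\bal(L'')$ form a sequence weakly decreasing in the dominance order from top to bottom.

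To pass from this to the corresponding filtration on an arbitrary indecomposable summand $Q = P(L)$ of $P^\kappa_\Bi$, I would intersect the filtration $\{P_{\geq \phi}\}$ with $Q$. The key observation is that $P_{> \phi}$ is defined as a sum of images from Stendhal diagrams with strictly higher root function, so its intersection with the summand $Q$ coincides with the trace in $Q$ of projectives of strictly higher root function; this gives $Q$ a filtration whose subquotients are direct sums of standards $\Delta(L'')$ with $\bal(L'') \geq \bal(L)$, and with $\Delta(L)$ itself occurring exactly once at the bottom (by the indecomposability of $Q$ and the counting of root functions via Proposition \ref{prop:stringy}). This verifies conditions (i)--(iii) simultaneously: the bottom piece $\Delta(L)$ of the filtration of $P(L)$ is its standard quotient (axiom (i)), the remaining pieces involve only $\Delta(L'')$ with $\bal(L'') > \bal(L)$ or equal to it (axioms (ii) and (iii)).

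The main obstacle — and really the only non-formal step — is the passage from the filtration of $P^\kappa_\Bi$ to a filtration of the indecomposable summand $Q$ that still has standard modules (and not merely standardly filtered modules) as associated graded pieces. This requires the identification from Proposition \ref{prop:stringy} and the paragraph after it: the summands of $S^{\kappa_\phi}_{\Bi_\phi}$ are exactly the standard quotients of the indecomposable projectives of root function $\bal_{\Bi_\phi,\kappa_\phi}$, and the splitting of $P^\kappa_\Bi$ into indecomposables is compatible with the splitting of each graded piece. Once this compatibility is recorded, the stratification axioms are immediate, and the corollary follows.
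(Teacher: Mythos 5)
Your overall strategy coincides with the paper's: Corollary \ref{SS} is stated there with no separate proof precisely because it is read off from Proposition \ref{standard-filtration}, which exhibits on each $P^\kappa_{\Bi}$ a filtration with top quotient $S^\kappa_{\Bi}$ and lower subquotients $S^{\kappa_\phi}_{\Bi_\phi}$ of strictly larger root function (every non-identity $\phi\in\Phi$ moves some black strand left across a red strand, so strictly raises $\bal$). Your identification of the preorder, of the standard quotients, and your use of Proposition \ref{prop:stringy} to locate the indecomposable projectives are all consistent with this.

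The one step that does not work as written is the descent to an indecomposable summand $Q$ of $P^\kappa_{\Bi}$ by ``intersecting the filtration with $Q$.'' Two things go wrong. First, $P_{>\phi}=\sum_{\phi'>\phi}x_{\phi'}T^\bla$ is generated by the specific elements $x_{\phi'}$, not defined as the trace of all higher projectives, so it is not a priori stable under the idempotent endomorphism cutting out $Q$, and $P_{>\phi}\cap Q$ need not equal $e(P_{>\phi})$. Second, and more seriously, even for an arbitrary submodule chain $F_\bullet$ of $M=Q\oplus Q'$, the subquotient $(F_i\cap Q)/(F_{i+1}\cap Q)$ is only a \emph{submodule} of $F_i/F_{i+1}$, and a submodule of a standard module need not itself admit a standard filtration; so term-by-term intersection does not produce a standard filtration of $Q$. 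The standard repair is the general fact that the class of modules admitting a filtration by (grading shifts of) the $S^{\kappa}_{\Bi}$ is closed under direct summands; this follows from an additive homological characterization, e.g.\ via the vanishing $\Ext^{>0}\bigl(S^{\kappa}_{\Bi},(S^{\kappa'}_{\Bi'})^{\star}\bigr)=0$ used in the paper in the proof of Lemma \ref{pro-sta} (or the semi-orthogonality of Proposition \ref{semi-orthogonal}). Alternatively, you can avoid indecomposable summands altogether: the Cline--Parshall--Scott conditions can be verified directly on the projective generator $\bigoplus_{\Bi,\kappa}P^{\kappa}_{\Bi}$, where Proposition \ref{standard-filtration} applies without modification.
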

Note that we can easily bootstrap this to prove that
\begin{cor}\label{tilde-SS}
  The algebra $\tilde{T}^{\bla}$ is standardly stratified with
 standard modules given by the standard quotients of indecomposable
 projectives, and the preorder on simples/standards/projectives given
 by the dominance order on root functions $\bal$.
 \end{cor}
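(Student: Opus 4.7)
The plan is to mimic the proof of Corollary \ref{SS} verbatim, but using the larger index set $\tilde{\Phi}$ in place of $\Phi$, and to verify the one new ingredient, namely an analogue of Lemma \ref{Phivs} identifying the class $[\tilde{P}_{\Bi}^{\kappa}]$ in the Grothendieck group $K_q^0(\tilde{T}^\bla) \cong U_q^{-,\Z}\otimes V_\bla^\Z$ of Proposition \ref{tilde-iso}. Recall from the setup on page \pageref{Phi} that $\tilde{\Phi}$ consists of all permutations of top termini that do not move a black strand into a later block, are minimal length in their block-at-bottom coset, but {\it need not} yield a non-violated bottom. For $\phi \in \tilde{\Phi}$ let $x_\phi \in \tilde{T}^\bla$ be a chosen lift, giving a bottom Stendhal triple $(\Bi_\phi,\kappa_\phi)$ which may now be violated.

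First I would define $\tilde{P}_{\geq \phi} = \tilde{P}_{>\phi} + x_\phi \tilde{T}^\bla$ and $\tilde{P}_{>\phi} = \sum_{\phi'>\phi} x_{\phi'}\tilde{T}^\bla$ inside $\tilde{P}_{\Bi}^\kappa$, preordered by the dominance order on root functions of $(\Bi_\phi,\kappa_\phi)$. Exactly as in Proposition \ref{standard-filtration}, left multiplication by $x_\phi$ induces a well-defined surjection $\gamma_\phi \colon S^{\kappa_\phi}_{\Bi_\phi} \twoheadrightarrow \tilde{P}_{\geq \phi}/\tilde{P}_{>\phi}$, since it sends the submodule $U^{\kappa_\phi}_{\Bi_\phi}$ generated by higher idempotents into $\tilde{P}_{>\phi}$ by construction. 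To upgrade this to an isomorphism I need a dimension count that pins down $\dim_q \tilde{P}_{\Bi}^{\kappa} = \sum_{\phi\in\tilde\Phi} q^{-\deg x_\phi}\dim_q S^{\kappa_\phi}_{\Bi_\phi}$.

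The heart of the argument is therefore the analogue of Lemma \ref{Phivs}, asserting
\[
\tilde p^\kappa_\Bi \;=\; \sum_{\phi\in \tilde{\Phi}} q^{-\deg x_\phi}\, [S^{\kappa_\phi}_{\Bi_\phi}]
\qquad \text{in } K_q^0(\tilde{T}^\bla)\cong U_q^{-,\Z}\otimes V_\bla^\Z,
\]
where $[S^{\kappa_\phi}_{\Bi_\phi}]$ denotes the class of a possibly violating standard module and is the obvious pure tensor (the violating strands contribute freely to the $U_q^{-,\Z}$ factor, the remaining black blocks to the $V_{\la_j}^\Z$ factors). I would prove this by induction on the number of black and red strands exactly as in Lemma \ref{Phivs}, using Proposition \ref{tilde-iso} to translate the filtration on $\fF_i\tilde{P}^{\kappa}_{\Bi^-}$ (from Proposition \ref{prop:act-filter}, now applied in the $\tilde{T}$ setting) into the right-action coproduct formula \eqref{lr-action-1}. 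The only new case is when the rightmost terminus is still in the leftmost (pre-red) block, where $F_{i_n}$ acts on the left via $\tilde{\fF}_i^*$ and contributes a single new term with zero shift to the sum over $\tilde{\Phi}$, matching \eqref{lr-action-1}. Combined with Proposition \ref{standard-class}, whose proof applies verbatim here to compute $[S^{\kappa_\phi}_{\Bi_\phi}]$ as the relevant pure tensor, this gives the formula above.

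With this dimension count in hand, the surjectivity of each $\gamma_\phi$ forces it to be an isomorphism, and the filtration $\tilde{P}_{\geq \phi} \supset \tilde{P}_{>\phi}$ exhibits $\tilde{T}^\bla$ as standardly stratified in the sense of Cline-Parshall-Scott. The main obstacle I anticipate is purely bookkeeping: carefully tracking the grading shifts $q^{-\deg x_\phi}$ for those $\phi \in \tilde{\Phi}\setminus\Phi$ which push black strands to the far left, and confirming that the coproduct expansion in $U_q^{-,\Z}\otimes V_\bla^\Z$ produces exactly these shifts (rather than the $\tilde K_i$-twists that appear for strands crossing red lines on their right). No new categorical input beyond Propositions \ref{prop:act-filter}, \ref{standard-class}, and \ref{tilde-iso} is needed.
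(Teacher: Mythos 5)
Your proposal is essentially correct, but it takes a genuinely different route from the paper. The paper's proof is a two-sentence reduction to the already-established Proposition \ref{standard-filtration}: one adds a new red strand at the far left and imposes the violating relation, observing that for each fixed degree the label on that strand can be chosen so dominant that this operation kills nothing in that degree of either $\tilde P^\kappa_{\Bi}$ or of the standard modules; the dimension count for $T^{(\la_0,\bla)}$ then transfers verbatim to $\tilde T^{\bla}$, degree by degree. You instead redo the Grothendieck-group argument natively in $\tilde T^{\bla}$, proving the $\tilde\Phi$-analogue of Lemma \ref{Phivs} inside $U_q^{-,\Z}\otimes V_\bla^{\Z}$ via Proposition \ref{tilde-iso} and pairing with the form of \eqref{form-match}. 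Your route is viable and has the virtue of producing the explicit class of $[\tilde P^\kappa_\Bi]$ as a sum over $\tilde\Phi$, but it is longer and requires two extensions you only gesture at: Proposition \ref{prop:act-filter} is stated without the $R_{\bal(0)}$ factor, so the extra filtration step for the pre-red block must actually be checked, and likewise Proposition \ref{standard-class} must be re-proved for violated $(\Bi_\phi,\kappa_\phi)$. The paper's truncation trick sidesteps both.

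One concrete slip in your bookkeeping: the new term coming from the strand landing in the leftmost (pre-red) block does \emph{not} have zero shift. That term is $uF_{i_n}\otimes \tilde K_{-i_n}w$ in the right-action formula of \eqref{lr-action-1}, and the corresponding $x_\phi$ crosses all $\ell$ red strands and all $n-1$ black strands, so $\deg x_\phi=\bigl\langle \al_{i_n},\textstyle\sum_j\la_j-\sum_m\al_{i_m}\bigr\rangle=\langle\al_{i_n},\wt(w)\rangle$, which is exactly the $\tilde K_{-i_n}$-twist $q^{-\langle\al_{i_n},\wt(w)\rangle}$. The identity you need does hold, but for the opposite reason to the one you state.
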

  \begin{proof}
    The indexing set for the standard filtration on a
    projective is now $\tilde{\Phi}$ instead of $\Phi$.  As before,
    the map of $S^{\kappa_{\phi}}_{\Bi_\phi}$ to the successive
    quotient is clear.  In order to check that the dimensions are
    correct, the easiest thing to note is that we can add a new red
    strand at the left and impose the violating relation in both the projective to be filtered and the
    standard modules.  In this case, Proposition
    \ref{standard-filtration} shows that the dimensions match in each
    degree.  

For each fixed degree, we can
    choose the label on the new red strand to be sufficiently dominant
    so that in both the projective and standard modules, adding the
    red strand and imposing the violating relation kills no elements
    of that degree in either the projective or standard modules.  Thus,  Proposition
    \ref{standard-filtration} shows the same result for
    $\tilde{T}^{\bla}$, and the standard stratification follows.
  \end{proof}

\begin{cor}\label{standard-finite-length}
  Every standard module has a finite length projective resolution.
\end{cor}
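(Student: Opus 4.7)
The plan is to prove by induction that every standard module has finite projective dimension. The key ingredients I would exploit are: (i) the exactness of $\fI_{\la_\ell}$ and $\fF_i$ (Proposition \ref{standard-exact} and Theorem \ref{full-action}), together with the fact that both send projectives to projectives (for $\fI_{\la_\ell}$ this follows from Proposition \ref{add-red}, which identifies $\fI_{\la_\ell}(e T^{\bla^-})$ with $(e\cdot e_\ell)T^\bla$; for $\fF_i$ it follows from biadjointness with the exact functor $\fE_i$), so that both preserve the property of having a finite projective resolution; and (ii) Proposition \ref{prop:act-filter}, which describes $\fF_i\, \mathbb{S}^\bla(M)$ as an iterated extension of standardizations ${}_k\fF_i M$, whose top quotient places the new strand in the $\ell$-th (rightmost) black block and whose lower subquotients place it in earlier blocks.

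I would proceed by a triple induction: a primary induction on the number $\ell$ of red strands, a secondary induction on the number $n$ of black strands at fixed $\ell$, and a tertiary downward Noetherian induction on the root function $\bal$ under dominance order (well-founded on the finite set of root functions of a given total weight). The base cases are easy: for $\ell = 1$ the preorder on Stendhal triples is trivial, so every standard is an indecomposable projective; and for $n = 0$ the unique standard $S^\emptyset$ equals the one-dimensional projective $T^\bla e(\emptyset, 0)$.

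For the inductive step I would split into two cases according to whether the last black block is empty. If $\kappa(\ell) = n$, then $S^\kappa_\Bi \cong \fI_{\la_\ell}(S^{\kappa|_{[1,\ell-1]}}_\Bi)$, which has finite projective dimension by the primary induction on $\ell$ together with (i). If $\kappa(\ell) < n$, let $i = i_n$ be the label of the rightmost black strand and let $(\Bi^-,\kappa^-)$ be the triple obtained by deleting it. Applying (ii) with $M = \fF_{\Bi_1}P_\emptyset \boxtimes\cdots\boxtimes \fF_{\Bi_\ell^-}P_\emptyset$, I obtain a short exact sequence
$$0 \to O_{\ell-1} \to \fF_{i}\, S^{\kappa^-}_{\Bi^-} \to S^\kappa_\Bi \to 0,$$
in which $S^{\kappa^-}_{\Bi^-}$ has finite projective dimension by the secondary induction on $n$, so by (i) the middle term does as well, and $O_{\ell-1}$ is filtered by standards $S^{\kappa_k}_{\Bi_k}$ for $k < \ell$ whose root functions strictly dominate $\bal$; each of these has finite projective dimension by the tertiary induction, and finite projective dimension is preserved under extensions, so $O_{\ell-1}$ does too. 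The sequence then yields the desired bound on $\operatorname{pd}(S^\kappa_\Bi)$.

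The main obstacle is bookkeeping: ensuring that every recursive invocation is at a strictly smaller triple $(\ell, n, \bal)$ in the chosen well-ordering. This works cleanly because observation (ii) isolates $S^\kappa_\Bi$ as the top quotient of $\fF_i S^{\kappa^-}_{\Bi^-}$ (requiring only a reduction in $n$) while sequestering all the ``strand in an earlier block'' contributions into $O_{\ell-1}$, where each summand strictly increases $\bal$ at fixed $n$ and $\ell$. The strict increase of $\bal$ amounts to the observation that moving one simple root $\al_i$ from position $\ell$ to position $k < \ell$ strictly raises the cumulative partial sums $\sum_{j \le m}\bal(j)$ for $k \le m < \ell$ while leaving the total unchanged.
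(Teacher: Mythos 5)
Your proof is correct, but it takes a genuinely different route from the paper's. The paper's argument is a two-line downward induction on the preorder: a maximal standard is projective, and for arbitrary $(\Bi,\kappa)$ the kernel of $P^\kappa_\Bi\twoheadrightarrow S^\kappa_\Bi$ is filtered by standards strictly higher in the preorder --- this is the standard filtration of projectives from Proposition \ref{standard-filtration}, which has just been established --- so the finite resolutions of those higher standards can be glued onto $P^\kappa_\Bi$ by Lemma \ref{glue-res}. You never invoke the standard filtration of projectives at all; instead you build $S^\kappa_\Bi$ recursively out of smaller standards via the exact, projective-preserving functors $\fI_{\la_\ell}$ and $\fF_i$, using the coproduct filtration of Proposition \ref{prop:act-filter} to exhibit $S^\kappa_\Bi$ as the top quotient of $\fF_{i_n}S^{\kappa}_{\Bi^-}$ with the remainder pushed strictly up in dominance order. (This short exact sequence is exactly the one the paper writes down in Propositions \ref{pro-res} and \ref{standard-class}, there used to compute classes in $K_0$ rather than projective dimension.) The paper's route is shorter given what is already on the table; yours is logically lighter, resting only on material from Section \ref{sec:standard-def}, and it yields as a by-product an explicit recursive bound on the projective dimension in terms of $n$, $\ell$, and the height of $\bal$ in dominance order. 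Two harmless inaccuracies: for $\ell=1$ the standards $S^\kappa_\Bi$ are projective but not in general indecomposable, and the identification $S^\kappa_\Bi\cong\fI_{\la_\ell}(S^{\kappa^-}_\Bi)$ in the case $\kappa(\ell)=n$ should be justified by the associativity of standardization, $\fI_{\la_\ell}(-)\cong\mathbb{S}^{\bla^-;(\la_\ell)}(-\boxtimes P_\emptyset)$.
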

This is a standard fact about finite dimensional standardly stratified algebras; in particular, any module with a standard filtration has a well-defined class in $K_0(\alg^\bla)$. 
\begin{proof}
 We induct on the preorder order $\leq$.  If a standard is
  maximal in this order, it is projective. For an arbitrary standard,
  there is a map $P^{\Bi}_{\kappa}\to   S^{\Bi}_{\kappa}$ with kernel
  filtered by standards higher in the partial order.  Since each of
  these has a finite length projective resolution, we can glue these
  to form one of $S^\kappa_\Bi$ by Lemma \ref{glue-res}.
\end{proof}

We note that $e(\Bi,\kappa)\alg^\bla e(\Bi,0)$ has a unique element
consisting of a diagram with no dots and no crossings between black
strands which simply pulls red strands to the left and black to the
right. As before, we call this element $\theta_\kappa$ (leaving $\Bi$
implicit).

\begin{lemma}\label{self-dual-embedding}
The map from $P^\kappa_{\Bi}\to P^{0}_{\Bi}$ given by the action of  $\dot{\theta}_\kappa$ is injective.
\end{lemma}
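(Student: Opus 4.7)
The plan is to reduce the lemma to a combinatorial injection on the natural basis of $P^\kappa_\Bi$ from Proposition \ref{basis}, by producing an ``upper triangular'' normal form for $\dot\theta_\kappa\cdot$. First I note that if $\kappa(1)>0$ then $e(\Bi,\kappa)$ is itself violated, so $P^\kappa_\Bi=0$ in $T^\bla$ and the claim is vacuous. Under the assumption $\kappa(1)=0$, the diagram $\dot\theta_\kappa$ is itself non-violated (its leftmost strand throughout is the first red), so left multiplication by $\dot\theta_\kappa$ is a well-defined map $P^\kappa_\Bi\to P^0_\Bi$ in $T^\bla$.

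The non-violated elements of $B$ whose top is $(\Bi,\kappa)$ form a basis of $P^\kappa_\Bi$; I write a typical such element as $b=\psi_{w,\kappa}\By^{\mathbf a}e(\Bi_1,\kappa_1)$ with $w\Bi_1=\Bi$. The main technical step I would establish is the following ``top'' analogue of Lemma \ref{theta-pull}: for some vector $\mathbf c(w,\kappa)\in\Z_{\geq 0}^n$ depending only on the pair $(w,\kappa)$,
\begin{equation*}
\dot\theta_\kappa\, b \;\equiv\; \psi_{w,0}\,\By^{\mathbf a+\mathbf c(w,\kappa)}\,e(\Bi_1,\kappa_1) \pmod{\text{basis elements of $P^0_\Bi$ with strictly fewer crossings than $\psi_{w,0}$}}.
\end{equation*}
The proof will mirror the inductive argument of Lemma \ref{theta-pull}, but reduces $\sum_j\kappa(j)$ one step at a time by adding a single red/black crossing at the \emph{top} rather than the bottom. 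At each inductive step, either the new crossing is the first meeting of its red/black pair (so the product remains a basis vector with one additional crossing, and $\mathbf c$ is unchanged), or it duplicates an earlier crossing, in which case Lemma \ref{modulo-smaller}(3) together with relation \eqref{cost} resolves the resulting bigon, contributing $\la_j^{i_k}$ dots to $\mathbf c$ modulo diagrams of strictly fewer crossings.

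Granted this displayed congruence, I would choose the reference diagrams $\psi_{w,0}$ so that the first red strand goes straight up through the diagram without crossing anything; this ensures that the leading term on the right-hand side is itself non-violated, hence a genuine basis element of $P^0_\Bi$. The assignment $b\mapsto \psi_{w,0}\By^{\mathbf a+\mathbf c(w,\kappa)}e(\Bi_1,\kappa_1)$ is then injective on bases: its image determines $w$ (from its permutation pattern) and $(\Bi_1,\kappa_1)$ (as its bottom triple), and since with $\kappa$ fixed $\mathbf c(w,\kappa)$ depends only on $w$, it also recovers $\mathbf a$. To conclude, given a putative relation $\sum_b\alpha_b(\dot\theta_\kappa b)=0$, I pick a basis element $b^\star$ of $P^0_\Bi$ of maximal crossing count among the leading terms of those $b$'s with $\alpha_b\neq 0$; the lower-order contributions from other $b$'s cannot reach $b^\star$ at this maximal level, so injectivity of $b\mapsto b^\star$ forces each such $\alpha_b$ to vanish, a contradiction.

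The hardest part will be the ``top'' variant of Lemma \ref{theta-pull} displayed above; once that straightening is in place, the linear-algebra finish is routine.
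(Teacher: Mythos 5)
The central step of your argument fails at its foundation: the non-violated elements of $B$ do \emph{not} descend to a basis of $P^{\kappa}_{\Bi}$. They descend only to a spanning set, because the violating ideal imposes nontrivial relations among them. This is already visible for $\ell=1$: by Theorem \ref{cyclotomic}, $T^{\la}\cong R^\la$ is finite dimensional, whereas there are infinitely many non-violated basis vectors $e(\Bi,0)\By^{\mathbf a}$ with top $(\Bi,0)$; concretely, relation \eqref{cost} converts the (violated) bigon around the red strand into $y_1^{\la^{i_1}}e(\Bi,0)$, so this non-violated, crossing-free basis vector is zero in $T^\la$. The same cyclotomic-type relations occur in every $T^\bla$ with $\kappa(1)=0$. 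Consequently your concluding triangularity argument is invalid: the "leading terms'' $b^\star$ and the lower-order corrections are not linearly independent in $P^0_{\Bi}$, and the relations coming from the violating ideal do not respect the crossing-number filtration at all, since they can kill pure dot monomials with zero crossings. Your straightening congruence (the "top'' analogue of Lemma \ref{theta-pull}) is correct as a statement in $\tilde{T}^\bla$, where Proposition \ref{basis} does give a basis, and it would prove injectivity of $\dot{\theta}_\kappa\cdot$ on $\tilde{P}^{\kappa}_{\Bi}$; but the lemma concerns the quotient modules over $T^\bla$, and there the linear algebra collapses. (A smaller imprecision: the dot vector $\mathbf c$ depends on the bottom datum $(\Bi_1,\kappa_1)$ and on $\Bi$, not only on $(w,\kappa)$; since that datum is recoverable from the leading term this alone would not break the injection on leading terms, but it should be stated correctly.)

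The paper's proof avoids needing any basis of $T^\bla$ by invoking the standard filtration of Proposition \ref{standard-filtration}, whose existence already encodes the requisite dimension count (via the Grothendieck-group computation): precomposition with $\dot{\theta}_\kappa$ embeds the index set $\Phi_{\Bi,\kappa}$ into $\Phi_{\Bi,0}$ without creating any red/black bigons (neither $x_\phi$ nor $\dot{\theta}_\kappa$ moves a black strand leftward across a red one when read from bottom to top), the map carries $P_{\geq\phi}$ into $P_{\geq\iota(\phi)}$, and it induces an isomorphism of the standard subquotients $S^{\kappa_\phi}_{\Bi_\phi}$; a filtered map that is injective on the associated graded, with distinct pieces landing in distinct pieces, is injective. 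If you wish to repair your approach rather than adopt this one, you must replace "fewer crossings'' by a filtration of $P^0_{\Bi}$ that is actually compatible with the violating ideal --- which is, in effect, exactly what the standard filtration provides.
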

\begin{proof}
Obviously, this map is filtered, where we include $\Phi_{\Bi,\kappa}\subset \Phi_{\Bi,0}$ by precomposing with $\dot{\theta}_\kappa$.  Furthermore, it induces an isomorphism on each successive quotient in this image.  Thus, it is injective.
\end{proof}

%We let $\cata^{\la_1;\dots;\la_n}_{\bal}=
%\alg_{\mu_1}^{\la_1}\otimes\cdots \otimes
%\alg_{\bal(\ell)}^{\la_\ell}\modu$, and 
Let $\cC^{\bal}$ be the
subcategory of $\cat^\bla$ generated  by standard modules with root function $\bal$.

\begin{prop}\label{semi-orthogonal}
We have a natural isomorphism $$\End_{\talg^\bla}({S}_\bal)\cong
R_{\bal(0)}\otimes  \alg_{\mu_1}^{\la_1}\otimes\cdots \otimes \alg_{\mu_\ell}^{\la_\ell}.$$
    The triangulated subcategories generated by $\cC^{\bal}$ form a semi-orthogonal
    decomposition of $\tilde{\cat}^\bla$ with respect to
    dominance order.

For more general standardizations, this implies
that for modules $M$ and $N$ over $R_{\mu_0}\otimes \alg_{\mu_1}^{\bla_1}\otimes\cdots \otimes
  \alg_{\mu_\ell}^{\bla_m}$ that \[\Hom_{\talg^\bla}({\mathbb{S}^{\bla_1;\dots;\bla_m}}(M),{\mathbb{S}^{\bla_1;\dots;\bla_m}}(N))\cong
\Hom_{R_{\mu_0}\otimes \alg_{\mu_1}^{\bla_1}\otimes\cdots \otimes
  \alg_{\mu_\ell}^{\bla_m}}(M,N),\] that is, that standardization is
fully faithful
  \end{prop}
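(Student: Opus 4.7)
The three assertions are tightly interlocked and I would prove them in sequence. The main input is the standard stratification from Corollary~\ref{tilde-SS}, and my strategy is to establish part~(1) via a careful analysis of $S_\bal\cdot e_\bal$, deduce part~(2) from the standard stratification by standard homological arguments, and reduce part~(3) to part~(1) via exactness of standardization.

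For part~(1), the map $\wp_\bal$ of Proposition~\ref{standard-end} provides the candidate algebra homomorphism $R_{\bal(0)}\otimes\bigotimes_i T^{\la_i}_{\mu_i}\to \End(S_\bal)$. Since $S_\bal$ is generated as a right $\tilde T^\bla$-module by its $e_\bal$-component, an endomorphism is determined by its value on $e_\bal$, so $\End(S_\bal)$ embeds in $S_\bal\cdot e_\bal$. I would then show that $S_\bal\cdot e_\bal$ itself equals the image of $\wp_\bal$: by Proposition~\ref{basis}, every element of $e_\bal\tilde T^\bla e_\bal$ is a linear combination of diagrams with a fixed permutation of black strands and prescribed positions relative to the red strands; those whose black strands all stay within their original blocks lie in the image of $\wp$, while any diagram in which a black strand visits a strictly higher block factors through an idempotent of strictly greater root function and so lies in $U_\bal$. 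Injectivity of the induced map $\wp_\bal\colon R_{\bal(0)}\otimes\bigotimes_i T^{\la_i}_{\mu_i}\to S_\bal\cdot e_\bal$ combines Corollary~\ref{wp-injective} with the observation that any element in the cyclotomic ideal of a factor $T^{\la_i}$ can, using relation~\eqref{cost}, be rewritten as a diagram with a strand pushed into a higher block, again landing in $U_\bal$.

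For part~(2), I would show $\Ext^*_{\tilde T^\bla}(S_\bal,S_{\bal'})=0$ whenever $\bal\not\leq \bal'$ in the dominance order. By Corollary~\ref{standard-finite-length}, $S_\bal$ admits a finite projective resolution, and together with Proposition~\ref{standard-filtration} this resolution can be chosen with terms a sum of $P^{\kappa'}_{\Bi'}$ whose root functions are all $\geq \bal$. Now $\Hom(P^{\kappa'}_{\Bi'},S_{\bal'})=S_{\bal'}\cdot e(\Bi',\kappa')$, and since $S_{\bal'}$ is a direct sum of standard quotients in which all higher-root-function projective images are killed, any $(\Bi',\kappa')$ with $\bal_{\Bi',\kappa'}\not\leq \bal'$ contributes zero. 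This yields the semi-orthogonal decomposition.

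For part~(3), exactness of $\mathbb{S}^{\bla_1;\dots;\bla_m}$ (Proposition~\ref{standard-exact}) allows me to compute $\Hom(\mathbb{S}(M),\mathbb{S}(N))$ from a projective resolution $P_\bullet\to M$ over $R_{\mu_0}\otimes\bigotimes_k T^{\bla_k}_{\mu_k}$, reducing to the case when $M$ is projective. For projective $M$, the module $\mathbb{S}^{\bla_1;\dots;\bla_m}(M)$ is a partial analogue of $S_\bal$, and the same analysis as in part~(1) computes $\mathbb{S}(N)\cdot e_\bal\cong N$ as a module over $R_{\mu_0}\otimes\bigotimes_k T^{\bla_k}_{\mu_k}$, so $\Hom(\mathbb{S}(M),\mathbb{S}(N))\cong\Hom(M,N)$. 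The main obstacle is the surjectivity step of part~(1): reducing an arbitrary diagram of $e_\bal\tilde T^\bla e_\bal$ modulo the relations of $\tilde T^\bla$ until it either sits in the image of $\wp_\bal$ or is visibly standardly violating. This uses the basis argument of Proposition~\ref{basis} together with the bigon- and triple-point-moves of Lemma~\ref{modulo-smaller} to clear away any strand that has strayed between blocks.
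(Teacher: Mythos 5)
Your surjectivity argument in part (1), and your treatment of parts (2) and (3), follow essentially the paper's route and are sound. The genuine gap is the injectivity step of part (1). Corollary \ref{wp-injective} gives injectivity of $\wp_{\bal}$ into $\tilde{\alg}^\bla$, and your observation about relation \eqref{cost} shows that the cyclotomic ideals of the factors land in $U_{\bal}$ --- but that only proves the map $R_{\bal(0)}\otimes\bigotimes_i \alg^{\la_i}_{\mu_i}\to S_{\bal}e_{\bal}$ is \emph{well-defined}, i.e.\ that the kernel of $R_{\bal(0)}\otimes\cdots\otimes R_{\bal(\ell)}\to S_{\bal}e_{\bal}$ \emph{contains} the cyclotomic ideals. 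Injectivity requires the reverse containment: that $U_{\bal}e_{\bal}$ meets the image of $\wp_{\bal}$ in nothing more than the cyclotomic ideals. That is a statement about \emph{all} diagrams with a standardly violating strand, and it is not automatic --- already with a single red strand it is exactly the nontrivial inductive half of Theorem \ref{cyclotomic}, and with several red strands a violating strand from one block can interact with other blocks and other red strands while being straightened. Your proposal does not supply this step.

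The paper sidesteps the diagrammatics entirely here by a dimension count: since $\Ext^{>0}(S_{\bal},S_{\bal})=0$ by the stratification, $\dim_q\End(S_{\bal})=\langle[S_{\bal}],[S_{\bal}]\rangle$, and the upper-triangularity of the quasi-R-matrix $\Theta^{(2)}$ makes the form factor across tensor factors on vectors of matching weights, so this equals $\dim_q\bigl(R_{\bal(0)}\otimes\bigotimes_i\alg^{\la_i}_{\mu_i}\bigr)$; the surjection you already constructed is then forced to be an isomorphism. You should either import this count (it rests on Theorem \ref{Uq-action}, Proposition \ref{standard-class} and Proposition \ref{unique-form}) or carry out the substantially harder direct argument that the standardly violating ideal intersects $\operatorname{im}\wp_{\bal}$ precisely in the cyclotomic ideals.
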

\begin{proof}
Since standardization is exact, it's enough to check full faithfulness
on standards, which is the first part of the theorem.

Since the map  $\nu\colon e_\bal \talg^\bla\to S_\bal$ is surjective, the projective
lifting property shows that every endomorphism of $S_\bal$ is induced by an
  endomorphism of $e_\bal \talg^\bla$.  Thus $\End^{op}(S_\bal)$ is a
  subquotient of $e_\bal \talg^\bla e_\bal$: it is the
  quotient of the subalgebra in $e_\bal \talg^\bla e_\bal$ which preserves
  the kernel of the map  $\nu$ modulo the ideal of endomorphisms whose
  composition with $\nu$ is $0$.  

Now let us use Proposition
\ref{basis} to better understand how elements of $e_\bal \talg^\bla
e_\bal$ act.  We choose a
  reduced word for each permutation.  
First we split the strands, both red and black, into groups consisting
of a black block at $y=1$ and the red strand immediately to its left.
For each permutation, we choose a reduced word so that 
 so that all crossings that
  occur within such a group are above $y=\nicefrac 12$ and all crossings that occur between
  different groups are below.  This implies that the diagram for any permutation which has a left crossing has at least one above any right crossings.  By the definition of the standard quotient such a
  diagram acts trivially (assuming it preserves the kernel of $\nu$).  On the other hand,
  an element of $e_\bal \talg^\bla e_\bal$ must have equal numbers of the
  two types of crossings, so our element acts in the same way as one
  that has been ``straightened'' so
  that no red and black strands ever cross.  Thus, the map $ R_{\bal(0)}\otimes \cdots
     \otimes R_{\bal(\ell)}\to \End_{\tilde{\alg}^\bla}(S_\bal)$ of
     Proposition \ref{standard-end} is surjective.

By definition of a standard quotient, the cyclotomic ideal of this tensor product is killed by
  the map to $\End^{op}(S_\bal)$, so we have a surjective map
  $R_{\bal(0)}\otimes\alg^{\la_1}_{\bal(1)}\otimes \cdots \otimes \alg^{\la_\ell}_{\bal(\ell)}\to \End^{op}(S_\bal)$,
  which we need only show is also injective.  Since $\Ext^{>0}(S_\bal,S_\bal)=0$, this is equivalent to showing that \[\dim_q \End(S_\bal,S_\bal)=\langle[S_\bal],[S_\bal]\rangle=\dim_q R_{\bal(0)}\otimes\alg^{\la_1}_{\bal(1)}\otimes \cdots \otimes \alg^{\la_\ell}_{\bal(\ell)}.\] The second equality follows from the equality $\langle a\otimes b,a'\otimes b'\rangle=\langle a,b\rangle \langle a',b'\rangle$ if $a,a'$ and $b,b'$ are weight vectors with each pair having the same weight, which follows, in turn, from the upper-triangularity of $\Theta^{(2)}$.

Finally, we establish the semi-orthogonal decomposition: by Proposition~\ref{standard-filtration}, the subcategory
  generated by $\cC^{\bal'}$ for $\bal'> \bal$ in the dominance order
  is the same as that generated by $P^{\kappa}_{\Bi}$ such that
  $\bal_{\Bi,\kappa}> \bal$. Since all the simple modules in
  $S_\Bi^\kappa$ are given by idempotents $e_{\Bi,\kappa}$ such that
  $\bal_{\Bi,\kappa}\leq \bal$, we have
  \begin{equation*}
    \Ext^\bullet(S_{\Bi'}^{\kappa'},S_{\Bi}^\kappa)=0
  \end{equation*}
  whenever $\bal_{\Bi,\kappa}< \bal_{\Bi',\kappa'}$, and higher
  $\Ext$'s vanish when equality holds.
\end{proof}
Together, the results above show that the category
$\cata^\bla$ is a {\bf tensor product categorification} in the sense
introduced by the author and Losev in \cite[\S 3.2]{LoWe}.  
\begin{cor}\label{TPC}
  The $\cata^\bla$ with its standardly stratified structure from
  Corollary \ref{SS} and categorical $\fg$-action from Theorem
  \ref{full-action} forms a tensor product categorification of
  $V_{\bla}$.  
\end{cor}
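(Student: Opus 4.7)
The plan is to verify each of the axioms of a tensor product categorification from \cite[\S 3.2]{LoWe} by pointing to results already established. Recall that a tensor product categorification of $V_{\bla}$ consists of: a standardly stratified category indexed by a set $\Xi$ equipped with a root function $\bal\colon \Xi \to \rola(\fg)^{\ell}$; a categorical $\fg$-action; an identification of the associated graded category (the direct sum $\bigoplus_{\bal} \cC^{\bal}$) with $\bigotimes_j \alg^{\la_j}\modu$; and compatibility constraints relating the $\fg$-action to the stratification, so that on Grothendieck groups the isomorphism $K_0 \cong V_{\bla}$ intertwines the $\fg$-action with the usual one on the tensor product.

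The first ingredient, the categorical $\fg$-action, is Theorem \ref{full-action}. The second, the standardly stratified structure with strata indexed by root functions $\bal$ ranging over the appropriate finite set, is Corollary \ref{SS}; the standard modules are the $S_\bal$ (and their summands $S^\kappa_{\Bi}$). The identification of the endomorphism algebra of $S_\bal$ with a tensor product of smaller tensor product algebras, together with the resulting semi-orthogonal decomposition of $\tilde{\cat}^\bla$ by $\cC^{\bal}$ and the full faithfulness of standardization, is precisely the content of Proposition \ref{semi-orthogonal}. Hence the strata are canonically equivalent to $\prod_j \alg^{\la_j}_{\mu_j}\modu$, which on decategorification gives the weight space decomposition of $V_{\la_1}\otimes\cdots\otimes V_{\la_\ell}$.

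The essential compatibility axiom is that $\fE_i$ and $\fF_i$ are filtered with respect to the stratification, with associated graded given by the coproduct formula in $U_q(\fg)$. This is exactly Proposition \ref{prop:act-filter}: it produces, for any module of the form $\mathbb{S}^{\bla_1;\dots;\bla_m}(M)$, canonical filtrations on $\fE_i \mathbb{S}(M)$ and $\fF_i \mathbb{S}(M)$ whose subquotients are $\mathbb{S}({}_k\fE_i M)$ and $\mathbb{S}({}_k\fF_i M)$ with precisely the grading shifts dictated by the formulas \eqref{Ecp}--\eqref{Fcp} for the iterated coproduct. Applied fiberwise across all $\bal$, this gives the required compatibility at the level of standardly stratified categories.

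Finally, on the decategorified level, Theorem \ref{Uq-action} identifies $K_0(\cata^\bla) \cong V_{\bla}^\Z$ as $U_q^\Z(\fg)$-modules, and Proposition \ref{standard-class} shows that under this identification $[S^\kappa_{\Bi}]$ maps to the pure tensor $s^\kappa_{\Bi}$, so the stratification categorifies the tensor product decomposition on weight spaces and the standardization functors categorify the natural inclusions of pure tensors. Combining these ingredients verifies all the axioms of \cite[\S 3.2]{LoWe}. There is essentially no obstacle beyond bookkeeping: every piece has been proved earlier in the paper, and the only task is to match the list of axioms in \cite{LoWe} with the constructions assembled above; the main point to be careful about is that the grading shifts in Proposition \ref{prop:act-filter} agree on the nose with those appearing in the coproduct formulas used in \cite{LoWe}, which is a direct inspection.
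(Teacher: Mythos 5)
Your proposal is correct and matches the paper's own proof, which likewise verifies the axioms (TPC1)--(TPC3) of \cite{LoWe} by citing the order on root functions, Proposition \ref{semi-orthogonal} for the identification of the strata, and Proposition \ref{prop:act-filter} for the filtrations on $\fE_i$ and $\fF_i$ of standardized modules. The only item the paper cites that you leave implicit is Proposition \ref{standard-exact} (exactness of the standardization functors), which is part of axiom (TPC1); the decategorified statements you add at the end are not needed for the axioms but are harmless.
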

\begin{proof}
  We consider the axioms of a tensor product categorification in turn,
  and confirm them:
\begin{itemize}
\item[(TPC1)] We must have that  the poset underlying the
  stratification is that of 
  $n$-tuples $\bmu=(\mu_1,\ldots,\mu_n)$, where $\mu_i$ is a weight of
  $V_i$.  The poset structure is given by ``inverse dominance
  order'': we have
  \[\bmu=(\mu_1,\ldots,\mu_n)\geqslant \bmu'=(\mu'_1,\ldots,\mu'_n)\]
  if and only if $ \sum_{i=1}^n\mu_i=\sum_{i=1}^n \mu_i' $ and for all $1\leqslant j< n$, we have
\[\sum_{i=1}^j
  \mu_i\leqslant \sum_{i=1}^j \mu_i'.\]
This precisely matches the definition of the order on root functions
from Section \ref{sec:standard-def}, since $\mu_i=\la_i-\bal(i)$.
Proposition \ref{standard-exact} shows that the standardization
functors are exact, as required in \cite{LoWe}.

\item[(TPC2)]
Proposition \ref{semi-orthogonal} shows that the subquotients of this
standardly stratified structure are equivalent to $\cata^{\la_1;\dots
 ;\la_\ell}$ and thus carry the expected categorical $\fg^{\oplus \ell}$ action
on these subquotients.

\item[(TPC3)]  Proposition \ref{prop:act-filter} shows that $\fE_i$
and $\fF_i$ acting on a standard module have the desired filtrations.\qedhere
\end{itemize}
\end{proof}

Finally, we prove a result which, while somewhat
technical in nature, is very important for understanding how to
decategorify our construction.  As in \cite[\S 2.12]{BGS96}, we let
$C^{\uparrow}(\alg^\bla)$ denote the category of complexes of graded
modules such that the degree $j$ part of the $i$th homological term $C^i_j=0$ for $i\gg 0$ or $i+j\ll 0$.

\begin{thm}\label{triangular-resolution}
  Every simple module over $\alg^\bla_\al$ has a projective resolution in
  $C^{\uparrow}(\alg^\bla)$.
  In particular, each simple module $L$ has a well-defined class in
  $K_0(\alg^\bla)\otimes_{\Z[q,q^{-1}]}\Z((q))\cong V_\bla$.
\end{thm}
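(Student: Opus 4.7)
The plan is to construct projective resolutions of simple modules by combining two inputs: finite projective resolutions of standard modules (Corollary \ref{standard-finite-length}), and resolutions of simples by modules admitting standard filtrations that lie in $C^{\uparrow}(\alg^\bla)$. Gluing these via a standard construction yields the required projective resolution. For the class $[L]\in V_\bla$, the $C^{\uparrow}$ condition ensures that the alternating sum $\sum_i (-1)^i[P^{-i}]$ converges in $V_\bla = V_\bla^\Z\otimes_{\Z[q,q^{-1}]}\Z((q))$, since only finitely many terms contribute in each internal degree.

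For the resolution of $L$ by modules with standard filtrations, I would induct on the length $\ell$ of $\bla$, with an inner induction on the refined order within each stratum. Via the bijection $h$ from Theorem \ref{h-bijection}, write $L$ as the head of the proper standard $\bar{S}_L=\mathbb{S}^{\bla}(L_1\boxtimes\cdots\boxtimes L_\ell)$. By the inductive hypothesis, each $L_i$ over the single-strand cyclotomic KLR $\alg^{\la_i}$ admits a projective resolution in $C^{\uparrow}(\alg^{\la_i})$; tensoring these gives a resolution of $L_1\boxtimes\cdots\boxtimes L_\ell$ over the stratum algebra $A_{\bal_L}=\alg^{\la_1}_{\bal_L(1)}\otimes\cdots\otimes\alg^{\la_\ell}_{\bal_L(\ell)}$, still in $C^{\uparrow}$. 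Applying the exact standardization functor (Proposition \ref{standard-exact}) then yields a resolution of $\bar{S}_L$ by standard modules in $C^{\uparrow}(\alg^\bla)$. The short exact sequence $0\to K\to \bar{S}_L\to L\to 0$ has $K$ with composition factors that are simples strictly lower in the stratum order, so iterating the construction via the inner induction produces the desired resolution of $L$.

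The base case $\ell=1$ is the principal obstacle: showing that every simple module over a cyclotomic KLR algebra $\alg^\la\cong R^\la$ admits a projective resolution in $C^{\uparrow}(\alg^\la)$. Since $\alg^\la$ is finite-dimensional and Frobenius (Theorem \ref{Frobenius}), simples have infinite projective dimension in general, so the content of the claim is that the internal degrees of the generators of the $i$-th syzygy tend to $+\infty$ with $i$. Despite the presence of negative-degree elements in $R^\la$ coming from same-colour crossings, each simple is concentrated in a single internal degree, and a careful degree estimate based on the graded structure (using, for instance, that a projective cover maps onto its simple head with kernel concentrated in strictly higher degrees in the radical) forces the syzygies to grow in degree. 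This positivity property for the grading of cyclotomic KLR algebras is the technical heart of the argument; the rest of the theorem follows by the reduction described above.
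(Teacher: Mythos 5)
Your reduction of the general case to $\ell=1$ is essentially the paper's argument: standardize resolutions of the $L_i$, replace standards by their finite projective resolutions via Corollary \ref{standard-finite-length}, and handle the kernel of $\mathbb{S}^{\bla}(L_1\boxtimes\cdots\boxtimes L_\ell)\to L$ by induction on the order on simples, gluing resolutions as in Lemma \ref{glue-res}. That part is fine.

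The gap is in your base case $\ell=1$, and it is exactly the point you flag as ``the technical heart.'' Your proposed degree estimate rests on the premise that the kernel of the projective cover $P\to L$ is concentrated in internal degrees strictly higher than the head, i.e.\ that the radical filtration is compatible with a positive grading. For cyclotomic KLR algebras this is false in general: the algebra contains negative-degree elements (a same-colour crossing has degree $-\langle\al_i,\al_i\rangle<0$), and the paper remarks immediately after the theorem statement that $T^\bla$ need not be Morita equivalent to any positively graded algebra outside of symmetric type in characteristic zero with special choices of $Q_{ij}$ (citing a counterexample). So the positivity you are invoking is precisely what cannot be assumed, and no ``careful degree estimate'' of the kind you describe will close the argument.

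The paper circumvents this with a second induction inside the $\ell=1$ case. The genuine base case is the weight space $\be$ with $\la-\be=k\al_i$ for a single $i$: there $\alg^{\la}_\be$ is Morita equivalent to its center, the cohomology ring of a Grassmannian, which \emph{is} positively graded, so the resolution exists. One then bootstraps: setting $L'=\te_{i_1}^{a_1}L$ and $M=\mathrm{Ind}_{\be+a_1\al_{i_1},a_1\al_{i_1}}\,L'\boxtimes L(i_1^{(a_1)})$, the module $M$ inherits a resolution in $C^{\uparrow}$ by inducing the outer tensor of resolutions, and the kernel of the surjection $M\twoheadrightarrow L$ has composition factors strictly smaller in the lexicographic order on string parametrizations (by Khovanov--Lauda's analysis of such inductions), so one glues as before. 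You would need to supply this, or some substitute for it, to make your base case work.
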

This observation would be clear if $T^\bla$ were Morita equivalent to
a positively graded algebra.  This case is called {\bf mixed} by Achar
and Stroppel \cite{AchS}, and is carefully worked out in their paper.
As shown in \cite[4.6]{WebwKLR}, this is true when $\K$ is
characteristic 0, the Cartan matrix of $\fg$ is symmetric, and
polynomials $Q_{ij}$ are carefully chosen, but as the example
\cite[5.6]{WebCB} shows,  outside these cases there may simply be no
such Morita equivalence.  
\begin{lemma}\label{glue-res}
   If a module $M$ is filtered by modules which have
  finite length projective resolutions, these resolutions can be glued
  to give a finite length resolution of the entire module. 
\end{lemma}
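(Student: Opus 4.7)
The plan is to induct on the length $n$ of the filtration $0 = M_0 \subset M_1 \subset \cdots \subset M_n = M$ whose successive quotients $M_k/M_{k-1}$ have finite length projective resolutions. The base case $n = 1$ is exactly the hypothesis applied to the single filtration quotient $M = M_1$. For the inductive step, I apply the inductive hypothesis to the filtration $0 = M_0 \subset \cdots \subset M_{n-1}$ of $M_{n-1}$, obtaining a finite length projective resolution $P_\bullet \to M_{n-1}$, and use the given finite length projective resolution $Q_\bullet \to M/M_{n-1}$.

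The gluing step is carried out via the horseshoe lemma applied to the short exact sequence
\[0 \to M_{n-1} \to M \to M/M_{n-1} \to 0.\]
This produces a projective resolution $R_\bullet \to M$ with $R_i = P_i \oplus Q_i$ (as graded projectives), equipped with a differential that projects to the given differentials on $P_\bullet$ and $Q_\bullet$ and whose ``off-diagonal'' part is constructed by successive lifting through the surjection $M \twoheadrightarrow M/M_{n-1}$. Since $P_i = 0$ and $Q_i = 0$ for $i$ sufficiently large, the same holds for $R_i$, so the resulting resolution is again of finite length.

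The only point requiring comment is that the horseshoe construction must be performed in the category of \emph{graded} $\alg^\bla$-modules; this is routine since graded modules form an abelian category with enough projectives and the standard proof of the horseshoe lemma applies verbatim, with each lift chosen to be homogeneous of the appropriate internal degree. There is no substantive obstacle, and the lemma follows.
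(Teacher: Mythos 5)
Your proposal is correct and follows essentially the same route as the paper: induct on the filtration length to reduce to a single short exact sequence, then glue the two finite resolutions into one with terms $P_i\oplus Q_i$ and an upper-triangular differential. The only cosmetic difference is that you invoke the horseshoe lemma, whereas the paper realizes the same complex as the cone of a chain map lifting the connecting homomorphism $K''\to M'$; the resulting resolutions coincide.
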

\begin{proof}
  This is a
  standard lemma in homological algebra, but let us include a
  proof. By induction, we need only prove this for a short exact
  sequence $0\to M'\to M\to M''\to 0$, with $M'\leftarrow P'_0\leftarrow \cdots$ and
  $M''\leftarrow P_0''\leftarrow\cdots$ projective resolutions.  If we delete $M''$
  and $P_0''$ from the second resolution, we obtain a resolution of
  $K''$, the kernel of the map $P_0''\to M''$.

By the universal property of projectives, we have a map $P_0''\to M$
which lifts the projection $P_0''\to M''$ and thus induces a map
$K''\to M'$.  Let $\nu_i\colon P_i''\to P_{i-1}'$ be a lift of this
map to the projective resolutions, and let $\nu_0=0$. The cone of this
chain map is a new complex of projectives, necessarily exact except in
degree 0.  In degree 0, the homology is the cokernel of the map
$P_1'\oplus P_1''\to P_0'\oplus P_1''$ given by the matrix
\begin{equation*}
  \begin{bmatrix}
    \partial'& \nu_1\\
    0 & \partial''
  \end{bmatrix}
\end{equation*}
which is easily checked to be $M$.  Thus we have found a finite length
resolution of this module.
\end{proof}
\begin{proof}[Proof of Theorem \ref{triangular-resolution}]
  The proof is by induction on our order above.  First, we
  do the base case of $\bla=(\la)$ and $\la-\be=k\al_i$.  This case,
  $\alg^\bla_\be$ is Morita equivalent to its center, which is the
  cohomology ring on a Grassmannian of $k$-planes in
  $\la^i$-dimensional space.  In particular, it is positively graded,
  so such a resolution exists.

  Now, we bootstrap to the case where $\bla=(\la)$ but $\be$ is
  arbitrary.  In this case, we may assume that $L'=\te_{i_1}^{a_1}L$
  has this type of resolution.  Now, we
  consider $$M=\mathrm{Ind}_{\be+a_1\al_{i_1},a_i\al_{i_1}}L'\boxtimes
  L(i_1^{a_1}),$$ where here we use the notation of \cite[\S
  3.2]{KLI}.  The module $M$ has a projective resolution of the
  prescribed type, by inducing the outer tensors of the resolutions on
  the two factors.  Furthermore, there is a surjection
  $M\twoheadrightarrow L$ whose kernel has composition factors smaller
  in the order given above on simples, by \cite[Theorem
  3.7]{KLI}. Since each of these has an appropriate resolution by
  induction, we may lift the inclusion of each composition factor to a
  map of projective resolutions, and take the cone to obtain a
  resolution of $L$ in $C^{\uparrow}(\alg^\bla)$.

  Finally, we deal with the general case using standardization; let
  $L=h(\{L_i\})$.  By standardizing the resolutions of $L_i$, we
  obtain a standard resolution of
  $\mathbb{S}^{\bla}(L_1\boxtimes\cdots\boxtimes L_\ell)$.  Replacing
  each standard with its finite projective resolution, we obtain a
  projective resolution of the same module.  As before, the kernel of
  the surjection of this module to $L$ has composition factors all
  smaller in the partial order, so we may attach projective
  resolutions of each composition factor to obtain a projective
  resolution of $L$ in $C^{\uparrow}(\alg^\bla)$.
\end{proof}

\subsection{Self-dual projectives}
\label{sec:self-dual}

One interesting consequence of the module structure over $\tU$ and standard stratification is the
understanding it gives us of the self-dual projectives of our
category.  Self-dual projectives have played a very important role in
understanding the structure of representation theoretic categories
like $\cata^\bla$. For example, the unique self-dual projective in BGG
category $\cO$ for $\fg$ was key in Soergel's
analysis of that category \cite{Soe90,Soe92}, and the self-dual
projectives in category $\cO$ for a rational Cherednik algebra provide
an important perspective on the Knizhnik-Zamolodchikov functor defined
by Ginzburg, Guay, Opdam and Rouquier \cite{GGOR}.  In particular, following
Mazorchuk and Stroppel \cite{MS}, we use these modules to identify the Serre functor in Section \ref{sec:serre}.

Consider the projectives where $\kappa(i)=0$ for all $i$, in which case
we will simply denote the projective for $\kappa$ by $P^0_\Bi$.  We
note that $P^0_\Bi$ carries an obvious action of $R$ by composition on
the bottom.  We let $P^0=\oplus_{\Bi}P^0_\Bi$ be the sum of all such
projectives with $\kappa(i)=0$.% and $P=\oplus_{\Bi}P_\Bi$ be the corresponding module over $\alg^\la$.

\begin{thm}\label{self-dual}
If $P$ is an indecomposable projective $\alg^\bla$-module, then the following are equivalent:
\begin{enumerate}
\item $P$ is injective.
\item $P$ is a summand of the injective hull of an indecomposable standard module.
\item $P$ is isomorphic (up to grading shift) to a summand of $P^0$.
\end{enumerate}
\end{thm}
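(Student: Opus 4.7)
The equivalence will be established by the chain $(2)\Rightarrow(1)\Rightarrow(3)$ together with $(3)\Rightarrow(1)\Rightarrow(2)$; by far the most substantial step is $(3)\Rightarrow(1)$, while the others are essentially formal. The implication $(2)\Rightarrow(1)$ is immediate since a direct summand of an injective module is injective, and projectivity of $P$ is part of the standing hypothesis. For $(1)\Rightarrow(2)$, let $P$ be projective-injective and indecomposable, so $L := \socle(P)$ is simple. By Corollary \ref{SS}, $P$ has a finite filtration $0=F_0\subset F_1\subset\cdots\subset F_r=P$ with standard subquotients; taking the minimal $i$ with $L\subset F_i$, the composition $L\hookrightarrow F_i\twoheadrightarrow F_i/F_{i-1}=:S$ is nonzero, so $L\subset\socle(S)$, and hence $I(L)=P$ appears as a summand of $I(S)$.

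For $(1)\Rightarrow(3)$, the indecomposable $P$ is a summand of $P^\kappa_\Bi$ for some stringy triple $(\Bi,\kappa)$ by Proposition \ref{prop:stringy}. Lemma \ref{self-dual-embedding} provides an injection $P^\kappa_\Bi\hookrightarrow P^0_\Bi$ via $\dot\theta_\kappa$; composing with the split inclusion $P\hookrightarrow P^\kappa_\Bi$ embeds $P$ into $P^0_\Bi$, and injectivity of $P$ splits this embedding, realising $P$ as a summand of $P^0_\Bi$.

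The heart of the argument is $(3)\Rightarrow(1)$, which reduces to showing that $P^0$ is injective. Iterating Proposition \ref{split-strands}, we identify $T^{(\la)}\cong e''T^\bla e''$, where $e''\in T^\bla$ is the sum of the idempotents $e(\Bi,\kappa)$ satisfying $\kappa(1)=\kappa(2)=\cdots=\kappa(\ell)$ (i.e.\ the red strands appear consecutively). Consider
\[ F\colon T^{(\la)}\modu\to T^\bla\modu,\qquad F(N)=N\otimes_{T^{(\la)}}e''T^\bla. \]
Its right adjoint is $F^R(M)=Me''$. Because $e''T^\bla$ is a summand of the regular right $T^\bla$-module, it is finitely generated projective over $T^\bla$, so $F$ also has a left adjoint, which is computed by the usual dualization:
\[ F^L(M)=M\otimes_{T^\bla}\Hom_{T^\bla}(e''T^\bla,T^\bla)=M\otimes_{T^\bla}T^\bla e''=Me''. \]
Hence $F^L\cong F^R$ coincides with the exact functor of restriction to $e''$, so $F$ preserves both projectives and injectives. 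Now $T^{(\la)}$ is Frobenius by Theorem \ref{Frobenius}, so its regular module is projective-injective; consequently $F(T^{(\la)})=e''T^\bla$ is projective-injective over $T^\bla$. Finally, the idempotent $e'=\sum_\Bi e(\Bi,0)$ satisfies $e'\leq e''$, so $e''=e'+(e''-e')$ is a sum of orthogonal idempotents, giving the decomposition $e''T^\bla=P^0\oplus(e''-e')T^\bla$, which exhibits $P^0$ as a summand of a projective-injective, hence projective-injective itself.

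The main obstacle is the absence of a direct Frobenius structure on $T^\bla$ in general. The key insight resolving it is that the Frobenius information of the ``collapsed'' algebra $T^{(\la)}$ (which is Theorem \ref{Frobenius}) can be transported back into $T^\bla$ through the splitting isomorphism, since the inclusion $T^{(\la)}\hookrightarrow T^\bla$ is realised by a \emph{sweet} corner idempotent $e''$: projectivity of $e''T^\bla$ on the right furnishes the biadjoint pair whose left adjoint is exact, and exactness of this left adjoint is precisely what is needed to push injectivity through.
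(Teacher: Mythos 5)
Your implications $(1)\Leftrightarrow(2)$ and $(1)\Rightarrow(3)$ are fine; in particular, deducing $(1)\Rightarrow(3)$ by composing the split inclusion $P\hookrightarrow P^\kappa_\Bi$ with the embedding of Lemma \ref{self-dual-embedding} and then splitting off $P$ by injectivity is a clean variant of the paper's route (the paper instead proves $(2)\Rightarrow(3)$ by a socle argument). The problem is in the step you rightly identify as the heart of the matter, $(3)\Rightarrow(1)$.

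The gap is the claim that $F=-\otimes_{T^{(\la)}}e''T^\bla$ admits $(-)e''$ as a \emph{left} adjoint. Finite projectivity of $e''T^\bla$ as a \emph{right} $T^\bla$-module is not the relevant hypothesis: it only tells you that the \emph{right} adjoint of $F$ can be rewritten as $-\otimes_{T^\bla}\Hom_{T^\bla}(e''T^\bla,T^\bla)=(-)e''$, i.e.\ that $F$ preserves projectives, which is clear anyway. For $F$ to be \emph{right} adjoint to the exact functor $(-)e''$ --- which is what you need to conclude that $F$ preserves injectives --- you would need the natural map $N\otimes_{T^{(\la)}}e''T^\bla\to\Hom_{T^{(\la)}}(T^\bla e'',N)$ to be an isomorphism for all $N$, i.e.\ you need $e''T^\bla$ to be finitely generated projective as a \emph{left} $T^{(\la)}=\End(P^0)$-module. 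That is precisely the ``sweetness'' you invoke in your closing paragraph, and it fails: by Proposition \ref{prop:add-embed} the relevant summands are $\Hom_{T^\bla}(P^0,P^\kappa_\Bi)\cong y_{\Bi,\kappa}\alg^\la$, ideals of $\alg^\la$ that are typically non-projective. Concretely, for $\fg=\mathfrak{sl}_2$, $\bla=(\omega,\omega)$ and one black strand, $\cata^\bla$ is a regular block of $\cO(\mathfrak{sl}_2)$, $P^0$ is the big projective with $\End(P^0)\cong\K[y]/(y^2)$, and $\Hom(P^0,P(0))\cong\K$ with $y$ acting by zero, which is not projective over $\K[y]/(y^2)$. (This non-projectivity is the standard feature of Soergel-type double centralizer situations.) Since every injective is a summand of copies of the cogenerator $(T^{(\la)})^\star\cong T^{(\la)}$, the assertion ``$F$ preserves injectives'' is in fact \emph{equivalent} to the injectivity of $P^0$, so the argument is circular once the adjunction claim is removed.

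For comparison, the paper proves $(3)\Rightarrow(1)$ by exhibiting an explicit non-degenerate pairing on $P^0_\Bi$, namely $(a,b)=\tr_\la(a\dot b)$ built from the Frobenius trace of Theorem \ref{Frobenius} and the dot anti-automorphism; this shows $P^0\cong(P^0)^\star$, and the dual of a projective is injective. So the Frobenius structure on $T^{(\la)}$ is indeed the essential input, but it must be transported through the trace and the duality $\star$, not through a biadjunction for the corner idempotent.
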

%Let $P^0$ be the sum of all the projectives with $\kappa=0$.
\begin{proof}
$(3)\rightarrow(1)$:
To establish this, we show that $P^0$ is self-dual; that is, there is a non-degenerate pairing
$P^0_\Bi\otimes P^0_\Bi\to \K$.  This is given by $(a,b)=\tr_\la(a\dot
b)$, where $\tr_\la$ is the Frobenius trace on $\End(P^0)\cong
\alg^{\la}$ given in Section \ref{sec:cyc}.  Thus $P^0$ is both
projective and injective, so any summand of it is as well.

$(1)\rightarrow(2)$: Since $P$ is indecomposable and injective, it is the injective hull of any submodule of $P$.  Since $P$ has a standard stratification, it has a submodule which is standard.

$(2)\rightarrow(3)$: We have already established that $P^0$ is injective, so we need only 
establish that any simple in the socle of $S^\kappa_\Bi$ is a summand
of the cosocle of $P^0$ (since the injective hull of $S^\kappa_\Bi$
coincides with that of its socle).  It suffices to show that there is
no non-trivial submodule of $S^\kappa_\Bi$ killed by $e_{0,\Bj}$ for
all $\Bj$.  If such a submodule $M$ existed, then we would have $M\dot{\theta_\kappa}=0$. Thus, its preimage $M'$ in $P^\kappa_\Bi$ satisfies $M' \dot{\theta_\kappa}\subset U^0_\Bi$.  But the injectivity of Lemma \ref{self-dual-embedding} and the fact that $L^\kappa_\Bi\dot{\theta_\kappa}=U^0_\Bi\cap P^\kappa_\Bi \dot{\theta_\kappa}$, this implies that $M=0$.
\end{proof}

For two rings $A$ and $B$, we say an $A$-$B$ bimodule $M$ has the {\bf
  double centralizer property} if $\End_B(M)=A$ and $\End_A(M)=B$.  In
particular, this implies that if $M$ is projective as a $B$-module,
the functor $$\Hom( M,-):B\modu\to A\modu$$ is fully faithful on
projectives (it could be quite far from being a Morita equivalence, as
the theorem below shows).

\begin{prop}\label{self-dual-end}
$\displaystyle \End_{\alg^\bla}(P^0)\cong \alg^\la\cong R^\la .$
\end{prop}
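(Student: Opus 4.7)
The last isomorphism $\alg^\la\cong R^\la$ is already Theorem~\ref{cyclotomic}, so the content of the proposition is the identification $\End_{\alg^\bla}(P^0)\cong \alg^\la$. Let $e_0=\sum_\Bi e(\Bi,\bla,0)\in\alg^\bla$ denote the sum of idempotents attached to Stendhal triples with $\kappa\equiv 0$ (all red strands at the leftmost positions). Then $P^0=e_0\alg^\bla$ as a right $\alg^\bla$-module, and the standard formula for endomorphisms of a projective presented by an idempotent identifies $\End_{\alg^\bla}(P^0)\cong e_0\alg^\bla e_0$ as rings. Thus it suffices to build an algebra isomorphism $\alg^\la\cong e_0\alg^\bla e_0$.

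The plan is to iterate Proposition~\ref{split-strands}. Set $\bla_k=(\la_1,\dots,\la_{k-1},\la_k+\la_{k+1}+\cdots+\la_\ell)$, so that $\bla_1=(\la)$ and $\bla_\ell=\bla$. Splitting the last red of $\bla_k$ produces $\bla_{k+1}$, and Proposition~\ref{split-strands} yields an algebra isomorphism
\[
\sigma_k\colon \alg^{\bla_k}\xrightarrow{\;\sim\;} e_k\,\alg^{\bla_{k+1}}\,e_k,
\]
where $e_k\in\alg^{\bla_{k+1}}$ is the idempotent summing over $e(\Bi,\kappa)$ with $\kappa(k)=\kappa(k+1)$. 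Composing these $\ell-1$ isomorphisms and tracking how each $\sigma_k$ transforms the corner idempotent (each splitting adjoins one new equality $\kappa(k)=\kappa(k+1)$ to the previous constraints) produces an algebra isomorphism $\alg^{(\la)}\cong e_\Delta\,\alg^\bla\,e_\Delta$, where
\[
e_\Delta \;=\; \sum_{\kappa(1)=\kappa(2)=\cdots=\kappa(\ell)} e(\Bi,\kappa)
\]
is the idempotent picking out Stendhal triples in which all $\ell$ red strands are grouped contiguously (with no intervening black strands) at both top and bottom.

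Finally, I use that $\alg^\bla$ is the quotient of $\tilde\alg^\bla$ by the violating ideal, which kills every $e(\Bi,\kappa)$ with $\kappa(1)>0$, since such a diagram has a black strand at the far left. Combined with the constraint $\kappa(1)=\cdots=\kappa(\ell)$, this forces $\kappa(j)=0$ for all $j$, so $e_\Delta=e_0$ in $\alg^\bla$. Therefore $\alg^\la\cong e_0\,\alg^\bla\,e_0 \cong \End_{\alg^\bla}(P^0)$, which together with Theorem~\ref{cyclotomic} gives the desired chain of isomorphisms.

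There is no serious obstacle; the entire argument is a careful composition of the isomorphisms of Proposition~\ref{split-strands}. The main thing to verify is the bookkeeping: that each splitting adjoins exactly one equality $\kappa(k)=\kappa(k+1)$ to the defining condition of the corner idempotent, and that after all $\ell-1$ splittings one is left with precisely $\kappa(1)=\cdots=\kappa(\ell)$, which in turn collapses to $e_0$ on passing to the violated quotient.
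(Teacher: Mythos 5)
Your proposal is correct and is essentially the paper's own argument: the paper proves the first isomorphism by "repeated application" of Proposition~\ref{split-strands} and cites Theorem~\ref{cyclotomic} for the second, which is exactly your iteration of the splitting isomorphisms followed by the observation that the violating relations collapse the resulting corner idempotent to $e_0$. The only difference is that you have written out the bookkeeping that the paper leaves implicit.
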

\begin{proof}
  The first isomorphism follows from repeated application of Corollary
  \ref{split-strands}.  The second is just a restatement of
  Proposition \ref{cyclotomic}
\end{proof}

\begin{cor}\label{doub-cen}
    The projective-injective $P^0$ has the double centralizer property
  for the actions of $\alg^{\la}$ and $\alg^\bla$ on the left and right.
\end{cor}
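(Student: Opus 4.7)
The plan is to apply König's double centralizer theorem. Given $\End_{\alg^\bla}(P^0) \cong \alg^\la$ from Proposition \ref{self-dual-end}, this reduces the claim to producing an exact sequence $0 \to \alg^\bla \to Q_0 \to Q_1$ of right $\alg^\bla$-modules with $Q_0, Q_1 \in \mathrm{add}(P^0)$. Injectivity of the canonical map $\alg^\bla \to \End_{\alg^\la}(P^0)$ is automatic: Lemma \ref{self-dual-embedding} exhibits an injection $P^\kappa_\Bi \hookrightarrow P^0_\Bi$ for each indecomposable summand of $\alg^\bla$, showing $P^0$ is a faithful right $\alg^\bla$-module.

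First I construct the embedding $\iota \colon \alg^\bla \hookrightarrow Q_0$. Writing $\alg^\bla = \bigoplus_{(\Bi,\kappa)} P^\kappa_\Bi$ as a direct sum of indecomposable projective summands, Lemma \ref{self-dual-embedding} provides an injection $P^\kappa_\Bi \hookrightarrow P^0_\Bi$ via left multiplication by $\dot\theta_\kappa$. Assembling these yields $\iota \colon \alg^\bla \hookrightarrow Q_0 := \bigoplus_{(\Bi,\kappa)} P^0_\Bi$ (with appropriate grading shifts), and $Q_0 \in \mathrm{add}(P^0)$ by construction.

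The main task is to embed the cokernel $C = Q_0/\alg^\bla$ into some $Q_1 \in \mathrm{add}(P^0)$. Since every module in $\mathrm{add}(P^0)$ is injective, it suffices to ensure that every simple in $\socle(C)$ has its injective hull in $\mathrm{add}(P^0)$; by Theorem \ref{self-dual} combined with self-duality of $P^0$, these are precisely the simples of the form $L_{(\Bi,0)}$, i.e., those appearing in $\socle(P^0)$. Applying $\Hom_{\alg^\bla}(L,-)$ to $0 \to \alg^\bla \to Q_0 \to C \to 0$ and using $\Ext^1(L, Q_0) = 0$ (from injectivity of $Q_0$) yields
\[ 0 \to \Hom(L, \alg^\bla) \to \Hom(L, Q_0) \to \Hom(L, C) \to \Ext^1(L, \alg^\bla) \to 0. \]
For $L$ not of the form $L_{(\Bi,0)}$ we have $\Hom(L, Q_0) = 0$, so the problem reduces to showing $\Ext^1(L, \alg^\bla) = 0$ for such $L$.

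The hard part will be this $\Ext$-vanishing. I plan to verify it using the standard filtration of $\alg^\bla$ from Corollary \ref{SS} together with the semi-orthogonal decomposition from Proposition \ref{semi-orthogonal}: the filtration reduces the computation to extensions $\Ext^1(L, S^{\kappa'}_{\Bi'})$ between $L$ and individual standards, and the semi-orthogonality together with the explicit description of $\End(S_\bal)$ from Proposition \ref{semi-orthogonal} controls which of these can be nonzero. This parallels the standard strategy for verifying König's criterion in standardly stratified categories, in the spirit of Soergel's Struktursatz for category $\cO$ and its subsequent extensions.
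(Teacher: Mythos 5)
Your overall strategy — K\"onig's criterion, i.e.\ producing an exact sequence $0\to\alg^\bla\to Q_0\to Q_1$ with $Q_0,Q_1\in\mathrm{add}(P^0)$ — is a legitimate alternative to what the paper does, which is simply to invoke \cite[Corollary 2.6]{MS}: that corollary says the double centralizer property holds for a standardly stratified algebra once the injective hull of every indecomposable standard is projective-injective, and that hypothesis is exactly Theorem \ref{self-dual} (2)$\Rightarrow$(3). So you are in effect reproving the Mazorchuk--Stroppel criterion from scratch. Your first half is fine: $\End_{\alg^\bla}(P^0)\cong\alg^\la$ is Proposition \ref{self-dual-end}, and Lemma \ref{self-dual-embedding} does give the embedding $\alg^\bla\hookrightarrow Q_0=\bigoplus P^0_{\Bi}$, with $\Hom(L,Q_0)=0$ for any simple $L$ not in $\socle(P^0)$ by self-duality of $P^0$.

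The gap is in the second half, which is the entire content of the statement. You correctly reduce to showing $\Ext^1(L,\alg^\bla)=0$ for every simple $L$ whose injective hull is not a summand of $P^0$, but your proposed route — run over the standard filtration of $\alg^\bla$ and kill each $\Ext^1(L,S^{\kappa}_{\Bi})$ using Proposition \ref{semi-orthogonal} — cannot work. First, semi-orthogonality controls $\Ext$ groups \emph{between standard modules}, not between a simple and a standard. Second, and more seriously, the termwise vanishing is simply false: already for $\fg=\mathfrak{sl}_2$, $\bla=(\omega,\omega)$ in the zero weight space (a regular block of $\cO(\mathfrak{sl}_2)$), take $L$ to be the simple quotient of the standard-and-projective module $S^{(0,0)}=P^{(0,0)}$ (the ``dominant Verma''); its injective hull is not projective, and $\Ext^1(L,S^{(0,1)})\neq 0$ because $P^{(0,0)}$ itself is a nonsplit extension of $L$ by the lowest standard. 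The vanishing of $\Ext^1(L,\alg^\bla)$ holds only after these contributions cancel against $\Hom(L,-)$ of the neighbouring subquotients in the filtration, so an argument one standard at a time cannot close. To finish you would need the actual mechanism of \cite[Corollary 2.6]{MS} (or an equivalent dominant-dimension-two argument): embed each standard into its projective-injective hull and control the cokernel globally, not subquotient by subquotient.
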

\begin{proof}
By \cite[Corollary 2.6]{MS}, this follows immediately from the fact that the injective hull of an indecomposable standard is also a summand of $P^0$.
\end{proof}
Thus, in this case, our algebra can be realized as the endomorphisms
of a collection of modules over $R^\la$, in a way analogous to the
realization of a regular block of category $\cO$ as the modules over
endomorphisms of a particular module over the coinvariant algebra, or
of the cyclotomic $q$-Schur algebra as the endomorphisms of a module
over the Hecke algebra.

In fact, these modules are easy to identify.  Given $(\Bi,\kappa)$, we
consider the element $y_{\Bi,\kappa}$ of $P^0_{\Bi}$ given
by $$y_{\Bi,\kappa}=e_{\Bi}\prod_{j=1}^{\ell}\prod_{k=\kappa(j)+1}^ny_k^{\la_j^{i_k}}.$$
Pictorially this is given by multiplying the element $\theta_\kappa$ with no
black/black crossings going from $(\Bi,0)$ to $(\Bi,\kappa)$ by its horizontal reflection
$\dot\theta_\kappa$, and then straightening the strands.

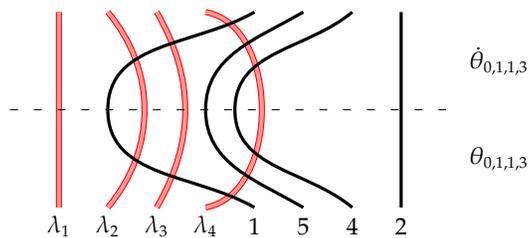
\begin{figure}[ht]
  \centering
  \begin{tikzpicture}[very thick, scale=1.3]
    
\draw[wei] (1.5,-1) to node[below, at start]{$\la_1$} (1.5,1);
\draw[wei] (2,-1) to[out=50, in=-50] node[below, at start]{$\la_2$} (2,1) ;
\draw[wei] (2.5,-1) to[out=60, in=-60] node[below, at start]{$\la_3$} (2.5,1)  ;
\draw[wei] (3,-1) to[out=10, in=-10] node[below, at start]{$\la_4$}  (3,1) ;
\draw (3.5,-1) to[out=150, in=-90]  node[below, at start]{1}(2,0) to[out=90,  in=-150] (3.5,1) ;
\draw (4,-1)  to[out=150,in=-90] node[below, at start]{5} (3,0) to[out=90,  in=-150] (4,1);
\draw (4.5,-1)  to[out=140,in=-90] node[below, at start]{4} (3.3,0) to[out=90,   in=-140] (4.5,1);
\draw (5,-1) tonode[below, at start]{2}  (5,1) ;
\draw[loosely dashed,thin] (1,0) -- (5.5,0);
\node at (6,.5) {$\dot\theta_{0,1,1,3}$};
\node at (6,-.5) {$\theta_{0,1,1,3}$};
  \end{tikzpicture}

  \caption{The element $y_{(1,5,4,2),(0,1,1,3)}$.}
  \label{fig:yik}
\end{figure}

\begin{prop}\label{prop:add-embed}
The algebra $\alg^\bla$ is isomorphic to the algebra $\End_{\alg^\la}(\bigoplus_{\kappa}y_{\Bi,\kappa} \alg^\la  )$.
\end{prop}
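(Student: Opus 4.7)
The plan is to deduce this from the double centralizer property. By Proposition~\ref{self-dual-end} combined with Corollary~\ref{doub-cen}, there is a canonical algebra isomorphism $\alg^\bla\cong \End_{\alg^\la}(P^0)$, where $P^0 = \bigoplus_\Bi P^0_\Bi$ is viewed as an $\alg^\la$-module via the identification $\alg^\la\cong e_-\alg^\bla e_-$ obtained from iterating Proposition~\ref{split-strands} (with the dot-flip anti-automorphism used to reconcile left/right conventions as needed). The proposition therefore reduces to producing an isomorphism of $\alg^\la$-modules
\[
P^0 \;\cong\; \bigoplus_{\Bi,\kappa} y_{\Bi,\kappa}\,\alg^\la.
\]
A preliminary check is that $y_{\Bi,\kappa}$ actually lies in $\alg^\la$ (and not merely in $\alg^\bla$): iterating the cost relation \eqref{cost} on the product $\dot\theta_\kappa\theta_\kappa$ resolves every red/black crossing into precisely the prescribed dot pattern, yielding a diagram with no red/black crossings, i.e., an element of $e_-\alg^\bla e_-\cong \alg^\la$.

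I would then assemble the cyclic inclusions $y_{\Bi,\kappa}\,\alg^\la \hookrightarrow P^0_\Bi$ into a single $\alg^\la$-module map
\[
\Psi\colon \bigoplus_{\Bi,\kappa} y_{\Bi,\kappa}\,\alg^\la \;\longrightarrow\; P^0,
\]
and verify bijectivity. Injectivity follows by writing $y_{\Bi,\kappa} = \dot\theta_\kappa \theta_\kappa$, so that each summand factors through the injection $\dot\theta_\kappa\colon P^\kappa_\Bi\hookrightarrow P^0_\Bi$ from Lemma~\ref{self-dual-embedding}; distinct $\kappa$'s correspond to generators landing in distinct strata of the standardly stratified structure from Corollary~\ref{SS}, so no relations can arise between different summands. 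Surjectivity I plan to establish by a graded dimension count: Proposition~\ref{standard-filtration} combined with Lemma~\ref{Phivs} computes $\dim_q P^0_\Bi$ as a sum of grade-shifted standard dimensions, and one verifies the same sum arises from $\bigoplus_\kappa y_{\Bi,\kappa}\,\alg^\la$.

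The main obstacle will be the per-summand dimension match $\dim_q y_{\Bi,\kappa}\,\alg^\la = q^{-\deg\theta_\kappa}\dim_q S^\kappa_\Bi$. I expect this to come from identifying the right annihilator of $y_{\Bi,\kappa}$ in $\alg^\la$ with the preimage of $U^\kappa_\Bi$ under the map $\dot\theta_\kappa\cdot(-)$, so that $y_{\Bi,\kappa}\,\alg^\la$ is realized as a grade-shifted copy of $S^\kappa_\Bi$. The key inputs should be the cost relation~\eqref{cost}, which forces any ``extra'' red/black crossings paired against the dots already present in $y_{\Bi,\kappa}$ to collapse, together with the cyclotomic relations in $\alg^\la\cong R^\la$ (Theorem~\ref{cyclotomic}). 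Pinning down this annihilator cleanly is the technical heart of the proof.
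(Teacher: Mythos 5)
Your reduction is the right one and matches the paper's: by Corollary~\ref{doub-cen} it suffices to identify $\Hom_{\alg^\bla}(P^0,P^{\kappa}_{\Bi})$ with $y_{\Bi,\kappa}\alg^\la$ as $\alg^\la$-modules, and your preliminary check that $y_{\Bi,\kappa}=\dot\theta_\kappa\theta_\kappa$ lies in $e_-\alg^\bla e_-\cong\alg^\la$ is fine. The problem is the step you yourself flag as the technical heart: the identity $\dim_q y_{\Bi,\kappa}\alg^\la = q^{-\deg\theta_\kappa}\dim_q S^\kappa_\Bi$ is false, and $y_{\Bi,\kappa}\alg^\la$ is not a grade-shifted standard module. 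These objects live over different algebras ($y_{\Bi,\kappa}\alg^\la$ is an $\alg^\la$-module, $S^\kappa_\Bi$ an $\alg^\bla$-module), and already for $\fg=\mathfrak{sl}_2$, $\bla=(\omega,\omega)$, one black strand, one has $\alg^\la\cong\K[y]/(y^2)$, with $y_{\Bi,(0,1)}\alg^\la=\K y$ one-dimensional while $S^{(0,1)}_{\Bi}$ is the two-dimensional dominant Verma, and $y_{\Bi,(0,0)}\alg^\la=\K[y]/(y^2)$ two-dimensional while $S^{(0,0)}_{\Bi}$ is the one-dimensional simple Verma. The totals agree (both give $\dim P^0=3$) but the summands do not, so the dimension count against the standard filtration of Proposition~\ref{standard-filtration} cannot close the argument. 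The injectivity step is also broken: the honest inclusions $y_{\Bi,\kappa}\alg^\la\subset e(\Bi)\alg^\la$ have nested, overlapping images (in the example $y_{\Bi,(0,1)}\alg^\la\subset y_{\Bi,(0,0)}\alg^\la$), so the map $\Psi$ you assemble from them is never injective on the external direct sum; the ``distinct strata'' heuristic does not apply because all of these right ideals sit inside the single algebra $\alg^\la$.

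The fix is to compare $y_{\Bi,\kappa}\alg^\la$ not with $P^0$ via inclusions but with $\Hom_{\alg^\bla}(P^0,P^{\kappa}_{\Bi})=e(\Bi,\kappa)\alg^\bla e_-$ directly, using exactly the two ingredients you already have in hand. Left multiplication by $\theta_\kappa$ gives a map $\Hom(P^0,P^{0}_{\Bi})\to\Hom(P^0,P^{\kappa}_{\Bi})$ which is \emph{surjective}, because by Proposition~\ref{basis} any diagram from $(\Bi',0)$ to $(\Bi,\kappa)$ can be rewritten with all red/black crossings above all black/black crossings, i.e.\ factored as $\theta_\kappa m'$ with $m'\in\alg^\la$. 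Left multiplication by $\dot\theta_\kappa$ gives a map $\Hom(P^0,P^{\kappa}_{\Bi})\to\Hom(P^0,P^{0}_{\Bi})$ which is \emph{injective} by Lemma~\ref{self-dual-embedding}. A surjection followed by an injection identifies the middle term with the image of the composite, and the composite is left multiplication by $\dot\theta_\kappa\theta_\kappa=y_{\Bi,\kappa}$, whose image in $\Hom(P^0,P^0_\Bi)\cong e(\Bi)\alg^\la$ is precisely $y_{\Bi,\kappa}\alg^\la$. No dimension count, and no description of the annihilator of $y_{\Bi,\kappa}$, is needed.
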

\begin{proof}
  Based on Corollary \ref{doub-cen}, all we need to show is that
  $\Hom_{\alg^\bla}(P^0,P^{\kappa}_{\Bi})\cong y_{\Bi,\kappa} P^0_{\Bi} $
  as a $\alg^\la$ representation.  A map $m$ from $P^0_{\Bi'}$ to
  $P^\kappa_{\Bi}$ is simply a linear combination of diagrams starting
  at $\Bi$ with the correct placement of red strands and ending at
  $\Bi'$ with all red strands to the right.  By Proposition \ref{basis}, we
  can assure that all red/black crossings occur above all black/black
  ones, so $m=\theta_\kappa m'$, where $m\in \alg^\la$.
 
  Thus, we have
  maps $$\Hom_{\alg^\bla}(P^0,P^{0}_{\Bi})\overset{\theta_\kappa}\longrightarrow
  \Hom_{\alg^\bla}(P^0,P^{\kappa}_{\Bi})\overset{\dot\theta_\kappa}\longrightarrow
  \Hom_{\alg^\bla}(P^0,P^{0}_{\Bi})$$ given by composition. The first of
  these is surjective, as we argued above. Furthermore, the latter is injective,
  by Proposition \ref{self-dual-embedding}.
  Thus, $\Hom_{\alg^\bla}(P^0,P^{\kappa}_{\Bi})$ is isomorphic to the
  image of the composition of these maps, which is $y_{\Bi,\kappa}
  \alg^\la$.
\end{proof}
For some choices of $\Bi$ and $\kappa$, the element $y_{\Bi,\kappa}$
has already appeared in work of Hu and Mathas \cite{HM}.  Assume that
$\fg=\mathfrak{sl}_n$ and specialize to the case where for all $j$, we
have $\la_j=\omega_{\pi_j}$ for some
$\pi_j$.  As suggested by the notation, we will later want to think of
$\pi_j$ as the numbers in a composition, not just arbitrary symbols
indexing the nodes of the Dynkin diagram. We can define stringy triples for this
algebra using the reduced decomposition of the longest element of the
Weyl group $w_0=s_{n-1}(s_{n-2}s_{n-1})\cdots(s_1\cdots s_{n-1})$.   

As illustrated in Figure \ref{fig:partitions}, the stringy triples
for the fundamental representation $V_{\omega_i}$ are gotten by
\begin{itemize}
\item taking a partition diagram which fits in an $i\times(n-i)$ box,
\item  filling the box at $(k,m)$ with its content $m-k+i$,
\item taking the row-reading word.
\end{itemize}
\begin{figure}
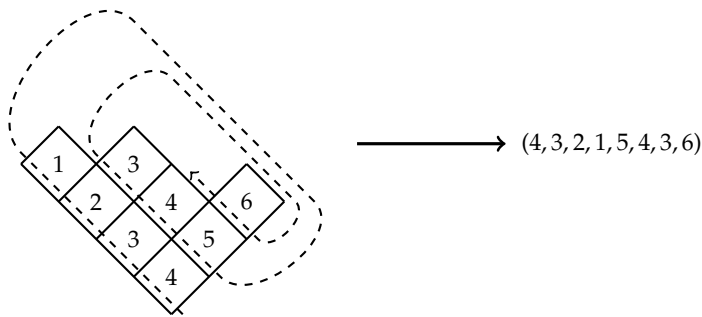

  \centering
  \tikz{
\node (a) at (-3,0) {
\tikz[thick,scale=.5]{
\draw (-4,4)--(0,0); 
\draw (-3,5) -- (1,1);
\draw (-1,5) -- (2,2);
\draw (2,4) -- (3,3);
\draw (-4,4)-- (-3,5);
\draw (-3,3) -- (-1,5);
\draw (-2,2) -- (0,4);
\draw (-1,1) -- (2,4);
\draw (0,0) -- (3,3);
\node at (0,1) {$4$};
\node at (1,2) {$5$};
\node at (2,3) {$6$};
\node at (-1,2) {$3$};
\node at (-2,3) {$2$};
\node at (-3,4) {$1$};
\node at (0,3) {$4$};
\node at (-1,4) {$3$};
\draw[dashed,->] (.3,0) -- (-4,4.3) to[out=135,in=135] (-1,7.8) --
(3.8,3) to[out=-45, in=-45] (1.3,1) -- (-2,4.25) to[out=135,in=135]
(0,6.3) -- (3.3,3) to[out=-45, in=-45] (2.3,2) -- (.5,3.8);
}
};
\node[outer sep=3pt] (b) at (3,0){$(4,3,2,1,5,4,3,6)$};
\draw[very thick, ->] (a)-- (b);
}
  \caption{The stringy triple attached to a partition for $n=7$ and $i=4$.}
  \label{fig:partitions}
\end{figure}
For a multipartition $\xi=(\xi^{(1)},\dots, \xi^{(\ell)})$, with
$\xi^{(i)}$ fitting in a $\pi_i\times (n-\pi_i)$ box, we can thus
define $(\Bi_\xi,\kappa_\xi)$ where $\Bi_\xi$ is the concatenation of
these row-reading words, and $\kappa_\xi(k)$ is the number of
the boxes in the first $k-1$ partitions.  The element
$y_{\Bi_\xi,\kappa_\xi}$ is exactly that denoted
$\psi_{\mathfrak{t}^\xi \mathfrak{t}^\xi}$ in \cite{HM,HMQ}.

Mathas and Hu have defined another algebra,
which they call a {\bf quiver
    Schur algebra}\footnote{This is an unfortunate terminological clash
    with \cite{SWschur}, where a non-equivalent, but graded Morita
    equivalent algebra is given the same name; after forgetting the
    grading, this is the difference between defining Schur algebras
    using all permutation modules attached to partitions or to
    compositions.} $\mathcal{S}^\la_m$.
\begin{thm}\label{quiver-schur}
  For $\fg=\mathfrak{sl}_n$, the category $\cata^\bla$ is equivalent
  (as a graded category)
  to a sum of blocks of graded representations of $\mathcal{S}^\la_m$ for the charges $(\pi_1,\dots,\pi_\ell)$.
\end{thm}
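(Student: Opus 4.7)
The plan is to reduce the theorem to Proposition \ref{prop:add-embed}, which already realizes $\alg^\bla$ as an endomorphism algebra of a sum of right ideals inside $\alg^\la \cong R^\la$. Since Hu--Mathas define $\mathcal{S}^\la_m$ as the endomorphism algebra of an analogous sum of right ideals inside $R^\la$ (indexed by multipartitions with a fixed charge), once I match the two parametrizing sets and the two families of generators, the result will follow.

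First I would unwind the combinatorics. For $\fg=\mathfrak{sl}_n$, the unique reduced decomposition $w_0 = s_{n-1}(s_{n-2}s_{n-1})\cdots(s_1\cdots s_{n-1})$ yields the string parametrization whose associated stringy triples for the fundamental weight $\omega_{\pi}$ are in bijection with partitions fitting in a $\pi \times (n-\pi)$ box, via the row-reading word of the filling by contents shown in Figure \ref{fig:partitions}. Concatenating these, stringy triples for $\bla = (\omega_{\pi_1},\ldots,\omega_{\pi_\ell})$ are in natural bijection with $\ell$-multipartitions $\xi = (\xi^{(1)},\ldots,\xi^{(\ell)})$ compatible with the charge $(\pi_1,\dots,\pi_\ell)$. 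Under this bijection the stringy triple $(\Bi_\xi,\kappa_\xi)$ is exactly the triple whose row-reading data indexes the tableau $\mathfrak{t}^\xi$ of Hu--Mathas, and the element $y_{\Bi_\xi,\kappa_\xi}$ coincides (up to normalization inside $R^\la \cong \alg^\la$) with the Hu--Mathas idempotent-like element $\psi_{\mathfrak{t}^\xi\mathfrak{t}^\xi}$ used to cut out their permutation modules.

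Next I would refine Proposition \ref{prop:add-embed}. That statement writes $\alg^\bla \cong \End_{\alg^\la}(\bigoplus_{(\Bi,\kappa)} y_{\Bi,\kappa} \alg^\la)$ over all Stendhal triples, but by Proposition \ref{prop:stringy} any indecomposable projective already appears (up to shift) as a summand of the one attached to its stringy triple. Thus passing to the subsum over stringy triples $(\Bi_\xi,\kappa_\xi)$ for multipartitions $\xi$ is a Morita equivalence. Combining with the previous paragraph gives
\[
\alg^\bla \;\cong\; \End_{R^\la}\Bigl(\bigoplus_{\xi} \psi_{\mathfrak{t}^\xi\mathfrak{t}^\xi} R^\la\Bigr)^{\mathrm{op}},
\]
where the sum is over multipartitions for the charge $(\pi_1,\dots,\pi_\ell)$ with total weight $\la - \sum_i\al_{i_i}$ corresponding to our weight grading. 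This is exactly the definition of (the relevant sum of blocks of) $\mathcal{S}^\la_m$, so the graded algebras agree and the graded categories of modules are equivalent.

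The main obstacle is the bookkeeping between the two graphical/cellular formalisms. In particular I would need to check that the Hu--Mathas basis of $\mathcal{S}^\la_m$ — built from pairs of tableaux together with KLR-type diagrams — matches, under the identifications above, the basis of $\End_{\alg^\la}(\bigoplus_\xi y_{\Bi_\xi,\kappa_\xi}\alg^\la)$ coming from Proposition \ref{basis}, and that the two gradings (both inherited from the KLR grading on $R^\la$) coincide on the nose. Since by construction both algebras sit inside $\End_{R^\la}(R^\la)$ via the same generators, this verification is a comparison of two descriptions of the same subalgebra rather than a construction of new structure, so it should reduce to straightforward diagrammatic calculations together with the description of $y_{\Bi_\xi,\kappa_\xi}$ recalled after Proposition \ref{prop:add-embed}.
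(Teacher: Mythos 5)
Your proposal is correct and follows essentially the same route as the paper: both arguments rest on Proposition \ref{prop:add-embed} (the double centralizer realization of $\alg^\bla$ inside $\alg^\la\modu$), Proposition \ref{prop:stringy} (stringy triples suffice up to Morita equivalence), and the identification $y_{\Bi_{\xi},\kappa_{\xi}}=\psi_{\mathfrak{t}^{\xi}\mathfrak{t}^{\xi}}$, with the Hu--Mathas side handled by their result that projectives over $\mathcal{S}^\la_m$ form the additive subcategory generated by the same right ideals. The final "bookkeeping" you flag is already disposed of by the literal equality of the elements $y_{\Bi_{\xi},\kappa_{\xi}}$ and $\psi_{\mathfrak{t}^{\xi}\mathfrak{t}^{\xi}}$ (so the right ideals, hence the graded endomorphism algebras, coincide on the nose), which is exactly how the paper closes the argument.
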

If we considered the case where $\fg=\mathfrak{sl}_\infty$ (thought of
as the Kac-Moody algebra of the $A_\infty$-quiver), then we could say
that $\cata^\bla$ is simply equivalent to $\oplus_m\mathcal{S}^\la_m\modu$.
\begin{proof}
  By \cite[4.35]{HMQ},  the graded category of projectives in a block
  of Hu and Mathas's algebra
  is equivalent to an additive subcategory of $\alg^\la\modu$.  By
  Proposition \ref{prop:add-embed}, the graded category of projectives in each weight space of
  $\cata^\bla$ is also equivalent to such a subcategory.  Thus, we
  need only show that these subcategories coincide.  

Each block of $\mathcal{S}^\la_m$ is the sum of images of the
idempotents $e(\Bi)$ where $\Bi$ ranges over all integer sequences
with a fixed number $m_i$ of occurrences of $i$.  As long as $m_i$ is
only non-zero for $1\leq i \leq n-1$, we can associate to this
multiplicity data a weight $\mu=\la-\sum_im_i\al_i$.  We wish to
show that this block is equivalent to $\cata^\bla_\mu$.  Let $m=\sum m_i$.

The image of projective modules over $\mathcal{S}^\la_m$ is the subcategory
additively generated by $\psi_{\mathfrak{t}^\xi
  \mathfrak{t}^\xi}\alg^\la=y_{\Bi_\xi,\kappa_\xi}\alg^\la$ as we
range over all multipartitions with $m$ boxes fitting inside the correct
$\pi_i\times (n-\pi_i)$ boxes.  These are the same as the images of
the projectives $P_{\Bi_\xi}^{\kappa_\xi}$ under the functor
$\Hom(P^0,-)$.  By Proposition \ref{prop:stringy}, every
indecomposable projective over $\alg^\bla_\mu$ is a summand of a
unique one of these modules, so those which have weight $\mu$ already
additively generate the image of the $\alg^\bla_\mu\mpmod$ in
$\alg^\la_\mu\modu$.  Thus, that image coincides with the corresponding
image for the quiver Schur algebra.
\end{proof}

\section{Braiding functors}
\label{sec:braid-rigid}

\subsection{Braiding}
\label{sec:braiding}

Recall that the category of integrable $U_q(\fg)$ modules (of type I)
is a {\bf braided category}; that is, for every pair of representations
$V, W$, there is a natural isomorphism $\sigma_{V,W}\colon V\otimes
W\to W\otimes V$ satisfying various commutative diagrams (see, for
example, \cite[5.2B]{CP}, where the name ``quasi-tensor category'' is
used instead).  This braiding is described in terms of an $R$-matrix
$R\in \widehat{U(\fg)\otimes U(\fg)}$; here the hat denotes the
completion of the tensor
square with respect to the kernels of finite dimensional
representations, as usual.

As we mentioned earlier, we were left at times with
difficult decisions in terms of reconciling the different conventions
which have appeared in previous work.  One which we seem to be forced
into is to use the opposite $R$-matrix from that usually chosen (for
example in \cite{CP}), which would usually be denoted $R^{21}$. Thus,
we must be quite careful about matching formulas with references such
as \cite{CP}.

Our first task is to describe the braiding in terms of an explicit bimodule
$\bra_{\si}$ attached to each braid. We will now define bimodules
which we can use as building blocks for these.

Fix a permutation $w\in S_\ell$.
\begin{defn}
  A $w$-Stendhal diagram is a collection of curves which form a
  Stendhal diagram {\it except} that the red strands read from {\em
    top} to {\em bottom} trace out a reduced string diagram of the permutation
  $w$ (that is, one where no two strands cross twice). 

We'll draw these with the crossing of red strands given by an
over-crossing to remind the reader that ultimately these will define
the bimodules for positive braids.
\end{defn}

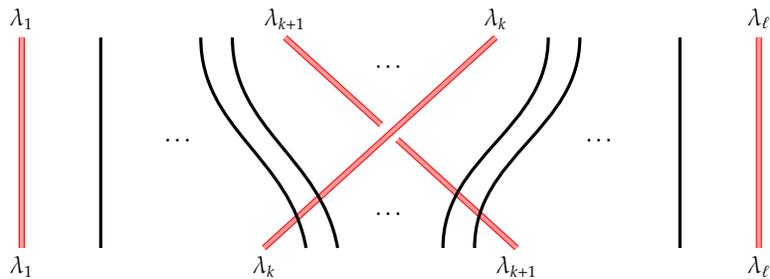
\begin{figure}[ht]
  \begin{equation*}
    \begin{tikzpicture}
      [very thick,scale=1.4]
      \usetikzlibrary{decorations.pathreplacing} \draw[wei] (-3.5,-1)
      -- +(0,2) node[at start,below]{$\la_1$} node[at
      end,above]{$\la_1$}; \draw (-2.75,-1) -- +(0,2); \node at
      (-2,0){$\cdots$}; \node at (2,0){$\cdots$}; \draw[wei] (3.5,-1)
      -- +(0,2) node[at start,below]{$\la_\ell$} node[at
      end,above]{$\la_\ell$}; \draw (2.75,-1) -- +(0,2); \draw[wei]
      (1.2,-1) -- (-1,1) node[at start,below]{$\la_{k+1}$} node[at
      end,above]{$\la_{k+1}$} node[pos=.55,fill=white,circle]{}; \draw[wei]
      (-1.2,-1) -- (1,1) node[at start,below]{$\la_{k}$} node[at
      end,above]{$\la_{k}$}; \draw (.5,-1) to[in=-90, out=90]
      (1.5,1); \draw (.8,-1) to[in=-90, out=90] (1.8,1); \draw
      (-.5,-1) to[in=-90, out=100] (-1.5,1); \draw (-.8,-1) to[in=-90,
      out=100] (-1.8,1); \node at (0,.7){$\cdots$}; \node at
      (0,-.7){$\cdots$};
    \end{tikzpicture}
  \end{equation*}
  \caption{An example of a $w$-Stendhal diagram for $w=(k,k+1)$.}
\end{figure}

We can compose $w$-Stendhal diagrams with usual Stendhal diagrams.
Unlike in the usual case, the triples at the top and bottom of the
diagram needn't have the same sequence of red strands; instead, the
sequences $\bla$ and $\bla'$ from the top and bottom must differ by
the permutation $\bla'=w\cdot \bla$.  

\begin{defn}
  Let {\it $\doubletilde{B}_{w}$} denote the formal span of
  $w$-Stendhal diagrams over $\K$.  We can consider this as a bimodule over
  {\it $\doubletilde{T}$} using composition on the left and right
  (that is, on the top and bottom of the diagram).
\end{defn}
We grade $w$-Stendhal diagrams much the same as usual Stendhal
diagrams, but with a red/red crossing with labels $\la$ and $\la'$
given degree $-\langle\la,\la'\rangle$. Annoyingly, this is typically
not an integer.  If the Cartan matrix of $\fg$ is invertible, then
this will be an integer divided by its determinant.  If the Cartan
matrix is not invertible, then this can be any complex number.  To
avoid trouble from now on, we'll consider the categories
$\cata^\bla_\C$ and $\cat^\bla_\C$ of modules graded by $\C$, not by
$\Z$.

\begin{defn}\label{bra-def}
  Let $\tilde{\bra}_{w}$ be the quotient of {\it
    $\doubletilde{B}_{w}$} by:
\begin{itemize}
\item All local relations of $\tilde{T}$, including
  planar isotopy. That is, we impose the relations of
   (\ref{first-QH}--\ref{triple-smart}) and  from equations
  (\ref{red-triple-correction}-\ref{cost}), but not the relations
  killing violating strands.
\item The relations (along with their mirror images):
\newseq
  \begin{equation*}\label{side-dumb}\subeqn
    \begin{tikzpicture}
      [very thick,scale=1,baseline] \usetikzlibrary{decorations.pathreplacing}
      \draw[wei] (1,-1) -- (-1,1) node[at start,below]{$\la_k$}
      node[at end,above]{$\la_k$} node[midway,fill=white,circle]{};
      \draw[wei] (-1,-1) -- (1,1) node[at start,below]{$\la_{k+1}$}
      node[at end,above]{$\la_{k+1}$}; \draw (0,-1)
      to[out=135,in=-135] (0,1); \node at (3,0){=}; \draw[wei] (7,-1)
      -- (5,1) node[at start,below]{$\la_k$} node[at
      end,above]{$\la_k$} node[midway,fill=white,circle]{}; \draw[wei] (5,-1)
      -- (7,1) node[at start,below]{$\la_{k+1}$} node[at
      end,above]{$\la_{k+1}$}; \draw (6,-1) to[out=45,in=-45] (6,1);
    \end{tikzpicture}.
  \end{equation*}
  \begin{equation*}\label{top-dumb}\subeqn
    \begin{tikzpicture}
      [very thick,scale=1,baseline] \usetikzlibrary{decorations.pathreplacing}
      \draw[wei] (1,-1) -- (-1,1) node[at start,below]{$\la_k$}
      node[at end,above]{$\la_k$} node[midway,fill=white,circle]{};
      \draw[wei] (-1,-1) -- (1,1) node[at start,below]{$\la_{k+1}$}
      node[at end,above]{$\la_{k+1}$}; \draw (1.8,-1)
      to[out=145,in=-20] (-1,-.2) to[out=160,in=-80] (-1.8,1); \node
      at (3,0){=}; \draw[wei] (7,-1) -- (5,1) node[at
      start,below]{$\la_k$} node[at end,above]{$\la_k$}
      node[midway,fill=white,circle]{}; \draw[wei] (5,-1) -- (7,1) node[at
      start,below]{$\la_{k+1}$} node[at end,above]{$\la_{k+1}$}; \draw
      (7.8,-1) to[out=100,in=-20] (7,.2) to[out=160,in=-35] (4.2,1);
    \end{tikzpicture}.
  \end{equation*}
  \begin{equation*}\label{triple-point-red}\subeqn
    \begin{tikzpicture}
      [very thick,scale=1,baseline] \usetikzlibrary{decorations.pathreplacing}
      \draw[wei] (1,-1) -- (-1,1) node[at start,below]{$\la_{k+1}$}
      node[at end,above]{$\la_{k+1}$};
 \draw[white,line width=7pt] (0,-1) .. controls (1,0) .. (0,1);
 \draw[wei] (0,-1) .. controls (1,0) .. (0,1) node[at start,below]{$\la_{k}$}
      node[at end,above]{$\la_{k}$};
  \draw[white,line width=7pt] (-1,-1) -- (1,1);
      \draw[wei] (-1,-1) -- (1,1) node[at start,below]{$\la_{k+1}$}
      node[at end,above]{$\la_{k+1}$}; 
\node
      at (3,0){=};       \draw[wei] (7,-1) -- (5,1) node[at start,below]{$\la_{k-1}$}
      node[at end,above]{$\la_{k-1}$};
 \draw[white,line width=7pt] (6,-1) .. controls (5,0) .. (6,1);
 \draw[wei] (6,-1) .. controls (5,0) .. (6,1) node[at start,below]{$\la_{k}$}
      node[at end,above]{$\la_{k}$};
  \draw[white,line width=7pt] (5,-1) -- (7,1);
      \draw[wei] (5,-1) -- (7,1) node[at start,below]{$\la_{k+1}$}
      node[at end,above]{$\la_{k+1}$}; 
    \end{tikzpicture}.
  \end{equation*}
\end{itemize}
As in the usual case, we call a $w$-Stendhal diagram {\bf violated} if
for some $y$-value, the leftmost strand is black. 
Let ${\bra}_{w}$ be the quotient of $\tilde{\bra}_{w}$ by the
sub-bimodule spanned by violated $w$-Stendhal diagrams.
\end{defn}

We can define a basis of $\tilde{\bra}_{w}$ much like that of
$\tilde{T}$.  We fix a reduced expression of $w$, and only consider
diagrams where the string diagram formed by the red strands follows this
expression.  For fixed bottom triple $(\Bi,\bla,\kappa)$, we range
over permutations $v\in S_n$  and top
triples of the form $(v\cdot \Bi,w^{-1}\cdot \bla,\kappa')$; for each
 such $v$ and $\kappa'$, we let $\psi_{v,\kappa'}e(\Bi,\bla,\kappa)$ be
 an arbitrarily chosen diagram with no dots such that the black
 strands give a string diagram for a reduced decomposition of $v$.

 \begin{prop}\label{B-basis}
   The set \[B_w =\{\psi_{v,\kappa'}e(\Bi,\bla,\kappa)y^{\bf a}|v\in
   S_n,\Bi\in \Gamma^n,
   \kappa'\colon [1,\ell]\to [0,n], {\bf a}\in \Z_{\geq 0}^n\}\] is a
   basis of $\tilde{\bra}_{w}$.
 \end{prop}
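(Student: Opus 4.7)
The plan is to adapt the strategy of Proposition \ref{basis}, which establishes the analogous basis theorem for $\tilde{T}^{\bla}$, to the present setting. Both parts of that argument---a spanning argument using local moves, and a linear independence argument via a polynomial representation---have natural extensions once one accounts for the new red/red crossings governed by relations (\ref{side-dumb})--(\ref{triple-point-red}).

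First I would prove spanning by induction on the number of crossings in a $w$-Stendhal diagram, mirroring Lemma \ref{span}. The key auxiliary fact, paralleling Lemma \ref{modulo-smaller}, is that any isotopy, any passage through a triple point (black/black/black, black/black/red, or the new black/red/red and red/red/red of (\ref{triple-point-red})), and any relocation of dots past a crossing changes a diagram only by terms with strictly fewer crossings. Granted this, one pulls all dots to the bottom, notes that the sequence of red-strand crossings is determined up to isotopy and triple-point moves by a reduced expression for $w$, and hence can be brought to the preferred reduced expression; next one notes that for fixed $(\Bi,\bla,\kappa)$ at the bottom and $(v\cdot\Bi,w^{-1}\bla,\kappa')$ at the top, any two ways of realizing the black permutation $v$ as a reduced expression differ by (black/black and black/red) triple-point switches, so the diagram agrees with $\psi_{v,\kappa'}e(\Bi,\bla,\kappa)$ modulo fewer-crossing terms. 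Finally, reducing the number of crossings one at a time and applying the inductive hypothesis, the result is a $\K$-linear combination of elements of $B_w$.

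For linear independence I would construct a faithful action of $\tilde{\bra}_w$ on a polynomial module generalising $\cP_n$ from Lemma \ref{action}. The new module $\cP_n^w$ is free over $\K[Y_1,\dots,Y_n]$ with basis $\varepsilon(\Bi,\bla',\kappa)$ indexed by all Stendhal triples whose red sequence $\bla'$ is some permutation of $\bla$. The action of black dots, of black/black crossings, and of black/red crossings is given exactly as in Lemma \ref{action}; a red/red crossing $(k,k+1)$ acts by sending $\varepsilon(\Bi,\bla',\kappa)\mapsto \varepsilon(\Bi,s_k\bla',\kappa)$ (permuting the underlying red sequence) together with an overall scalar of $1$, i.e.\ it is the identity on the level of polynomials. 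The relations (\ref{side-dumb}), (\ref{top-dumb}) are immediate since red crossings act trivially on polynomials, (\ref{triple-point-red}) follows because the reduced expression of a permutation is well-defined up to braid moves, and the interaction with (\ref{red-triple-correction})--(\ref{cost}) is exactly the computation carried out in the proof of Lemma \ref{action}. Since $\tilde{\bra}_w$ surjects onto $\End_{\K}(\cP_n^w)$-valued operators that, restricted to the $\psi_{v,\kappa'}$-variables, recover the faithful representation of the appropriate tensor product of KLR algebras (via the map $\wp$ of Corollary \ref{wp-injective}), one deduces that no nontrivial combination of $B_w$ acts trivially.

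The main obstacle I anticipate is verifying that the proposed action respects (\ref{triple-point-red}) and its compatibility with the KLR triple-point relation \eqref{triple-smart} when a black strand is braided past a red/red crossing. This is the one place where the argument is genuinely new: one must check that transporting a black strand from the left of a red crossing to its right, through a triple point, agrees with the two routes going either to the NW or SE of the crossing, and that any dot-counting discrepancies cancel because the red crossing contributes no polynomial factor. Once this compatibility is established, the argument at the end of Proposition \ref{basis}---picking a maximal-in-Bruhat $v$ in a putative relation, straightening, and appealing to the linear independence of $\{\psi_{w',0}\By^{\mathbf{a}'}e(\Bi,0)\}$ established in the proof of that proposition---carries over verbatim to show the coefficient of every element of $B_w$ vanishes.
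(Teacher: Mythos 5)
Your proposal is correct and follows essentially the same route as the paper: spanning by reduction of crossings using the sliding relations (\ref{side-dumb})--(\ref{top-dumb}) and the red triple-point move (\ref{triple-point-red}) to handle changes of reduced word, and linear independence by straightening with $\dot\theta_{\kappa'}(-)\theta_\kappa$ to reduce to the known basis of the KLR algebra $R$. The paper even explicitly mentions your polynomial representation of $\oplus_{w\in S_\ell}\tilde\bra_w$ (with red/red crossings acting by the identity) as one way to justify the injectivity of $R\to\tilde\bra_w$, so the compatibility check you flag as the "main obstacle" is exactly the routine verification the paper relies on.
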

 \begin{proof}
   This is essentially the same as the proof of Proposition
   \ref{basis}.  The proof that the diagrams
   $\psi_{v,\kappa'}e(\Bi,\bla,\kappa)y^{\bf a}$ span all diagrams
   that have the same reduced decomposition of $w$ is exactly the
   same, using the relations (\ref{side-dumb}) and (\ref{top-dumb})
   to slide through red crossings.  So, now we must check that the
   span of the vectors $\psi_{v,\kappa'}e(\Bi,\bla,\kappa)y^{\bf
     a}$ will not change if we change the reduced word for $w$.

   Any two reduced words are related by switching commuting crossings
   and braid moves.  Obviously, if two red crossings commute, then we can isotope one
   past the other with no problem.  Thus, we need only to check that
   the vectors where the red strands trace out one reduced word also
   span the space where they trace out one that differs by a braid
   move.  However, if 3 red strands form a triangle, we can always
   choose $\psi_{v,\kappa'}e(\Bi,\bla,\kappa)$ so that its strands
   avoid this triangle.  In this case, we can apply
   (\ref{triple-point-red}) to get all diagrams where the crossings
   are in the opposite order.

   The proof of linear independence is also similar. 
Applying the product
$\boldsymbol{\theta}(a)=\dot{\theta}_{\kappa'}a\theta_\kappa$ to an
sum of diagrams that is zero in $\tilde{\bra}_w$
results in a relation in $R$, as is easily checked on a case-by-case
basis.  Thus, the map of $R\to \tilde\bra_w$ placing a KL diagram to the
right of red strands tracing out $w$ is injective.  One can also check
this by defining a ``polynomial'' representation of $\oplus_{w\in
  S_\ell}\tilde\bra_w$, where a red/red crossing acts by the identity on the
underlying polynomial rings.  

Therefore, $\boldsymbol{\theta} $ defines a linear map $\tilde{\bra}_w\to R$.
Applying $\boldsymbol{\theta}$ to any element of $B_w$ gives an
element of Khovanov and Lauda's basis of $R$ modulo terms with fewer
crossings.  Furthermore, if $\kappa$ and $\kappa'$ are fixed, no two
elements of $B_w$ yield the same one.  Thus, the linear independence
of Khovanov and Lauda's basis shows the linear independence of $B_w$
as well.
 \end{proof}

 \begin{lemma}\label{lem:independent}
   If $\ell(ww')=\ell(w)+\ell(w')$, then $\tilde\bra_{ww'}\cong
   \tilde{\bra}_{w}\otimes_{\tilde{T}}\tilde{\bra}_{w'}$ and $\bra_{ww'}\cong
   {\bra}_{w}\otimes_{{T}}{\bra}_{w'}$.
 \end{lemma}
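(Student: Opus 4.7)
The plan is to construct the comparison map by vertical stacking and then to verify it is a bijection on the natural bases from Proposition \ref{B-basis}. First, since a $w$-Stendhal diagram stacked on top of a $w'$-Stendhal diagram is by definition a $ww'$-Stendhal diagram (the reduced words concatenate to a reduced word for $ww'$ precisely under the hypothesis $\ell(ww')=\ell(w)+\ell(w')$), there is a natural composition map
\[
\mu\colon \tilde{\bra}_{w}\otimes_{\tilde{T}}\tilde{\bra}_{w'}\longrightarrow \tilde{\bra}_{ww'}.
\]
The relations defining $\tilde{\bra}_w$ and $\tilde{\bra}_{w'}$ are all local and sit among the relations defining $\tilde{\bra}_{ww'}$, so $\mu$ is a well-defined map of $\tilde T$-bimodules, and one checks immediately from the definitions that $\mu$ is a bimodule map.

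Next, I would prove that $\mu$ is an isomorphism by choosing matching bases on both sides. Fix reduced expressions for $w$ and $w'$ and let their concatenation be the chosen reduced expression for $ww'$; this is legitimate precisely by the length-additivity hypothesis. Applying Proposition \ref{B-basis} with these choices, I would observe that each basis element of $\tilde{\bra}_{ww'}$ can be written (by an isotopy pulling all red/red crossings of $w$ above those of $w'$, with black strands and dots sliding freely across the horizontal slice, using the relations of Definition \ref{bra-def}) as a vertical composition of a chosen basis element of $\tilde{\bra}_{w}$ on top and one of $\tilde{\bra}_{w'}$ on the bottom, for a unique ``middle'' triple. Conversely, the tensor product $\tilde{\bra}_{w}\otimes_{\tilde{T}}\tilde{\bra}_{w'}$ is spanned over $\K$ by such pairs with matching middle triple, and one uses the basis description of Proposition \ref{B-basis} together with the identification of the diagonal $\tilde T$-bimodule action to see that these pairs are linearly independent. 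Matching bases gives that $\mu$ is an isomorphism.

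For the untilded statement, let $K_w\subset \tilde{\bra}_w$ denote the submodule spanned by violated $w$-Stendhal diagrams, so $\bra_w=\tilde{\bra}_w/K_w$. The composition map clearly sends $K_w\otimes\tilde{\bra}_{w'}+\tilde{\bra}_{w}\otimes K_{w'}$ into $K_{ww'}$, since a diagram whose top or bottom half is violated is itself violated. Thus $\mu$ descends to a surjection $\bar\mu\colon \bra_{w}\otimes_{T}\bra_{w'}\to \bra_{ww'}$. To show $\bar\mu$ is injective, I would argue that the surjective map
\[
\tilde{\bra}_{w}\otimes_{\tilde{T}}\tilde{\bra}_{w'}\;\Big/\;\bigl(K_w\otimes\tilde{\bra}_{w'}+\tilde{\bra}_{w}\otimes K_{w'}\bigr)\;\longrightarrow\;\bra_{ww'}
\]
has trivial kernel, by showing that any violated $ww'$-Stendhal diagram $d$ can, after composition with the isomorphism $\mu^{-1}$ and straightening, be rewritten as a sum of tensors in which one tensor factor is violated. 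This is the same kind of argument as in Lemma \ref{add-red} and Proposition \ref{split-strands}: a violating black strand at some horizontal slice $y=c$ can be pushed either upward or downward past the entire block of red crossings (using isotopy and the local relations (\ref{side-dumb}--\ref{triple-point-red}) together with Lemma \ref{modulo-smaller} to correct by lower-crossing terms), until it becomes violating either in the top $w$-half or in the bottom $w'$-half.

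The main obstacle is this last step: tracking that the ``correction terms with fewer crossings'' produced by moving a violating strand across a red/red crossing remain violated. Once one verifies that moving a violating strand past a red/red crossing or a red/black triple point, via (\ref{side-dumb})--(\ref{triple-point-red}), produces only diagrams that still contain a violating strand (possibly in a different position but with the same violating character), the induction on the number of red/red crossings between the violating strand and the nearest horizontal boundary of the appropriate half of the diagram closes, and the injectivity of $\bar\mu$ follows.
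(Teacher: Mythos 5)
Your proposal is correct and follows essentially the same route as the paper: the composition map, surjectivity and injectivity via the basis of Proposition \ref{B-basis} with a concatenated reduced expression, and then descent to the violated quotients by checking that the isomorphism matches the violated ideals on both sides (your "injectivity of $\bar\mu$" is exactly the paper's construction of the inverse map $\bra_{ww'}\to\bra_w\otimes_T\bra_{w'}$). The final step you flag as the main obstacle is real but handled just as you sketch — the violating slice of a $ww'$-diagram lies either above or below the cut, and the local corrections from Lemma \ref{modulo-smaller} do not disturb a violation occurring away from them — and the paper in fact treats it more tersely than you do.
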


\begin{proof}
We have a map $\tilde{\bra}_{w}\otimes_{\tilde{T}}\tilde{\bra}_{w'}\to
\tilde\bra_{ww'}$ given by composition; thus we wish to show that this
map is an isomorphism.

First, we note that this map is a surjection.  We can choose reduced expressions of  $w$ and $w'$ and concatenate these
to get one for $ww'$.  Thus, we can choose the element
$\psi_{v,\kappa'}e(\Bi,\bla,\kappa)$ so that the diagram above
$y=\nicefrac 12$ has red strands permuted by $w'$ and below
$y=\nicefrac 12$ has red strands permuted by $w$.  This writes
$\psi_{v,\kappa'}e(\Bi,\bla,\kappa)$ as the image of the tensor of the
2 halves of the diagram.   

Now assume that we have an element $p$ of the tensor product which is sent
to 0 in $\tilde\bra_{ww'}$.  We can think of each pure tensor of
$w$- and $w'$-Stendhal diagrams as a $ww'$-Stendhal diagram by
composition.  The relations in $\tilde{\bra}_{ww'}$ are the usual
local relations of Stendhal diagrams, and the relations in
$\tilde{\bra}_{w}\otimes_{\tilde{T}}\tilde{\bra}_{w'}$ are those
relations applied to pictures in one of the two halves of the tensor
(i.e. above or below $y=\nicefrac 12$) and the fact that one can
isotope diagrams from one side of $y=\nicefrac 12$ to the other.  The
image of the element $p$ is thus a sum of $ww'$-Stendhal diagrams
which can be sent to 0 using our relations.  In fact, as argued in
Proposition \ref{B-basis}, we never need to use the relation
(\ref{triple-point-red}) to write this sum as a sum of basis vectors
(and thus to show that it is 0).  Any other relation can be pushed
above or below the line $y=\nicefrac 12$ so that it is a relation in
the tensor product.  Thus, the map is injective.

Now we turn to considering the tensor product
${\bra}_{w}\otimes_{{T}}{\bra}_{w'}$; this obviously receives a map
from $\tilde{\bra}_{w}\otimes_{\tilde{T}}\tilde{\bra}_{w'}\cong 
\tilde\bra_{ww'}$, and this map sends violated diagrams to tensors of
diagrams where one is violated.  Thus, it induces a map $\bra_{ww'}\to
{\bra}_{w}\otimes_{{T}}{\bra}_{w'}$ which is inverse to the obvious
composition map.  This shows that $\bra_{ww'}\cong
{\bra}_{w}\otimes_{{T}}{\bra}_{w'}$
\end{proof}

\begin{defn}
  Let $\mathbb{B}_{w}$ be the functor
  $-\overset{L}\otimes\bra_{w}:D^-(\cata_\C^\bla)\to
  D^-(\cata_\C^{w\cdot\bla})$.
We'll use $\mathbb{B}_{j}$ to denote $\mathbb{B}_{s_j}$.
\end{defn}
Here, $D^-(\cata_\C^\bla)$ refers to the bounded above derived
category of $\cata_\C^\bla$;  {\it a priori}, the functor
$\mathbb{B}_{k}$ does not obviously preserve the subcategory
$\cat_\C^\bla\subset D^-(\cata_\C^\bla)$. In order to show this, and
certain other important properties of this functor, we require some
technical results.  Of course, the image of a projective
$P^\kappa_\bla$ is easy to understand in diagrammatic  terms:
$\mathbb{B}_{w}(P^\kappa_\bla)=e(\Bi,\kappa)\bra_{w}$ is given by the
span of $w$-Stendhal diagrams with the top fixed to be the idempotent
$e(\Bi,\kappa)$, and with $T^{w\cdot \bla}$ acting by attaching at the
bottom (thought of as a 1-term complex).

\begin{prop}\label{bra-commute}
The functors $\mathbb{B}_{k}$ commute with all 1-morphisms in
$\tU$; in fact, $\mathbb{B}_{k}$ is a strongly equivariant functor
$D^-(\cata_\C^\bla)\to D^-(\cata_\C^{s_k\bla})$.  
\end{prop}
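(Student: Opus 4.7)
The goal is to construct, for each $1$-morphism $u\colon \mu\to \nu$ of $\tU$, a natural bimodule isomorphism $c_u\colon \bra_w \otimes_{T^{w\bla}} \beta_u \xrightarrow{\sim} \beta_u \otimes_{T^\bla} \bra_w$, and then to verify that these isomorphisms intertwine the action of $2$-morphisms in the sense of strong equivariance. My plan is to realize both sides as quotients of a common ambient bimodule built from a diagrammatic extension of $w$-Stendhal diagrams.

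In parallel with the passage from $\alg^\bla$ to $D\alg^\bla$ in Section~\ref{sec:double-tens-prod}, I would define a \emph{double $w$-Stendhal diagram} in which the red strands still trace out a fixed reduced word for $w$ (drawn with overcrossings), but black strands are unconstrained in orientation and may self-intersect or form cups and caps. Imposing the relations of $\cT$, the red-crossing relations (\ref{side-dumb})--(\ref{triple-point-red}), and the violating relation produces a double bimodule $D\bra_w$; restricting to the idempotent selecting a $1$-morphism $u$ on the right yields a bimodule $D\bra_w^u$ analogous to $\beta_u'$. The decisive feature of (\ref{side-dumb})--(\ref{top-dumb}) is that they are exact: black strands, dots, cups, and caps slide across a red crossing with no correction terms. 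Consequently any double $w$-Stendhal diagram can be straightened so that its $u$-part lies entirely above, or entirely below, the red crossings, yielding the two identifications $\beta_u \otimes_{T^\bla} \bra_w \cong D\bra_w^u \cong \bra_w \otimes_{T^{w\bla}} \beta_u$. Bijectivity of these identifications follows from a spanning/basis argument in the style of Proposition~\ref{B-basis} and Lemma~\ref{lem:independent}: the basis $B_w$ can be produced with the $u$-portion pre-positioned on either side, and the two resulting normal forms span the same quotient bimodule.

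Strong equivariance is then essentially automatic. A $2$-morphism $\phi\colon u\to v$ acts by attaching a KL diagram in the top-right region, disjoint from the strands involved in straightening, so locality of the relations gives $c_v \circ (\beta_\phi \otimes \id) = (\id \otimes \beta_\phi)\circ c_u$, together with the coherences with horizontal composition needed for strong equivariance. The main obstacle I anticipate is the bookkeeping of $D\bra_w$, and in particular the verification that the two straightened forms define the \emph{same} quotient bimodule rather than merely both surjecting from it; this is analogous to the injectivity step in Lemma~\ref{lem:independent} and proceeds by a careful case analysis of how each generating $2$-morphism of $\tU$ (a cup, cap, dot or black crossing) interacts with a red crossing, exploiting in each case the error-free character of (\ref{side-dumb})--(\ref{top-dumb}) and the usual sliding relations (\ref{dumb})--(\ref{red-dot}). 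A secondary point is to confirm that $\mathbb{B}_w$ restricts from $D^-(\cata_\C^\bla)$ to the subcategory $\cat_\C^\bla$: the basis of Proposition~\ref{B-basis} shows that $\bra_w$ is flat enough as a right $T^\bla$-module, and finitely generated in each graded degree, so the derived tensor product preserves the requisite finiteness conditions.
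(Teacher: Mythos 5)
Your proposal is correct and follows essentially the same route as the paper: the key mechanism in both is that the relations (\ref{side-dumb})--(\ref{top-dumb}) let black strands, dots, cups and caps slide across the red/red crossing with no correction terms, so the $u$-part of a diagram can be moved from below the crossing to above it (identifying $\beta_u\otimes\bra_{s_k}$ with $\bra_{s_k}\otimes\beta_u$ via the basis $B_w$), and strong equivariance follows because $2$-morphisms attach at the top right, away from the sliding. The only cosmetic difference is that the paper first treats $\eE_i$ directly and obtains $\eF_i$ by biadjunction before extending to general $u$, rather than packaging everything in an ambient ``double $w$-Stendhal'' bimodule as you do.
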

\begin{proof}
Both $\fF_i$ and $\fE_i$ can be written as tensor product with
bimodules $\beta_{\eF_i}$ and $\beta_{\eE_i}$; since both these
functors are exact and preserve projectives, these bimodules are
projective as left and right modules.  Thus, the desired isomorphism
of functors will be yielded by an isomorphism
$\beta_{\eE_i}\otimes\bra_{s_k}\cong \bra_{s_k}\otimes \beta_{\eE_i}$;
the same result for $\eF_i$ will follow by biadjunction.  The bimodule 
$\bra_{s_k}\otimes \beta_{\eE_i}$ can be identified with a subspace of
$\bra_{s_k}$ where we require that the rightmost strand at the bottom
is black and colored $i$; the right (that is, bottom) action of
$T^{s_i\bla}$ ignores this strand and acts on the others.

The tensor product $\beta_{\eE_i}\otimes \bra_{s_k}$ maps injectively
into this space; its image is that of $s_k$-Stendhal diagrams where
the strand at far right at the bottom (i.e. that which makes the cup)
cannot pass below the red/red crossing, since we must have pulled this to
the side before adding the red/red crossing.   But this map is easily
seen to be surjective, since the vector
$\psi_{v,\kappa'}e(\Bi,\bla,\kappa)$ can be chosen to never pass a
black strand under the red/red crossing, using the relation 
\ref{top-dumb}.

This proves that for any 1-morphism $u$, we have an isomorphism
$\beta_{u}\otimes\bra_{s_k}\cong \bra_{s_k}\otimes \beta_{u}$; if we
picture $\beta_u$ as in \eqref{Fu-schematic}, the former module comes
from putting the red/red crossing at the bottom of the diagram, and
the latter from putting it at the top left.  The isomorphism is simply
using the relation (\ref{top-dumb}) to slide the crossing from the top to the bottom or {\it vice versa}.
Since the action of 2-morphisms is by attachment at top right, it does
not matter whether we do this before or after we slide the crossing.
This shows the strong equivariance of this functor.
\end{proof}
Note that $P^0_\Bi$ is the image of $P^0_{\emptyset}=\alg^\bla_\la$ under the 1-morphism
in $\tU$ given $-\Bi$.  Abusing notation to let $P^0_\Bi$ denote the
corresponding module over both $\alg^\bla$ and $\alg^{s_i\cdot \bla}$,
this shows that:
\begin{cor}\label{P0i}
  $\mathbb{B}_{i}P^0_\Bi\cong P^0_\Bi(\langle \la_i,\la_{i+1}\rangle)$.
\end{cor}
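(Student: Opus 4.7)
The plan is to deduce this from Proposition~\ref{bra-commute} together with a direct computation of the base case $\Bi = \emptyset$. Since $P^0_\Bi$ is projective over $\alg^\bla$, the derived tensor product defining $\mathbb{B}_i$ coincides with the ordinary tensor product, and unraveling definitions gives $\mathbb{B}_i P^0_\Bi = e(\Bi,0)\bra_{s_i}$ as a right $\alg^{s_i\bla}$-module.

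First I would handle the case $\Bi = \emptyset$. Here $P^0_\emptyset \cong \K$ is the one-dimensional highest weight module, the same whether viewed over $\alg^\bla$ or $\alg^{s_i\bla}$. The space $e(\emptyset,0)\bra_{s_i}$ is spanned by $s_i$-Stendhal diagrams with no black strands at top. Using the relations \eqref{side-dumb} and \eqref{top-dumb}, any such diagram can be isotoped so that the single red/red crossing lies below all black strands, yielding an identification of $e(\emptyset,0)\bra_{s_i}$ with the free right $\alg^{s_i\bla}$-module on the ``pure crossing'' diagram $\mathbf{x}$; equivalently, this is the content of Proposition~\ref{B-basis} for the reduced word $s_i$. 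Since $\mathbf{x}$ has degree $-\langle\la_i,\la_{i+1}\rangle$, the convention $A(k)_n = A_{k+n}$ gives $\mathbb{B}_i P^0_\emptyset \cong P^0_\emptyset(\langle\la_i,\la_{i+1}\rangle)$.

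Next, for general $\Bi = (i_1,\dots,i_n)$, note that $P^0_\Bi \cong \fF_{-i_n}\cdots\fF_{-i_1} P^0_\emptyset$, and this identification is the same whether made over $\alg^\bla$ or over $\alg^{s_i\bla}$ (which is precisely the remark preceding the corollary). Applying the strong equivariance of $\mathbb{B}_i$ from Proposition~\ref{bra-commute}, and using that grading shifts commute with the functors $\fF_j$, we obtain
\[
\mathbb{B}_i P^0_\Bi \cong \mathbb{B}_i(\fF_{-\Bi} P^0_\emptyset) \cong \fF_{-\Bi}(\mathbb{B}_i P^0_\emptyset) \cong \fF_{-\Bi}\bigl(P^0_\emptyset(\langle\la_i,\la_{i+1}\rangle)\bigr) \cong P^0_\Bi(\langle\la_i,\la_{i+1}\rangle).
\]

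The main technical inputs (the commutation of $\mathbb{B}_i$ with the $\fF_j$ and the basis for $\bra_{s_i}$) are already established, so the only real work is the base case. That in turn reduces to bookkeeping the degree of a red/red crossing, which is $-\langle\la_i,\la_{i+1}\rangle$ by definition — matching the shift in the statement. Thus there is no genuine obstacle; the corollary is essentially a formal consequence of Propositions~\ref{B-basis} and~\ref{bra-commute}.
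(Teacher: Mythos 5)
Your proposal is correct and follows the same route as the paper: the paper derives the corollary immediately from the remark that $P^0_\Bi$ is the image of $P^0_\emptyset$ under a $1$-morphism of $\tU$ together with the strong equivariance of $\mathbb{B}_i$ from Proposition~\ref{bra-commute}, exactly as you do. Your explicit treatment of the base case $\Bi=\emptyset$ (identifying $e(\emptyset,0)\bra_{s_i}$ with the span of the single red/red crossing of degree $-\langle\la_i,\la_{i+1}\rangle$) correctly supplies the origin of the grading shift, which the paper leaves implicit.
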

Let $\bla^{(j)}$ be the sequence of sequences
$(\la_1;\dots;\la_{j-1};\la_j,\la_{j+1};\la_{j+2};\dots;\la_\ell)$;
that is almost all weights are singletons, but $\la_j$ and $\la_{j+1}$
are together in a block.  We consider the category
$\cata_\C^{\bla^{(j)}} :=T^{\la_1}\otimes \cdots\otimes 
T^{(\la_j,\la_{j+1})}\otimes \cdots \otimes T^{\la_\ell}\modu$.  We can define a functor
$\mathbb{B}_j \colon D^-(\cata_\C^{\bla^{(j)}})\to
D^-(\cata_\C^{s_j\bla^{(j)}})$ by derived tensor product
with the bimodule \[T^{\la_1}\boxtimes \cdots \boxtimes
T^{\la_{j-1}}\boxtimes\bra\boxtimes
T^{\la_{j+2}}\boxtimes\cdots\boxtimes T^{\la_{\ell}}\] where $\bra$ is
the bimodule where we switch the two strands labeled with $\la_j$ and $\la_{j+1}$.
\begin{lemma}\label{cross-standard}
  The functor $\mathbb{B}_j$ commutes with the standardization functor
  $\mathbb{S}^{\bla^{(j)}}$.  
\end{lemma}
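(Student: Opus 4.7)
My plan is to prove the commutativity by identifying, up to a natural isomorphism, the two composed bimodules directly and diagrammatically. Both functors are given by (derived) tensor product with an explicit bimodule:
\[
\mathbb{B}_j\circ\mathbb{S}^{\bla^{(j)}}(-)\;=\;-\otimes^L\bigl(S^{\bla^{(j)}}\otimes_{T^{\bla}}\bra_{s_j}\bigr),\qquad
\mathbb{S}^{s_j\bla^{(j)}}\circ\mathbb{B}_j(-)\;=\;-\otimes^L\bigl(\bra_{\mathrm{loc}}\otimes_{\ast}S^{s_j\bla^{(j)}}\bigr),
\]
where $\bra_{\mathrm{loc}}=T^{\la_1}\boxtimes\cdots\boxtimes\bra\boxtimes\cdots\boxtimes T^{\la_\ell}$ is the bimodule used in the definition of $\mathbb{B}_j$ on $\cata_{\C}^{\bla^{(j)}}$, and $\ast$ is the appropriate tensor product of algebras. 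Each of these bimodules has an evident diagrammatic presentation: generators are ``hybrid'' diagrams consisting of a red/red crossing of the $j$-th and $(j+1)$-st red strands glued onto a partial-standardization stratum, the only difference being the vertical position at which the red/red crossing appears, and which permutation of $\bla$ indexes the strata above and below the crossing.

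The central step is then to construct a bimodule isomorphism between the two by sliding the red/red crossing vertically through the standardization stratum. The relations (\ref{side-dumb}), (\ref{top-dumb}), (\ref{triple-point-red}), together with the dot-slide (\ref{red-dot-1}), ensure this slide is free of any correction: a black strand (and any dot on it) can be pushed across a red/red crossing without creating dots, crossings, or changes in the labeling of regions that are not already present. Concretely, starting from a representative of $S^{\bla^{(j)}}\otimes_{T^\bla}\bra_{s_j}$ with the crossing of $\la_j$ and $\la_{j+1}$ at the very bottom, I apply (\ref{top-dumb}) repeatedly to push the crossing upward past every black strand of the partial-standardization stratum until it sits just above it, inside the would-be $(\la_j,\la_{j+1})$ block; this is exactly the diagrammatic form of a generator of $\bra_{\mathrm{loc}}\otimes_\ast S^{s_j\bla^{(j)}}$. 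The inverse map pushes the crossing downward by the same local moves, and the two are mutually inverse by construction.

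To verify well-definedness on the quotients, the key observation is that the red strands being swapped are both \emph{non-separator} for the partial standardization $\bla^{(j)}$ (they lie within a single block), and they remain non-separator for $s_j\bla^{(j)}$ after the swap. Hence a right crossing of a black strand with a separator red strand is neither created nor destroyed by sliding the $\la_j/\la_{j+1}$ crossing across it, so the standardly-violating ideal is respected on both sides. I would make this precise by using the bases supplied by Proposition~\ref{B-basis} for $\bra_{s_j}$ and the basis of Proposition~\ref{basis} adapted to the partial standardization (as in the proof of Proposition~\ref{semi-orthogonal}); the slide then carries basis to basis in a manner that is visibly bijective and equivariant for both the left and right actions. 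Compatibility of the left action with the local braid $\bra$ uses the fact that $\bra$ is literally the sub-bimodule of $\bra_{s_j}$ of diagrams supported inside the middle block.

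For the derived statement, note that $\mathbb{S}^{\bla^{(j)}}$ is exact (Proposition~\ref{standard-exact}), so $S^{\bla^{(j)}}$ is flat as a right $T^{\bla_1}\boxtimes\cdots\boxtimes T^{(\la_j,\la_{j+1})}\boxtimes\cdots\boxtimes T^{\bla_\ell}$-module; the same holds for $S^{s_j\bla^{(j)}}$ on the corresponding side. Thus the derived issues reduce to the behaviour of $\bra_{s_j}$ (equivalently $\bra$) alone, and can be handled by resolving $\bra_{s_j}$ on the non-flat side and applying the above diagrammatic argument term by term. I expect the main obstacle to be the careful bookkeeping needed in the well-definedness check: one must confirm that any relation imposed on $S^{\bla^{(j)}}\otimes_{T^\bla}\bra_{s_j}$ by violated or standardly-violating diagrams is already imposed on $\bra_{\mathrm{loc}}\otimes_\ast S^{s_j\bla^{(j)}}$ after the slide, and vice-versa. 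Because the slide operation has the locality properties of Lemma~\ref{modulo-smaller}, this reduces to a finite case analysis of diagrams where a black strand passes through, or terminates on, the moving red/red crossing, each case being resolved by a single application of (\ref{side-dumb})--(\ref{triple-point-red}).
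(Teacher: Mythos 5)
Your identification of the degree-zero parts of the two compositions is essentially the paper's argument: both are presented by $s_j$-Stendhal diagrams, and the content is that the standardly-violating ideal imposed on the $\la_j$ strand \emph{above} the red/red crossing coincides with the one imposed on the $\la_{j+1}$ strand \emph{below} it, which is checked by pushing strands through the crossing via (\ref{top-dumb}). One caveat: your claim that both swapped red strands are ``non-separator'' is not right --- the left strand of the block $(\la_j,\la_{j+1})$ is exactly the strand on which the partial standardization kills right crossings, and after the braiding this role passes to $\la_{j+1}$; this is precisely why a case analysis (strands originating inside the block versus to its right) is needed rather than the observation that nothing happens.

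The genuine gap is in the derived statement. Writing $\mathbb{B}_j\circ\mathbb{S}^{\bla^{(j)}}(-)=-\otimes^L\bigl(S^{\bla^{(j)}}\otimes_{T^{\bla}}\bra_{s_j}\bigr)$ with an \emph{underived} inner tensor product already presupposes that $\Tor^{T^{\bla}}_{>0}\bigl(\mathbb{S}^{\bla^{(j)}}(P),\bra_{s_j}\bigr)=0$ for $P$ projective, and this is the hard half of the lemma. Exactness of $\mathbb{S}^{\bla^{(j)}}$ only says its bimodule is flat on the \emph{source} side; it says nothing about $\mathbb{S}^{\bla^{(j)}}(P)$ as a right $T^{\bla}$-module, and $\bra_{s_j}$ is not projective as a left $T^{\bla}$-module (it merely has a standard filtration). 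So ``resolving $\bra_{s_j}$ on the non-flat side and applying the diagrammatic argument term by term'' does not give vanishing of the higher Tor groups --- the diagrammatic slide only computes $\Tor^0$. One also cannot invoke the Tor-vanishing between standardly and costandardly filtered modules from the proof of Lemma \ref{pro-sta}, since that argument uses the present lemma in its base case. The paper closes this gap by an induction on the preorder on root functions: comparing $\mathbb{S}^{\bla^{(j)}}(P)$ with a projective $Q$ surjecting onto it, running the long exact sequence \eqref{KQP}, and proving by a separate diagrammatic argument that $H^0(\mathbb{B}_j(K))\to H^0(\mathbb{B}_j(Q))$ is injective (any violated $s_j$-Stendhal diagram can be isotoped so that the violation occurs above the red/red crossing and then cut). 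Your proposal needs an argument of this kind before the bimodule identification implies the statement at the derived level.
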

\begin{proof}
As with the commutation of $\mathbb{B}_j$ with $\tU$, the proof in
terms of naive tensor products is simply identifying the pictures that
describe elements of the tensor product.  So, having fixed a Stendhal
triple at the top, elements of the 0th cohomology of
$\mathbb{B}_j\circ\mathbb{S}^{\bla^{(j)}}$ are just $s_j$-Stendhal
diagrams modulo
\begin{itemize}
\item the usual local relations, 
\item the standardly violating relations
  that kill a right crossing above all left crossings on the red strands
  other than the $\la_{j}$ and $\la_{j+1}$, and 
\item the same relation on the $\la_j$
  strand above the red/red crossing.
\end{itemize}
For
$\mathbb{S}^{\bla^{(j)}}\circ \mathbb{B}_j$, we impose this last relation
on the $\la_{j+1}$ strand {\it below} the red/red crossing (so at
horizontal slices where the $\la_{j+1}$ strand is {\it left} of the
$\la_j$).  

Now, assume we have  a strand which
originates between the $\la_{j}$ and $\la_{j+1}$ red strands and has a
violation below the crossing. If at
the $y$-value of the red/red crossing, this strand is left of the
crossing, it also has a violation above the crossing; if it is right
of the crossing, it must have crossed both red strands below the
crossing, and (\ref{top-dumb}) gives us a violation above it.  

On the other hand, if we have a strand which originates right of both
strands, it can only violate below the crossing if it crosses both red
strands, and we can use (\ref{top-dumb}) again.

Thus, it suffices to check that the higher cohomology of both
functors applied to projectives vanishes.  This is clear for
$\mathbb{S}^{\bla^{(j)}}\circ \mathbb{B}_j$ by the exactness of
$\mathbb{S}^{\bla^{(j)}}$, so we need only show it for
$\mathbb{B}_j\circ \mathbb{S}^{\bla^{(j)}}$.  We'll prove this by
induction on the preorder on Stendhal triples.  If $\bal$ is maximal,
then any induction of a projective is again projective and we are
done. If not, then there is a map from a projective $Q$ to
$\mathbb{S}^{\bla^{(j)}}(P)$ such that the kernel $K$ is
filtered with higher standardizations.  Consider the usual long exact
sequence:
\begin{equation}\label{KQP}
\cdots H^{k+1}(\mathbb{B}_j\circ \mathbb{S}^{\bla^{(j)}}(P))\to
H^{k}(\mathbb{B}_j(K))\to H^{k}(\mathbb{B}_j(Q))\to
H^{k}(\mathbb{B}_j\circ \mathbb{S}^{\bla^{(j)}}(P))\to
H^{k-1}(\mathbb{B}_j(K))\to\cdots 
\end{equation}

Since the higher cohomology for both $Q$ and $K$ vanish, it
immediately follows that $H^{k}(\mathbb{B}_j\circ
\mathbb{S}^{\bla^{(j)}}(P)) =0$ for $k>1$, and $H^{1}(\mathbb{B}_j\circ
\mathbb{S}^{\bla^{(j)}}(P))$ is the kernel of the  map
$H^0(\mathbb{B}_j(K))\to H^0(\mathbb{B}_j(Q))$.  Thus, it remains to show that
this map  is injective.  

We can assume that $Q$ is of the form $P^\kappa_{\Bi}$ and
$\mathbb{S}^{\bla^{(j)}}(P)$ is its standard quotient for the sequence
$\bla^{(j)}$.  This is not
the module we denote $S^{\kappa}_\Bi$, since we are not imposing the
standardly violating relation on the $j+1$st strand. In this case,
$K\subset P^\kappa_{\Bi}$ is the set of all diagrams with a standardly
violating strand on a red line other than the $j+1$st. 

In order to show that 
$\mathbb{B}_j(K)\to \mathbb{B}_j(Q)$ is injective, it is enough to show
that any sum of $s_k$-Stendhal diagrams which is 0 in $\mathbb{B}_j(Q)$
is a sum of arbitrary $s_k$-Stendhal diagrams composed with ones that
are 0 in $K$.  
If one has a violated $s_k$-Stendhal diagram, then one can always push
the red crossing below one of the violation points. This is possible
by isotopy as long as the violating strand does not cross below the
red/red crossing; if it does pass below, we can use the relations, in
particular \eqref{top-dumb}, to push the violating strand above the
crossing.  Thus, if we isotope so that the red/red crossing is below
$y=\nicefrac 12$ and one of the violations above it, we can cut along
$y=\nicefrac 12$, and obtain the desired composition. 

This allows us to write any element of the kernel
of $\mathbb{B}_j(K)\to \mathbb{B}_j(Q)$ in terms of elements that are
0 in $K$, so the map is injective.  Thus, substituting into
\eqref{KQP}, we see that $\mathbb{B}_j\circ \mathbb{S}^{\bla^{(j)}}$
is exact.
\end{proof}

\begin{prop}\label{sta-braid}
   \begin{math}\displaystyle
    \mathbb{B}_j\left(\mathbb{S}^{\bla}(P_{\dots;\Bi_j;\emptyset;\dots})\right)
    \cong \mathbb{S}^{\bla}(P_{\dots;\emptyset;\Bi_j;\dots})\Big(\big\langle\la_j-\bal(j),\la_{j+1}\big\rangle\Big)
  \end{math}
\end{prop}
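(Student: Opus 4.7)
The strategy is to reduce to the two-red-strand case using Lemma \ref{cross-standard}, and then establish the resulting isomorphism by an explicit diagrammatic comparison of bases. Standardization is transitive: the functor $\mathbb{S}^\bla\colon\cata^{\la_1;\dots;\la_\ell}\to\cata^\bla$ factors as $\mathbb{S}^{\bla^{(j)}}$ composed with the inner standardization that groups only the $j$th and $(j{+}1)$st singleton slots into the doublet $(\la_j,\la_{j+1})$. Under this factorization, $P_{\ldots;\Bi_j;\emptyset;\ldots}$ is carried to the module placing $P_{\Bi_j}^{(0,n)}$ in the $j$th slot of $\cata^{\bla^{(j)}}$, where $n=|\Bi_j|$; this is the $T^{(\la_j,\la_{j+1})}$-projective with all $n$ black strands between the two red strands. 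Analogously, the right-hand side produces $P_{\Bi_j}^{(0,0)}$ in the $j$th slot of $\cata^{s_j\bla^{(j)}}$, with all black strands right of both red strands. Lemma \ref{cross-standard} then commutes $\mathbb{B}_j$ past the outer standardization $\mathbb{S}^{\bla^{(j)}}$, reducing the proposition to the two-red-strand assertion
$$
\mathbb{B}_j\bigl(P_{\Bi_j}^{(0,n)}\bigr) \;\cong\; P_{\Bi_j}^{(0,0)}\bigl(\bigl\langle\la_j-\bal(j),\la_{j+1}\bigr\rangle\bigr)
$$
of graded right $T^{(\la_{j+1},\la_j)}$-modules.

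To construct the isomorphism, let $d\in\bra_{s_j}$ be the $s_j$-Stendhal diagram with top triple $(\Bi_j,(\la_j,\la_{j+1}),(0,n))$ and bottom triple $(\Bi_j,(\la_{j+1},\la_j),(0,0))$ in which every black strand crosses the red strand labeled $\la_{j+1}$ exactly once, above the red/red crossing, and the black strands are otherwise straight and carry no dots. Summing the degree contributions of the $n$ black/red crossings and the single red/red crossing yields
$$
\deg d \;=\; \bigl\langle\bal(j),\la_{j+1}\bigr\rangle-\bigl\langle\la_j,\la_{j+1}\bigr\rangle \;=\; -\bigl\langle\la_j-\bal(j),\la_{j+1}\bigr\rangle,
$$
since $\bal(j)=\sum_k\al_{i_k}$. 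Right multiplication then defines a degree-zero map
$$
\phi\colon P_{\Bi_j}^{(0,0)}\bigl(\bigl\langle\la_j-\bal(j),\la_{j+1}\bigr\rangle\bigr) \longrightarrow \mathbb{B}_j\bigl(P_{\Bi_j}^{(0,n)}\bigr)
$$
sending the generating idempotent $e(\Bi_j,(\la_{j+1},\la_j),(0,0))$ to $d$.

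Finally, $\phi$ is an isomorphism by comparison of bases: $\mathbb{B}_j(P_{\Bi_j}^{(0,n)})=e(\Bi_j,(\la_j,\la_{j+1}),(0,n))\,\bra_{s_j}$ has a basis of elements $\psi_{v,\kappa'}\,e(\Bi_j,(\la_j,\la_{j+1}),(0,n))\,y^{\ba}$ by Proposition \ref{B-basis}, and the relations \eqref{side-dumb}--\eqref{top-dumb} of Definition \ref{bra-def} allow one to straighten each basis element into $d$ right-multiplied by an element of the Khovanov--Lauda basis of $P_{\Bi_j}^{(0,0)}=e(\Bi_j,(\la_{j+1},\la_j),(0,0))\,T^{(\la_{j+1},\la_j)}$ given by Proposition \ref{basis}. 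The principal technical obstacle is executing this straightening cleanly: one must verify that isotoping the red/red crossing to a normal position near the top and sliding the black strands of $\Bi_j$ through it produces no uncontrolled correction terms and, crucially, no new violating diagrams, so that the computation remains inside $\bra_{s_j}$ rather than $\tilde{\bra}_{s_j}$. Since the relations of Definition \ref{bra-def} are designed precisely to permit such local moves without disturbing the violating structure, the bookkeeping here is parallel to the proof of Proposition \ref{B-basis} itself, and the resulting bijection of bases forces $\phi$ to be an isomorphism.
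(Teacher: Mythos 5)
Your overall strategy is the paper's: reduce to the two-red-strand case via Lemma \ref{cross-standard}, and exhibit the explicit diagram $d$ (which is exactly the generator in Figure \ref{fig:cross-gen}) with the degree count you give. But there is a genuine error in the identification of the target. Under the factorization of $\mathbb{S}^{s_j\bla}$ through the doublet category, the right-hand side of the proposition places the \emph{inner standardization} of $P_\emptyset\boxtimes P_{\Bi_j}$ in the $j$th slot, and that inner standardization is the standard module $S^{(0,0)}_{\Bi_j}=P^{(0,0)}_{\Bi_j}/U^{(0,0)}_{\Bi_j}$ over $T^{(\la_{j+1},\la_j)}$, \emph{not} the projective $P^{(0,0)}_{\Bi_j}$. (By contrast, on the left-hand side the inner standardization $S^{(0,n)}_{\Bi_j}$ \emph{does} coincide with the projective $P^{(0,n)}_{\Bi_j}$, because anything higher in the preorder is violated; that asymmetry is the point.) So the correct two-strand assertion is $\mathbb{B}_j(P^{(0,n)}_{\Bi_j})\cong S^{(0,0)}_{\Bi_j}(\langle\la_j-\bal(j),\la_{j+1}\rangle)$.

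Consequently your final step fails: there is no bijection between your spanning set and the Khovanov--Lauda basis of $P^{(0,0)}_{\Bi_j}$. The map $\phi$ given by right multiplication by $d$ is well defined and surjective, but its kernel is exactly $U^{(0,0)}_{\Bi_j}$: if $a$ is a diagram in which a black strand crosses left over the $\la_j$-labeled red (now the rightmost red at the bottom of $d$), then in $d\cdot a$ that strand crosses both reds, and sliding those crossings through the red/red crossing via \eqref{top-dumb} places the strand left of both reds above the crossing, producing a violated diagram, hence $0$ in $\bra_{s_j}$. A minimal counterexample to your claimed isomorphism: for $\fg=\mathfrak{sl}_2$, $\bla=(\omega,\omega)$, $\Bi_j=(1)$, one computes from the explicit basis in Section \ref{sec:defn} that $\dim P^{(0,1)}_{(1)}=2$, $\dim P^{(0,0)}_{(1)}=3$, and $\dim S^{(0,0)}_{(1)}=1$; the standard filtration of Lemma \ref{pro-sta} gives $\dim\mathbb{B}_1(P^{(0,1)}_{(1)})=1$, matching the standard module and not the projective. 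To repair the argument, replace the basis comparison by the observation that $\phi$ kills $U^{(0,0)}_{\Bi_j}$ (as above) and hence descends to a surjection $S^{(0,0)}_{\Bi_j}(\langle\la_j-\bal(j),\la_{j+1}\rangle)\to\mathbb{B}_j(P^{(0,n)}_{\Bi_j})$, which is then an isomorphism by the dimension count coming from the standard filtration in Lemma \ref{pro-sta} (or from the decategorified isometry of Corollary \ref{br-cat}).
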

\begin{proof}
By Lemma \ref{cross-standard}, we can immediately 
reduce to the case where $\ell=2$.  In this case, $\mathbb{S}^{\bla}(P_{\Bi_j;\emptyset})$ is
projective, so
$\mathbb{B}_j\left(\mathbb{S}^{\bla}(P_{\dots;\Bi_j;\emptyset;\dots})\right)$
is the naive tensor product of these modules.  The isomorphism to
$\mathbb{S}^{\bla}(P_{\dots;\emptyset;\Bi_j;\dots})\Big(\big\langle\la_j-\bal(j),\la_{j+1}\big\rangle\Big)$
is the single diagram shown in Figure \ref{fig:cross-gen}.
  \begin{figure}[ht]
   \centering
    \begin{tikzpicture}
      [very thick,xscale=1.7,yscale=-1.4]
      \usetikzlibrary{decorations.pathreplacing}
      \node at  (-1.5,0){$\cdots$}; 
      \node at (2.5,0){$\cdots$}; 
      \draw[wei] (1.2,-1) -- (-1,1) node[at start,above]{$\la_{j+1}$} node[at
      end,below]{$\la_{j+1}$} node[pos=.55,fill=white,circle]{}; 
      \draw[wei]    (-1.2,-1) -- (1,1) node[at start,above]{$\la_{j}$} node[at
      end,below]{$\la_{j}$}; \draw (.5,-1) to[in=-110, out=70]
      (2.4,1); \draw
      (-.4,-1) to[in=-110, out=70] (1.7,1);  \node at
      (.3,-.7){$\cdots$};
\node at      (1.9,.7){$\cdots$};
    \end{tikzpicture}
  \caption{The generator of $\mathbb{B}_j\left(\mathbb{S}^{\boldsymbol{\lambda}}(P_{\dots;\mathbf{i}_j;\emptyset;\dots})\right)$.}\label{fig:cross-gen}
\end{figure}
\end{proof}

\begin{cor}\label{br-cat}
  The action of $\mathbb{B}_{k}$ categorifies the action of the
  braiding $V^\C_{\bla}\to V^\C_{s_k\bla}$ switching the $k$ and $k+1$st representations with a
  positive crossing.
\end{cor}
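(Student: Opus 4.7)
The plan is to show the identity $[\mathbb{B}_k] = \sigma^\C_{s_k}$ on the Grothendieck group by checking it on a $U_q(\fg)$-generating subspace of $V^\C_\bla$. By Proposition \ref{bra-commute} the functor $\mathbb{B}_k$ is strongly equivariant, so the induced map on Grothendieck groups is a $U_q(\fg)$-module homomorphism $V^\C_\bla \to V^\C_{s_k\bla}$; since $\sigma^\C_{s_k}$ is also such a homomorphism, the two will coincide as soon as they agree on any subspace $W \subset V^\C_\bla$ that generates $V^\C_\bla$ as a $U_q(\fg)$-module.

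For $W$ I will take the span of vectors of the form
\[
w_{\Bi_1,\dots,\Bi_k} \ := \ F_{\Bi_1} v_{\la_1}\otimes\cdots\otimes F_{\Bi_k} v_{\la_k}\otimes v_{\la_{k+1}}\otimes\cdots\otimes v_{\la_\ell},
\]
with arbitrary $\Bi_1,\dots,\Bi_k$, so that the last $\ell - k$ tensor factors are highest weight vectors. This subspace is $V_{\la_1}\otimes\cdots\otimes V_{\la_k}\otimes v_{\la_{k+1}}\otimes\cdots\otimes v_{\la_\ell}$, and it generates all of $V^\C_\bla$ as a $U_q(\fg)$-module since $v_{\la_{k+1}}\otimes\cdots\otimes v_{\la_\ell}$ is a cyclic vector of $V_{\la_{k+1}}\otimes\cdots\otimes V_{\la_\ell}$.

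By Proposition \ref{standard-class} the vector $w_{\Bi_1,\dots,\Bi_k}$ is the class of the standard module $\mathbb{S}^{\bla}(P_{\Bi_1}\boxtimes\cdots\boxtimes P_{\Bi_k}\boxtimes P_\emptyset\boxtimes\cdots\boxtimes P_\emptyset)$. Proposition \ref{sta-braid}, applied with the role of $j$ played by $k$ (so $\Bi_{k+1}=\emptyset$), yields an isomorphism
\[
\mathbb{B}_k\bigl(\mathbb{S}^{\bla}(P_{\Bi_1}\boxtimes\cdots\boxtimes P_{\Bi_k}\boxtimes P_\emptyset\boxtimes\cdots)\bigr) \ \cong \  \mathbb{S}^{s_k\bla}(P_{\Bi_1}\boxtimes\cdots\boxtimes P_\emptyset\boxtimes P_{\Bi_k}\boxtimes\cdots)\bigl(\langle \la_k-\wt(\Bi_k),\la_{k+1}\rangle\bigr),
\]
whose Grothendieck class is $q^{\langle \la_k-\wt(\Bi_k),\la_{k+1}\rangle}\,F_{\Bi_1}v_{\la_1}\otimes\cdots\otimes v_{\la_{k+1}}\otimes F_{\Bi_k}v_{\la_k}\otimes\cdots\otimes v_{\la_\ell}$. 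On the other hand, since $v_{\la_{k+1}}$ is a highest weight vector of $V_{\la_{k+1}}$, the second summation in equation \eqref{Rmatrix} vanishes and the braiding simplifies to $\sigma_{V_{\la_k},V_{\la_{k+1}}}(F_{\Bi_k}v_{\la_k}\otimes v_{\la_{k+1}}) = q^{\langle \la_k-\wt(\Bi_k),\la_{k+1}\rangle}\,v_{\la_{k+1}}\otimes F_{\Bi_k}v_{\la_k}$. Since $\sigma^\C_{s_k}$ acts as the identity on the other tensor factors, $[\mathbb{B}_k]$ and $\sigma^\C_{s_k}$ agree on every $w_{\Bi_1,\dots,\Bi_k}\in W$, hence on all of $V^\C_\bla$.

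There is no serious obstacle: the two crucial inputs, strong equivariance and the computation on standard modules with highest weight strands, have already been prepared in Propositions \ref{bra-commute} and \ref{sta-braid}. The only verification required is that the explicit grading shift $\langle \la_k-\wt(\Bi_k),\la_{k+1}\rangle$ obtained from Proposition \ref{sta-braid} is exactly the $R$-matrix normalization dictated by \eqref{Rmatrix} when one of the two factors is a highest weight vector, which is immediate.
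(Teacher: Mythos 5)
Your overall strategy---strong equivariance from Proposition \ref{bra-commute} reduces the check to a $U_q(\fg)$-generating subspace, where Propositions \ref{standard-class} and \ref{sta-braid} compute $[\mathbb{B}_k]$ and the answer is compared with the simplified form of \eqref{Rmatrix} when one tensor factor is highest weight---is exactly the one the paper uses. However, there is a genuine error in your choice of generating subspace. You take $W = V_{\la_1}\otimes\cdots\otimes V_{\la_k}\otimes v_{\la_{k+1}}\otimes\cdots\otimes v_{\la_\ell}$ and justify that it generates by asserting that $v_{\la_{k+1}}\otimes\cdots\otimes v_{\la_\ell}$ is a cyclic vector of $V_{\la_{k+1}}\otimes\cdots\otimes V_{\la_\ell}$. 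That assertion is false whenever $\ell-k\geq 2$ and this tensor product is reducible: the tensor product of highest weight vectors generates only the Cartan component $V_{\la_{k+1}+\cdots+\la_\ell}$. Concretely, for $\fg=\mathfrak{sl}_2$, $\bla=(\omega,\omega,\omega)$ and $k=1$, the $U_q(\fg)$-submodule generated by $V_\omega\otimes v_\omega\otimes v_\omega$ is $6$-dimensional (the $4$-dimensional top constituent together with one of the two $2$-dimensional constituents), not all of the $8$-dimensional $V_\omega^{\otimes 3}$. So agreement of $[\mathbb{B}_k]$ and the braiding on your $W$ does not force agreement everywhere, and this step of the argument fails.

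The repair is to enlarge $W$ exactly as the paper does: take the span of all pure tensors of weight vectors that are required to be highest weight \emph{only} in the $(k+1)$st slot, i.e.\ the classes of $\mathbb{S}^{\bla}(P_{\Bi_1}\boxtimes\cdots\boxtimes P_{\Bi_k}\boxtimes P_\emptyset\boxtimes P_{\Bi_{k+2}}\boxtimes\cdots\boxtimes P_{\Bi_\ell})$ with all the other blocks arbitrary. This larger subspace does generate $V^\C_\bla$ over $U_q^-(\fg)$: induct on the height of the weight of the $(k+1)$st factor, using that in $\Delta^{(\ell)}(F_i)$ exactly one summand lowers that factor and the remaining summands preserve the subspace. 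Proposition \ref{sta-braid} is stated precisely for standardizations with arbitrary blocks everywhere except an empty $(k+1)$st block, and \eqref{Rmatrix} still collapses to its leading term because the $(k+1)$st factor is highest weight, so the rest of your computation goes through verbatim on this corrected generating set.
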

\begin{proof}
  By Proposition \ref{bra-commute},
  the induced action on $V_\bla$, which we denote by $\mathcal{R}_\si$,
  commutes with the action of $U_q^-(\fg)$.  Thus we need only calculate
  the action of $R_\si$ on a pure tensor of weight vectors with a
  {\em highest} weight vector $v_h$ in the $j+1$st place, since these generate $V_\bla$ as a $U_q^-(\fg)$ -representation. 

The space of such vectors is spanned by the classes of the form $\mathbb{S}^{\bla}(P_{\dots;\Bi_j;\emptyset;\dots})$.
Thus, Proposition \ref{sta-braid} implies that
  \begin{equation*}
\mathcal{R}_\si(v_1\otimes\cdots\otimes v_j\otimes v_h\otimes\cdots\otimes v_\ell)=q^{\langle\wt(v_j),\la_{j+1}\rangle}v_1\otimes\cdots\otimes v_h\otimes v_j\otimes\cdots\otimes v_\ell
  \end{equation*}
which is exactly what the braiding \eqref{Rmatrix} does to vectors of this form.
Since vectors of this form generate the representation over $U_q(\fg)$, there is a unique endomorphism with this behavior, and $R_\si$ is the braiding.
\end{proof}

\begin{lemma}\label{pro-sta}
For any projective $P^\kappa_\Bi$, the module $\mathbb{B}_w(P^\kappa_\Bi)$ has a standard
  filtration. If $w>ws_i$ then
    $\mathbb{B}_w\cong \mathbb{B}_{ws_i}\otimes \mathbb{B}_{i} $.
\end{lemma}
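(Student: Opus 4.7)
The plan is to prove both statements together by induction on $\ell(w)$, with the base case $w = 1$ immediate from Proposition \ref{standard-filtration} (since $\mathbb{B}_1$ is the identity). For the second statement, when $w > ws_i$, the length-additive decomposition $w = (ws_i)\cdot s_i$ lets Lemma \ref{lem:independent} produce a bimodule isomorphism $\bra_w \cong \bra_{ws_i} \otimes_T \bra_{s_i}$ (with the correct factor ordering dictated by the conventions of Section \ref{sec:braiding}). This lifts to the desired functor isomorphism $\mathbb{B}_w \cong \mathbb{B}_{ws_i}\otimes \mathbb{B}_i$ once we verify, via the first claim, that the derived and naive tensor products agree on the standardly-filtered subcategory we actually need.

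For the first claim, the functor identity just established reduces us to proving that each single braiding $\mathbb{B}_{s_j}$ preserves the class of modules with standard filtrations. Lemma \ref{cross-standard} then reduces this further: since $\mathbb{B}_j$ commutes with the partial standardization $\mathbb{S}^{\bla^{(j)}}$, it is enough to show that for any standardly-filtered module $N$ over $T^{(\la_j,\la_{j+1})}$, the module $\mathbb{B}_{s_1}(N)$ has a standard filtration over $T^{(\la_{j+1},\la_j)}$. So one is left with the $\ell=2$ case.

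For $\ell=2$ and a projective $P^\kappa_\Bi$, I will write $P^\kappa_\Bi \cong \fF_{\Bi''}\bigl(\fI_{\la_2}(\fF_{\Bi'}(P^{\la_1}_\emptyset))\bigr)$, where $\Bi'$ is the sequence of black labels strictly between the two red strands and $\Bi''$ the sequence to the right of both. Strong equivariance (Proposition \ref{bra-commute}) lets $\mathbb{B}_{s_1}$ pass through the outer $\fF_{\Bi''}$. The inner piece $\fI_{\la_2}(\fF_{\Bi'}(P^{\la_1}_\emptyset))$ equals $\mathbb{S}^{(\la_1,\la_2)}(\fF_{\Bi'}(P^{\la_1}_\emptyset) \boxtimes P^{\la_2}_\emptyset)$, to which Proposition \ref{sta-braid} applies directly (the right-hand factor is the highest-weight vacuum), yielding the swapped standard
\[
\mathbb{B}_{s_1}\bigl(\fI_{\la_2}(\fF_{\Bi'}(P^{\la_1}_\emptyset))\bigr) \;\cong\; \mathbb{S}^{(\la_2,\la_1)}\bigl(P^{\la_2}_\emptyset \boxtimes \fF_{\Bi'}(P^{\la_1}_\emptyset)\bigr)
\]
up to a grading shift. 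Applying $\fF_{\Bi''}$ to this standard produces a module with standard filtration by Proposition \ref{prop:act-filter}, handling projectives.

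The main obstacle is extending this from projectives to arbitrary standardly-filtered modules, which is required to chain single braidings inductively: one must know that $\mathbb{B}_{s_j}$ is exact on such modules so that a standard filtration of the input yields one on the output. This is the step that most needs care. I plan to handle it by taking the finite projective resolutions of standards furnished by Corollary \ref{standard-finite-length}, applying the computation just carried out to each term, and showing the higher derived terms vanish via the commutation with standardizations from Lemma \ref{cross-standard} together with the exactness of standardization in Proposition \ref{standard-exact}; this will simultaneously validate the derived-versus-naive tensor product identification needed for the second statement.
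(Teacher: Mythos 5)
Your route to the statement ``$\mathbb{B}_{s_1}$ of an $\ell=2$ projective is standardly filtered'' — strong equivariance through $\fF_{\Bi''}$, then Proposition \ref{sta-braid} on the inner projective standard, then Proposition \ref{prop:act-filter} — is correct and genuinely different from the paper, which instead builds the filtration explicitly out of the elements $y_\phi$ realizing minimal-crossing permutations and then verifies the subquotients are the expected standards by a dimension count, using that the decategorified braiding is an isometry at $q=1$ (Lemma \ref{Phivs} and Corollary \ref{br-cat}). Your version is cleaner where it applies, but note two things: the paper's construction works directly for arbitrary $\ell$ and arbitrary $w$ in one stroke, and — more importantly — your reduction via Lemma \ref{cross-standard} forces you to treat $\ell=2$ \emph{standards}, not just projectives, since $\mathbb{S}^{\bla}$ factors as $\mathbb{S}^{\bla^{(j)}}$ applied to $P_1\boxtimes\cdots\boxtimes\mathbb{S}^{(\la_j,\la_{j+1})}(P_j\boxtimes P_{j+1})\boxtimes\cdots$, and the middle factor is a standard over $T^{(\la_j,\la_{j+1})}$ that is not projective unless $P_{j+1}$ is the vacuum. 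Your $\ell=2$ computation only covers that vacuum case.

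The genuine gap is the Tor-vanishing $\Tor^{>0}(S^\kappa_\Bi,\bra_{s_j})=0$, which you correctly flag as load-bearing (it is needed both to chain single braidings along a standard filtration and to identify the derived and naive tensor products in $\mathbb{B}_w\cong\mathbb{B}_{ws_i}\otimes\mathbb{B}_i$), but your proposed fix is not an argument. Resolving $S$ by projectives and applying $\mathbb{B}_{s_j}$ termwise gives, via the long exact sequence for $0\to K\to Q\to S\to 0$, that $H^1(\mathbb{B}_{s_j}S)$ is the kernel of $H^0(\mathbb{B}_{s_j}K)\to H^0(\mathbb{B}_{s_j}Q)$; exactness of standardization and Lemma \ref{cross-standard} say nothing about why that map stays injective, and proving it directly is the hard diagrammatic content of the proof of Lemma \ref{cross-standard} itself. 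The paper closes this loop by a different mechanism, which you should adopt: once one knows that $\mathbb{B}_v$ sends projectives to standardly filtered modules for all $v$ of length $\le n$ (applied to $v=w^{-1}$ and transported by the dot anti-automorphism, this gives $\bra_w$ a standard filtration as a \emph{left} module), the semi-orthogonality of the stratification gives $\Tor^{>0}(S^\kappa_\Bi,\dot S^{\kappa'}_{\Bi'})=0$ outright, hence $\Tor^{>0}(S^\kappa_\Bi,\bra_w)=0$. This is why the paper runs a three-statement induction $p_n\Rightarrow f_n\Rightarrow s_n\Rightarrow p_{n+1}$ rather than deriving the Tor-vanishing from resolutions; without some version of that semi-orthogonality step your induction does not close.
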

\begin{proof}
We will prove this by induction on the length of $w$.  This
  induction is slightly subtle, so rather than attempt each step in
  one go, we break the theorem into 3 statements, and induct around a
  triangle. Consider the three statements (for each positive integer
  $n$):
  \begin{enumerate}
  \item[$p_n:$] For all $w$ with $\ell(w)=n$, if $w>ws_i$ then
    $\mathbb{B}_w\cong \mathbb{B}_{ws_i}\otimes \mathbb{B}_{i} $.
  \item[$f_n:$] For all $w$ with $\ell(w)=n$, $\mathbb{B}_w$
    sends projectives to objects with standard filtrations.
  \item[$s_n:$] For all $w$ with $\ell(w)=n$, $\mathbb{B}_w$
    sends standards to modules; that is,
    $\Tor^k(S^\kappa_\Bi,\bra_w)=0$ for all $k>0$.
  \end{enumerate}
  Our induction proceeds by showing
  \begin{equation*}
    \cdots  \to  p_n \to f_n \rightarrow s_n \rightarrow p_{n+1} \rightarrow \cdots  
  \end{equation*}

  These are all obviously true for $w=1$, so this covers the base of
  our induction.

  $f_n\rightarrow s_n$:
 Consider the groups $\Tor^k(S^\kappa_{\Bi},\dot
  S^{\kappa'}_{\Bi'})$.  By symmetry, we may assume that $(\kappa,\Bi)\not< (\kappa',\Bi')$ in which case $S^\kappa_\Bi$ has a projective resolution where all higher terms are killed by tensor product with $\dot S^{\kappa'}_{\Bi'}$, since they are projective covers of simples which do not appear as composition factors in $S^{\kappa'}_{\Bi'}$.   Thus, we have $\Tor^k(S^\kappa_{\Bi},\dot
  S^{\kappa'}_{\Bi'})=0$ for $k>0$, and for $k=0$ if $(\kappa,\Bi)\neq (\kappa',\Bi')$.

 If we let $\dot{\bra}_w$ be $\bra_w$ with the left and
  right actions reversed by the dot-anti-automorphism, then
  $\dot{\bra}_w\cong\bra_{w^{-1}}$.
  By $f_n$, the bimodule $\bra_{w^{-1}}$ has a standard filtration as
  a right module, so $\bra_{w}$ has a standard filtration as a left
  module.  Thus, we have $\Tor^k(S^\kappa_{\Bi},\bra_w)$
  for $k>0$ and the same holds for any module with a standard
  filtration.

  $s_n+f_n\rightarrow p_{n+1}$: By Lemma \ref{lem:independent}, we
  have that 
 $\bra_w\cong \bra_{ws_i}\otimes \bra_{i} $, so we
 need only show that the higher Tor's of this tensor product vanish.
 By $f_n$, as a right module $\bra_{ws_i}$ has a standard
 filtration, as does $\bra_{i} $ as a left module by Lemma
 \ref{cross-standard} (note that this follows from $f_n$ if $n\geq 1$,
 but one needs to use Lemma
 \ref{cross-standard} when $n=0$).  Thus, as we argued above, the
 higher Tor's vanish, and we are done.

  $p_n\rightarrow f_n$:  
  Now, we construct the standard filtration on $D=\mathbb{B}_w
  P^{\kappa}_{\Bi}$. 
Let $\Phi$ be the
  parameter set of the standard filtration on the projective as
  defined on page \pageref{Phi}.  We let $\check{w}$ be the
  permutation of terminals which keeps together a red strand and
  the black block to its right, and permutes these groups according to
  $w$.  
We let $\Phi^w$
  be the permutations obtained by composing $\phi\in\Phi$ with
  $\check{w}$ on the bottom.

We let $y_\phi$ be a choice of $w$-Stendhal
  diagram which realizes the permutation $\phi\in \Phi^w$ with a minimal number
  of crossings; if $w=1$, then these satisfy the same conditions as the elements $x_\phi$
  defined earlier.  The diagrams $y_\phi$ are representatives of the isotopy classes of
diagrams where we cannot factor off a black/black crossing, or a left
crossing (as depicted in \eqref{eq:left-crossing}) at the bottom of
the diagram.  Note that $y_\phi$ does not actually have to
  arise from composing $x_\phi$ with an element of the bimodule
  $\bra_w$; there may be strands that cross in $x_\phi$ which do not in
  $y_\phi$.  We might have a situation like:
\[x_\phi=\begin{tikzpicture}[very thick,
    baseline,scale=.7] \draw[wei]
    (-.5,-1) -- (-.5,1);\draw[wei] (1.5,-1) -- (1.5,1); \draw[wei]
    (.5,-1) -- (.5,1); \draw (1,-1) --
    (1,1); \draw (0,-1) --
    (2,1); 
  \end{tikzpicture} \qquad y_\phi=\begin{tikzpicture}[very thick,
    baseline,scale=.7] \draw[wei]
    (.5,-1) -- (-.5,1);\draw[wei] (1.5,-1) -- (1.5,1); \draw[wei]
    (-.5,-1) -- (.5,1); \draw (0,-1) --
    (1,1); \draw (1,-1) --
    (2,1); 
  \end{tikzpicture}. \]
If $\fg=\mathfrak{sl}_2,\la=(1,1), \Bi=(1)$ and we
consider $\kappa(1,2)=0,1$ %(resp. $\kappa(1,2)=0,0$), 
then both $\Phi$ and $\Phi^s$ have two
elements %(resp. consist of the identity element and that moving the
%black strand between the red strands), but 
with associated diagrams given by 
\[\begin{tikzpicture}[very thick,
    baseline,scale=.7] \draw[wei] (1.5,-1) -- (1.5,1); \draw[wei]
    (.5,-1) -- (.5,1); \draw (1,-1) --
    (1,1);
  \end{tikzpicture}\qquad \qquad \begin{tikzpicture}[very thick,
    baseline,scale=.7] \draw[wei] (1.5,-1) -- (1.5,1); \draw[wei]
    (.5,-1) -- (.5,1); \draw (0,-1) --
    (1,1);
  \end{tikzpicture}\qquad \qquad \begin{tikzpicture}[very thick,
    baseline,scale=.7] \draw[wei] (1.5,-1) -- (.5,1); \draw[wei]
    (.5,-1) -- (1.5,1); \draw (2,-1) --
    (1,1);
  \end{tikzpicture}\qquad \qquad \begin{tikzpicture}[very thick,
    baseline,scale=.7] \draw[wei] (1.5,-1) -- (.5,1); \draw[wei]
    (.5,-1) -- (1.5,1); \draw (0,-1) --
    (1,1);
  \end{tikzpicture}\]
%Note that this example shows that while $\Phi$ and $\Phi^s$ are in 
As before, we can preorder these elements according the preorder on
the idempotents found at their bottom.    Note that the bijection between $\Phi$ and
  $\Phi^w$ is not order preserving.

We wish to show that the elements $y_\phi$ generate $\bra_w$ as a
right module. For ease, let us isotope the diagram so that all red/red
crossings occur above $y=\nicefrac{1}{2}$. Now we wish to apply the
relations to write an arbitrary element as a sum of diagrams where
the top half is of the form $y_\phi$. 
 As usual, it is enough to start with an
arbitrary diagram, and rewrite as a sum of diagrams with top half
given by $y_\phi$, plus elements with fewer crossings, and then use
induction.
 
As explained above, if the diagram above  $y=\nicefrac{1}{2}$ is not
isotopic to a $y_\phi$, then we can perform an isotopy to move a dot,
a black/black crossing or a left red/black crossing to lie directly
above $y=\nicefrac{1}{2}$; then we can isotope this offending element
through $y=\nicefrac{1}{2}$.  Since this reduces the number of
crossings or dots above $y=\nicefrac{1}{2}$, eventually this process
will terminate.  This shows that the elements  $y_\phi$ generate.

This allows us to construct a filtration \[D_{\leq \phi}=\sum_{\phi'\leq \phi}
  y_\phi T^{w\cdot\bla}\qquad  D_{<\phi}=\sum_{\phi'< \phi} y_\phi T^{w\cdot\bla}\]
  out of these elements and partial order; while the element $y_\phi$ involves a choice of reduced word, this filtration is independent of it.  Multiplication by $y_\phi$ gives a surjection $d:S^{\kappa_\phi}_{\Bi_\phi}\twoheadrightarrow D_{\leq \phi}/D_{<\phi}$, which we aim to show is an isomorphism.

Since $\mathbb{B}_{w}$ categorifies the braiding attached to the
positive lift of $w$ to a braid, when $q$ is specialized to 1, it categorifies the permutation map $V_\bla\to V_{w\cdot \bla}$, and is thus an isometry for $\langle-,-\rangle_1$.  In particular, $$\dim \mathbb{B}_w=\langle[\alg^{w\cdot \bla}],w\cdot [\alg^\bla]\rangle_1=\sum_{\phi\in\Phi} \langle[\alg^{w\cdot \bla}],[S^{\kappa_\phi}_{\Bi_\phi}]\rangle_1=\sum_{\phi\in\Phi}\dim S^{\kappa_\phi}_{\Bi_\phi}$$
which shows that all the maps $S^{\kappa_\phi}_{\Bi_\phi}\twoheadrightarrow D_{\leq \phi}/D_{<\phi}$ must be isomorphisms.
\end{proof}

\begin{lemma}
  The functor $\mathbb{B}_{w}$ sends $\cat_\C^\bla$ to $\cat_\C^{w\cdot\bla}$.
\end{lemma}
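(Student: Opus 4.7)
The plan is to check the two conditions defining $C^{\uparrow}(\alg^{w\cdot\bla})$---finite-dimensionality in each graded piece, and the combined bound $C^i_j=0$ for $i\geq N$ or $i+j\leq M$---by reduction to projectives and resolution.

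First I would verify the claim for the generators. For any projective $P^\kappa_\Bi$, Lemma \ref{pro-sta} gives $\mathbb{B}_w(P^\kappa_\Bi)$ a finite filtration whose successive quotients are (shifted) standard modules $S^{\kappa_\phi}_{\Bi_\phi}$ with $\phi$ ranging over the finite set $\Phi^w$. Each such standard is a quotient of a finite-dimensional projective, hence finite-dimensional; so $\mathbb{B}_w(P^\kappa_\Bi)$ is a finite-dimensional graded $\alg^{w\cdot\bla}$-module concentrated in homological degree zero, and trivially lies in $C^{\uparrow}(\alg^{w\cdot\bla})$.

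Second, for an arbitrary $C^\bullet\in C^{\uparrow}(\alg^\bla)$ I would build a Cartan--Eilenberg-style projective resolution $P^{\bullet,\bullet}\to C^\bullet$ with each $P^{i,\bullet}$ a projective resolution of $C^i$ lying in $C^{\uparrow}(\alg^\bla)$. Theorem \ref{triangular-resolution} supplies such resolutions for simples, and Lemma \ref{glue-res} extends this to all finite-dimensional $C^i$ by induction on composition factors; totalizing gives a complex $\operatorname{Tot}(P^{\bullet,\bullet})$ quasi-isomorphic to $C^\bullet$ and again in $C^{\uparrow}$. Because every term is projective, $\mathbb{B}_w(C^\bullet)\simeq\operatorname{Tot}(P^{\bullet,\bullet})\otimes_{\alg^\bla}\bra_w$, and by the previous step each term of the resulting complex is finite-dimensional.

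The main obstacle---and the step where care is needed---is verifying the diagonal bound $i+j\leq M'$ on $\mathbb{B}_w(C^\bullet)$. The bimodule $\bra_w$ is itself infinite-dimensional (via dot monomials), so the argument must proceed weight-space by weight-space. For each root datum $\bal$ there are only finitely many idempotents $e(\Bi,\kappa)$ of that weight, and the filtration of Lemma \ref{pro-sta} shows that the degrees occurring in $e(\Bi,\kappa)\bra_w$ as a right $\alg^{w\cdot\bla}$-module are bounded below by an integer depending only on $\bal$ (the minimum over $\phi\in\Phi^w$ of $\deg y_\phi$). Since the ambient algebra has finite-dimensional weight spaces and any given weight space of $\operatorname{Tot}(P^{\bullet,\bullet})$ absorbs contributions from only finitely many $(\Bi,\kappa)$, this produces a uniform shift constant $c$ such that the diagonal bound $C^i_j=0$ for $i+j\leq M$ on the resolution forces $\mathbb{B}_w(C^\bullet)^i_j=0$ for $i+j\leq M+c$. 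The homological bound $i\geq N$ is preserved automatically because each $P^{i,\bullet}$ lives in cohomological degrees $\leq i$. This completes the verification that $\mathbb{B}_w(C^\bullet)$ lies in $\cat_\C^{w\cdot\bla}$.
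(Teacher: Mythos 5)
Your argument is correct and is essentially the paper's: both rest on Lemma \ref{pro-sta} (a standard filtration on $\mathbb{B}_w$ of a projective, hence finite-dimensionality/finite projective dimension via Corollary \ref{standard-finite-length}) together with the fact that there are only finitely many indecomposable projectives in each weight space, so the degree shifts are uniformly bounded and the $C^{\uparrow}$ conditions are preserved. The only cosmetic difference is that you make the passage from a complex in $C^{\uparrow}(\alg^\bla)$ to a complex of projectives explicit via a Cartan--Eilenberg resolution, a step the paper leaves implicit.
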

\begin{proof}
   From Lemma \ref{pro-sta}, we find that $\bra_{w}$ considered as
   a left module (which is the same as $\dot \bra_{w}$) has a
   standard filtration.  By Corollary \ref{standard-finite-length},
   standard modules have finite length projective resolutions. So any projective module $M$ is sent to a finite length complex; since there are only finitely many indecomposable projectives, the amount which this can decrease the lowest degree is bounded below.  Thus, a complex of projectives in $C^{\uparrow}(\cata_\C^\bla)$ is sent to another collection of projectives in $C^{\uparrow}(\cata_\C^{w\cdot \bla})$.
\end{proof}

Consider the half twist $\tau$. Note that according to our conventions, it is drawn with the blackboard framing, not the one
with ribbon half-twists as well.
Recall that a module $M$ over a standardly stratified algebra is
called {\bf tilting} if
\begin{itemize}
\item $M$ has a filtration by standards, that is, modules of the form
  $\mathbb{S}^{\la_1;\dots;\la_\ell}(P)$ for $P$ projective and 
\item $M^\star$ has a
  filtration by standardizations, that is, modules of the form
  $\mathbb{S}^{\la_1;\dots;\la_\ell}(Q)$ for $Q$ arbitrary.
\end{itemize}

\begin{thm}\label{tilting}
The modules $\mathbb{B}_\tau P_{\Bi}^\kappa$ are tilting, and every indecomposable tilting module is a summand of these tiltings.
\end{thm}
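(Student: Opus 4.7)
The plan is to prove tiltingness of each $\mathbb{B}_\tau P^\kappa_\Bi$ by exhibiting both required filtrations, and then deduce the classification by a counting argument. The standard filtration is given immediately by Lemma \ref{pro-sta}. For the costandard filtration, I would use the $\Ext$-vanishing characterization of tiltings: a module $M$ with a standard filtration is tilting if and only if $\Ext^{>0}(\Delta, M)=0$ for every standard $\Delta$. Since the braid group acts by derived auto-equivalences on $\bigoplus_{w} \cat^{w\cdot \bla}$ --- an easy consequence of Lemma \ref{lem:independent} together with the explicit braid relations for the $\bra_{s_i}$ implicit in Proposition \ref{bra-commute} --- the half-twist $\mathbb{B}_\tau$ admits a derived inverse $\mathbb{B}_{\tau^{-1}}$, and adjunction yields
\[
\Ext^{>0}_{\cata^{w_0\bla}}(\Delta, \mathbb{B}_\tau P^\kappa_\Bi) \;\cong\; \Ext^{>0}_{\cata^\bla}(\mathbb{B}_{\tau^{-1}}\Delta,\, P^\kappa_\Bi),
\]
which vanishes because $P^\kappa_\Bi$ is projective, provided that $\mathbb{B}_{\tau^{-1}}\Delta$ is concentrated in cohomological degree zero.

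The main obstacle is therefore the analogue of the $s_n$-step from the proof of Lemma \ref{pro-sta} for the inverse half-twist: showing that $\mathbb{B}_{\tau^{-1}}$ sends every standard module to an honest module (no higher $\Tor$). I would establish this by a parallel induction, factoring $\tau^{-1}$ into negative simple generators and proving the commutation-with-standardization statement of Lemma \ref{cross-standard} in that setting. The diagrammatic argument adapts directly: the relations (\ref{side-dumb})--(\ref{triple-point-red}) are symmetric under reversing which red strand passes over which, and the key combinatorial point --- that a standardly violating strand below a red/red crossing must produce a violation above it, via a relation of type (\ref{top-dumb}) --- is insensitive to the sign of the crossing. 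Alternatively, one can argue via the duality $(-)^\star$: since $\bra_\tau$ and $\bra_{\tau^{-1}}$ are interchanged by the vertical flip (an anti-isomorphism of the diagram calculus), a standard filtration of $\bra_\tau$ on one side translates directly to the required filtration of $\bra_{\tau^{-1}}$ on the other.

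For the second claim that every indecomposable tilting appears, the key inputs are that $\mathbb{B}_\tau$ is a derived equivalence (so preserves indecomposability of complexes) and that $\mathbb{B}_\tau P^\kappa_\Bi$ is concentrated in a single cohomological degree, whence every indecomposable summand is an indecomposable tilting module. Indecomposable tiltings in a standardly stratified category are parameterized by the simples of $\cata^{w_0\bla}$ via the unique top standard of the tilting filtration; this is exactly the same set that parameterizes indecomposable projectives $P^\kappa_\Bi$ in $\cata^\bla$ via Proposition \ref{prop:stringy}. The braiding isomorphism of Corollary \ref{br-cat} matches these parameterizations on the Grothendieck group level, so counting indecomposables shows that every indecomposable tilting arises as a summand of some $\mathbb{B}_\tau P^\kappa_\Bi$.
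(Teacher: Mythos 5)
Your argument has a genuine circularity. The costandard half of your proof, and also your counting argument for the classification, both rest on $\mathbb{B}_\tau$ admitting a derived inverse $\mathbb{B}_{\tau^{-1}}$. But in the logical structure of the paper, the invertibility of $\mathbb{B}_\tau$ is Theorem \ref{braid-act}, whose proof \emph{uses} Theorem \ref{tilting}: one first shows $\mathbb{B}_\tau P^\kappa_{\Bi}$ is tilting, deduces $\Ext^{>0}(\mathbb{B}_\tau P^\kappa_{\Bi},\mathbb{B}_\tau P^{\kappa'}_{\Bi'})=0$ because higher Exts between tiltings vanish, and only then concludes $\mathbb{B}_\tau$ is fully faithful and an equivalence. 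Your claimed shortcut --- that invertibility is ``an easy consequence'' of Lemma \ref{lem:independent} and the braid relations --- does not work: braid relations among the $\bra_{s_i}$ only give an action of the braid \emph{monoid}; they say nothing about the generators being invertible. Without invertibility you cannot identify a left adjoint of $-\Lotimes\bra_\tau$ with $\mathbb{B}_{\tau^{-1}}$, and the adjunction isomorphism $\Ext^{>0}(\Delta,\mathbb{B}_\tau P)\cong\Ext^{>0}(\mathbb{B}_{\tau^{-1}}\Delta,P)$ is unavailable.

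Your two proposed constructions of $\mathbb{B}_{\tau^{-1}}$ also do not stand up. The over/under decoration on red/red crossings in Definition \ref{bra-def} is purely mnemonic: the bimodules $\bra_w$ are indexed by permutations, and no bimodule for a negative crossing is ever defined, so there is no ``negative'' diagrammatic calculus to run the induction in. And the vertical flip anti-isomorphism sends $\bra_w$ to $\bra_{w^{-1}}$ as a permutation; since $w_0^{-1}=w_0$, it returns the \emph{same} positive half-twist bimodule, not an inverse braid. The paper avoids all of this by a different mechanism: it proves $\mathbb{B}_\tau P^\kappa_{\Bi}$ is \emph{self-dual} directly, via the injection $\iota\colon\mathbb{B}_\tau P^\kappa_{\Bi}\hookrightarrow \mathbb{B}_\tau P^0_{\Bi}\cong P^0_{\Bi}(\ast)$ and surjection $\pi$ in the other direction (composing to multiplication by $y_{\Bi,\kappa}$), together with the Frobenius trace $\tr_\la$ on $T^\la$ from Theorem \ref{Frobenius}; the resulting pairing $\langle\pi(a),\pi(b)\rangle=\tr_\la(y_{\Bi,\kappa}\dot a b)$ is non-degenerate. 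Self-duality plus the standard filtration of Lemma \ref{pro-sta} gives tiltingness immediately from the definition, with no appeal to inverses. If you want to keep your $\Ext$-vanishing strategy, you must first supply an independent proof that each $\mathbb{B}_{s_i}$ is invertible (e.g.\ an explicit two-term complex of bimodules inverting $\bra_{s_i}$), which is a substantial piece of work not present in your sketch.
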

\begin{proof}
  We show first that $\mathbb{B}_\tau P_{\Bi}^\kappa$ is self-dual.
  The pairing that achieves this duality is a simple variant on that
  described in Section \ref{sec:cyc}, where as before, we form a
  closed diagram and evaluate its constant term. 
%This pairing is  pictorially represented in Figure \ref{pairing}.

The non-degeneracy of this pairing follows from that on $P^0_{\Bi}$.
In Lemma \ref{self-dual-embedding}, we have shown that $P^{\kappa}_{\Bi}$ has an embedding into
$P^{\kappa}_{\Bi}$ into $P^{0}_{\Bi}$ consistent with the standard
filtration, given by left multiplication by the element
$\theta_\kappa$.  The quotient $P^{0}_{\Bi}/P^{\kappa}_{\Bi}$ is again
filtered by standard modules, and this is sent to a module by
$\mathbb{B}_\tau$ by Lemma \ref{pro-sta}.  Thus, the usual long exact sequence
shows that the induced map 
$\iota\colon \mathbb{B}_\tau P^{\kappa}_{\Bi}\to \mathbb{B}_\tau P^0_{\Bi}\cong
P^0_{\Bi}((\langle\la,\la\rangle-\sum_{i=1}^\ell
\langle\la_i,\la_i\rangle)/2)$ is again an injection (the last
isomorphism follows by Corollary \ref{P0i}).

By Proposition \ref{B-basis}, any
non-zero diagram in $\mathbb{B}_\tau P^{\kappa}_{\Bi}$ can be drawn
with a section in the middle where all black strands are right of all
red strands.
Thus, the map $P^0_{\Bi}\to P^{\kappa}_{\Bi}$ given by multiplication by
$\dot{\theta_\kappa}$ is not surjective, but the induced
map $\pi\colon P^0_{\Bi}((\langle\la,\la\rangle-\sum_{i=1}^\ell
\langle\la_i,\la_i\rangle)/2)\cong \mathbb{B}_\tau P^0_{\Bi}\to \mathbb{B}_\tau P^{\kappa}_{\Bi}$ is.

Note that $\iota\pi=y_{\Bi,\kappa}\cdot$.  Thus, the pairing we desire
is defined by:
\[\langle \pi(a),\pi(b) \rangle_{\mathbb{B}_\tau P^{\kappa}_{\Bi}}=\tr(
\dot a{b}y_{\Bi,\kappa})=\tr(y_{\Bi,\kappa}
\dot a{b}).\] This is well defined since if $\pi(a)=0$, then $y_{\Bi,\kappa}
\dot a=0$ and similarly for $b$.   

We can alternatively define  this as the unique pairing such that
the maps $\pi$ and $\iota$ are adjoint with respect to the Frobenius
pairing on $P^0_{\Bi}$. 
This shows
immediately that the perpendicular to the image of the inclusion
contains the kernel of the surjection.  Since these have the same
dimension, they coincide and the pairing is non-degenerate.  Thus,
$\mathbb{B}_\tau P^{\kappa}_{\Bi}$ is self-dual.

By Lemma \ref{pro-sta}, $\mathbb{B}_\tau P^\kappa_\Bi$ has a
filtration by standards.  Since the element $\tau$ reverses the
pre-order on standards, every standard which appears is below
$(\kappa',\Bi')$, the sequence obtained from reversing the blocks of
$(\kappa,\Bi)$.  So if $(\kappa,\Bi)$ (and thus $(\kappa',\Bi)$) is
stringy, the indecomposable tilting whose highest composition factor is the head of
$S^{\kappa'}_{\Bi'}$ is a summand of $\mathbb{B}_\tau P^\kappa_\Bi$.
Thus, any tilting is a summand of $\mathbb{B}_\tau$ applied to a
projective.
\end{proof}

\begin{thm}\label{braid-act}
  The functor $\mathbb{B}_w$ is an equivalence $\cat_\C^\bla\to
  \cat_\C^{w\bla}$ for every  $w\in W$.
\end{thm}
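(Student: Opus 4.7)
By Lemma \ref{pro-sta}, choosing a reduced decomposition $w = s_{i_1} \cdots s_{i_k}$ yields $\mathbb{B}_w \cong \mathbb{B}_{s_{i_k}} \circ \cdots \circ \mathbb{B}_{s_{i_1}}$, so it is enough to treat the case $w = s_i$ a simple reflection.  My plan is to construct a quasi-inverse $\mathbb{B}'_i \colon \cat_\C^{s_i\bla} \to \cat_\C^\bla$ via a bimodule $\bra'_i$ built in parallel with Definition \ref{bra-def}, but with the crossing of the two moving red strands taken to be a negative (under-)crossing.  The relations (\ref{side-dumb})--(\ref{triple-point-red}) are replaced by their mirrors through the $x$-axis, and the ``dot correction'' terms for crossings of black strands past the red/red under-crossing are reversed, so that the analogues of Proposition \ref{B-basis}, Proposition \ref{bra-commute}, Lemma \ref{cross-standard}, and Lemma \ref{pro-sta} all go through with essentially the same proofs.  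On the Grothendieck group, $\mathbb{B}'_i$ acts by the inverse of the braiding of Corollary \ref{br-cat}.

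The heart of the argument is to exhibit a natural map $\eta \colon T^\bla \to \bra_i \otimes^L_{T^{s_i\bla}} \bra'_i$ of $T^\bla$-bimodules --- coming from the canonical diagram consisting of a positive red/red crossing immediately followed by its under-crossing inverse --- and to show it is a quasi-isomorphism.  Applied to a projective $P^\kappa_\Bi$, the right-hand side computes $\mathbb{B}'_i \mathbb{B}_i(P^\kappa_\Bi)$; by two applications of Lemma \ref{pro-sta}, each cohomology group has a standard filtration.  On $K_0$ the composition acts by the identity (two inverse braidings), so the graded Euler characteristic of $\mathbb{B}'_i \mathbb{B}_i(P^\kappa_\Bi)$ matches $[P^\kappa_\Bi]$ exactly.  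Combining this with Proposition \ref{semi-orthogonal}, which pins down $\End(S_\bal) \cong R_{\bal(0)} \otimes \bigotimes_i \alg^{\la_i}_{\mu_i}$ and enforces Ext-vanishing between different strata, forces both the vanishing of the higher cohomology of $\mathbb{B}'_i \mathbb{B}_i(P^\kappa_\Bi)$ and the bijectivity of $\eta$ in degree zero on each projective --- hence $\eta$ is a quasi-isomorphism of bimodules.

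The main obstacle I expect is verifying that the higher Tor's indeed vanish in $\bra_i \otimes^L_{T^{s_i\bla}} \bra'_i$; this reduces to showing that $\bra_i$ admits a filtration by standards as a right $T^{s_i\bla}$-module while $\bra'_i$ admits a filtration whose pieces pair in a controlled way with those standards as a left module.  The first is exactly Lemma \ref{pro-sta}; the second follows either by mirroring its proof or via the observation that $\dot\bra_i$ is isomorphic (up to a grading shift) to $\bra'_i$, so that the filtration on $\bra_i$ as a right module induces the required one on $\bra'_i$ as a left module.  Surjectivity of $\eta$ onto $H^0$ reduces to a diagrammatic generation claim of the kind used in the proof of Lemma \ref{cross-standard}: any $1$-Stendhal diagram can be factored through the canonical Reidemeister-II configuration using isotopies and the local relations.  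Running the same argument with the roles of $\bra_i$ and $\bra'_i$ reversed gives $\mathbb{B}_i \mathbb{B}'_i \cong \id$, completing the proof that $\mathbb{B}_i$, and hence every $\mathbb{B}_w$, is an equivalence.
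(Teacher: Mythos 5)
There is a genuine gap, and it sits exactly where your construction of the quasi-inverse lives. In this diagrammatic formalism the red/red crossing carries no over/under information: the relations (\ref{side-dumb})--(\ref{triple-point-red}) are blind to which red strand is ``on top'' (drawing it as an over-crossing is, as the paper says, only a mnemonic), and there are no dot-correction terms at red/red crossings for you to reverse. So the ``negative-crossing'' bimodule you describe is canonically isomorphic to $\bra_{s_i}$ itself. Your fallback identification $\bra'_i\cong\dot\bra_{s_i}$ does not rescue this, because $\dot\bra_w\cong\bra_{w^{-1}}$ (see the proof of Lemma \ref{pro-sta}), so $\dot\bra_{s_i}\cong\bra_{s_i}$ again. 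Either way, your composite $\mathbb{B}'_i\mathbb{B}_i$ is the full twist $\mathbb{B}_{s_i}^2$, which on the Grothendieck group is the \emph{square} of the braiding --- not the identity --- and which categorically is a Serre-type functor (Proposition \ref{serre}), certainly not $\id$. Hence the claim that the Euler characteristics of $\mathbb{B}'_i\mathbb{B}_i(P^\kappa_\Bi)$ and $P^\kappa_\Bi$ agree fails, and everything downstream of $\eta$ collapses. The honest inverse is the adjoint $\RHom_{T^{s_i\bla}}(\bra_{s_i},-)$, and proving that the unit and counit of that adjunction are isomorphisms is precisely the content that has to be supplied; it cannot be manufactured by re-decorating the crossing. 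A secondary, independent issue: even with a correct $\bra'_i$, ``Euler characteristics match and every cohomology group is standardly filtered'' does not force the higher cohomology to vanish, since contributions in different homological degrees could cancel; the Tor-vanishing must be argued directly, as in the $f_n\Rightarrow s_n$ step of Lemma \ref{pro-sta}.

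For comparison, the paper's proof avoids constructing an inverse altogether. By Theorem \ref{tilting} the half twist $\mathbb{B}_\tau$ sends projectives to tilting modules, so all higher Exts between images of projectives vanish for free; Corollary \ref{br-cat} gives $\dim\Hom(\mathbb{B}_\tau P_{\Bi}^{\kappa},\mathbb{B}_\tau P_{\Bi'}^{\kappa'})=\dim\Hom(P_{\Bi}^{\kappa},P_{\Bi'}^{\kappa'})$ by an isometry argument on $K_0$; and injectivity of the induced map on Hom spaces is checked diagrammatically via Lemma \ref{self-dual-embedding}. This makes $\mathbb{B}_\tau$ fully faithful on projectives with generating image, hence an equivalence, and each $\mathbb{B}_{s_i}$ is then an equivalence because $\mathbb{B}_\tau$ factors through it on either side. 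If you want to pursue your route, replace $\bra'_i$ by the adjoint functor and verify the unit and counit on projectives using the standard filtrations of $\bra_{s_i}$ on both sides --- but you will find yourself reassembling most of the tilting argument.
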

\begin{proof}
  We will first show $\mathbb{B}_\tau$ is a derived equivalence.
  The higher Ext's between tilting modules always vanish so we always have that $\Ext^{>0}(\mathbb{B}_\tau
  P_{\Bi}^{\kappa},\mathbb{B}_\tau P_{\Bi'}^{\kappa'})=0$; thus we need only show that induced map between endomorphisms of these modules is an isomorphism.

It follows from Corollary \ref{br-cat} that 
\begin{equation*}
  \dim\Hom (\mathbb{B}_\tau P_{\Bi}^{\kappa},\mathbb{B}_\tau
  P_{\Bi'}^{\kappa} )=\langle[\mathbb{B}_\tau P_{\Bi}^{\kappa}],[\mathbb{B}_\tau
  P_{\Bi'}^{\kappa}] \rangle_1= \langle [P_{\Bi}^{\kappa}], [P_{\Bi'}^{\kappa'}]\rangle_1=\dim\Hom ( P_{\Bi}^{\kappa}, P_{\Bi'}^{\kappa'}).
\end{equation*}
The functor $\mathbb{B}_\tau$ induces a map $$\Hom(P_{\Bi}^{\kappa},
P_{\Bi'}^{\kappa})\longrightarrow\Hom(\mathbb{B}_\tau
P_{\Bi}^{\kappa},\mathbb{B}_\tau P_{\Bi'}^{\kappa}).$$ This is
injective, since no element of the image kills the element which pulls all
black strands to the right of all red strands below all crossings, by
Lemma \ref{self-dual-embedding}.
Thus, it is surjective by the dimension calculation above.  

It follows
that $\mathbb{B}_\tau$ is an equivalence.  Since it factors through
any $\mathbb{B}_{k}$ on the left and right, the functor $\mathbb{B}_{k}$
is an equivalence as well.  Since all other $\mathbb{B}_\si$ is a
composition of these functors and their adjoints, these are
equivalences, finishing the proof.
\end{proof}

If $\si$ is a braid, recall that $\si$ has a canonical
factorization called {\bf Garside decomposition} $\si=\tau^{p}\xi_1\cdots \xi_m$ 
into minimal lifts of non-longest permutations $w_1,\dots, w_m$, with $\tau$ a positive lift of the longest element of $S_\ell$, and $p\in \Z$. 
First, $p$ is is the lowest integer such that $\tau^{-p}\si$ is a
positive braid.  
 Then, the first factor
$\xi_1$ is by definition the longest positive lift of a permutation
such that $\xi_1^{-1}\tau^{-p}\si$ is still positive, and the rest of the
decomposition is constructed inductively.
\begin{defn}
  Let $\mathbb{B}_{\si}:=\mathbb{B}_{\tau}^p\mathbb{B}_{w_1}\cdots \mathbb{B}_{w_n}$.
\end{defn}

\begin{cor}
    If $\si=\tau^{p'}\xi_{1}'\cdots \xi_{q}'$ is any other factorization of
    $\si$ into a power of $\tau$ and minimal positive lifts of $w_1',\dots,w_q'$, then we have an
    isomorphism of functors  $\mathbb{B}_\si\cong \mathbb{B}_{\tau}^{p'}\mathbb{B}_{w_1'}\cdots \mathbb{B}_{w_q'}$.
\end{cor}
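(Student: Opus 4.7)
My plan is to reduce the statement to the braid relations on the level of the functors $\mathbb{B}_{s_i}$, which will follow directly from Lemma \ref{lem:independent}. The first step is to observe that for any permutation $w\in S_\ell$ and any reduced expression $w=s_{i_1}\cdots s_{i_k}$, iterated application of Lemma \ref{lem:independent} gives an isomorphism of bimodules $\bra_w\cong \bra_{s_{i_1}}\otimes_T\cdots\otimes_T\bra_{s_{i_k}}$, and this lifts to an isomorphism of derived functors $\mathbb{B}_w\cong \mathbb{B}_{i_1}\circ\cdots\circ \mathbb{B}_{i_k}$. Hence every $\mathbb{B}_{w_j'}$ appearing in either factorization can be rewritten as a composition of $\mathbb{B}_{s_i}$'s; combined with the invertible functor $\mathbb{B}_\tau^{\pm 1}$ (Theorem \ref{braid-act}), both sides become compositions in the same ``alphabet.''

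Next, I would handle the case of a positive braid, i.e., $p=p'\geq 0$. Any two factorizations of a positive braid $\beta$ in terms of the generators $\sigma_{i_1}\cdots\sigma_{i_k}=\sigma_{j_1}\cdots\sigma_{j_k}$ are connected by a sequence of moves using only the two braid relations $\sigma_i\sigma_{i+1}\sigma_i=\sigma_{i+1}\sigma_i\sigma_{i+1}$ and $\sigma_i\sigma_j=\sigma_j\sigma_i$ for $|i-j|>1$. Each of these equalities, when lifted to positive lifts in $S_\ell$, has both sides expressing a permutation $w$ as a reduced word with $\ell(w)$ equal to the total number of letters (three or two respectively). Thus by the first step, both sides give functors isomorphic to $\mathbb{B}_w$, establishing that the braid relations hold on the functor level and hence $\mathbb{B}_\beta$ is independent of the choice of positive factorization.

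For the general case, I would use the invertibility of $\mathbb{B}_\tau$ from Theorem \ref{braid-act} to reduce to positive braids. Given the two factorizations $\si=\tau^p\xi_1\cdots\xi_n=\tau^{p'}\xi_1'\cdots\xi_q'$, set $N=-\min(p,p',0)$ and use the isomorphisms $\mathbb{B}_\tau\circ\mathbb{B}_{w}\cong\mathbb{B}_{\tau\cdot w}$ (a special case already handled by the positive case, since $\tau\cdot w$ is again a positive braid and its natural factorization matches that of $\mathbb{B}_{\tau}\mathbb{B}_w$) to absorb $\mathbb{B}_\tau^N$ on both sides. This yields an equality of positive braids with matching positive factorizations, to which the previous step applies, and then pre-composing with $\mathbb{B}_\tau^{-N}$ concludes.

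The main technical point is ensuring that these isomorphisms are genuinely at the derived level, not merely on underlying bimodules: we need $\bra_w\overset{L}{\otimes}_T\bra_{w'}\cong \bra_w\otimes_T\bra_{w'}$ whenever $\ell(ww')=\ell(w)+\ell(w')$. This, however, is already contained in the proof of Lemma \ref{pro-sta}: the implication $s_n+f_n\Rightarrow p_{n+1}$ in that argument is precisely the vanishing of $\Tor^{>0}(\bra_{ws_i},\bra_{s_i})$ deduced from the standard filtrations on both factors. So no genuinely new calculation is required, and the corollary follows by assembling these ingredients.
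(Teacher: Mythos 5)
Your proposal is correct and follows essentially the same route as the paper: reduce to positive braids by multiplying by a power of $\tau$, decompose each $\mathbb{B}_{w_j'}$ into simple twists via the reduced-expression factorization of $\bra_w$ (with the Tor-vanishing from the proof of Lemma \ref{pro-sta} guaranteeing this holds at the derived level), and then observe that any two positive words for the same braid are connected by braid relations, each instance of which is an equality of two reduced expressions of a single permutation. The paper phrases the last step in terms of Reidemeister III moves between refinements of factorizations, but the content is identical.
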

\begin{proof}
By multiplying by a high power of $\tau$, we can assume that the
braid is positive. 
 Let us induct on the length $\ell(\si)$ of the braid.  Pick a reduced
 expression for each $w_j'$; by induction, $\mathbb{B}_{w_j'}$ is
 isomorphic to the composition of the functors corresponding to these
 simple reflections.  This allows us to reduce to the case where each
 $w_j'$ has length 1.  

  The result is true when the Garside decomposition has length 1, since we
  can apply the statement $p_n$ proven in the proof of Lemma
  \ref{pro-sta} to write $\mathbb{B}_{w}\cong
  \mathbb{B}_{ws} \mathbb{B}_{s}$. 

In the general case, this shows that the resulting functor will not
change when one refines any factorization.  This establishes the
general case, since any two reduced expressions for the braid are related by a
finite chain of Reidemeister III moves, i.e. a chain where each consecutive pair
are two different refinements of a single factorization.  Thus,
starting with any factorization, we can refine to a product of simple
twists, and then apply Reidemeister III moves until we arrive at a
refinement of the Garside decomposition.
\end{proof}

\begin{cor}\label{cor:braid-action}
  The functors $\mathbb{B}_\sigma$ induce a strong action of the braid
  group on the categories $\bigoplus_{w\in S_\ell}\cat_\C^{w\cdot \bla}$.
\end{cor}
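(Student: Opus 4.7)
The plan is to reduce the corollary to two facts already established: the previous corollary, which shows that $\mathbb{B}_\si$ for $\si$ a positive braid does not depend on the choice of factorization into simple generators (not just the choice of reduced expression), and Theorem \ref{braid-act}, which shows that each $\mathbb{B}_w$ (in particular $\mathbb{B}_\tau$) is an equivalence. Together these two facts contain essentially all the content of the corollary; the remaining work is to assemble them into a coherent braid group action.

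First I would handle positive braids. For two positive lifts $\si,\si'$ of braids, I would identify $\mathbb{B}_{\si\si'}$ with $\mathbb{B}_\si \circ \mathbb{B}_{\si'}$ by appealing to Lemma \ref{lem:independent} at the bimodule level: if one picks Garside decompositions $\si=\tau^p\xi_1\cdots \xi_m$ and $\si'=\tau^{p'}\xi_1'\cdots \xi_{m'}'$, then the concatenation is a (possibly non-Garside) factorization of $\si\si'$, and the preceding corollary gives canonical isomorphisms identifying it with the Garside factorization of the product. The braid relations $\mathbb{B}_{s_i}\mathbb{B}_{s_{i+1}}\mathbb{B}_{s_i}\cong \mathbb{B}_{s_{i+1}}\mathbb{B}_{s_i}\mathbb{B}_{s_{i+1}}$ and the far-commutation relations are then automatic: both sides compute $\mathbb{B}_\si$ for the same positive braid $\si$, and the preceding corollary guarantees that the two resulting functors are canonically isomorphic.

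Next I would extend to all braids using the fact that $\mathbb{B}_\tau$ is an equivalence by Theorem \ref{braid-act}; indeed, any factorization of $\si$ becomes a positive factorization after multiplying by a sufficiently high power of $\tau$, so one defines $\mathbb{B}_\si = \mathbb{B}_\tau^{-N}\circ \mathbb{B}_{\tau^N \si}$ for $N$ large and checks independence of $N$ using the equivalence property. The composition isomorphism $\mathbb{B}_{\si_1\si_2}\cong \mathbb{B}_{\si_1}\circ \mathbb{B}_{\si_2}$ for arbitrary braids then follows from the positive case by multiplying both sides by high powers of $\mathbb{B}_\tau$ and applying its invertibility.

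The only part that requires care is the strong part of the action, namely the coherence of the associativity constraints $\mathbb{B}_{(\si_1\si_2)\si_3}\cong \mathbb{B}_{\si_1(\si_2\si_3)}$ under the canonical isomorphisms. Since all functors are derived tensor product with explicit bimodules $\bra_w$, the composition isomorphisms are induced by the associator of the underlying derived tensor product of bimodules, which is canonical and strictly associative in the $\infty$-categorical sense; the coherence diagrams therefore commute automatically. I expect the main (mild) obstacle to be checking that the Garside-decomposition convention interacts cleanly with the canonical isomorphisms from the preceding corollary—specifically, verifying that the bimodule isomorphisms used to identify $\bra_w\otimes \bra_{ws_i}$ with $\bra_{ws_i s_i}$ in the statement $p_n$ of Lemma \ref{pro-sta} are natural with respect to the horizontal concatenation of $w$-Stendhal diagrams. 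This is a diagrammatic verification using the relations (\ref{side-dumb}--\ref{triple-point-red}), and requires no new ideas.
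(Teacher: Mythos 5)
Your reduction of positive-braid compositions to Lemma \ref{lem:independent} and the preceding corollary, and the extension to arbitrary braids via invertibility of $\mathbb{B}_\tau$, is fine and broadly parallels what the paper does. The problem is in where you locate the actual content of ``strong.'' You claim the coherence diagrams ``commute automatically'' because derived tensor product of bimodules is strictly associative. But the coherence that must be verified for a strong braid group action is not associativity of composition; it is the Zamolodchikov tetrahedron equation satisfied by the chosen isomorphisms $\mathbb{B}_{\si_i}\mathbb{B}_{\si_{i+1}}\mathbb{B}_{\si_i}\cong \mathbb{B}_{\si_{i+1}}\mathbb{B}_{\si_i}\mathbb{B}_{\si_{i+1}}$ — these are not associators, and their coherence is exactly what the Elias--Williamson criterion \cite[1.18]{ElCox} (which the paper invokes) isolates as the thing to check. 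Relatedly, your appeal to the preceding corollary to say the two sides of a braid relation are ``canonically isomorphic'' assumes what must be proved: that corollary produces \emph{an} isomorphism via a chain of Reidemeister III moves, and independence of the chain is precisely the tetrahedron equation.

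The paper's verification is genuinely diagrammatic and not routine naturality: using Lemma \ref{lem:independent} to identify $\bra_{\si_i}\otimes\bra_{\si_{i+1}}\otimes\bra_{\si_i}\cong\bra_{\si_i\si_{i+1}\si_i}$, one goes around the Zamolodchikov loop for a reduced expression of the longest element of $S_4$, collapsing empty red triangles via relation (\ref{triple-point-red}) after pulling all black strands out of the polygons formed by the red strands; the loop acts as the identity on diagrams, hence on the bimodule, hence (after lifting to projective resolutions, up to homotopy) on the derived functors. Your final paragraph gestures at a ``diagrammatic verification using (\ref{side-dumb}--\ref{triple-point-red})'' but frames it as a mild naturality check subordinate to an associativity argument that does not bear on the issue; as written, the step that carries the corollary is missing.
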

\begin{proof}
  By work of Elias and Williamson \cite[1.18]{ElCox},  it suffices to show that
  we have isomorphisms lifting the braid relations which satisfy the
  Zamolodchikov tetrahedral equations.  This will hold since we have
  defined a canonical functor not just for braid generators, but for
  all positive lifts of permutations.  

By Lemma \ref{pro-sta}, the composition $\mathbb{B}_{\si_i}\circ
\mathbb{B}_{\si_{i+1}}\circ  \mathbb{B}_{\si_{i}}$ is the derived
tensor product with $\bra_{\si_i}\otimes \bra_{\si_{i+1}}\otimes
\bra_{\si_i}\cong \bra_{\si_i\si_{i+1}\si_i}$.  By Lemma
\ref{lem:independent}, we have a canonical isomorphism of this functor
with $\mathbb{B}_{\si_{i+1}}\circ
\mathbb{B}_{\si_i}\circ  \mathbb{B}_{\si_{i+1}}$.

Given any reduced expression for the longest permutation of 4
consecutive strands, we can apply these isomorphisms to go around the
loop of the Zamolodchikov tetrahedral equation, collapsing empty red
triangles in the desired sequence.  Since can use the relations to
pull all black strands out of all the polygons created by the red
strands in the permutation of 4 strands, going around this loop sends
a diagram to itself.  

This checks the necessary relation in terms of maps between modules.
Thus the induced natural transformations on projective resolutions
satisfy the Zamolodchikov tetrahedron equations up to homotopy, so
the natural transformations between derived functors satisfy the same
equations on the nose.
\end{proof}

Recall that the {\bf Ringel dual} of a standardly stratified category is the category of modules over the endomorphism ring of a tilting generator, that is, the opposite category to the heart of the $t$-structure in which the tiltings are projective.  

\begin{cor}
The Ringel dual of $\cata_\C^\bla$ is equivalent to $\cata_\C^{\tau\cdot \bla}$.
\end{cor}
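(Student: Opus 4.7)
The plan is to identify an explicit tilting generator of $\cata_\C^\bla$, compute its endomorphism algebra, and recognize the answer as (Morita equivalent to) $T^{\tau\cdot\bla}$.

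First I would apply Theorem \ref{tilting} with $\bla$ replaced by $\tau\cdot\bla$: since $\tau$ is an involution of $S_\ell$, the functor $\mathbb{B}_\tau\colon \cat_\C^{\tau\cdot\bla}\to \cat_\C^\bla$ carries each projective $P^\kappa_\Bi$ of $\cata^{\tau\cdot\bla}$ to a tilting module in $\cata^\bla$, and every indecomposable tilting in $\cata^\bla$ occurs as a summand of these. Let $P=\bigoplus_{(\Bi,\kappa)}P^\kappa_\Bi$ be a projective generator of $\cata^{\tau\cdot\bla}$; then $T:=\mathbb{B}_\tau(P)$ is a tilting generator of $\cata^\bla$, so the Ringel dual of $\cata_\C^\bla$ is (equivalent to) the category of modules over $\End_{\cata^\bla}(T)^{\mathrm{op}}$.

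Next I would compute this endomorphism algebra via the derived equivalence. By Theorem \ref{braid-act}, $\mathbb{B}_\tau$ is an equivalence of triangulated categories $\cat_\C^{\tau\cdot\bla}\simeq \cat_\C^\bla$, so we have
\[
\RHom_{\cat_\C^\bla}(\mathbb{B}_\tau P,\mathbb{B}_\tau P)\;\cong\;\RHom_{\cat_\C^{\tau\cdot\bla}}(P,P).
\]
Because $P$ is projective, the right-hand side is concentrated in degree $0$ and equals $\End_{T^{\tau\cdot\bla}}(P)$, which by Morita theory is $T^{\tau\cdot\bla}$ (up to opposite). Consequently the left-hand side is also concentrated in degree $0$ — equivalently, the tilting generator $T$ has no higher self-extensions, as one expects — and $\End_{\cata^\bla}(T)\cong T^{\tau\cdot\bla}$ as graded algebras. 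Taking opposites (which makes no essential difference at the level of module categories since reflecting diagrams through a vertical axis identifies $T^{\tau\cdot\bla}$ with its opposite) identifies the Ringel dual of $\cata_\C^\bla$ with $\cata_\C^{\tau\cdot\bla}$.

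The principal obstacle is not really an obstacle at all: everything hinges on upgrading Theorem \ref{braid-act} (an equivalence on the unbounded derived level) to the identification of $\Hom$ of the tilting generator in the abelian category with $\Hom$ of the projective generator of the target abelian category. This is automatic once one knows, on one side, that $\Ext^{>0}$ between tiltings vanishes (standard for any standardly stratified algebra with compatible tilting theory, which Theorems \ref{tilting} and Corollary \ref{SS} give us here), and on the other side that $\Ext^{>0}(P,P)=0$ because $P$ is projective. The matching of the extra grading shifts that appear in $\mathbb{B}_\tau$ is harmless as the statement of the corollary concerns an equivalence of $\C$-graded categories.
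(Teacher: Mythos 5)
Your proposal is correct and is essentially the argument the paper intends: the corollary is stated without proof as an immediate consequence of Theorem \ref{tilting} (that $\mathbb{B}_\tau$ of a projective generator is a tilting generator) and Theorem \ref{braid-act} (that $\mathbb{B}_\tau$ is an equivalence, identifying the endomorphism rings). The only nitpick is that the anti-automorphism identifying $T^{\tau\cdot\bla}$ with its opposite is the dot reflection through the \emph{horizontal} axis, not a vertical one, but this does not affect the argument.
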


If $C_i$ and $C'_i$ are semi-orthogonal decompositions indexed by
$i\in[1,n]$ then $C_i'$ is the {\bf mutation} of $C_i$ by a
permutation $\si$ if, for each $j\in [1,n]$, the category generated by $C_i$ for $i\geq j$ is the same as that generated by $C_{\si(i)}'$ for $i\geq j$.

\begin{prop}\label{pro:mutate}
  For any braid $\si$, $\mathbb{B}_\si$ sends the 
  semi-orthogonal decomposition of Proposition \ref{semi-orthogonal} to its mutation by $\si$.
\end{prop}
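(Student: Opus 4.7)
The plan is to use the strong braid group action (Corollary \ref{cor:braid-action}) to reduce to the case of a single simple reflection $\sigma=s_k$, and then to track precisely where the pieces $\cC^{\bal}$ go under $\mathbb{B}_k$. Since the mutation condition concerns only the images of triangulated subcategories generated by upward-closed pieces, and since every braid can be factored into simple twists and their inverses, it suffices to verify the claim for $\mathbb{B}_k$ (and then dualize/adjoin for inverses).

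The geometric heart of the argument is Proposition \ref{sta-braid}, which pins down $\mathbb{B}_k$ on standardizations with $P_\emptyset$ in the $(k{+}1)$-st slot: the image is, up to a grading shift matching the $R$-matrix normalization, the standardization for $s_k\cdot\bla$ in which the $k$-th and $(k{+}1)$-st tensor factors are interchanged. To extend this to an arbitrary standard module $\mathbb{S}^{\bla}(P_1\boxtimes\cdots\boxtimes P_\ell)$, I would use Proposition \ref{bra-commute} (strong equivariance of $\mathbb{B}_k$ with respect to the full $\tU$-action) together with Lemma \ref{cross-standard} (commutation of $\mathbb{B}_k$ with the coarser standardization $\mathbb{S}^{\bla^{(k)}}$). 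The point is that every module in the subquotient $\cata^{(\la_k,\la_{k+1})}_{\mu_k;\mu_{k+1}}$ can be built from the highest-weight object $P_\emptyset\boxtimes P_{k+1}$-like configurations by applying $\fF_i$'s and $\fE_i$'s to the two factors; on the level of $\cat_\C^\bla$ these correspond to pieces of the filtrations of Proposition \ref{prop:act-filter}, and $\mathbb{B}_k$ intertwines with each step. This shows that $\mathbb{B}_k$ carries the essential image of $\cC^{\bal}$ into the essential image of $\cC'_{s_k\bal}$ inside $\cat_\C^{s_k\cdot\bla}$, where $s_k$ acts on root functions by swapping $\bal(k)$ and $\bal(k+1)$.

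To finish, I would deduce the mutation property: for any upward-closed subset $S$ of root functions for $\bla$, the subcategory $\langle \cC^{\bal} : \bal \in S\rangle$ is sent by $\mathbb{B}_k$ into $\langle \cC'_{s_k\bal} : \bal\in S\rangle$. Because $\mathbb{B}_k$ is an equivalence (Theorem \ref{braid-act}) and both sides fit into semi-orthogonal decompositions (Proposition \ref{semi-orthogonal}) with matching total Grothendieck group classes, the inclusion must in fact be an equality. Specializing to the upward-closed sets $S = \{\bal : \bal\geq \mu\}$ for each fixed $\mu$ gives exactly the condition in the definition of mutation on page \pageref{pro:mutate}.

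The main obstacle I expect is step two: bootstrapping Proposition \ref{sta-braid} from the restricted class of standards it handles to \emph{all} standards. This requires carefully combining the commutation with $\tU$ from Proposition \ref{bra-commute} with the inductive filtration of $\fE_i$ and $\fF_i$ on standardizations in Proposition \ref{prop:act-filter}, and keeping track of grading shifts so that they match the $q$-twists in the $R$-matrix formula \eqref{Rmatrix}. A subtler point is that the dominance order on root functions is \emph{not} preserved by $s_k$, which is precisely why the conclusion is a nontrivial mutation of semi-orthogonal decompositions rather than a mere relabeling of pieces.
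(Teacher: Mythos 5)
Your overall architecture (reduce to a single positive crossing $s_k$, then establish upper-triangularity of $\mathbb{B}_k$ against the stratification) matches the paper's, but your central intermediate claim is false, and it is exactly the point the proposition is about. You assert that $\mathbb{B}_k$ carries each piece $\cC^{\bal}$ into the single piece $\cC'_{s_k\bal}$. This cannot be right: the $R$-matrix \eqref{Rmatrix} is only upper-triangular on pure tensors, not diagonal, so the class of $\mathbb{B}_k(S_\bal)$ is $[S'_{s_k\bal}]$ \emph{plus} correction terms supported on strictly higher strata (already for $\mathfrak{sl}_2$ with $\bla=(2,2)$, braiding $v_2\otimes Fv_2$ produces $Fv_2\otimes v_2$ plus a multiple of $v_2\otimes Fv_2$). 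Proposition \ref{sta-braid} only treats the degenerate case where the $(k{+}1)$-st slot carries $P_\emptyset$, precisely the case in which the correction terms of \eqref{Rmatrix} vanish; bootstrapping from there via $\fE_i,\fF_i$ cannot yield exact preservation of pieces, because Proposition \ref{prop:act-filter} says that applying $\fF_i$ to a standardization produces a \emph{filtration} whose subquotients live in several different strata. You in fact notice the tension yourself in your closing remark that the order is not preserved and the result is "a nontrivial mutation rather than a mere relabeling" — but if each $\cC^\bal$ really landed inside $\cC'_{s_k\bal}$, the statement would be a mere relabeling.

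The missing ingredient is Lemma \ref{pro-sta}, which is what the paper actually uses: $\mathbb{B}_{s_k}(P_\bal)$ has a standard filtration in which $S'_{s_k\bal}$ occurs exactly once and every other subquotient is a summand of some $S'_{\bal''}$ with $\bal''$ strictly higher. Combining this with the standard filtration of $P_\bal$ itself (Corollary \ref{SS}), one sees that for each $\boldsymbol{\beta}$ the subcategory generated by $\mathbb{B}_{s_k}(S_\bal)$ for $\bal\geq\boldsymbol{\beta}$ coincides with the one generated by $\mathbb{B}_{s_k}(P_\bal)$ for $\bal\geq\boldsymbol{\beta}$, and the two-sided upper-triangularity then identifies this with the subcategory generated by the appropriate $S'_{\bal}$'s — which is the definition of mutation. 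Your final "inclusion plus Grothendieck-group count forces equality" step is not needed once this triangularity is in hand, and without it the inclusion itself is not established.
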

\begin{proof}
First, note that we need only show this for $\si_k$.  Of course, an
equivalence sends one semi-orthogonal decomposition to another.  Thus,
the only point that remains to show is that
$\mathbb{B}_{\si_k}(S_\bal)$ for $\bal\geq \boldsymbol{\beta}$
generates the same subcategory as $S'_\bal$ for $\si_k^{-1}(\bal)\leq
\boldsymbol{\beta}$, where $S'_\bal$ denotes the appropriate standard
module in $\cata_\C^{\si_k\cdot \bla}$. Call these subcategories $C_1$
and $C_2$.  Now let $P_\bal$ be the projective cover of $S_\bal$.
First, note that the category $C_1$ is the same that generated by
$\mathbb{B}_{\si_k}(P_\bal)$ for $\bal\geq \boldsymbol{\beta}$, since
the kernel of the map $P_\bal\to S_\bal$ is filtered by summands of
$S_{\bal'}$ for $\bal'> \bal$ by Corollary \ref{SS}.
On the other hand,   $\mathbb{B}_{\si_k}(P_\bal)$ also has a
filtration in which $S_{\si_k(\boldsymbol{\beta})}$ appears with
multiplicity 1, and all other constituents are summands of $S'_\bal$
with $\bal >\si_k(\boldsymbol{\beta})$ by
Lemma \ref{pro-sta}.  This completes the proof.
\excise{This follows from the fact that
  \begin{equation*}
    \mathbb{B}_\si S^\kappa_{\Bi}\equiv\mathbb{B}_\si P^\kappa_{\Bi}\equiv S^{\kappa'}_{\Bi'} \text{ modulo smaller $S^{\eta}_{\Bi}$}
  \end{equation*}
  where $\kappa'$ and $\Bi'$ are arrived at by moving the $i$th red
  strand and all black strands between that and the $(i+1)$-st
  rightward to the immediate left of the $(i+2)$-nd.}
\end{proof}

\subsection{Serre functors}
\label{sec:serre}

It is a well-supported principle (see, for example, 
Beilinson, Bezrukavnikov and \Mirkovic \cite{BBM04} or Mazorchuk and
Stroppel \cite{MS}) that for any suitable braid group action on a
category, the Serre functor will be given by the full twist.  Here the
same is true, up to grading shift.  Let $\mathfrak{R}=\mathbb{B}_\tau^2$ be the functor
given by a full positive twist of the red strands.  Let $\mathfrak{S}'$ be the functor sending $M\in \cat^\bla_{\C,\al}$ to  $M\big(-\langle\al,\al\rangle+\sum_{i=1}^\ell \langle\la_i,\la_i\rangle\big)$.  Let $\cat_{\mathsf{per}}^\bla$ be the full subcategory of $\cat_\C^\bla$ given by bounded perfect complexes, that is, objects which have finite projective dimension.  We note that in general, this subcategory does not contain many of the important objects in $\cat_\C^\bla$;  for example, it will contain all simple modules if and only if all $\bla$ are minuscule.

\begin{prop}\label{serre}
The right Serre functor of $\cat^\bla_{\mathsf{per}}$ is given by $\mathfrak{S}=\mathfrak{R} \mathfrak{S}'$.
\end{prop}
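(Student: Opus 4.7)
My plan is to identify $\mathfrak{R}\mathfrak{S}'$ with the derived Nakayama functor $\mathbb{N}(-) = -\Lotimes_{T^\bla}(T^\bla)^*$, which is well-known to be the right Serre functor on the perfect derived category of any finite-dimensional graded algebra. By uniqueness of the Serre functor, it is enough to construct a natural isomorphism $\mathfrak{R}\mathfrak{S}' \cong \mathbb{N}$ on a generating set of objects, namely the indecomposable projectives. Equivalently, I want to show that $\mathfrak{R}\mathfrak{S}'(P)$ is the injective envelope of $\cosoc(P)$, shifted by the correct graded degree.

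The first step I would do is the case of the self-dual projective-injectives $P^0_\Bi$. Iterating Corollary \ref{P0i} along a reduced expression for the square of the longest element, I can compute $\mathfrak{R}(P^0_\Bi)\cong P^0_\Bi(d)$ for an explicit degree $d$ coming from red/red crossings. Combined with the shift $\mathfrak{S}'$, the total shift should match the Frobenius/Nakayama degree on $T^\la = R^\la$, which is Frobenius by Theorem \ref{Frobenius}; here the self-duality of $P^0_\Bi$ as a module over the Frobenius algebra $R^\la$ forces its Nakayama image to equal itself with a known grading shift. To bootstrap from $P^0$ to an arbitrary indecomposable projective I plan to use two different routes as consistency checks. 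The first route uses the double centralizer property (Corollary \ref{doub-cen}) and Proposition \ref{prop:add-embed}: since $T^\bla\cong\End_{T^\la}(M)$ for the module $M=\bigoplus_{\Bi,\kappa} y_{\Bi,\kappa}T^\la$, transporting the Nakayama functor of the Frobenius algebra $T^\la$ across this isomorphism computes $\mathbb{N}$ on $T^\bla$. The injection $\theta_\kappa \colon P^\kappa_\Bi\hookrightarrow P^0_\Bi$ of Lemma \ref{self-dual-embedding} and the standard filtration of its cokernel (Corollary \ref{SS}) let me propagate the shift from $P^0_\Bi$ to $P^\kappa_\Bi$. The second route uses equivariance: by Proposition \ref{bra-commute}, $\mathfrak{R}$ commutes with $\fE_i$ and $\fF_i$, and the Nakayama functor also commutes with them up to an explicit shift because $\fE_i$ and $\fF_i$ are biadjoint. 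Since $T^\bla_\la\cong\K$ is cyclic for the $\tU$-action, agreement on this one module plus compatibility with $\fF_i$ determines the functors on every projective.

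The main obstacle I expect is the bookkeeping of grading shifts. The shift $-\langle\al,\al\rangle+\sum_i\langle\la_i,\la_i\rangle$ defining $\mathfrak{S}'$ must balance against (i) the accumulated red/red crossing degrees in $\mathbb{B}_\tau^2$, (ii) the graded degree of the Frobenius trace on $R^\la$, and, in the equivariance approach, (iii) the weight-dependent shift arising each time one commutes $\mathbb{N}$ past an $\fF_i$. If the combinatorial matching of these shifts becomes unwieldy, I would fall back on a direct verification: exhibit an explicit nondegenerate pairing $\Hom(P,Q)\otimes \Hom(Q,\mathfrak{R}\mathfrak{S}'P)\to \K$ for projectives $P,Q$, built diagrammatically by closing up a Stendhal diagram along the full red twist, in the same spirit as the pairing showing $\mathbb{B}_\tau P^\kappa_\Bi$ is self-dual in the proof of Theorem \ref{tilting}. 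Non-degeneracy would then follow from the injectivity of $\theta_\kappa$ and the self-duality of $P^0$, upgrading the functorial isomorphism $\mathfrak{R}\mathfrak{S}'\cong \mathbb{N}$ to a Serre-duality isomorphism that is manifestly natural.
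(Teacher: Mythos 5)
Your reduction to the Nakayama functor and your computation on the projective--injectives $P^0_\Bi$ (iterating Corollary \ref{P0i} over the full twist to get the shift $\langle\la,\la\rangle-\sum_i\langle\la_i,\la_i\rangle$) reproduce the first half of the paper's argument. The gap is in passing from $P^0_\Bi$ to a general $P^\kappa_\Bi$, and neither of your two main routes closes it. The equivariance route only reaches the additive subcategory generated by $\fF_\Bi P_\emptyset$, i.e.\ exactly the projective--injectives $P^0_\Bi$: the remaining indecomposable projectives are built using the red-strand functors $\fI_\mu$, which are \emph{not} biadjoint to their adjoints (only $\fI^R_\mu\fI_\mu=\id$ holds, Proposition \ref{FI-IF}), so the Serre functor need not commute with them up to shift and "agreement on one module plus compatibility with $\fF_i$" does not determine the functor on every projective. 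The double centralizer route is circular in a different way: propagating $\mathbb{N}$ along $0\to P^\kappa_\Bi\xrightarrow{\theta_\kappa} P^0_\Bi\to C\to 0$ requires knowing $\mathbb{N}(C)$ for the non-projective cokernel $C$, and the Nakayama functor is only right exact; moreover the Nakayama functor of $\End_{T^\la}(\bigoplus y_{\Bi,\kappa}T^\la)$ is not obtained by transporting that of $T^\la$ unless the summands $y_{\Bi,\kappa}T^\la$ are permuted by the Nakayama automorphism of $T^\la$, which is not established.

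The missing idea is the one the paper extracts from Theorem \ref{tilting}: since $\mathbb{B}_\tau$ intertwines the duality $(-)^\star$ with $\mathbb{B}_{\tau}^{-1}$ and $(P^\kappa_\Bi)^\star\cong I^\kappa_\Bi$, the modules $\mathbb{B}_\tau P^\kappa_\Bi$ and $\mathbb{B}_\tau^{-1}I^\kappa_\Bi$ are dual to one another; the self-duality of the tilting module $\mathbb{B}_\tau P^\kappa_\Bi$ then forces $\mathbb{B}_\tau^2 P^\kappa_\Bi\cong I^\kappa_\Bi$ up to shift, i.e.\ $\mathfrak{R}$ sends \emph{every} indecomposable projective to the corresponding indecomposable injective. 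Combined with the fact that $\mathfrak{R}$ is an equivalence (Theorem \ref{braid-act}) and the grading normalization you already checked on $P^0_\Bi$, this yields the proposition by the Mazorchuk--Stroppel criterion (or, in your language, by uniqueness of the Serre functor). Your "fallback" pairing gestures at exactly this self-duality, so the fix is to promote it from a contingency to the central step rather than relying on the equivariance or double-centralizer arguments.
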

\begin{proof}
  First consider the action of $\mathfrak{S}$ on
  projective-injectives.  This is the same as to say on $P^0_\Bi$,
  since these modules generate the additive category of projective-injectives. The twists of red strands are irrelevant to
  black strands that begin to the right of all of them, so
  $$\mathfrak{R}\cong \operatorname{Id}\big(\lllr-\sum_{i=1}^\ell \langle\la_i,\la_i\rangle\big)$$ as functors on the
  projective-injective category.  We let $I^\kappa_{\Bi}$ be the injective hull of the cosocle of $P^\kappa_{\Bi}$. Since $I_\Bi^0\cong
  P^0_\Bi(\lllr-\langle \al,\al\rangle),$ on this subcategory
  $\mathfrak{S}P^0_\Bi=P^0_\Bi(\lllr-\langle \al,\al\rangle)\cong
  I_\Bi^0$ and so $\mathfrak{S}$ is the graded Serre functor.

  On general grounds, we know that the modules 
  $\mathbb{B}_\tau^{-1}I^\kappa_\Bi$ and $\mathbb{B}_\tau
  P^{\kappa}_{\Bi}$ are dual.  However, we proved in Theorem
  \ref{tilting}  that $\mathbb{B}_\tau
  P^{\kappa}_{\Bi}$ is a self-dual tilting module and so $\mathbb{B}_\tau^{-1}I^\kappa_\Bi\cong\mathbb{B}_\tau
  P^{\kappa}_{\Bi}$ (ignoring
  grading for the moment).  Thus, $\mathfrak{R}P^\kappa_\Bi\cong
  I^\kappa_\Bi$ (again, ignoring the grading). In particular,
  $\mathfrak{R}$ sends projectives to injectives, and is an
  equivalence by Theorem \ref{braid-act}.  By \cite[Theorem 3.4]{MS}, the result follows.
\end{proof}

\begin{samepage}
\section{Rigidity structures}
\label{sec:rigid}
Throughout this section and the next, $\fg$ is assumed to be
finite-dimensional. Let $D$ be the determinant of the Cartan matrix.
For technical reasons, most convenient to use
$V^{\nicefrac{1}{D}}_\bla=V^{\Z}_\bla[\qD]$. To categorify
this, we consider the categories $\cata^\bla_{\nicefrac{1}{D}}$ and
$\cat^\bla_{\nicefrac{1}{D}}$ where we allow gradings in $\frac{1}{D}\Z$
rather than just $\Z$.

\subsection{Coevaluation and evaluation for a pair of representations}
\label{sec:co-evaluation}

Now, we must consider the cups and caps in our theory.  The most basic case of this is $\bla=(\la,\la^*)$, where we use
$\la^*=-w_0\la$ to denote the highest weight of the dual
representation to $V_\la$.  It is important to note that $V_\la\cong V_{\la^*}^*$, but this isomorphism is not canonical.  

\end{samepage}

In fact, the representation $K_0(\alg^\la)$ comes with more structure,
since it is an integral form $V_\bla^\Z$.  In particular, it comes
with a distinguished highest weight vector $v_h$, the class of the
unique simple module over $\alg^\la_\la\cong \K$ which is 1-dimensional and concentrated in degree 0.  Thus, in order to fix the isomorphism above, we need only fix a lowest weight vector $v_l$ of $V_{\la^*}$, and take the unique invariant pairing such that $\langle v_h,v_l\rangle=1$.

Our first step is to better understand the lowest weight category
$\alg^\la_{w_0\la}\modu$.  This is most efficiently done not by
considering it in isolation, but in the context of the other extremal
weight spaces. Consider a reduced expression
$\mathbf{w}=(s_{i_1},\dots,s_{i_k}) $ of $w\in W$ in the Weyl group of $\fg$, and let $w_j$ be the product of the first $j$ reflections in
this word.
\begin{defn}\label{longest-sequence}
  Consider the sequence
  \begin{equation*}
    \Bi^\la_{\mathbf{w}}=(i_1^{(\la^{i_1})},i_2^{\left((w_1\la)^{i_2}\right)},\dots, i_k^{\left((w_{k-1}\la)^{i_{k}}\right)})
  \end{equation*}
\end{defn}

For example, if $\fg=\mathfrak{sl}_3$, $\la=a\omega_1+b\omega_2$
and $\mathbf{w}=(1,2,1)$, then
$\Bi^\la_{(1,2,1)}=(1^{(a)},2^{(a+b)},1^{(b)})$.  Note that the number
of black strands for a reduced expression of $w_0$ is given by
$2\rho^\vee(\la)$.

\begin{prop}\label{proj-irr}
The projective $P_{\Bi^{\la}_{\mathbf{w}}}^{0}$ over $\alg^\la$ is irreducible, and only depends on $w$.
\end{prop}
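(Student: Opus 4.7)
The plan is to leverage two facts: the weight space $V_\la[w\la]$ is one-dimensional, and the class of $P^0_{\Bi^\la_{\mathbf{w}}}$ in the Grothendieck group coincides with the extremal weight vector at $w\la$. Once these are in place, irreducibility and independence from $\mathbf{w}$ both follow essentially formally.

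First I would observe, via Definition \ref{divided-power-idempotent}, that
\[
P^0_{\Bi^\la_{\mathbf{w}}} \cong \fF_{i_k}^{(n_k)}\cdots \fF_{i_1}^{(n_1)}(P_\emptyset),\qquad n_j=(w_{j-1}\la)^{i_j},
\]
so by Theorem \ref{Uq-action} its class in $K_0(\alg^\la)\cong V_\la^\Z$ is
\[
[P^0_{\Bi^\la_{\mathbf{w}}}]=F_{i_k}^{(n_k)}\cdots F_{i_1}^{(n_1)}v_\la.
\]

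Next I would identify this class with an extremal weight vector. The key input is Lusztig's formula that, for any weight vector $v\in V_\la[\mu]$ annihilated by $E_i$ with $\mu^i=n\geq 0$, one has $T_i(v)=(-q_i)^{?}F_i^{(n)}v$ up to an explicit unit in $\Z[q,q^{-1}]$. Applied inductively along the reduced expression $\mathbf{w}$, using that at each step $w_{j-1}\la$ is annihilated by $E_{i_j}$ because $s_{i_1}\cdots s_{i_j}$ is reduced, I conclude
\[
F_{i_k}^{(n_k)}\cdots F_{i_1}^{(n_1)}v_\la=\pm q^{?}\,v_{w\la},
\]
where $v_{w\la}=T_w v_\la$ is the canonical generator of the one-dimensional extremal weight space $V_\la[w\la]$. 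In particular this vector, and hence $[P^0_{\Bi^\la_{\mathbf{w}}}]$, depends on $w$ but not on the chosen reduced expression.

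Then I would invoke Corollary \ref{Euler-form} to compute
\[
\dim_q\End\bigl(P^0_{\Bi^\la_{\mathbf{w}}}\bigr)=\langle v_{w\la},v_{w\la}\rangle,
\]
where the pairing is the $q$-Shapovalov form. Since Lusztig's symmetries intertwine this form up to the corresponding unit, and $\langle v_\la,v_\la\rangle=1$ by definition, we get $\langle v_{w\la},v_{w\la}\rangle=1$. Therefore $\End(P^0_{\Bi^\la_{\mathbf{w}}})=\K$, concentrated in degree $0$, so $P^0_{\Bi^\la_{\mathbf{w}}}$ is indecomposable with simple cosocle $L$ satisfying $[L]=[P^0_{\Bi^\la_{\mathbf{w}}}]$; the kernel of $P^0_{\Bi^\la_{\mathbf{w}}}\twoheadrightarrow L$ then has zero class in $K_0$ and, being finite dimensional, must vanish. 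Finally, any two reduced expressions of $w$ give simple modules of the same graded class $v_{w\la}$, and there is at most one such simple in $\alg^\la_{w\la}\mpmod$ up to isomorphism, establishing the independence from $\mathbf{w}$.

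The main obstacle is the second step: the careful identification of iterated divided-power action on $v_\la$ with the extremal weight vector $v_{w\la}$, together with the tracking of the signs and $q$-power normalizations inherited from Lusztig's formulas. Once this has been pinned down, the remainder is a clean application of the Euler--Shapovalov comparison and the rigidity of modules whose class in $K_0$ generates a one-dimensional weight space.
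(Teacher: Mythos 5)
Your proof is correct, but it takes a genuinely different route from the paper's. The paper argues by induction on the length of the reduced word: $P^0_{\Bi^\la_{\emptyset}}$ is obviously simple, and at each step one applies $\fF_{i_j}^{(n_j)}$ with $n_j=(w_{j-1}\la)^{i_j}$ to a simple module killed by $\fE_{i_j}$, invoking Chuang--Rouquier [CR04, 5.20(a)] to conclude the result is again simple; independence of the reduced expression then falls out of the one-dimensionality of the weight space exactly as you observe. You instead work on the decategorified side: you identify $[P^0_{\Bi^\la_{\mathbf{w}}}]$ with the extremal weight vector $v_{w\la}$ (up to a unit), and then use Corollary \ref{Euler-form} to get $\dim_q\End(P^0_{\Bi^\la_{\mathbf{w}}})=\langle v_{w\la},v_{w\la}\rangle=1$, which forces the projective to coincide with the unique simple in that weight space. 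Both arguments are sound and non-circular (Theorem \ref{Uq-action} and Corollary \ref{Euler-form} precede this proposition and do not use it). What the paper's approach buys is brevity and the avoidance of any Shapovalov-norm bookkeeping, at the cost of citing the external $\mathfrak{sl}_2$-categorification structure theory; what yours buys is self-containment within the paper's own decategorification results, at the cost of the computation you rightly flag as the delicate point --- that each $F_{i_j}^{(n_j)}$ step preserves the Shapovalov norm of an $i_j$-highest weight vector (a short $\mathfrak{sl}_2$ calculation using $E^{(n)}F^{(n)}v=v$ and the sesquilinearity of the form, which makes the accumulated signs and powers of $q$ irrelevant). With that detail filled in, your argument is complete.
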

\begin{proof}
Since the corresponding weight space is one dimensional, there can
only be a single irreducible up to isomorphism, which shows that
independence of expression will follow from simplicity.

  The irreducibility is easily proven by induction:
  $P_{\Bi^{\la}_{\emptyset}}^{0}$ is obviously irreducible, and if we
    assume that $P_{\Bi^{\la}_{(s_{i_1},\dots,s_{i_{k-1})}}}^{0}$ is irreducible, 
    \cite[5.20(a)]{CR04} proves the simplicity of
    $P_{\Bi^{\la}_{\mathbf{w}}}^{0}$ applied to $\eF_{i_k}$ (in place
    of $E$).  
\end{proof}

Fix an expression $\mathbf{w}_0$ for the longest element $w_0$ and
consider this construction for $\Bi^\la=\Bi^\la_{\mathbf{w}_0}$.  We
fix $v_l=[P^0_{\Bi^{\la}}]$.  Since this is a non-zero lowest weight
vectors, we can use this to fix an isomorphism $V_{\la}\cong
V_{\la^*}^*$ which we use freely throughout the rest of the paper.  

We can now consider the standardization of $P_{\Bi^\la}^0\boxtimes
P_\emptyset$ obtaining the standard and projective module $S^{(0,2\rcl)}_{\Bi_\la}=P^{(0,2\rcl)}_{\Bi_\la}$.
\begin{lemma}\label{lem:Lco-simple}
  The module $S^{(0,2\rcl)}_{\Bi_\la}$ has a unique simple quotient
  $\Lco$. The kernel of the projection map $S^{(0,2\rcl)}_{\Bi_\la}\to
  \Lco$ is the sum of images of
  every map from a projective $P^\kappa_{\Bi}\to
  S^{(0,2\rcl)}_{\Bi_\la}$ with $\kappa(2)<2\rho^\vee(\la)$.
\end{lemma}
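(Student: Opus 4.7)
The plan is to apply Theorem~\ref{h-bijection} using that both factors of the standardization are simple, and then unpack what that produces.

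For the first assertion, Proposition~\ref{proj-irr} gives that $P^0_{\Bi^\la}$ is a simple $T^\la$-module, and $P_\emptyset\cong T^{\la^*}_{\la^*}\cong\K$ is tautologically simple over $T^{\la^*}$. Since by construction $S^{(0,2\rcl)}_{\Bi_\la}=\mathbb{S}^{(\la,\la^*)}(P^0_{\Bi^\la}\boxtimes P_\emptyset)$, Theorem~\ref{h-bijection} immediately yields a unique simple quotient $\Lco:=h(P^0_{\Bi^\la},P_\emptyset)$.

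For the characterization of the kernel, let $\bal$ be the root function of $(\Bi_\la,(0,2\rcl))$; since the weight of the module is fixed, a triple $(\Bi',\kappa')$ has root function $\bal$ iff $\kappa'(2)=2\rcl$, so $e_\bal=\sum_{\kappa'(2)=2\rcl}e(\Bi',\kappa')$ while $1-e_\bal=\sum_{\kappa'(2)<2\rcl}e(\Bi',\kappa')$. Write $M$ for the kernel and $N$ for the trace of projectives named in the lemma. The inclusion $M\subseteq N$ follows directly: Lemma~\ref{simple-quotient} (applied as in the proof of Theorem~\ref{h-bijection}) gives $M e_\bal=0$, so $v\in M$ decomposes as $v=\sum_{\kappa'(2)<2\rcl}v\cdot e(\Bi',\kappa')$, and each term is the image of $e(\Bi',\kappa')$ under the right-module map $P^{\kappa'}_{\Bi'}\to S^{(0,2\rcl)}_{\Bi_\la}$, $a\mapsto va$.

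The reverse inclusion reduces, by the simplicity of $S e_\bal$ as a module over $e_\bal T^{(\la,\la^*)}e_\bal$, to verifying that $N\cdot e_\bal=0$. This is the main obstacle. The approach is to use generation of $S^{(0,2\rcl)}_{\Bi_\la}$ by $S e_\bal\cong P^0_{\Bi^\la}\boxtimes P_\emptyset$ to write any $\phi(e(\Bi',\kappa'))\in S\cdot e(\Bi',\kappa')$ as $(p^0\boxtimes p^\emptyset)\cdot b$ for some $b\in e_\bal T^{(\la,\la^*)}e(\Bi',\kappa')$; the vanishing then reduces to showing that the composite $ba\in e_\bal T^{(\la,\la^*)}e_\bal$ acts as zero on $S e_\bal$. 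By Proposition~\ref{semi-orthogonal}, $e_\bal T^{(\la,\la^*)}e_\bal\cong\alg^\la_{w_0\la}\otimes T^{\la^*}_{\la^*}$; since $P^0_{\Bi^\la}$ is both simple and projective over the Frobenius algebra $\alg^\la$ (Proposition~\ref{proj-irr}, Theorem~\ref{Frobenius}), the local algebra $\alg^\la_{w_0\la}$ has trivial radical and is therefore $\K$ concentrated in degree zero. As both $b$ and $a$ are diagrams involving at least one black/red crossing with the second red strand, $ba$ has strictly positive degree and so vanishes in this degree-zero algebra, giving $N\cdot e_\bal=0$ as needed.
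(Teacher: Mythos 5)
Your first paragraph and the inclusion $M\subseteq N$ are correct and match the paper: the unique simple quotient comes from Theorem~\ref{h-bijection} together with Proposition~\ref{proj-irr}, and $Me_\bal=0$ follows from Lemma~\ref{simple-quotient}. The reduction of the reverse inclusion to showing that every composite $ba\in e_\bal Te(\Bi',\kappa')\cdot e(\Bi',\kappa')Te_\bal$ with $\kappa'(2)<2\rcl$ acts as zero on $Se_\bal$ is also sound. The problem is the concluding degree argument, both of whose premises fail. First, $\alg^\la_{w_0\la}$ is neither local nor concentrated in degree zero: it is the full weight-space algebra $\bigoplus_{\Bi,\Bj}e(\Bi)R^\la e(\Bj)$, a graded matrix algebra whose graded dimension is given by the Shapovalov form (Corollary~\ref{Euler-form}); already for $\fg=\mathfrak{sl}_2$, $\la=2\omega$ one has $\dim_q e((1,1))R^{2\omega}e((1,1))=[2]_q^2=q^{-2}+2+q^2$, so there are nonzero elements in degrees $\pm 2$. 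Simplicity of $P^0_{\Bi^\la}$ gives semisimplicity of this algebra, not triviality of its grading. Second, $\deg(ba)$ need not be positive: a black strand labeled $i$ crossing the red strand labeled $\la^*$ contributes $\langle\al_i,\la^*\rangle=d_i(\la^*)^i$, which vanishes whenever $(\la^*)^i=0$, and $b,a$ may contain same-label black crossings of degree $-2d_i$. Concretely, in the $\la=\la^*=2\omega$ example the space $e_\bal Te((1,1),(0,1))$ has graded dimension $q^{2}[2]_q^2=q^4+2q^2+1$, whose nonzero constant term exhibits elements $b$ (and dually $a$) of degree exactly zero.

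What you are trying to prove, namely that the ideal $e_\bal Te(\Bi',\kappa')Te_\bal$ of the simple algebra $e_\bal Te_\bal$ is zero rather than everything, is precisely the content of the lemma and cannot be extracted from degrees alone; note that after straightening via the bigon relation \eqref{cost}, $ba$ becomes a sum of dotted diagrams, and dots do act nontrivially on $T^\la_{w_0\la}$ in general. Equivalently one must show $\Hom(P^{\kappa'}_{\Bi'},\Lco)=0$ for all $\kappa'(2)<2\rcl$, which by adjunction (such a projective is $\fF_{i}$ applied to a smaller one) amounts to $\fE_i\Lco=0$ for every $i$. The paper supplies this by a global representation-theoretic argument: by Theorem~\ref{simple-perfect} the simples form a perfect basis, so exactly one weight-zero simple of $T^{(\la,\la^*)}$ is killed by all $\fE_i$ (the invariants of $V_\la\otimes V_{\la^*}$ being one-dimensional), and one then identifies that simple with $\Lco$. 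Some input of this kind is unavoidable; your local diagrammatic argument does not supply it.
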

\begin{proof}
  The existence of a unique simple quotient follows from
  Theorem \ref{h-bijection} and Proposition \ref{proj-irr}. First, we
  must show that if $\kappa(2)<2\rho^\vee(\la)$, then
  $\Hom(P^{\kappa}_{\Bi},\Lco)=0$.  By adjunction, this is the same as
  proving that $\fE_i\Lco=0$ for all $i$.  By Theorem
  \ref{simple-perfect}, there is a crystal isomorphism between the set
  of simples over $T^{\la,\la^*}$ and the tensor product crystal.
  Thus, there is exactly one simple module over
  $T^{\la,\la^*}_0$ killed by all $\fE_i$.  Every simple module other
  than $\Lco$ is the image under the map $h$ of simples $(L_1,L_2)$
  with the weight $\wt(L_1)>w_0\la$ and $\wt(L_2)<\la^*$; none of
  these are killed by all $\fE_i$.  Thus, by the pigeonhole principle,
  $\Lco$ just be the unique simple killed by these functors.  This
  completes the proof.
\end{proof}

This theorem suggests a pictorial representation of $\Lco$ which will
be helpful for us in the future.  We represent the image of the generating vector
of $P^{(0,2\rcl)}_{\Bi_\la}$ by a small grey box, with the red and
black lines we act on springing out, as shown below:

\begin{equation}\label{red-cap}
  \begin{tikzpicture}[very thick,xscale=1.4,yscale=-1,baseline]

\node (v) at (0,-1) [fill=white!80!gray,draw=white!80!gray, thick,rectangle,inner xsep=10pt,inner ysep=6pt, outer sep=-2pt] {};

\begin{pgfonlayer}{background} \begin{scope}[very thick]
    \draw[wei] (v.170) to[in=280,out=170] node[at end, below]{$\la$} (-2.5,1);
\draw[wei] (v.10) to[out=10,in=260] node[at end, below]{$\la^*$} (2.5,1) ;
    \draw (v.55) to[in=270,out=55] node[below,at end]{$i_k$} (2.1,1);
    \draw (v.65) to[in=270,out=65] node[below,at end]{$i_k$}(1.7,1);
    \draw (v.75) to[in=270,out=75] node[below,at end]{$i_k$} (1.3,1) ;
    \draw (v.125) to[in=270,out=125]node[below,at end]{$i_1$}  (-2.1,1) ;
    \draw (v.115) to[in=270,out=115] node[below,at end]{$i_1$} (-1.7,1);
    \draw (v.105) to[in=270,out=105] node[below,at end]{$i_1$} (-1.3,1);
    \draw[ultra thick,loosely dotted,-] (-.35,.5) -- (.35,.5);
\draw[ultra thick,loosely dotted,-] (-.35,1.4) -- (.35,1.4);
    \draw[decorate,decoration=brace,-] (-.8,1.5) --
    node[below,midway]{$\la^{i_1}$} (-2.2,1.5);
    \draw[decorate,decoration=brace,-] (2.2,1.5) --
    node[below,midway]{$(s_{k-1}\la)^{i_k}$} (.8,1.5) ;\end{scope}
\end{pgfonlayer}
  \end{tikzpicture}
\end{equation}

The elements of
$P^{(0,2\rcl)}_{\Bi_\la}$ are given by attaching Stendhal diagrams to
these inputs and imposing the relations of $T^{(\la,\la^*)}$. Recall
that since we have multiplied on the left by Khovanov and Lauda's
idempotent in the nilHecke algebra on each group of like-colored
strands, any crossing of like-colored consecutive strands springing
from the box is trivial.  Also, any black strand crossing the left red
strand is trivial by the violating relation. 

Passing to $\Lco$ means that we also mod out by any crossing of a
black strand across the right red strand. Pictorially, we express
these relations as:
\begin{equation}\label{eq:red-rels}
  \begin{tikzpicture}[very thick,xscale=1,yscale=-.6,baseline]

\node (v) at (0,-1) [fill=white!80!gray,draw=white!80!gray, thick,rectangle,inner xsep=10pt,inner ysep=6pt, outer sep=-2pt] {};

\begin{pgfonlayer}{background} \begin{scope}[very thick]
    \draw[wei] (v.170) to[in=280,out=170] node[at end, below]{$\la$} (-1.5,1);
\draw[wei] (v.10) to[out=10,in=260] node[at end, below]{$\la^*$} (1.5,1) ;
    \draw (v.180) to[in=270,out=90] node[below,at end]{$i$} (.9,1) ;
    \draw (v.0) to[in=270,out=90] node[below,at end]{$i$} (-.9,1);
    \draw[ultra thick,loosely dotted,-] (-.35,.5) -- (.35,.5);
\node at (2,0){$=0$};
\end{scope}
\end{pgfonlayer}
  \end{tikzpicture}\quad 
 \begin{tikzpicture}[very thick,xscale=1,yscale=-.6,baseline]

\node (v) at (0,-1) [fill=white!80!gray,draw=white!80!gray, thick,rectangle,inner xsep=10pt,inner ysep=6pt, outer sep=-2pt] {};

\begin{pgfonlayer}{background} \begin{scope}[very thick]
    \draw[wei] (v.170) to[in=280,out=170] node[at end, below]{$\la$} (-1.5,1);
\draw[wei] (v.10) to[out=10,in=270] node[at end, below]{$\la^*$} (.9,1) ;
    \draw (v.0) to[in=270,out=90] node[below,at end]{$j$} (1.5,1) ;
    \draw (v.180) to[in=270,out=90] node[below,at end]{$i$} (-.9,1);
    \draw[ultra thick,loosely dotted,-] (-.35,.5) -- (.35,.5);
\node at (2,0){$=0$};
\end{scope}
\end{pgfonlayer}
  \end{tikzpicture}\quad 
\begin{tikzpicture}[very thick,xscale=1,yscale=-.6,baseline]

\node (v) at (0,-1) [fill=white!80!gray,draw=white!80!gray, thick,rectangle,inner xsep=10pt,inner ysep=6pt, outer sep=-2pt] {};

\begin{pgfonlayer}{background} \begin{scope}[very thick]
    \draw[wei] (v.170) to[in=270,out=170] node[at end, below]{$\la$} (-.9,1);
\draw[wei] (v.10) to[out=10,in=260] node[at end, below]{$\la^*$} (1.5,1) ;
    \draw (v.0) to[in=270,out=90] node[below,at end]{$j$} (.9,1) ;
    \draw (v.180) to[in=270,out=90] node[below,at end]{$i$} (-1.5,1);
    \draw[ultra thick,loosely dotted,-] (-.35,.5) -- (.35,.5);
\node at (2,0){$=0$};
\end{scope}
\end{pgfonlayer}
  \end{tikzpicture}
\end{equation}

At the moment, the reader can consider this graphical representation a
convenient mnemonic, but in the next subsection, this will help define
a generalization of this simple module.

Recall that the {\bf coevaluation} $\C[\qD,q^{-\nicefrac{1}D}]\to
V_{\la,\la^*}$ is the map sending $1$ to the canonical element of the
pairing we have fixed, and {\bf evaluation} is the map induced by the
pairing $V_{\la^*,\la}\to \C[\qD,q^{-\nicefrac{1}D}]$.  

%Let
%$\Dbe(\mathsf{gVect})$ be the derived category of complexes of
%$\nicefrac{1}{D}\Z$-graded vector spaces satisfying $C^i_j=0$ for $i\gg 0$ or $i+j\ll 0$.

\begin{defn}
Let 
$$\mathbb{K}^{\la,\la^*}_{\emptyset}\colon \cat_{\nicefrac{1}{D}}^{\emptyset}\to \cat_{\nicefrac{1}{D}}^{\la,\la^*}\text{
be the functor }\RHom_{\K}(\dot{L}_\la,-)(2\llrr)[-2\rcl]$$ \centerline{and} $$\mathbb{E}_{\la^*,\la}^{\emptyset}\colon \cat_{\nicefrac{1}{D}}^{\la^*,\la}\to \cat_{\nicefrac{1}{D}}^{\emptyset}\text{ be the functor }-\overset{L}\otimes_{\alg^\bla} \dot{L}_{\la^*}.$$
\end{defn}
These functors preserve the appropriate categories since by Theorem
\ref{triangular-resolution}, the module $\Lco$ has a projective resolution in $\cat_{\nicefrac{1}{D}}^\bla$.  
\begin{prop}\label{coev}
The functor $\mathbb{K}^{\la,\la^*}_{\emptyset}$ categorifies the coevaluation, and $\mathbb{E}_{\la^*,\la}^{\emptyset}$ the evaluation.
\end{prop}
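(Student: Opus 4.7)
The plan is to decategorify both functors, exploit that the image under the $U_q(\fg)$-action and the characterization as invariants leave only a one-parameter freedom, then pin down the remaining scalar by matching one matrix coefficient. I treat coevaluation in detail; evaluation is handled by an essentially dual argument.

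First I would unpack $\mathbb{K}^{\la,\la^*}_\emptyset(\K)$. Since $\Lco$ is finite-dimensional in each graded piece with a projective resolution in $C^{\uparrow}(\cata^{(\la,\la^*)})$ by Theorem \ref{triangular-resolution}, the object $\RHom_\K(\dot{L}_\la,\K)(2\llrr)[-2\rcl]$ genuinely lies in $\cat^{(\la,\la^*)}_{\nicefrac{1}{D}}$, and $\RHom_\K(\dot L_\la, \K) \cong L_\la^\star \cong L_\la$ by the self-duality of simples (Proposition \ref{prop-simple-self-dual}). Thus $[\mathbb{K}^{\la,\la^*}_\emptyset(\K)] = (-1)^{2\rcl} q^{2\llrr} [L_\la]$ inside $V^{\nicefrac{1}{D}}_{\la,\la^*}$.

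Next I would show that $(-1)^{2\rcl} q^{2\llrr} [L_\la]$ equals the coevaluation image of $1$. Since $V_0$ occurs exactly once in $V_\la\otimes V_{\la^*}$, the space of zero-weight $E_i$-invariants is one-dimensional, so it suffices to check that $[L_\la]$ is a zero-weight vector killed by every $E_i$ and then determine the scalar. The weight statement is immediate, and Lemma \ref{lem:Lco-simple} gives $\fE_i L_\la = 0$ for every $i$ (since any nonzero $\fE_i L_\la$ would yield a nontrivial map $P^{(0,\kappa)}_\Bi\to L_\la$ for some $\kappa(2)<2\rcl$, contradicting the lemma), and under the categorification isomorphism of Theorem \ref{Uq-action} this is precisely $E_i[L_\la]=0$.

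For the scalar, I would use the standard filtration of $P^{(0,2\rcl)}_{\Bi^\la}$ together with Proposition \ref{standard-class}: every standard strictly below $S^{(0,2\rcl)}_{\Bi^\la}$ satisfies $\kappa(2)<2\rcl$ and therefore has class in the span of pure tensors $w\otimes w'$ with $\wt(w)>w_0\la$. Consequently the coefficient of the weight pure tensor $v_l\otimes v_{\la^*}$ in $[L_\la]$ equals the coefficient in $[S^{(0,2\rcl)}_{\Bi^\la}]=s^{(0,2\rcl)}_{\Bi^\la}$, which by Proposition \ref{standard-class} and the definition of $v_l=[P^0_{\Bi^\la}]$ equals $1$. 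On the other hand, the coevaluation element $1\mapsto\sum_i e_i\otimes e^i$ has coefficient of $v_l\otimes v_{\la^*}$ fixed by the normalization $\langle v_h,v_l\rangle = 1$ of the invariant pairing; a direct computation using the explicit form of the $q$-Shapovalov form and the identification $V_\la \cong V_{\la^*}^*$ determined by $v_l$ yields precisely $(-1)^{2\rcl}q^{2\llrr}$, matching step one.

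Finally, for $\mathbb{E}_{\la^*,\la}^\emptyset$ I would apply the same argument symmetrically: tensoring on the right with $\dot{L}_{\la^*}$ is an exact functor on projectives with Grothendieck class computable by $[P\otimes^L\dot L_{\la^*}]=\langle[P],[L_{\la^*}]\rangle$ in the Euler form, using Lemma \ref{Euler-inner}. Since the Euler form is the Shapovalov pairing (Theorem \ref{Uq-action}) and the coevaluation analysis applied to the pair $(\la^*,\la)$ identifies $[L_{\la^*}]$ (up to the same explicit scalar) with the canonical invariant, this exhibits $\mathbb{E}_{\la^*,\la}^\emptyset$ as the quantum evaluation on $K_0$. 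The main obstacle is the bookkeeping in the scalar check of step three: tracking the gradings in $\llrr$, signs in $2\rcl$, and the conventional appearance of $q$-powers in the Shapovalov form and the $R$-matrix formula \eqref{Rmatrix} to see that the shifts $(2\llrr)[-2\rcl]$ built into the definition of $\mathbb{K}$ are exactly those forced by the normalization $\langle v_h,v_l\rangle=1$.
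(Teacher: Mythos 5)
Your proposal is correct and follows essentially the same route as the paper: both arguments establish that $[\Lco]$ is the unique invariant vector of weight zero via Lemma \ref{lem:Lco-simple} (showing $\fE_i\Lco=0$ by adjunction), and then fix the remaining scalar by computing the single extremal matrix coefficient at $(V_\la)_{low}\otimes(V_{\la^*})_{high}$, where the paper reads off $[P^{(0,2\rcl)}_{\Bi_\la}]=F_{\Bi^\la}v_h\otimes v_{h^*}$ and applies the antipode, while you extract the same coefficient from the standard filtration and Proposition \ref{standard-class}. The evaluation is likewise handled in both by the same invariance-plus-one-normalization-check argument, so the two proofs differ only in bookkeeping.
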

\begin{proof}
Since $\Lco$ is self-dual, we must first check that $[\Lco]$ is invariant.
  Of course, the invariants are the space of vectors of weight 0 such
  that $\{v|E_iv=0\}$ for any $i$. To show this, its enough to see
  that  $\Hom(P,\fE_i\Lco)=\Hom(\fF_iP,\Lco)=0$ for all projectives
  $p$ and $i\in \Gamma$. This follows immediately from Lemma \ref{lem:Lco-simple}.  Thus $[\Lco]$ is invariant.
  In fact, $\Lco$ is the only invariant simple representation, since the
  $-\la^*$-weight space of $V_\la$ is 1 dimensional.

Now, we need just check the normalization is correct.  Of course, $[\Lco]$'s projection to $(V_\la)_{low}\otimes (V_{\la^*})_{high}$ is \[[ P^{(0,2\rcl)}_{\Bi_\la}]=[P^0_{\Bi^{\la}}]\otimes [P^0_\emptyset]=F_{\Bi^\la}v_h\otimes v_{h^*}.\]  Thus, by invariance, the projection to $(V_\la)_{high}\otimes (V_{\la^*})_{low}$ is $$v_h\otimes S(F_{\Bi^\la})v_{h^*}=(-1)^{2\rcl}q^{-2\llrr}v_h\otimes v_l.$$

On the other hand, Lemma \ref{lem:Lco-simple} also implies that
$-\overset{L}\otimes_{\alg^\bla}\dot L_{\la^*}$ kills all modules of the form $\fF_iM$, so it gives an invariant map, whose normalization we, again, just need to check on one element.  For example, $P^{(0,2\rcl)}_{\Bi_{\la^*}}\otimes L_{\la^*}\cong\K$, so we get 1 on $v_l\otimes v_h$, which is the correct normalization for the evaluation.
\end{proof}

We represent these functors as leftward oriented cups as is done for the coevaluation and evaluation in the usual diagrammatic approach to quantum groups, as shown in Figure \ref{coev-ev}.

\begin{figure}
\begin{tikzpicture}
\node [label=below:{$\mathbb{K}^{\la,\la^*}_{\emptyset}$}] at (-3,0){
\begin{tikzpicture}
\draw[wei,<-] (-1,0) to[out=-90,in=-90] node[at start, above]{$\la$}
node [at end, above]{$\la^*$} (1,0);
\end{tikzpicture}
};
\node [label=below:{$\mathbb{E}_{\la^*,\la}^{\emptyset}$}] at (3,0){
\begin{tikzpicture}
\draw[wei,<-] (-1,0) to[out=90,in=90] node[at start, below]{$\la^*$}
node [at end, below]{$\la$} (1,0) ;
\end{tikzpicture}
};
\end{tikzpicture}
\caption{Pictures for the coevaluation and evaluation maps.}
\label{coev-ev}
\end{figure}
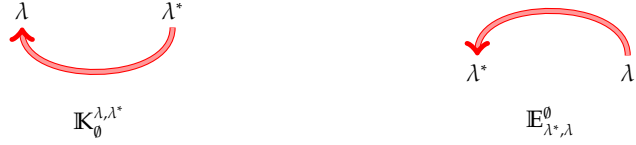

In order to analyze the structure of $\Lco$, we must
understand some projective resolutions of standards.  This can be done with surprising precision in the case where $\ell=2$.

Fix a sequence $\Bi=(i_1,\dots i_n)$.  Define a map $\kappa_j:[1,2]\to [0,n]$ by 
$\kappa_j(2)=j$ and $\kappa_j(1)=0.$ 
Given a subset $T\subset [j+1,n]$, we
let $\Bi_{T}$ be the sequence given by $i_1,\dots ,i_j$ followed by
$T$ in reversed order, and then $[j+1,n]\setminus T$ in sequence
and let $\kappa_{T}(2)=j+\#T$.  Let $$\chi_T=\sum_{k\in T}\left\langle
  \al_{i_k},-\la_2+\sum_{j< m< k}\al_{i_m}\right\rangle.$$

\begin{prop}\label{pro-res}
  The standard $S_\Bi^{\kappa_j}$ has a projective resolution of the form $$\cdots \longrightarrow \bigoplus_{|T|=k}P_{\Bi_T}^{\kappa_T}(\chi_T)\longrightarrow\cdots \longrightarrow P_\Bi^{\kappa_j} \longrightarrow S_\Bi^{\kappa_j}$$
\end{prop}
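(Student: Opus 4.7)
I would prove Proposition \ref{pro-res} by induction on $n-j$, using a mapping cone construction at each step. The base case $n=j$ is trivial: $S_\Bi^{\kappa_j}=P_\Bi^{\kappa_j}$ and only $T=\emptyset$ contributes.

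For the inductive step, set $\Bi^-=(i_1,\dots,i_{n-1})$ and apply Proposition \ref{prop:act-filter} (with $m=2$) to
\[
\fF_{i_n}S_{\Bi^-}^{\kappa_j}=\fF_{i_n}\mathbb{S}^{\la_1;\la_2}\bigl(P_{(i_1,\dots,i_j)}\boxtimes P_{(i_{j+1},\dots,i_{n-1})}\bigr).
\]
This produces a short exact sequence
\[
0 \longrightarrow S^{(0,j+1)}_{\Bi'}(\chi_{\{n\}}) \longrightarrow \fF_{i_n}S_{\Bi^-}^{\kappa_j} \longrightarrow S_\Bi^{\kappa_j} \longrightarrow 0,
\]
with $\Bi'=(i_1,\dots,i_j,i_n,i_{j+1},\dots,i_{n-1})$; the shift $\chi_{\{n\}}$ is exactly the one predicted by the $\tilde K_{-i_n}$-factor in $\Delta(F_{i_n})$ acting on the highest-weight vector of the second factor. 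By induction, both ends admit resolutions of the claimed form. Applying the exact functor $\fF_{i_n}$ to the inductive resolution of $S_{\Bi^-}^{\kappa_j}$ yields terms $P_{\Bi_T}^{\kappa_T}(\chi_T)$ indexed by $T\subset[j+1,n-1]$; the inductive resolution of the kernel (applied to the data $(\Bi',j+1)$, with $n-j-1$ right-block strands) yields, under the identification $T=\{n\}\cup(T''-1)$ for $T''\subset[j+2,n]$, terms indexed by the remaining $T\subset[j+1,n]$ with $n\in T$. A direct index substitution confirms $\Bi'_{T''}=\Bi_T$ and $\kappa'_{T''}=\kappa_T$, and the additivity $\chi_T=\chi_{\{n\}}+\chi_{T\setminus\{n\}}$ collapses the shifted inductive shifts to exactly $\chi_T$.

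The desired resolution is then the mapping cone of a chain map lifting the above inclusion, going from the kernel's resolution to the middle's. At the level of diagrams this inclusion is realized by pulling the new $i_n$-strand leftward across the second red strand, paying the cost dictated by relation \eqref{cost}; at each level of the resolution it lifts to an ``uncrossing at $n$'' map $d_n^T\colon P_{\Bi_T}^{\kappa_T}(\chi_T)\to P_{\Bi_{T\setminus\{n\}}}^{\kappa_{T\setminus\{n\}}}(\chi_{T\setminus\{n\}})$ for $T\ni n$.

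The main technical difficulty is choosing this chain map so that the mapping cone differential assembles into the alternating Koszul differential $\sum_{k\in T}\pm d_k^T$ implicit in the statement, rather than merely some differential with the correct Euler characteristic. This requires identifying the component of the lifted chain map at homological degree $|T|-1$ with the uncrossing $d_n^T$, via a diagrammatic computation using \eqref{cost}, \eqref{red-triple-correction}, and the pitchfork relations to straighten iterated uncrossings. Once such a choice is made, $d^2=0$ reduces to standard Koszul cancellation: for distinct $k,\ell\in T$ the compositions $d_k^T\circ d_\ell^{T\setminus\{k\}}$ and $d_\ell^T\circ d_k^{T\setminus\{\ell\}}$ agree up to sign by locality, as the uncrossings of $i_k$ and $i_\ell$ occur in disjoint regions of the diagram up to isotopy.
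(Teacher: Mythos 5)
Your proof follows the same route as the paper's: the same induction on $n-j$, the same short exact sequence obtained from Proposition \ref{prop:act-filter} applied to $\fF_{i_n}S^{\kappa_j}_{\Bi^-}$, the same index bookkeeping identifying the terms coming from $T'\subset[j+1,n-1]$ with $T=T'$ and those from the kernel with $T=(T'-1)\cup\{n\}$, and the same mapping-cone assembly of the two inductive resolutions. The only divergence is your concern about arranging the differential into an alternating Koszul form, which is unnecessary: the proposition asserts only the terms of the resolution (with their shifts), so any lift of the connecting map to a chain map between the two resolutions suffices, which is exactly what the paper does.
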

\begin{proof}
We induct on $n-j$.  If $j=n$, then $S_\Bi^{\kappa_j}$ is itself projective, so we may take
the trivial resolution.  Let $\Bi'$ be $\Bi$ with its last entry removed, and $\Bi''$ be
$\Bi$ with its last entry moved to the $j+1$st position. As shown
in Proposition \ref{prop:act-filter}, we have an exact sequence
\begin{equation}\label{eq:standard-sequence}
0\longrightarrow S_{\Bi''}^{\kappa_{j+1}}\Big(\big\langle
\al_{i_n},-\la_2+\sum_{j<
  \ell<n}\al_{i_\ell}\big\rangle\Big)\longrightarrow
\fF_{i_n}S_{\Bi'}^{\kappa_j}\longrightarrow
S_{\Bi}^{\kappa_{j}}\longrightarrow 0.
\end{equation}
The right hand map is the obvious projection, which imposes the
standardly violating condition on the strand added by $\fF_{i_n}$.
The kernel of this map is thus spanned by diagrams where at the top,
the strand added by $\fF_{i_n}$ crosses the second red strand and all black ones
to its right.  Thus the left hand map is given by attaching a diagram
in $S_{\Bi''}^{\kappa_{j+1}}$ to this diagram:
\[\begin{tikzpicture}[very thick,yscale=1,baseline,xscale=1.2]
\node at (-3,0){
      \begin{tikzpicture}[very thick,xscale=1.4,yscale=1]

\draw[wei] (4.5,-1) -- +(0,2) node[at start,below]{$\la_{1}$} 
  node[at   end,above]{$\la_{1}$}; 
\node (a) [inner xsep=10pt, inner ysep=6.6pt,draw] at (5.5,0){$d$};
\draw (a.-130) -- +(0,-.63);
\draw (a.-50) -- +(0,-.63);
\draw (a.130) -- node[at end,above]{$i_1$} +(0,.63);
\draw (a.50) -- node[at end,above]{$i_n$} +(0,.63);
sep=2pt, at start]{}  (5.5,-.5);
  \draw[wei] (6.5,-1) -- +(0,2) node[at start,below]{$\la_{2}$} 
  node[at   end,above]{$\la_{2}$}; 
  \node (a) [inner xsep=10pt, inner ysep=6.6pt,draw] at (7.5,0){$d'$};
\draw (a.-130) -- +(0,-.63);
\draw (a.-50) -- +(0,-.63);
\draw (a.130) -- node[at end,above]{$i_{j+1}$} +(0,.63);
\draw (a.50) -- node[at end,above]{$i_{n-1}$} +(0,.63);
      \end{tikzpicture}};
\draw[->] (-.3,0) -- (.3,0);
\node at (3,0){
      \begin{tikzpicture}[very thick,yscale=1,xscale=1.4]
\draw[wei] (4.5,-1) -- +(0,2) node[at start,below]{$\la_{1}$} 
  node[at   end,above]{$\la_{1}$}; 
\node (a) [inner xsep=10pt, inner ysep=6.6pt,draw] at (5.5,0){$d$};
\draw (a.-130) -- +(0,-.63);
\draw (a.-50) -- +(0,-.63);
\draw (a.130) -- node[at end,above]{$i_1$} +(0,.63);
\draw (a.50) to[out=40,in=-140]  node[at end,above]{$i_n$} node[circle,fill=blue,inner sep=2pt, at end]{}  (8.5,1);
  \draw[wei] (6.5,-1) -- +(0,2) node[at start,below]{$\la_{2}$} 
  node[at   end,above]{$\la_{2}$}; 
  \node (a) [inner xsep=10pt, inner ysep=6.6pt,draw] at (7.5,0){$d'$};
\draw (a.-130) -- +(0,-.63);
\draw (a.-50) -- +(0,-.63);
\draw (a.130) -- node[at end,above]{$i_{j+1}$} +(0,.63);
\draw (a.50) -- node[at end,above]{$i_{n-1}$} +(0,.63);
      \end{tikzpicture}};
\end{tikzpicture}\]

Applying the inductive
hypothesis, we obtain projective resolutions of the left two
factors. In the terms that appear in $\fF_{i_n}S_{\Bi'}^{\kappa_j}$,
we have taken a subset $T'\subset [j+1,n-1]$ and moved these to the
left of the second red strand (reversing their order).  Clearly, we
have 
$\Bi'_{T'}=\Bi_T$ and $\kappa_{T'}=\kappa_T$ where we take $T:=T'$.  
In $ S_{\Bi''}^{\kappa_{j+1}}$, we have now taken a subset of
$[j+2,n]$, and moved these left of the red line, and right of the
$j+1$st strand, which has label $i_n$.  Thus, if we take $T=T'-1\cup
\{n\}$, then $\Bi''_{T'}=\Bi_T$ and $(\kappa_{j+1})_{T'}=\kappa_T$,
and $\big\langle
\al_{i_n},-\la_2+\sum_{j<
  \ell<n}\al_{i_\ell}\big\rangle+\chi_{T'}=\chi_T$.  This shows why
we must reverse the order of $T$: in $\Bi''_{T'}$, all the strands for
$T'$ are right of $j+1$st, reversing the order from $\Bi$.  Thus,
between these two resolutions we have all the terms that appear in our
expected resolution, in the correct degree shifts.

Now, we can lift the leftmost map of \eqref{eq:standard-sequence} to a map between
projective resolutions.  The cone of this map is the desired
projective resolution of $S_{\Bi}^{\kappa_{j}}$.
\end{proof}

The same principle can be used for any value of $\ell$ to construct an
explicit description of a projective resolution for any standard, but
carefully writing this down is a bit more subtle and difficult than
the $\ell=2$ case, so we will not do so here.  

This provides a
resolution of the standard module $\ocL=S^{\kappa_0}_{\Bi_{\la^*}}$.  In particular, it
shows that
\begin{cor}
$\displaystyle{\Ext^i(\ocL,\Lco)=\begin{cases}0& i\neq 2\rcl\\ \K(2\llrr) & i=2\rcl\end{cases}.}$
\end{cor}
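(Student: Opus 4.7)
The plan is to compute $\Ext^\bullet(\ocL,\Lco)$ by applying the contravariant functor $\Hom_{T^{(\la,\la^*)}}(-,\Lco)$ to the explicit projective resolution of $\ocL = S^{\kappa_0}_{\Bi_{\la^*}}$ furnished by Proposition \ref{pro-res} with $\ell = 2$, $j = 0$, $\Bi = \Bi_{\la^*}$, and $n = 2\rcl$. The $k$-th term of this resolution is
\[
\bigoplus_{\substack{T \subset [1,2\rcl]\\|T|=k}} P^{\kappa_T}_{(\Bi_{\la^*})_T}(\chi_T),
\]
with $\kappa_T(2) = |T|$, and $\Ext^k(\ocL,\Lco)$ is the $k$-th cohomology of the complex obtained after applying $\Hom(-,\Lco)$.

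The main simplification comes from Lemma \ref{lem:Lco-simple}, which shows $\Hom(P^\kappa_\Bi,\Lco) = 0$ whenever $\kappa(2) < 2\rcl$. Since $\kappa_T(2) = |T|$, only the unique term with $T = [1,2\rcl]$ can contribute, sitting in cohomological degree $2\rcl$. All differentials in the resulting complex either land in or emanate from zero spaces, so the cohomology is automatically concentrated in degree $2\rcl$, equal to $\Hom(P^{\kappa_{[1,2\rcl]}}_{(\Bi_{\la^*})_{[1,2\rcl]}}(\chi_{[1,2\rcl]}),\Lco)$.

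It remains to identify this surviving Hom space. The sequence $(\Bi_{\la^*})_{[1,2\rcl]}$ is the reverse of $\Bi_{\la^*}$; with a compatible choice of reduced word for $w_0$, this agrees with $\Bi^\la$ as a divided-power idempotent, both describing the unique lowest-weight component of $V_\la$ in the crystal. Thus $\Lco \cdot e((\Bi_{\la^*})_{[1,2\rcl]},(0,2\rcl))$ is canonically identified with the one-dimensional lowest-weight simple $L_l$ over $T^\la$, concentrated in degree $0$. Therefore the Hom space is one-dimensional, shifted by $-\chi_{[1,2\rcl]}$. To compute this shift, expand
\[
\chi_{[1,2\rcl]} = \sum_{k=1}^{2\rcl}\left\langle\al_{i_k},-\la^*+\sum_{m<k}\al_{i_m}\right\rangle = -\langle\la+\la^*,\la^*\rangle + \tfrac{1}{2}\langle\la+\la^*,\la+\la^*\rangle - \tfrac{1}{2}\sum_k\langle\al_{i_k},\al_{i_k}\rangle,
\]
using $\sum_k\al_{i_k} = \la+\la^*$. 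The first two terms cancel, leaving $\chi_{[1,2\rcl]} = -\sum_k d_{i_k}$. Since $\rho^i = 1$ for all $i$, pairing both sides of $\sum_k\al_{i_k} = \la+\la^*$ with $\rho$ gives $\sum_k d_{i_k} = \langle\la+\la^*,\rho\rangle = 2\llrr$ by $W$-invariance of $\langle-,\rho\rangle$. Hence $\chi_{[1,2\rcl]} = -2\llrr$, and the surviving Hom space is $\K(2\llrr)$, giving the claimed formula.

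The main potential obstacle is the identification $\Lco \cdot e(\text{rev}(\Bi_{\la^*}),(0,2\rcl)) \cong \K$ in degree $0$; if one's chosen reduced words do not match up perfectly, this may require a small auxiliary argument, but in any case the weight space of $\Lco$ at any kappa with $\kappa(2) = 2\rcl$ restricts along $\wp$ to the lowest-weight component of the corresponding $R$-module, which is one-dimensional and self-dual, so the result is insensitive to this choice.
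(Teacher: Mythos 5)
Your proof is correct and follows essentially the same route as the paper: apply $\Hom(-,\Lco)$ to the Proposition \ref{pro-res} resolution of $\ocL$, use Lemma \ref{lem:Lco-simple} to kill every term except $T=[1,2\rcl]$, and evaluate the grading shift $\chi_{[1,2\rcl]}=-2\llrr$. The only (harmless) difference is in that last computation — you symmetrize the double sum directly and use $\langle\la,\la\rangle=\langle\la^*,\la^*\rangle$ and $-w_0\rho=\rho$, whereas the paper identifies the summands with $\langle\al,-\la^*\rangle$ over positive roots $\al$ — and your closing remark correctly handles the identification of the reversed sequence with $\Bi_\la$, which the paper leaves implicit.
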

\begin{proof}
All of the projectives which appear in the resolution of $\ocL$ have no
maps to $\Lco$ except the last term where $T=[1,2\rho^\vee(\la)]$.  We can break up the grading
shift $\chi_{[1,2\rho^\vee(\la)] }$ of this term into the pieces corresponding to simple reflections
in a reduced expression for a longest word of $W$, which are in turn
in canonical bijection with the set of positive roots $R^+$.  Thus,
we have \[\sum_{i=1}^n\left\langle
  \al_{i_k},-\la^*+\sum_{ m< k}\al_{i_m}\right\rangle=\sum_{\al\in
  R^+}\langle\al, -\la^* \rangle=-2\langle\la^*,\rho\rangle=-2
\llrr.\] Thus, the last term in the resolution is $P_{\Bi_\la}^{\kappa_{2\rcl}}(-2\llrr)$.  Thus we have $$\Ext^i(\ocL,\Lco)\cong \Ext^{i-2\rcl}(P_{\Bi_\la}^{\kappa_{2\rcl}}(-2\llrr),\Lco)$$ and the result follows.
\end{proof}

It also shows more indirectly that $\Lco$ has a beautiful, if more
complicated resolution.

\begin{prop}\label{sta-res}
There is a resolution $$ \cdots \longrightarrow M_j\longrightarrow \cdots \longrightarrow M_1\longrightarrow M_0\longrightarrow \Lco\longrightarrow 0$$ of $\Lco$  with the property that \begin{itemize}
\item $M_{2\rcl-j}$ lies in the subcategory generated by $S^{\kappa_j}_{\Bi}$ for all different choices of $\Bi$. In particular, if $j>2\rcl$, then $M_j=0$.
\item $M_{2\rcl}\cong\ocL(-2\llrr)$.
\end{itemize}
\end{prop}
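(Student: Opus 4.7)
The plan is to construct the resolution inductively by peeling off one layer of the standardly stratified structure (Corollary \ref{SS}) at each step. The base term $M_0 = S^{(0, 2\rcl)}_{\Bi_\la}$ is both projective and standard (it sits in the top stratum $\bal(2) = 0$), and surjects onto $\Lco$ by Lemma \ref{lem:Lco-simple}. This takes care of the case $2\rcl - j = 0$, i.e., $j = 2\rcl$, as required.

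For the inductive step, suppose we have built a partial complex $M_0 \leftarrow M_1 \leftarrow \cdots \leftarrow M_k$ with each $M_i$ a direct sum of standards $S^{\kappa_{2\rcl-i}}_\Bi$ and with the sequence exact at $M_0, \ldots, M_{k-1}$. Let $Z_k$ denote the kernel at $M_k$. The key claim is that $Z_k$ lies in the subcategory generated by standards with $j = 2\rcl - k - 1$; granting this, we choose $M_{k+1}$ to be a direct sum of such standards mapping surjectively onto $Z_k$. To establish the claim I would combine two ingredients. First, the semi-orthogonal decomposition of Proposition~\ref{semi-orthogonal} and the standardly stratified structure ensure that $Z_k$ lies in the triangulated subcategory generated by standards with $j < 2\rcl - k$. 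Second, a pictorial analysis using the diagrammatic presentation \eqref{red-cap}--\eqref{eq:red-rels} of $\Lco$ shows that the generators of $Z_k$ can be taken to be diagrams where exactly one additional black strand has been pushed across the right red strand (via the cost relation \eqref{cost}), and such diagrams factor through a standard in the immediately lower stratum $j = 2\rcl - k - 1$. A more formal version of this second step would exploit the projective resolution of Proposition~\ref{pro-res} combined with the interaction of standardization with induction in Proposition~\ref{prop:act-filter}, realizing the complex as a Koszul-type complex indexed by subsets of the original black strands.

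For the terminal step: the resolution terminates at $M_{2\rcl}$, a standard with $j = 0$. Since the standards with $\kappa(2) = 0$ correspond to modules of the form $\mathbb{S}^{\la,\la^*}(P_\emptyset \boxtimes N)$ and the $\la^*$-weight space of $V_\la$ is one-dimensional, the only such standard (up to grading shift) that can appear as the final kernel is $\ocL = S^{\kappa_0}_{\Bi_{\la^*}}$. To pin down the grading shift I would apply $\Hom(\ocL, -)$ to the resolution: by the semi-orthogonal decomposition, the terms $M_i$ for $i < 2\rcl$ have $j > 0$ and so receive no nonzero maps from $\ocL$, so only $M_{2\rcl}$ contributes to $\Ext^{2\rcl}(\ocL, \Lco)$. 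The preceding corollary computes this $\Ext$-group as $\K(2\llrr)$, which together with $\End(\ocL) = \K$ forces $M_{2\rcl} = \ocL(-2\llrr)$ with multiplicity one.

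The main obstacle will be the verification of the purity claim at each inductive step, namely that $Z_k$ is generated by standards drawn entirely from the single stratum $j = 2\rcl - k - 1$ rather than a mixture of lower strata. This is the technical heart of the argument and requires the detailed diagrammatic analysis indicated above; in essence, one must show that every diagram representing an element of $Z_k$ can be reduced, modulo the standardly violating relation, to one where the strand freshly crossing the right red strand does so last. Once this purity is established the rest of the argument — multiplicities, grading shifts, and termination — is controlled by the $\Ext$-computation of the preceding corollary together with the semi-orthogonal decomposition.
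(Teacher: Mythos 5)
Your skeleton matches the paper's proof — same base term $M_0=S^{\kappa_{2\rcl}}_{\Bi_\la}$, same stratum-by-stratum induction, same use of $\Ext^{2\rcl}$ to pin down $M_{2\rcl}\cong\ocL(-2\llrr)$ — but the step you yourself flag as "the technical heart," the purity of the kernel $Z_k$, is exactly the step you have not supplied, and the route you lean on is the wrong one. A diagrammatic analysis of "generators of $Z_k$ where one more black strand has crossed the right red strand" is plausible only for $k=0$; for $k\geq 1$ the kernel $Z_k$ sits inside a direct sum of standard modules and has no ready presentation by diagrams, so there is nothing to push a strand across. Likewise, knowing from the semi-orthogonal decomposition that $Z_k$ lies in the triangulated subcategory generated by lower standards does not tell you that its \emph{head} is concentrated in the single stratum $j=2\rcl-k-1$, which is what you need in order to cover it by standards from that stratum alone (via upper-triangularity of standard multiplicities).

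The paper closes this gap homologically rather than diagrammatically. The criterion is reformulated as: the simples in the head of $Z_k$ coming from stratum $j$ are detected by $\Hom(Z_k,(S^{\kappa_j}_{\Bi})^\star)$, since simple quotients of $S^{\kappa_j}_{\Bi}$ are submodules of its dual. Because $\Ext^{>0}$ of a standard against a dual standard vanishes, the long exact sequences attached to $0\to Z_i\to M_i\to Z_{i-1}\to 0$ dimension-shift to give $\Hom(Z_k,(S^{\kappa_j}_{\Bi})^\star)\cong\Ext^{k+1}(\Lco,(S^{\kappa_j}_{\Bi})^\star)$, and the latter is computed from the explicit resolution of Proposition \ref{pro-res}: dualizing it, the only terms of that resolution mapping nontrivially to $\Lco$ sit in homological degree $2\rcl-j$, so the $\Ext$ group vanishes unless $j=2\rcl-k-1$. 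This is the content behind your parenthetical appeal to Proposition \ref{pro-res}, but it is the whole argument, not a footnote to a diagrammatic one. A smaller point: in your terminal step you compute $\Ext^{2\rcl}(\ocL,\Lco)$ by applying $\Hom(\ocL,-)$ to the resolution by the $M_i$; since the $M_i$ are not $\Hom(\ocL,-)$-acyclic a priori (that would require $\Ext^{>0}(\ocL,S^{\kappa_j}_{\Bi})=0$ with $\ocL$ in the \emph{lowest} stratum, which the semi-orthogonal decomposition does not give), you should instead argue as the paper does: $\Ext^{>0}(\ocL,\ocL)=0$ forces $M_{2\rcl}$ to be a sum of shifts of $\ocL$, and $\Hom(M_{2\rcl},(S^{\kappa_0}_{\Bi_\la})^\star)\cong\Ext^{2\rcl}(\Lco,(S^{\kappa_0}_{\Bi_\la})^\star)\cong\K(-2\llrr)$ fixes the multiplicity and shift.
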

\begin{proof}
We prove this statement by induction on $j$.  We take $M_0$ to be the
standard $S^{\kappa_{2\rcl}}_{\Bi_\la}$; by definition, we have a
surjective map $M_0\to \Lco$.  Let $M_1'$ be the kernel of this map.
We wish to show that we have a surjective map from a sum of standards
of the form $S^{\kappa_{2\rcl-1}}_{\Bi}$.  By the
upper-triangularity of multiplicities in standards, this will follow
if we show that all the simples that receive a non-zero map from
$M_1'$ are quotients of $S^{\kappa_{2\rcl-1}}_{\Bi}$ for some
$\Bi$, and not of $S^{\kappa_k}_{\Bi}$ for $k<2\rho^\vee(\la)-1$.  The simple quotients of $S^{\kappa_k}_{\Bi}$ are
the same as the submodules of
$(S^{\kappa_k}_{\Bi})^\star$.  Thus, we wish to show that
$\Hom\big(M_1',(S^{\kappa_k}_{\Bi})^\star\big)=0$ for
$k<2\rho^\vee(\la)-1$.  Since
$\Ext^i\left(S^{\kappa_{2\rcl}}_{\Bi_\la},(S^{\kappa_k}_{\Bi})^\star\right)=0$,
the long exact sequence shows that 
\[\Hom\big(M_1',(S^{\kappa_k}_{\Bi})^\star\big)\cong
\Ext^1\big(L_\la,(S^{\kappa_k}_{\Bi})^\star\big).\]
Dualizing the projective resolution of Proposition \ref{pro-res}, we
see that this can only be non-zero if $k=2\rho^\vee(\la)-1$.
Thus, there exists the module $M_1$ as desired.  

Now, we let $M_2'$ be the kernel of the map $M_1\to M_1'$.  The
composition factors of this module are quotients of $S^{\kappa_k}_{\Bi}$
for $k\leq 2\rho^\vee(\la)-2$.  We now wish to show that that this
inequality is sharp for any simple quotient as before.  The long exact
sequence applied again shows that 
\[\Hom\big(M_2',(S^{\kappa_k}_{\Bi})^\star\big)\cong
\Ext^2\big(L_\la,(S^{\kappa_k}_{\Bi})^\star\big).\]
Applying the projective resolution of Proposition \ref{pro-res} again, we
see that this can only be non-zero if $k=2\rho^\vee(\la)-2$.

Applying this argument inductively, we see that we can construct $M_i$
as desired.

Now we wish to analyze $M_{2\rho^\vee(\la)}$. This is in the
subcategory generated by $\ocL$. Since 
$\Ext^i(\ocL,\ocL)$ vanishes for $i>0$, we must have that
$M_{2\rho^\vee(\la)}$ is a sum of grading shifts of $\ocL$. By our
  projective resolution, we have
$$\Hom(M_{2\rcl},\left(S^{\kappa_0}_{\Bi_\la}\right)^{\!\star})\cong
\Ext^{2\rcl}\left(\Lco,\left(S^{\kappa_0}_{\Bi_\la}\right)^{\!\star}\right)\cong
\K(-2\llrr).$$  This can only be the case if
$M_{2\rcl}\cong\ocL(-2\llrr)$, since
$\Hom(\ocL,\left(S^{\kappa_0}_{\Bi_\la}\right)^{\!\star})\cong \K$.  
\end{proof}
\begin{cor}%\label{LM}
$\displaystyle{\Ext^i(\Lco,\ocL)=\begin{cases}0& i\neq 2\rcl\\ \K(2\llrr) & i=2\rcl\end{cases}.}$
\end{cor}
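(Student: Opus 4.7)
The plan is to compute $\Ext^i(\Lco,\ocL)$ by applying $\Hom(-,\ocL)$ to the length-$2\rcl$ resolution
$$0 \to M_{2\rcl}\to \cdots \to M_1 \to M_0 \to \Lco \to 0$$
constructed in Proposition \ref{sta-res}. Each $M_{2\rcl-j}$ is a finite direct sum of grading shifts of standards $S^{\kappa_j}_\Bi$, and the top term is $M_{2\rcl}\cong \ocL(-2\llrr)$. This mirrors the strategy of the preceding corollary but uses the resolution of $\Lco$ rather than the projective resolution of $\ocL$.

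The key input is the semi-orthogonal decomposition of Proposition \ref{semi-orthogonal}. I would first observe that $\ocL=S^{\kappa_0}_{\Bi_{\la^*}}$ is minimal in the root-function preorder among the standards appearing in the resolution: at index $1$ we have $\bal_{\ocL}(1)=0$, while for any $j>0$ the standard $S^{\kappa_j}_\Bi$ has $\bal(1)=\sum_{k\leq j}\al_{i_k}$, strictly greater in the dominance order on the root lattice. Proposition \ref{semi-orthogonal} then gives $\Ext^\bullet(S^{\kappa_j}_\Bi,\ocL)=0$ for all $j>0$, and since each $M_p$ with $p<2\rcl$ is a direct sum of such standards, $\Ext^\bullet(M_p,\ocL)=0$ for $p<2\rcl$.

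Breaking the resolution into short exact sequences $0\to Z_{j+1}\to M_j\to Z_j\to 0$ with $Z_0=\Lco$ and $Z_{2\rcl}=M_{2\rcl}$, and iterating the long exact $\Ext$-sequence against $\ocL$, the vanishing above collapses each step by a dimension shift and yields
$$\Ext^i(\Lco,\ocL)\;\cong\;\Ext^{i-2\rcl}\bigl(\ocL(-2\llrr),\ocL\bigr).$$
Proposition \ref{semi-orthogonal} applied to $\ocL$ itself identifies $\End(\ocL)\cong \alg^\la_\la\otimes \alg^{\la^*}_{w_0\la^*}\cong \K$ (both factors being one-dimensional extremal weight spaces) with $\Ext^{>0}(\ocL,\ocL)=0$. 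The right-hand side is therefore $\K(2\llrr)$ in cohomological degree $i=2\rcl$ and zero otherwise, which is the desired formula.

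The only mildly delicate point will be tracking the grading shift: one must check that the internal shift $-2\llrr$ on $M_{2\rcl}$ correctly produces $\K(2\llrr)$ under the convention $[A(i)]=q^i[A]$, but this is straightforward. Conceptually, the entire argument is driven by the semi-orthogonality of Proposition \ref{semi-orthogonal} plus the explicit shape of the resolution from Proposition \ref{sta-res}, so no new technical machinery is required.
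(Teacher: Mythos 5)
Your proof is correct and is exactly the argument the paper intends: the corollary is stated without proof as an immediate consequence of Proposition \ref{sta-res}, and your route — applying $\Hom(-,\ocL)$ to that resolution, killing all terms but $M_{2\rcl}\cong\ocL(-2\llrr)$ via the semi-orthogonality of Proposition \ref{semi-orthogonal}, and then using $\End(\ocL)\cong\K$ with $\Ext^{>0}(\ocL,\ocL)=0$ — is the intended one, with the grading shift handled consistently with the paper's conventions.
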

\begin{cor}\label{LM}
$\displaystyle{\operatorname{Tor}^i(\ocL,\dot{L}_\la)=\begin{cases}0& i\neq 2\rcl\\ \K(-2\llrr) & i=2\rcl\end{cases}.}$
\end{cor}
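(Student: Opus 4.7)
The plan is to reduce $\operatorname{Tor}^i(\ocL,\dot{L}_\la)$ to the $\operatorname{Ext}$-computation of the immediately preceding corollary via a standard graded duality, using the self-duality of $\Lco$.

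First I would record that by Lemma \ref{lem:Lco-simple}, $\Lco$ is simple, so Proposition \ref{prop-simple-self-dual} gives $\Lco\cong \Lco^\star$ as right $T^{\la,\la^*}$-modules. Unpacking the definition $M^\star=\dot{M}^*$, this says precisely that the vector-space dual of the \emph{left} module $\dot{\Lco}$ is isomorphic (as a right module) to $\Lco$ itself.

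Next I would invoke the tensor-Hom adjunction against $\K$, which yields a natural isomorphism
\[
(M\otimes_T N)^{*}\;\cong\;\Hom_T(M,\,N^{*})
\]
for any right module $M$ and left module $N$. This is straightforward to verify directly on projectives: when $M=eT$, both sides are canonically identified with the subspace $N^{*}\cdot e$ of functionals annihilating $(1-e)N$, and the isomorphism is evidently degree-preserving in the graded sense. Since $\ocL$ has a finite-length projective resolution by Corollary~\ref{standard-finite-length}, deriving this gives the graded natural isomorphism
\[
\operatorname{Tor}^i_T(\ocL,N)^{*}\;\cong\;\operatorname{Ext}^i_T(\ocL,\,N^{*})
\]
for every $i$ and every left module $N$.

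Specializing to $N=\dot{\Lco}$, the first step identifies $N^{*}\cong \Lco$ as right modules, so the previous corollary gives
\[
\operatorname{Tor}^i(\ocL,\dot{\Lco})^{*}\;\cong\;\operatorname{Ext}^i(\ocL,\Lco)\;=\;\begin{cases}\K(2\llrr) & i=2\rcl,\\[2pt] 0 & i\ne 2\rcl.\end{cases}
\]
Taking graded vector-space duals, and using the convention $A(i)_n=A_{i+n}$ under which $(\K(i))^{*}=\K(-i)$, yields exactly $\K(-2\llrr)$ in cohomological degree $2\rcl$ and $0$ elsewhere, as claimed.

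The only genuinely delicate point is ensuring the tensor-Hom duality is applied cleanly in the graded, locally finite setting. This should present no serious obstacle: everything happens inside a single weight block, where the algebra is finite-dimensional by Proposition~\ref{basis}, and $\ocL$ admits a \emph{finite-length} projective resolution so the derived statement is unambiguous term-by-term. Thus the main content of the proof is really just the self-duality of $\Lco$ and bookkeeping with grading shifts.
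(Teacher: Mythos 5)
Your argument is correct, and it is in substance the same as the paper's (implicit) derivation: the corollary is stated without proof because it is the tensor-product counterpart of the $\Ext^i(\ocL,\Lco)$ computation, obtained by applying $-\otimes\dot{L}_\la$ to the projective resolution of $\ocL$ from Proposition \ref{pro-res}, where only the last term $P_{\Bi_\la}^{\kappa_{2\rcl}}(-2\llrr)$ survives. Your reduction via the duality $\operatorname{Tor}^i(M,N)^*\cong\Ext^i(M,N^*)$ together with $\Lco\cong\Lco^\star$ is a clean formal repackaging of exactly that computation (only note that finite-dimensionality of the relevant weight block follows from Theorem \ref{Uq-action} rather than Proposition \ref{basis}, which concerns the infinite-dimensional algebra $\tilde{T}^\bla$).
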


\subsection{Ribbon structure}
\label{sec:ribbon}

This calculation is also important for showing how $\Lco$ behaves under braiding:
\begin{prop}\label{Lco-bra}
$\mathbb{B}_{\sigma_1}\Lco\cong L_{\la^*}[-2\rcl](-2\llrr-\lllr)$.
\end{prop}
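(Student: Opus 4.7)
The plan is to apply $\mathbb{B}_{\sigma_1}$ to the resolution of $\Lco$ from Proposition~\ref{sta-res} and identify the result. That resolution has the form
\[
0 \to M_{2\rcl} \to \cdots \to M_0 \to \Lco \to 0
\]
with $M_0 = P^{(0,2\rcl)}_{\Bi_\la}$, $M_{2\rcl} = \ocL(-2\llrr)$, and each $M_j$ filtered by standards $S^{\kappa_{2\rcl-j}}_{\Bi}$. By the vanishing statement $s_n$ established in the proof of Lemma~\ref{pro-sta}, every such standard (and hence each $M_j$) is $\bra_{\sigma_1}$-acyclic, so $\mathbb{B}_{\sigma_1}\Lco$ is represented in the derived category by the bounded complex $\mathbb{B}_{\sigma_1} M_\bullet$ of honest modules in $\cata^{(\la^*,\la)}$.

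To identify the endpoints of this complex I would invoke Proposition~\ref{sta-braid}. Applied with $\bla = (\la,\la^*)$ and $\Bi_1 = \Bi^\la$ (whose root contribution is $\la+\la^*$), it yields
\[
\mathbb{B}_{\sigma_1} M_0 \cong \mathbb{S}^{(\la^*,\la)}(P_\emptyset \boxtimes P^0_{\Bi^\la})(-\langle\la^*,\la^*\rangle),
\]
the analog of $\ocL$ in $\cata^{(\la^*,\la)}$ with a grading shift. For the other end, applying Proposition~\ref{sta-braid} instead to the sequence $(\la^*,\la)$ with $\Bi^{\la^*}$ gives the identity
\[
\mathbb{B}_{\sigma_1}\bigl(\mathbb{S}^{(\la^*,\la)}(P^0_{\Bi^{\la^*}}\boxtimes P_\emptyset)\bigr) \cong \mathbb{S}^{(\la,\la^*)}(P_\emptyset \boxtimes P^0_{\Bi^{\la^*}})(-\lllr) = \ocL(-\lllr);
\]
inverting this using the equivalence property of $\mathbb{B}_{\sigma_1}$ (Theorem~\ref{braid-act}) and factoring in the shift $(-2\llrr)$ from the definition of $M_{2\rcl}$, I obtain
\[
\mathbb{B}_{\sigma_1} M_{2\rcl} \cong P^{(0,2\rcl)}_{\Bi^{\la^*}}(-2\llrr - \lllr),
\]
the projective cover of $L_{\la^*}(-2\llrr-\lllr)$ in $\cata^{(\la^*,\la)}$.

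To finish, I would argue that $\mathbb{B}_{\sigma_1} M_\bullet$ has cohomology concentrated in a single homological degree, equal to $L_{\la^*}(-2\llrr-\lllr)$. Two observations pin this down: first, since $\mathbb{B}_{\sigma_1}$ is a derived equivalence (Theorem~\ref{braid-act}) that commutes with the $\fg$-action (Proposition~\ref{bra-commute}), $\fE_i \mathbb{B}_{\sigma_1}\Lco = \mathbb{B}_{\sigma_1}\fE_i\Lco = 0$ for all $i$, so every composition factor of the cohomology is an $\fE$-annihilated simple, forcing it to be $L_{\la^*}$; second, Proposition~\ref{pro:mutate} shows that $\mathbb{B}_{\sigma_1}$ sends the stratum $\bal=(\la+\la^*,0)$ of the semi-orthogonal decomposition of $\cat^{(\la,\la^*)}$ (which contains $\Lco$) to the symmetric stratum $(0,\la+\la^*)$ of $\cat^{(\la^*,\la)}$ (which contains $L_{\la^*}$), so the cohomology must live entirely in this stratum. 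Combined with the explicit identification of $\mathbb{B}_{\sigma_1} M_{2\rcl}$ above, which sits at cohomological degree $-2\rcl$ and surjects onto $L_{\la^*}(-2\llrr-\lllr)$, this yields $\mathbb{B}_{\sigma_1}\Lco \cong L_{\la^*}[-2\rcl](-2\llrr-\lllr)$. The main obstacle is rigorously ruling out cohomology in intermediate degrees of $\mathbb{B}_{\sigma_1} M_\bullet$; this requires a Tor-vanishing argument parallel in spirit to the computation of $\Tor^i(\ocL,\dot{L}_\la)$ in Corollary~\ref{LM}, together with the K-theoretic bookkeeping from Corollary~\ref{br-cat} that matches $[\mathbb{B}_{\sigma_1}\Lco]$ with the R-matrix scalar $(-1)^{2\rcl}q^{-2\llrr-\lllr}$ acting on the invariant vector $[\Lco]$.
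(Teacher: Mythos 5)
Your route---pushing the resolution of $\Lco$ from Proposition~\ref{sta-res} through $\mathbb{B}_{\sigma_1}$---is genuinely different from the paper's, which instead computes the idempotent truncations $\mathbb{B}_{\sigma_1}\Lco\, e(\Bi,\kappa_j)\cong \Lco\Lotimes\bra\Lotimes\dot P^{\kappa_j}_{\Bi}$ directly: these vanish for $j<2\rcl$, and for $j=2\rcl$ Proposition~\ref{sta-braid} gives $\bra\Lotimes\dot P^{\kappa_{2\rcl}}_{\Bi_\la}\cong\dot M_\la(-\lllr)$, so Corollary~\ref{LM} yields $\mathbb{B}_{\sigma_1}\Lco\, e(\Bi_\la)\cong\K[-2\rcl](-2\llrr-\lllr)$. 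That last computation is what forces a single copy of $L_{\la^*}$ in a single homological degree, and it is exactly the point your proposal leaves open: knowing every cohomology group is a sum of shifts of $L_{\la^*}$ and knowing the Euler characteristic $(-1)^{2\rcl}q^{-2\llrr-\lllr}[L_{\la^*}]$ does not exclude cancelling pairs in adjacent degrees, so the ``K-theoretic bookkeeping'' you invoke cannot close the gap, whereas the Tor computation does (it bounds the \emph{total} dimension of the $e(\Bi_\la)$-truncation over all degrees by one).

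Two intermediate steps are also flawed as written. First, inverting $\mathbb{B}_{\sigma_1}\bigl(\mathbb{S}^{(\la^*,\la)}(P^0_{\Bi^{\la^*}}\boxtimes P_\emptyset)\bigr)\cong\ocL(-\lllr)$ produces the \emph{negative} crossing $\mathbb{B}_{\sigma_1^{-1}}\colon\cat^{(\la,\la^*)}\to\cat^{(\la^*,\la)}$ applied to $\ocL$, not the positive one; the two differ by the full twist, which on this object is a shift by $-2\lllr$, so your inversion literally yields $P^{(0,2\rcl)}_{\Bi^{\la^*}}(\lllr-2\llrr)$. The shift you wrote down is in fact the correct value of $\mathbb{B}_{\sigma_1}M_{2\rcl}$, but it requires a different argument (e.g.\ statement $s_1$ of Lemma~\ref{pro-sta} plus Proposition~\ref{pro:mutate} to place $\mathbb{B}_{\sigma_1}\ocL$ in the top stratum, where the standard is projective with endomorphisms $\K$, and then a weight-by-weight comparison of classes to fix the shift). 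Second, $\Lco$ does \emph{not} lie in the stratum $\cC^{(\la+\la^*,0)}$: that piece of the semi-orthogonal decomposition is generated by the standard $P^{(0,2\rcl)}_{\Bi_\la}$, of which $\Lco$ is the simple head but to which it is not isomorphic in general, so Proposition~\ref{pro:mutate} does not constrain the cohomology of $\mathbb{B}_{\sigma_1}\Lco$ in the way you claim. I would recommend abandoning the resolution of $\Lco$ in favor of the paper's evaluation against $\dot P^{\kappa_{2\rcl}}_{\Bi_\la}$, which uses only Proposition~\ref{sta-braid} and Corollary~\ref{LM}.
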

\begin{proof}
Note that $\Lco$ is the unique simple module such that for all $j<2\rcl$
\begin{equation}
  \label{Lco-properties}
  \Lco e(\Bi,\kappa_j)\cong \Lco\Lotimes \dot{P}^{\kappa_{j}}_{\Bi}\cong  0. 
\end{equation}
Thus we wish to check that $\mathbb{B}_{\sigma_1}\Lco$ has the same
property.  Assume $\Bi$ is a sequence of
length $2\rcl$.  
If $j<2\rcl$, then $\bra\overset{L}\otimes \dot P^{\kappa_j}_{\Bi}\cong \fF_i(\bra\overset{L}\otimes \dot P^{\kappa_j}_{\Bi'})$ for a shorter sequence $\Bi'$.
Thus, $\bra\overset{L}\otimes \dot P^{\kappa_j}_{\Bi}$ has a
projective resolution in which $P^{\kappa_{2\rcl}}_{\Bi}$ never
appears, and $$\mathbb{B}_{\sigma_1}\Lco e(\Bi,\kappa_j)\cong
\Lco\overset{L}\otimes \bra\overset{L}\otimes \dot
P^{\kappa_j}_{\Bi}\cong 0.$$
The property  shows that the only composition factor which can occur in the
cohomology $\mathbb{B}\Lco$ is $L_{\la^*}$.  Now we need only show
that it only appears with multiplicity 1 in the correct degree.

In order to see this, we note that Proposition \ref{sta-braid} implies
that $\bra\overset{L}\otimes\dot P^{\kappa_{2\rcl}}_{\Bi_\la}\cong  \dot M_\la(-\lllr)$.
Thus, by Corollary \ref{LM}, we have an isomorphism of  vector
spaces $$\mathbb{B}\Lco e(\Bi_\la)\cong \Lco\overset{L}\otimes
\bra\overset{L}\otimes\dot P^{\kappa_{2\rcl}}_{\Bi_\la}\cong
\Lco\overset{L}\otimes \dot M_\la(-\lllr)\cong
\K[-2\rcl](-2\llrr-\lllr).$$  By the exactness of tensoring with a
projective, we see that as a $\alg^{\la^*,\la}$ representation, the
cohomology of $\mathbb{B}\Lco$ must be simple, and thus \begin{equation*}
\mathbb{B}\Lco\cong L_{\la^*}[-2\rcl](-\lllr-2\llrr).\qedhere
\end{equation*}
\end{proof}

Now, in order to define quantum knot invariants, we must also have have quantum trace and cotrace maps, which can only be defined after one has chosen a ribbon structure.  The Hopf algebra $U_q(\fg)$ does not have a unique ribbon structure; in fact topological ribbon elements form a torsor over the characters  $\wela/\rola\to\{\pm 1\}$.  Essentially, this action is by multiplying quantum dimension by the value of the character.  

The standard convention is to choose the ribbon element so that all
quantum dimensions are Laurent polynomials in $q$ with positive
coefficients; however, the calculation above shows that this choice is
not compatible with our categorification! Instead we define:
\begin{defn}
  The {\bf ribbon functor} $\mathbb{R}_i$ is defined by
  \[\mathbb{R}_iM=M[2\rho^\vee(\la_i)](2\langle\la_i,\rho\rangle+\langle\la_i,\la_i
  \rangle).\]
\end{defn}
By Proposition
\ref{Lco-bra}, we
have $$\mathbb{B}^2\Lco=\Lco[-4\rcl](-4\llrr-2\lllr).$$ Thus, our
ribbon functor $\mathbb{R}$ satisfies the
equations $$\mathbb{B}^2\Lco\cong
\mathbb{R}_1^{-2}\Lco=\mathbb{R}_2^{-2}\Lco=\mathbb{R}_1^{-1}\mathbb{R}_{2}^{-1}\Lco,$$
which are necessary for topological invariance (as we depict in Figure
\ref{Lco-inv}).

\begin{figure}[ht]
\begin{tikzpicture}[wei, very thick, scale=-1]
\draw (0,0) to node[at end,inner sep=4pt,fill=white,circle]{} (0,.3) to[out=90,in=0] (-.2,.5) to[out=180,in=90] (-.4,.3);
\draw (0,1.2) to[out=-90,in=90]  node[midway,inner sep=4pt,fill=white,circle]{}(1,.5) to node[at end,inner sep=4pt,fill=white,circle]{} (1,.3) to[out=-90,in=180] (1.2,.1) to[out=0,in=-90] (1.4,.3);
\draw (-.4,.3) to[out=-90,in=180] (-.2,.1) to[out=0,in=-90] (0,.3) to[out=90,in=-90] (0,.5) to[out=90,in=-90] (1,1.2) to[out=90,in=-90] node[midway,inner sep=4pt,fill=white,circle]{} (0,1.9) to[out=90,in=180] (.5,2.25);
\draw (.5,2.25) to[out=0,in=90] (1,1.9)[out=-90,in=90] to (0,1.2);
\draw (1.4,.3) to[out=90,in=0] (1.2,.5) to[out=180,in=90] (1,.3) to[out=-90,in=90] (1,0);
\node[black] at (1.6,1){=};
\draw (2,0) to (2,1.9)  to[out=90,in=180] (2.5,2.25) to[out=0,in=90] (3,1.9) to [out=-90,in=90] (3,0) ;
\end{tikzpicture}
\caption{The compatibility of double twist and the ribbon element.}
\label{Lco-inv}
\end{figure}
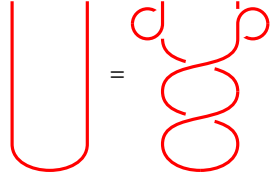

Taking Grothendieck group, we see that we obtain the ribbon element in
$U_q(\fg)$ uniquely determined by the fact that it acts on the simple
representation of highest weight $\la$ by
$(-1)^{2\rcl}q^{\lllr+2\llrr}$.  This is the inverse  of the ribbon element constructed by
Snyder and Tingley in \cite{STtwist}; we must take inverse because
Snyder and Tingley use the opposite choice of coproduct from ours.  See Theorem 4.6 of that paper
for a proof that this is a ribbon element.  

From now on, we will term
this the {\bf ST ribbon element}.  It may seem strange that this
element appears more naturally from the perspective of categorification
than the standard ribbon element, but it is perhaps not so surprising;
the ST ribbon element is closely connected to the braid group action
on the quantum group, which also played an important role in Chuang
and Rouquier's early investigations on categorifying $\mathfrak{sl}_2$
in \cite{CR04}.  It is not surprising at all that we are forced into a
choice, since ribbon structures depend on the ambiguity of taking a
square root; while numbers always have 2 or 0 square roots in any
given field (of characteristic $\neq 2$), a functor will often
only have one.

Due to the extra trouble of drawing ribbons, we will draw
all pictures in the blackboard framing.

This different choice of ribbon element will not seriously affect our topological invariants; we simply multiply the invariants from the standard ribbon structure by a sign depending on the framing of our link and the Frobenius-Schur indicator of the label, as we describe precisely in Proposition \ref{schur-indicate}.

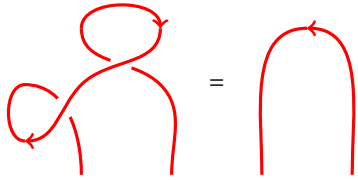
\begin{figure}[ht]
\begin{tikzpicture}[wei,very thick]
%\draw (0,0) --  node [pos=.91,fill=white,inner sep=6pt]{}  (1,1)to[out=45,in=-90](2.5,2.7);
%\draw (0.3,0) --  node [pos=1.1,fill=white,inner sep=6pt]{}  (1,.7) to[out=45,in=-90] (2.8,2.7) ;
%\draw (2.5,2.7)  to[out=90,in=0] (1.5,4)  to[out=180,in=0] node[midway,fill=white,inner sep=3pt]{} (1,4.3);
%\draw (2.8,2.7) to[out=90,in=0] (1.5,4.3) to[out=180,in=0] (1,4) to[out=180,in=0] node[midway,fill=white,inner sep=3pt]{} (.5,4.3) to[out=180,in=90] (-.8,2.7) to [out=-90,in=135] (1,.7) -- (1.7,0);
%\draw (1,4.3) to[out=180,in=0] (.5,4)  to[out=180,in=90] (-.5,2.7) to [out=-90,in=135] (1,1) -- (2,0);
%\node at (3.8,2){=};
  \end{tikzpicture}
\begin{tikzpicture}[very thick,scale=1.5,red]
\draw (0,-.3) to[out=90,in=0] node[pos=.57,inner sep=3pt,fill=white,circle]{} (-.5,.5) to[out=180,in=180] (-.5,0) ;
\draw (0,1) to[out=-90,in=135] node[pos=.6,inner sep=3pt,fill=white,circle]{}  (.7,.5) to[in=90,out=-45] (.8,-.3);
\draw[<-] (-.5,0) to[out=0,in=-135] (0,.5) to[out=45,in=-90] (.7,1) ; \draw[<-] (.7,1) to[out=90,in=90](0,1);
\node[black] at (1.2,.5){=} ;
\draw (1.6,-.3) to[out=90,in=180] (2,1);\draw[<-] (2,1) to
[out=0,in=90] (2.4,-.3);

  \end{tikzpicture}
\caption{Changing the orientation of a cap}
\label{reverse}
\end{figure}

\begin{prop}\label{trace}
The quantum trace and cotrace for the ST ribbon structure are categorified by the functors
$$\mathbb{C}^{\la^*,\la}_{\emptyset}\colon \cat_{\nicefrac{1}{D}}^{\emptyset}\to \cat_{\nicefrac{1}{D}}^{\la^*,\la}\text{
given by }\RHom( \dot{L}_{\la^*},-)(2\llrr)[-2\rcl]$$ \centerline{and} $$\mathbb{T}_{\la,\la^*}^{\emptyset}\colon \cat_{\nicefrac{1}{D}}^{\la,\la^*}\to \cat_{\nicefrac{1}{D}}^{\emptyset}\text{ given by }-\otimes_{\alg^\bla}\dot{L}_\la.$$
\end{prop}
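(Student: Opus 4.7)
The proof will follow the template of Proposition \ref{coev}, with the added ingredient that rightward cups and caps differ from leftward ones by a braiding composed with a ribbon twist, as depicted in Figure \ref{reverse}. There are essentially three things to verify for each of $\mathbb{T}$ and $\mathbb{C}$: that the functor is well-defined on the stated category, that its decategorification is a $U_q(\fg)$-invariant map (and hence a multiple of the trace or cotrace), and that the normalization matches the ST ribbon element.

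First I would check well-definedness using Theorem \ref{triangular-resolution}: both $\dot L_\la$ and $\dot L_{\la^*}$ admit projective resolutions in $C^{\uparrow}$, so the Tor and Ext in the definitions of $\mathbb{T}$ and $\mathbb{C}$ land in the correct triangulated categories. Next, invariance under $U_q(\fg)$: by Lemma \ref{lem:Lco-simple} applied to the pair $(\la,\la^*)$, $L_\la$ is annihilated by every $\fE_i$ and, via self-duality (Proposition \ref{prop-simple-self-dual}), every $\fF_i$ as well (in the dual sense appropriate to acting on a left module). Consequently $\beta_{\eE_i}\otimes \dot L_\la \simeq 0$ and $\beta_{\eF_i}\otimes \dot L_\la \simeq 0$ in the derived category, which forces $\mathbb{T}$ to decategorify to a $U_q(\fg)$-equivariant map $V_{\la,\la^*}\to \C[\qD,\qD^{-1}]$; the same argument gives equivariance of $\mathbb{C}$.

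Since the space of $U_q(\fg)$-invariant linear functionals on $V_\la\otimes V_{\la^*}$ is one-dimensional, $[\mathbb{T}]$ is a scalar multiple of the trace, and similarly $[\mathbb{C}]$ of the cotrace. The cleanest way to pin down the scalar is to use the topological content of Figure \ref{reverse}: establish natural isomorphisms of functors
\[\mathbb{T}_{\la,\la^*}^{\emptyset}\;\cong\;\mathbb{E}_{\la^*,\la}^{\emptyset}\circ \mathbb{B}_{\sigma_1}\circ(\mathbb{R}_1\boxtimes\id),
\qquad
\mathbb{C}^{\la^*,\la}_{\emptyset}\;\cong\;(\id\boxtimes\mathbb{R}_1^{-1})\circ \mathbb{B}_{\sigma_1}^{-1}\circ \mathbb{K}^{\la,\la^*}_{\emptyset}.\]
Granting these, the result follows by combining Proposition \ref{coev} (identifying $\mathbb{E}$ and $\mathbb{K}$ with evaluation and coevaluation), Corollary \ref{br-cat} (identifying $\mathbb{B}_{\sigma_1}$ with the braiding), and the definition of $\mathbb{R}_i$ from Section \ref{sec:ribbon}, which was chosen precisely so that Proposition \ref{Lco-bra} yields the ST ribbon element on the nose. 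In particular, the shifts $(2\llrr)[-2\rcl]$ appearing in $\mathbb{C}$ match exactly the homological and internal shifts produced by this composition.

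The main obstacle is the bimodule-level verification of the functorial identities above. This amounts to showing that the module $\dot L_\la$, when resolved as in Proposition \ref{sta-res} and dragged through the braiding bimodule $\bra$ (composed with the ribbon shift), becomes $\dot L_{\la^*}$ up to the predicted shifts. Morally this is just Proposition \ref{Lco-bra} read across the Hom-tensor adjunction, but one has to be careful with left/right conventions and the anti-involution $a\mapsto \dot a$. As a cross-check, one can verify the normalization directly: apply $\mathbb{T}$ to $L_\la$ itself using the resolution of Proposition \ref{sta-res}, whose final term is $\ocL(-2\llrr)[-2\rcl]$, and then invoke Corollary \ref{LM} (which gives $\Tor^{2\rcl}(\ocL,\dot L_\la)\cong\K(-2\llrr)$) to see that the overall shift is consistent with $(-1)^{2\rcl}q^{\lllr+2\llrr}$ acting on $v_h$, i.e.\ the ST ribbon scalar.
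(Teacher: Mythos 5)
Your proposal is correct and takes essentially the same route as the paper: the paper's entire proof is the identification $\mathbb{T}_{\la,\la^*}^{\emptyset}\cong \mathbb{E}_{\la^*,\la}^{\emptyset}\circ\mathbb{B}_{\sigma_1}\circ\mathbb{R}_1$ read off from Figure \ref{reverse}, followed by Proposition \ref{Lco-bra} together with the definition of $\mathbb{R}_i$ to conclude $\mathbb{B}\mathbb{R}_1\dot L_{\la^*}\cong\dot L_\la$, with the cotrace then handled by adjunction. The preliminary invariance and one-dimensionality discussion in your write-up is harmless but superfluous once that composition is in hand, since the braiding and ribbon functors are already known equivalences and the evaluation is already identified in Proposition \ref{coev}.
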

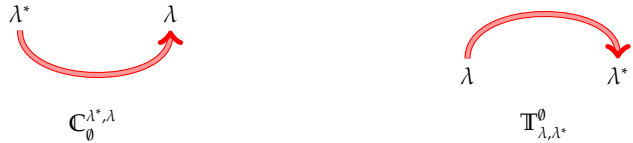
\begin{figure}
\begin{tikzpicture}
\node [label=below:{$\mathbb{C}^{\la^*,\la}_{\emptyset}$}] at (-3,0){
\begin{tikzpicture}
\draw[wei,->] (-1,0) to[out=-90,in=-90] node[at start, above]{$\la^*$}
node [at end, above]{$\la$} (1,0);
\end{tikzpicture}
};
\node [label=below:{$\mathbb{T}_{\la,\la^*}^{\emptyset}$}] at (3,0){
\begin{tikzpicture}
\draw[wei,->] (-1,0) to[out=90,in=90] node[at start, below]{$\la$}
node [at end, below]{$\la^*$} (1,0);
\end{tikzpicture}
};
\end{tikzpicture}
\caption{Pictures for the quantum (co)trace.}
\label{q-tr}
\end{figure}

\begin{proof}
As the picture Figure \ref{reverse} suggests, by definition the quantum trace is given by applying a negative ribbon twist of one strand, and then applying a positive braiding, followed by  the evaluation; that is, it is categorified by \[(\mathbb{B}\mathbb{R}_1-)\otimes \dot{L}_\la\cong -\otimes (\mathbb{B}\mathbb{R}_1\dot{L}_\la)\cong -\otimes \dot{L}_\la.\]  The result thus immediately follows from Proposition \ref{Lco-bra}, and our definition of $\mathbb{R}$.  The same relation between evaluation and quantum trace follows from adjunction.
\end{proof}

\subsection{Coevaluation and quantum trace in general}

More generally, whenever we are presented with a sequence $\bla$ and a
dominant weight $\mu$, we wish to have a functor relating the
categories $\bla$ and
$\bla^+=(\la_1,\dots,\la_{j-1},\mu,\mu^*,\la_{j},\dots,\la_\ell)$.
This will be given by left tensor product with a particular bimodule.

\begin{defn}
  We let a $(\bla,\bla^+)$-Stendhal diagram be a collection of curves
  like a Stendhal diagram, except that we allow a single cap given by
  a red strand connecting the bottom to itself; like in
  \eqref{red-cap}, we insert an element of $L_\mu$ at the maximum of
  this cup, with appropriate inputs exploding out of its bottom.  
\end{defn}
The $(\bla,\bla^+)$-Stendhal diagrams are obtained by attaching normal
Stendhal diagrams to the top and bottom of diagrams of the form:
\begin{equation*}
  \begin{tikzpicture}[very thick,xscale=1.4,yscale=-1.4]

\node (v) at (0,-1) [fill=white!80!gray,draw=white!80!gray, thick,rectangle,inner xsep=10pt,inner ysep=6pt, outer sep=-2pt] {$v$};

\begin{pgfonlayer}{background} \begin{scope}[very thick]
\draw[wei] (-4.5,-1) -- +(0,2) node[at start,above]{$\la_1$} node[at end,below]{$\la_1$};
    \draw (-3.75,-1) -- +(0,2) node[at start,above]{$i$} node[at end,below]{$i$};
    \node at (-3,0){$\cdots$};
    \draw[wei] (v.170) to[in=280,out=170] node[at end, below]{$\mu$} (-2.5,1);
\draw[wei] (v.10) to[out=10,in=260] node[at end, below]{$\mu^*$} (2.5,1) ;
  \node at (3,0){$\cdots$};
    \draw[wei] (4.5,-1) -- +(0,2) node[at start,above]{$\la_\ell$} node[at end,below]{$\la_\ell$};
    \draw (3.75,-1) -- +(0,2) node[at start,above]{$j$} node[at end,below]{$j$};
    \draw (v.55) to[in=270,out=55] node[below,at end]{$i_k$} (2.1,1);
    \draw (v.65) to[in=270,out=65] node[below,at end]{$i_k$}(1.7,1);
    \draw (v.75) to[in=270,out=75] node[below,at end]{$i_k$} (1.3,1) ;
    \draw (v.125) to[in=270,out=125]node[below,at end]{$i_1$}  (-2.1,1) ;
    \draw (v.115) to[in=270,out=115] node[below,at end]{$i_1$} (-1.7,1);
    \draw (v.105) to[in=270,out=105] node[below,at end]{$i_1$} (-1.3,1);
    \draw[ultra thick,loosely dotted,-] (-.35,.5) -- (.35,.5);
\draw[ultra thick,loosely dotted,-] (-.35,1.4) -- (.35,1.4);
    \draw[decorate,decoration=brace,-] (-.8,1.5) --
    node[below,midway]{$\mu^{i_1}$} (-2.2,1.5);
    \draw[decorate,decoration=brace,-] (2.2,1.5) --
    node[below,midway]{$(s_{k-1}\mu)^{i_k}$} (.8,1.5) ;\end{scope}
\end{pgfonlayer}
  \end{tikzpicture}
\end{equation*}
where $v$ is an element of $\Lco$.

  Let $g_i$ be the number of times $i$ appears in
  $\Bi^\la_{\mathbf{w}}$ for any reduced expression for the longest
  element $w_0$.  These numbers can also be defined as the unique
  integers so that $\la-w_0(\la)=\sum_i g_i\al_i$.  In particular, the
  sum $\sum g_i$ is precisely the quantity $2\rho^\vee(\la)$, which
  we have considered extensively
\begin{defn}
  We let  $\tilde{\coe}^{\bla^+}_{\bla}$ be the quotient of the
  $\K$-span of all $(\bla,\bla^+)$-Stendhal diagrams by:
\begin{itemize}
\item We impose all local relations of $\tilde{T}$, including
  planar isotopy. That is, we impose the relations of
   (\ref{first-QH}--\ref{triple-smart}) and 
  (\ref{red-triple-correction}-\ref{cost}), but not the relations
  killing violating strands.
\item diagrams only involving strands that hit the maximum of the cup
  can act on elements of $\Lco$ as expected.
\item The relations: 
\begin{equation}\label{max1}
   \begin{tikzpicture}[very thick,yscale=-1]
\node (v) at (0,-1) [fill=white!80!gray,draw=white!80!gray, thick,rectangle,inner xsep=10pt,inner ysep=6pt, outer sep=-2pt] {$v$};
\begin{pgfonlayer}{background} \begin{scope}[very thick]
    \draw[wei] (v.170) to[in=280,out=170] node[at end, below]{$\mu$} (-2.5,1);
\draw[wei] (v.10) to[out=10,in=260] node[at end, below]{$\mu^*$}(2.5,1) ;
    \draw (-3,-2.2) to[in=190,out=90] node[at start,above]{$j$} (0,0)
    to[out=10,in=-90] node[at end,below]{$j$} (3,1)  ;
    \draw (v.55) to[in=270,out=55] node[below,at end]{}(2.1,1);
    \draw (v.65) to[in=270,out=65] node[below,at end]{} (1.7,1);
    \draw (v.75) to[in=270,out=75] node[below,at end]{} (1.3,1);
    \draw (v.125) to[in=270,out=125] node[below,at end]{} (-2.1,1);
    \draw (v.115) to[in=270,out=115] node[below,at end]{} (-1.7,1);
    \draw (v.105) to[in=270,out=105] node[below,at end]{}  (-1.3,1);
    \draw[ultra thick,loosely dotted,-] (-.35,.5) -- (.35,.5);
\end{scope}
\end{pgfonlayer}
\node at (3.5,0){=};
  \end{tikzpicture}
    \begin{tikzpicture}[very thick,yscale=-1]
\node at (-3.5,0) {$(-1)^{g_j}\displaystyle\prod_{i\neq j}t_{ij}^{g_i}$};
\node (v) at (0,-1) [fill=white!80!gray,draw=white!80!gray, thick,rectangle,inner xsep=10pt,inner ysep=6pt, outer sep=-2pt] {$v$};
\begin{pgfonlayer}{background} \begin{scope}[very thick]

    \draw[wei] (v.170) to[in=280,out=170] node[at end, below]{$\mu$} (-2.5,1);
\draw[wei] (v.10) to[out=10,in=260] node[at end, below]{$\mu^*$} (2.5,1);

    \draw  (-3,-2.2) to[in=190,out=60] node[at start,above]{$j$}
    (0,-1.6) to[out=10,in=-90]node[at end,below]{$j$}  (3,1) ;
    \draw (v.55) to[in=270,out=55] (2.1,1) node[below,at end]{};
    \draw (v.65) to[in=270,out=65] (1.7,1) node[below,at end]{};
    \draw (v.75) to[in=270,out=75] (1.3,1) node[below,at end]{} ;
    \draw (v.125) to[in=270,out=125] (-2.1,1) node[below,at end]{};
    \draw (v.115) to[in=270,out=115] (-1.7,1) node[below,at end]{};
    \draw (v.105) to[in=270,out=105] (-1.3,1) node[below,at end]{};
    \draw[ultra thick,loosely dotted,-] (-.35,.5) -- (.35,.5);
\end{scope}
\end{pgfonlayer}
  \end{tikzpicture}
\end{equation}
\begin{equation}\label{max2}
   \begin{tikzpicture}[very thick,yscale=-1,xscale=-1]
\node (v) at (0,-1) [fill=white!80!gray,draw=white!80!gray, thick,rectangle,inner xsep=10pt,inner ysep=6pt, outer sep=-2pt] {$v$};
\begin{pgfonlayer}{background} \begin{scope}[very thick]
    \draw[wei] (v.170) to[in=280,out=170] node[at end, below]{$\mu^*$} (-2.5,1);
\draw[wei] (v.10) to[out=10,in=260] node[at end, below]{$\mu$}(2.5,1) ;
    \draw (-3,-2.2) to[in=190,out=90] node[at start,above]{$j$} (0,0)
    to[out=10,in=-90] node[at end,below]{$j$} (3,1)  ;
    \draw (v.55) to[in=270,out=55] node[below,at end]{}(2.1,1);
    \draw (v.65) to[in=270,out=65] node[below,at end]{} (1.7,1);
    \draw (v.75) to[in=270,out=75] node[below,at end]{} (1.3,1);
    \draw (v.125) to[in=270,out=125] node[below,at end]{} (-2.1,1);
    \draw (v.115) to[in=270,out=115] node[below,at end]{} (-1.7,1);
    \draw (v.105) to[in=270,out=105] node[below,at end]{}  (-1.3,1);
    \draw[ultra thick,loosely dotted,-] (-.35,.5) -- (.35,.5);
\end{scope}
\end{pgfonlayer}
\node at (-3.5,0){=};
  \end{tikzpicture}
    \begin{tikzpicture}[very thick,yscale=-1,xscale=-1]
\node (v) at (0,-1) [fill=white!80!gray,draw=white!80!gray, thick,rectangle,inner xsep=10pt,inner ysep=6pt, outer sep=-2pt] {$v$};
\begin{pgfonlayer}{background} \begin{scope}[very thick]

    \draw[wei] (v.170) to[in=280,out=170] node[at end, below]{$\mu^*$} (-2.5,1);
\draw[wei] (v.10) to[out=10,in=260] node[at end, below]{$\mu$} (2.5,1);

    \draw  (-3,-2.2) to[in=190,out=60] node[at start,above]{$j$}
    (0,-1.6) to[out=10,in=-90]node[at end,below]{$j$}  (3,1) ;
    \draw (v.55) to[in=270,out=55] (2.1,1) node[below,at end]{};
    \draw (v.65) to[in=270,out=65] (1.7,1) node[below,at end]{};
    \draw (v.75) to[in=270,out=75] (1.3,1) node[below,at end]{} ;
    \draw (v.125) to[in=270,out=125] (-2.1,1) node[below,at end]{};
    \draw (v.115) to[in=270,out=115] (-1.7,1) node[below,at end]{};
    \draw (v.105) to[in=270,out=105] (-1.3,1) node[below,at end]{};
    \draw[ultra thick,loosely dotted,-] (-.35,.5) -- (.35,.5);
\end{scope}
\end{pgfonlayer}
  \end{tikzpicture}
\end{equation}
One can think of the relation above as categorifying the equality
$(F_iv)\otimes K=F_i(v\otimes K)$ for any invariant element $K$.
\end{itemize}
\end{defn}

In order to check the coherence of these relations, we will need to check
that we can pull a strand which passes over the cup and back either
off the bottom or off using the usual relations, and obtain the same
answer.  That is:
\begin{lemma}\label{lem:pull-off}
  \begin{equation}
   \begin{tikzpicture}[very thick,yscale=-1,baseline]
\node at (-3.4,0) {$(-1)^{g_j}\displaystyle\prod_{i\neq j}t_{ij}^{g_i}$};\node (v) at (0,-1.3) [fill=white!80!gray,draw=white!80!gray, thick,rectangle,inner xsep=10pt,inner ysep=6pt, outer sep=-2pt] {$v$};
\begin{pgfonlayer}{background} \begin{scope}[very thick]
    \draw[wei] (v.170) to[in=280,out=170] node[at end, below]{$\mu$} (-2.5,1.5);
\draw[wei] (v.10) to[out=10,in=260] node[at end, below]{$\mu^*$}(2.5,1.5) ;
    \draw (3,-1.5) to[in=-100,out=100] node[at start,above]{$j$} node[at end,below]{$j$} (3,1.5)  ;
    \draw (v.55) to[in=270,out=55] node[below,at end]{}(2.1,1.5);
    \draw (v.65) to[in=270,out=65] node[below,at end]{} (1.7,1.5);
    \draw (v.75) to[in=270,out=75] node[below,at end]{} (1.3,1.5);
    \draw (v.125) to[in=270,out=125] node[below,at end]{} (-2.1,1.5);
    \draw (v.115) to[in=270,out=115] node[below,at end]{} (-1.7,1.5);
    \draw (v.105) to[in=270,out=105] node[below,at end]{}  (-1.3,1.5);
    \draw[ultra thick,loosely dotted,-] (-.35,0) -- (.35,0);
\end{scope}
\end{pgfonlayer}
\node at (3.5,0){=};
  \end{tikzpicture}
    \begin{tikzpicture}[very thick,yscale=-1,baseline]
\node (v) at (0,-1.3) [fill=white!80!gray,draw=white!80!gray, thick,rectangle,inner xsep=10pt,inner ysep=6pt, outer sep=-2pt] {$v$};
\begin{pgfonlayer}{background} \begin{scope}[very thick]
    \draw[wei] (v.170) to[in=280,out=170] node[at end, below]{$\mu$} (-2.5,1.5);
\draw[wei] (v.10) to[out=10,in=260] node[at end, below]{$\mu^*$}(2.5,1.5) ;
    \draw (3,-1.5) to[in=-20,out=150]   node[at start,above]{$j$}
    (-2.2,-.2) to[out=160,in=-90] (-2.4,0) to[out=90,in=-160] (-2.2,.2)  to [in=-150,out=20] node[at end,below]{$j$} (3,1.5)  ;
    \draw (v.55) to[in=270,out=55] node[below,at end]{}(2.1,1.5);
    \draw (v.65) to[in=270,out=65] node[below,at end]{} (1.7,1.5);
    \draw (v.75) to[in=270,out=75] node[below,at end]{} (1.3,1.5);
    \draw (v.125) to[in=270,out=125] node[below,at end]{} (-2.1,1.5);
    \draw (v.115) to[in=270,out=115] node[below,at end]{} (-1.7,1.5);
    \draw (v.105) to[in=270,out=105] node[below,at end]{}  (-1.3,1.5);
    \draw[ultra thick,loosely dotted,-] (-.35,0) -- (.35,0);
\end{scope}
\end{pgfonlayer}
  \end{tikzpicture}\label{eq:lemma}
\end{equation}
\end{lemma}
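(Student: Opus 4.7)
The strategy is to apply relation \eqref{max1} in reverse to the LHS, removing the scalar coefficient by moving the $j$-strand from above the coevaluation box to below it on the left side. The resulting diagram then equals the RHS after planar isotopy.

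Concretely, the first step is to recognize the scalar $(-1)^{g_j}\prod_{i \ne j}t_{ij}^{g_i}$ on the LHS as exactly the coefficient appearing in \eqref{max1}. Applying \eqref{max1} backwards, we replace the arc that travels above the box with one that passes beneath and to the left of the box, while simultaneously dropping the scalar. This is legitimate because the LHS arc is disjoint from the interior of the box, so the move from the ``above'' configuration to the ``through'' configuration of \eqref{max1} is local at the top of the box.

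Next, I would observe that the relocated strand now lies in a region bounded on the right by the $\mu$ red strand emanating from the box and otherwise free of obstructions. The planar isotopy that sweeps this strand leftward, forms a small loop past the $\mu$ red strand, and returns it to its top-right endpoint produces exactly the fishhook configuration on the RHS. The local relations \eqref{dumb} and \eqref{red-dot}, together with planar isotopy in a region without other strands, confirm that this isotopy is scalar-free.

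The main obstacle is verifying that the scalar in \eqref{max1} is precisely what is needed for this pulling-through to give a scalar-free equality after the subsequent isotopy: any discrepancy would have to be absorbed by additional crossings or dots. In the present setting this is automatic because the coefficient in \eqref{max1} was defined to be compatible with \eqref{cost}, with the Stendhal sign and $t_{ij}$-conventions for black/red crossings coming from \eqref{opp-cancel1}--\eqref{opp-cancel2}, and with the relations \eqref{eq:red-rels} killing black strands that meet the two red lines emanating from $\Lco$. Once \eqref{max1} is invoked, the equality with the RHS reduces purely to planar isotopy, completing the proof.
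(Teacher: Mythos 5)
There is a genuine gap here, and it begins with the very first step: relation \eqref{max1} does not apply to either side of \eqref{eq:lemma}. Both \eqref{max1} and \eqref{max2} concern a strand whose endpoints lie on \emph{opposite} sides of the cap (one terminus to the left of the box, one to the right), and they trade the configuration ``passing under the box'' for ``passing over the box.'' In the lemma, the $j$-strand has \emph{both} termini on the far right; on the left-hand side it never meets the box at all, and on the right-hand side it sweeps under the box, crosses every red and black strand twice, and carries a curl at the far left. No reading of \eqref{max1} ``in reverse'' converts one of these pictures into the other, so the proposed first move does not typecheck. This is not an accident: the lemma is precisely the coherence check that the relation \eqref{max1}, which is imposed by fiat in the definition of $\tilde{\coe}^{\bla^+}_{\bla}$, is compatible with resolving the right-hand picture by the \emph{other} relations. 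Invoking \eqref{max1} to prove it is circular.

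The second, larger gap is the claim that the remaining comparison ``reduces purely to planar isotopy'' certified by \eqref{dumb} and \eqref{red-dot}. The right-hand diagram contains $2\rho^\vee(\mu)$ black/black bigons, four red/black crossings, and a curl; none of these is removable by isotopy. Resolving each black bigon via \eqref{black-bigon} produces the polynomial $Q_{ji}(y_1,y_2)$ (not a scalar), resolving the red bigons via \eqref{cost} produces dots, and opening the curl via \eqref{lollipop1} produces bubbles. The entire content of the lemma is that the leading terms of these resolutions contribute exactly $(-1)^{g_j}\prod_{i\neq j}t_{ij}^{g_i}$ while every correction term annihilates the vector $v\in \Lco$. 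The paper's proof first reduces to a setting where the ambient weight is $0$ (by adjoining auxiliary red strands labelled $\nu^*,\nu$ and passing through a standardization/Morita argument), then introduces a curl on the left-hand strand and pushes it leftward one crossing at a time, killing the correction terms using the defining relations \eqref{eq:red-rels} of $\Lco$, a divided-power identity from Khovanov--Lauda--Mackaay--\Stosic, and degree arguments about positive-degree bubbles. None of this is addressed in your proposal, and asserting that the scalar match ``is automatic because the coefficient in \eqref{max1} was defined to be compatible'' assumes exactly the statement to be proved.
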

\begin{proof}
  We note, this is equivalent to checking a relation in
  $\tilde{T}^{\mu,\mu^*}$: if we remove the box from the top of the
  diagrams, we must obtain that the RHS of \eqref{eq:lemma} is equal
  to the LHS plus a sum of diagrams that give zero when they act on
  the cap.  Unfortunately, this is quite a difficult computation and
  it would not be straightforward to present it cogently here.  It will be greatly
  simplified if we can also use upward strands and assume that the
  weight labeling the region outside the cup is 0.

In order to do this, it is enough to check that our relation holds in
${T}^{(\nu ,\mu,\mu^*)}$ for $\nu$ sufficiently large, after adding a
red strand at the left.  
Finally, given a element $d$ in ${T}^{\bla}$, let $\gamma(d)$ be
the same diagram with the
sequence $\Bi^{\nu^*}$ added and then a red strand at far left with
weight $\nu^*$.
This is a non-unital homomorphism, so $e=\gamma(1) $ is an
idempotent.  We claim that:
\begin{equation}
e{T}^{(\nu^*,\bla)}
\cong
\mathbb{S}^{(\nu^*);\bla}\left(e(\Bi_{\nu^*}) T^{\nu_*}\boxtimes
  {T}^{\bla}\right).\label{eq:add-nu}
\end{equation}
This is clear if $\bla=\emptyset$.  As usual, we can prove this by induction on the
number of red and black strands.  If we add a new red strand turning
$\bla$ to $(\bla,\la_{\ell+1})$, this
this clear, since 
\[\mathfrak{I}_{\la_{\ell+1}}(e{T}^{(\nu^*,\bla)})\cong \mathfrak{I}_{\la_{\ell+1}}\mathbb{S}^{(\nu^*);\bla}\left(e(\Bi_{\nu^*}) T^{\nu_*}\boxtimes
  {T}^{\bla}\right)\cong \mathbb{S}^{(\nu^*);(\bla,\la_{\ell+1})}\left(e(\Bi_{\nu^*}) T^{\nu_*}\boxtimes
  \mathfrak{I}_{\la_{\ell+1}}({T}^{\bla})\right)\] by the
associativity of standardization.  If we add a black
strand with label $i_{n+1}$, then we have that 
\[\fF_{i_{n+1}}(e{T}^{(\nu^*,\bla)})\cong\fF_{i_{n+1}}\mathbb{S}^{(\nu^*);\bla}\left(e(\Bi_{\nu^*}) T^{\nu_*}\boxtimes
  {T}^{\bla}\right)\cong \mathbb{S}^{(\nu^*);\bla}\left(e(\Bi_{\nu^*}) T^{\nu_*}\boxtimes
  \fF_{i_{n+1}}{T}^{\bla}\right)\] by Proposition \ref{prop:act-filter}, since
$\fF_{i_{n+1}}(e(\Bi_{\nu^*}) T^{\nu_*})=0$.  This establishes
\eqref{eq:add-nu}.

Proposition \ref{semi-orthogonal} shows that standardization
is fully-faithful, so 
\begin{multline}
\End_{\alg^{(\nu^*,\nu ,\mu,\mu^*)}}\left(\mathbb{S}^{(\nu^*);(\nu ,\mu,\mu^*)}\left(e(\Bi_{\nu^*}) T^{\nu_*}\boxtimes
  {T}^{(\nu ,\mu,\mu^*)}\right)\right)\cong\\ \End_{\alg^{\nu^*}\otimes \alg^{(\nu ,\mu,\mu^*)}}(e\left(\Bi_{\nu^*}) T^{\nu_*}\boxtimes
  {T}^{(\nu ,\mu,\mu^*)}\right)\cong e(\Bi_{\nu^*}) T^{\nu_*}e(\Bi_{\nu^*})\otimes
  {T}^{(\nu ,\mu,\mu^*)}\cong {T}^{(\nu ,\mu,\mu^*)}\label{eq:nnmm1}
\end{multline}
where we apply the standard observation for any algebra $A$ and
idempotent $e$, we have $\End_A(eA)\cong eAe$.  This also shows that
\begin{equation}
  \End_{\alg^{(\nu^*,\nu ,\mu,\mu^*)} }(e\alg^{(\nu^*,\nu
  ,\mu,\mu^*)})\cong e \alg^{(\nu^*,\nu ,\mu,\mu^*)}e. \label{eq:nnmm2}
\end{equation}
Thus \eqref{eq:add-nu} applied with
$\bla=(\nu ,\mu,\mu^*) $ together with (\ref{eq:nnmm1}--\ref{eq:nnmm2}) shows that the map $\gamma$ induces an isomorphism
${T}^{(\nu ,\mu,\mu^*)}\to e {T}^{(\nu^*,\nu
  ,\mu,\mu^*)}e$.  After doing this, we
see that the label on the
region above the cup is 0. 
Theorem \ref{Morita} now shows that we can
perform our calculation in $DT^{(\nu^*,\nu
  ,\mu,\mu^*)}$, for sufficiently large $\nu$.

 We begin with the left-hand picture, and
  add a curl.  Push the left side of the curl through the strands. The
  primary term that we arrive at has a curl wrapped over all strands;
  all the correction terms have a strand pulled right out of the cap,
  and thus are 0.  By the relations (\ref{switch-1}) and (\ref{opp-cancel1}) of $\tU$, this term is multiplied
  by $t_{ij}^{-1}$ each time we cross a strand labeled $i$ for $i\neq
  j$, and by $-1$ when we cross one labeled $j$.  Thus we obtain the equality:
  \begin{equation}\label{bubble-laid}   
\scalebox{.62}{
\begin{tikzpicture}[very
      thick,yscale=-1,baseline]
\node[scale=1.4] at (2.3,-1.1) {$0$};
\node at (-3.4,0) {$(-1)^{g_j}\displaystyle\prod_{i\neq j}t_{ij}^{g_i}$};
\node (v) at (0,-1.3) [fill=white!80!gray,draw=white!80!gray, thick,rectangle,inner xsep=10pt,inner ysep=6pt, outer sep=-2pt] {$v$};
\begin{pgfonlayer}{background} \begin{scope}[very thick]
    \draw[wei] (v.170) to[in=280,out=170] node[at end, below]{$\mu$} (-2.5,1.5);
\draw[wei] (v.10) to[out=10,in=260] node[at end, below]{$\mu^*$}(2.5,1.5) ;
    \draw (3,-1.5) to[in=-100,out=100] node[at start,above]{$j$} node[at end,below]{$j$} (3,1.5)  ;
    \draw (v.55) to[in=270,out=55] node[below,at end]{}(2.1,1.5);
    \draw (v.65) to[in=270,out=65] node[below,at end]{} (1.7,1.5);
    \draw (v.75) to[in=270,out=75] node[below,at end]{} (1.3,1.5);
    \draw (v.125) to[in=270,out=125] node[below,at end]{} (-2.1,1.5);
    \draw (v.115) to[in=270,out=115] node[below,at end]{} (-1.7,1.5);
    \draw (v.105) to[in=270,out=105] node[below,at end]{}  (-1.3,1.5);
    \draw[ultra thick,loosely dotted,-] (-.35,0) -- (.35,0);
\end{scope}
\end{pgfonlayer}
\node at (3.5,0){=};
  \end{tikzpicture}
\begin{tikzpicture}[very
      thick,yscale=-1,baseline]
\node[scale=1.4] at (2.3,-1.1) {$0$};
\node at (-3.4,0) {$(-1)^{g_j}\displaystyle\prod_{i\neq j}t_{ij}^{g_i}$};
\node (v) at (0,-1.3) [fill=white!80!gray,draw=white!80!gray, thick,rectangle,inner xsep=10pt,inner ysep=6pt, outer sep=-2pt] {$v$};
\begin{pgfonlayer}{background} \begin{scope}[very thick]
    \draw[wei] (v.170) to[in=280,out=170] node[at end, below]{$\mu$} (-2.5,1.5);
\draw[wei] (v.10) to[out=10,in=260] node[at end, below]{$\mu^*$}(2.5,1.5) ;
    \draw [postaction={decorate,decoration={markings,
    mark=at position .5 with {\arrow[scale=1.2]{>}}}},postaction={decorate,decoration={markings,
    mark=at position .06 with {\arrow[scale=1.2]{>}}}},postaction={decorate,decoration={markings,
    mark=at position .94 with {\arrow[scale=1.2]{>}}}}] (3,-1.5) to[in=0,out=100] node[at start,above]{$j$} 
(2.7,.3) to[out=180,in=90] (2.4, 0) to[out=-90,in=180] (2.7,-.3) to [out=0,in=-100]
node[at end,below]{$j$} (3,1.5)  ;
    \draw (v.55) to[in=270,out=55] node[below,at end]{}(2.1,1.5);
    \draw (v.65) to[in=270,out=65] node[below,at end]{} (1.7,1.5);
    \draw (v.75) to[in=270,out=75] node[below,at end]{} (1.3,1.5);
    \draw (v.125) to[in=270,out=125] node[below,at end]{} (-2.1,1.5);
    \draw (v.115) to[in=270,out=115] node[below,at end]{} (-1.7,1.5);
    \draw (v.105) to[in=270,out=105] node[below,at end]{}  (-1.3,1.5);
    \draw[ultra thick,loosely dotted,-] (-.35,0) -- (.35,0);
\end{scope}
\end{pgfonlayer}
\node at (3.5,0){=};
  \end{tikzpicture}
   \begin{tikzpicture}[very thick,yscale=-1,baseline]
\node[scale=1.4] at (2.3,-1.1) {$0$};
\node (v) at (0,-1.3) [fill=white!80!gray,draw=white!80!gray, thick,rectangle,inner xsep=10pt,inner ysep=6pt, outer sep=-2pt] {$v$};
\begin{pgfonlayer}{background} \begin{scope}[very thick]
    \draw[wei] (v.170) to[in=280,out=170] node[at end, below]{$\mu$} (-2.5,1.5);
\draw[wei] (v.10) to[out=10,in=260] node[at end, below]{$\mu^*$}(2.5,1.5) ;
    \draw [postaction={decorate,decoration={markings,
    mark=at position .5 with {\arrow[scale=1.2]{>}}}},postaction={decorate,decoration={markings,
    mark=at position .06 with {\arrow[scale=1.2]{>}}}},postaction={decorate,decoration={markings,
    mark=at position .94 with {\arrow[scale=1.2]{>}}}}] (3,-1.5) to[in=0,out=100] node[at start,above]{$j$} 
(2,.5) to (-2.1,.5) to[out=180,in=90] (-2.6, 0) to[out=-90,in=180] (-2.1, -.5) to (2,-.5) to [out=0,in=-100]
node[at end,below]{$j$} (3,1.5)  ;
    \draw (v.55) to[in=270,out=55] node[below,at end]{}(2.1,1.5);
    \draw (v.65) to[in=270,out=65] node[below,at end]{} (1.7,1.5);
    \draw (v.75) to[in=270,out=75] node[below,at end]{} (1.3,1.5);
    \draw (v.125) to[in=270,out=125] node[below,at end]{} (-2.1,1.5);
    \draw (v.115) to[in=270,out=115] node[below,at end]{} (-1.7,1.5);
    \draw (v.105) to[in=270,out=105] node[below,at end]{}  (-1.3,1.5);
    \draw[ultra thick,loosely dotted,-] (-.35,0) -- (.35,0);
\end{scope}
\end{pgfonlayer}
  \end{tikzpicture}}
\end{equation}
Next we move the crossing in the RHS of \eqref{bubble-laid} left over the red strand using
(\ref{red-triple-correction}).  There is one term in the result where
we simply isotope the crossing to the left side, and then there are
others where the crossing is broken, and on the resulting strands
there are $m=(\mu^*)^j-1$ total dots.  If we choose the reduced
word  for $w_0$ used to define $\Bi_\mu$ so
that the last reflection appearing is $s_j$, then we can assume the $m+1$
rightmost black strands inside the cup are labelled $j$, and are
multiplied by the divided power idempotent $e_{m+1}$.  That is, fixing
$a+b=m$, these have the form:
\[  \begin{tikzpicture}[very thick,yscale=-1,baseline,xscale=1.5]
\node (v) at (0,-1.3) [fill=white!80!gray,draw=white!80!gray, thick,rectangle,inner xsep=10pt,inner ysep=6pt, outer sep=-2pt] {$v$};
\begin{pgfonlayer}{background} \begin{scope}[very thick]
    \draw[wei] (v.170) to[in=280,out=170] node[at end, below]{$\mu$} (-2.9,1.5);
\draw[wei] (v.10) to[out=10,in=260] node[at end, below]{$\mu^*$}(2.9,1.5) ;
    \draw [postaction={decorate,decoration={markings,
    mark=at position .5 with {\arrow[scale=1.2]{>}}}},postaction={decorate,decoration={markings,
    mark=at position .06 with {\arrow[scale=1.2]{>}}}},postaction={decorate,decoration={markings,
    mark=at position .94 with {\arrow[scale=1.2]{>}}}}] (3.4,-1.5)
to[in=-90,out=100] node[at start,above]{$j$}
(2.7,0) to [out=90,in=-100]
node[at end,below]{$j$}  node[pos=.18,fill=black,circle,inner sep=3pt,label=35:{$a$}]{}  (3.4,1.5)  ;
    \draw [postaction={decorate,decoration={markings,
   mark=at position .5 with {\arrow[scale=1.2]{>}}}},postaction={decorate,decoration={markings,
    mark=at position 0 with {\arrow[scale=1.2]{>}}}}] (2.1,0)
to[in=0,out=90]  node[pos=.3,fill=black,circle,inner sep=3pt,label=145:{$b$}]{} 
(1.6,.5) to (-2.7,.5) to[out=180,in=90] (-3.2, 0) to[out=-90,in=180] (-2.7, -.5) to (1.6,-.5) to [out=0,in=-90]
(2.1,0)  ;
    \draw (v.55) to[in=270,out=55] node[below,at end]{j}(2.1,1.5);
    \draw (v.65) to[in=270,out=65] node[below,at end]{j} (1.7,1.5);
    \draw (v.75) to[in=270,out=75] node[below,at end]{j} (1.3,1.5);
    \draw (v.125) to[in=270,out=125] node[below,at end]{} (-2.1,1.5);
    \draw (v.115) to[in=270,out=115] node[below,at end]{} (-1.7,1.5);
    \draw (v.105) to[in=270,out=105] node[below,at end]{}  (-1.3,1.5);
 %\draw (v.90) to[in=270,out=90] node[below,at end]{$i$}  (0,1.5);
    \draw[ultra thick,loosely dotted,-] (-.59,0) -- (.59,0);
\draw[thin,dashed] (-.2,-.8) -- (1.2,-.8) -- (1.8,-.2) --(.4,-.2) --cycle;
%\draw[thin,dashed] (-.1,1.1) -- (-2.9,1.1) -- (-2.9,-.9) --(-.1,-.9) --cycle;
\end{scope}
\end{pgfonlayer}
  \end{tikzpicture}\]
Since we have multiplied by the divided power idempotent where this
group of strand with label $j$ meet the gray box, we can write this
element as an element of $\Lco$ times the  half twist on these $m$
strands, that is, the element $D_m$ in the notation of  \cite[\S
2.2]{KLMS}.   Taking the top row of crossings of the right most strand
with these (in the dashed parallelogram above), we actually have $D_{m+1}$ on
these $m+1$ strands with label $j$ inside the dashed parallelogram.  
Applying \cite[(2.28)]{KLMS} to the $m+1$ black strands, we see this
element is 0, since $b<m$.  Thus, we have 

  \begin{equation}\label{eq:red-isotope}
\scalebox{1}{
   \begin{tikzpicture}[very thick,yscale=-1,baseline]
\node[scale=1.4] at (2.3,-1.1) {$0$};
\node (v) at (0,-1.3) [fill=white!80!gray,draw=white!80!gray, thick,rectangle,inner xsep=10pt,inner ysep=6pt, outer sep=-2pt] {$v$};
\begin{pgfonlayer}{background} \begin{scope}[very thick]
    \draw[wei] (v.170) to[in=280,out=170] node[at end, below]{$\mu$} (-2.5,1.5);
\draw[wei] (v.10) to[out=10,in=260] node[at end, below]{$\mu^*$}(2.5,1.5) ;
    \draw [postaction={decorate,decoration={markings,
    mark=at position .5 with {\arrow[scale=1.2]{>}}}},postaction={decorate,decoration={markings,
    mark=at position .06 with {\arrow[scale=1.2]{>}}}},postaction={decorate,decoration={markings,
    mark=at position .94 with {\arrow[scale=1.2]{>}}}}] (3,-1.5) to[in=0,out=100] node[at start,above]{$j$} 
(2,.5) to (-2.1,.5) to[out=180,in=90] (-2.6, 0) to[out=-90,in=180] (-2.1, -.5) to (2,-.5) to [out=0,in=-100]
node[at end,below]{$j$} (3,1.5)  ;
    \draw (v.55) to[in=270,out=55] node[below,at end]{}(2.1,1.5);
    \draw (v.65) to[in=270,out=65] node[below,at end]{} (1.7,1.5);
    \draw (v.75) to[in=270,out=75] node[below,at end]{} (1.3,1.5);
    \draw (v.125) to[in=270,out=125] node[below,at end]{} (-2.1,1.5);
    \draw (v.115) to[in=270,out=115] node[below,at end]{} (-1.7,1.5);
    \draw (v.105) to[in=270,out=105] node[below,at end]{}  (-1.3,1.5);
    \draw[ultra thick,loosely dotted,-] (-.35,0) -- (.35,0);
\end{scope}
\end{pgfonlayer}
\node at (3.5,0){=};
  \end{tikzpicture}
 \begin{tikzpicture}[very thick,yscale=-1,baseline]
\node[scale=1.4] at (2.3,-1.1) {$0$};
\node (v) at (0,-1.3) [fill=white!80!gray,draw=white!80!gray, thick,rectangle,inner xsep=10pt,inner ysep=6pt, outer sep=-2pt] {$v$};
\begin{pgfonlayer}{background} \begin{scope}[very thick]
    \draw[wei] (v.170) to[in=280,out=170] node[at end, below]{$\mu$} (-2.5,1.5);
\draw[wei] (v.10) to[out=10,in=260] node[at end, below]{$\mu^*$}(2.5,1.5) ;
    \draw [postaction={decorate,decoration={markings,
    mark=at position .5 with {\arrow[scale=1.2]{>}}}},postaction={decorate,decoration={markings,
    mark=at position .06 with {\arrow[scale=1.2]{>}}}},postaction={decorate,decoration={markings,
    mark=at position .94 with {\arrow[scale=1.2]{>}}}}] (3,-1.5) to[in=0,out=130] node[at start,above]{$j$} 
(.5,.5) to (-2.1,.5) to[out=180,in=90] (-2.6, 0) to[out=-90,in=180] (-2.1, -.5) to (.5,-.5) to [out=0,in=-130]
node[at end,below]{$j$} (3,1.5)  ;
    \draw (v.55) to[in=270,out=55] node[below,at end]{}(2.1,1.5);
    \draw (v.65) to[in=270,out=65] node[below,at end]{} (1.7,1.5);
    \draw (v.75) to[in=270,out=75] node[below,at end]{} (1.3,1.5);
    \draw (v.125) to[in=270,out=125] node[below,at end]{} (-2.1,1.5);
    \draw (v.115) to[in=270,out=115] node[below,at end]{} (-1.7,1.5);
    \draw (v.105) to[in=270,out=105] node[below,at end]{}  (-1.3,1.5);
    \draw[ultra thick,loosely dotted,-] (-.35,0) -- (.35,0);
\end{scope}
\end{pgfonlayer}
  \end{tikzpicture}}
\end{equation}
Now, we move the crossing in the RHS of \eqref{eq:red-isotope} left through all the black strands, using
(\ref{triple-smart}).  There is a ``dominant'' term where the crossing
simply isotopes through. 
 There are also correction terms coming from the leftmost term in the triple point
  relation (\ref{triple-smart}), when crossing a strand of label $i$
  with $c_{ji}<0$. In these,
  \begin{itemize}
  \item the outside strand makes bigons with all the rightmost 
    $2\rho^\vee(\mu)-k$ black strands, and the rightward red, and
    carries some number of dots $a\geq 0$
\item a bubble is laid over the
    leftmost $k-1$ black strands and the leftward red, and carries
    some number of dots $b\geq 0$ with $a+b+1=-c_{ji}$.
\item there is a single strand between these which is black with label $i$.
  \end{itemize}

Schematically, these
  look like:
\[  \begin{tikzpicture}[very thick,yscale=-1,baseline,xscale=1.5]
\node (v) at (0,-1.3) [fill=white!80!gray,draw=white!80!gray, thick,rectangle,inner xsep=10pt,inner ysep=6pt, outer sep=-2pt] {$v$};
\begin{pgfonlayer}{background} \begin{scope}[very thick]
    \draw[wei] (v.170) to[in=280,out=170] node[at end, below]{$\mu$} (-2.9,1.5);
\draw[wei] (v.10) to[out=10,in=260] node[at end, below]{$\mu^*$}(2.9,1.5) ;
    \draw [postaction={decorate,decoration={markings,
    mark=at position .5 with {\arrow[scale=1.2]{>}}}},postaction={decorate,decoration={markings,
    mark=at position .06 with {\arrow[scale=1.2]{>}}}},postaction={decorate,decoration={markings,
    mark=at position .94 with {\arrow[scale=1.2]{>}}}}] (3.4,-1.5)
to[in=-90,out=100] node[at start,above]{$j$}
(.2,0) to [out=90,in=-100]
node[at end,below]{$j$}  node[pos=.18,fill=black,circle,inner sep=3pt,label=-135:{$a$}]{}  (3.4,1.5)  ;
    \draw [postaction={decorate,decoration={markings,
   mark=at position .5 with {\arrow[scale=1.2]{>}}}},postaction={decorate,decoration={markings,
    mark=at position 0 with {\arrow[scale=1.2]{>}}}}] (-.2,0)
to[in=0,out=90]  node[pos=.7,fill=black,circle,inner sep=3pt,label=-45:{$b$}]{} 
(-.7,.5) to (-2.7,.5) to[out=180,in=90] (-3.2, 0) to[out=-90,in=180] (-2.7, -.5) to (-.7,-.5) to [out=0,in=-90]
(-.2,0)  ;
    \draw (v.55) to[in=270,out=55] node[below,at end]{}(2.5,1.5);
    \draw (v.65) to[in=270,out=65] node[below,at end]{} (2.1,1.5);
    \draw (v.75) to[in=270,out=75] node[below,at end]{} (1.7,1.5);
    \draw (v.125) to[in=270,out=125] node[below,at end]{} (-2.5,1.5);
    \draw (v.115) to[in=270,out=115] node[below,at end]{} (-2.1,1.5);
    \draw (v.105) to[in=270,out=105] node[below,at end]{}  (-1.7,1.5);
 \draw (v.90) to[in=270,out=90] node[below,at end]{$i$}  (0,1.5);
    \draw[ultra thick,loosely dotted,-] (-.82,0) -- (-.38,0);
\draw[ultra thick,loosely dotted,-] (.38,0) -- (.82,0);
\draw[thin,dashed] (.1,1.1) -- (2.9,1.1) -- (2.9,-.9) --(.1,-.9) --cycle;
\draw[thin,dashed] (-.1,1.1) -- (-2.7,1.1) -- (-1.4,-.75) --(-.1,-.75) --cycle;
\end{scope}
\end{pgfonlayer}
  \end{tikzpicture}\]
 We intend to show that all these correction terms
  kill the cap.

  We do this by applying Theorem \ref{basis} to simplify the diagrams
  inside the dashed boxes (which only involve downward strands).
  First, in the righthand box, we use a reduced expression for each
  permutation where the rightmost transposition only occurs once.  In
  each diagram, if the rightmost terminal at the top and the bottom
  are connected by a single strand, then this strand will not cross
  any other strands. Otherwise, the strands connected to these
  terminals cross to the right of the red strands.  In this case, the
  resulting diagram acts trivially, by Lemma \ref{lem:Lco-simple} (as
  expressed in \eqref{eq:red-rels}).  Thus, we can assume the
  rightmost strand never enters the cap.

Now consider the lefthand box, and use a reduced expression for each
permutation where the leftmost transposition only occurs once.  We
leave unchanged the upward oriented segment of a strand left of the
red strands. As above, we divide these diagrams into those where a
single strand connects the lefthand terminals, and those the strands
from these terminals cross immediately right of the red strand.  In the former
case, the upward segment closes up to a bubble
just to the right of the red strand, without intersecting any black
strand, and we can pull this to the left resulting in a positive
degree bubble at the far left.  In the latter, it has a
self-intersection before crossing any others, and we can apply the
relation \begin{equation}\label{eq:red-loop}
  \begin{tikzpicture}[very thick,baseline]
\node[scale=1.2] at (-.7,-.8){$0$};
    \draw[wei] (0,-1) --  node[at end, above]{$\mu$} node[at start, below]{$\mu$} (0,1);
   \draw [postaction={decorate,decoration={markings,
    mark=at position .5 with {\arrow[scale=1.2]{<}}}},postaction={decorate,decoration={markings,
    mark=at position .06 with {\arrow[scale=1.2]{<}}}},postaction={decorate,decoration={markings,
    mark=at position .94 with {\arrow[scale=1.2]{<}}}}] (.7,-1) to node[at start,below]{$j$} 
(.7,-.6) to[out=90,in=0] (0,.3) to[out=180,in=90] (-.3, 0) to[out=-90,in=180] (0, -.3) to[out=0,in=-90] (.7,.6) to 
node[at end,above]{$j$} (.7,1);
\node at (1,0){$=$};
  \end{tikzpicture}\begin{tikzpicture}[very thick,baseline]
\node[scale=1.2] at (-.2,-.8){$0$};
    \draw[wei] (.5,-1) to[out=90,in=-90] node[at start, below]{$\mu$}  
    (1.2,0) to[out=90,in=-90] node[at end, above]{$\mu$} (.5,1);
\draw [postaction={decorate,decoration={markings,
    mark=at position .5 with {\arrow[scale=1.2]{<}}}}] (1.2,-1) to[out=90,in=-90] node[at start,below]{$j$}  (.5,0) to[out=90,in=-90]  node[at end,above]{$j$} (1.2,1);\node at (2,0){$+$};
  \end{tikzpicture}
  \begin{tikzpicture}[very thick,baseline]
\node[scale=1.2] at (-.3,-.8){$0$};
    \draw[wei] (.5,-1) --  node[at end, above]{$\mu$} node[at start, below]{$\mu$} (.5,1);
   \draw [postaction={decorate,decoration={markings,
    mark=at position .5 with {\arrow[scale=1.2]{<}}}}] (0,.3)
to[out=180,in=90] (-.3, 0) to[out=-90,in=180] node[at start,left]{$j$}
(0, -.3) to[out=0,in=-90] (.3,0) to[out=90,in=0] node [at end,fill=black, inner sep=2pt, circle]{} (0,.3);
\draw [postaction={decorate,decoration={markings,
    mark=at position .8 with {\arrow[scale=1.2]{<}}}}] (1.2,-1) --
node [midway,fill=black, inner sep=2pt,
circle,label=right:{$\mu^j-1$}]{} node[at end,above]{$j$} node[at
start,below]{$j$} (1.2,1);\node at (3,0){$+ $};
  \end{tikzpicture}
  \begin{tikzpicture}[very thick,baseline]
\node[scale=1.2] at (-.3,-.8){$0$};
    \draw[wei] (.5,-1) --  node[at end, above]{$\mu$} node[at start, below]{$\mu$} (.5,1);
   \draw [postaction={decorate,decoration={markings,
    mark=at position .5 with {\arrow[scale=1.2]{<}}}}] (0,.3)
to[out=180,in=90] (-.3, 0) to[out=-90,in=180] node[at start,left]{$j$}
(0, -.3) to[out=0,in=-90] (.3,0) to[out=90,in=0] node [at end,fill=black, inner sep=2pt, circle,label=above:{$2$}]{} (0,.3);
\draw [postaction={decorate,decoration={markings,
    mark=at position .8 with {\arrow[scale=1.2]{<}}}}] (1.2,-1) --
node [midway,fill=black, inner sep=2pt,
circle,label=right:{$\mu^j-2$}]{} node[at end,above]{$j$} node[at
start,below]{$j$} (1.2,1);\node at (3,0){$+\cdots $};
  \end{tikzpicture}
\end{equation}
The leftmost term kills the cap by Lemma \ref{lem:Lco-simple} and
\eqref{eq:red-rels} again, so all the remaining terms have a positive
degree bubble left of the cap.

Thus, the result is that the only correction terms that matter are
those where there is a positive degree bubble at the far left and a
rightmost strand that does not cross any of the reds.  Since the total
diagram has degree 0, the diagram acting between the red strands must
have {\it negative} degree.  This means that it must act trivially on
$L_\mu$, so all correction terms act trivially.

Therefore, we have that:
  \begin{equation*}   
   \begin{tikzpicture}[very thick,yscale=-1,baseline]
\node[scale=1.4] at (1.8,-1.4) {$0$};
\node (v) at (0,-1.3) [fill=white!80!gray,draw=white!80!gray, thick,rectangle,inner xsep=10pt,inner ysep=6pt, outer sep=-2pt] {$v$};
\begin{pgfonlayer}{background} \begin{scope}[very thick]
    \draw[wei] (v.170) to[in=280,out=170] node[at end, below]{$\mu$} (-2.5,1.5);
\draw[wei] (v.10) to[out=10,in=260] node[at end, below]{$\mu^*$}(2.5,1.5) ;
    \draw [postaction={decorate,decoration={markings,
    mark=at position .5 with {\arrow[scale=1.2]{>}}}},postaction={decorate,decoration={markings,
    mark=at position .06 with {\arrow[scale=1.2]{>}}}},postaction={decorate,decoration={markings,
    mark=at position .94 with {\arrow[scale=1.2]{>}}}}] (3,-1.5) to[in=0,out=100] node[at start,above]{$j$} 
(2,.5) to (-2.1,.5) to[out=180,in=90] (-2.6, 0) to[out=-90,in=180] (-2.1, -.5) to (2,-.5) to [out=0,in=-100]
node[at end,below]{$j$} (3,1.5)  ;
    \draw (v.55) to[in=270,out=55] node[below,at end]{}(2.1,1.5);
    \draw (v.65) to[in=270,out=65] node[below,at end]{} (1.7,1.5);
    \draw (v.75) to[in=270,out=75] node[below,at end]{} (1.3,1.5);
    \draw (v.125) to[in=270,out=125] node[below,at end]{} (-2.1,1.5);
    \draw (v.115) to[in=270,out=115] node[below,at end]{} (-1.7,1.5);
    \draw (v.105) to[in=270,out=105] node[below,at end]{}  (-1.3,1.5);
    \draw[ultra thick,loosely dotted,-] (-.35,0) -- (.35,0);
\end{scope}
\end{pgfonlayer}\node at (3.5,0){=};
  \end{tikzpicture}
   \begin{tikzpicture}[very thick,yscale=-1,baseline]
\node[scale=1.4] at (1.8,-1.4) {$0$};
\node (v) at (0,-1.3) [fill=white!80!gray,draw=white!80!gray, thick,rectangle,inner xsep=10pt,inner ysep=6pt, outer sep=-2pt] {$v$};
\begin{pgfonlayer}{background} \begin{scope}[very thick]
    \draw[wei] (v.170) to[in=280,out=170] node[at end, below]{$\mu$} (-2.5,1.5);
\draw[wei] (v.10) to[out=10,in=260] node[at end, below]{$\mu^*$}(2.5,1.5);
    \draw [postaction={decorate,decoration={markings,
    mark=at position .54 with {\arrow[scale=1.2]{>}}}},postaction={decorate,decoration={markings,
    mark=at position .06 with {\arrow[scale=1.2]{>}}}},postaction={decorate,decoration={markings,
    mark=at position .74 with {\arrow[scale=1.2]{>}}}}] (3,-1.5) to[in=-20,out=150] node[at start,above]{$j$} 
(-1.4,-.2) to[out=160,in=0] (-2.3,.2) to[out=180,in=90] (-2.5, 0) to[out=-90,in=180] (-2.3, -.2) to[out=0,in=-160] (-1.4,.2) to [out=20,in=-150]
node[at end,below]{$j$} (3,1.5)  ;
    \draw (v.55) to[in=270,out=55] node[below,at end]{}(2.1,1.5);
    \draw (v.65) to[in=270,out=65] node[below,at end]{} (1.7,1.5);
    \draw (v.75) to[in=270,out=75] node[below,at end]{} (1.3,1.5);
    \draw (v.125) to[in=270,out=125] node[below,at end]{} (-2.1,1.5);
    \draw (v.115) to[in=270,out=115] node[below,at end]{} (-1.7,1.5);
    \draw (v.105) to[in=270,out=105] node[below,at end]{}  (-1.3,1.5);
    \draw[ultra thick,loosely dotted,-] (-.35,0) -- (.35,0);
\end{scope}
\end{pgfonlayer}
  \end{tikzpicture}
\end{equation*}
In order to finish, we apply the relation \eqref{eq:red-loop} again;
as argued for the correction terms, all terms but the first  on the
RHS of \eqref{eq:red-loop}
acts trivially, since each has a positive degree bubble.  Thus we are
just left with the first term which is precisely the RHS of the
statement \eqref{eq:lemma}.
\end{proof}

Like its analogues, the module $\tilde{\coe}^{\bla^+}_{\bla}$ has
a basis. First one considers the basis $B$ for $\tilde{T}^\bla$ and
chooses a basis $B'$ of $\Lco$ given by Stendhal diagrams; we'll
define a spanning set $B''$ for $\tilde{\coe}^{\bla^+}_{\bla}$ is indexed by triples
consisting of
\begin{enumerate}
\item an element of $b\in B$,
\item an element of $b'\in B'$,
\item a shuffle of the bottom of $b'$ (a sequence in $\Gamma$) and the
  $j-1$st black block of the bottom of $b$; that is, an order on the
  union of these sequences that coincides with the usual sequence
  order on each of them.
\end{enumerate}
The elements of $B''$ are obtained by inserting the maximum of the cup after
the $j-1$st black block at the top of the diagram, and using a minimal
number of crossings to attain the shuffle; in particular, we never
pass any black strands above the minimum of the cup, always going
under it instead.  A schematic representation of one of these basis
vectors looks like:

\begin{equation}
\begin{tikzpicture}[very thick,scale=1.4,baseline=-40pt]
  \draw[wei] (0,0) -- (0,-2);
  \draw (.4,0) -- (.4,-2);
\draw (2.5,-.65) to[out=-120,in=90] (2.3,-2);
\draw (2.5,-.65) to[out=-60,in=90] (2.7,-2);
  \draw[wei] (.8,0) -- (.8,-2);
\draw[wei] (3.1,0) to[out=-90,in=110] (3.7,-2);
\draw[wei] (3.4,0) to[out=-90,in=110] (4,-2);
  \draw[wei] (2, -2) to[out=90,in=180] (2.5,-.65) to[out=0,in=90] (3,-2);
\draw (1.1,0) to (1.3,-2);
\draw (1.4,0) to[out=-90,in=140] (2.5, -2);
\draw(1.7,0) to [out=-90,in=160] (3.4,-2);
\node[draw,fill=white,inner xsep=90pt,inner ysep=8pt] at (1.7,0)
{$b$};
\node [draw,fill=white] at (2.5,-.65)
{$b'$};
\draw[thin, dashed] (-.5,-.95) -- (4,-.95);
\end{tikzpicture}\label{eq:b-b}
\end{equation}

As usual, there are choices involved in this
definition, and we arbitrarily fix one for each triple.

\begin{lemma}
  The set $B''$ is a basis of $\tilde{\coe}^{\bla^+}_{\bla}$.
\end{lemma}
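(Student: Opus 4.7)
The proof will follow the two-step pattern established in Propositions \ref{basis} and \ref{B-basis}: first show that $B''$ spans $\tilde{\coe}^{\bla^+}_{\bla}$, then prove linear independence. For the spanning part, I would take an arbitrary $(\bla,\bla^+)$-Stendhal diagram $d$ and induct on the total number of crossings plus dots. Using planar isotopy, I fix the maximum of the cup at a height $y=\tfrac12$ and push all dots and crossings off this horizontal slice. The region above this slice is an ordinary Stendhal diagram for $\tilde{T}^\bla$ (with the top of the $\Lco$-box treated as a row of black terminals), so Proposition \ref{basis} rewrites it, modulo diagrams with fewer crossings, as one of the chosen representatives $\psi_{w,\kappa'}e(\cdot)\mathbf{y}^{\mathbf{a}}$; any residual morphism landing on the top of the $\Lco$-box is absorbed into a basis element $b' \in B'$. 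Below $y=\tfrac12$, I would apply the relations \eqref{max1}, \eqref{max2}, their mirrors, and Lemma \ref{lem:pull-off} to pull any strand passing over the cup's maximum off the cap, and invoke Lemma \ref{modulo-smaller} to resolve bigons. What remains below $y=\tfrac12$ is then a minimal-crossing interleaving of the strands emerging from the bottom of $b'$ with the strands of the $(j-1)$st black block at the bottom of $b$ descending past the cap --- exactly a shuffle in the sense of part (3) of the index set for $B''$. Combining these simplifications expresses $d$ as a $\K$-linear combination of elements of $B''$ plus diagrams with strictly fewer crossings, completing the induction.

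For linear independence, I would mimic the strategy of Proposition \ref{basis} by extending the polynomial representation $\cP_n$ of Lemma \ref{action}. Let $\cP^{\Lco}$ have underlying space $\cP_n \otimes \Lco$, where dots, black-black crossings, and red-black crossings not touching the cap act on the $\cP_n$ factor as in Lemma \ref{action}, and any black strand meeting a leg of the cap acts by the appropriate $F_i$ on the $\Lco$ factor in accordance with \eqref{max1}, \eqref{max2}, and \eqref{eq:red-rels}. The KLR relations and the relations \eqref{max1}--\eqref{max2} then hold by construction, and the subtle relation of Lemma \ref{lem:pull-off} holds because both sides amount to acting by the same element of $U_q(\fg)$ on $\Lco$, which is the very computation carried out in the proof of that lemma. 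A straightening operator analogous to $\boldsymbol{\theta}$ from the proof of Proposition \ref{B-basis} --- obtained by pulling all black strands and the $\Lco$-box to one side of the cap and straightening the result --- sends the elements of $B''$ to distinct elements of Khovanov--Lauda's basis for $R$ tensored with distinct vectors of $B'$, modulo lower-order terms. This upper-triangularity with respect to crossing number establishes linear independence.

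The principal obstacle will be verifying the well-definedness of the representation $\cP^{\Lco}$, especially the careful translation of Lemma \ref{lem:pull-off} into a consistent operator identity; the proof of that lemma is already intricate, and its output must be matched against an explicit computation on $\Lco$. If this direct route proves unwieldy, an alternative strategy is to identify $\tilde{\coe}^{\bla^+}_{\bla}$ with a direct summand of the bimodule $\tilde{T}^{\bla^+} \otimes_{T^{\mu,\mu^*}} \Lco$ cut out by an explicit idempotent (extracting the data of a single cap together with the shuffle), and then transport the basis across this identification using Proposition \ref{basis} together with a chosen basis of $\Lco$.
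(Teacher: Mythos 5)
Your spanning argument is essentially the paper's: straighten above and below the maximum of the cup, use \eqref{max1} to avoid strands that start left of the $j$th red strand but pass right of the maximum, and induct on crossings. No complaints there.

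The independence half is where the gap lies. Your primary proposal, the representation $\cP^{\Lco}=\cP_n\otimes\Lco$, is not well-specified, and the rule you give for it is wrong as stated: a black strand crossing a red \emph{leg} of the cap below the maximum is an ordinary red/black crossing and should act exactly as in Lemma \ref{action} (multiplication by $Y_k^{\mu^i}$ or by $1$), not by an operator $F_i$ on $\Lco$; the only things that act on the $\Lco$ factor are diagrams attached to the black legs emanating from the grey box, while pulling an outside strand over the \emph{maximum} is multiplication by the scalar of \eqref{max1}. Sorting out which local move does what, on which tensor factor, and checking compatibility (in particular that the two ways of removing a strand that passes over the cap and back agree) is precisely the hard content here, and you cannot wave at it by saying both sides ``act by the same element of $U_q(\fg)$'' --- that identity is the nontrivial Lemma \ref{lem:pull-off}, not a consequence of functoriality. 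The paper avoids inventing a polynomial-type module altogether: it puts a bimodule structure directly on the formal span $K$ of $B''$, defining the action of each generator on a basis diagram case by case (re-expanding via Proposition \ref{basis} and the fixed bases $B$ and $B'$ whenever a dot or crossing lands between the box and the $j$th red strand), and then checks the relations --- the local ones and \eqref{max1} hold by construction, and \eqref{max2} is exactly Lemma \ref{lem:pull-off}. Since the resulting map $\tilde{\coe}^{\bla^+}_{\bla}\to K$ hits each element of $B''$ on the corresponding basis vector of $K$, independence is immediate. Your ``alternative strategy'' at the end is much closer to this in spirit; if you pursue the representation-theoretic route, it should be the span of $B''$ itself, not $\cP_n\otimes\Lco$, that carries the action.
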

\begin{proof}
Let $K$ be the formal span of the elements of $B''$.   One can define a
bimodule structure on $K$ as follows:
\begin{itemize}
\item When one acts at the top, one uses the usual action of elements
  of $\tilde{T}^\bla$ on the formal span of the elements $B$ from the
  top (i.e. the left), and leaves the
  element $b'$ unchanged; that is, one simply does simplifications
  above the maximum of the cap.
\item If one acts at the bottom with a crossing or dot on strands
  which are {\it not} between the left edge of the cap and the $j$th red
  strand from the top (the next one to the right of the cap), one
  simply isotopes the diagram up to the top and lets it act on the
  formal span of $B$ by the usual multiplication on the bottom
  (i.e. the right).
\item If at the bottom, we cross the left edge of the cap with a black
  strand to its left, that is a new basis vector where we have only
  changed the shuffle.
\item If we apply a crossing or dot to the strands between the left edge of the cap and the $j$th red
  strand from the top, then we apply Theorem \ref{basis} to rewrite
  the portion of the diagram below $b'$ using basis diagrams that put
  all dots and all crossings between pairs of strands both from $b$ or
  both from $b'$ occur above those between strands coming from $b$ and
  $b'$.  Once we have fixed basis diagrams, as we can using a fixed longest
  reduced word, this expansion is unique.   

That is, our diagrams look like the one above, with a shuffle
  between strands from $b$ and $b'$ at the bottom, and the elements
  $b$ and $b'$ at the top, but possibly with some crossings and dots
  on the strands coming out of $b$ and on those coming out of $b'$ at
  the $y$-value where there is a dashed line.  We let these crossings
  and dots act on the span of $B$ and $B'$ in the usual way, by
  thinking of them as bases of $\tilde{T}^\bla$ and $\Lco$.  
\item If at the bottom, we cross a black strand from the left over the $j$th red
  strand (from the top), then the strand must have come from $b$, and
  passed under $b'$.  We simply pull the strand through the top of the
  cap, multiplying by a scalar as in 
  \eqref{max1}.  Similarly, when the black strand comes from the
  right, we must do this operation in the opposite direction.  
\end{itemize}
%The only relation which is interesting to check is the relation
%\eqref{cost} when the red strand is the $j$th; this is precisely the
%calculation done in Lemma \ref{lem:pull-off}.

Now, we wish to define a map $\tilde{\coe}^{\bla^+}_{\bla}\to K$.  The
local relations  (\ref{first-QH}--\ref{triple-smart}) and 
  (\ref{red-triple-correction}-\ref{cost}) are immediate, so we need
  only confirm (\ref{max1}--\ref{max2}).  The relation \eqref{max1}
  holds by the definition of the action, and \eqref{max2} follows from
  Lemma \ref{lem:pull-off}.
This map is surjective since every element of $B''$ is in the image.

We need only show that $\tilde{\coe}^{\bla^+}_{\bla}$ is spanned by
$B''$.  This is easily shown using techniques
analogous to Lemma \ref{span}; the only new trick needed is to show
that we don't need diagrams where a strand starts left of the $j$th
red strand, but passes right of the maximum of the cap.  This is
avoided using \eqref{max1}.
\end{proof}

As usual, we let $ \coe^{\bla^+}_\bla$ be the quotient of
$\tilde{\coe}^{\bla^+}_\bla$ by the submodule spanned by violated diagrams.

\begin{defn}
  The coevaluation functor is
  \begin{equation*}
\mathbb{K}^{\bla^+}_{\bla}=-\overset{L}\otimes_{\alg^{\bla^+}}\coe^{\bla^+}_\bla\colon\cat_{\nicefrac{1}{D}}^{\bla}\to \cat_{\nicefrac{1}{D}}^{\bla^+}.
\end{equation*}

Similarly, the quantum trace functor is the right adjoint to this given by
  \begin{equation*}
\mathbb{T}^{\bla^+}_{\bla}=\RHom_{\alg^{\bla}}( \coe^{\bla^+}_\bla, -)(2\llrr)[-2\rcl]\colon\cat_{\nicefrac{1}{D}}^{\bla^+}\to \cat_{\nicefrac{1}{D}}^{\bla}.
\end{equation*}
The evaluation and quantum cotrace are defined similarly.
\end{defn}

As with the functors $\mathbb{B}$, these functors can be worked with
using their relationship with standardization.
Let $\mathbb{S}^\bla$ be the usual standardization functor and
$\mathbb{S}^{\bla^+}_+$ denote the standardization functor where
$(\mu,\mu^*)$ one of the subsequences and all others are
singletons.  
\begin{lemma}\label{trace-standard}
  \[\mathbb{K}^{\bla^+}_{\bla}\circ \mathbb{S}^\bla\cong
  \mathbb{S}^{\bla^+}_+\circ \mathbb{K}^{\bla^+}_{\bla}
\qquad  \mathbb{T}^{\bla^+}_{\bla}\circ\mathbb{S}^{\bla^+}_+\cong  \mathbb{S}^\bla \circ \mathbb{T}^{\bla^+}_{\bla}\]
\end{lemma}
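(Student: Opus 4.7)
The plan is to establish the first isomorphism by an explicit diagrammatic identification of bimodules, then deduce the second by adjunction. Both sides of the first equation are derived tensor products with bimodules: the source $\mathbb{K}^{\bla^+}_\bla\circ\mathbb{S}^\bla$ corresponds to tensoring $S\otimes_{T^\bla}\coe^{\bla^+}_\bla$, while the target corresponds to tensoring with a standardization bimodule for $\bla^+$ into which $\Lco$ has been inserted as the $\mu,\mu^*$ factor. The natural transformation between them should send a diagram for the source (a Stendhal diagram with standardly violating relations for $\bla$ at the top, capped by a $\mu$-labeled cup) to the ``same'' diagram viewed as a $(\bla,\bla^+)$-Stendhal diagram with the full standardly violating relations for $\bla^+$ imposed. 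The compatibility between these two descriptions is exactly the content of relations \eqref{max1}-\eqref{max2} together with the relations \eqref{eq:red-rels} characterizing $\Lco$ via Lemma \ref{lem:Lco-simple}.

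First I would reduce to checking the isomorphism on projective tensors $P_1\boxtimes\cdots\boxtimes P_\ell$ using the exactness of standardization (Proposition \ref{standard-exact}). On such inputs the naive tensor products can be enumerated by the basis from Proposition \ref{basis} (appropriately modified to include the cup), and the two sides give the same spanning set of diagrams: in each case, strands can freely move in the region near the cup, but any strand originating at the top that is forced into the standardly violating position (either because $\bla^+$ has more red strands or because a black strand has passed over the cup) is killed, by either the ``outer'' violating relation or by \eqref{max1}-\eqref{max2} combined with the $\Lco$-relations. Fixing a compatible choice of basis diagrams, one matches the bases bijectively.

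The main obstacle will be showing that the derived tensor product agrees with the naive one so that the identification above is actually an isomorphism of functors on the derived category, not merely on projectives. For this I would follow the inductive strategy used in the proof of Lemma \ref{cross-standard}: induct on the preorder on root functions $\bal$, using the standard filtration of projectives (Proposition \ref{standard-filtration}) and the long exact sequence of the appropriate short exact sequence of standards. The base case of maximal $\bal$ gives a projective in $\cat^\bla$, for which naive and derived tensor products with $\coe^{\bla^+}_\bla$ agree because $\coe^{\bla^+}_\bla$ admits a left standard filtration analogous to the one for $\bra_w$ from Lemma \ref{pro-sta}. The inductive step matches the higher cohomology vanishing across the map, as in the diagram chase with the exact sequence \eqref{KQP}.

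For the second isomorphism, I would proceed by adjunction. The functor $\mathbb{T}^{\bla^+}_\bla$ is, by its definition as a shifted $\RHom$, the right adjoint (up to the stated grading and homological shifts) of $\mathbb{K}^{\bla^+}_\bla$; and the standardization functor $\mathbb{S}^{\bla^+}_+$ has a right adjoint given by the composition of idempotent truncation to the appropriate tensor product block followed by a projection, which on the factor corresponding to $(\mu,\mu^*)$ corresponds to the right adjoint of $\mathbb{K}^{(\mu,\mu^*)}_\emptyset$, namely $\mathbb{T}^{(\mu,\mu^*)}_\emptyset$. Taking right adjoints of the first isomorphism, together with the fact that $\mathbb{S}^\bla$ is fully faithful (Proposition \ref{semi-orthogonal}), yields the second isomorphism.
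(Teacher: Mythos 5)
Your treatment of the first isomorphism is essentially the paper's argument: both sides have the same zeroth cohomology (tensor product with the quotient of $\coe^{\bla^+}_{\bla}$ by standardly violating strands), and the vanishing of higher cohomology is obtained by the same induction on the stratification as in Lemma \ref{cross-standard}, using that $\coe^{\bla^+}_{\bla}$ has a standard filtration on the left. That half is fine.

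The adjunction argument for the second isomorphism has a genuine gap. Taking right adjoints of $\mathbb{K}^{\bla^+}_{\bla}\circ \mathbb{S}^\bla\cong \mathbb{S}^{\bla^+}_+\circ \mathbb{K}^{\bla^+}_{\bla}$ yields $(\mathbb{S}^\bla)^R\circ \mathbb{T}^{\bla^+}_{\bla}\cong \mathbb{T}^{\bla^+}_{\bla}\circ(\mathbb{S}^{\bla^+}_+)^R$, where $(\mathbb{S})^R$ is the \emph{right adjoint} of standardization (essentially idempotent truncation onto the relevant stratum), not standardization itself. To pass from this to the desired statement you would precompose with $\mathbb{S}^{\bla^+}_+$ and postcompose with $\mathbb{S}^\bla$, and full faithfulness gives $(\mathbb{S}^{\bla^+}_+)^R\circ\mathbb{S}^{\bla^+}_+\cong\id$; but you are then left with $\mathbb{S}^\bla\circ(\mathbb{S}^\bla)^R\circ\mathbb{T}^{\bla^+}_{\bla}\circ\mathbb{S}^{\bla^+}_+\cong\mathbb{S}^\bla\circ\mathbb{T}^{\bla^+}_{\bla}$, and $\mathbb{S}^\bla\circ(\mathbb{S}^\bla)^R$ is not the identity. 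You would still need to prove that $\mathbb{T}^{\bla^+}_{\bla}\circ\mathbb{S}^{\bla^+}_+$ lands in the essential image of $\mathbb{S}^\bla$ — and that is precisely the substantive content of the second isomorphism, which full faithfulness alone does not supply. (Your description of $(\mathbb{S}^{\bla^+}_+)^R$ as involving $\mathbb{T}^{(\mu,\mu^*)}_{\emptyset}$ also conflates two different adjoints.)

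The paper instead argues the second isomorphism directly: it evaluates both composites on objects of the form $P_1\boxtimes\cdots\boxtimes I\boxtimes\cdots\boxtimes P_\ell$ with $I$ injective in the $(\mu,\mu^*)$ slot — such objects arise as $\mathbb{B}_{j-1}^2$ applied to standard modules — identifies $\mathbb{S}^\bla\circ\mathbb{T}^{\bla^+}_{\bla}$ of such an object as $\mathbb{S}^\bla(P_1\boxtimes\cdots\boxtimes P_\ell)\otimes\Hom(\Lco,I)$, exhibits the comparison map via diagrams with no crossings or dots, and reruns the cohomology-vanishing induction, now viewing $\mathbb{S}^{\bla^+}_+(P)$ as a quotient of $\mathbb{B}_{j-1}^2$ of a projective with kernel filtered by $\mathbb{B}_{j-1}$ of standards. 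You should either supply an argument of this kind or directly prove that $\mathbb{T}^{\bla^+}_{\bla}\circ\mathbb{S}^{\bla^+}_+$ factors through the image of $\mathbb{S}^\bla$.
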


\begin{proof}
The 0th cohomology of both $ \mathbb{K}^{\bla^+}_{\bla}\circ
\mathbb{S}^\bla$ and $
  \mathbb{S}^{\bla^+}_+\circ \mathbb{K}^{\bla^+}_{\bla}$ are given by
  tensor product with the bimodule given by the
  quotient of $\coe^{\bla^+}_\bla$ by standardly violating strands.
  Thus we need to show that $\mathbb{K}^{\bla^+}_{\bla}\circ
\mathbb{S}^\bla$ applied to a projective gives a module. 
  The proof of this using exact sequences is sufficiently similar to Lemma \ref{cross-standard} that
  we leave the details to the reader.  

The argument for $
\mathbb{T}^{\bla^+}_{\bla}$ is a variation on this.  Consider the functors
$\mathbb{T}^{\bla^+}_{\bla}\circ\mathbb{S}^{\bla^+}_+$ and
$\mathbb{S}^\bla \circ \mathbb{T}^{\bla^+}_{\bla}$ applied to a module
of the form $P=P_1\boxtimes\cdots\boxtimes P_{j-1}\boxtimes I \boxtimes
P_{j}\boxtimes\cdots \boxtimes P_\ell$ with $P_i$ projective and $I$
injective.  This may seem like a strange module, but it appears
naturally as $\mathbb{B}_{j-1}^2$ applied to a usual standard module.  

The functor $\mathbb{S}^\bla \circ
\mathbb{T}^{\bla^+}_{\bla}$ sends $P$ to $\mathbb{S}^\bla
(P_1\boxtimes\cdots \boxtimes P_\ell)\otimes \Hom(\Lco,I)$.  An
element of $\mathbb{T}^{\bla^+}_{\bla}\circ\mathbb{S}^{\bla^+}_+$
gives an element of $\mathbb{S}^\bla \circ
\mathbb{T}^{\bla^+}_{\bla}$ by considering the image of diagrams with no
crossings or dots.  We apply the same induction argument to show that
$\mathbb{T}^{\bla^+}_{\bla}\circ\mathbb{S}^{\bla^+}_+$ has no higher
cohomology as for $
\mathbb{K}^{\bla^+}_{\bla}$, but now viewing
$\mathbb{S}^{\bla^+}_+(P)$ as a quotient of $\mathbb{B}_{j-1}^2$
applied to a projective, with the kernel filtered by
$\mathbb{B}_{j-1}$ applied to standards.
\end{proof}

Since $\coe^{\bla^+}_\bla$ is projective as a left module, tensor with it gives an exact functor.  The quantum trace functor, however, is very far from being exact.
\begin{prop}\label{qt-cat}
  $\mathbb{K}^{\bla^+}_{\bla}$ categorifies the coevaluation and $\mathbb{T}^{\bla^+}_{\bla}$ the quantum trace.
\end{prop}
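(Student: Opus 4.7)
The plan is to leverage the two-strand case already established in Proposition \ref{coev}, together with the compatibility with standardization (Lemma \ref{trace-standard}), to reduce the general statement to a computation on pure tensors, where standardization gives us exactly what we need.

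First, I would verify that $\mathbb{K}^{\bla^+}_{\bla}$ induces the correct map on Grothendieck groups when applied to a standard module $\mathbb{S}^{\bla}(P_1 \boxtimes \cdots \boxtimes P_\ell)$. By Lemma \ref{trace-standard} (and the associativity of partial standardizations), we have
\[
\mathbb{K}^{\bla^+}_{\bla}\bigl(\mathbb{S}^{\bla}(P_1 \boxtimes \cdots \boxtimes P_\ell)\bigr) \cong \mathbb{S}^{\bla^+}_+\bigl(P_1 \boxtimes \cdots \boxtimes P_{j-1} \boxtimes L_\mu \boxtimes P_j \boxtimes \cdots \boxtimes P_\ell\bigr),
\]
since the bimodule $\coe^{\bla^+}_\bla$ is, on the level of a single $(\mu,\mu^*)$-block, built out of the module $L_\mu$ that defines the two-strand coevaluation. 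Taking classes in the Grothendieck group and applying Proposition \ref{standard-class}, the left side becomes the class $[\mathbb{K}^{\bla^+}_{\bla}(\mathbb{S}^{\bla}(P_1 \boxtimes \cdots \boxtimes P_\ell))]$ while the right side decategorifies to $p_1 \otimes \cdots \otimes p_{j-1} \otimes [L_\mu] \otimes p_j \otimes \cdots \otimes p_\ell$. By Proposition \ref{coev}, $[L_\mu] \in V_\mu \otimes V_{\mu^*}$ is the image of $1$ under the coevaluation, so this is precisely the value of the coevaluation map applied to $p_1 \otimes \cdots \otimes p_\ell$.

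Since pure tensors of the form $p_1 \otimes \cdots \otimes p_\ell$ (equivalently, classes of standards $\mathbb{S}^\bla(P_1\boxtimes\cdots\boxtimes P_\ell)$) span $V_\bla^{\nicefrac{1}{D}}$, and since standard modules generate $K_0(\cata^\bla)$ as a $\Z[q^{\nicefrac{1}{D}},q^{-\nicefrac{1}{D}}]$-module, this determines the decategorified map uniquely. Because $\coe^{\bla^+}_\bla$ is projective as a left $\alg^\bla$-module, $\mathbb{K}^{\bla^+}_\bla$ is exact, so this calculation on standards indeed computes the induced map on all of $K_0$; it coincides with the coevaluation $V_\bla \to V_{\bla^+}$.

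For the quantum trace $\mathbb{T}^{\bla^+}_{\bla}$, I would argue by adjunction: it is by construction the right adjoint of $\mathbb{K}^{\bla^+}_\bla$ (after the indicated shift and homological twist). On Grothendieck groups, this gives an invariant map $V_{\bla^+} \to V_\bla$ that is adjoint to the coevaluation with respect to the forms $\langle -,-\rangle$ of Proposition \ref{unique-form}; this characterizes the quantum trace. The main subtlety is showing that this adjoint map really does correspond to a well-defined functor landing in $\cat^\bla_{\nicefrac{1}{D}}$ rather than merely the unbounded derived category, for which one needs that $\coe^{\bla^+}_\bla$ admits a finite resolution by projective left $\alg^{\bla^+}$-modules that are also projective as right $\alg^\bla$-modules; the standard filtration on $\coe^{\bla^+}_\bla$ coming from the filtration on $L_\mu$ constructed in Proposition \ref{sta-res}, together with Corollary \ref{standard-finite-length} and the analogue of the projective resolution of Proposition \ref{pro-res} for the $(\mu,\mu^*)$ insertion, provides this. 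The main obstacle will be carefully tracking the grading and homological shifts so that the normalization matches the quantum trace (not merely an invariant map proportional to it); as in Proposition \ref{coev}, this is pinned down by evaluating on a single pure tensor involving a lowest weight vector of $V_\mu$, where the explicit shift $[-2\rho^\vee(\mu)](2\langle\la,\rho\rangle)$ in the definition of $\mathbb{T}$ exactly compensates for the shifts picked up by the projective resolution of $L_\mu$.
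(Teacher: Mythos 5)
Your proposal is correct and follows essentially the same route as the paper: both arguments reduce to standard modules via Lemma \ref{trace-standard}, identify the result as a standardization with $L_\mu$ inserted so that the computation collapses to the two-strand case of Propositions \ref{coev} and \ref{trace}, and obtain the quantum trace statement by adjunction. The paper is terser (it invokes strong equivariance to put the cup at the far right rather than tracking a general insertion position, and does not re-verify the finiteness issues you raise, which were already handled when the functors were defined), but the substance is identical.
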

\begin{proof}
We need only prove the former, since the latter follows by adjunction.  Furthermore, we may reduce to the case where $\mu$ is added at the end
of the sequence, since all other cases are obtained from this by the
action of $\tU$.

In this case, consider
\begin{math}
\mathbb{K}^{\bla^+}_{\bla}(S^\kappa_{\Bi}).
\end{math}
The resulting module is isomorphic to the standardization
\begin{equation*}
  \mathbb{S}^{\bla;\mu,\mu^*}(S^\kappa_{\Bi}\boxtimes L_\mu)(2\llrr)[-2\rcl]
\end{equation*}
by Lemma \ref{trace-standard}.

This reduces to the case where $\bla=\emptyset$, which we have covered in Propositions \ref{coev} and \ref{trace}.
\end{proof}

\begin{prop}\label{trace-equivariance}
  The functors  $\mathbb{K}^{\bla^+}_{\bla}$ and
  $\mathbb{T}^{\bla^+}_{\bla}$ are strongly equivariant.
\end{prop}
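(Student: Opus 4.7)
The plan is as follows. Since $\mathbb{T}^{\bla^+}_{\bla}$ is defined as the right adjoint of $\mathbb{K}^{\bla^+}_{\bla}$ (up to shift), and every $1$-morphism in $\tU$ comes in a biadjoint pair, strong equivariance of $\mathbb{T}^{\bla^+}_{\bla}$ follows formally from strong equivariance of $\mathbb{K}^{\bla^+}_{\bla}$ by taking mates of the equivariance natural isomorphisms through the biadjunctions. So I will focus entirely on $\mathbb{K}^{\bla^+}_{\bla}$.

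For $\mathbb{K}^{\bla^+}_{\bla} = -\otimes_{T^\bla} \coe^{\bla^+}_{\bla}$, strong equivariance reduces, exactly as in the proof of Proposition~\ref{bra-commute}, to constructing natural bimodule isomorphisms
\[ c_u\colon \coe^{\bla^+}_{\bla}\otimes_{T^{\bla^+}} \beta^{\bla^+}_u \;\cong\; \beta^{\bla}_u \otimes_{T^\bla} \coe^{\bla^+}_{\bla} \]
for every $1$-morphism $u$ of $\tU$, together with compatibility for $2$-morphisms. Diagrammatically, both sides are presented by $(\bla,\bla^+)$-Stendhal-like diagrams that admit $u$-strands (possibly with upward-oriented segments) attached at the top right, and $c_u$ is given by sliding those $u$-strands past the cup. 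As in Proposition~\ref{bra-commute}, it suffices to define $c_u$ on $u=\eE_i$ and $u=\eF_i$, which correspond to isotoping a single upward or downward strand over (or around) the cup.

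The main obstacle is verifying well-definedness of $c_u$ where the sliding strand meets the cup itself. For a downward $i$-labeled strand, this is precisely the content of Lemma~\ref{lem:pull-off}: pulling a strand all the way around the cup is equivalent to letting it act on the $L_\mu$-vertex, and the relations (\ref{max1})--(\ref{max2}) ensure that the scalar produced matches the categorification of the fact that the canonical coevaluation element in $V_\mu\otimes V_{\mu^*}$ is $U_q(\fg)$-invariant. For an upward $i$-labeled strand, one first uses the cup and cap of $\tU$ (relations (\ref{switch-1})--(\ref{opp-cancel2})) to rewrite the upward strand as a composition involving only downward strands together with units/counits of biadjunction, then applies the downward case, and finally undoes the rewriting; the triple-point and pitchfork relations guarantee that the result is independent of how one performs this reduction. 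I expect the bulk of the technical work to lie in organizing these moves so that one does not accidentally produce a violated diagram, which is handled by the same argument used in Proposition~\ref{add-red} and Lemma~\ref{add-red}: up to isotopy, violation can always be pushed to the far left, where the spanning set of $\coe^{\bla^+}_\bla$ shows it is zero.

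Finally, compatibility with $2$-morphisms is automatic from locality. Given $\alpha\colon u\to v$ in $\tU$, the two composites in the strong equivariance square differ only in whether $\alpha$ is attached at the top before or after the sliding isomorphism. Since the attachment takes place in a region of the diagram disjoint from the cup, and since the relations of $\cT$ that govern $\alpha$ are local, the two attachments agree. This yields the required compatibility $c_v\circ (\id_{\mathbb{K}}\otimes \alpha) = (\alpha\otimes \id_{\mathbb{K}})\circ c_u$, completing strong equivariance.
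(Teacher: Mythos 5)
Your proposal is correct and follows essentially the same route as the paper: reduce to $\mathbb{K}^{\bla^+}_{\bla}$ by adjunction, identify the bimodules presenting $u\circ\mathbb{K}$ and $\mathbb{K}\circ u$ as differing only in whether strands are attached below or above the maximum of the cup, and pass between them via the relations (\ref{max1})--(\ref{max2}), with $2$-morphism compatibility following from locality exactly as in Proposition \ref{bra-commute}. The paper's own proof is just a terser version of this argument, leaving implicit the points you spell out (the upward-strand case and the role of Lemma \ref{lem:pull-off} in guaranteeing coherence of those relations).
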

\begin{proof}
  By taking adjoint, one can reduce to just the case
  $\mathbb{K}^{\bla^+}_{\bla}$.  The proof is essentially the same as
  Lemma \ref{bra-commute}; the composition of functors $u\circ
  \mathbb{K}^{\bla^+}_{\bla}$ and $\mathbb{K}^{\bla^+}_{\bla}\circ u$
  are both given by tensor product with honest modules by the
  exactness of $u$ and the bimodules are easily identified.  The
  difference is that in the first bimodule grabs strands below the
  maximum, whereas the second grabs them above it.  These are
  equivalent by the relations \eqref{max1} and \eqref{max2}.
\end{proof}

The most important property of these functors is that they satisfy
the obvious isotopy.  To see this, consider the two functors
\[S_1=\mathbb{T}^{\bla_1\la;\la^*,\la;\bla_2}_{\bla_1\la\bla_2}\mathbb{K}^{\bla_1;\la,\la^*;\la\bla_2}_{\bla_1\la\bla_2}\qquad S_2=\mathbb{T}^{\bla_1;\la,\la^*;\la\bla_2}_{\bla_1\la\bla_2}\mathbb{K}^{\bla_1\la;\la^*,\la;\bla_2}_{\bla_1\la\bla_2}\]
which come from adding a pair of the representations
are added on the left of an entry $\la$, and removing them on the
right of $\la$ or {\it vice versa}. 
These functors are depicted in more usual topological form in
Figure~\ref{Smove}. 
\begin{prop}\label{S-move}
  The functors $S_1$ and $S_2$ are isomorphic to the identity functor.
\end{prop}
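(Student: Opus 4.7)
The plan is to show that each of $S_1,S_2$ is given by tensor product with a bimodule isomorphic to $T^\bla$ itself, where $\bla = \bla_1\la\bla_2$. Because both $\mathbb{K}$ and (a suitable shift of) $\mathbb{T}$ appear as tensor product with projective bimodules (at least on one side), one can realize both $S_1$ and $S_2$ as exact functors given by tensor product with an explicit bimodule built from $\coe$.  The zigzag/S-move identity we want is the categorification of the fact that the composition of evaluation and coevaluation along the same strand is the identity, which we already know on the Grothendieck group via Propositions \ref{coev}, \ref{trace}, \ref{qt-cat} and the choice of ribbon element from Section \ref{sec:ribbon}. The task is to promote this decategorified identity to a functor isomorphism.

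First, by Proposition \ref{trace-equivariance} both $\mathbb{K}$ and $\mathbb{T}$ are strongly equivariant for the $\tU$-action, hence so are $S_1$ and $S_2$. Any strongly equivariant endofunctor of $\cat^\bla$ is determined up to isomorphism by its value on a single highest weight generator, together with the natural transformations implementing equivariance; thus it suffices to construct a natural transformation $S_i \to \mathrm{id}$ (or $\mathrm{id}\to S_i$) and check it is an isomorphism after applying to $T^\bla$ itself (or, equivalently, on a single standard module $\mathbb{S}^\bla(P^0_\emptyset\boxtimes\cdots\boxtimes P^0_\emptyset)$).

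Next, I would use Lemma \ref{trace-standard} to reduce the computation to the setting of standardization. Concretely, applying $S_1$ to a standardization $\mathbb{S}^{\la_1;\cdots;\la_\ell}(M_1\boxtimes\cdots\boxtimes M_\ell)$ produces $\mathbb{S}^{\la_1;\cdots;\la_\ell}(M_1\boxtimes\cdots\boxtimes Z(M_k)\boxtimes\cdots\boxtimes M_\ell)$ for some endofunctor $Z$ of the category attached to the single weight $\la=\la_k$; the same is true of $S_2$. This reduces the claim to the case $\bla=(\la)$ with $\bla_1=\bla_2=\emptyset$, i.e.\ the base case of a single strand with a single zigzag, where $S_i$ is built from the minimal cup-cap combination of Section \ref{sec:co-evaluation}.

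In that base case, $S_1$ becomes derived tensor product with the bimodule
\[ \coe^{\la,\la^*,\la}_{\la}\otimes_{T^{\la,\la^*,\la}}\bigl(\text{cap bimodule on the rightmost pair}\bigr)(2\llrr)[-2\rcl], \]
and one can write the unit of the $(\mathbb{K},\mathbb{T})$-adjunction as an explicit Stendhal diagram that introduces a cup and cap on the first $\la$ strand. Using the relations \eqref{max1}, \eqref{max2}, Lemma \ref{lem:pull-off}, and the explicit description of $L_\la$ via the idempotent $P^0_{\Bi^\la}$, one can straighten this zigzag diagram to the identity modulo diagrams which act trivially on $L_\la$; the grading shifts $(2\llrr)[-2\rcl]$ cancel exactly the shift coming from Proposition \ref{Lco-bra} (equivalently, from the choice of ST ribbon element). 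This exhibits a nonzero natural transformation $S_1\to\mathrm{id}$. To conclude that it is an isomorphism, I would compare Euler characteristics: since $S_1$ categorifies the composition of coevaluation and quantum trace, which equals the identity on $V_\bla^{\nicefrac{1}{D}}$, the map $S_1 P\to P$ is an isomorphism on graded dimensions for every projective $P$, and hence is an isomorphism. The same argument applies to $S_2$.

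The main obstacle is the straightening computation in the base case: one must verify that the zigzag diagram, after all possible applications of the relations between cups, caps and the gray box representing $L_\la$, really does reduce to the identity with the grading shift predicted by the ST ribbon element, rather than merely to a scalar multiple or a diagram that vanishes only after passing to $L_\la$. Once this explicit diagrammatic calculation is in hand, the rest of the argument is formal via equivariance, standardization, and Euler-characteristic rigidity.
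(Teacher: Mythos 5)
Your reductions — strong equivariance (Proposition \ref{trace-equivariance}) plus compatibility with standardization (Lemma \ref{trace-standard}) to cut down to $\bla_1=\bla_2=\emptyset$ and then to the single object $P_\emptyset$ — are exactly the paper's first moves. The divergence, and the problem, is in how you handle the base case.

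There are two genuine gaps. First, the diagrammatic straightening you propose can at best identify the degree-zero part of the composite. After the reduction, what must be computed is the derived tensor product
$\mathbb{S}^{\la;\la^*,\la}(P_\emptyset\boxtimes \Lco)\Lotimes_{\alg^{(\la,\la^*,\la)}} \mathbb{S}^{\la,\la^*;\la}(\dot \Lco\boxtimes \dot P_\emptyset)(2\llrr)[-2\rcl]$,
and since $\Lco$ is a simple module of infinite projective dimension in general, the higher Tor groups are the whole issue; manipulating zigzag diagrams with the relations \eqref{max1}--\eqref{max2} and Lemma \ref{lem:pull-off} never sees them. The paper disposes of these by substituting the resolution of $\Lco$ from Proposition \ref{sta-res} (whose terms $M_{2\rcl-j}$ lie in the piece of the semi-orthogonal decomposition generated by the $S^{\kappa_j}_{\Bi}$, with $M_{2\rcl}\cong\ocL(-2\llrr)$) into both factors and running the resulting spectral sequence: Tor between standardly filtered objects in different pieces of the semi-orthogonal decomposition vanishes, so the only surviving entry is the $(i,j,k)=(0,2\rcl,0)$ term, which is visibly $\K[-2\rcl]$ and cancels the shift. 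Your sketch contains no substitute for this vanishing argument.

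Second, the final step — upgrading a nonzero natural transformation $S_1\to\mathrm{id}$ to an isomorphism by ``Euler-characteristic rigidity'' — does not work. Equality of classes in the Grothendieck group does not force a complex to be concentrated in one degree: a complex such as $\K\oplus \K(a)[1]\oplus \K(a)[2]$ has Euler characteristic $1$ but is not quasi-isomorphic to $\K$, and a map from it onto $\K$ is not a quasi-isomorphism. So even granting your straightening, you would only know that $H^0(S_1P_\emptyset)\cong\K$ and that $[S_1P_\emptyset]=[\K]$, which is strictly weaker than $S_1P_\emptyset\cong\K$ in the derived category. The homological vanishing has to be established directly, which is precisely what the paper's spectral-sequence argument does.
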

\begin{proof}
  One can use Lemma \ref{trace-standard} to reduce to the case
  where $\bla_1=\bla_2=\emptyset$.  Furthermore, by Lemma
  \ref{trace-equivariance}, it suffices to check that
  $S_1P_\emptyset\cong S_2P_\emptyset\cong P_\emptyset \cong\K$, since
  any choice of isomorphism between these objects will induce isomorphisms
  between the functors.  To prove the result for $S_2$,  we must check that
  \[\mathbb{S}^{\la;\la^*,\la}(P_\emptyset\boxtimes
  L_\la)\overset{L}\otimes_{\alg^\bla} \mathbb{S}^{\la,\la^*;\la}(\dot L_\la\boxtimes
  \dot P_\emptyset)(2\llrr)[-2\rcl]\cong \K\]
Applying the dot involution to switch left/right, the symmetry of
tensor product shows that $S_1$ reduces to the same calculation.

We can use Lemma \ref{sta-res} to expand $L_\la$ into a complex, and
then use the spectral sequence attached to tensoring these complexes.
The $E^2$-page of this spectral sequence has entries \[E^2_{k,m}=\bigoplus_{i+j=m}\operatorname{Tor}^k\Big(\mathbb{S}^{\la;\la^*,\la}(P_\emptyset\boxtimes
  M_i), \mathbb{S}^{\la,\la^*;\la}(\dot M_j\boxtimes
  \dot P_\emptyset)(2\llrr)\Big).\]

By the Tor-vanishing discussed in the proof of \ref{pro-sta}, this
will be 0 unless the two factors lie in the same piece of the
semi-orthogonal decomposition, that is, if $i=0, j=2\rcl$ and $k=0$.  This
term is exactly 
\begin{equation*}
  \mathbb{S}^{\la;\la^*;\la}(P_\emptyset\boxtimes P_{\Bi_\la}\boxtimes P_\emptyset)
  )\otimes_{\alg^\bla} \mathbb{S}^{\la,\la^*;\la}(\dot
  P_\emptyset\boxtimes \dot P_{\Bi_\la}\boxtimes
  \dot P_\emptyset)[-2\rcl]\cong \K[-2\rcl].
\end{equation*}
The homological shift above is cancelled by the fact that $m=j=2\rcl$.  Thus,
the result follows. \end{proof}

  \begin{figure}
     \centering
 \tikzset{knot/.style={draw=white,double=red,line width=3.5pt, double
     distance=1.2pt}}
 \begin{tikzpicture}[very thick,knot,xscale=1.5]
 \node (a) at (-2.5,0){\begin{tikzpicture}[xscale=.8]
 \draw[knot,postaction={decorate,decoration={markings,
    mark=at position .5 with {\arrow[red,scale=.4]{>}}}}] (-2,1) to[out=-45,in=180] (-1,.1) to[out=0,in=180] (0,.9) to[out=0,in=135] (1,0);
  \end{tikzpicture}};
\node (d)  at (0,0){\begin{tikzpicture}[xscale=.8]
\draw[knot,postaction={decorate,decoration={markings,
    mark=at position .5 with {\arrow[red,scale=.4]{>}}}}] (1,1.5) -- (1,0);
\end{tikzpicture}};
\node (e) at (2.5,0){\begin{tikzpicture}[xscale=-.8]
 \draw[knot,postaction={decorate,decoration={markings,
    mark=at position .5 with {\arrow[red,scale=.4]{>}}}}] (-2,1) to[out=-45,in=180] (-1,.1) to[out=0,in=180] (0,.9) to[out=0,in=135] (1,0);
  \end{tikzpicture}};
\draw[->,black,thick] (a) -- (d);\draw[->,black,thick] (d) -- (e);
 \end{tikzpicture}
     \caption{The ``S-move''}
     \label{Smove}
  \end{figure}
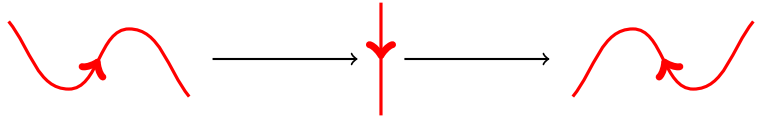
It is extremely tempting to conclude that this
proposition shows that the functors $\mathbb{K}$ and $\mathbb{T}$ are biadjoint; in
fact, they are not always, though the adjunction on one side is clear
from the definition.  Rather, this is reflecting some sort of
biadjunction between the 2-functors of ``tensor with $\cata^\la$'' and
``tensor with $\cata^{\la^*}$'' on the 2-category of representations
of $\tU$.  While there is not a unified construction of a tensor
product of two $\tU$ categories, one can easily generalize the
definition of $\cata^\bla$ to describe auto-2-functors of $\tU$
representations given by adding one red line; we will discuss this
construction in more detail in forthcoming work \cite{WebCB}.

\section{Knot invariants}
\label{sec:invariants}

As in Section \ref{sec:rigid}, we assume that $\fg$ is finite
dimensional in this section.

\setcounter{equation}{0}

\subsection{Constructing knot and tangle invariants}

Now, we will use the functors from the previous section to construct tangle invariants. Using these as building blocks, we can associate a
functor $\Phi(T)\colon\cat_{\nicefrac{1}{D}}^\bla\to \cat_{\nicefrac{1}{D}}^{\bmu}$ to any diagram of an oriented labeled ribbon tangle  $T$ with the bottom ends given by $\bla=\{\la_1,\dots,\la_\ell\}$ and the
top ends labeled with $\bmu=\{\mu_1,\dots,\mu_m\}$.

As usual, we choose a projection of our tangle such that at any height
(fixed value of the $x$-coordinate) there is at most a single crossing, single
cup or single cap. This allows us to write our tangle as a composition of
these elementary tangles.

For a crossing, we ignore the orientation of the knot, and separate
crossings into positive (right-handed) and negative (left-handed) according to the upward
orientation we have chosen on $\R^2$.
\begin{itemize}
\item To a positive crossing of the $i$ and $i+1$st strands, we associate the braiding functor
  $\mathbb{B}_{\si_i}$.  \item To a negative crossing, we associate
  its right
  adjoint $\mathbb{B}_{\si_i^{-1}}$ (the left and right adjoints are isomorphic,
  since $\mathbb{B}$ is an equivalence).
\end{itemize}
For the cups and caps, it is necessary to consider the orientation, following the pictures of Figures \ref{coev-ev} and \ref{q-tr}.
\begin{itemize}
\item To a clockwise oriented cup, we associate the coevaluation.
\item To a clockwise oriented cap, we associate the quantum trace.
\item  To a counter-clockwise cup, we associate the quantum cotrace.
\item  To a counter-clockwise cap, we associate the evaluation.
\end{itemize}

\begin{prop}\label{decat-all}
 The map induced by $\Phi(T):\cat_{\nicefrac{1}{D}}^\bla\to\cat_{\nicefrac{1}{D}}^\bmu$ on the Grothendieck groups $V_{\bla}^{\nicefrac{1}{D}}\to V_\bmu^{\nicefrac{1}{D}}$ is that assigned to a ribbon tangle by the structure maps of the category of $U_q(\fg)$ with the ST ribbon structure.

In particular, the graded Euler characteristic of the complex $\Phi(T)(\K)$ for a closed link is the quantum knot invariant for the ST ribbon element.
\end{prop}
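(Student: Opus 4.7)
The plan is to decompose $T$ according to the chosen projection into a finite composition of elementary ribbon tangles — positive and negative crossings, clockwise and counter-clockwise cups and caps — and then observe that for each such piece the assigned functor has already been shown to categorify the corresponding Reshetikhin--Turaev structure map. For positive crossings, this is Corollary \ref{br-cat}. Negative crossings are assigned the adjoint functor $\mathbb{B}_{\sigma_i^{-1}}$; since $\mathbb{B}_{\sigma_i}$ is an equivalence by Theorem \ref{braid-act}, its adjoint decategorifies to the inverse of the braiding on $V_\bla^{\nicefrac{1}{D}}$, which is exactly the map RT assigns to a negative crossing. Cups and caps in their four possible orientations are handled by Propositions \ref{coev}, \ref{trace} and \ref{qt-cat}, whose generalizations to arbitrary $\bla$ were proved using the bimodules $\coe^{\bla^+}_\bla$, so in each case the decategorification matches the coevaluation, evaluation, quantum trace, or quantum cotrace for the ST ribbon element.

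Because $K_0$ is functorial under composition of exact functors (and each elementary building block is exact on the relevant $C^\uparrow$ category by the results of Sections \ref{sec:braid-rigid} and \ref{sec:rigid}), the induced map $V_\bla^{\nicefrac{1}{D}} \to V_\bmu^{\nicefrac{1}{D}}$ of the full composition $\Phi(T)$ is simply the composition of the above RT structure maps, assembled in the same pattern dictated by the projection. By definition of the Reshetikhin--Turaev functor on the ribbon category of $U_q(\fg)$-representations, this composition is precisely the map that RT assigns to $T$ using the ST ribbon element; thus the first assertion of the proposition holds.

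The claim for a closed link $L$ is an immediate specialization. Taking $\bla=\bmu=\emptyset$, the category $\cat_{\nicefrac{1}{D}}^\emptyset$ is generated by $\K = P_\emptyset$, whose class in $K_0$ is $1$, and $K_0$ is the ground ring $\Z[q^{\pm 1/D}]$ (after suitable completion). The first part of the proposition shows that $[\Phi(L)(\K)] \in K_0(\cat_{\nicefrac{1}{D}}^\emptyset)$ is the image of $1$ under the RT invariant of $L$, i.e.\ the quantum invariant itself. Since $[\Phi(L)(\K)]$ is by definition the graded Euler characteristic of the complex $\Phi(L)(\K)$, the second assertion follows.

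The one thing worth emphasizing is what does \emph{not} need to be proved here. One might worry that $\Phi(T)$ depends on the chosen projection of $T$, and that we must verify invariance under Reidemeister-type moves. However, the present proposition only concerns the induced map on Grothendieck groups, so any such ambiguity is washed out: the RT tangle invariant is a genuine isotopy invariant by \cite{RT, Tur}, and we have matched the decategorification piece by piece with its elementary building blocks. Promoting this to an isotopy invariance statement at the level of functors — which is the actual content of Theorem \ref{knot-hom} — is a separate matter, and will require the explicit verification of Reidemeister moves carried out later in this section.
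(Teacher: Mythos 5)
Your proposal is correct and takes essentially the same route as the paper: the paper's entire proof is the observation that one need only check each elementary tangle, citing Corollary \ref{br-cat} for crossings, Section \ref{sec:ribbon} for the ribbon normalization, and Proposition \ref{qt-cat} for cups and caps, exactly as you do. Your additional remarks on the specialization to closed links and on why projection-independence is not needed here are accurate but go beyond what the paper spells out.
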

\begin{proof}
  We need only check this for each elementary tangle, which was done in  Corollary \ref{br-cat}, Section \ref{sec:ribbon} and Proposition \ref{qt-cat}.
\end{proof}
\begin{thm}
Consider a link $L$.  The cohomology of $\Phi(L)(\K)$ is finite-dimensional in each homological degree, and each graded degree is a complex with finite dimensional total cohomology.  In particular the bigraded Poincar\'e series 
\[\varphi(L)(q,t)=\sum_{i}(-t)^{-i}\dim_qH^i(\Phi(L)(\K))\]  is a well-defined element of $\Z[\qD,q^{\nicefrac {-1}D}]((t))$.
\end{thm}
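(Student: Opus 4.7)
My plan is to deduce everything from the single claim that $\Phi(L)(\K)$ lies in $\cat^\emptyset=\cat_{\nicefrac{1}{D}}^\emptyset$. Recall that an object of $\cat^\emptyset$ is a complex $C^\bullet$ of \emph{finite-dimensional} graded vector spaces with $C^i_j = 0$ for $i\geq N$ or $i+j\leq M$ (Definition \ref{derived-cat}). The first hypothesis says each $C^i$, and hence each $H^i$, is a finite-dimensional graded vector space, giving finite-dimensionality in each homological degree. For the second hypothesis, fix a graded degree $n$: then $C^i_n=0$ unless $M-n<i<N$, so $C^\bullet_n$ is a bounded complex of finite-dimensional vector spaces and its total cohomology is finite-dimensional. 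Convergence of the series $\varphi(L)(q,t)$ in $\Z[\qD,q^{-\nicefrac{1}{D}}]((t))$ then follows: the inequality $i<N$ bounds below the exponents of $t$ appearing, and for each fixed $t$-exponent $-i$ the polynomial $\dim_q H^i$ lies in $\Z[\qD,q^{-\nicefrac{1}{D}}]$ by the finiteness of $H^i$ as a graded vector space.

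To establish $\Phi(L)(\K)\in\cat^\emptyset$, I would proceed by induction on the number of elementary tangles in a Morse presentation of $L$, starting with $\K\in\cat^\emptyset$, and show that each elementary functor $F$ sends $\cat^\bla$ to $\cat^{\bla'}$ for the appropriate sequences. For braidings $\mathbb{B}_\si$ and $\mathbb{B}_{\si^{-1}}$, this is Theorem \ref{braid-act}. For the coevaluation $\mathbb{K}^{\bla^+}_\bla$ and cotrace $\mathbb{C}^{\bla^+}_\bla$, the functor is (derived) tensor with the bimodule $\coe^{\bla^+}_\bla$; using the basis $B''$ from Section \ref{sec:co-evaluation}, one verifies that $\coe^{\bla^+}_\bla$ is projective as a right $T^{\bla^+}$-module (so the functor is exact on projective resolutions) and bounded below in each graded degree as a left $T^\bla$-module, so applying it to a complex in $C^\uparrow(\cata^\bla)$ of finite-dimensional projectives yields a complex in $C^\uparrow(\cata^{\bla^+})$ of the same shape.

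For the quantum trace $\mathbb{T}^{\bla^+}_\bla$ and evaluation $\mathbb{E}^{\bla^+}_\bla$, the key input is that $\Lco$ (and more generally the simple module associated to the cap) admits a \emph{finite-length} projective resolution: Proposition \ref{sta-res} gives a resolution of length $2\rcl$ by direct sums of standards, and by Corollary \ref{standard-finite-length} each standard has a finite-length projective resolution, so these glue (Lemma \ref{glue-res}) to a finite-length projective resolution of $\Lco$. Consequently $-\otimes^L\dot L_\la$ and $\RHom(\Lco,-)$ have bounded cohomological amplitude and send $\cat^{\bla^+}$ to $\cat^\bla$, after accounting for the finite grading and homological shifts in the definition of $\mathbb{T},\mathbb{E}$.

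The main obstacle I anticipate is the technical verification of the preservation property for the cup bimodule $\coe^{\bla^+}_\bla$. Since this bimodule can be infinite-dimensional in total, one must argue carefully using the basis $B''$ and the schematic \eqref{eq:b-b} that, for each idempotent $e(\Bi,\kappa)$, the left module $e(\Bi,\kappa)\coe^{\bla^+}_\bla$ is a finitely generated projective $T^\bla$-module (or has a resolution by such in $C^\uparrow$). The cleanest route is via Lemma \ref{trace-standard}: since $\mathbb{K}^{\bla^+}_\bla\circ\mathbb{S}^\bla\cong\mathbb{S}^{\bla^+}_+\circ\mathbb{K}^{\bla^+}_\bla$ and every object in $\cat^\bla$ has a resolution whose terms are standardizations of finite-dimensional modules (by the standard stratification of Corollary \ref{SS} together with Corollary \ref{standard-finite-length}), one reduces the preservation claim to a controlled finite-dimensional computation in one tensor factor, where it follows directly.
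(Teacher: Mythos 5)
Your proposal is correct and follows essentially the same route as the paper: the paper's proof simply identifies $\cat_{\nicefrac{1}{D}}^\emptyset$ as the category of complexes of finite-dimensional graded vector spaces satisfying the $C^{\uparrow}$ conditions, observes that $\Phi(L)(\K)$ lands there because each elementary functor preserves the categories $\cat^\bla$ (facts already established in Sections \ref{sec:braid-rigid}--\ref{sec:rigid}, in particular via Theorem \ref{triangular-resolution} for the trace/evaluation functors), and reads off the finiteness claims. Your write-up just makes explicit the preservation checks that the paper treats as already done.
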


\begin{proof}
We note that the category $\cat_{\nicefrac{1}{D}}^\emptyset$ is the category of complexes of graded finite dimensional vector spaces $$\cdots\longleftarrow M^{i+1}\longleftarrow M^i\longleftarrow M^{i-1}\longleftarrow\cdots$$ such that $M^i=0$ for $i\gg 0$ and for some $k$, the vector space $M^i$ is concentrated in degrees above $k-i$.  Thus, $\Phi(L)(\K)$ lies in this category.  In particular, each homological degree and each graded degree of $\Phi(L)(\K)$ is finite-dimensional.  
\end{proof}

The only case where the invariant is known to be finite dimensional is when the representations $\bla$ are {minuscule}; recall that a weight $\mu$ is called {\bf minuscule} if every weight with a non-zero weight space in $V_\mu$ is in the Weyl group orbit of $\mu$. 

\begin{prop}\label{fin-dim}
If all $\la_i$ are minuscule, then the cohomology of $\Phi(T)(\K)$ is finite-dimensional.
\end{prop}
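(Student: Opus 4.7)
The plan is to exploit the fact (stated already in the introduction and in Corollary \ref{SS} / Proposition \ref{standard-filtration}) that when every $\la_i$ is minuscule, the standardly stratified category $\cata^\bla$ is in fact quasi-hereditary: each standard $S^\kappa_\Bi$ has trivial ``weight-space algebra'' $\alg^{\la_1}_{\mu_1}\otimes\cdots\otimes \alg^{\la_\ell}_{\mu_\ell}\cong \K$, since every non-zero weight space of a minuscule representation is one-dimensional and $\alg^{\la_i}_{\mu_i}$ is Morita equivalent to the cohomology of a single point. From this I would deduce that $T^\bla$ has finite global dimension: each standard has a finite projective resolution by Corollary \ref{standard-finite-length}, and each simple has a finite filtration by standards (being the head of a standard whose kernel is filtered by strictly higher standards, inducting on the poset of root functions). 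Splicing these two finite-length resolutions together gives each simple a finite projective resolution, so $\mathrm{gl.dim}(T^\bla)<\infty$.

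With finite global dimension in hand, I would argue that each of the elementary building-block functors $\mathbb{B}_{\si^{\pm 1}},\mathbb{K},\mathbb{T},\mathbb{C},\mathbb{E}$ preserves the subcategory $D^b(\cata^\bla_{fd})$ of bounded complexes with finite-dimensional total cohomology. For the braiding this is Theorem \ref{braid-act} together with Lemma \ref{pro-sta} (projectives go to objects with finite standard filtrations, which are finite-dimensional in the minuscule case). For $\mathbb{K}^{\bla^+}_\bla$ and $\mathbb{E}_{\la^*,\la}^\emptyset$, which are given by tensoring with the bimodules $\coe^{\bla^+}_\bla$ and $\dot L_{\la^*}$ that are projective on one side, the functor is exact (in the case of $\mathbb{K}$) or has finite Tor dimension by the finite global dimension, so finite-dimensional input yields finite-dimensional output in finitely many homological degrees. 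For $\mathbb{T}^{\bla^+}_\bla$ and $\mathbb{C}^{\la^*,\la}_\emptyset$, which are the right adjoints given by $\RHom$, finite global dimension guarantees that only finitely many $\Ext^i$'s are nonzero, and each one is finite-dimensional on finite-dimensional input.

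Starting from $\K \in \cat^\emptyset_{\nicefrac{1}{D}}$, which is a one-term complex of a one-dimensional module, and composing the functors in the order dictated by a chosen projection of $T$, we therefore remain in $D^b(\cata^\bla_{fd})$ throughout. The final output $\Phi(T)(\K)$ is thus quasi-isomorphic to a bounded complex of finite-dimensional modules, and in particular its cohomology is finite-dimensional.

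The main obstacle I anticipate is making precise the claim that the derived functors preserve the bounded-with-finite-dimensional-cohomology subcategory; formally this amounts to checking that the bimodules $\coe^{\bla^+}_\bla$ and $\bra_w$ (and their duals) have finite Tor and Ext dimension over $T^\bla$, for which the cleanest route is to reduce via the standard filtrations of Lemma \ref{pro-sta} and Lemma \ref{trace-standard} to standardization of finite-dimensional modules over the Levi factors $\alg^{\la_i}$, where the minuscule hypothesis collapses everything to $\K$. Everything else is then formal homological algebra.
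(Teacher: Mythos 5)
Your proposal is correct and follows essentially the same route as the paper: minuscule weights force the standardly stratified structure to be quasi-hereditary (the preorder becomes a partial order and every standard has endomorphism ring $\K$), hence $T^\bla$ has finite global dimension, and therefore $\RHom$ or $\Lotimes$ with a finite-dimensional (bi)module preserves finite-length complexes. One small slip worth correcting: the kernel of the surjection from a standard onto its simple head is not filtered by strictly higher standards (that describes the kernel of projective $\to$ standard); rather it has composition factors indexed by strictly lower elements of the poset, which is what makes the induction establishing finite global dimension go through.
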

\begin{proof}
If all $\la_i$ are minuscule, then the preorder on standard modules is
a true partial order, since there are never two standard modules with
the same weight in each component.  Furthermore, since every weight
space of the categorification of a minuscule is equivalent to the
category of vector spaces, $\End(S)\cong \K$ for any indecomposable
standard. 

These properties show that $T^\bla\modu$ is a highest weight
category.  Any highest weight category with finitely many simples has
finite homological dimension (in fact, the homological dimension is no
more than twice the number of simple objects).  

 Thus, in this case, the functor given by $\RHom$ or $\overset{L}\otimes$ with a finite dimensional module preserves being quasi-isomorphic to a finite length complex.
\end{proof}

\subsection{The unknot for \texorpdfstring{$\fg=\mathfrak{sl}_2$}{g=sl_2}}
Unfortunately,  the cohomology of the complex $\Phi(T)(\K)$ is not always finite-dimensional.  This can be seen in examples as simple as the unknot $U$ for $\fg=\mathfrak{sl}_2$ and label $2$.   

In this case, the module $L_2$ has a standard resolution of the form 
\[0\longrightarrow S^{(0,0)}_{1^{(2)}}(-2)\longrightarrow
S^{(0,1)}_{1,1}/(y_1+y_2)(-1)\longrightarrow
S^{(0,2)}_{1^{(2)}}\longrightarrow L_{2}\longrightarrow 0.\]

We let $A=\End_{\cat^{2,2}}(S^{(0,1)}_{1,1},S^{(0,1)}_{1,1})\cong \K[y_1,y_2]/(y_1^{2},y_2^2)$; the middle piece of the semi-orthogonal decomposition is equivalent to representations of this algebra.

Taking $\otimes$ of this resolution with its dual, we observe that all $\Tor$'s vanish between terms that do not lie in the same piece of the semi-orthogonal decomposition, so \begin{multline*} \Tor^\bullet(\Lco,\Lco)=\Tor^\bullet(S^{(0,2)}_{1^{(2)}},(S^{(0,2)}_{1^{(2)}})^\star)\\ \oplus \Tor^\bullet(S^{(0,1)}_{1,1}/(y_1+y_2),(S^{(0,1)}_{1,1}/(y_1+y_2))^\star)[2](-2)\oplus \Tor^\bullet(S^{(0,2)}_{1^{(2)}},(S^{(0,2)}_{1^{(2)}})^\star)[4](-4) \\
\cong \K\oplus\Tor^\bullet_{A}(A/(y_1+y_2),A/(y_1+y_2))[2](-2)\oplus  \K[4](-4)\\
%\cong \K\oplus  \K[-4]\oplus\left(\bigoplus_{i=0}^\infty \K[-i-2](i+2)\oplus \K[-i-2](i)\right)
\end{multline*}
The module $A/(y_1+y_2)A$ has a minimal projective resolution given by \[\cdots\overset{y_1+y_2}\longrightarrow A(-4)\overset{y_1-y_2}\longrightarrow A(-2)\overset{y_1+y_2}\longrightarrow  A\longrightarrow A/(y_1+y_2)A\longrightarrow 0.\]
After taking $\otimes$, this
becomes \[\cdots\overset{0}\longrightarrow A/(y_1+y_2)(-4)\overset{y_1-y_2}\longrightarrow A/(y_1+y_2)(-2)  \overset{0}\longrightarrow A/(y_1+y_2)\overset{\sim}\longrightarrow A/(y_1+y_2)\longrightarrow 0.\]

Thus, we have that \[\Tor^i_{A}(A/(y_1+y_2),A/(y_1+y_2))\cong \begin{cases}A/(y_1+y_2) & i=0\\
\K(-2i) & i>0, \text{ odd}\\
\K(-2i-2) & i>0, \text{ even}\end{cases}\]

Thus, we have that 
\begin{prop}
\(\displaystyle \vp(U)=q^{-2}t^2+1+q^2t^{-2}+\frac{q^{-2}-q^{-2}t}{1-t^{2}q^{-4}}\).
\end{prop}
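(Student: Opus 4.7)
The plan is to read off $\vp(U)$ directly from the decomposition of $\Tor^\bullet(\Lco,\Lco)$ established in the preceding discussion. First, I would interpret the unknot with label $\la=2\omega$ as the composition $\mathbb{T}^\emptyset_{\la,\la^*}\circ\mathbb{K}^{\la,\la^*}_\emptyset$ of a coevaluation and a quantum trace; using the self-duality of $\Lco$ (Proposition \ref{prop-simple-self-dual}) together with the definitions of these functors in Propositions \ref{coev} and \ref{trace}, this gives
\[\Phi(U)(\K)\;\cong\;(\Lco\overset{L}\otimes_{\alg^{(\la,\la^*)}}\dot\Lco)(2\llrr)[-2\rcl],\]
which for $\la=2\omega$ simplifies to $(\Lco\otimes^L\dot\Lco)(2)[-2]$ since both $\llrr$ and $\rcl$ equal $1$. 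Thus $\vp(U)$ is the Poincar\'e series of $\Tor^\bullet(\Lco,\Lco)$ multiplied by the factor corresponding to the shift $(2)[-2]$.

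Next, I would substitute the three-summand decomposition of $\Tor^\bullet(\Lco,\Lco)$ just displayed. This decomposition relies on the three-term standard resolution of $\Lco$ coming from Proposition \ref{sta-res} applied to one tensor factor, combined with the vanishing of Tor between objects lying in distinct strata of the semi-orthogonal decomposition from Proposition \ref{semi-orthogonal}, so that the associated double complex reduces to three diagonal contributions. The outer summands $\K$ and $\K[4](-4)$ come from the outer terms $S^{(0,2)}_{1^{(2)}}$ and $S^{(0,0)}_{1^{(2)}}(-2)$ of the resolution, each of which is projective within its own stratum and therefore contributes only $\Tor^0$; after applying the outer shift $(2)[-2]$ these produce the terms $q^{-2}t^2$ and $q^2t^{-2}$.

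For the middle summand, the task reduces to computing $\Tor^\bullet_A(A/(y_1+y_2),A/(y_1+y_2))$ for $A=\K[y_1,y_2]/(y_1^2,y_2^2)$. I would use the minimal $2$-periodic resolution of $A/(y_1+y_2)$ whose alternating differentials are multiplication by $y_1+y_2$ and $y_1-y_2$, each shifting internal degree by $2$. Upon tensoring, the differentials $y_1+y_2$ become zero while $y_1-y_2=2y_1$ kills one of the two basis vectors of each copy of $A/(y_1+y_2)$; the result is $\Tor^0=A/(y_1+y_2)$ together with a one-dimensional $\Tor^i$ for each $i\geq 1$, organized $2$-periodically with the period corresponding to the grading shift $q^{-4}$. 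Assembling this into a Poincar\'e series produces a geometric sum of common ratio $t^2q^{-4}$, and the constant term $1$ together with the numerator $q^{-2}-q^{-2}t$ follow by combining the $\Tor^0$ and $\Tor^1$ contributions with the middle summand's shift $[2](-2)$ and the overall shift $(2)[-2]$.

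The main obstacle is purely bookkeeping: one must carefully track the grading shifts $(2),(-2),(-4)$ and cohomological shifts $[-2],[2],[4]$ across the three summands and the outer factor, along with the signs $(-t)^{-i}$ in the definition of $\vp$. A good consistency check is that specialising to $t=1$ should yield the quantum dimension $[3]_q=q^{-2}+1+q^2$ of $V_{2\omega}$; indeed the numerator $q^{-2}-q^{-2}t$ vanishes at $t=1$, so the geometric series contributes nothing and only the three outer terms survive, confirming the formula.
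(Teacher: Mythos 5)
Your argument is correct and is essentially the paper's own computation: decompose $\Tor^\bullet(\Lco,\Lco)$ into three summands using the three-term standard resolution of $\Lco$ together with the vanishing of $\Tor$ between distinct strata of the semi-orthogonal decomposition, compute $\Tor^\bullet_A(A/(y_1+y_2),A/(y_1+y_2))$ from the $2$-periodic minimal resolution with alternating differentials $y_1\pm y_2$, and then apply the overall shift $(2)[-2]$ coming from the coevaluation. The only quibbles are cosmetic: the summand $\K$ contributes $q^{2}t^{-2}$ while $\K[4](-4)$ contributes $q^{-2}t^{2}$ (you list the correspondence transposed), and in the $t=1$ check the surviving constant $1$ comes from the middle stratum's $\Tor^0$ rather than from an ``outer'' term.
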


It is easy to see that the Euler characteristic is
$q^{-2}+1+q^2=[3]_q$, the quantum dimension of $V_2$.  As this example
shows, infinite-dimensionality of invariants is extremely typical
behavior, and quite subtle.  This same phenomenon of infinite dimensional vector spaces categorifying integers has also appeared in the work of Frenkel, Sussan and Stroppel \cite{FSS}, and in fact, their work could be translated into the language of this paper using the equivalences of Section~\ref{sec:type-A}; it would be quite interesting to work out this correspondence in detail.

\begin{conj}
The invariant $\Phi(L)$ for a link $L$ is only finite-dimensional if all components of $L$ are labeled with minuscule representations. 
\end{conj}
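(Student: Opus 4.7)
I would prove the contrapositive: if some component of $L$ is labeled by a non-minuscule weight $\la$, then $\Phi(L)(\K)$ has infinite-dimensional total cohomology. The central case is the framed unknot $U_\la$, where the $\mathfrak{sl}_2$ computation in the excerpt generalizes. The strategy is first to settle the unknot, and then reduce the general case to it by ``cutting open'' the offending component and tracking where the infinite cohomology lives under the resulting trace.

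\textbf{Step 1: the unknot.} By unpacking the definitions of $\mathbb{K}^{\la,\la^*}_{\emptyset}$ and $\mathbb{T}_{\la,\la^*}^{\emptyset}$, one obtains
\[
\Phi(U_\la)(\K)\;\cong\;\RHom_{\alg^{(\la,\la^*)}}\!\bigl(\dot L_{\la^*},\, L_\la\bigr)\,(\text{shifts}),
\]
so the question becomes whether $\Ext^{\bullet}(\dot L_{\la^*}, L_\la)$ is finite-dimensional in $\cata^{(\la,\la^*)}$. When $\la$ is minuscule, $\cata^{(\la,\la^*)}$ is quasi-hereditary (Corollary \ref{SS} upgrades to a highest weight structure, as in Proposition \ref{fin-dim}), hence of finite global dimension, so finiteness holds. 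When $\la$ is \emph{not} minuscule, there is some weight $\mu$ of $V_\la$ not in the Weyl orbit of $\la$; the corresponding piece of the semi-orthogonal decomposition of Proposition~\ref{semi-orthogonal} has endomorphism ring containing a non-trivial block of a KLR algebra tensored with cyclotomic quotients at $\mu$, and these blocks have infinite global dimension (they are positively graded, non-semisimple, and Frobenius in the $(\la)$ factor by Theorem \ref{Frobenius}, so can never have finite global dimension unless semisimple). Computing $\Ext^\bullet(\dot L_{\la^*}, L_\la)$ by the resolution of Proposition \ref{sta-res} and its dual, the contribution from this piece is an infinite-dimensional $\Tor$ over the Frobenius algebra $\alg^{(\la)}_\mu$, exactly as in the $\mathfrak{sl}_2$, $\la=2$ example just computed above the conjecture.

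\textbf{Step 2: reduction of a general link to the unknot.} Given $L$ with distinguished non-minuscule component $C$ of label $\la$, cut $C$ open at a generic point to obtain a $(1,1)$-tangle $T$ from $\la$ to $\la$, possibly linked and knotted with the remaining components (whose labels we bundle into a sequence $\bmu$). Using Proposition \ref{S-move} to rewrite the closure, one has
\[
\Phi(L)(\K)\;\cong\;\mathbb{T}_{\la,\la^*}^{\emptyset}\!\Bigl(\Phi(T)\bigl(\mathbb{K}^{\la,\la^*}_{\emptyset}(\K)\bigr)\Bigr)
\;\cong\;\RHom_{\alg^{(\la,\la^*)}}\!\bigl(\dot L_{\la^*},\, \Phi(T)(L_\la)\bigr)\,(\text{shifts}),
\]
where $\Phi(T)$ is a composition of braiding and (co)evaluation functors for the $\bmu$-strands. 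By Theorem \ref{braid-act} and Proposition \ref{trace-equivariance}, these functors are strongly equivariant derived equivalences (or adjoint pairs) commuting with the $\fg$-action and compatible with standardization. In particular, $\Phi(T)(L_\la)$ lies in the weight-$0$ part of $\cat^{(\la,\la^*)}$ but with possibly non-trivial weight structure in the $\bmu$ factors, and one can analyze it through the semi-orthogonal decomposition by root functions $\bal$.

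\textbf{Step 3: persistence of the infinite contribution.} The key point — and the main obstacle — is to show that the infinite-dimensional contribution identified in Step 1 survives after applying $\Phi(T)$ and pairing with $\dot L_{\la^*}$. The plan is to filter $L_\la$ by the resolution of Proposition \ref{sta-res}, yielding an $E_2$ spectral sequence for $\Phi(L)(\K)$ whose entries are of the form
\[
E_2^{p,q}\;=\;\bigoplus_{i+j=q}\,\Tor^{p}\!\bigl(M_i^{\star},\, \Phi(T)(M_j)\bigr),
\]
with $M_i$ the standardly filtered modules from Proposition~\ref{sta-res}. The Tor-vanishing from Proposition \ref{semi-orthogonal} forces only the ``diagonal'' term at the non-minuscule weight $\mu$ to be non-zero for $p\gg 0$; on that diagonal, $\Phi(T)$ acts as a graded auto-equivalence of the subquotient $R_{\mu_0}\otimes \alg^{\la}_\mu\otimes \alg^{\la^*}_{\mu^*}$-modules, so it cannot kill the infinite tower of $\Tor$'s over the Frobenius factor. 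Convergence of the spectral sequence (which is genuine because the filtration is finite away from the problematic weight, by Theorem \ref{triangular-resolution}) then forces infinite-dimensional cohomology of $\Phi(L)(\K)$.

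\textbf{Main obstacle.} The delicate step is the last one: ruling out cancellations between the infinite contributions from different parts of the resolution after $\Phi(T)$ mixes the standard pieces. One needs a clean statement that the degree-zero part of a braiding on the non-minuscule subquotient is a graded Morita auto-equivalence that preserves a suitable positive grading, so that the infinitely many non-zero $\Tor$'s contribute in strictly increasing internal degree and cannot cancel. Establishing this requires a finer understanding of $\mathbb{B}_\si$ at the level of the subquotients of the stratification than is developed in the excerpt, and is likely where the real work of proving the conjecture lies.
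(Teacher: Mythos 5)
This statement is a \emph{conjecture} in the paper; no proof is given there, and the only supporting evidence supplied is the converse direction (Proposition \ref{fin-dim}) together with the explicit $\mathfrak{sl}_2$, $\la=2$ unknot computation. So there is nothing to compare your argument against, and what you have written should be judged as an attack on an open problem. As an outline it is reasonable — the contrapositive, the reduction to a cut-open component, and the use of the stratification are all natural — but it does not close the conjecture, for two concrete reasons.

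First, already in Step 1 the inference from ``the subquotient algebra at a non-extremal weight $\mu$ is non-semisimple Frobenius, hence of infinite global dimension'' to ``$\Ext^\bullet(\dot L_{\la^*},L_\la)$ is infinite-dimensional'' is a gap: infinite global dimension of $\alg^{\la}_\mu\otimes\alg^{\la^*}_{\mu^*}$ does not tell you that the \emph{particular} modules appearing on the $\mu$-diagonal of the resolution of Proposition \ref{sta-res} have non-perfect self-$\Tor$; in the $\mathfrak{sl}_2$ example this was verified by an explicit minimal resolution of $A/(y_1+y_2)$ over $\K[y_1,y_2]/(y_1^2,y_2^2)$, and no analogue is identified for general non-minuscule $\la$. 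Second, and more seriously, Step 3 as stated would fail: $\Phi(T)$ for a general $(1,1)$-tangle is a composition that includes coevaluation and quantum trace functors on the other components, which are not equivalences, and even the braiding functors do not preserve the semi-orthogonal decomposition but mutate it (Proposition \ref{pro:mutate}), so the claim that $\Phi(T)$ ``acts as a graded auto-equivalence of the subquotient'' at $\mu$ is unsubstantiated. The positivity-of-grading argument you propose to rule out cancellation in the spectral sequence relies on a mixed/Koszul structure that the paper only establishes for $\mathfrak{sl}_n$ with fundamental weights (Proposition \ref{sln-Koszul}) — precisely the minuscule situations the conjecture excludes. You have correctly located the obstacle, but the conjecture remains open after your argument.
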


\subsection{Independence of projection}

While Theorem \ref{decat-all} shows the action on the Grothendieck group is independent of the presentation of the tangle, it doesn't establish this for the functor $\Phi(T)$ itself.

\begin{thm}\label{ind}
The functor $\Phi(T)$ does not depend (up to isomorphism) on the projection of $T$.
\end{thm}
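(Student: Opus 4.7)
The plan is to verify that $\Phi(T)$ is invariant under each of the generating moves in Turaev's presentation of the category of oriented framed tangles: namely Reidemeister II and III, the zig-zag (``S-move'') for oppositely oriented cup/cap compositions, the framed Reidemeister I move involving a ribbon twist, and the various ``slide'' moves allowing a strand to be pushed over or under a cup or cap. Once these are checked, any two projections of $T$ are related by a finite sequence of such moves, so $\Phi(T)$ is well-defined up to canonical isomorphism.

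First I would dispose of the cases that follow almost immediately from results already established. The functor isomorphisms for Reidemeister II and III are exactly the content of Corollary~\ref{cor:braid-action}, which provides a strong action of the braid groupoid on $\bigoplus_{w}\cat_\C^{w\cdot\bla}$; the Zamolodchikov coherence ensures that the resulting isomorphisms satisfy the required compatibilities. The two zig-zag moves (for each orientation) are handled by Proposition~\ref{S-move}, where we showed that $S_1$ and $S_2$ are isomorphic to the identity. The framed Reidemeister I move reduces to the defining identification $\mathbb{B}_\tau^2\cong\mathbb{R}_1^{-1}\mathbb{R}_2^{-1}$ established in Proposition~\ref{Lco-bra}, combined with the zig-zag, which is precisely the diagrammatic identity depicted in Figure~\ref{Lco-inv}. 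The strong equivariance of $\mathbb{B}_w$ with respect to $\tU$ (Proposition~\ref{bra-commute}), and of $\mathbb{K}^{\bla^+}_{\bla}$, $\mathbb{T}^{\bla^+}_{\bla}$ (Proposition~\ref{trace-equivariance}), lets one slide any instance of $\fE_i,\fF_i$ past the structure functors, which already covers the ``easy'' slide moves where a strand passes adjacent to but not over/under a cup or cap.

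The main obstacle is the family of compound slide moves that allow a strand to be pulled across the maximum of a cup or minimum of a cap, since these simultaneously involve a braiding and a (co)evaluation and translate, on Grothendieck groups, into the hexagon axioms for the $R$-matrix. Concretely, one must show that the composition of braiding a strand over the cup created by $\mathbb{K}$ equals the identity on $\mathbb{K}$ (up to the appropriate framing correction), and symmetrically for $\mathbb{T}$, $\mathbb{E}$, $\mathbb{C}$. The approach here is to reduce to the case of a single strand being slid across, using the strong braid action and naturality to localize to a neighborhood of the cup/cap. In this local model the claim becomes the statement that the $R$-matrix bimodule $\bra_{\si}$ is compatible with the bimodule $\coe^{\bla^+}_\bla$ defining coevaluation, which can be analyzed using Lemma~\ref{trace-standard} to reduce to a calculation on standardizations, where the tensor factors decouple and the identity reduces to the hexagon-type relation already verified at the level of the highest weight vector in Lemma~\ref{lem:pull-off} (which categorified $F_iv\otimes K=F_i(v\otimes K)$). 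The remaining compatibilities between the ribbon structure and the braiding on mixed-orientation strands follow from the definition of $\mathbb{R}_i$ and the naturality of all functors involved.

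Finally, having verified each generating move as an isomorphism of functors, one must check that the coherence diagrams among these isomorphisms commute, i.e.\ that two sequences of moves relating the same pair of projections induce the same natural isomorphism. As in Corollary~\ref{cor:braid-action}, this coherence follows formally once we observe that each generating isomorphism is defined in terms of canonical bimodule maps, and the relations hold already at the level of Stendhal and $w$-Stendhal diagrams before passing to derived tensor product. The upshot is that $T\mapsto \Phi(T)$ descends to a functor from the category of oriented framed tangles to the 2-category of triangulated categories, proving the claimed independence.
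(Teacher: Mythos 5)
Your overall strategy coincides with the paper's: reduce to a generating set of tangle moves, observe that Reidemeister I, II, III and the S-move are already established (Proposition \ref{Lco-bra}, Theorem \ref{braid-act}/Corollary \ref{cor:braid-action}, Proposition \ref{S-move}), and isolate the remaining ``slide a strand across a cup/cap'' move as the real content. Your reductions for that move are also on the right track: using Lemma \ref{trace-standard} to strip away spectator red strands and strong equivariance (Propositions \ref{bra-commute}, \ref{trace-equivariance}) to reduce to the value on $P_\emptyset$ is exactly how the paper proceeds, and the paper then derives the $\chi$-move and the rest of Ohtsuki's list from this one ``pitchfork'' move plus the S-move.

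The gap is in the final local verification. You attribute it to Lemma \ref{lem:pull-off}, but that lemma (and the relations \eqref{max1}--\eqref{max2} it makes coherent) concerns a \emph{black} strand --- a Chevalley generator --- being pulled across the maximum of the red cup; it is part of establishing that the coevaluation bimodule is well defined and $\tU$-equivariant. The pitchfork move instead requires the analogous compatibility for a \emph{red} strand, i.e.\ another tensor factor $V_\mu$ being braided over versus under the cup, and this does not follow from $\tU$-equivariance. The paper supplies it as a separate statement (Lemma \ref{pitch}): after composing with $\mathbb{B}_{\si_1}$ one must exhibit an isomorphism $\mathbb{S}^{\mu,\la+\la^*}(P_\emptyset\boxtimes -)\cong \mathbb{B}_{\si_1}\mathbb{B}_{\si_2}\mathbb{S}^{\la+\la^*,\mu}(-\boxtimes P_\emptyset)$, which is done by writing down an explicit degree-zero $w$-Stendhal diagram (in the spirit of Proposition \ref{sta-braid}) and checking it is bijective, the key point being that the would-be kernel consists of standardly violating strands that slide down to become violating. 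Your proposal never produces this map or its bijectivity, so as written the central move of the theorem is asserted rather than proved; with Lemma \ref{pitch} (or an equivalent computation) inserted at that point, the rest of your argument goes through.
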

\begin{proof}
  We have already proved the ribbon Reidemeister moves in at least one
  position: RI in Proposition \ref{Lco-bra} and RII and RIII as part of Theorem
  \ref{braid-act}, and also the ``S-move'' shown in Figure~\ref{Smove} in Proposition
  \ref{S-move}. There is only one move of importance left for us
  to establish: the pitchfork move, shown in Figure~\ref{pitch-pic}.
  
  Once we have established this move, we can easily show the others
  which are necessary.  The illustrative example of the 
  ``$\chi$-move'' follows from the pitchfork and S-move, shown in
  Figure \ref{chimove}. The other moves in the list
  of Ohtsuki \cite[Theorem 3.3]{Oht} follow in  the same way.

   \begin{figure}
     \centering
 \tikzset{knot/.style={draw=white,double=red,line width=3.5pt, double
     distance=1.2pt}}
 \begin{tikzpicture}[very thick,knot,xscale=1.3]
 \node (a) at (-7,0){\begin{tikzpicture}[xscale=.8]
 \draw[knot,postaction={decorate,decoration={markings,
    mark=at position .3 with {\arrow[red,scale=.4]{>}}}}] (-2,1)
to[out=-45,in=180] (-1,.1) to[out=0,in=180] (0,.9) to[out=0,in=135]
(1,0);
\draw[knot,postaction={decorate,decoration={markings,
    mark=at position .7 with {\arrow[red,scale=.4]{<}}}}] (-1,1) --(0,0);
  \end{tikzpicture}};
\node (b) at (-3.8,0){\begin{tikzpicture}[xscale=.8]
 \draw[knot,postaction={decorate,decoration={markings,
    mark=at position .3 with {\arrow[red,scale=.4]{>}}}}] (-2,1)
to[out=-45,in=180] (-1,.1) to[out=0,in=180] (0,.9) to[out=0,in=135]
(1,0);
\draw[knot,postaction={decorate,decoration={markings,
    mark=at position .7 with {\arrow[red,scale=.4]{<}}}}] (1,1) --(0,0);
\end{tikzpicture}};
\node (c) at (-.7,0){\begin{tikzpicture}[xscale=.8]
\draw[knot,postaction={decorate,decoration={markings,
    mark=at position .25 with {\arrow[red,scale=.4]{<}}}}] (1,1)
--(-.3,-.3);
\draw[knot,postaction={decorate,decoration={markings,
    mark=at position .75 with {\arrow[red,scale=.4]{>}}}}] (-.3,1) --(1,-.3);
\end{tikzpicture}};
\draw[->,black,thick] (a) -- (b);\draw[->,black,thick] (b) -- (c);
 \end{tikzpicture}
     \caption{The ``$\chi$-move''}
     \label{chimove}
  \end{figure}
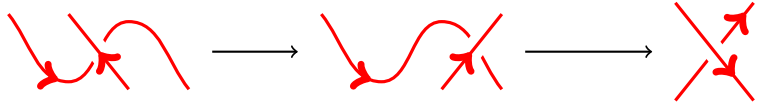

  So, let us turn to the pitchfork.  We may
  assume that the pictured red strands are the only ones using Lemma
  \ref{trace-standard} as in earlier proofs.  We must
  prove that this move holds for all reflections and orientations.
  The vertical reflection of the version shown in Figure \ref{pitch-pic} follows from that
  illustrated by adjunction.  We may assume that the cup is clockwise
  oriented, since the counter clockwise move can be derived from that
  one using Reidemeister moves II and III.  The orientation of the
  ``middle tine'' is irrelevant, so we will ignore it.  Thus, we have
  reduced to the case of Figure \ref{pitch-pic} and its reflection
  ``through the page.''

For the orientation shown in Figure \ref{pitch-pic}, we need only show
this move holds for $P_\emptyset$ again, since we again have
equivariance for the $\tU$ action by Lemma \ref{trace-equivariance}.

\begin{figure}[ht]
\begin{tikzpicture}[very thick, red, scale=-1]
\draw[postaction={decorate,decoration={markings,
    mark=at position .3 with {\arrow[red,scale=1.3]{>}}}}] (-.8,-1)
to[out=90,in=-135] node[pos=.55,inner sep=3pt,fill=white,circle]{}
node[at start,above]{$\mu$} node[at end,below]{$\mu$}  (0,1);
\draw[postaction={decorate,decoration={markings,
    mark=at position .7 with {\arrow[red,scale=1.3]{>}}}}] (-1,1)  to[out=-90,in=180]   node[at start,below]{$\la$}(0,0) to[out=0,in=-90]  node[at end,below]{$\la$} (1,1);
\node[black] at (1.5,.2){=};

\draw[postaction={decorate,decoration={markings,
    mark=at position .3 with {\arrow[red,scale=1.3]{>}}}}] (3.8,-1)
to[out=90,in=-45] node[pos=.55,inner sep=3pt,fill=white,circle]{}
node[at start,above]{$\mu$} node[at end,below]{$\mu$} (3,1);
\draw[postaction={decorate,decoration={markings,
    mark=at position .3 with {\arrow[red,scale=1.3]{>}}}}] (2,1) to[out=-90,in=180]  node[at start,below]{$\la$} (3,0) to[out=0,in=-90] node[at end,below]{$\la$}  (4,1);
\end{tikzpicture}
\caption{The ``pitchfork'' move}
\label{pitch-pic}
\end{figure}
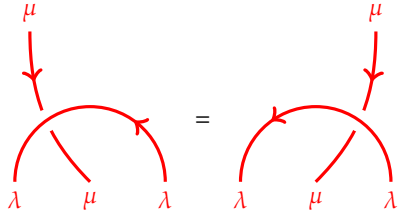

We have two functors $\cat^{\mu}_{{\nicefrac{1}{D}}}\to \cat^{\la,\mu,\la^*}_{{\nicefrac{1}{D}}}$ given by
\begin{equation*}
\Pi_1=\mathbb{B}_{\si_1^{-1}}\circ \mathbb{S}^{\mu,\la+\la^*}(P_\emptyset\boxtimes -)\hspace{1in}\Pi_2=\mathbb{B}_{\si_2}\circ \mathbb{S}^{\la+\la^*,\mu}(-\boxtimes P_\emptyset).
\end{equation*}

\begin{lemma}\label{pitch}
The functors $\Pi_1$ and $\Pi_2$ coincide.
\end{lemma}
\begin{proof}
  First, we multiply both sides by $\mathbb{B}_{\si_1}$, so we must show
  that we have isomorphisms of functors $$
  \mathbb{S}^{\mu,\la+\la^*}(P_\emptyset\boxtimes
  -)\cong\mathbb{B}_{\si_1}\circ
  \mathbb{B}_{\si_2}\circ\mathbb{S}^{\la+\la^*,\mu}(-\boxtimes
  P_\emptyset).$$ We need only exhibit a natural transformation and
  show it is an isomorphism when applied to projectives.

The isomorphism is given by the diagram
\[  \begin{tikzpicture}[very thick,xscale=3.5,yscale=2]
  \draw[wei] (-1,.5) to[out=-90,in=90] (-.8,-.5);
  \draw (-.8,.5) to[out=-90,in=90] (-.6,-.5);
  \node at (-.5,.25) {$\cdots$};  \node at (-.3,-.25) {$\cdots$};
  \draw (-.2,.5) to[out=-90,in=90] (0,-.5);
  \draw[wei] (0,.5) to[out=-90,in=90] (.2,-.5);
  \draw (.2,.5) to[out=-90,in=90] (.4,-.5);
  \node at (.5,.25) {$\cdots$};  \node at (.7,-.25) {$\cdots$};
  \draw (.8,.5) to[out=-90,in=90] (1,-.5);
  \draw[wei] (1,.5) to[out=-90,in=90] (-1,-.5);
  \end{tikzpicture},\] and is essentially the same as that of Proposition \ref{sta-braid}.  We note
that this element has degree zero because we are assuming that the
roots on the black strands add to $\la+\la^*$.  Any diagram in the module $\mathbb{B}_{\si_1}\mathbb{B}_{\si_2}\mathbb{S}^{\la+\la^*,\mu}(P_{\Bi}^\kappa\boxtimes P_\emptyset)$  can be prefixed by this element, so the map is surjective.  Any element which is sent to 0 by adjoining this diagram is easily seen to be 0, since the standardly violating strand can be slid downward to become a violating strand, so the map is also injective.  \end{proof}

The pitchfork move shown in Figure \ref{pitch-pic} follows from this
lemma, since two sides of the depicted move are $$-\otimes_{T} \Pi_1\Lco(2\llrr)[-2\rcl]\quad\text{ and }\quad-\otimes_T\Pi_2\Lco(2\llrr)[-2\rcl].$$ The only variation remaining to check is the case where
the move is reflected through the page (i.e. with the signs of the
crossings given reversed), but this follows from the lemma as well
since the two sides are \begin{equation*}-\otimes_T(\Pi_1\Lco)^\star(2\llrr)[-2\rcl]\quad\text{ and }\quad-\otimes_T(\Pi_2\Lco)^\star(2\llrr)[-2\rcl].\qedhere\end{equation*}
\end{proof}

Some care must be exercised with the normalization of these
invariants, since as we noted in Section \ref{sec:ribbon}, they are
the Reshetikhin-Turaev invariants for a slightly different ribbon
element from the usual choice.  However, the difference is easily
understood.  Let $L$ be a link drawn in the blackboard framing, and
let $L_i$ be its components, with $L_i$ labeled with $\la_i$.  Recall
that the {\bf writhe} $\wr(K)$ of a oriented ribbon knot is the
linking number of the two edges of the ribbon; this can be calculated
by drawing the link the blackboard framing and taking the difference
between the number of positive and negative crossings.  Here we give a
slight extension of the proposition of Snyder and Tingley relating the
invariants for different framings \cite[Theorem 5.21]{STtwist}:
\begin{prop}\label{schur-indicate}
  The  invariants attached to $L$ by the standard and Snyder-Tingley ribbon elements differ by the scalar $\prod_{i}(-1)^{2\rho^\vee(\la_i)\cdot(\wr(L_i)-1)}$.
\end{prop}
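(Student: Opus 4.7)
The plan is to deduce this from the relation between the two ribbon elements established in \cite{STtwist}, together with careful tracking of the normalization in the Reshetikhin--Turaev construction. The key input is that the Snyder--Tingley ribbon element $\theta_{ST}$ and the standard ribbon element $\theta_{std}$ share the same universal $R$-matrix and differ only by a grouplike central element $\epsilon \in U_q(\fg)$ with $\epsilon|_{V_\la} = (-1)^{2\rho^\vee(\la)}$, so that $\theta_{ST} = \epsilon \cdot \theta_{std}$.  This comparison is essentially the content of \cite[Theorem~5.21]{STtwist}; the only thing to be extracted is how the two invariants differ when evaluated on the same framed link.

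Since the $R$-matrix is unchanged, the braiding contributions to $\Phi(L)(\K)$ computed under either ribbon structure coincide.  The discrepancy between the two invariants therefore comes from two sources: first, the ribbon twist, which for a framed link with framings $f_i$ contributes a factor of $\epsilon(\la_i)^{f_i} = (-1)^{2\rho^\vee(\la_i)f_i}$ per component; and second, the distinguished pivotal element $g = u \theta^{-1}$ defining the quantum trace, which also changes by $\epsilon$, contributing an additional factor $\epsilon(\la_i) = (-1)^{2\rho^\vee(\la_i)}$ per component (as each component closes up exactly once).  Combining these, the overall ratio is $\prod_i \epsilon(\la_i)^{f_i+1}$, and using $\epsilon(\la_i)^2 = 1$ this may be rewritten as $\prod_i \epsilon(\la_i)^{f_i-1}$.

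Finally, the blackboard framing of the projection is exactly the framing for which $f_i = \wr(L_i)$, so the formula becomes $\prod_i (-1)^{2\rho^\vee(\la_i)(\wr(L_i)-1)}$, as claimed.  A sanity check: for the writhe-zero unknot labeled by $\la$, the ratio is $(-1)^{2\rho^\vee(\la) \cdot (-1)} = (-1)^{2\rho^\vee(\la)}$, matching the elementary observation that $\dim_q^{ST} V_\la = (-1)^{2\rho^\vee(\la)}\dim_q^{std} V_\la$.

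The only nontrivial bookkeeping, and therefore the main obstacle, is making sure that the ``extra'' factor of $\epsilon(\la_i)$ arising from the pivotal element $g$ is indeed contributed precisely once per component and not, say, once per cap or once per Morse-critical point: this is the origin of the shift by $-1$ in the exponent.  This is an intrinsically component-wise feature of the quantum trace, because a closed component forces exactly one use of $g$ regardless of how the projection is drawn; having verified that point, the rest of the proof is the straightforward invocation of \cite[Theorem~5.21]{STtwist}.
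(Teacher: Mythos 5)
Your argument is correct and arrives at the right formula, but it is genuinely different from the proof in the paper. The paper argues by induction on crossing number: it checks the formula for an arbitrarily framed unlink, observes that the ratio of the two invariants depends only on the number of rightward-oriented cups and caps (the ones carrying the pivotal element) and is therefore unchanged by Reidemeister II, III, and crossing changes, notes that a crossing change alters the writhe by $2$ and so preserves the stated sign as well, and then reduces any diagram to an unlink by these moves. You instead compute the ratio directly from the algebraic relation $\theta_{ST}=\epsilon\cdot\theta_{std}$ with $\epsilon|_{V_\la}=(-1)^{2\rho^\vee(\la)}$, tracking where the ribbon and pivotal elements enter the Reshetikhin--Turaev functor. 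Your route is more structural and explains \emph{why} the exponent is $\wr(L_i)-1$ rather than $\wr(L_i)$; the paper's route avoids any careful analysis of the pivotal element by hiding it in the unlink base case.

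One point in your write-up is literally false, though harmless here: it is not true that ``a closed component forces exactly one use of $g$ regardless of how the projection is drawn.'' The net power of $g$ accumulated along a component equals its rotation (Whitney) number, which depends on the diagram and can be any integer of the appropriate parity. What saves you is that $\epsilon^2=1$ and that the rotation number of a component is congruent to $\wr(L_i)+1$ modulo $2$ (both quantities change by $\pm1$ under a Reidemeister I move and are fixed by II and III, and the claim holds for a round circle), so only the parity matters and it is exactly the one you use. You should replace the ``exactly once'' claim with this parity statement; as written, that sentence is the one step a referee would object to.
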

\begin{proof}
The proof is essentially the same as that of  \cite[Theorem 5.21]{STtwist} with a bit more attention paid to the case where the components have different labels.  The proof is an induction on the crossing number of the link.  The formula is correct for any framing of an unlink, which gives the base case of our induction.  

Now note that the ratio between the knot invariants only depends on
the number of rightward oriented cups and caps, so both the ratio
between the invariants for the usual and ST ribbon structures and the
formula given are insensitive to Reidemeister II and III as well as
crossing change (which changes the writhe, but by an even number).
These operations can be used to reduce any link to an unlink
with some framing.  Since we have already considered this case, we are done.
\end{proof}

One of the main reasons for interest in these quantum invariants
of knots is their connection to Chern-Simons theory and invariants of
3-manifolds, so it is natural to ask:
\begin{ques}
  Can these invariants glue into a categorification of the
  Witten-Reshetikhin-Turaev invariants of 3-manifolds?
\end{ques}
\begin{rem}
The most naive ansatz for categorifying Chern-Simons theory, following
the development of Reshetikhin and Turaev \cite{RT91} would associate
\begin{itemize}
\item a category $\cC(\Sigma)$ to each surface $\Sigma$, and
\item an
object in $\cC(\Sigma)$ to each isomorphism of $\Sigma$ with the boundary of
a 3-manifold
\end{itemize}
such that 
\begin{itemize}
\item the invariants $\EuScript{K}$ we have given are the Ext-spaces
of this object for a knot complement with fixed generating set of
$\cC(T^2)$ labeled by the representations of $\fg$, and 
\item the categorification of the WRT
invariant of a Dehn filling is the Ext space of this object with
another associated to the torus filling. 
\end{itemize}
\end{rem}

\subsection{Functoriality}
One of the most remarkable properties of Khovanov homology is its functoriality with respect to cobordisms between knots \cite{Jac04}.  This property is not only theoretically satisfying but also played an important role in Rasmussen's proof of the unknotting number of torus knots \cite{Ras04}. Thus, we certainly hope to find a similar property for our knot homologies.  While we cannot present a complete picture at the moment, there are promising signs, which we explain in this section.  We must restrict ourselves to the case where the weights $\la_i$ are minuscule, since even the basic results we prove here do not hold in general.  We will assume this hypothesis throughout this subsection.

The weakest form of functoriality is putting a Frobenius structure on
the vector space associated to a circle.  This vector space, as we
recall, is $$A_\la=\Ext^\bullet(\Lco,\Lco)[2\rcl](2\llrr).$$  This algebra is naturally bigraded by the homological and internal gradings.
The algebra structure on it is that induced by the Yoneda product.  Recall that $\mathfrak{S}$ denotes the right Serre functor of $\cat_{\nicefrac{1}{D}}^{(\la,\la^*)}$, discussed in Section \ref{sec:serre}.
%\begin{conj}
\label{Frob-conj}
\begin{thm}
  For a minuscule weights $\la$, we have a canonical isomorphism $$\mathfrak{S}\Lco\cong
  \Lco(-4\llrr)[-4\rcl].$$ Thus, the functors $\mathbb{K}$ and
  $\mathbb{T}$ are biadjoint up to shift.

  In particular, $\Ext^{4\llrr}(\Lco,\Lco)\cong \Hom(\Lco,\Lco)^*$,
  and the dual of the unit $$\iota^*\colon\Ext^{4\llrr}(\Lco,\Lco)\to
  \K$$ is a symmetric Frobenius trace on $A_\la$ of degree $-4\llrr$
\end{thm}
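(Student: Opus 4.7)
The plan is to compute $\mathfrak{S}L_\la$ directly from Proposition~\ref{serre}, which decomposes $\mathfrak{S} = \mathfrak{R} \circ \mathfrak{S}'$ into the double twist $\mathfrak{R} = \mathbb{B}_\tau^2$ and an explicit grading shift $\mathfrak{S}'$. For $\ell = 2$, $\tau = s_1$, so $\mathfrak{R} = \mathbb{B}_{\sigma_1}^2$. The module $L_\la$ lies in the zero weight space ($\al = 0$) of $\cat^{\la,\la^*}$, so the formula for $\mathfrak{S}'$ gives $\mathfrak{S}' L_\la = L_\la(\lllr + \langle\la^*,\la^*\rangle) = L_\la(2\lllr)$, using $\langle\la^*,\la^*\rangle = \lllr$. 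I would then apply Proposition~\ref{Lco-bra} twice: first to $L_\la$, producing $L_{\la^*}[-2\rcl](-2\llrr - \lllr)$ in $\cat^{\la^*,\la}$, and then to $L_{\la^*}$ in that category, producing $L_\la[-2\rcl](-2\llrr - \lllr)$ in $\cat^{\la,\la^*}$. The second application has matching shifts because of the symmetries $\rho^\vee(\la^*) = \rcl$, $\langle\la^*,\rho\rangle = \llrr$, and $\langle\la^*,\la^*\rangle = \lllr$, which follow directly from $\la^* = -w_0\la$ and $w_0\rho = -\rho$. Composing gives $\mathbb{B}_{\sigma_1}^2 L_\la \cong L_\la[-4\rcl](-4\llrr - 2\lllr)$, and combining with $\mathfrak{S}'$ the $\lllr$-terms cancel to yield $\mathfrak{S} L_\la \cong L_\la[-4\rcl](-4\llrr)$.

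For the biadjunction of $\mathbb{K}$ and $\mathbb{T}$ up to shift: one direction, $\mathbb{T} \dashv \mathbb{K}$, is built into the definitions, since $\mathbb{K}$ is an $\RHom$ and $\mathbb{T}$ is a tensor product (the overall $(2\llrr)[-2\rcl]$ in the definition of $\mathbb{K}$ simply becomes a bidegree shift in the adjunction). For the reverse direction, I would invoke the standard Serre-theoretic identity that in a triangulated category with a Serre functor the right adjoint of an exact functor equals its left adjoint composed with Serre functors on both sides; applied to $\mathbb{K}$, this gives $\mathbb{K}^R \cong \mathfrak{S}_{\cat^\emptyset}^{-1} \circ \mathbb{T} \circ \mathfrak{S}_{\cat^{\la,\la^*}}$. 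Serre on $\cat^\emptyset$ is trivial (it is the derived category of vector spaces), and by the first part of the theorem the Serre functor acts on $L_\la$ — and hence on the essential image of $\mathbb{K}$, which is generated by $L_\la$ — as the pure shift $[-4\rcl](-4\llrr)$. So $\mathbb{K}^R$ coincides with $\mathbb{T}$ up to an explicit shift.

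For the Frobenius structure on $A_\la = \Ext^\bullet(L_\la,L_\la)[2\rcl](2\llrr)$: Serre duality supplies a perfect pairing $\Hom^\bullet(L_\la, L_\la) \otimes \Hom^\bullet(L_\la, \mathfrak{S}L_\la) \to \K$. Under the identification $\mathfrak{S}L_\la \cong L_\la[-4\rcl](-4\llrr)$, the second factor becomes the top component of the bigraded Ext algebra, sitting in bidegree $(4\rcl, 4\llrr)$. After the shift in the definition of $A_\la$, dualizing the unit $\iota: \K \to A_\la$ yields a Frobenius trace $\iota^*: A_\la \to \K$ of internal degree $-4\llrr$. Symmetry $\iota^*(ab) = \iota^*(ba)$ will follow from the cyclicity of the Serre pairing against Yoneda composition, which is automatic from the naturality of the Serre isomorphism in both variables.

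The main obstacle I anticipate is book-keeping of grading and homological shifts throughout, particularly in the Serre-theoretic biadjunction step where shifts arising from $\mathfrak{S}$, from the definition of $\mathbb{K}$, and from Serre duality must assemble consistently; verifying compatibility with the equivariance of $\mathbb{K}$ from Proposition~\ref{trace-equivariance} is the technically most delicate point. The minuscule hypothesis is essential throughout: by Proposition~\ref{fin-dim} it ensures $\cat^{\la,\la^*}$ has finite homological dimension, so $L_\la$ is perfect and therefore lies in $\cat^{\la,\la^*}_{\mathsf{per}}$, the domain on which $\mathfrak{S}$ is defined.
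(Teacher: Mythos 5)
Your proposal is correct and follows the same route as the paper: the paper's own proof consists of noting that the minuscule hypothesis gives finite global dimension (so $\Lco$ is perfect) and then citing Proposition \ref{serre}, whose unpacking — $\mathfrak{S}'\Lco=\Lco(2\lllr)$ together with $\mathbb{B}_\tau^2\Lco\cong\Lco[-4\rcl](-4\llrr-2\lllr)$ from Proposition \ref{Lco-bra} — is exactly the computation you carry out. Your additional details on the Serre-theoretic biadjunction and the Frobenius pairing are the standard arguments the paper leaves implicit.
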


%\end{conj}
One should consider this as an analogue of Poincar\'e duality, and thus is a piece of evidence for $A_\la$'s relationship to cohomology rings. 
\begin{proof}
As we noted in the proof of \ref{fin-dim}, $\alg^\bla$ has finite global dimension if the weights $\bla$ are minuscule. The result then follows immediately from Proposition \ref{serre}.
\end{proof}

It would be enough to show that this algebra is commutative to establish the functoriality for flat tangles; we
simply use the usual translation between 1+1 dimensional TQFTs and
commutative Frobenius algebras (for more details, see the book by Kock
\cite{Kock}).  At the moment, not even this very weak form of functoriality is known.

\begin{ques}
  Is there another interpretation of the algebra $A_\la$?  Is it the
  cohomology of a space?
\end{ques}
One natural guess, based on the work of Mirkovi\'c-Vilonen \cite{MV}
and the symplectic duality conjecture of the author and collaborators
\cite{BLPWgco}, is that $A_\la$ is the cohomology
of the corresponding Schubert variety $\overline{\mathrm{Gr}_\la}$ in the
Langlands dual affine Grassmannian.

Another candidate algebra is the multiplication induced on $V_\la$ by
the quantized ``shift of function algebra'' $\EuScript{A}_f$ for a
regular nilpotent element $f$ studied by Feigin, Frenkel, and Rybnikov
\cite{FFR}.

We can use the biadjunction of $\mathbb{K}$ and $\mathbb{T}$ to give a rather simple
prescription for functoriality: for each embedded cobordism in
$I\times S^3$ between knots in $S^3$, we can isotope so that the
height function is a Morse function, and thus decompose the cobordism
into handles.  Furthermore, we can choose this so that the projection
goes through these handle attachments at times separate from the times
it goes through Reidemeister moves.  We construct the functoriality
map by assigning
\begin{itemize}
\item to each Reidemeister move, we associate a fixed isomorphism of
  the associated functors.
\item to the birth of a circle (the attachment of a 2-handle), we associate
  the unit of the adjunction $(\mathbb{K},\mathbb{T})$ or $(\mathbb{C},\mathbb{E})$, depending on the orientation.
\item to the death of a circle (the attachment of a 0-handle), we
  associate the counits of the opposite adjunctions  $(\mathbb{T},\mathbb{K})$ or $(\mathbb{E},\mathbb{C})$ (i.e., the Frobenius trace).
\item to a saddle cobordism (the attachment of a 1-handle), we
  associate (depending on orientation) the unit of the second adjunction above, or the counit of the first.
\end{itemize}

\begin{conj}
  This assignment of a map to a cobordism is independent of the choice
  of Morse function, i.e.\ this makes the knot homology theory
  $\EuScript{K}(-)$ functorial.
\end{conj}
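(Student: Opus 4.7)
The plan is to translate the statement into the language of movie moves \`a la Carter--Saito and verify invariance one elementary move at a time, using the rigidity structures built up in Sections~\ref{sec:rigid}--\ref{sec:invariants}. Concretely, any two generic height functions on an embedded cobordism are related by a finite sequence of elementary modifications: (a) planar isotopies of the projections at each time slice (i.e.\ Reidemeister moves performed in a one-parameter family), (b) exchanges of the heights of two handle attachments, and (c) birth--death pairs of critical points of the Morse function. The Carter--Saito theorem gives a finite list of ``movie moves'' that generate all such sequences, so the conjecture reduces to verifying that our prescription produces the same 2-morphism on each side of each movie move.

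The first step is to assemble the basic compatibilities already present in the paper into a formal 2-functorial framework. Theorem~\ref{ind} establishes invariance under Reidemeister moves at a fixed time, and Proposition~\ref{S-move} together with the pitchfork move of Lemma~\ref{pitch} provide the compatibility between cups/caps and isotopy. The biadjunction $(\mathbb{K},\mathbb{T})$ from Theorem~\ref{Frob-conj} (only available in the minuscule case, which is why the hypothesis is imposed) gives honest units and counits, and the symmetric Frobenius trace it produces on $A_\la$ gives the ``death'' and ``birth'' maps on a circle. The next step is to package the braidings, cups, caps and ribbon element into a pivotal/ribbon 2-category structure on the assignment $\bla \mapsto \cat^{\bla}_{\nicefrac{1}{D}}$, with the strong braid action of Corollary~\ref{cor:braid-action} supplying the axioms for horizontal composition.

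The third step is a case-by-case verification of the Carter--Saito movie moves. The moves split roughly into three groups. The first group consists of ``type I'' moves: time reversals of Reidemeister sequences and reorderings of disjoint local moves; these follow immediately from the strong braid group action and the equivariance Proposition~\ref{trace-equivariance} of the cup/cap functors, combined with the naturality of the coherence isomorphisms $c_u$ in the definition of strong equivariance. The second group consists of moves involving a handle slide past a Reidemeister move: these reduce to naturality of the unit/counit of $(\mathbb{K},\mathbb{T})$ with respect to the braiding, which is essentially a biadjunction-plus-pitchfork computation in the style of Lemma~\ref{pitch}. The third and hardest group consists of the genuinely new moves that involve the Frobenius structure on $A_\la$: the cusp-cancellation moves and the swallowtail move, which impose the ``zig-zag'' and ``sphere = scalar'' identities for the biadjunction.

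The main obstacle will be the last group. Even assuming Theorem~\ref{Frob-conj}, the units and counits of the biadjunction are only determined up to a scalar, and for each simple module $\Lco$ one must pin down that scalar so that the zig-zag identities hold \emph{on the nose} and so that the Frobenius trace is genuinely symmetric (rather than symmetric up to a sign depending on the Frobenius--Schur indicator as in Proposition~\ref{schur-indicate}). This is exactly where the independence of Morse function can fail; for non-minuscule labels, even the existence of such a normalization is unclear because the Serre functor computation of Proposition~\ref{serre} no longer collapses to an identity on the nose. A plausible route is to fix the normalizations by requiring compatibility with a single ``standard'' model of the unknotted disk, then to deduce the swallowtail identity from a direct computation inside the algebra $A_\la$ using its conjectural description as $H^\ast(\overline{\mathrm{Gr}_\la})$, where such identities are classically known. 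Carrying this out rigorously, without using the unproven geometric interpretation, is the principal difficulty and will likely require a detailed analysis of the generating 2-morphism $\iota^\ast\colon \Ext^{4\llrr}(\Lco,\Lco)\to \K$ in terms of the standard filtration of Proposition~\ref{sta-res}.
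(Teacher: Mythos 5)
The statement you are addressing is left as an open conjecture in the paper: the author says explicitly that no proof of independence of the Morse function is known and that such a proof is expected to be quite difficult, so there is no argument of the paper's own to compare yours against. Your proposal correctly identifies the standard strategy (reduce to the Carter--Saito movie moves and verify them one at a time) and correctly locates where the difficulty sits, but as written it is a plan rather than a proof: not a single movie move is actually verified. The first two groups are only asserted to follow from strong equivariance and from ``a biadjunction-plus-pitchfork computation in the style of Lemma~\ref{pitch}'' without that computation being carried out, and for the third group you concede that executing the argument rigorously is ``the principal difficulty'' and is deferred to a future detailed analysis. An argument whose hardest step is deferred does not establish the conjecture.

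There is also a concrete mathematical gap that the outline passes over. The movie moves that exchange the heights of two saddle attachments on the same circle, and those that close off a surface component, require the algebra $A_\la=\Ext^\bullet(\Lco,\Lco)$ to be a \emph{commutative} Frobenius algebra; the paper itself points out that even this is unknown, and that for this reason not even functoriality for flat tangles (the $1{+}1$-dimensional TQFT statement) has been established. Your proposed remedy --- pinning down normalizations and verifying the swallowtail identity by computing in $A_\la$ via its description as $H^*(\overline{\mathrm{Gr}_\la})$ --- rests on an identification that is only posed as a question in the paper, and you acknowledge that it cannot be invoked. Until the commutativity of $A_\la$ and the precise scalar normalizations of the units and counits of the biadjunction between $\mathbb{K}$ and $\mathbb{T}$ are settled, the verification cannot even begin for the moves involving births, deaths, and saddles, so the conjecture remains open on your approach exactly as it does in the paper.
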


In the case of $\mathfrak{sl}_2$, there is a homology theory which we
believe to coincide with ours, defined by Cooper, Hogancamp and
Krushkal \cite{CoKr, CoHoKr}.  A version of functoriality for this theory has been
given by Hogancamp \cite{Hofunc}, overcoming some of the difficulties
posed by the failure of finite global dimension this case, but still
not giving an answer for every cobordism between knots.

\section{Comparison to category \texorpdfstring{$\mathcal{O}$}{O} and other
  knot homologies}
\label{sec:type-A}

Now, we specialize to the case where $\fg\cong \mathfrak{sl}_n$ and
$\K=\C$.  In this case, we can reinterpret our results in terms of the
work of Brundan and Kleshchev \cite{BKSch,BKKL} who have shown that
the cyclotomic Khovanov-Lauda algebras for $\mathfrak{sl}_n$ are
isomorphic to cyclotomic degenerate affine Hecke algebras (cdAHA).
Proposition \ref{prop:add-embed} allows us to embed the category
of projectives over $T^\bla$ in the category of all $T^\la$-modules.
Transporting structure via Brundan and Kleshchev's isomorphism, we
obtain a subcategory of modules over the degenerate affine Hecke
algebra.  We will show that this subcategory is also the image of the
embedding of a block of parabolic category $\cO$ via a well-known
functor. 
In
particular, this will allow us to match our categories $\cata^\bla$
with blocks of category $\cO$ in type A and compare the knot homologies
constructed in Section \ref{sec:invariants} to those constructed using
category $\cO$ by Mazorchuk, Stroppel and Sussan
\cite{MS09,Sussan2007}.  

\subsection{Cyclotomic degenerate Hecke algebras}

\begin{defn}
  The degenerate affine Hecke algebra (dAHA) $H_d$ is the algebra 
  generated by the polynomial ring $\K[x_1,\dots,x_d]$ and the group
  ring $\K[S_d]$ subject to the relations
$$s_ix_j=x_{s_i\cdot j}s_i-\delta_{j,i}+\delta_{j,i+1}\qquad\qquad x_ix_j=x_jx_i$$ 
for the simple reflections in $s_i\in S_d$.  
\end{defn}

We have a natural action of $H_d$ on the $\mathfrak{gl}_N$ module
$P\otimes V^{\otimes d}$ for any $\fgl_N$ representation $P$, where
$V=\C^N$ is the defining representation of $\mathfrak{gl}_N$: 
\begin{itemize}
\item  $S_d$ acts on the $d$ copies of $V$, and
\item $x_1$ acts by $C\otimes 1^{\otimes d-1}$ where $C$ is the Casimir element of $\mathfrak{gl}_N$.  
\end{itemize}
We'll be interested in applying this result in one particular
context.  Fix a parabolic $\fp\subset\fgl_N$.  Without loss of
generality, we can assume that
$\fp$ is the precisely this subalgebra of block upper-triangular
matrices attached to a composition $\pi=(\pi_1,\dots, \pi_\ell)$.  These can be used to define a weight 
$\la=\sum_i\omega_{\pi_i}\in\wela(\fg)$; that is, $\la^j=\#\{i|\pi_i=j\}$.
\begin{defn}
Parabolic category $\cO$, which we denote $\cO^\fp$, is  the full
subcategory of $\mathfrak{gl}_N$-modules with a weight decomposition
where $\fp$ acts locally finitely.
\end{defn}
Since induction sends finite-dimensional modules to $\fp$-locally
finite modules, $P\otimes V^{\otimes d}\cong
U(\fgl_n)\otimes_{U(\fp)}(W\otimes V^{\otimes d})$ lies in this
category for $W$ any finite dimensional $\fp$-representation.

We'll index the parabolic Verma module in $\cO^\fp$ by their
$\rho$-shifted highest weight.  That is, we'll let $M^\fp(a_1,\dots, a_N)$
be the parabolic Verma module where the diagonal elementary matrix $e_{ii}$ acts
by $a_i+i-1$, and $L(a_1,\dots, a_N)$ be the simple $\fgl_N$ module
with this highest weight.   We'll only consider the case where $a_i$ is an integer
in this paper.  For example, the trivial module is
$L(0,-1,\dots, -N+1)$.  Of course, for certain highest
weights, $L(a_1,\dots, a_N)$ will not lie in $\cO^\fp$.  In this case, by convention, $M^\fp(a_1,\dots,
a_N)=0$. For example, the module $L(a_1, \dots, a_N)$
will be in $\cO^{\fgl_N}$ if and only if the entries $a_i$ are
{\it strictly} increasing. 

More generally, 
$L(a_1, \dots, a_N)$
will be in $\cO^{\fp}$ if and only if the associated highest weight
is dominant when restricted to the Levi $\fl$ of block triangular
matrices.  That is, if we have that $a_1>\cdots> a_{\pi_1},
a_{\pi_1+1}>\cdots >a_{\pi_1+\pi_2}$, etc.  

Following Brundan and Kleshchev
\cite[\S 4.2]{BKSch}, we can conveniently package this condition by
thinking of the numbers $a_i$ as the column reading of the entries of a tableau on the Young
pyramid for the composition $\bpi$.  To fix conventions, we read
the columns from top to bottom and in order from left to right.  The
inequalities above are the statement that the tableau is {\it
  column-strict}, i.e. its entries increase in each column decrease
when read from top to bottom.  Thus, we have that:
\begin{lemma}
  The simple module $L(a_1, \dots, a_N)$ is in $\cO^{\fp}$ if
  $\mathbf{a}$ is the column reading of a column-strict tableau.
\end{lemma}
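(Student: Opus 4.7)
The plan is to unpack the condition from the paragraph preceding the lemma and identify it directly with the column-strictness condition on tableaux. The key point is that $L(\mathbf{a})$ lies in $\cO^\fp$ iff $\fp$ acts locally finitely on $L(\mathbf{a})$, and for a simple highest weight module this is equivalent to the Levi $\fl = \fgl_{\pi_1} \oplus \cdots \oplus \fgl_{\pi_\ell}$ acting locally finitely on the highest weight line. By classical $\fgl_m$ highest weight theory, this in turn is equivalent to the restriction of the highest weight to each $\fgl_{\pi_j}$ summand being dominant integral. All weights in sight are integral, so only the weak dominance inequalities need to be checked.

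The next step is to translate these inequalities into relations among the $a_i$. Under the $\rho$-shifted convention, $e_{ii}$ acts on the highest weight vector by $a_i + i - 1$. For the $j$-th Levi block, indexed by positions $\pi_1 + \cdots + \pi_{j-1} + 1, \ldots, \pi_1 + \cdots + \pi_j$, dominance as a $\fgl_{\pi_j}$ weight demands
\[
(a_{\pi_1+\cdots+\pi_{j-1}+k}) + (\pi_1+\cdots+\pi_{j-1}+k-1) \;\geq\; (a_{\pi_1+\cdots+\pi_{j-1}+k+1}) + (\pi_1+\cdots+\pi_{j-1}+k)
\]
for all valid $k$. After cancellation this collapses to $a_{\pi_1+\cdots+\pi_{j-1}+k} \geq a_{\pi_1+\cdots+\pi_{j-1}+k+1} + 1$, i.e.\ a strict decrease of integers within each block. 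This exactly reproduces the block-wise strict inequalities displayed in the preceding paragraph.

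Finally I would identify these inequalities as column-strictness. The Young pyramid associated to $\bpi$ has columns of heights $\pi_1, \dots, \pi_\ell$, and the column reading convention (top to bottom within a column, left to right across columns) places the entries $a_{\pi_1 + \cdots + \pi_{j-1}+1}, \ldots, a_{\pi_1 + \cdots + \pi_j}$ into the $j$-th column from top to bottom. The strict decreases deduced above are precisely the statement that these entries strictly decrease from top to bottom, which is the definition of a column-strict filling. Combining the three steps gives both directions of the equivalence, so the lemma (in the ``if'' direction asserted, and in fact the ``only if'' as well) follows.

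There is no genuine mathematical obstacle here; the lemma is a bookkeeping translation. The only point requiring care is the $\rho$-shift: one must remember that the shifted label $a_i$ differs from the actual $e_{ii}$-eigenvalue by $i-1$, so that strict decrease of the $a_i$ corresponds to the weak dominance of the true weight under the $\fgl_{\pi_j}$-Weyl group.
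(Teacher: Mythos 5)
Your proposal is correct and is essentially the argument the paper itself gives (in the two paragraphs immediately preceding the lemma, which the lemma merely summarizes): membership of $L(\mathbf{a})$ in $\cO^\fp$ is equivalent to dominance of the highest weight restricted to the Levi, the $\rho$-shift converts the weak dominance inequalities into strict decrease of the $a_i$ within each block, and the column-reading convention identifies this with column-strictness. Your write-up just makes the bookkeeping explicit, including the converse direction, which the paper also asserts.
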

This labeling is particularly convenient, since two simples $L(a_1,
\dots, a_N)$ and $L(a_1', \dots, a_N')$ are in the same block of $\cO^\fp$ if and only if $a_i=a_{w(i)}'$
for some permutation $w$.
From now on, we let $P=M^{\fp}(\pi_1,\dots, 1,\pi_2,
\dots,1,\dots,\pi_\ell,\dots, 1)$. The corresponding ``ground state'' tableau fills
each box with its height.  Note that this is the only column-strict
tableau with these entries, so there are no other simples in the same
block as $P$.  Thus $P$ is simple, projective and injective in
$\cO^\fp$. 

Now, consider the action of dAHA on  $\oplus_d P\otimes V^{\otimes d}.$
This action is not faithful, but there is a very convenient
description of its kernel:
\begin{defn}
The {\bf cyclotomic
degenerate affine Hecke algebra} is the quotient of the dAHA given by $$H^\la_d=
H_d/\Big\langle\prod_{i=1}^n(x_1-i)^{\la^i}\Big\rangle\qquad
H^\la\cong\bigoplus_{d\geq 0}H^\la_d.$$  We let $e_d$ be the idempotent which
projects to $H^\la_d$ in $H^\la$.
\end{defn}
\begin{thm}[\mbox{Brundan-Kleshchev \cite[Th. B]{BKSch}}]
  When $P=M(\pi_1,\dots,1,\pi_2,\dots,1,\dots)$ as above, the action of dAHA
  on $P\otimes V^{\otimes d}$
factors through a faithful action of $H^\la_d$.
\end{thm}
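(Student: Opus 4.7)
My plan splits the statement into two parts: (i) showing the action factors through $H^\la_d$ (i.e., the cyclotomic relation holds on $P\otimes V^{\otimes d}$), and (ii) showing the resulting action is faithful.

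For (i), since $x_1$ commutes with the action on all tensor factors beyond the first, it suffices to verify $\prod_{i=1}^n(x_1-i)^{\la^i}$ annihilates $P\otimes V$. I would analyze $x_1 = \sum_{i,j} e_{ij}^{(0)}e_{ji}^{(1)}$ via the filtration of $P\otimes V$ by parabolic Vermas $M^\fp(\mathbf{a}+\mathbf{e}_k)$, with $k$ ranging over the positions where adding one box to the ``ground state'' Young pyramid $\bpi$ yields a valid column-strict tableau---there is one such $k$ for each column of each given height. On each subquotient, $x_1$ acts by the scalar $\tfrac{1}{2}(c_{\mathbf{a}+\mathbf{e}_k}-c_\mathbf{a}-c_V)$, where $c$ denotes the Casimir eigenvalue. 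A direct computation with our specific highest weight $\mathbf{a}$ (whose entries equal the heights within each column) identifies these scalars with integers in $\{1,\dots,n\}$: the value $i$ occurs exactly $\la^i$ times, once for each column of height $i$. Finally, one controls the nilpotent parts: by separating weight spaces of the Cartan, Jordan blocks of $(x_1-i)$ on the generalized $i$-eigenspace have size at most $\la^i$, which gives the cyclotomic relation.

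For (ii), I would argue by matching dimensions. A PBW-type argument reduces any element of $H^\la_d$ to a combination of $x_1^{a_1}\cdots x_d^{a_d}w$ with $w\in S_d$ and $a_j<\ell:=\sum_i\la^i$ (using the cyclotomic relation to bound powers of $x_1$ and the commutation relations to transfer to the other $x_j$), yielding $\dim H^\la_d\le \ell^d\cdot d!$. For the matching lower bound on the image in $\End_{\fgl_N}(P\otimes V^{\otimes d})$, I would invoke the structure of parabolic category $\mathcal{O}$: translation functor theory identifies the image with a subalgebra of $\End_{\fgl_N}(P\otimes V^{\otimes d})$, whose dimension one can compute either via a Kazhdan--Lusztig-theoretic count of the composition multiplicities of $P\otimes V^{\otimes d}$, or via Soergel's combinatorial description of the endomorphism rings of projective-injectives in blocks of $\mathcal{O}^\fp$.

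The main obstacle is the faithfulness in (ii). Step (i) is an eigenvalue calculation once one knows the branching rule for $P\otimes V$ in $\mathcal{O}^\fp$, but faithfulness rests on a detailed understanding of the decomposition of $P\otimes V^{\otimes d}$ into indecomposables and the sizes of the corresponding endomorphism rings. A tempting shortcut would be to appeal to the Brundan--Kleshchev isomorphism $H^\la_d\cong R^\la_\al$ (for appropriate $\al$) together with the faithful action of $R^\la$ on $\bigoplus e(\Bi)R^\la$ from Section~\ref{sec:categ-acti-cycl}, but this would be circular since the cited isomorphism is proved in the same paper; one really must carry out the direct $\mathcal{O}$-theoretic computation.
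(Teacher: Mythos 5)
A preliminary remark: the paper does not prove this theorem at all --- it is imported verbatim from Brundan--Kleshchev \cite[Th.~B]{BKSch} --- so the only meaningful comparison is with the proof in that source. Your part (i) matches it: the eigenvalues of $x_1$ on $P\otimes V$ are read off from the Casimir acting on the subquotients of the parabolic Verma filtration, they are the column heights of the pyramid (so the eigenvalue $i$ occurs exactly $\la^i$ times), and the cyclotomic relation then follows simply because a product $\prod_k(x_1-c_k)$ taken over all subquotients of a filtration annihilates the whole module; you do not even need the Jordan-block discussion.

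Part (ii) has a genuine gap. Computing $\dim\End_{\fgl_N}(P\otimes V^{\otimes d})$ --- whether by Kazhdan--Lusztig multiplicities, Soergel's description of endomorphisms of projective-injectives, or otherwise --- bounds the image of $H_d$ only from \emph{above}: a priori that image is a proper subalgebra of the $\fgl_N$-endomorphism ring, and the observation that ``translation functor theory identifies the image with a subalgebra'' gives nothing in the other direction. To deduce faithfulness from your upper bound $\dim H^\la_d\le \ell^d\cdot d!$ you must produce $\ell^d\cdot d!$ linearly independent operators in the image, which requires either the surjectivity half of higher-level Schur--Weyl duality (a theorem of comparable depth that you have not established) or a direct independence argument. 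Brundan and Kleshchev take the latter route: they show that the spanning set $x_1^{a_1}\cdots x_d^{a_d}w$ with $0\le a_j<\ell$ and $w\in S_d$ acts by linearly independent operators, via an explicit leading-term computation on vectors obtained by tensoring a highest weight vector of $P$ with standard basis vectors of $V$, triangular with respect to weight and Bruhat order; surjectivity is then deduced afterwards by counting, not the other way around. As written, your argument does not close this loop.
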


Thus, we have a functor $\Hom_{\mathfrak{gl}_N}(P\otimes
V^{\otimes d},-):\cO^\fp\to H^\la\modu$.  This functor is very far from being an
equivalence, but on each block of $\cO^\fp$ it is either 0, or fully
faithful on projectives by \cite[6.10]{BKSch}.  Thus, certain blocks of $\cO^\fp$  can be described in
terms of endomorphism rings of modules over $H^\la$, as in
\cite[Th. C]{BKSch}.

%In \cite{BKKL}, Brundan and Kleshchev show that each category $\cata^\la_\mu$ is
%equivalent to a block of the representations of $H^\la$. Thus, using this isomorphism, we can
%also express $\cata^\bla_\mu$ in terms of endomorphisms of modules
%over $H^\la$.

The center of $H_d^\la$ is generated by the symmetric polynomials in
the alphabet $x_i$.  Particular, this algebra decomposes into summands
according to the joint spectrum of these symmetric polynomials. For
any list $(a_1,\dots, a_d)$ of integers, we have a summand 
\[H_d^\la (a_1,\dots, a_d)=\{m\in H_d^\la|
(f(\mathbf{x})-f(\mathbf{a}))^jm=0 \text{ for $j\gg 0$ and any
  symmetric polynomial $f$}\}.\]   The projection to this summand is
given by multiplication by
a central idempotent $e(\Ba)$ of $H_d^\la$, since it is an idempotent bimodule endomorphism of $H_d^\la$. 

We let $e_\fg$ be the idempotent projecting to the
subalgebra $\bigoplus_{(a_1,\dots, a_d)\in [1,n]^d}H_d^\la (a_1,\dots,
a_d)$.  We can alternately describe this as projection to the kernel of
$\prod_{i=1}^d\prod_{j=1}^n(x_i-j)^N$ for $N\gg 0$.  

In this section, we use the polynomials $Q_{ij}$ as defined in the previous section for a fixed orientation of the type A (or later, affine type A) quiver.  The most obvious choice is \[Q_{ij}(u,v)=\begin{cases} 1 & i\neq j\pm 1\\
u-v & i=j+1\\
v-u & i=j-1 
\end{cases}\]
\begin{prop}[\cite{BKKL}]\label{BK}
  There is an isomorphism $\Upsilon\colon \alg^\la\to e_\fg H^\la
  e_\fg\overset{\text{def}}=H^{\la,n}$.

\end{prop}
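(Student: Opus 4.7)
The plan is to combine two ingredients. First, Theorem \ref{cyclotomic} gives an algebra isomorphism $\alg^\la \cong R^\la$, where $R^\la$ is the cyclotomic KLR algebra; this reduces the problem to producing an isomorphism $R^\la \to e_\fg H^\la e_\fg$. Second, I would invoke the explicit isomorphism of Brundan and Kleshchev, proven in \cite{BKKL} (building on the non-degenerate version in \cite{BKSch}), between the cyclotomic KLR algebra of type $A_{n-1}$ and the integer-residue part of the cyclotomic degenerate affine Hecke algebra. Combining these two maps gives $\Upsilon$.

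Concretely, the Brundan--Kleshchev map sends the KLR idempotent $e(\Bi)$ attached to $\Bi = (i_1, \ldots, i_d) \in \{1, \ldots, n\}^d$ to the central idempotent in $H^\la_d$ projecting onto the generalized simultaneous eigenspace where $x_k$ has eigenvalue $i_k$; sends the dot $y_k$ to $x_k - i_{k}$ on this summand, so that it acts nilpotently as required; and sends the crossing $\psi_k$ to an expression combining $s_k$ with rational functions in the $x_\bullet$'s, designed precisely so that the nilHecke, braid, and quadratic KLR relations hold. Verifying these relations is a nontrivial polynomial calculation, but it is exactly the content of the main theorem of \cite{BKKL}, so we need not reproduce it.

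The only remaining task is to match the image with $e_\fg H^\la e_\fg$ rather than all of $H^\la$. By construction of the Brundan--Kleshchev map, the sum $\sum_{\Bi} e(\Bi)$ over $\Bi \in \{1, \ldots, n\}^d$ is precisely the idempotent projecting $H^\la_d$ onto $\bigoplus_{(a_1, \ldots, a_d) \in [1,n]^d} H^\la_d(a_1, \ldots, a_d)$; summing over $d$ produces $e_\fg$. Hence the image is all of $e_\fg H^\la e_\fg$, and the map is an isomorphism since it is known to be so onto its image.

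The main obstacle is conventional rather than mathematical: one must take the polynomials $Q_{ij}$ compatibly with Brundan and Kleshchev's choice of orientation and signs, which is why we have fixed the type-$A$ orientation and the specific polynomials $Q_{ij}(u,v) = \pm(u-v)$ in the paragraph immediately preceding the statement of the proposition. With that convention in place, the construction of $\Upsilon$ is essentially a direct transcription of \cite{BKKL}.
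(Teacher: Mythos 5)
Your proposal is correct and takes the same route as the paper, which offers no independent proof of Proposition \ref{BK} at all: it simply cites \cite{BKKL}, exactly as you do, and records the same formulas $\Upsilon(y_je(\Bi))=e(\Bi)(x_j-i_j)$ and the eigenvalue description of the idempotents that you use to identify the image with $e_\fg H^\la e_\fg$. Your additional remarks --- composing with Theorem \ref{cyclotomic} and checking that $\sum_\Bi e(\Bi)$ maps to $e_\fg$ --- are accurate bookkeeping, not a deviation.
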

Under this map, we have that
 $\Upsilon(y_je(\Bi))=  e(\Bi)(x_j-i_j)$, and $\Upsilon^{-1}(s_i)$ is
 in a linear combination of $y_{i}^ay_{i+1}^b\psi_ie(\Bi)$ and
 $y_{i}^ay_{i+1}^be(\Bi)$ by \cite[(3.41-42)]{BKKL}.

\subsection{Comparison of categories}
\label{sec:comp-categ}

First, let us endeavor to understand how we can translate the $\alg^\la$-modules
$y_{\Bi,\kappa}\alg^\la$ defined in Section \ref{sec:self-dual} into the language of the cdAHA using $\Upsilon$.
It's immediate from Proposition \ref{BK} that
\begin{equation*}
\Upsilon(y_{\Bi,\kappa})=  e(\Bi)\prod_{j=1}^{\ell}\prod_{k=\kappa(j)+1}^n(x_k-i_k)^{\la_j^{i_k}}.
\end{equation*}

However, the strong dependence of this element on $e(\Bi)$ makes it
problematic for use in the Hecke algebra.
%\begin{prop}
%$\alg^\bla\cong \End_{H^{\la,n}}(\bigoplus_{\kappa}y_{\Bi,\kappa} H^{\la,n})$
%\end{equation*}
We first specialize to the case  where all the weights $\la_j$ are
fundamental.  That is, we have $\la_j=\omega_{\pi_j}$ for some
$\pi_j$.  As suggested by the notation, we will later want to think of
$\pi_j$ as a composition.  This bit of notation allows us to associate
to each $\kappa$ an element of $H^{\la,n}$ (note that there is no
dependence on $\Bi$):
\begin{equation}
z_\kappa=\prod_{j=1}^\ell \prod_{k=\kappa(j)+1}^{n}(x_k-\pi_j)
\end{equation}

We let $M^\kappa_\Bi=e(\Bi) z_{\kappa}H^{\la,n}$ and $M^\kappa=z_{\kappa}H^{\la,n}$.

\begin{prop}  If $\la_j=\omega_{\pi_j}$, then for all $\Bi$, we have $\Upsilon(y_{\Bi,\kappa}) H^{\la,n}= M^\kappa_\Bi$. In
  particular, we have an isomorphism $\alg^\bla\cong \End(\oplus_{\kappa}
  M^\kappa)$.
\end{prop}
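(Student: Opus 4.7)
\medskip

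\noindent\textbf{Plan of proof.} The strategy is to compare the two elements factor by factor after projecting onto an idempotent $e(\Bi)$. Since $\la_j = \omega_{\pi_j}$, the integer $\la_j^{i_k}$ equals $1$ when $\pi_j = i_k$ and $0$ otherwise. Unraveling the definitions, this gives
\[
  \Upsilon(y_{\Bi,\kappa}) \;=\; e(\Bi)\prod_{j=1}^{\ell}\prod_{\substack{k>\kappa(j)\\ \pi_j=i_k}}(x_k-i_k),
  \qquad
  e(\Bi)z_\kappa \;=\; e(\Bi)\prod_{j=1}^{\ell}\prod_{k>\kappa(j)}(x_k-\pi_j),
\]
so the two elements differ precisely by the factors $(x_k-\pi_j)$ with $\pi_j\neq i_k$.

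First I would record the standard fact that the $e(\Ba)$ are commuting idempotents in the closure of $\C[x_1,\dots,x_d]$, so in particular $e(\Bi)$ commutes with any polynomial in the $x_k$; this lets me rewrite $M^\kappa_\Bi=e(\Bi)z_\kappa H^{\la,n}=z_\kappa e(\Bi)H^{\la,n}$. Next, I would observe that on $e(\Bi)H^{\la,n}$ the operator $x_k-i_k$ is nilpotent (this is the defining property of the generalized eigenspace idempotent), and therefore for $\pi_j\neq i_k$ the element
\[
  x_k-\pi_j \;=\; (i_k-\pi_j) + (x_k-i_k)
\]
is a unit in $e(\Bi)H^{\la,n}e(\Bi)$, since its leading term $i_k-\pi_j$ is a nonzero scalar. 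Multiplying together these units $u_{k,j}$ over all pairs with $\pi_j\neq i_k$ and $k>\kappa(j)$, I get a unit $u\in e(\Bi)H^{\la,n}e(\Bi)$ such that $e(\Bi)z_\kappa = \Upsilon(y_{\Bi,\kappa})\,u$. Consequently $\Upsilon(y_{\Bi,\kappa})H^{\la,n} = e(\Bi)z_\kappa H^{\la,n} = M^\kappa_\Bi$, proving the first statement.

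For the isomorphism $\alg^\bla\cong \End(\bigoplus_\kappa M^\kappa)$, I would start from Proposition \ref{prop:add-embed}, which gives $\alg^\bla \cong \End_{\alg^\la}(\bigoplus_{\Bi,\kappa} y_{\Bi,\kappa}\alg^\la)$, and transport this identification along $\Upsilon$ to obtain $\alg^\bla\cong \End_{H^{\la,n}}(\bigoplus_{\Bi,\kappa}M^\kappa_\Bi)$ by the first part. Then I would note that, since $\sum_{\Bi\in[1,n]^d}e(\Bi)$ equals the identity in each $H^{\la,n}_d=e_\fg H^\la_d e_\fg$ and $e(\Bi)$ commutes with $z_\kappa$, we have the internal decomposition $M^\kappa=\bigoplus_\Bi M^\kappa_\Bi$; hence $\bigoplus_\kappa M^\kappa=\bigoplus_{\kappa,\Bi}M^\kappa_\Bi$, giving the stated endomorphism description.

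The only nontrivial step is the observation that the ``extra'' factors $(x_k-\pi_j)$ are units on $e(\Bi)H^{\la,n}$; everything else is bookkeeping. I do not anticipate a serious obstacle, but I would be slightly careful to justify that the $e(\Ba)$ genuinely lie in the commutative subalgebra generated by the $x_k$, so that $e(\Bi)$ commutes with $z_\kappa$ and the unit $u$ above makes sense as a left multiplier on $e(\Bi)H^{\la,n}$.
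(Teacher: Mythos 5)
Your proposal is correct and follows essentially the same route as the paper: both arguments rest on the observation that $(x_k-i_k)e(\Bi)$ is nilpotent, so for $\pi_j\neq i_k$ the factor $(x_k-\pi_j)e(\Bi)$ is an invertible multiple of $e(\Bi)$ (the paper simply writes out the inverse as an explicit geometric series), while for $\pi_j=i_k$ the factors literally agree since $\la_j^{i_k}=\delta_{\pi_j,i_k}$. The deduction of the endomorphism-algebra statement from Proposition \ref{prop:add-embed} via $\Upsilon$ is likewise exactly what the paper intends.
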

\begin{proof}
  If $a\neq i_k$, then we can rewrite $e(\Bi)$ as 
  $$e(\Bi)=(x_k-a)e(\Bi)\bigg(\frac{-1}{a-i_k}-\frac{x_k-i_k}{(a-i_k)^{2}}-\frac{(x_k-i_k)^2}{(a-i_k)^{3}}-\cdots\bigg)$$
  since $(x_k-i_k)e(\Bi)$ is nilpotent.  It follows that 
\begin{equation}\label{Hec-nil}
    e(\Bi)(x_k-\pi_j)H^{\la,n}=e(\Bi)(x_k-i_k)^{\la_j^{i_k}}H^{\la,n}
\end{equation}
since $\la_j^{i_k}=\delta_{\pi_j,i_k}$.  Thus, applying
(\ref{Hec-nil}) to each term in $z_\kappa$, the result follows.
\end{proof}

We note that the modules $M^\kappa$ are closely related to the
permutation modules discussed by Brundan and Kleshchev in \cite[\S
6]{BKSch}.  Each way of filling $\pi$ as a tableau such that the
column sums are $\kappa(i)-\kappa(i-1)$ results in a permutation
module which is a summand of $M^\kappa$.

Now we wish to understand how the modules $M^\kappa$ are related to
parabolic category $\cO$.  Let $N=\sum_{j}\pi_j$ be the number of
boxes in $\pi$. As before, the $\pi_i$ give a composition of $N$,
and thus a parabolic subgroup $\fp\subset \fgl_N$, which is precisely the operators preserving a flag
$V_1\subset V_2\subset \cdots \subset V$.  If, as usual, $\kappa$ is a weakly increasing function on
$[1,\ell]$ with non-negative integer values and further $\kappa(\ell)\leq d$,
then we let $$V_\kappa^d=V_1^{\otimes \kappa(1)}\otimes V_2^{\otimes
  \kappa(2)-\kappa(1)}\otimes \cdots \otimes V^{d-\kappa(\ell)}$$ as a
$\fp$-representation.  We can induce this representation to an object
in $\cO^\fp$ which we denote $$\prj^\kappa_d\cong
U(\fgl_n)\otimes_{U(\fp)}(\C_{-\rho}\otimes V^d_\kappa ),$$ where
$\C_{-\rho}$ is the 1-dimensional $\fp$-module defined in
\cite[pg. 4]{BKSch}.  These modules contain as summands the divided
power modules \[U(\fgl_n)\otimes_{U(\fp)}(\C_{-\rho}\otimes
\Sym^{ \kappa(1)}(V_1)\otimes \Sym^{
  \kappa(2)-\kappa(1)}(V_2)\otimes \cdots \otimes
\Sym^{d-\kappa(\ell)}(V))\] defined by Brundan and Kleshchev in \cite[\S
4.5]{BKSch}.

All the objects $\prj^\kappa_d$ live in the subcategory we denote
$\cO^\fp_{> 0}$ which is generated by all parabolic Verma modules whose
corresponding tableau has positive integer entries.  We also consider
a much smaller subcategory which has only finitely many simple
objects: let $\cO^\fp_{n}$ be the subcategory of $\cO^\fp$ generated
by all parabolic Vermas whose corresponding tableau only uses the
integers $[1,n]$.  Let $\pr_n:\cO^\fp\to\cO^\fp_n$ be the projection
to this subcategory ($\cO^\fp_n$ is a sum of blocks, so there is a unique
projection).

\begin{prop}
If one ranges over all $\kappa$ and all integers $d$, then $\displaystyle\oplus_{\kappa,d}\prj^d_\kappa$ is a projective generator for $\cO^\fp_{>0}$.
\end{prop}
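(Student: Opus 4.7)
The plan is to verify two claims: each $\prj^d_\kappa$ is projective in $\cO^\fp_{>0}$, and every indecomposable projective in $\cO^\fp_{>0}$ appears as a summand of some $\prj^d_\kappa$. A useful first observation is the special case $\kappa\equiv 0$: by the tensor identity for induction, we have $\prj^d_0\cong P\otimes V^{\otimes d}$, which is projective in $\cO^\fp$ since $P$ is itself projective (being the unique simple in its block, which is therefore semisimple) and tensoring with a finite-dimensional $\fg$-module preserves projectivity in category $\cO^\fp$. Moreover each $\prj^d_\kappa$ manifestly lies in $\cO^\fp_{>0}$: its standard filtration, obtained by inducing the Levi-filtration of $V^d_\kappa$ coming from the flag $0\subset V_1\subset\cdots\subset V$, consists of parabolic Vermas whose tableau entries are exactly the Levi-block indices appearing in $V^d_\kappa$, which are positive integers by construction.

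For projectivity of $\prj^d_\kappa$ in general $\kappa$, I would realize it as a direct summand of $\prj^D_0=P\otimes V^{\otimes D}$ for suitable $D\geq d$. The idea is to exploit the semisimplicity $V\vert_\fl\cong W_1\oplus\cdots\oplus W_\ell$ of the defining representation restricted to the Levi $\fl$: this gives a decomposition $V^{\otimes D}\vert_\fl=\bigoplus_{\mathbf j} W_{j_1}\otimes\cdots\otimes W_{j_D}$, and $V^d_\kappa\vert_\fl$ is the direct summand indexed by the sequences $\mathbf j$ with $j_i\leq m_i(\kappa)$. The corresponding idempotent in $\End_\fl(V^{\otimes D})$ need not be $\fp$-linear, but after multiplying $P$ with suitable combinations and carrying out induction, one can use the action of the center of $U(\fl)$ (via $\fl$-generalized weight components) to extract bimodule endomorphisms of $\prj^D_0$ whose images cut out summands isomorphic to $\prj^d_\kappa$. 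Alternatively, one can realize $V^d_\kappa$ as $\Ind_{\fp'}^\fp$ applied to a semisimple $\fl'$-module, where $\fp'\subseteq\fp$ is a smaller parabolic, and use transitivity of induction to identify $\prj^d_\kappa$ with $\Ind_{\fp'}^\fg$ of a semisimple $\fl'$-projective, which is standardly projective in the larger parabolic category.

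For generation, I would invoke the standard translation-functor argument in parabolic category $\cO$: every indecomposable projective in $\cO^\fp_{>0}$ arises as a summand of some $P\otimes V^{\otimes d}$. Since $V$ has $\fgl_N$-weights contributing $+\epsilon_i$ (thus raising or lowering tableau entries by one when translating blocks), iteratively tensoring with $V$ from the block containing $P$ reaches every block labelled by a tableau with positive integer entries, and extracts each indecomposable projective cover as a summand. Since $P\otimes V^{\otimes d}=\prj^d_0$ is among our modules, generation follows at once.

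The main obstacle is Step 1 for $\kappa\neq 0$: showing that $\prj^d_\kappa$ is a direct summand and not merely a submodule of something projective. The inclusion $V^d_\kappa\hookrightarrow V^{\otimes d}$ as $\fp$-modules does not split (for instance, $V_1\subset V$ fails to split in the $\fgl_2$ Borel case), so induction yields a $\fg$-submodule $\prj^d_\kappa\hookrightarrow\prj^d_0$ that is not obviously a summand. The resolution will likely require the more refined construction via a nested chain of parabolics, or a careful projection onto generalized Levi-weight subspaces, leveraging the fact that semisimplicity is recovered on restriction to $\fl$.
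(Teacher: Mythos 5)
Your generation step contains a genuine error, and it is the more serious of the two problems you face. It is not true that every indecomposable projective of $\cO^\fp_{>0}$ occurs as a summand of some $P\otimes V^{\otimes d}=\prj^d_0$: if it were, the modules with $\kappa\neq 0$ would be redundant and $\Hom(P\otimes V^{\otimes d},-)$ would be a Morita equivalence, whereas it is only fully faithful on projectives (this is exactly the Schur-algebra versus Hecke-algebra distinction). Concretely, take $N=2$, $\fp=\mathfrak{b}$, $\pi=(1,1)$, so $P=M^{\fp}(1,1)$. The dominant Verma $M(2,1)$ is an indecomposable projective of $\cO^{\fp}_{>0}$ in the block of content $\{1,2\}$; the only $d$ for which $P\otimes V^{\otimes d}$ meets that block is $d=1$, and $P\otimes V$ is the indecomposable projective cover of the antidominant simple $L(1,2)$ (with Verma flag $[M(1,2),M(2,1)]$), so $M(2,1)$ is not a summand of it. On the other hand $M(2,1)\cong \prj^1_\kappa$ for $\kappa(1)=\kappa(2)=1$, i.e.\ for $V^1_\kappa=V_1$. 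The same example shows your Step 1 strategy of exhibiting $\prj^d_\kappa$ as a direct summand of some $\prj^D_0$ cannot be repaired: here $\prj^1_\kappa=M(2,1)$ is projective yet is a summand of no $P\otimes V^{\otimes D}$ whatsoever. The correct generation argument goes through Brundan--Kleshchev's divided power modules, which are $\fp$-module direct summands of the $V^d_\kappa$ (symmetrize each factor $V_j^{\otimes k}$, which does split, unlike $V_j^{\otimes k}\subset V^{\otimes k}$) and hence give summands of the $\prj^d_\kappa$; that the divided power modules already form a projective generator is \cite[Theorem 4.14]{BKSch}.

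For projectivity, the fix you gesture at in your final paragraph --- a nested chain of parabolics and transitivity of induction --- is essentially the right idea and is how the paper argues: in the notation of \cite[Theorem 4.14]{BKSch} one has $\prj^d_\kappa\cong R(\prj^{\kappa(\ell)}_{\kappa^-}\otimes\C_{-\rho})\otimes V^{\otimes d-\kappa(\ell)}$, where $\kappa^-$ is the restriction of $\kappa$ to $[1,\ell-1]$ and $R$ is the relevant parabolic induction functor; since $R$ and $-\otimes V$ both preserve projectives, induction on $\ell$ finishes the proof. You should carry out this recursion explicitly rather than offering it as an alternative; neither the summand-of-$\prj^D_0$ route nor the central-idempotent projection can succeed, by the example above.
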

\begin{proof}
This follows from a simple modification of the proof of \cite[Theorem 4.14]{BKSch}.  In the notation of that proof, we have that $\prj^\kappa_d\cong R(\prj^{\kappa^-}_{\kappa(\ell)}\otimes\C_{-\rho})\otimes V^{\otimes d-\kappa(\ell)}$, where $\kappa^-$ is the restriction of $\kappa$ to $[1,\ell-1]$. As noted in that proof,  by induction, this is two functors which preserve projective modules applied to a projective module; thus $\prj^\kappa_d$ is projective.

Each of Brundan and Kleshchev's divided power modules is a summand in one of the $\prj^\kappa_d$, as we noted earlier.  Since any indecomposable projective of $\cO^\fp$ is a summand of a divided power module, the same is true of the $\prj^\kappa_d$'s.
\end{proof}

\begin{prop}\label{Hecke-equivalence}
For all $d,\kappa$, we have
\begin{align*}
 z_\kappa H^\la e_d &\cong  \Hom(P\otimes V^{\otimes d}, \prj_\kappa^d)\\
  M^\kappa e_d &\cong  \Hom(P\otimes V^{\otimes d}, \pr_n(\prj_\kappa^d)).
  \end{align*}
\end{prop}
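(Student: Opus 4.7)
The plan is to prove the first isomorphism by realizing both sides as descriptions of maps from the projective generator $P\otimes V^{\otimes d}$ to the parabolic Verma $\prj_\kappa^d$, and then to deduce the second by applying the projection $\pr_n$. First I would exhibit a canonical embedding $\iota\colon \prj^d_\kappa \hookrightarrow P\otimes V^{\otimes d}$: the inclusions of $\fp$-modules $V_j \hookrightarrow V$ tensor together to give $\C_{-\rho}\otimes V^d_\kappa \hookrightarrow \C_{-\rho}\otimes V^{\otimes d}$, and PBW-exactness of parabolic induction, combined with the tensor identity identifying $P\otimes V^{\otimes d}\cong U(\fgl_N)\otimes_{U(\fp)}(\C_{-\rho}\otimes V^{\otimes d})$, yields the desired embedding. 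Post-composition with $\iota$ identifies $\Hom(P\otimes V^{\otimes d},\prj^d_\kappa)$ with the right ideal $\{\phi\in H^\la_d : \phi(P\otimes V^{\otimes d})\subseteq \iota(\prj^d_\kappa)\}$ inside $H^\la_d = \End(P\otimes V^{\otimes d})$.

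The central step is to show that this right ideal equals $z_\kappa H^\la e_d$. For this I would invoke the Brundan-Kleshchev description of the dAHA action: the element $x_k$ acts on the $k$th tensor factor through an operator built from the Casimir whose generalized eigenspaces correspond to the filtration layers $V_j/V_{j-1}$, with the $\pi_j$-generalized eigenspace being the layer of the $j$th block. Consequently $(x_k - \pi_j)$ is invertible on the parts of the $k$th factor not lying in $V_j/V_{j-1}$ and nilpotent on that piece. Tracking $z_\kappa = \prod_j \prod_{k>\kappa(j)}(x_k-\pi_j)$: for $k\leq \kappa(\ell)$, the factors present are precisely $(x_k-\pi_j)$ for $j<j(k)$ (where $j(k)$ is the unique index with $\kappa(j(k)-1)<k\leq\kappa(j(k))$), which invert outside $V_{j(k)}/V_{j(k)-1}\oplus\cdots$ and cut out the submodule of vectors whose $k$th factor lies in $V_{j(k)}$; for $k>\kappa(\ell)$ the full product appears and leaves $V$ unrestricted up to identifiable invertible rescalings. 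Gluing these across all $k$ shows $z_\kappa\cdot(P\otimes V^{\otimes d})=\iota(\prj^d_\kappa)$, which gives the first isomorphism.

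For the second isomorphism, the projection $\pr_n\colon\cO^\fp\to\cO^\fp_n$ corresponds under $\Hom(P\otimes V^{\otimes d},-)$ to multiplication by the central idempotent $e_\fg$, by the very definition of $H^{\la,n}=e_\fg H^\la e_\fg$ in Proposition~\ref{BK} together with the block decomposition of $\cO^\fp$ by tableau entries. Since $z_\kappa$ is polynomial in the commuting $x_k$, it commutes with each spectral idempotent $e(\Bi)$ and hence with $e_\fg$; the first isomorphism therefore restricts to $M^\kappa e_d = z_\kappa H^{\la,n}e_d \cong \Hom(P\otimes V^{\otimes d},\pr_n(\prj^d_\kappa))$. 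The main obstacle is the image computation $z_\kappa(P\otimes V^{\otimes d}) = \iota(\prj^d_\kappa)$: the containment $\subseteq$ is reasonably direct from the nilpotent/invertible dichotomy applied to weight-vector generators, but the reverse $\supseteq$ is subtle, as it requires showing that on each generalized eigenspace contributing to $\iota(\prj^d_\kappa)$ the product $z_\kappa$ restricts to an invertible operator. This reduces to a careful bookkeeping exercise matching the Brundan-Kleshchev action formula with the combinatorics of the product defining $z_\kappa$, and in particular with the way the indexing $k>\kappa(j)$ of factors interlocks with the filtration condition $v_k\in V_{j(k)}$.
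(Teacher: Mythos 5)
Your strategy is the same as the paper's: both arguments reduce everything to the single computation that the image of $z_\kappa$ acting on $P\otimes V^{\otimes d}$ is exactly $\prj^d_\kappa$, and both obtain the second isomorphism by cutting down to the blocks whose tableaux use only entries in $[1,n]$ (your central idempotent $e_\fg$ is the same as the paper's condition that each $\chi^n_j=\prod_{i=1}^n(x_j-i)$ act nilpotently). Two points deserve care, however.

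First, your ``nilpotent on the $j$th layer, invertible off it'' dichotomy is not literally available when the composition $\pi$ has repeated parts (e.g.\ $\bla=(\om_1,\om_1)$, so $\pi_1=\pi_2$), which is the generic situation. In that case $(x_k-\pi_j)$ is nilpotent on the whole generalized $\pi_j$-eigenspace, which spans several filtration layers, and the assertion that the product still surjects onto $\iota(\prj^d_\kappa)$ is not formal: it is exactly the content of the cited result \cite[Lemma 3.3]{BKSch} about the structure of $M(\mathbf{a})\otimes V$ (the relevant extensions between layers are nonsplit in precisely the way needed for the nilpotent operator to have maximal image). The paper invokes that lemma as a black box for one tensor factor and iterates; your ``careful bookkeeping'' would have to reprove it, so you should either cite it or supply that computation. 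Second, having shown $z_\kappa(P\otimes V^{\otimes d})=\iota(\prj^d_\kappa)$, the identification of $\{\phi:\operatorname{im}\phi\subseteq\iota(\prj^d_\kappa)\}$ with $z_\kappa H^\la e_d$ still requires lifting each such $\phi$ through the surjection $z_\kappa\colon P\otimes V^{\otimes d}\twoheadrightarrow \iota(\prj^d_\kappa)$, i.e.\ the projectivity of $P\otimes V^{\otimes d}$, which the paper uses explicitly and your write-up omits.
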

\begin{proof}
  This rests on a single computation, which is that the image in
  $P\otimes V$ of the action of $\prod_{i=j+1}^\ell (x_1-\pi_i)$ is
  $$U(\fgl_n)\otimes_{U(\fp)}(\C_{-\rho}\otimes V_j)\subset U(\fgl_n)\otimes_{U(\fp)}(\C_{-\rho}\otimes V)\cong P\otimes V;$$ this follows
  from \cite[Lemma 3.3]{BKSch}.  This shows that the image of
  $z_\kappa$ acting on $P\otimes V^{\otimes d}$ is $\prj_\kappa^d$, so by
  the projectivity of $P\otimes V^{\otimes d}$, every homomorphism to
  $\prj_\kappa^d$ factors through this one.

  We can identify those homomorphisms whose image is in
  $\pr_n(\prj_\kappa^d)\subset \prj_\kappa^d$ as those killed by some power
  of $\chi^n_j=\prod_{i=1}^n(x_j-i)$ for each $j$ (if a number $m$
  appears in a tableau, then $x_j-m$ is nilpotent for some $j$, and so
  if $m\notin [1,n]$, then $\chi^n_j$ is invertible for that $j$).
  Thus, this homomorphism space is the subspace of $z_\kappa H^\la e_d $ on
  which all $\chi^n_j$ act nilpotently, which is precisely $M^\kappa e_d$.
\end{proof}
\begin{cor}\label{equiv}  For the sequence of weights
  $\bla=(\omega_{\pi_1},\dots,\omega_{\pi_\ell})$, 
we have an equivalence $\Xi:\cata^\bla\overset{\cong}\longrightarrow\cO^\fp_n$.
\end{cor}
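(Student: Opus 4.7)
The plan is to combine the previous proposition identifying $\alg^\bla$ with $\End(\oplus_\kappa M^\kappa)$ together with Proposition \ref{Hecke-equivalence}, and then invoke a standard ``projective generator'' argument to transfer the equivalence to category $\cO$. More precisely, I would proceed in four steps.

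First, I would observe that since $\oplus_{\kappa,d}\prj^d_\kappa$ is a projective generator of $\cO^\fp_{>0}$ and $\cO^\fp_n \subset \cO^\fp_{>0}$ is a sum of blocks, the image $\oplus_{\kappa,d}\pr_n(\prj^d_\kappa)$ is a projective generator of $\cO^\fp_n$. This reduces the problem to computing the endomorphism ring of this generator and identifying it with $\alg^\bla$. Second, I would use the Brundan--Kleshchev result \cite[6.10]{BKSch} that on each block of $\cO^\fp$ the functor $\Hom(P\otimes V^{\otimes d},-)$ is either zero or fully faithful on projectives; the blocks where it is nonzero are precisely those contributing to $\cO^\fp_n$, because the central characters appearing in $\End(P\otimes V^{\otimes d})$ are exactly those controlled by $H^\la$, and the idempotent $e_\fg$ cuts out the subblocks whose tableau entries lie in $[1,n]$.

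Third, combining full faithfulness with Proposition \ref{Hecke-equivalence}, I get
\[
\End_{\cO^\fp_n}\Bigl(\bigoplus_{\kappa,d}\pr_n(\prj^d_\kappa)\Bigr) \cong \End_{H^{\la,n}}\Bigl(\bigoplus_{\kappa,d} M^\kappa e_d\Bigr) \cong \End_{H^{\la,n}}\Bigl(\bigoplus_{\kappa} M^\kappa\Bigr),
\]
and by the proposition preceding \ref{Hecke-equivalence} this last algebra is exactly $\alg^\bla$. The standard Morita equivalence for projective generators then produces an equivalence $\Xi\colon\cata^\bla = \alg^\bla\modu\to \cO^\fp_n$ sending the indecomposable projective $P^\kappa_\Bi$ (viewed via Proposition \ref{prop:add-embed} as a summand of $y_{\Bi,\kappa}\alg^\la$) to the matching summand of $\pr_n(\prj^d_\kappa)$.

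The main obstacle I anticipate is bookkeeping rather than conceptual: one has to be careful that summing over \emph{all} $d$ in the Hecke-algebra picture matches summing over all weight spaces in $\cata^\bla$, and that the idempotent $e_\fg$ correctly carves out exactly $\cO^\fp_n$ (as opposed to some larger sum of blocks). Verifying this amounts to checking that the central characters appearing in $\pr_n$ correspond bijectively, under $\Upsilon$ of Proposition \ref{BK}, to the joint spectra of the $x_i$ lying in $[1,n]^d$, which is essentially the content of Brundan--Kleshchev's identification of blocks. Once this matching is in place, the equivalence follows formally.
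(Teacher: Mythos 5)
Your proof is correct and follows exactly the route the paper intends: the corollary is stated as an immediate consequence of Proposition \ref{Hecke-equivalence}, the identification $\alg^\bla\cong\End(\oplus_\kappa M^\kappa)$, the projective-generator statement for $\oplus_{\kappa,d}\prj^d_\kappa$, and Brundan--Kleshchev's full faithfulness on projectives, which is precisely the chain you assemble. The bookkeeping point you flag (matching the idempotent $e_\fg$ with the blocks of $\cO^\fp_n$) is indeed the only thing to check, and it is handled by the second isomorphism in Proposition \ref{Hecke-equivalence}.
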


We can generalize this statement a bit further: let us now consider
the case where the weights $\la_i$ are not fundamental.  In this case,
to each weight $\la_i$ we have a unique Young diagram given by writing
it as a sum of fundamental weights, and we obtain a pyramid $\pi$ by
concatenating these horizontally (this is the pyramid associated
earlier to the refinement of $\bla$ into fundamental weights).  We
associate a parabolic $\fp$ with the pyramid as on the previous page.

For each collection of semi-standard\footnote{In \cite{BKSch}, these
  are called ``standard.''} tableaux $T_i$ on each of these diagrams
which only use the integers $[1,n]$, this gives a tableau on $\pi$
(now just column-strict).  Such a tableau can be converted into a module
in $\cO^\fp$ for $\fgl_N$ (where $N=\sum \pi_i$) by taking the
projective cover of the $\fp$-parabolic Verma module corresponding to
this tableau.  Let $\cO^\fp_{\bla}$ be the subcategory of modules presented by these
projectives.

\begin{prop}
The functor $\Xi$ induces an equivalence of $\cO^\fp_\bla$ and $\cata^\bla$.
\end{prop}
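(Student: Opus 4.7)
The plan is to reduce to the fundamental-weight case already handled in Corollary~\ref{equiv} via the strand-splitting isomorphism of Proposition~\ref{split-strands}. Write each $\la_i$ as a sum of fundamentals $\la_i = \sum_{k}\omega_{c_{i,k}}$ (the $c_{i,k}$ being the column heights of its Young diagram), and let $\bla^{fine}$ denote the concatenated sequence of these fundamental weights. Then $\pi=(c_{1,1},c_{1,2},\dots,c_{\ell,1},\dots)$ is precisely the pyramid indexing the parabolic $\fp\subset \fgl_N$ attached to $\bla$, and Corollary~\ref{equiv} supplies an equivalence $\Xi\colon\cata^{\bla^{fine}}\to\cO^{\fp}_n$.

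Applying Proposition~\ref{split-strands} iteratively to merge adjacent red strands back into the weights $\la_i$ gives an algebra isomorphism $T^{\bla}\cong e_{\bla}\,T^{\bla^{fine}}\,e_{\bla}$, where $e_{\bla}$ is the sum of $e(\Bi,\kappa)$ over those Stendhal triples with $\kappa(k)=\kappa(k+1)$ whenever the $k$th and $(k+1)$st red strands of $\bla^{fine}$ come from the same $\la_i$. Thus $\cata^{\bla}$ is equivalent to the full subcategory of $\cata^{\bla^{fine}}$ additively generated by the projectives $P^{\kappa}_{\Bi}$ with $\kappa$ satisfying these constraints. It therefore remains to identify the image of this subcategory under $\Xi$ with $\cO^{\fp}_{\bla}$.

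For this, combine Proposition~\ref{Hecke-equivalence} with the indexing of indecomposable projectives in $\cO^{\fp}_n$ by column-strict $[1,n]$-tableaux on $\pi$. Via Proposition~\ref{prop:stringy}, an indecomposable projective of $\cata^{\bla^{fine}}$ appearing as a summand of $e_{\bla}\cdot e(\Bi,\kappa)T^{\bla^{fine}}$ corresponds to a stringy triple in which, for each $i$, the columns of $\pi$ belonging to $\la_i$ are not separated by black strands. Under the dictionary between stringy triples and tableaux already used in the discussion after Proposition~\ref{prop:add-embed} (and in the quiver Schur description of Theorem~\ref{quiver-schur}), this constraint corresponds exactly to requiring that the filling of each sub-Young diagram attached to $\la_i$ be weakly increasing along rows, i.e., semistandard on each factor. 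These are precisely the tableaux indexing the projective generators of $\cO^{\fp}_{\bla}$, so $\Xi$ carries the image of the splitting onto $\cO^{\fp}_{\bla}$.

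The main obstacle is the combinatorial matching in the final step: one must verify carefully that the constraint $\kappa(k)=\kappa(k+1)$ on Stendhal triples (no black strands interrupting a group of reds belonging to a single $\la_i$) translates under $\Xi$ into the semistandardness (rather than mere column-strictness) of the filling on each sub-pyramid. This can be checked either directly using the formulas $\Upsilon(y_{\Bi,\kappa}) H^{\la,n}=M^{\kappa}_{\Bi}$ together with the identification of $M^{\kappa}$-summands with permutation modules in $\cO^{\fp}$ from \cite[\S6]{BKSch}, or indirectly via the quiver Schur algebra of Theorem~\ref{quiver-schur}, which already encodes the semistandard-tableau labeling. Everything else is formal: once the generating projectives are matched, Morita equivalence propagates the equivalence from $\cata^{\bla^{fine}}\cong\cO^{\fp}_n$ to the desired equivalence $\cata^{\bla}\cong\cO^{\fp}_{\bla}$.
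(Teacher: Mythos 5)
Your reduction is the same as the paper's: split $\bla$ into fundamental weights, use Proposition \ref{split-strands} to realize $\cata^\bla$ inside $\cata^{\bla'}$ as the subcategory presented by the projectives $P^\kappa_\Bi$ with $\kappa$ constant on the blocks coming from each $\la_i$, and transport through Corollary \ref{equiv}. The divergence, and the gap, is in the final step, where you propose a direct combinatorial dictionary between the indecomposable summands of these constrained projectives and block-semistandard tableaux.

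The problem is that a summand of $e_\bla\cdot e(\Bi,\kappa)T^{\bla'}$ is the projective cover of a simple $L$ with $Le(\Bi,\kappa)\neq 0$, and nothing guarantees that the stringy triple of such an $L$ again has $\kappa$ constant on blocks. Read literally, your characterization (``a stringy triple in which the columns of $\pi$ belonging to $\la_i$ are not separated by black strands'') forces every black block internal to the group for $\la_i$ to be empty, hence, under the dictionary of Figure \ref{fig:partitions}, every column of that group except the last to be the ground-state column. That is a much smaller set than the block-semistandard tableaux: already for $\mathfrak{sl}_3$ and $\la=\omega_1+\omega_2$ it yields $3$ tableaux rather than $\dim V_\la=8$. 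If instead you intend the (correct) statement that the surviving projectives are exactly those labeled by block-semistandard tableaux, you give no argument for it beyond ``this can be checked,'' and that is precisely the content of the proposition; note also that Theorem \ref{quiver-schur} only treats fundamental weights, so it does not supply the labeling for general $\bla$. The paper sidesteps the whole issue by counting: the indecomposable projectives of $\cata^\bla$ are in bijection with the indecomposable summands of the constrained $\pr_n(\prj^\kappa_d)$, and there are exactly $\dim V_\bla$ of them by Theorem \ref{Uq-action}, which is also the number of block-semistandard tableaux; it then shows every block-semistandard tableau's projective genuinely occurs as a summand via the crystal-operator argument of Brundan--Kleshchev (such a tableau is built from the empty one by attaching ground-state-filled diagrams and applying crystal operators). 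To complete your route you would need either to prove the tableau-theoretic characterization of the simples not killed by $e_\bla$ directly, or to fall back on a cardinality argument of this kind.
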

\begin{proof}
Let $\pi_i$ be a composition chosen so that
$\bla'=(\omega_{\pi_1},\dots, \omega_{\pi_q})$ is one way of splitting
$\bla$ into fundamental weights.  By Lemma \ref{split-strands}, we
have an embedding $\cata^\bla\hookrightarrow \cata^{\bla'}$ as the
objects represented by $P^\kappa_\Bi$ where $\kappa$
 is constant on the blocks of fundamental weights obtained by
  breaking up $\la_i$.

  Corollary \ref{equiv} thus shows that $\cata^\bla$ is
  equivalent to the subcategory of $\cO^\fp_{\bla'}$ consisting of objects presented by
  projectives $\pr_n(\prj_\kappa^d)$ where $\kappa$
  is constant on the blocks of fundamental weights obtained by
  breaking up $\la_i$.  In terms of category $\cO$, these are the
  result of inducing
  finite-dimensional $\fp$-modules obtained by tensoring the
  vector spaces which appear in a particular flag preserved by $\fp$,
  the gaps of which encode the sequence $\bla$.

  That is, the indecomposable projectives of $\cata^\bla$ are sent to
  the indecomposable projectives which appear as summands of these
  $\pr_n(\prj_d^\kappa)$.  Thus these are in bijection, and there can
  only be $\dim V_\bla$ of the latter.  Since there is exactly that
  number of tableaux which are semi-standard in blocks as described
  above, we need only show that these occur as summands.

  This follows from the relationship between the crystal structure on
  tableaux and projectives in category $\cO$.  Specifically, since any
  tableau which is semi-standard in blocks can be obtained from the
  empty tableau by the operations of attaching a fresh Young diagram
  filled with the ground state tableau and of applying crystal
  operators, the argument from \cite[Corollary 4.6]{BKSch} shows that
  the projective corresponding to such a tableau is a summand of an
  appropriate $\prj_d^\kappa$.
\end{proof}

We note that this shows that our categorification agrees with that
for twice fun\-da\-men\-tal weights of $\mathfrak{sl}_n$ recently given by
Hill and Sussan \cite{HS}.

The category $\cO^\fp$ has a natural endofunctor given by tensoring
with $V$.  Restricting to $\cO^\fp_n$, we can take the functor
$f_\bullet=\pr_n(-\otimes V)$. This functor has a natural
decomposition $f_\bullet=\oplus_{i=1}^n f_i$ in terms of the
generalized eigenspaces of $x_1$ acting on $-\otimes V$;  we need only
take $i\in [0,n]$ since these are the only eigenvalues of $x_1$ on the
projection to $\cO^\fp_n$.

\begin{prop}\label{trans-act}
We have a commutative diagram
\begin{equation*}
    \begin{tikzpicture}[yscale=1.1,xscale=1.5,very thick]
        \node (a) at (1,1) {$\cO^\fp_n$};
        \node (b) at (-1,1) {$\cO^\fp_n$};
        \node (c) at (1,-1) {$\cata^\bla$};
        \node (d) at (-1,-1) {$\cata^\bla$};
        \draw[->] (b) -- (a) node[above,midway]{$f_i$};
        \draw[->] (d) -- (c) node[below,midway]{$\fF_i$};
        \draw[->] (c) --(a) node[right,midway]{$\Xi$};
         \draw[->] (d) --(b) node[left,midway]{$\Xi$};
    \end{tikzpicture}
\end{equation*}
\end{prop}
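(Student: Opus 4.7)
The plan is to verify the commutativity of the diagram by tracing both $\fF_i$ and $f_i$ through the chain of equivalences defining $\Xi$, and showing that on the intermediate category of modules over the cyclotomic degenerate affine Hecke algebra $H^{\la,n}$ they both correspond (naturally) to the same explicit endofunctor: induction from $H^{\la,n}_d$ to $H^{\la,n}_{d+1}$ followed by projection to the generalized eigenspace in which the newly adjoined variable acts with eigenvalue~$i$. Both $\fF_i$ and $f_i$ are exact (the former because $\beta_{\eF_i}$ is projective as a left and right $T^\bla$-module by Theorem~\ref{full-action}; the latter because $-\otimes V$ is exact and $\pr_n$ is the projection onto a sum of blocks), so once the underlying bimodules are matched the natural transformation is determined on projective generators.

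The first step is to unpack $\Xi$. Using Proposition~\ref{prop:add-embed}, $\cata^\bla$ is realized as the category of right modules over $\End_{T^\la}\bigl(\bigoplus_\kappa y_{\Bi,\kappa} T^\la\bigr)$; Brundan--Kleshchev's isomorphism $\Upsilon$ of Proposition~\ref{BK} transports this into modules over $H^{\la,n}$ with $\Upsilon(e(\Bi))$ being the projector onto the joint generalized eigenspace where $x_k$ has eigenvalue $i_k$; and Proposition~\ref{Hecke-equivalence} identifies the outcome with a subcategory of $\cO^\fp_n$ via the Schur functor $\mathbf{S}=\Hom(P\otimes V^{\otimes d},-)$. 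Under this chain, the functor $\fF_i$---which is by construction induction along the map $\nu_i\colon T^\bla\to T^\bla$ adjoining an $i$-colored strand at the right---corresponds to the induction along the obvious inclusion $H^{\la,n}_d\hookrightarrow H^{\la,n}_{d+1}$ composed with projection onto the generalized eigenspace where $x_{d+1}$ acts with eigenvalue~$i$. This is immediate from the compatibility of $\Upsilon$ with the inclusions on both sides and from the characterization of $\Upsilon$ on KLR idempotents recalled above.

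For $f_i$ one invokes Brundan--Kleshchev's translation principle from \cite{BKSch}: the Schur functor $\mathbf{S}$ intertwines $-\otimes V$ on $\cO^\fp$ with the induction functor $H^\la_d\text{-}\mathsf{mod}\to H^\la_{d+1}\text{-}\mathsf{mod}$, and $\pr_n$ corresponds to compressing by the idempotent $e_\fg$ cutting out $H^{\la,n}$. The $x_1$-eigenvalue decomposition of $-\otimes V$ (where $x_1$ is realized as the split Casimir between the module and the new tensor factor $V$) corresponds precisely to the eigenvalue decomposition of induction by the added variable, since in the dAHA both $x_1$ and $x_{d+1}$ are Jucys--Murphy elements whose eigenvalues on a module record the content of the newly added box. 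The main obstacle is therefore conventional rather than substantive: one must carefully verify that the identification in Proposition~\ref{Hecke-equivalence}, together with the Casimir-based action of $x_1$ used by Brundan--Kleshchev to define translation, really matches the projection in the induced module to the $x_{d+1}=i$ summand. Once this dictionary between ``new variable on the left'' (as in Brundan--Kleshchev's conventions) and ``new variable on the right'' (as in our $\fF_i$) is pinned down---which reduces to a computation inside the center of $H^\la_{d+1}$ acting on induced modules---both paths around the square become the same functor, and the commutativity follows.
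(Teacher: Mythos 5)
Your proposal is correct and follows essentially the same route as the paper's proof: identify $f_\bullet=\pr_n(-\otimes V)$ with induction $H^{\la,n}_d\to H^{\la,n}_{d+1}$ via the Schur functor, recognize this under the Brundan--Kleshchev isomorphism as $\oplus_i\fF_i$ (induction along $\nu_i$), and then match the generalized eigenspace decompositions, using that the new variable acts on $\fF_iM$ by $y+i$ so that $x-i$ is nilpotent exactly on the $\fF_i$ summand. The convention-matching step you flag as remaining is handled in the paper by the same one-line observation, so nothing substantive is missing.
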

\begin{proof}
  The functor $f_\bullet$ corresponds to tensoring a
  $H^{\la,n}_d$-module with $H^{\la,n}_{d+1}$. By Proposition
  \ref{BK}, this corresponds to tensoring over $T^\bla_\mu$ with
  $\oplus_iT^\bla_{\mu-\al_i}$ via the map $\oplus \nu_i$.  This is,
  of course, the
  functor $\oplus_{i=1}^n\fF_i$.  Via Brundan and Kleshchev's
  isomorphism, $x_n$ acts on $\fF_iM$  for any $M$ by $y_n+i$; that is, $x_n-i$ acts
  invertibly on $\fF_jM$ for $j\neq i$ and nilpotently on $\fF_iM$.
  This shows the desired isomorphism.
\end{proof}

For any parabolic subalgebra $\fq\supset \fp$ with Levi $\fl=\fq/\!\rad\fq$, we have an induction functor \[\ind_\fl^{\fgl_N}\overset{def}= U(\fgl_N)\otimes_{U(\fq)}-:\cO^\fp(\fl)\to \cO^\fp\] where $\cO^\fp(\fl)$ denotes the parabolic category $\cO$ for $\fl$ and the parabolic $\fp/\!\rad\fq$ (here $\fl$-representations are considered as $\fq$ representations by pullback). 

Choices of $\fq$ are in bijection with partitions of $\bla$ into
consecutive blocks $\bla_1,\dots,\bla_k$. Let $\Xi_\fl:\cata^{\bla_1;\dots ;\bla_k}\to \cO^\fp(\fl)$ be the comparison functor analogous to $\Xi$ for $\fl$.

\begin{prop}\label{ind-sta}
We have a commutative diagram
\begin{equation*}
    \begin{tikzpicture}[yscale=1.1,xscale=1.9,very thick]
        \node (a) at (1,1) {$\cO^\fp_n$};
        \node (b) at (-1,1) {$\cO^\fp_n(\fl)$};
        \node (c) at (1,-1)  {$\cata^\bla$};
        \node (d) at (-1,-1){$\cata^{\bla_1;\dots ;\bla_k}$};
        \draw[->] (b) -- (a) node[above,midway]{$\ind_{\fl}^{\fgl_N}$};
        \draw[->] (d) -- (c) node[below,midway]{$\mathbb{S}^{\bla_1,\dots,\bla_k}$};
        \draw[->] (c) --(a) node[right,midway]{$\Xi$};
         \draw[->] (d) --(b) node[left,midway]{$\Xi_\fl$};
    \end{tikzpicture}
\end{equation*}
\end{prop}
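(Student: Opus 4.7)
My plan is to verify the diagram commutes by establishing a natural isomorphism of the two composed functors on projective generators. Both functors are exact: $\mathbb{S}^{\bla_1;\dots;\bla_k}$ by Proposition \ref{standard-exact}, and $\ind_\fl^{\fgl_N} = U(\fgl_N) \otimes_{U(\fq)} (-)$ because $U(\fgl_N)$ is free as a right $U(\fq)$-module by PBW, so it suffices to produce the isomorphism on projective objects. By transitivity of both standardization (via successive refinement of the partition of $\bla$) and parabolic induction (via intermediate Levi subalgebras), the general case reduces to $k=2$, with $\bla_1 = (\la_1,\ldots,\la_p)$, $\bla_2 = (\la_{p+1},\ldots,\la_\ell)$, and an intermediate parabolic $\fq$ having Levi $\fl = \fl_1 \times \fl_2$.

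In the $k=2$ case, the cleanest strategy is to invoke uniqueness of tensor product categorifications, as developed in \cite{LoWe}. Corollary \ref{TPC} identifies $\cata^\bla$ with its standard stratification as a TPC of $V_\bla$, so transporting this structure along the equivalence $\Xi$ of Corollary \ref{equiv} equips $\cO^\fp_n$ with a TPC structure whose standardization functor is $\Xi \circ \mathbb{S}^{\bla_1;\bla_2} \circ (\Xi_\fl)^{-1}$. Independently, $\cO^\fp_n$ carries a natural ``parabolic induction'' TPC structure: the classical projection formula $V \otimes \ind_\fq^{\fgl_N}(-) \cong \ind_\fq^{\fgl_N}(V|_\fl \otimes -)$ combined with $V|_\fl \cong V_1 \oplus V_2$ produces the two-step filtration of $\fF_i \circ \ind_\fl^{\fgl_N}$ required by axiom (TPC3) --- this is the category $\cO$ counterpart of Proposition \ref{prop:act-filter} --- with grading shifts determined by the difference between the $\rho$-shifts for $\fgl_N$ and $\fl$. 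By Proposition \ref{trans-act} the two TPC structures share the same underlying categorical $\fg$-action, so uniqueness forces their standardization functors to coincide.

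The main obstacle will be verifying that parabolic induction genuinely yields a \emph{graded} TPC structure on $\cO^\fp_n$ compatible with the grading inherited from $\cata^\bla$ via $\Xi$; while the ungraded statement is classical, matching the Koszul grading with the one transported from the cyclotomic Hecke algebra requires some care. As a fallback, I would pursue a direct verification on the generators $P^{\kappa_1}_{\Bi_1} \boxtimes P^{\kappa_2}_{\Bi_2}$: Proposition \ref{Hecke-equivalence} identifies $\Xi\bigl(\mathbb{S}^{\bla_1;\bla_2}(P^{\kappa_1}_{\Bi_1} \boxtimes P^{\kappa_2}_{\Bi_2})\bigr)$ explicitly with $\pr_n$ applied to a specific $\prj^\kappa_d$, while Brundan-Kleshchev's realization of parabolic projectives as iterated inductions (in the proof of \cite[Thm.~4.14]{BKSch}) combined with transitivity of parabolic induction identifies $\ind_\fq^{\fgl_N}\bigl(\Xi_{\fl_1}(P^{\kappa_1}_{\Bi_1}) \boxtimes \Xi_{\fl_2}(P^{\kappa_2}_{\Bi_2})\bigr)$ with the same module. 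Naturality in $\Bi_i$ and $\kappa_i$ then follows because all morphism spaces on both sides are controlled by the common cyclotomic Hecke endomorphism algebra.
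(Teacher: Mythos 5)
Your primary strategy is genuinely different from the paper's. The paper argues directly: both functors are exact, so it suffices to compare them on a projective $P=\prj_1\boxtimes\cdots\boxtimes\prj_k$ over $\fl$; it then exhibits both $\ind_\fl^{\fgl_N}P$ and $\mathbb{S}^{\bla_1;\dots;\bla_k}(\Xi_\fl^{-1}(P))$ as quotients of one and the same projective $P'$ (the one attached to the concatenated tableau), matches the two kernels combinatorially (maps from projectives higher in Bruhat order that change the content of some block, versus maps from idempotents where a black strand has migrated leftward across a red line), and finally disposes of naturality by observing that $\End(S_\bal)$ is the \emph{full} tensor product algebra (Proposition \ref{semi-orthogonal}), so any object-level isomorphism automatically intertwines the two actions. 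Your route through uniqueness of tensor product categorifications is heavier but defensible: it buys you a conceptual explanation (two TPC structures on $\cO^\fp_n$ with the same underlying action must agree) at the cost of having to verify the TPC axioms for $(\cO^\fp_n,\ind_\fl^{\fgl_N})$ from scratch, including the graded refinement you correctly flag, and of checking that the unicity theorem really pins down the standardization functor and not merely an abstract equivalence. The paper avoids all of this by never needing to know that parabolic induction is a standardization functor in the axiomatic sense.

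Your fallback, however, contains a genuine error as stated: $\Xi\bigl(\mathbb{S}^{\bla_1;\bla_2}(P^{\kappa_1}_{\Bi_1}\boxtimes P^{\kappa_2}_{\Bi_2})\bigr)$ is \emph{not} $\pr_n(\prj^\kappa_d)$, and $\ind_\fq^{\fgl_N}$ of a projective over the Levi is \emph{not} projective over $\fgl_N$ (already for $\fl=\fh$ it is a Verma module). Proposition \ref{Hecke-equivalence} identifies the images of \emph{projectives} of $\cata^\bla$ with the modules $\pr_n(\prj^\kappa_d)$, which are induced from finite-dimensional $\fp$-modules, not from the intermediate Levi; standardizations of projectives are only \emph{quotients} of these. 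The entire content of the object-level comparison is the identification of the two kernels of the surjections from the common projective $P'$, which your fallback skips. If you pursue the direct route you must supply that kernel comparison, and then the endomorphism-algebra observation above handles functoriality.
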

\begin{proof}
We know that both functors are exact, by Proposition
\ref{standard-exact}; thus need only check this on projectives. Consider a representation of $\fl$ given by an exterior product of projectives in category $\cO$ for each of its $\mathfrak{gl}_j$-factors \[P=\prj_1\boxtimes\cdots\boxtimes \prj_k.\]  Then the induction $\ind^{\fgl_N}_\fl P$ is a quotient of the projective $P'$ corresponding to the concatenation $T$ of the tableaux $T_i$ for the $\prj_i$.  The kernel is the image of all maps from projectives higher than $T$ in Bruhat order through a series of transpositions which change the content of at least one of the $T_i$.

Similarly, the standardization $\mathbb{S}^{\bla_1;\dots;\bla_k}(\Xi^{-1}_\fl(P))$ is a quotient of $\Xi^{-1}(P')$; the kernel is the image of all maps from projectives that correspond to idempotents for sequences where at least one black strand has been moved left from one block to the other.  Thus, these functors agree on the level of projective objects.

Now, we must show that they agree on morphisms; that is, we must show
that the action of $\alg^{\bla_1}\otimes \cdots \otimes \alg^{\bla_k}$
induced on $\ind_{\fl}^{\fgl_N}(\Xi(\alg^{\bla_1}\otimes \cdots
\otimes \alg^{\bla_k}))$ agrees with that on
$\Xi(\mathbb{S}^{\bla_1,\dots,\bla_k}(\alg^{\bla_1}\otimes \cdots
\otimes \alg^{\bla_k}))$ under an isomorphism between these objects.
Since $\alg^{\bla_1}_{\al_1}\otimes \cdots \otimes
\alg^{\bla_k}_{\al_k}$ is the full-endomorphism algebra of $S_\bal$,
it is also the full endomorphism algebra of $\Xi(S_\bal)$.  Thus, in
fact, any isomorphism $\Xi(S_\bal)\cong \ind_{\fl}^{\fgl_N}(\Xi(\alg^{\bla_1}_{\al_1}\otimes \cdots
\otimes \alg^{\bla_k}_{\al_k}))$ induces an isomorphism of functors.  
\end{proof}

Some care is required here on the subject of gradings. Brundan and
Kleshchev's results relating category $\cO$ to Khovanov-Lauda algebras
are ungraded; they imply no connection between the usual graded lift
of $\tO^\fp$ of category $\cO$ and the graded category of modules over
$\alg^\bla$.  Luckily, the uniqueness of Koszul gradings proven in \cite[2.5.2]{BGS96}
implies that any Morita equivalence between two Koszul graded algebras
can be lifted to a graded equivalence.

There are now two proofs in the literature that in the type A case,
when all weights are fundamental, these algebras are Koszul.  Hu and
Mathas have shown that their quiver Schur algebra is Koszul
\cite[Th. C]{HMQ}; thus, we may use the Morita equivalence of Theorem
\ref{quiver-schur} to transport this result to $\alg^\bla$.  The
author has also given a direct geometric proof in
\cite[Th. B]{Webqui}, by directly constructing a graded isomorphism of
$\alg^\bla$ with an Ext-algebra in the Koszul dual of $\cO^\fp_n$.

\begin{prop}\label{sln-Koszul}
  When $\fg=\mathfrak{sl}_n$ and $\bla$ is a list of fundamental
  weights, the algebra $\alg^\bla_\mu$ is Koszul.
\end{prop}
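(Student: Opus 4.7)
The plan is to deduce the Koszulity of $\alg^\bla_\mu$ from one of two existing Koszulity results in the literature, both of which fit naturally with machinery already developed in the paper. My preferred approach would use Theorem \ref{quiver-schur}, which identifies each weight space $\cata^\bla_\mu$ (as a \emph{graded} category) with a sum of blocks of graded modules over Hu and Mathas's quiver Schur algebra $\mathcal{S}^\la_m$, associated to the charge $(\pi_1,\ldots,\pi_\ell)$ determined by writing $\bla$ as a sum of fundamental weights. Since Hu and Mathas have established in \cite[Theorem C]{HMQ} that $\mathcal{S}^\la_m$ is a Koszul algebra, and since Koszulity descends to idempotent truncations by a degree-zero idempotent (and hence to individual blocks), the Morita equivalence of Theorem \ref{quiver-schur} transports this Koszulity to $\alg^\bla_\mu$.

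An alternative route would begin from Corollary \ref{equiv}, which gives an equivalence between $\cata^\bla$ and the subcategory $\cO^\fp_n$ of parabolic category $\mathcal{O}$ for $\mathfrak{gl}_N$. Parabolic category $\mathcal{O}$ is known to be Koszul by Beilinson--Ginzburg--Soergel and Backelin, so the basic algebra controlling $\cO^\fp_n$ carries a Koszul grading $\gamma$. To conclude that $\alg^\bla_\mu$ itself is Koszul in its natural grading (the one coming from the Stendhal diagram presentation), one would invoke \cite[Corollary 2.5.2]{BGS96}: any two Koszul gradings on a basic finite-dimensional algebra are conjugate by a graded automorphism, so it suffices to produce \emph{some} Koszul grading Morita equivalent to the given one.

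The main obstacle in the second approach is genuinely matching gradings: the Brundan--Kleshchev isomorphism underlying Corollary \ref{equiv} is formulated ungradedly in \cite{BKSch}, so one only obtains a Koszul grading on the endomorphism algebra of a projective generator of $\cO^\fp_n$, and must then argue that this grading agrees with the Stendhal grading on $\alg^\bla_\mu$ up to graded Morita equivalence. One could attempt this by verifying directly that $\Ext^i(L,L')$ between simples of $\alg^\bla_\mu$ is concentrated in internal graded degree $i$, but this essentially \emph{is} a proof of Koszulity, offering no shortcut. For this reason I would present the proof as a citation of Theorem \ref{quiver-schur} together with \cite[Theorem C]{HMQ}, noting in passing that a direct geometric construction of the Koszul dual appears in \cite[Theorem B]{Webqui}.
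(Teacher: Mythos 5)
Your proposal matches the paper's argument essentially verbatim: the paper proves Koszulity by citing Hu--Mathas \cite[Th.~C]{HMQ} for the quiver Schur algebra and transporting it through the graded Morita equivalence of Theorem \ref{quiver-schur}, while noting the alternative geometric proof of \cite[Th.~B]{Webqui} and the role of \cite[2.5.2]{BGS96} in handling the grading issue on the category $\cO$ side. One small caution: Koszulity passes to direct summands (blocks) but not to arbitrary idempotent truncations $eAe$, so your parenthetical should lean on the fact that Theorem \ref{quiver-schur} produces a sum of \emph{blocks} rather than a general degree-zero idempotent truncation.
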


If $\bla$ is not a list of fundamental weights, then we expect that
$T^\bla$ will never be Koszul.

\begin{cor}
The equivalence $\Xi$ has a graded lift.
\end{cor}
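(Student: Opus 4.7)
The strategy is to invoke the rigidity of Koszul gradings: once we know both sides of the equivalence $\Xi$ are Morita equivalent to Koszul algebras, any ungraded Morita equivalence lifts essentially uniquely to a graded one. More precisely, I will combine Proposition \ref{sln-Koszul}, which gives Koszulity of $\alg^\bla_\mu$ in type A at fundamental weights (and hence, via Proposition \ref{split-strands} together with idempotent truncation, for the more general $\bla$ relevant to $\cata^\bla_\mu \simeq \cO^\fp_n$), with the classical Koszulity of (parabolic) category $\cO$ for $\fgl_N$ established by Beilinson--Ginzburg--Soergel \cite{BGS96}. The key input is \cite[2.5.2]{BGS96}, which says that any equivalence between the module categories of two Koszul algebras is induced (up to isomorphism and an overall grading shift on each block) by a graded Morita equivalence of the algebras themselves.

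The steps, in order, would be: first, identify a projective generator $Q$ of $\cO^\fp_n$ (for example a suitable sum of the $\pr_n(\prj_\kappa^d)$) and its image $\Xi^{-1}(Q)$ in $\cata^\bla$, so that $\Xi$ is realized by the bimodule $\Hom_{\cO^\fp_n}(Q, \Xi(-))$ and $\End_{\cO^\fp_n}(Q)$ is Morita equivalent to a sum of blocks of the BGS Koszul algebra. Second, apply Proposition \ref{sln-Koszul} together with the equivalences of Section \ref{sec:type-A} to conclude that $\End_{\cata^\bla}(\Xi^{-1}(Q))$ is also Koszul. Third, invoke \cite[2.5.2]{BGS96} to produce a graded algebra isomorphism between these two (non-negatively graded, semisimple in degree $0$) Koszul algebras compatible with the underlying ungraded Morita equivalence; this is where the choice of indecomposable summands in $Q$ provides the necessary matching of idempotents. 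Finally, transport this along $\Xi$ to obtain a graded equivalence $\tilde\Xi \colon \cata^\bla \to \tO^\fp_n$ whose image under forgetting the grading is isomorphic to $\Xi$.

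The only genuinely delicate point is the normalization of the grading shifts: the BGS result determines the grading on each indecomposable block up to a single overall shift, so one must check that these shifts can be chosen coherently across the various weight spaces $\mu$ and consistently with the functorial statements already established (in particular Propositions \ref{trans-act} and \ref{ind-sta}, which relate $\Xi$ to $\fF_i$ and to standardization). This coherence is however forced upon us, because the graded lifts of $\fF_i$ and of the induction functors are themselves determined up to unique isomorphism by their ungraded avatars together with the requirement that they preserve the Koszul grading; fixing the shift on the trivial weight space $\mu = \la$ then propagates uniquely to all other weight spaces, giving the desired graded equivalence.
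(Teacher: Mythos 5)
Your proposal is correct and follows essentially the same route as the paper: the corollary is presented there as an immediate consequence of Proposition \ref{sln-Koszul}, the Koszulity of parabolic category $\cO$ from \cite{BGS96}, and the uniqueness of Koszul gradings \cite[2.5.2]{BGS96}, which is exactly the chain of reasoning you give. Your extra care about normalizing grading shifts (and the aside about non-fundamental $\bla$, where the corollary is not actually asserted since $\Xi$ is defined only in the fundamental-weight setting) goes beyond what the paper records but does not change the argument.
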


We note that both the action of projective functors and of 
induction functors on $\cO^{\fp}$ have graded lifts which are unique up to grading
shift, and thus are determined by their action on the Grothendieck
group.  Thus the graded lifts given by the action of $\tU$ and
$\mathbb{S}$ agree, up to an easily understood shift, with those used
in other papers on graded category $\cO$ (most importantly for us,
this is used in the work of Mazorchuk-Stroppel \cite{MS} and Sussan
\cite{Sussan2007} on link homologies, which we build upon later).

\subsection{The affine case}
We note that the constructions of the previous subsection generalize in an
absolutely straightforward way to the affine case by simply replacing
the results of Section 3 of \cite{BKKL} with Section 4.

We let $\hat H_d$ denote the affine Hecke algebra (not the degenerate
one we considered earlier). Fix an element  $\zeta\in \overline{\K}$,
the separable algebraic closure of $\K$ such
that \[1+\zeta+\zeta^2+\cdots+\zeta^{n-1}=0,\] and $n$ is smallest
integer for which this holds (for example, if $\K$ is characteristic
0, these means that $\zeta$ is a primitive $n$th root of unity). The
{\bf cyclotomic affine Hecke algebra} or {\bf Ariki-Koike algebra}
(introduced in \cite{AK}) for $\la$ is the quotient $$\hat
H^\la=\bigoplus_{d}\hat H_d/\langle (X_1-\zeta^{i})^{\al_i^\vee(\la)}
\rangle.$$
where we adopt the slightly strange convention that if $\zeta\in \Z$, then $\zeta^i=\zeta+i$, and otherwise it is the usual power operation.

\begin{thm}[\mbox{\cite[Main Theorem]{BKKL}}]
When $\fg\cong \widehat{\mathfrak{sl}}_n$, there is an isomorphism $\alg^\la\cong \hat H^\la$.
\end{thm}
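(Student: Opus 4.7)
The plan is to deduce this from Brundan--Kleshchev's main theorem in \cite{BKKL} together with Theorem \ref{cyclotomic}. Recall that Theorem \ref{cyclotomic} establishes a canonical isomorphism $R^\la \cong T^\la$ for any single dominant weight $\la$, obtained by sending a KLR diagram to the Stendhal diagram gotten by adjoining a red strand at the far left. This identification works uniformly across symmetrizable Kac--Moody types, so it applies in particular when $\fg \cong \widehat{\mathfrak{sl}}_n$.

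The content of \cite[Main Theorem]{BKKL} (parallel to Proposition \ref{BK} in the finite type case) is an explicit isomorphism between the cyclotomic KLR algebra $R^\la$ for $\widehat{\mathfrak{sl}}_n$ and the Ariki--Koike algebra $\hat H^\la$ for the parameter $\zeta$ prescribed above. The generators $y_j e(\Bi)$ and $\psi_k e(\Bi)$ of $R^\la$ are matched with explicit elements in $\hat H^\la$ built from the Jucys--Murphy elements $X_j$ (shifted by $\zeta^{i_j}$ to become topologically nilpotent) and the generators $T_k$; the cyclotomic relation $y_1^{\la^{i_1}} e(\Bi) = 0$ corresponds to $(X_1 - \zeta^{i_1})^{\la^{i_1}} e(\Bi) = 0$, which holds thanks to the defining relation of $\hat H^\la$.

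The main qualitative difference from the finite type case is that no idempotent truncation is needed: in Proposition \ref{BK} one has to cut down $H^\la$ by $e_\fg$ because the spectrum of $x_1$ on a general cdAHA module is a subset of $\K$, and only the generalized eigenvalues in $[1,n]$ correspond to weights of $\mathfrak{sl}_n$. For $\widehat{\mathfrak{sl}}_n$, however, the vertices of the Dynkin diagram are $\Z/n\Z$, and every eigenvalue of $X_1$ on a finite-dimensional $\hat H^\la$-module automatically lies in $\{\zeta^i \mid i \in \Z/n\Z\}$ by the cyclotomic relation, so the analogous truncation is trivial.

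The main step is therefore simply citing \cite{BKKL}; there is no real obstacle once Theorem \ref{cyclotomic} is in hand. Composing $T^\la \cong R^\la$ with the Brundan--Kleshchev isomorphism $R^\la \cong \hat H^\la$ yields the desired identification. The only point worth double-checking is that the parameter conventions in our Definition \ref{tilde-def} (specifically the choice of $Q_{ij}$ for the affine type A quiver) match those used in \cite{BKKL}, which amounts to choosing the same orientation and the same normalization $t_{ij}$; if a different orientation has been fixed, one obtains an isomorphism by an easy rescaling of the generators $\psi_k e(\Bi)$ as in \cite[\S 3]{BKKL}.
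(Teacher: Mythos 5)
Your proposal is correct and takes essentially the same route as the paper: the result is obtained by citing the Main Theorem of \cite{BKKL} (the affine analogue, Section 4 of that paper, of the degenerate isomorphism used in Proposition \ref{BK}) and combining it with the identification $R^\la\cong \alg^\la$ of Theorem \ref{cyclotomic}. Your observation that no idempotent truncation $e_\fg$ is needed in the affine case, since the cyclotomic relation forces all eigenvalues of the $X_k$ to lie in $\{\zeta^i\mid i\in\Z/n\Z\}$, is exactly the reason the statement is cleaner than in finite type.
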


This symmetric Frobenius algebra has a natural quasi-hereditary cover, called the {\bf cyclotomic $q$-Schur algebra}, defined by Dipper, James and Mathas \cite{DJM}.  Indecomposable projectives over this algebra are indexed by ordered $k=\sum_{i=0}^{n}\al_i^\vee(\la)$-tuples of partitions.

\begin{prop} \label{q-Schur} When
$\fg=\widehat{\mathfrak{sl}}_n$, then ${\cata}^\bla$ is equivalent to the
subcategory of representations of the cyclotomic $q$-Schur algebra consisting
of objects presented by certain projective modules.

If all $\la_i$ are fundamental, then these are exactly the projectives for the multipartitions where each constituent partition is $n$-regular.
\end{prop}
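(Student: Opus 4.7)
The plan is to mirror the proof of Theorem \ref{quiver-schur} in the affine setting, with the cyclotomic $q$-Schur algebra of Dipper--James--Mathas playing the role of Hu--Mathas's quiver Schur algebra. The starting point is Proposition \ref{prop:add-embed}, which realizes $\alg^\bla$ as the endomorphism algebra $\End_{\alg^\la}\bigl(\bigoplus_{\kappa} y_{\Bi,\kappa}\alg^\la\bigr)$ of an explicit collection of cyclic $\alg^\la$-modules. Combining this with the Brundan--Kleshchev isomorphism $\alg^\la\cong \hat H^\la$, the task becomes identifying the modules $y_{\Bi,\kappa}\hat H^\la$ with a specific subset of the Dipper--James--Mathas permutation modules $M^\mu$ whose direct sum has endomorphism algebra the cyclotomic $q$-Schur algebra.

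First I would compute the image of $y_{\Bi,\kappa}$ under the Brundan--Kleshchev isomorphism in the affine case. By an argument parallel to equation \eqref{Hec-nil} from the degenerate setting, with integer shifts replaced by powers of $\zeta$, the dependence on $\Bi$ disappears once one passes to the principal right ideal, so that $\Upsilon(y_{\Bi,\kappa})\hat H^\la$ is generated by an element of the form $z_\kappa = \prod_{j}\prod_{k=\kappa(j)+1}^{n}(X_k-\zeta^{\pi_j})$ built from the Jucys--Murphy generators. These products (combined with the appropriate Young symmetrizer on each block cut out by $\kappa$) are exactly the generators of the permutation modules $M^\mu$ used by Dipper--James--Mathas for a collection of multi-compositions $\mu$ refining the pyramid determined by $\bla$. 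Since the cyclotomic $q$-Schur algebra is by definition the endomorphism algebra of the direct sum of all such $M^\mu$, passing to an idempotent truncation exhibits the endomorphism algebra in Proposition \ref{prop:add-embed} as that of a collection of indecomposable projectives in the $q$-Schur algebra, giving the stated equivalence between $\cata^\bla$ and the additive subcategory these projectives present.

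For the second statement, when all $\la_i$ are fundamental the indecomposable projectives of $\cata^\bla$ are parametrized by the crystal $\mathcal{B}^\bla$ via Proposition \ref{prop:stringy} and Theorem \ref{simple-perfect}. For $\fg=\widehat{\mathfrak{sl}}_n$, this tensor product of fundamental crystals is the crystal of a higher level Fock space, whose vertices are parametrized precisely by $n$-regular multipartitions (Misra--Miwa, Jimbo--Misra--Miwa--Okado). Under Ariki's parametrization of simple $\hat H^\la$-modules, these indecomposable projectives correspond in the $q$-Schur algebra to the projective covers of the simples labeled by $n$-regular multipartitions, which completes the identification.

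The main obstacle is the first step: making the image of $y_{\Bi,\kappa}$ under the affine Brundan--Kleshchev isomorphism explicit enough to match up with Dipper--James--Mathas's permutation modules. The affine BK isomorphism is combinatorially more intricate than its degenerate analog in Proposition \ref{BK}, and a careful comparison of Young symmetrizers together with a $\zeta$-analog of \eqref{Hec-nil} is needed to pin down which $M^\mu$ actually arise. A detailed treatment of this comparison will appear in joint work with Stroppel \cite{SWschur}.
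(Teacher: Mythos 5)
Your treatment of the first statement follows the paper's own route: the paper likewise starts from the double centralizer property (Corollary \ref{doub-cen}, packaged as Proposition \ref{prop:add-embed}), passes through the Brundan--Kleshchev isomorphism, and identifies the modules $\hat z_\kappa\alg^\la$ with $\hat z_\kappa=\prod_{j}\prod_{k=\kappa(j)+1}^{n}(x_k-\zeta^{\pi_j})$ as the Dipper--James--Mathas permutation modules for the multipartitions whose components are single columns $(1^{m_k})$; the $\zeta$-analogue of \eqref{Hec-nil} that you flag as the ``main obstacle'' is exactly the step the paper disposes of by saying it is ``the same argument as Proposition \ref{Hecke-equivalence}.'' (A small point: for these column multipartitions the Young subgroup is trivial, so no symmetrizer is needed; the generator is just the product of shifted Jucys--Murphy elements.)

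The second statement is where there is a genuine gap. You correctly observe that the indecomposable projectives of $\cata^\bla$ are indexed by $\mathcal{B}^\bla=\mathcal{B}(\omega_{\pi_1})\times\cdots\times\mathcal{B}(\omega_{\pi_\ell})$, i.e.\ by $\ell$-tuples of $n$-regular partitions. But a bijection of index sets does not identify \emph{which} $q$-Schur projectives occur as summands of the column permutation modules: you must rule out that the subset $S$ of $q$-Schur labels is some other set of the same cardinality. Your appeal to ``Ariki's parametrization'' cannot close this: Ariki's theorem labels the simple $\hat H^\la$-modules by Kleshchev multipartitions, which is a strictly smaller set than tuples of $n$-regular partitions and says nothing about which $q$-Schur projectives appear in a given permutation module. (Relatedly, the tensor product of fundamental crystals is not ``the crystal of a higher level Fock space'' --- the level-$\ell$ Fock crystal is indexed by \emph{all} multipartitions, and the tensor product of highest-weight crystals is the proper subcrystal of $n$-regular ones.) The paper closes the gap by a uniqueness argument on the $q$-Schur side: it transports Shan's crystal structure to the cyclotomic $q$-Schur simples via Wada's comparison of categorical $\widehat{\mathfrak{sl}}_n$-actions, notes that $S$ is a subcrystal of the $\ell$-fold product of level-one Fock crystals which is moreover closed (as the rank varies) under appending an empty component, and observes that the only such subset is the set of multipartitions with every component $n$-regular. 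Some argument of this kind --- pinning down $S$ intrinsically as a subcrystal with a closure property, rather than merely counting it --- is needed to complete your proof.
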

The results \cite[5.5\& 5.8]{WebBKnote} actually allow one to write an
explicit isomorphism between $T^\bla$ and an endomorphism ring over
projectives for the cyclotomic $q$-Schur algebra, giving a more
explicit version of this theorem.
\begin{proof}
By Corollary \ref{doub-cen}, $\alg^{\bla}$ is the endomorphism algebra of certain modules over $\alg^\la$, which one can see by the same arguments as Proposition \ref{Hecke-equivalence} are of the form $\hat z_\la  \alg^\la$ where \begin{equation*}
\hat z_\kappa=\prod_{j=1}^\ell \prod_{k=\kappa(j)+1}^{n}(x_k-\zeta^{\pi_j}).
\end{equation*}
These are permutation modules for the Ariki-Koike algebra, exactly
those corresponding to the multipartitions where the $k$th component
is of the form $(1^{m_k})$.    

Corresponding to the summands of these modules, we have a subset $S$ of
the indecomposable projectives over the cyclotomic $q$-Schur and the
corresponding simple quotients.  The modules over the cyclotomic
$q$-Schur algebra carry a categorical action of
$\mathfrak{\widehat{sl}}_n$ as argued in \cite[5.8]{Wada}.  This is coincides with the action defined by Shan
\cite{ShanCrystal} under an equivalence of categories by
\cite[6.3]{Wada}.  Thus, we can transport Shan's crystal structure to
simple modules over the cyclotomic
$q$-Schur algebra; by \cite[6.3]{ShanCrystal}, this crystal is the
tensor product of $\ell$ copies of a level 1 Fock crystal.  

The simples $S$ are a subcrystal of this structure.  Furthermore, if
we consider all ranks together, this set is closed under the operation
sending an $(\ell-1)$-multipartition $\nu^{(i)}$ to an
$\ell$-multipartition with $\nu^{(\ell)}=\emptyset$.  There is only
one such subset: the $\ell$-multipartitions where all components are $n$-regular.
\end{proof}
If $\la_i$ is a general weight, as before, we can define $\bla'$ by
breaking every $\la_i$ into fundamental weights.  In this case,
$\cata^\bla$ will be equivalent to the subcategory presented by
projectives where the first $k_1=  \sum_{i=1}^{n}(\la_1^i)$
partitions, the next $k_2$, etc, for an $n$-Kleshchev multipartition.

Thus, our categorification can be seen a generalization of the Ariki
categorification theorem \cite{Acat}.  As mentioned in the
introduction, the author and Stroppel address the question of how to
describe the entirety of the cyclotomic $q$-Schur algebra
diagrammatically and obtain categorifications of other interesting
objects in affine representation theory in \cite{SWschur,WebRou,WebBKnote}.

\subsection{Comparison to other knot homologies}
\label{sec:comparison-functors}

A great number of other knot homologies have appeared on the scene in the last decade, and obviously, we would like to compare them to ours.  In this section we check the isomorphism which seems most straightforward based on the similarity of constructions: we describe an isomorphism to the invariants constructed by Mazorchuk-Stroppel and Sussan for the fundamental representations of $\mathfrak{sl}_n$.

In order to compare knot homologies, we must compare the functors we
have described on our categories $\cat^\bla$ and
those on $\tcO^\fp_n$.  In order to keep combinatorics simpler, we
consider our fundamental weights as weights of $\mathfrak{gl}_n$; this
only affects the inner products between elements of the weight
lattice, neither of which are in the root lattice.  This has the
advantage of assuring that all inner products between weights are
integral, so we have no need of fractional gradings.  

For simplicity, in this section we will assume that $\bla$ is a sequence of fundamental weights.  In this paper, we are only concerned about commuting up to isomorphism of functors; thus when we say a diagram of functors ``commutes'' we mean that the functors for any two paths between the same points are isomorphic.

First, let us consider the braiding functors.  Associated to each
permutation of $N$ letters, we have a derived twisting functor
$T_w\colon D^b(\cO_n)\to D^b(\cO_n)$ (see \cite{AS} for more details and
the definition).  We let $T_w$ also denote the graded lift of this
functor, which is normalized so that the Verma module for a dominant
weight $\mu$ generated in degree
0 is sent to that of highest weight $w(\la+\rho) -\rho$ also generated
in degree 0.

\begin{prop}\label{trans-braid}
When $\bla=(\om_1,\cdots,\om_1)$, then $\fp=\mathfrak{b}$ and we have a commutative diagram
\begin{equation*}
    \begin{tikzpicture}[yscale=1.1,xscale=1.9,very thick]
        \node (a) at (1,1) {$D^b(\tcO_n)$};
        \node (b) at (-1,1) {$D^b(\tcO_n)$};
        \node (c) at (1,-1) {$\cat^{\bla}$};
        \node (d) at (-1,-1) {$\cat^\bla$};
        \draw[->] (b) -- (a) node[above,midway]{$T_{v}$};
        \draw[->] (d) -- (c) node[below,midway]{$\mathbb{B}_v$};
        \draw[->] (c) --(a) node[right,midway]{$\Xi$};
         \draw[->] (d) --(b) node[left,midway]{$\Xi$};
    \end{tikzpicture}
\end{equation*}
\end{prop}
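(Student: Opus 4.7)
The plan is to reduce to the case of a simple reflection $v = s_k$, and then identify $T_{s_k} \circ \Xi$ with $\Xi \circ \mathbb{B}_{s_k}$ by showing that both functors are strongly equivariant self-equivalences that behave the same way on a collection of generating objects. The reduction to simple reflections is justified because both families of functors satisfy the braid relations: for $\mathbb{B}_v$ this is Corollary \ref{cor:braid-action}, and for $T_v$ this is the classical result of Arkhipov, Khomenko--Mazorchuk and Andersen--Stroppel. It therefore suffices to exhibit a natural isomorphism for each simple reflection.

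Next I would check strong equivariance on both sides. For $\mathbb{B}_{s_k}$, strong equivariance with respect to the $\tU$-action is Proposition \ref{bra-commute}. For $T_{s_k}$, it is classical that twisting functors commute with projective functors, and combining this with Proposition \ref{trans-act} (which identifies $f_i$ with $\fF_i$ under $\Xi$) yields the required commutation with $\fE_i$ and $\fF_i$ on the cyclotomic side. Both functors are moreover equivalences of the bounded derived category.

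The main technical step is to match the two functors on a small collection of test objects, for which the standard modules are natural candidates. Proposition \ref{sta-braid} describes $\mathbb{B}_{s_k}$ applied to a standardization $\mathbb{S}^{\bla}(P_{\dots;\Bi_k;\emptyset;\dots})$: it produces the standardization $\mathbb{S}^{\bla}(P_{\dots;\emptyset;\Bi_k;\dots})$ with a specific grading shift determined by the weights. On the parabolic category $\cO$ side, $T_{s_k}$ acts on a Verma module $M(\mu)$ (with $\mu$ anti-dominant for $s_k$) by sending it to $M(s_k \cdot \mu)$ with an analogous shift, while on the induced objects from smaller Levi subalgebras this can be read off using that twisting functors commute with parabolic induction from the appropriate Levi. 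Under $\Xi$, Proposition \ref{ind-sta} translates parabolic induction into standardization, and so these two descriptions match up on the image of standardization applied to projectives having highest weight in one block. Since every object in $\cat^{\bla}$ is generated from these by applying the $\fF_i$'s (or equivalently, the $f_i$'s after transport via $\Xi$), strong equivariance pins down the functors.

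To close the argument, I would combine strong equivariance with agreement on this generating family: the standardly stratified structure of Corollary \ref{SS}, together with the finite projective resolutions of standards guaranteed by Corollary \ref{standard-finite-length}, ensures that any strongly equivariant endofunctor of $\cat^\bla$ which agrees with another on one indecomposable standard per block agrees on all of $\cat^\bla$. The main obstacle I anticipate is bookkeeping for grading shifts, since twisting and braiding functors are normalized by different conventions in the literature; this should be pinned down by evaluating both functors on the projective $P^0_\emptyset$ in the highest weight space $\la$, where $\cata^\bla_\la \cong \K\text{-mod}$ and $\tcO_n$ restricts similarly, forcing the two shifts to coincide.
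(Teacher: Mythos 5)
Your overall strategy — both functors commute with the translation/categorification functors, so it suffices to match them on a generating family of objects — is the same as the paper's. But the object-level check you propose is done on too small a family, and the step that is supposed to close the argument is not justified. Proposition \ref{sta-braid} only computes $\mathbb{B}_{s_k}$ on standardizations whose $(k+1)$-st slot is the highest-weight object $P_\emptyset$; these correspond under $\Xi$ to only a proper subfamily of the Verma modules (and, correspondingly, your description of $T_{s_k}M(\mu)$ as $M(s_k\cdot\mu)$ is only valid when $\mu$ is $s_k$-antidominant — in the other case $T_{s_k}M(\mu)$ is a twisted Verma, not a Verma). The assertion that ``any strongly equivariant endofunctor which agrees with another on one indecomposable standard per block agrees on all of $\cat^\bla$'' is the crux, and it does not follow from Corollary \ref{SS} and Corollary \ref{standard-finite-length}: knowing that $F(\fF_i X)\cong G(\fF_i X)$ for $X$ in your test family does not let you separate the standard subquotients of $\fF_i X$, and $\cata^\bla$ is not generated by a single highest-weight object under the $\tU$-action alone (that would require the $\fI$-functors, which do not interact simply with the braiding).

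The paper closes both holes at once. First, the reduction from a functor isomorphism to an object-level check is made precise: every projective in $\tcO_n$ is a summand of a composition of translation functors applied to a dominant Verma, and every morphism between such projectives is the image of a natural transformation between compositions of translation functors, so two strongly equivariant functors agreeing on all Vermas (as objects) are isomorphic. Second, the agreement on \emph{all} Vermas — not just the antidominant ones for $s_k$ — comes from Proposition \ref{pro:mutate}: $\mathbb{B}_v$ realizes the mutation of the semi-orthogonal decomposition by standard modules associated to $v$, while $T_v$ classically realizes the corresponding mutation of the exceptional collection of Vermas; Proposition \ref{ind-sta} shows $\Xi$ matches the two collections and their orders, and a mutation is determined by the collection and the order change. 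Your argument becomes correct if you replace the appeal to Proposition \ref{sta-braid} by Proposition \ref{pro:mutate} (which you never invoke) and replace the ``one standard per block'' claim by the generation statement for $\tcO_n$ above. Your normalization check on $P^0_\emptyset$ is a reasonable way to pin down the grading shift.
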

\begin{proof}
  We note that the functors $T_{v}$ commute with translation functors by \cite[Lemma 2.1(5)]{AS}.  The same holds for  $\Xi\circ \mathbb{B}_v\circ
  \Xi^{-1}$ by Propositions \ref{bra-commute} and  \ref{trans-act}.
  
  Every projective object in $\tilde{\cO}_n$ is a summand of a
  composition of translation functors applied to a dominant Verma
  module, and every morphism is the image of a natural transformation
  between these.  The we need only compute the behavior of the
  functors $T_{v}$ and on Verma modules $\Xi\circ \mathbb{B}_v\circ
  \Xi^{-1}$ on the level of objects in order to check isomorphisms of
  functors.

By Proposition \ref{pro:mutate}, $\mathbb{B}_v$ sends the exceptional
collection of standard objects to its mutation by using $v$ to reorder
the root functions $\bal$ given by the sum of the roots that appear
between the red lines.  On the other hand, the functor $T_v$ sends the
exceptional collection of Verma modules to its mutation by the change
of order associated to the action of $v$ on tableaux.  By Proposition
\ref{ind-sta}, these changes of partial order are intertwined by the
correspondence between standard modules and Verma modules given by
$\Xi$. Thus the mutations also match under $\Xi$, so the diagram
commutes.
\end{proof}

Finally, we turn to describing the functors associated to cups and
caps.  If $\pi$ has a column of height $n$ in the $k$th position,
then any block of category $\tcO_n^\fp$ is equivalent to the block of
category $\tcO^{\fp'}_n$ associated to $\pi'$, the diagram $\pi$ with
that column of height $n$ removed. The content of the
tableaux in the new block is that of the original block with the multiplicity of each
number in $[1,n]$ reduced by 1.  The effect of this functor on the simples,
projectives and Vermas is simply removing that column of height $n$ (which by column
strictness must be the numbers $[1,n]$ in order).
The functor that realizes this equivalence $\zeta:\tcO^\fp_n\to
\tcO^{\fp'}_n$ is the {\bf Enright-Shelton equivalence}, which is
developed in the form most useful for us in \cite[\S 3.2]{Sussan2007}.

Having already developed the equivalence $\Xi$, this functor is
actually quite easy to describe.  Let $\prj^\kappa_d$ denote the
module attached to $\kappa$ and $d$ for $\fp'$ as above, and let
$Q^{\kappa_+}_d$ be the module attached in the same way to $\fp$,
where \[\kappa_+(j)=
\begin{cases}
  \kappa(j) & j\leq k\\
  \kappa(j-1) &j>k.
\end{cases}\]
We already have equivalences of $\cata^\bla$ with the category generated
by $\operatorname{pr}_n(\prj^\kappa_d)$ and with that generated by
$\operatorname{pr}_n(Q^\kappa_d)$; under these two equivalences,
$\operatorname{pr}_n(\prj^\kappa_d)$ and $\operatorname{pr}_n(Q^\kappa_d)$
are sent to the same projective.  The functor $\zeta$ is the
composition of the second equivalence with the inverse of the first.

We will also use also have {\bf Zuckerman functors}, which are the derived functors of
sending a module in $\tcO$ to its largest quotient  which is
locally finite for $\fp$.  These are left adjoint to the
forgetful functor $D^b(\tcO^\fp)\to D^b( \tcO)$.

Begin with a pyramid $\pi$, and assume $\pi'$ is obtained from $\pi$
by replacing a pair of consecutive columns whose lengths add up to $n$
(a pair of consecutive dual representations in the sequence $\bla$),
with one of length $n$, and $\pi''$ is obtained by deleting them
altogether.  
\begin{defn}
The {\bf ES-cup functor} $K\colon\tcO^{\pi''}\to \tcO^{\pi}$ 
is the composition of the inverse of the Enright-Shelton equivalence for
$\pi''$ and $\pi'$ with the forgetful functor from $\tcO^{\pi'}$ to
$\tcO^\pi$ (which corresponds to an inclusion of parabolic subgroups).

The {\bf ES-cap functor} $T\colon\tcO^{\pi}\to \tcO^{\pi''}$  is  the composition of the
Zuckerman functor from $\tcO^\pi$ to $\tcO^{\pi'}$  with the
Enright-Shelton functor $\zeta\colon\tcO^{\pi'}\to\tcO^{\pi''}$.
\end{defn}

\begin{prop}\label{trans-cup-cap}
Both squares in the diagram below commute.
\begin{equation*}
    \begin{tikzpicture}[yscale=1.2,xscale=2.1,very thick]
        \node (a) at (1,1) {$\Dbe(\tcO^{\fp}_n)$};
        \node (b) at (-1,1) {$\Dbe(\tcO^{\fp'}_n)$};
        \node (c) at (1,-1) {$\cat^{\bla^+}$};
        \node (d) at (-1,-1) {$\cat^\bla$};
        \draw[->] (b) to[out=20,in=160] node[above,midway]{$K$} (a);
        \draw[->] (a) to[out=-160,in=-20] node[below,midway]{$T$} (b);
         \draw[->] (d) to[out=20,in=160]
         node[above,midway]{$\mathbb{K},\mathbb{C}$}  (c);
        \draw[->] (c) to[out=-160,in=-20]
        node[below,midway]{$\mathbb{T},\mathbb{E}$} (d);
        \draw[->] (c) --node[right,midway]{$\Xi$} (a) ;
         \draw[->] (d) -- node[left,midway]{$\Xi$} (b);
    \end{tikzpicture}
\end{equation*}
\end{prop}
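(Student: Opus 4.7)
The plan is to reduce the four claims (two arrows in each of two squares) to a single one by adjunction, then to exploit strong equivariance to reduce checking that one claim to a single object. Because $\mathbb{K}$ is left adjoint to $\mathbb{T}$ (by the definition of $\mathbb{T}$ as an appropriately shifted $\RHom$) and the ES-cup $K$ is left adjoint to the ES-cap $T$ (since forgetting parabolic finiteness is left adjoint to the Zuckerman functor and $\zeta$ is an equivalence), commutation of the cup square will imply commutation of the cap square by passing to right adjoints. The pair $(\mathbb{C},\mathbb{E})$ corresponds to the oppositely-oriented cup/cap, which under $\Xi$ is handled by the same underlying ES-equivalence applied to the reverse pair of columns; thus a single commutation statement governs all four.

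The next step is to observe that all functors in sight are strongly equivariant for the categorical $\fg$-action. On the diagrammatic side this is Proposition \ref{trace-equivariance}. On the category $\cO$ side, the forgetful functor intertwines projective functors tautologically, and Enright-Shelton is an equivalence of highest weight categories that matches translation functors across the corresponding categorical $\mathfrak{sl}_n$-actions (as developed in the work of Mazorchuk--Stroppel and Sussan). Together with Proposition \ref{trans-act}, this shows that both $K\circ\Xi$ and $\Xi\circ\mathbb{K}$ are strongly equivariant functors $\cat^{\bla}\to \Dbe(\tcO^{\fp}_n)$. Since every indecomposable projective of $\cat^{\bla}$ is a summand of some $\fF_{\Bi}P_\emptyset$, and morphisms between these are generated by 2-morphisms in $\tU$, such a strongly equivariant functor is determined by its value on $P_\emptyset$ together with a choice of $\End$-compatible isomorphism there.

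It then suffices to produce a canonical isomorphism $\Xi(\mathbb{K}(P_\emptyset)) \cong K(\Xi(P_\emptyset))$ and verify its compatibility. Unpacking, $\mathbb{K}(P_\emptyset)$ is a standardization $\mathbb{S}^{\bla^+}(P_\emptyset\boxtimes L_\mu\boxtimes P_\emptyset)$, which by Proposition \ref{ind-sta} transports under $\Xi$ to a parabolic induction whose middle tensor factor is $\Xi(L_\mu)$. On the other hand, $K$ applied to the dominant projective of $\tcO^{\pi''}_n$ produces the dominant projective of $\tcO^{\pi'}_n$ viewed through the forgetful functor; under $\Xi$ the dominant projective of $\tcO^{\pi'}_n$ becomes standardization with a distinguished simple in the pair slot. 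The identification we need is that $\Xi(L_\mu)$ coincides with this distinguished simple, which amounts to the characterization of $L_\mu$ from Lemma \ref{lem:Lco-simple} as the unique simple killed by every $\fE_i$, matched with the parallel characterization of the corresponding simple in $\tcO^{\pi'}_n$ as the unique simple whose tableau uses each integer in $[1,n]$ with the correct multiplicity.

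The main obstacle will be upgrading this identification of objects to an identification of functors that respects the $\tU$-action. This is not automatic: two strongly equivariant functors can send $P_\emptyset$ to the same object via an $\End$-incompatible isomorphism. The cleanest route is to invoke the double centralizer / semi-orthogonal decomposition of Proposition \ref{semi-orthogonal}, which pins down the endomorphism ring acting on the image of $P_\emptyset$ under each functor as a product of cyclotomic KLR algebras; the analogous fact on the parabolic side is provided by Brundan--Kleshchev's isomorphism (Proposition \ref{BK}). Matching these endomorphism actions — which ultimately reduces to verifying that $\Xi$ commutes with standardization (Proposition \ref{ind-sta}) applied inside the pair slot — then promotes the object-level isomorphism to the desired natural isomorphism of functors.
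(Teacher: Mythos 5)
Your overall shape (reduce to the cup square by adjunction, then use equivariance plus the identification of the invariant simple) is close to the paper's, but there is a genuine gap in your main reduction step. You claim that every indecomposable projective of $\cat^{\bla}$ is a summand of some $\fF_{\Bi}P_\emptyset$ and that morphisms are generated by 2-morphisms of $\tU$, so that a strongly equivariant functor is determined by its value on $P_\emptyset$. This is false once $\bla$ has more than one entry: the $\tU$-orbit of $P_\emptyset$ only produces the projectives $P^{0}_{\Bi}$ (all black strands to the right of all reds), whereas the projectives $P^{\kappa}_{\Bi}$ with $\kappa\neq 0$ — e.g.\ the dominant Verma in the $\mathfrak{sl}_2$, $\bla=(\omega,\omega)$ example, which is $P^{(0,1)}_{(1)}$ and is not a summand of the big projective $P^{0}_{(1)}$ — are genuinely new objects, and the red/black crossings generating the maps to them are not 2-morphisms of $\tU$. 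So strong equivariance alone does not pin down either $K\circ\Xi$ or $\Xi\circ\mathbb{K}$, and your closing appeal to the double centralizer property is not developed far enough to repair this: you would need to know that both target functors are themselves recovered from their restrictions to $\operatorname{add}(P^0)$, which is an additional argument you do not supply.

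The paper closes exactly this hole differently: instead of reducing to $P_\emptyset$, it uses Proposition \ref{trans-act} \emph{together with} Proposition \ref{ind-sta} (compatibility of $\Xi$ with standardization/parabolic induction) to reduce to the case where the cup is inserted at the far right, with no red strands to its right. In that position the Enright--Shelton equivalence is literally $\ind_{\fl}^{\fgl_N}(-\otimes \C^n)$ for the Levi $\fl$ of type $(N-n,n)$, which Proposition \ref{ind-sta} identifies with $\mathbb{S}^{\bla;(\om_1,\om_{n-1})}(-\boxtimes L_{\om_1})$, and Lemma \ref{trace-standard} identifies that with $\mathbb{K}$. If you want to keep your equivariance-based framing, you must add the red-strand functors $\fI_\mu$ (equivalently, standardization) to the list of structures your functors are required to intertwine; with that addition your argument essentially becomes the paper's.
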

\begin{proof}
We need only check this for $K$, since in both cases, the functors
above are in adjoint pairs.  

Using the compatibility results for functors proved in Propositions
\ref{trans-act} and \ref{ind-sta}, we can reduce to the case
where the cup is added at the far right. Let  $\fl$ is be the standard Levi of type $(N-n,n)$. In this case, the
ES-equivalence is just given by $\ind_{\fl}^{\fgl_N}(-\otimes \C^n)$, since this sends  $\operatorname{pr}_n(\prj^\kappa_d)$ to
$\operatorname{pr}_n(Q^\kappa_d)$.  On the other
hand, we already know by Proposition \ref{ind-sta} that this is intertwined with
$\mathbb{S}^{\bla,(\om_1,\om_{n-1})}(-,L_{\om_1})$, which matches with
$\mathbb{K}$ as shown in Lemma \ref{trace-standard}.
\end{proof}

These propositions show that our work matches with that of Sussan
\cite{Sussan2007} and Mazorchuk-Stroppel \cite{MS09}, though the
latter paper is ``Koszul dual'' to our approach above.   Recall that
each block of $\tcO_n$ has a Koszul dual, which is also a block of
parabolic category $\cO$ for $\mathfrak{gl}_N$ (see \cite{Back99}).
In particular, we have a Koszul duality
equivalence $$\Ko:\Dbe(\tcO_n^\fp)\to
D^{\downarrow}({^{n}_{\fp}\tcO})$$ where ${^n_\fp\tcO}$ is the direct
sum over all $n$ part compositions $\mu$ (where we allow parts of size
0) of a block of $\fp_\mu$-parabolic category $\tcO$ for
$\mathfrak{gl}_N$ with a particular central character depending on 
$\fp$.

Now, let $T$ be an oriented tangle labeled with $\bla$ at the bottom and $\bla'$ at top, with all appearing labels being fundamental.  Then, as before, associated to $\bla$ and $\bla$ we have parabolics $\fp$ and $\fp'$.
\begin{prop}\label{MSS-compare}
Assume $\bla$ and $\bla'$ only contain the fundamental weights $\om_1$ and $\om_{n-1}$. Then we have a commutative diagram
\begin{equation*}
    \begin{tikzpicture}[yscale=.9,xscale=2.4,very thick]
        \node (e) at (1,3) {$D^{\downarrow}({^n_{\fp'} \tcO})$};
        \node (f) at (-1,3) {$D^{\downarrow}({^n_{\fp} \tcO})$};
        \node (a) at (1,1) {$\Dbe(\tcO_n^{\fp'} )$};
        \node (b) at (-1,1) {$\Dbe(\tcO_n^{\fp})$};
        \node (c) at (1,-1) {$\cat^{\bla'}$};
        \node (d) at (-1,-1) {$\cat^{\bla}$};
        \draw[->] (f) -- (e) node[above,midway]{$\mathcal{F}(T)$};
	        \draw[->] (b) -- (a) node[above,midway]{$\mathbb{F}(T)$};
        \draw[->] (d) -- (c) node[above,midway]{$\Phi(T)$};
        \draw[->] (c) --(a) node[right,midway]{$\Xi$};
         \draw[->] (d) --(b) node[left,midway]{$\Xi$};
        \draw[->] (a) --(e) node[right,midway]{$\Ko$};
         \draw[->] (b) --(f) node[left,midway]{$\Ko$};
    \end{tikzpicture}
\end{equation*}
where $\mathbb{F}(T)$ is the functor for a tangle defined by Sussan in \cite{Sussan2007} and $\mathcal{F}(T)$ is the functor defined by Mazorchuk and Stroppel in \cite{MS09}.

  Our invariant $\EuScript{K}$ thus coincides with the knot invariants
  of both the above papers when  all components are labeled
  with the defining representation.  In particular,  it coincides with Khovanov homology when
  $\fg=\mathfrak{sl}_2$ and Khovanov-Rozansky homology when
  $\fg=\mathfrak{sl}_3$.
\end{prop}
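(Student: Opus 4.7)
The strategy is to decompose $T$ into elementary tangles (crossings, cups, caps) and check commutativity of each square of the diagram on each generator separately, invoking the compatibility results already established in Section \ref{sec:type-A}. Since $\Phi(T)$, $\mathbb{F}(T)$ and $\mathcal{F}(T)$ are all defined by assigning functors to elementary tangles and composing in the same pattern, commutativity on generators implies commutativity for all tangle diagrams, and independence of the diagram is already guaranteed for the three constructions individually.

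For the bottom square (between $\cat^\bla$ and $\Dbe(\tcO_n^\fp)$), the work is essentially done: Proposition \ref{trans-braid} identifies $\Xi \circ \mathbb{B}_w \cong T_w \circ \Xi$, while Proposition \ref{trans-cup-cap} identifies $\Xi \circ \mathbb{K} \cong K \circ \Xi$ and $\Xi \circ \mathbb{T} \cong T \circ \Xi$ (with the analogous statements for the oppositely oriented cups and caps). Sussan's functor $\mathbb{F}(T)$ in \cite{Sussan2007} is defined precisely by assigning derived twisting functors to crossings and ES-cup/cap functors to cups and caps, so after matching grading shifts this square commutes on each generator.

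For the top square (between $\Dbe(\tcO_n^\fp)$ and $D^{\downarrow}({}^n_\fp\tcO)$), we invoke Backelin's Koszul duality \cite{Back99}. Under Koszul duality, derived twisting functors on $\tcO_n^\fp$ are intertwined with shuffling functors on ${}^n_\fp\tcO$, and the pair (Zuckerman, forgetful) that enters the definition of the ES-cup/cap functors is intertwined with the pair of translation functors onto and off a wall. These are precisely the functors that Mazorchuk and Stroppel use to define $\mathcal{F}(T)$ in \cite{MS09}; this is exactly the statement that the Sussan and Mazorchuk-Stroppel constructions are Koszul dual, which they themselves observe. Checking this identification one generator at a time and assembling gives the commutativity of the top square.

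The principal technical obstacle is bookkeeping of grading and homological shifts. Our construction uses the ST ribbon element, so our invariant differs from the standard Reshetikhin-Turaev invariant by the explicit sign determined in Proposition \ref{schur-indicate}, whereas \cite{Sussan2007} and \cite{MS09} use the usual ribbon normalization. The isomorphisms of functors above therefore hold only up to explicit $[\cdot]$ and $(\cdot)$ shifts that depend on the writhe and the number of clockwise cups/caps; one must verify that these shifts are compatible with the shifts already built into our functors $\mathbb{B}_\si$, $\mathbb{K}$, $\mathbb{T}$, $\mathbb{C}$, $\mathbb{E}$ so that, after assembly, the total shift from the two paths around the diagram agrees. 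Once commutativity of the whole diagram is established, the final comparison to Khovanov homology and Khovanov–Rozansky $\mathfrak{sl}_3$ homology follows by invoking the known identifications of $\mathcal{F}(T)$ (respectively $\mathbb{F}(T)$) with these theories in the defining-representation case, which are established in \cite{StDuke,MS09} for $\mathfrak{sl}_2$ and in \cite{MS09,Sussan2007} for $\mathfrak{sl}_3$.
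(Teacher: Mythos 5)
Your proposal is correct and follows essentially the same route as the paper: reduce to elementary tangles, use Propositions \ref{trans-braid} and \ref{trans-cup-cap} for the bottom square, and the Koszul duality between twisting/shuffling functors and between Zuckerman/translation functors (the paper cites Ryom-Hansen \cite{RH} for this, alongside Backelin's equivalence) for the top square. Your extra attention to the grading/shift bookkeeping forced by the ST versus standard ribbon normalization is a reasonable point the paper passes over silently, but it does not change the argument.
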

\begin{proof}
We need only check that we define the same functors as Sussan and
Mazorchuk-Stroppel on a single crossing of strands labeled $\om_1$ and
on cups and caps.  
In  \cite[\S 6]{Sussan2007}, the action of crossings is given by twisting
functors and in \cite[\S 6]{MS09} by shuffling functors; thus, Proposition
\ref{trans-braid} identifies our crossing with Sussan's and the
duality of twisting and shuffling functors proven in \cite{RH} shows
that it matches that of Mazorchuk and Stroppel.

Since Sussan's cup and cap functors defined in \cite[\S 3.2]{Sussan2007} are defined by applying a
Zuckerman functor after the ES-equivalence $\cO^\fp_n\cong
\cO^{\fp'}_n$ on objects, Proposition \ref{trans-cup-cap} shows that
our functors agree with his; similarly, Mazorchuk and Stroppel's
functor is an ES-equivalence Koszul dual to ours, followed by a
translation functor, which matches our Zuckerman functor by  \cite{RH}.
\end{proof}
We believe strongly that this homology agrees with that of
Khovanov-Rozansky when one uses the defining representation for all
$n$ (this is conjectured in \cite{MS09}), but actually proving this
requires an improvement in the state of understanding of the
relationship between the foam model of Mackaay, \Stosic and Vaz
\cite{MSVsln} and the model we have presented.  Progress in this
direction was recently made by Lauda, Queffelec and Rose\cite{LQR, QR}
using skew Howe duality to relate foam categories and $\tU$;  the author and Mackaay will explain one version of this connection in future.

 It would also be
desirable to compare our results to those of Cautis-Kamnitzer for
minuscule representations, and Khovanov-Rozansky for the Kauffman
polynomial, but this will require some new ideas, beyond the scope of
this paper.

 \bibliography{../gen}
\bibliographystyle{amsalpha}
\end{document}